\newcommand{\ee}{\mathrm{e}}
\newcommand{\dd}{\mathrm{d}}
\newcommand{\ii}{\mathrm{i}}
\newcommand{\KK}{\mathbb{K}}
\newcommand{\EE}{\mathbb{E}}
\newcommand{\gc}{\mathrm{gc}}
\newcommand{\bfA}{\mathbf{A}}
\newcommand{\bfH}{\mathbf{H}}
\newcommand{\bfK}{\mathbf{K}}
\newcommand{\bfM}{\mathbf{M}}
\newcommand{\bfO}{\mathbf{O}}
\newcommand{\bfT}{\mathbf{T}}
\newcommand{\bfY}{\mathbf{Y}}
\newcommand{\Id}{\mathbb{I}}
\newtheorem{lemma}{Lemma}[section]
\newtheorem{theorem}{Theorem}[section]
\newtheorem*{theorem*}{Theorem}
\definecolor{armygreen}{rgb}{0.29, 0.33, 0.13}
\newtheorem{rhp}{Riemann--Hilbert Problem}[section]
\newtheorem{proposition}{Proposition}[section]
\newenvironment{remark}{$\triangleleft$ \textbf{Remark:}}{$\triangleright$\newline}
\theoremstyle{definition}
\newtheorem{definition}{Definition}[section]
\newtheorem{assumption}{Assumption}[section]
\numberwithin{equation}{section}
\numberwithin{figure}{section}
\begin{document}
\title{Universality Near the Gradient Catastrophe Point in the Semiclassical Sine-Gordon Equation}
\author{Bing-Ying Lu}
\address{Department of Mathematics, University of Michigan, East Hall, 530 Church St., Ann Arbor, MI 48109\footnote{Current affiliation:  Institute of Mathematics, Academia Sinica, Taipei, Taiwan}}
\email{bylu@gate.math.sinica.tw}
\author{Peter D. Miller}
\address{Department of Mathematics, University of Michigan, East Hall, 530 Church St., Ann Arbor, MI 48109}
\email{millerpd@umich.edu}
\date{\today}
\maketitle

\begin{abstract}

We study the semiclassical limit of the sine-Gordon (sG) equation with below threshold pure impulse initial data of Klaus--Shaw type. The Whitham averaged approximation of this system exhibits a gradient catastrophe in finite time. In accordance with a conjecture of Dubrovin, Grava and Klein, we found that in a $\mathcal{O}(\epsilon^{4/5})$ neighborhood near the gradient catastrophe point, the asymptotics of the sG solution are universally described by the Painlev\'e I tritronqu\'ee solution. A linear map can be explicitly made from the tritronqu\'ee solution to this neighborhood. Under this map: away from the tritronqu\'ee poles, the first correction of sG is universally given by the real part of the Hamiltonian of the tritronqu\'ee solution; localized defects appear at locations mapped from the poles of tritronqu\'ee solution; the defects are proved universally to be a two parameter family of special localized solutions on a periodic background for the sG equation. We are able to characterize the solution in detail. Our approach is the rigorous steepest descent method for matrix Riemann--Hilbert problems, substantially generalizing \cite{BertolaTovbis2014} to establish universality beyond the context of solutions of a single equation.

\end{abstract}

\tableofcontents

\section{Introduction}

This paper concerns the behavior of solutions of the sine-Gordon equation in the form
\begin{equation}
	\epsilon^2 u_{tt}-\epsilon^2 u_{xx}+\sin u=0,\quad x\in\mathbb{R},\quad t\ge 0,
\label{eq:SemiClassicalsG}
\end{equation}
where $\epsilon\ll 1$ is a small positive parameter.  In this form, the sine-Gordon equation arises in the theory of crystal dislocations \cite{FrenkelKontorova1939}, superconducting Josephson junctions \cite{ScottChuReible1976}, vibrations of DNA molecules \cite{Yakushevich2004}, and quantum field theory \cite{Coleman1975}; see also the reviews \cite{BaroneEspositoMageeScott1971} and \cite{Cuevas-MaraverKevrekidisWilliams2014}.  More precisely, we are interested in a specific sequence of solutions $u=u_N(x,t)$, $N\in\mathbb{Z}_{>0}$, of \eqref{eq:SemiClassicalsG} for a specific sequence of parameters $\epsilon=\epsilon_N\to 0$ as  $N\to\infty$.  The sequence of solutions, called a \emph{fluxon condensate} in \cite{BuckinghamMiller2013}, is associated in a well-defined manner with initial data for \eqref{eq:SemiClassicalsG} of the form
\begin{equation}
	u(x,0)=0, \quad \epsilon u_t(x,0)=G(x),
\label{eq:InitialConditionFG}
\end{equation}
where $G(\cdot)$ is a given function independent of $\epsilon$.  Setting $u(x,0)=0$ makes the initial condition \eqref{eq:InitialConditionFG} of \emph{pure impulse} type; more generally one might set $u(x,0)=F(x)$ for a given function $F(\cdot)$ independent of $\epsilon$, but the class of fluxon condensates associated with pure impulse initial conditions is already extremely rich and makes available certain useful analytical shortcuts.  The association of the fluxon condensate $\{u_N(x,t)\}_{N=1}^\infty$ with the initial condition \eqref{eq:InitialConditionFG} is such that $u_N(x,t)$ satisfies \eqref{eq:InitialConditionFG} with $\epsilon=\epsilon_N$ up to terms of order $\mathcal{O}(\epsilon)$, see Proposition~\ref{prop:IC-approximation} below.  
Hence the fluxon condensate may be viewed as an approximate solution of the Cauchy problem \eqref{eq:SemiClassicalsG}--\eqref{eq:InitialConditionFG}.  There are functions $G(\cdot)$  for which the approximation is known to be exact; see \eqref{eq:sech-IC} below.

To set the stage for our study, suppose we firstly examine the solution of the Cauchy problem \eqref{eq:SemiClassicalsG}--\eqref{eq:InitialConditionFG} for small time, taking $t=\epsilon T$ for $T$ bounded.  The initial-value problem can then be rewritten in the form
\begin{equation}
	u_{TT}+\sin(u)=\epsilon^2u_{xx},\quad
	u(x,0)=0,\quad u_T(x,0)=G(x).
\label{eq:PerturbedPendulum}
\end{equation}
This scaling suggests neglecting $\epsilon^2u_{xx}$ as a small perturbation, in which case the sine-Gordon equation reduces to an independent ordinary differential equation for each value of $x\in\mathbb{R}$, namely that describing the simple pendulum.  It is well-known that the unperturbed problem ($\epsilon=0$ in \eqref{eq:PerturbedPendulum}) is of a different character depending on the value of the total energy $\mathcal{E}(x):=\tfrac{1}{2}u_T(x,T)^2-\cos(u(x,T))\ge -1$, which is independent of $T$ for $\epsilon=0$. If $\mathcal{E}(x)<1$ (the \emph{librational case}) then the pendulum swings back and forth, while if $\mathcal{E}(x)>1$ (the \emph{rotational case}) then the pendulum rotates around its pivot point.  The borderline case $\mathcal{E}(x)=1$ characterizes the separatrix in the phase portrait and the corresponding motions are homoclinic orbits representing the nonlinear saturation of the linearized instability of the unstable vertical equilibrium configuration of the pendulum.  It turns out that when $\epsilon>0$ is small, the solutions of the sine-Gordon equation exhibit a similar dichotomy at least for a certain range of $T$, and interestingly the behavior can be different for different values of $x$ because it is possible for smooth impulse profiles $G(x)$ to give rise to librational motion for some $x$ and rotational motion for other $x$.

In the papers \cite{BuckinghamMiller2012,BuckinghamMiller2013}, the fluxon condensate was studied in detail in the case that $G(\cdot)$ is a real-valued function decaying rapidly to zero for large $|x|$ and satisfying the inequality $\|G\|_\infty>2$ (among other properties, some essential and some technical).  In this situation, the impulse profile $G$ is such that the pendulum energy $\mathcal{E}(x)$ is above the threshold for rotation for some values of $x$ and below the threshold for others.  Hence the impulse profile initiates two distinct types of wave motion for small time:  near values of $x$ where $|G(x)|>2$ one observes the generation of modulated \emph{rotational waves} (in which case the value of $u$ increases without bound, like the pivot angle of a rotating pendulum) while near values of $x$ where $|G(x)|<2$ one observes instead the generation of modulated \emph{librational waves} (in which case $u$ is locally periodic).  In both cases, the waves are \emph{superluminal}, having a local phase velocity exceeding the (unit) characteristic speed for \eqref{eq:SemiClassicalsG} in absolute value.  It was further shown in \cite{BuckinghamMiller2012} that near transitional values of $x$ where the initial data describes a transversal crossing of the pendulum separatrix with varying $x$, a certain universal (i.e., largely independent of initial conditions) wave pattern emerges in the semiclassical limit $\epsilon\to 0$.

This paper concerns the less-energetic case, in which $\|G\|_\infty<2$ or equivalently $\mathcal{E}(x)<1$ holds for all $x\in\mathbb{R}$ at the initial time.  Thus the system is initially globally below threshold for rotation in the sense of the unperturbed simple pendulum problem, and one expects modulated superluminal librational waves for all $x\in\mathbb{R}$ when $t$ is sufficiently small.   At a formal level one may appeal to Whitham modulation theory \cite{Whitham1965a,Whitham1965b} to describe the modulation, and it is well-known that the Whitham modulation equations relevant for librational, superluminal periodic solutions of \eqref{eq:SemiClassicalsG} constitute a $2\times 2$ quasilinear system of elliptic type.  Therefore one needs to assume analyticity of the initial data $G(\cdot)$ to ensure local existence of a solution, and then the solution is not generally global.  The generic breakdown mechanism is the finite-time formation of a singularity, a \emph{gradient catastrophe} of elliptic umbilic type \cite{DubrovinGravaKlein2009} occurring at a point $(x_\mathrm{gc},t_\mathrm{gc})$.
After the gradient catastrophe occurs, the formal Whitham modulation theory is no longer valid and one expects the solution of the Cauchy problem \eqref{eq:SemiClassicalsG}--\eqref{eq:InitialConditionFG} to exhibit more complicated behavior, locally near $x=x_\mathrm{gc}$, at least.  

The main thrust of this paper is a study of the asymptotic properties of fluxon condensates for the Cauchy problem \eqref{eq:SemiClassicalsG}--\eqref{eq:InitialConditionFG} with analytic impulse profile $G(\cdot)$ globally below threshold for rotation, when the space-time coordinates $(x,t)$ are suitably scaled with $\epsilon$ to be close to a point of generic gradient catastrophe for the Whitham modulation equations.  We show that the fluxon condensate locally generates a different kind of universal wave pattern than had been seen in \cite{BuckinghamMiller2012}, although it has more in common with a conjecture of Dubrovin et al. \cite{DubrovinGravaKlein2009} that was proved in the setting of a different equation in \cite{BertolaTovbis2014} and extended at the formal level to other systems in \cite{DubrovinGravaKleinMoro2015}.  Our results therefore give a second example of the type of universality first predicted in \cite{DubrovinGravaKlein2009}, and hence they provide the first rigorous evidence that universality extends beyond independence on initial data to independence on the equation of motion, as was formally argued in \cite{DubrovinGravaKleinMoro2015}.

\subsection{Fluxon condensates below the threshold for rotation}
We begin by describing the fluxon condensate associated to the Cauchy problem \eqref{eq:SemiClassicalsG}--\eqref{eq:InitialConditionFG} in more detail.  The sine-Gordon equation in the form \eqref{eq:SemiClassicalsG} is well-known to be a completely integrable partial differential equation in the sense that it is the compatibility condition for the Lax pair 
\begin{equation}
	\begin{split}
		& 4\ii \epsilon \mathbf{v}_x=
			\begin{bmatrix}
			\displaystyle 4E(w)-\frac{\ii}{\sqrt{-w}}(1-\cos(u)) & \displaystyle \frac{\ii}{\sqrt{-w}}\sin(u)-\ii\epsilon(u_x+u_t)\\
			\displaystyle \frac{\ii}{\sqrt{-w}}\sin(u)+\ii\epsilon(u_x+u_t) & \displaystyle -4E(w)+\frac{\ii}{\sqrt{-w}}(1-\cos(u))\end{bmatrix} \mathbf{v}, \\
		& 4\ii \epsilon \mathbf{v}_t=
		 \begin{bmatrix}
			\displaystyle 4D(w)+\frac{\ii}{\sqrt{-w}}(1-\cos(u)) & \displaystyle -\frac{\ii}{\sqrt{-w}}\sin(u)-\ii\epsilon(u_x+u_t)\\
			\displaystyle -\frac{\ii}{\sqrt{-w}}\sin(u)+\ii\epsilon(u_x+u_t) & \displaystyle -4D(w)-\frac{\ii}{\sqrt{-w}}(1-\cos(u))
		\end{bmatrix} \mathbf{v},
	\end{split}
	\label{eq:wLaxPair}
\end{equation}
where
\begin{equation}
	E(w):=\frac{\ii}{4}\left[\sqrt{-w}+\frac{1}{\sqrt{-w}}\right]\quad\text{and}\quad
	D(w):=\frac{\ii}{4}\left[\sqrt{-w}-\frac{1}{\sqrt{-w}}\right],\quad |\arg(-w)|<\pi.
\label{eq:E-and-D-define}
\end{equation}
Here $w\in\mathbb{C}\setminus\mathbb{R}_+$ is a spectral parameter.
This representation is due to Faddeev, Takhtajan, and Zakharov \cite{FaddeevTakhtajanZakharov1974}.  For further information about the use of this Lax pair in studying the Cauchy problem \eqref{eq:SemiClassicalsG}--\eqref{eq:InitialConditionFG}, see \cite{Kaup1975} and \cite[Appendix A]{BuckinghamMiller2008}.  An equivalent form of the sine-Gordon equation written in characteristic or ``light-cone'' coordinates and important in differential geometry \cite{Bour1862} was earlier given a Lax pair formulation by Ablowitz, Kaup, Newell, and Segur \cite{AblowitzKaupNewellSegur1973}, but this is not as relevant for the Cauchy problem \eqref{eq:SemiClassicalsG}--\eqref{eq:InitialConditionFG} which is formulated instead in ``laboratory'' coordinates.  Note that if $u(x,t)$ and $u_t(x,t)$ are substituted from the pure-impulse initial condition \eqref{eq:InitialConditionFG} into the first equation of the Lax pair \eqref{eq:wLaxPair} for $t=0$, one sees that
\begin{equation}
\epsilon \mathbf{v}_x=\begin{bmatrix}
-\ii \lambda & \psi(x) \\ -\psi(x)^* & \ii \lambda
\end{bmatrix} \mathbf{v}, \qquad \psi(x):=-\frac{1}{4}G(x), \quad \lambda:=E(w),\quad t=0,
\label{eq:Zakharov-Shabat}
\end{equation}
and therefore for initial conditions of the form \eqref{eq:InitialConditionFG} the direct scattering problem is reduced to one of Zakharov-Shabat type \cite{ZakharovShabat1971} for a real-valued potential $\psi:\mathbb{R}\to\mathbb{R}$.  The first step in the solution of the Cauchy problem \eqref{eq:SemiClassicalsG}--\eqref{eq:InitialConditionFG} is to compute relevant scattering data for the simplified direct scattering problem \eqref{eq:Zakharov-Shabat}.

We take on the following basic assumption on $G$.
\begin{assumption}
\label{assumption:G}
The impulse profile $G:\mathbb{R}\to\mathbb{R}$ is Schwartz class, even (i.e., $G(-x)=G(x)$), and monotone increasing for $x>0$.  
\end{assumption}
Each such function $G$ takes a negative minimum value precisely at $x=0$.  Given an impulse profile satisfying Assumption~\ref{assumption:G}, we define a related function on an interval of the positive imaginary axis as follows:
\begin{equation}
\Psi(\lambda):=\frac{1}{4} \int_{x_-(\lambda)}^{x_+(\lambda)} \sqrt{G(s)^2+ 16 \lambda^2}\,\dd s, \quad 0<-\ii\lambda<\max_{x\in\mathbb{R}}\left(-\frac{1}{4}G(x)\right),
\label{eq:Psi_Initial_Condition}
\end{equation}
where $x_-(\lambda)<x_+(\lambda)$ are the two opposite real roots of $G(s)^2+16 \lambda^2$. The function $\Psi(\lambda)$ appears naturally (playing the role of a \emph{phase integral}) in the analysis of  \eqref{eq:Zakharov-Shabat} for small $\epsilon$ via the WKB method.  Indeed, if $\epsilon$ is taken from the discrete sequence
\begin{equation}
\epsilon=\epsilon_N:=-\frac{1}{4\pi N}\int_\mathbb{R}G(x)\,\dd x>0, \quad N\in\mathbb{Z}_{>0},
\label{eq:epsilon-N}
\end{equation}
then the WKB method predicts that the reflection coefficient defined for $\lambda\in\mathbb{R}$ from \eqref{eq:Zakharov-Shabat} is negligible, and that the eigenvalues\footnote{For the type of initial data under consideration it is known that the eigenvalues are purely imaginary numbers in complex-conjugate pairs lying in the indicated imaginary interval of definition of $\Psi(\lambda)$ and its Schwarz reflection, see \cite{KlausShaw2002}.  Moreover, all complex eigenvalues are simple, and for the values of $\epsilon=\epsilon_N$ defined in \eqref{eq:epsilon-N} $\lambda=0$ is not an eigenvalue (spectral singularity) and there are exactly $2N$ eigenvalues in purely imaginary conjugate pairs.} in the positive imaginary interval are well-approximated by points $\{\lambda_k\}_{k=0}^{N-1}$ in the same interval defined by the Bohr-Sommerfeld quantization rule:
\begin{equation} \label{eq:BohrSommerfeld}
\Psi(\lambda_k)=\pi \epsilon \left(k+\frac{1}{2} \right), \quad k=0,1,2,\dots, N-1,\quad\epsilon=\epsilon_N.
\end{equation}
We refer to the purely positive imaginary points $\{\lambda_k\}_{k=0}^{N-1}$ defined from a suitable impulse profile $G(\cdot)$ and a value of $N\in\mathbb{Z}>0$ as \emph{approximate eigenvalues} for \eqref{eq:Zakharov-Shabat} in the upper half-plane.  Whenever $\lambda$ is a positive imaginary eigenvalue 
of \eqref{eq:Zakharov-Shabat}, the unique solutions $\mathbf{v}=\mathbf{v}_\pm(x;\lambda)$ determined by exponentially decaying asymptotics as $x\to\pm\infty$ via 
\begin{equation}
\begin{split}
\mathbf{v}_-(x;\lambda)&=\ee^{-\ii\lambda x/\epsilon}\begin{bmatrix}1+o(1) \\ o(1)\end{bmatrix},\quad x\to -\infty\\
\mathbf{v}_+(x;\lambda)&=\ee^{\ii\lambda x/\epsilon}\begin{bmatrix}o(1)\\1+o(1)\end{bmatrix},\quad x\to +\infty
\end{split}
\end{equation}
are necessarily proportional, i.e., there is a constant $\gamma\neq 0$ associated with the eigenvalue $\lambda$ such that $\mathbf{v}_-(x;\lambda)=\gamma\mathbf{v}_+(x;\lambda)$.  For real and even $G(\cdot)$ it is easy to see that $\gamma=\pm 1$, and WKB theory predicts a correlation with the index $k$ in \eqref{eq:BohrSommerfeld} in the form $\gamma=\gamma_k:=(-1)^{k+1}$ for the approximate eigenvalue $\lambda=\lambda_k$, $k=0,\dots,N-1$.  This is sufficient information to specify the fluxon condensate for $G$ as follows.
\begin{definition}[Fluxon condensate associated with $G$]
Let $G:\mathbb{R}\to\mathbb{R}$ be a function satisfying Assumption~\ref{assumption:G}.  The \emph{fluxon condensate} for $G$ is the sequence of functions $\{u_N(x,t)\}_{N=1}^\infty$ such that $u(x,t)=u_N(x,t)$ is the exact solution of the sine-Gordon equation in the form \eqref{eq:SemiClassicalsG} with $\epsilon=\epsilon_N$ given by \eqref{eq:epsilon-N} that is a reflectionless potential (i.e. pure multi-soliton solution) constructed from the discrete eigenvalues $w_k$ and $w_k^*$ that are preimages under $\lambda=E(w)$ of the approximate eigenvalues $\lambda_k$ defined by \eqref{eq:BohrSommerfeld} and the connection coefficients $\gamma_k:=(-1)^{k+1}$, $k=0,\dots,N-1$.
\label{def:fluxon-condensate}
\end{definition}

Definition~\ref{def:fluxon-condensate} will be clarified further in Section~\ref{sec:RHP} below, where we properly define $u_N(x,t)$ in terms of the solution of a (discrete) Riemann--Hilbert problem.  If $G(0)=\min_{x\in\mathbb{R}}G(x)<-2$, then some of the preimages $w_k$ lie on the unit circle in conjugate pairs and others lie on the negative real axis in pairs symmetric with respect to reflection through the unit circle.  This means that the fluxon condensate contains both breathers (corresponding to the conjugate pairs) and counterpropagating kinks/antikinks (corresponding to the real pairs symmetric in reflection through the circle).  This is the case that is primarily studied in \cite{BuckinghamMiller2012,BuckinghamMiller2013}.  However, in this work we assume instead throughout that $-2<G(0)=\min_{x\in\mathbb{R}}G(x)<0$; in addition to ensuring that the unperturbed simple pendulum problem in \eqref{eq:PerturbedPendulum} is globally (in $x$, at $t=0$) below the threshold for rotation this means that the fluxon condensate is a nonlinear superposition of breathers only.  Indeed, the preimages under $\lambda=E(w)$ of the approximate eigenvalues with their complex conjugates fill out as $N$ increases an arc of the unit circle passing through $w=1$ and having endpoints $w=\ee^{\pm\ii\mu}$ for some $\mu\in (0,\pi)$.

To study the fluxon condensate $\{u_N(x,t)\}_{N=1}^\infty$ from the point of view of asymptotic analysis as $N\to\infty$ equivalent to the semiclassical limit $\epsilon\to 0$, we will require some specific properties of the phase integral function $\Psi$ defined in \eqref{eq:Psi_Initial_Condition}.
\begin{assumption}
The function $\Psi$ is a strictly monotone decreasing real-valued function of $v=-\ii\lambda\in (0,-\tfrac{1}{4}G(0))$ that admits analytic continuation to an open neighborhood of its initial domain $\lambda\in\mathbb{C}: 0<-\ii\lambda<-\tfrac{1}{4}G(0)$ which satisfies $\Psi(-\tfrac{1}{4}\ii G(0))=0$,  $-\ii\Psi'(-\tfrac{1}{4}\ii G(0))>0$, $\Psi(0)=-\tfrac{1}{4}\int_\mathbb{R}G(x)\,\dd x>0$, and $-\ii\Psi'(0)>0$, and such that $\Psi(\lambda)-\Psi(0)-\lambda\Psi'(0)$ is the germ of an even analytic function at $\lambda=0$.
\label{assumption:Psi}
\end{assumption}
In \cite[Section 1.1]{BuckinghamMiller2013} one can find conditions on $G$, beyond Assumption~\ref{assumption:G} and in particular including real-analyticity for all $x\in\mathbb{R}$, sufficient to guarantee that Assumption~\ref{assumption:Psi} holds on some small neighborhood of the initial domain for $\Psi$.  This in turn is enough to prove the following.
\begin{proposition}
\label{prop:IC-approximation}
Let $G$ be an impulse profile $G$ satisfying Assumption~\ref{assumption:G} and $-2<G(0)<0$, for which the phase integral $\Psi$ given by \eqref{eq:Psi_Initial_Condition} satisfies Assumption~\ref{assumption:Psi}. If $\{u_N(x,t)\}_{N=1}^\infty$ denotes the fluxon condensate for $G$, then
\begin{equation}
u_N(x,0)=\mathcal{O}(\epsilon)\quad\text{and}\quad\epsilon \frac{\partial u_N}{\partial t}(x,0)=G(x)+\mathcal{O}(\epsilon),\quad\epsilon=\epsilon_N,\quad N\to\infty
\label{eq:t-zero-formulae}
\end{equation}
where the error terms are uniform for bounded $x$.
\end{proposition}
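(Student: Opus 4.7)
The plan is to specialize the Riemann--Hilbert characterization of $u_N(x,t)$ (set up in Section~\ref{sec:RHP}) to the initial time $t=0$ and then perform a Deift--Zhou nonlinear steepest descent analysis whose leading order reproduces the initial data. Inspection of the first Lax equation in \eqref{eq:wLaxPair} at $t=0$ shows that the quantities $1-\cos u_N(x,0)$, $\sin u_N(x,0)$, and $\epsilon u_{N,t}(x,0)$ appear as entries of a coefficient in the large-$w$ expansion of the matrix solution of the RHP, so it suffices to compute this coefficient asymptotically.

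Next, I would interpolate the discrete residues---placed at the preimages under $E$ of the WKB eigenvalues $\lambda_k$, with norming constants $\gamma_k=(-1)^{k+1}$---into a continuous contour integral with density read off from the Bohr--Sommerfeld rule \eqref{eq:BohrSommerfeld}, converting the discrete RHP into a continuous one whose jump on the accumulation arc $\{e^{\ii\theta}:|\theta|\le\mu\}$ carries the scalar factor $e^{\pm 2\Psi(\lambda)/\epsilon}e^{\pm 2\ii\lambda x/\epsilon}$. A $g$-function $g_0(\lambda;x)$ tailored to the $t=0$ equilibrium variational problem cancels these exponentials and reduces the analysis to an outer model parametrix together with local Airy parametrices at the two endpoints of the equilibrium band. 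Expanding the outer solution at infinity, using the normalization $\Psi(0)=\pi\epsilon_N N$ from \eqref{eq:epsilon-N} together with the evenness of $\Psi(\lambda)-\Psi(0)-\lambda\Psi'(0)$ at $\lambda=0$ guaranteed by Assumption~\ref{assumption:Psi}, returns exactly $u_N(x,0)=0$ and $\epsilon u_{N,t}(x,0)=G(x)$ at leading order, with $O(\epsilon)$ errors absorbed by the local parametrices and the small-norm error Riemann--Hilbert problem.

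The main technical obstacle is the global validity of the $g$-function construction uniformly for $x$ in compact sets: one must check that at $t=0$ the equilibrium problem has a single band supported precisely on the full accumulation arc and that the real part of the effective phase has the correct sign on the lens contours. Both points follow from Assumption~\ref{assumption:Psi}, since strict monotonicity of $\Psi$ on $(0,-\tfrac14 G(0))$ rules out spurious saddles while $-\ii\Psi'(0)>0$ together with the evenness assumption pin down the endpoint behaviour. Finally, the identification of the leading-order reconstruction with exactly $G(x)$---rather than some auxiliary profile---reduces to an Abel-type inversion of the defining relation \eqref{eq:Psi_Initial_Condition} between $\Psi$ and $G$; this is the familiar statement that the WKB eigenvalue distribution is the leading-order scattering data for the Zakharov--Shabat potential $\psi=-G/4$ featured in \eqref{eq:Zakharov-Shabat}.
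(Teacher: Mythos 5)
The paper does not supply a stand-alone proof of Proposition~\ref{prop:IC-approximation}; it appeals to \cite[Corollary 1.1]{BuckinghamMiller2013} and remarks that the below-threshold hypothesis $-2<G(0)<0$ removes the kink/antikink complications. Your sketch is the right \emph{kind} of argument (residue interpolation, $g$-function, Airy parametrices, small-norm reduction), so in broad strokes it is consistent with the cited reference and with the pre-catastrophe machinery assembled in Section~\ref{sec:steepest-descent}. There are, however, two places where the sketch glosses over points that actually carry real content.

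First, the $t=0$ configuration is degenerate in a way your proposal does not acknowledge. At $t=0$ the band endpoint is $\alpha(x,0)=\ee^{\ii\eta_\alpha(x)}$ with $\cos\eta_\alpha(x)=1-\tfrac12 G(x)^2$, so $|\alpha(x,0)|=1$, and the band $\beta\cup\beta^*$ is a sub-arc of the accumulation arc $P_\infty$ (the paper explicitly notes $|\alpha|>1$ only for $t>0$ small). Consequently the band lives in exactly the set where the poles of $\Pi_N$ accumulate, and the lens domains $\Omega_\pm$ that remove the poles cannot be kept disjoint from $\beta$. One must either argue by continuity from $t>0$ (with uniformity of all error estimates as $t\downarrow 0$), or set up the $t=0$ problem with $\beta\subset P_\infty$ directly and check that the constant-jump outer model and the Airy local models still mesh with the $T_N$/$Y_N$ factorizations at and near $w=1$. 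This is precisely the reason the paper defers to a careful companion argument rather than asserting the result from Proposition~\ref{prop:BeforeCatastrophe} at $t_0=0$.

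Second, the step ``expanding the outer solution at infinity returns exactly $u_N(x,0)=0$ and $\epsilon u_{N,t}(x,0)=G(x)$'' is not quite the right mechanism, and as stated it cannot give the second formula. The reconstruction used by the paper is through $\mathbf{H}(0)$, not the large-$w$ expansion: $\cos(\tfrac12 u_N)=H_{11}(0)$ and $\sin(\tfrac12 u_N)=H_{21}(0)$. More importantly, the Lax pair \eqref{eq:wLaxPair} encodes only the characteristic combination $u_x+u_t$ in the $w$-independent off-diagonal entries (in both Lax equations), so a large-$w$ expansion of the solution at $t=0$ yields $u_x(x,0)+u_t(x,0)$, not $u_t(x,0)$. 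Separating the two requires either an a priori bound on $u_{N,x}(x,0)$ (which does not follow from a pointwise $O(\epsilon)$ estimate on $u_N(x,0)$ without an additional differentiability/analyticity-in-$x$ argument) or, more naturally, the route the paper's framework supports: compute the uniform asymptotics of $\mathbf{H}(0;x,t)$ for $(x,t)$ in a full neighborhood of $(x,0)$, then differentiate in $t$. That route is the one that makes the Whitham initial data $\Phi(x,0)=0$, $m(x,0)=\tfrac14 G(x)^2$, and $\Phi_t(x,0)=-\omega(x,0)=\pi/(2\KK(m(x,0)))$ do the work: inserting these into \eqref{eq:cos-sin-before} gives $u_N(x,0)=O(\epsilon)$, and $\partial_t$ applied to the $\sin(\tfrac12 u_N)$ formula yields $\epsilon u_{N,t}(x,0)=-2\sqrt{m(x,0)}+O(\epsilon)=G(x)+O(\epsilon)$ (using $G<0$). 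The ``Abel-type inversion'' you invoke is a correct heuristic, but in the steepest-descent proof the inversion is effected implicitly through the $g$-function conditions $M=I=0$ at $t=0$ pinning $\alpha(x,0)$ to $G(x)$, not through any explicit Abel transform.
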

This is the ``below threshold'' analogue of \cite[Corollary 1.1]{BuckinghamMiller2013}, but its proof is simpler, lacking complications arising from the presence of kink/antikink components of the fluxon condensate which lead to nonuniformity of the limit near the transitional $x$-values studied in \cite{BuckinghamMiller2012}.  Proposition~\ref{prop:IC-approximation} is a kind of justification for replacing the solution of the Cauchy problem \eqref{eq:SemiClassicalsG}--\eqref{eq:InitialConditionFG} with the fluxon condensate $\{u_N(x,t)\}_{N=1}^\infty$, which is easier to analyze in the semiclassical limit because it is a reflectionless (pure soliton) potential for all $N$.  The step of replacing the solution of a Cauchy problem with a reflectionless approximation goes back to the analysis of Lax and Levermore \cite{LaxLevermore1979} of the small dispersion Korteweg-de Vries equation, and has since been used on many different integrable equations \cite{BuckinghamJenkinsMiller2017,BuckinghamMiller2012,BuckinghamMiller2013,ErcolaniJinLevermoreMacEvoy2003,JinLevermoreMcLaughlin1999,OrangeBook,MillerXu2011}.  In situations for which the dispersionless or Whitham modulational system is of hyperbolic type it becomes possible to propagate the accuracy at $t=0$ afforded by results such as Proposition~\ref{prop:IC-approximation} to positive time $t$.

The same assumptions needed to prove Proposition~\ref{prop:IC-approximation} allow the fluxon condensate to be studied for nonzero $t$ sufficiently small (but independent of $\epsilon$), with the result being that the elliptic Whitham modulation equations correctly describe the modulated librational waves that are generated over the whole $x$-axis.  The more precise statement is essentially \cite[Theorem 1.1]{BuckinghamMiller2013} in which the domain $S_\mathsf{L}$ is enlarged to a strip around the $x$-axis in the $(x,t)$-plane.  However, the fact that the Whitham equations have a smooth solution on this domain $S_\mathsf{L}$ means that this result on its own does not begin to capture the phenomenon of gradient catastrophe or the ensuing dynamics.  In order to extend the result to values of $t$ sufficiently large to approach a gradient catastrophe point, it is necessary to make stronger assumptions.  In particular, we will need to require the domain of analyticity in Assumption~\ref{assumption:Psi} to be sufficiently large because as $t$ increases certain contours in the complex $w$-plane (to be explained fully in Section~\ref{sec:steepest-descent}) begin to move away from their initial positions, and the domain of analyticity of $\Psi$, when pulled back to the $w$-plane under $\lambda=E(w)$, should be large enough to accommodate this motion.  In practice we just assume in this paper that any singularities of $\Psi$ are sufficiently distant as to provide no obstruction to our analysis.  Note that there exist impulse profiles $G$ satisfying Assumption~\ref{assumption:G} for which $\Psi$ admits analytic continuation as an entire function; in particular for any $A>0$,
\begin{equation}
G(x)=-4A\,\mathrm{sech}(x)\quad\implies\quad\Psi(\lambda)=\ii\pi\lambda+\pi A.
\label{eq:sech-IC}
\end{equation}
We also note that, as a consequence of a calculation of Satsuma and Yajima \cite{SatsumaYajima1974},  the error terms in \eqref{eq:t-zero-formulae} vanish identically for the fluxon condensate associated with the impulse profile in \eqref{eq:sech-IC}, i.e., the fluxon condensate provides the exact solution of the Cauchy problem \eqref{eq:SemiClassicalsG}--\eqref{eq:InitialConditionFG} when $\epsilon=\epsilon_N$.
In addition to sufficient analyticity of $\Psi$ given the point $(x,t)$ of interest, we require certain other technical properties that are difficult to explain at this juncture, but for which we provide a concrete definition in Section~\ref{sec:modulated-librational-wave-region} below.
\begin{proposition}
\label{prop:BeforeCatastrophe}
Suppose that $G$ is an impulse profile satisfying Assumption~\ref{assumption:G} and $-2<G(0)<0$, and that a point $(x_0,t_0)\in\mathbb{R}^2$ is given.  Suppose also that the associated phase integral $\Psi$ satisfies Assumption~\ref{assumption:Psi} on a large enough domain of the $\lambda$-plane given $(x_0,t_0)$ and 
that $(x_0,t_0)$ belongs to the modulated librational wave region (see Definition~\ref{def:MLW} below).  Then there exists a neighborhood $N\subset\mathbb{R}^2$ of $(x_0,t_0)$ and well-defined differentiable functions $n_\mathrm{p}:N\to (-1,1)$, $\mathcal{E}:N\to (-1,1)$, and $\Phi:N\to\mathbb{R}$ such that
\begin{itemize}
\item $n_\mathrm{p}(x,t)$ and $\mathcal{E}(x,t)$ satisfy the elliptic Whitham modulation equations in the form
\begin{equation}
\frac{\partial}{\partial t}\begin{bmatrix}n_\mathrm{p}\\\mathcal{E}\end{bmatrix}+
\frac{1}{\mathcal{N}}\begin{bmatrix}n_\mathrm{p}[JJ''+(J')^2] & -(1-n_\mathrm{p}^2)^2J'J''\\
JJ' &  n_\mathrm{p}[JJ''+(J')^2]\end{bmatrix}\frac{\partial}{\partial x}\begin{bmatrix}n_\mathrm{p}\\\mathcal{E}\end{bmatrix}=\mathbf{0}
\label{eq:Whitham-intro}
\end{equation}
with $\mathcal{N}:=n_\mathrm{p}^2JJ''+(J')^2$ and
\begin{equation}
J=J(\mathcal{E}):=\frac{8}{\pi}\left(\EE(m)+(m-1)\KK(m)\right),\quad m:=\frac{1}{2}(1+\mathcal{E}),
\label{eq:J-func-define}
\end{equation}
\item
the partial derivatives of $\Phi(x,t)$ are related to $n_\mathrm{p}(x,t)$ and $\mathcal{E}(x,t)$ by
\begin{equation}
\frac{\partial\Phi}{\partial x}=k(x,t)\quad\text{and}\quad\frac{\partial\Phi}{\partial t}=-\omega(x,t)
\label{eq:Phi-partials}
\end{equation}
where with $m(x,t):=\tfrac{1}{2}(1+\mathcal{E}(x,t))\in (0,1)$,
\begin{equation}
\omega(x,t):=-\frac{\pi}{2\KK(m(x,t))}\frac{1}{\sqrt{1-n_\mathrm{p}(x,t)^2}}\quad\text{and}\quad k(x,t):=\omega(x,t)n_\mathrm{p}(x,t),
\label{eq:omega-k}
\end{equation}
\item
the fluxon condensate obeys the following asymptotic formul\ae\ in the limit $N\to\infty$ with $\epsilon=\epsilon_N$ given by \eqref{eq:epsilon-N}:
\begin{equation}
\begin{split}
\cos(\tfrac{1}{2}u_N(x,t))&=\mathrm{dn}\left(\frac{2\Phi(x,t)\KK(m(x,t))}{\pi\epsilon};m(x,t)\right)+\mathcal{O}(\epsilon)\\
\sin(\tfrac{1}{2}u_N(x,t))&=-\sqrt{m(x,t)}\,\mathrm{sn}\left(\frac{2\Phi(x,t)\KK(m(x,t))}{\pi\epsilon};m(x,t)\right)+\mathcal{O}(\epsilon),
\end{split}
\label{eq:cos-sin-before}
\end{equation}
in which the error terms are uniform over $(x,t)\in N$.
\end{itemize}
\end{proposition}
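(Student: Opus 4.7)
The strategy is to apply the Deift--Zhou nonlinear steepest descent method to the discrete Riemann--Hilbert problem encoding the fluxon condensate $u_N(x,t)$, set up in Section~\ref{sec:RHP} in terms of residue conditions at the preimages $w_k$, $w_k^*$ of the Bohr--Sommerfeld points. First I would interpolate the discrete poles by a continuous jump: multiplying the unknown matrix by a suitable interpolating function $Y(w;\epsilon)$ whose phase is built out of $\Psi$ and whose boundary values on the spectral arc $\Gamma$ reproduce the connection coefficients $\gamma_k = (-1)^{k+1}$ converts the problem into a continuous RHP with jumps supported on a contour bracketing $\Gamma$. The error incurred by replacing Riemann sums at the Bohr--Sommerfeld points by integrals is manifestly $\mathcal{O}(\epsilon)$ by the standard interpolation estimate.

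Next I would introduce a genus-one $g$-function $g(w;x,t)$, analytic off a band $I(x,t) \subset \Gamma$, whose endpoints together with two real periods are determined by a transcendental system of moment/endpoint conditions ensuring that the combined phase (schematically $2g(w) - \ii x E(w) - \ii t D(w)$ plus an interpolation shift built from $\Psi$) is purely oscillatory of prescribed constant modulus on $I$ and has strictly negative real part on every other contour to which the RHP will be deformed. The two free real parameters in this construction are identified with $n_\mathrm{p}(x,t)$ and $\mathcal{E}(x,t)$, and implicit differentiation of the endpoint equations with respect to $x$ and $t$, using the standard period-matrix identities for abelian differentials of the first and second kind on the underlying genus-one Riemann surface, yields the Whitham system \eqref{eq:Whitham-intro} as a compatibility condition; the phase $\Phi(x,t)$ emerges as the normalization constant of $g$ and the relations \eqref{eq:Phi-partials}--\eqref{eq:omega-k} follow from the same calculation.

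With the $g$-function in hand and lenses opened across $I$ so that the lens jumps are exponentially close to the identity, the problem reduces to an outer parametrix carrying the band jump plus local Airy parametrices at each endpoint of $I$. The outer parametrix is constructed explicitly in terms of the Riemann theta function on the elliptic spectral curve; extracting $\cos(\tfrac{1}{2} u_N)$ and $\sin(\tfrac{1}{2} u_N)$ from its entries specializes the theta quantities to $\mathrm{dn}$ and $\sqrt{m}\,\mathrm{sn}$ of the argument $2\Phi\KK(m)/(\pi\epsilon)$ with modulus $m = \tfrac{1}{2}(1+\mathcal{E})$, yielding \eqref{eq:cos-sin-before} at leading order, after which the standard small-norm argument promotes this to uniform $\mathcal{O}(\epsilon)$ error on a neighborhood $N$ of $(x_0,t_0)$. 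The main obstacle, and the real technical content of the proof, is the construction of the $g$-function on a full neighborhood $N$: one must show that the moment/endpoint system admits a smooth solution with band endpoints staying inside the domain of analyticity of $\Psi$ guaranteed by Assumption~\ref{assumption:Psi}, and --- most delicately --- that the real part of the phase function satisfies the required global sign inequalities on every deformed contour. It is precisely this bundle of conditions that will be codified in Definition~\ref{def:MLW} of the modulated librational wave region, so that for $(x_0,t_0)$ in this region the smoothness of $n_\mathrm{p}$, $\mathcal{E}$, and $\Phi$ follows from the implicit function theorem applied to the endpoint system.
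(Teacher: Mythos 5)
Your proposal follows essentially the same route as the paper's proof: discrete-to-continuous interpolation of the residue conditions, a genus-one $g$-function whose endpoint/moment equations (the paper's conditions $M=0$, $I=0$) determine the band endpoints $\alpha,\alpha^*$ and hence $n_\mathrm{p},\mathcal{E}$, derivation of the Whitham system and of \eqref{eq:Phi-partials}--\eqref{eq:omega-k} by cross-differentiation of those equations, a theta-function outer parametrix, Airy local parametrices at the band endpoints, and a small-norm argument to close the error at $\mathcal{O}(\epsilon)$. The paper handles the first two bullet points by citation to \cite{BuckinghamMiller2013} rather than re-deriving them, and it also devotes considerable care to the jump structure in an annulus around $w=1$ (where the $Y_N$ and $T_N$ factors enter and the Bohr--Sommerfeld quantization is used to show near-triviality of the jumps), which your sketch does not mention; but these are matters of detail rather than of strategy, and the overall plan matches the paper's argument.
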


Here $\mathbb{K}(m)$ and $\EE(m)$ denote the complete elliptic integrals of the first and second kinds respectively:
\begin{equation}
	\mathbb{K}(m):=\int_0^1 \frac{\dd s}{\sqrt{(1-s^2)(1-ms^2)}},\quad \EE(m):=\int_0^1 \sqrt{\frac{1-ms^2}{1-s^2}} \dd s,\quad
0<m<1,
\label{eq:KE}
\end{equation}
\cite[Ch.\@ 19]{NIST:DLMF}, while $\mathrm{sn}(z;m)$ and $\mathrm{dn}(z;m)$ (and in Theorem~\ref{thm:AwayFromPoles} below, $\mathrm{cn}(z;m)$) denote Jacobi elliptic functions, which are described in detail also in \cite[Ch.\@ 22]{NIST:DLMF} in terms of a different notation for the parameter, using $k\in (0,1)$ instead of $m=k^2\in (0,1)$.  In particular, the leading terms in \eqref{eq:cos-sin-before} are $2\pi\epsilon$-periodic in $\Phi(x,t)$ (and for $\cos(\tfrac{1}{2}u_N(x,t))$ the fundamental period is half as big), so that $\Phi(x,t)$ has the interpretation of a phase variable.  
Note also that in \eqref{eq:Phi-partials}, $k(x,t)$ has the interpretation of a local wavenumber, $\omega(x,t)$ has the interpretation of a local frequency, and then from \eqref{eq:omega-k}, $n_\mathrm{p}(x,t)$ has the interpretation of a local reciprocal phase velocity $k(x,t)/\omega(x,t)$.  These observations imply that  the leading terms in the formul\ae\ \eqref{eq:cos-sin-before} describe
a slowly-modulated and rapidly oscillating superluminal librational wave of local energy $\mathcal{E}(x,t)$; indeed this is the hypothesis on which the formal Whitham modulation theory leading to the elliptic system \eqref{eq:Whitham-intro} is based.

If $t_0>0$ is sufficiently small, then $(x_0,t_0)$ is automatically in the modulated librational wave region, and the proof of Proposition~\ref{prop:BeforeCatastrophe} follows along the lines of that of \cite[Theorem 1.1]{BuckinghamMiller2013} but again with some simplifications due to the given lower bound on $G(0)$.  In this situation we also have the initial conditions 
\begin{equation}
n_\mathrm{p}(x,0)=0 \quad\text{and}\quad \mathcal{E}(x,0)=\tfrac{1}{2}G(x)^2-1
\label{eq:Whitham-ICs}
\end{equation}
and also $\Phi(x,0)=0$.  For larger $t_0$ the proof relies essentially on all of the hypotheses, and it also uses a modification of the type of analysis described in \cite{BuckinghamMiller2013}; it will be given in Section~\ref{sec:BeforeCatastropheProof} below.

\subsection{Main results and discussion}

The boundary of the modulated librational wave region may contain one or more points of simple gradient catastrophe, a notion that will be properly defined in Section~\ref{sec:steepest-descent} below.  Given the even symmetry of impulse profiles $G(\cdot)$ satisfying Assumption~\ref{assumption:G}, the Whitham equations \eqref{eq:Whitham-intro} and initial conditions \eqref{eq:Whitham-ICs} imply that $n_\mathrm{p}(x,t)$ and $\mathcal{E}(x,t)$ are respectively odd and even functions of $x$ for each $t$ for which they are defined.  Therefore, the gradient catastrophe points are symmetric about $x=0$. Working from known examples and numerical calculations we assume for simplicity that there is a simple gradient catastrophe point on the $t$-axis, at a point $(x,t)=(0,t_\mathrm{gc})$, $t_\mathrm{gc}>0$, and that the domain $(x,t)\in (-\delta,\delta)\times [0,t_\mathrm{gc})$ is contained in the modulated librational wave region for sufficiently small $\delta>0$.  Our main results concern the asymptotic behavior of the fluxon condensate in different scaling limits in which $(x,t)\to (0,t_\mathrm{gc})$ at a suitable rate while $N\to\infty$.  As the catastrophe in question is one in which derivatives of $n_\mathrm{p}(x,t)$ and $\mathcal{E}(x,t)$ blow up as $(x,t)\to (0,t_\mathrm{gc})$ while their values have definite limits, the following values
are well-defined:
\begin{equation}
m_\mathrm{gc}:=m(0,t_\mathrm{gc}),\quad \omega_\mathrm{gc}:=\omega(0,t_\mathrm{gc}),\quad \Phi_\mathrm{gc}:=\Phi(0,t_\mathrm{gc}).
\label{eq:Whitham-values-at-gc}
\end{equation}
Note also that, according to \eqref{eq:Phi-partials}, $\Phi_t(0,t_\mathrm{gc})=-\omega(0,t_\mathrm{gc})=-\omega_\mathrm{gc}$ and that
$\Phi_x(0,t_\mathrm{gc})=k(0,t_\mathrm{gc})=0$ because $n_\mathrm{p}(0,t)=0$ for $0<t<t_\mathrm{gc}$ by odd symmetry and hence also $k(0,t)=0$.

Near the catastrophe point $(0,t_\mathrm{gc})$, the $\mathcal{O}(\epsilon)$ error estimates in \eqref{eq:cos-sin-before} are not uniform. Our first main result shows that they are larger, of size $\epsilon^{1/5}$, and that the leading term of the error can be expressed in terms of the \emph{real tritronqu\'ee solution} $y(\tau)$ of the first Painlev\'e equation
\begin{equation}
\frac{\dd^2y}{\dd\tau^2}=6y^2+\tau.
\label{eq:PI-intro}
\end{equation}
By definition \cite{Kapaev2004}, $y(\tau)$ is the unique solution of \eqref{eq:PI-intro} with the property that 
\begin{equation}
y(\tau)=-\left(-\frac{1}{6}\tau\right)^{1/2}(1+o(1)),\quad\tau\to\infty,\quad |\arg(-\tau)|\le \frac{4}{5}\pi-\delta
\label{eq:y-asymp}
\end{equation}
holds for every $\delta>0$.  The paper \cite{DubrovinGravaKlein2009} raised the conjecture that $y(\tau)$ is analytic in the sector $|\arg(-\tau)|<\frac{4}{5}\pi$ without the asymptotic condition $\tau\to\infty$; this conjecture was subsequently proven by Costin, Huang, and Tanveer \cite{CostinHuangTanveer2014}.  The Painlev\'e-I \emph{Hamiltonian} associated with $y(\tau)$ is the related function
\begin{equation}
h(\tau):=-\frac{1}{2}y'(\tau)^2+2y(\tau)^3+\tau y(\tau).
\label{eq:Hamiltonian-intro}
\end{equation}
Obviously, $h(\tau)$ is analytic in the same sector of the complex plane where $y(\tau)$ is, and whereas one can show that all poles of $y(\tau)$ in the complementary sector are double, all corresponding poles of $h(\tau)$ are simple, with residue $-1$.
Also, both $y(\tau)$ and its Hamiltonian $h(\tau)$ are Schwarz-symmetric functions:  $y(\tau^*)=y(\tau)^*$ and $h(\tau^*)=h(\tau)^*$.

\begin{theorem}[First correction near the gradient catastrophe point]
Suppose that $G$ is an impulse profile satisfying Assumption~\ref{assumption:G} and $-2<G(0)<0$ for which the phase integral $\Psi$ satisfies Assumption~\ref{assumption:Psi} on a sufficiently large domain of the $\lambda$-plane.  Let $(0,t_\mathrm{gc})$ be a point of simple gradient catastrophe, on the boundary of the modulated librational wave domain $(x,t)\in(-\delta,\delta)\times [0,t_\mathrm{gc})$ at which the limits \eqref{eq:Whitham-values-at-gc} are defined.
Then there is a positive number $\sigma>0$ depending on $G(\cdot)$ and related constants defined by
\begin{equation}
M:=\frac{2}{\sigma}\left(\frac{m_\mathrm{gc}}{1-m_\mathrm{gc}}\right)^{1/4}>0\quad\text{and}\quad
a:=-\frac{(m_\mathrm{gc}(1-m_\mathrm{gc}))^{1/4}}{2\sigma}<0
\end{equation}
as well as $b:=-a\rho(m_\mathrm{gc})>0$ in which $\rho(m_\mathrm{gc})>0$ is defined by
\begin{equation}
\rho(m)=\frac{\EE(m)}{\KK(m)\sqrt{m(1-m)}}-\sqrt{\frac{1-m}{m}},\quad 0<m<1,
\label{eq:rho-func-define}
\end{equation}
such that the fluxon condensate $u_N(x,t)$ associated with $G$ obeys the following asymptotic formul\ae\ in the limit $N\to\infty$ with $\epsilon=\epsilon_N$ given by \eqref{eq:epsilon-N}:
\begin{equation}
\begin{split}
\cos(\tfrac{1}{2}u_N(x,t))&=\dot{C}(t)
-M\epsilon^{1/5}\mathrm{Re}\left\{h\left(\frac{\ii ax+b(t-t_\mathrm{gc})}{\epsilon^{4/5}}\right)\right\}
\mathrm{cn}\left(\frac{2(\Phi_\mathrm{gc}-\omega_\mathrm{gc}(t-t_\mathrm{gc}))\KK(m_\mathrm{gc})}{\pi\epsilon};m_\mathrm{gc}\right)\dot{S}(t)\\
&\qquad\qquad\qquad{}+\mathcal{O}(\epsilon^{2/5})\\
\sin(\tfrac{1}{2}u_N(x,t))&=\dot{S}(t)
+M\epsilon^{1/5}\mathrm{Re}\left\{h\left(\frac{\ii ax+b(t-t_\mathrm{gc})}{\epsilon^{4/5}}\right)\right\}
\mathrm{cn}\left(\frac{2(\Phi_\mathrm{gc}-\omega_\mathrm{gc}(t-t_\mathrm{gc}))\KK(m_\mathrm{gc})}{\pi\epsilon};m_\mathrm{gc}\right)\dot{C}(t)\\
&\qquad\qquad\qquad{}+\mathcal{O}(\epsilon^{2/5})
\end{split}
\label{eq:AwayFromPolesSolution}
\end{equation}
where $h(\tau)$ is the Hamiltonian \eqref{eq:Hamiltonian-intro} for the real tritronqu\'ee solution $y(\tau)$ of the Painlev\'e-I equation \eqref{eq:PI-intro}, where
\begin{equation}
\dot{C}(t):=\mathrm{dn}\left(\frac{2(\Phi_\mathrm{gc}-\omega_\mathrm{gc}(t-t_\mathrm{gc}))\KK(m_\mathrm{gc})}{\pi\epsilon};m_\mathrm{gc}\right)
\label{eq:C-dot-define}
\end{equation}
and
\begin{equation}
\dot{S}(t):=-\sqrt{m_\mathrm{gc}}\,\mathrm{sn}\left(\frac{2(\Phi_\mathrm{gc}-\omega_\mathrm{gc}(t-t_\mathrm{gc}))\KK(m_\mathrm{gc})}{\pi\epsilon};m_\mathrm{gc}\right),
\label{eq:S-dot-define}
\end{equation}
and where the error terms are uniform for
$x^2+(t-t_\mathrm{gc})^2=\mathcal{O}(\epsilon^{8/5})$ and $y(\epsilon^{-4/5}(\ii ax+b(t-t_\mathrm{gc})))$ bounded.
\label{thm:AwayFromPoles}
\end{theorem}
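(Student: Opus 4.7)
The plan is to extend the rigorous Deift--Zhou nonlinear steepest descent analysis of \cite{BuckinghamMiller2013} for the modulated librational wave region to a scaling window of size $\epsilon^{4/5}$ around the catastrophe point $(0,t_\mathrm{gc})$. I would begin from the discrete Riemann--Hilbert problem for the fluxon condensate (set up in Section~\ref{sec:RHP}), interpolate the poles at the approximate eigenvalues $\lambda_k$ onto a continuous contour, and introduce the same $g$-function $g(w;x,t)$ that controls the pre-catastrophe asymptotics: its band/gap structure is determined by $(n_\mathrm{p}(x,t),\mathcal{E}(x,t))$ so that away from the catastrophe the outer parametrix (genus one, built from Jacobi elliptic functions) yields Proposition~\ref{prop:BeforeCatastrophe}.

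At a simple gradient catastrophe, two endpoints of a band of $g$ coalesce at a point $w_\mathrm{gc}$, and the outer parametrix becomes singular there. On a disk of spectral radius $\epsilon^{4/5}$ around $w_\mathrm{gc}$ I would construct a local parametrix using the Painlev\'e--I model Riemann--Hilbert problem whose distinguished solution is the tritronqu\'ee $y(\tau)$; pole-freeness of $y$ on $|\arg(-\tau)|<\tfrac{4}{5}\pi$, due to Costin--Huang--Tanveer \cite{CostinHuangTanveer2014}, guarantees solvability for sufficiently small $\epsilon$. The Painlev\'e variable is read off from the cubic Taylor expansion of $g(\,\cdot\,;x,t)-g(\,\cdot\,;0,t_\mathrm{gc})$ about $w_\mathrm{gc}$: the cubic-in-$w$ coefficient produces $\sigma$, and linearizing the $(x,t)$-dependence of the coalescing endpoints yields the affine map $\tau=\epsilon^{-4/5}(\ii a x + b(t-t_\mathrm{gc}))$ with the explicit $a$ and $b$ of the theorem. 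The identity $b=-a\rho(m_\mathrm{gc})$ follows from the explicit form of $\rho$ in terms of complete elliptic integrals and the Whitham equations \eqref{eq:Whitham-intro} linearized at $(0,t_\mathrm{gc})$.

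The first correction is then extracted from the small-norm problem via a contour integral along the boundary of the disk. This yields a term of size $\epsilon^{1/5}$ whose coefficient is the sub-leading part of the Painlev\'e parametrix at spectral infinity. Crucially, the sine-Gordon reconstruction of $\cos(\tfrac{1}{2}u_N)$ and $\sin(\tfrac{1}{2}u_N)$ from the RHP solution selects the combination of expansion coefficients whose Painlev\'e content is $h(\tau)$ rather than $y(\tau)$; this is compatible with the identity $h'(\tau)=y(\tau)$ obtained by differentiating \eqref{eq:Hamiltonian-intro} and invoking \eqref{eq:PI-intro}, and Schwarz symmetry $h(\tau^*)=h(\tau)^*$ produces the real part seen in \eqref{eq:AwayFromPolesSolution}. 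Evaluating the outer parametrix on the matching circle and expanding in $t-t_\mathrm{gc}$ supplies the carriers $\dot{C}(t),\dot{S}(t)$ and the modulating factor $\mathrm{cn}(\,\cdots\,;m_\mathrm{gc})$; the Jacobian of the rescalings produces the overall constant $M$.

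The main obstacle is the identification in the third paragraph above: proving that the particular sub-leading coefficient of the Painlev\'e parametrix selected by the sine-Gordon recovery formul\ae\ is $h(\tau)$, which requires keeping at least two sub-leading terms in the $\tau=\infty$ expansion of the local parametrix and verifying that the leading term cancels precisely against the outer parametrix on the matching circle, leaving $h(\tau)$ as the surviving contribution. A secondary difficulty is uniform control over the $(x,t)$-disk of radius $\epsilon^{4/5}$ subject to boundedness of $y(\tau)$; the bound degrades near the double poles of the tritronqu\'ee and one must choose the radius of the matching circle so that, for the admissible $(x,t)$, the point $\tau$ stays in a compact pole-free sub-sector where Costin--Huang--Tanveer bounds apply. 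The remaining ingredients---Schwarz-symmetric jump factorizations, the genus--one outer parametrix from theta functions, and the small-norm estimate---follow the template of \cite{BuckinghamMiller2013,BertolaTovbis2014} with mechanical modifications.
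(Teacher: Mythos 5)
Your high-level plan (steepest descent, local Painlev\'e--I parametrix built from the tritronqu\'ee RHP, small-norm estimate extracted from a boundary contour integral, identification of $T_{1,21}=-h$ as the relevant coefficient) is the right skeleton, but there are two gaps concerning exactly the part of the argument that makes a catastrophe point harder than the interior of the modulated librational wave region.

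First, your geometric description of the catastrophe mechanism is off. You write that ``two endpoints of a band of $g$ coalesce at a point $w_\mathrm{gc}$.'' In this setting the band endpoints $\alpha(x,t)$ and $\alpha^*(x,t)=\alpha(x,t)^*$ remain a distinct complex-conjugate pair on the unit circle throughout, including at $(0,t_\mathrm{gc})$. What actually degenerates (Definition~\ref{def:GradientCatastrophe}) is that a simple zero of $H(w;x,t)$, i.e.\ a critical point of $\phi$, collides with the endpoint $\alpha$, so that $\phi'(w)=R(w)H(w)$ changes from vanishing like $(w-\alpha)^{1/2}$ to vanishing like $(w-\alpha)^{3/2}$; equivalently $\phi(w)-\phi(\alpha)$ goes from $(w-\alpha)^{3/2}$ (Airy) to $(w-\alpha)^{5/2}$ (Painlev\'e--I). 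This is the content of the formula $\tfrac12\widetilde\phi(w)=\tfrac45 W^{5/2}+sW^{1/2}$ of Lemma~\ref{lemma:ConformalMap}. Also, the local parametrix neighborhood $U$ in the paper has \emph{fixed}, $\epsilon$-independent radius; the exponent $\epsilon^{4/5}$ governs the size of the $(x,t)$-window, not the spectral disk, with the conformal variable then rescaled as $\xi=\epsilon^{-2/5}W(w;x,t)$.

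Second, and more seriously: you propose to ``introduce the same $g$-function $g(w;x,t)$'' from the pre-catastrophe analysis and Taylor-expand it near $(0,t_\mathrm{gc})$. But that $g$-function is determined by solving the system $M=I=0$ for $(\alpha,\alpha^*)$, and the Jacobian of this system is proportional to $H(\alpha(x,t);x,t)$, which by definition vanishes at the catastrophe point. So the pre-catastrophe $g$-function and the endpoint functions $\alpha(x,t),\alpha^*(x,t)$ simply cannot be continued as differentiable functions of $(x,t)$ to a full neighborhood of $(0,t_\mathrm{gc})$; the object you propose to expand does not exist on the relevant domain. The paper's remedy (Section~\ref{sec:modified-g}) is not cosmetic: one enlarges the $g$-ansatz by adding singular terms $R(w)[\,n/(w-\alpha)+n^*/(w-\alpha^*)\,]/\sqrt{-w}$, eliminates $n,n^*$ via the now solvable linear system \eqref{eq:mmstar}, and then redetermines $\alpha(x,t),\alpha^*(x,t)$ together with the auxiliary parameter $s(x,t)$ from the requirement that $W$ in \eqref{eq:conformal-map-identity} be conformal. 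Only this modified construction makes $\alpha$, $s$, and hence the constants $\sigma=|W'_\mathrm{gc}(\alpha_\mathrm{gc})|^{1/2}$, $a$, $b$ well-defined and real-analytic near $(0,t_\mathrm{gc})$, and thereby makes the phase-linearized outer parametrix $\dot{\mathbf O}^{\mathrm{out,l}}$ (which freezes the endpoints at $\alpha_\mathrm{gc}$ and replaces $\Phi(x,t)$ by its linearization) a legitimate comparison object. Your outline never confronts this breakdown; the proof does not go through with the unmodified $g$. Beyond that, you are correct to flag the $h$-vs-$y$ identification and the $\mathrm{cn}$ factor as delicate, but you underestimate the weight of the latter: in the paper it is Lemma~\ref{lemma:C-real}, whose proof uses a web of genus-one theta identities and, notably, an \emph{indirect} argument (via evenness in $x$ of the fluxon condensate) to establish that a certain combination of theta values is real.
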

\begin{remark}
The constant $\sigma>0$ is not easy to describe in terms of the ingredients mentioned so far, but it is well-defined and is given by \eqref{eq:SIGMA} below.
\end{remark}
\indent \begin{remark}
Comparing \eqref{eq:rho-func-define} with \eqref{eq:J-func-define} shows that $\pi J(\mathcal{E})=8\KK(m)\sqrt{m(1-m)}\rho(m)$, i.e., the same linear combination of complete elliptic integrals apparently occurs in two different but related settings.  See the right-hand panel of Figure~\ref{fig:Minus-b-Over-a} below for a plot of the function $\rho(m_\mathrm{gc})$ as a function of $m_\mathrm{gc}$.
\end{remark}

Some basic observations related to Theorem~\ref{thm:AwayFromPoles} are the following.
First, the leading terms correspond to the leading terms in Proposition~\ref{prop:BeforeCatastrophe} except that 
the elliptic modulus $m(x,t)$ has been ``frozen'' at the gradient catastrophe point $(x,t)=(0,t_\mathrm{gc})$, and that
the phase $\Phi(x,t)$ has been expanded through the linear terms in its two-variable Taylor expansion about the gradient catastrophe point (where $k(0,t_\mathrm{gc})=0$ has been used).
Also, the leading terms $\dot{C}(t)$ and $\dot{S}(t)$ are independent of $x$ and periodic in $t$ with a period of $2\pi\epsilon/\omega_\mathrm{gc}$, and there is an exact $x$-independent solution $u(t)$ of the sine-Gordon equation \eqref{eq:SemiClassicalsG} (hence the simple pendulum equation, really) $\epsilon^2u''+\sin(u)=0$ such that $\dot{C}(t)=\cos(\tfrac{1}{2}u(t))$ and $\dot{S}(t)=\sin(\tfrac{1}{2}u(t))$.  
Next we consider the perturbing terms proportional to $\epsilon^{1/5}$.  One consequence of the form of these terms is that 
the asymptotic formul\ae\ \eqref{eq:AwayFromPolesSolution} are consistent with the Pythagorean identity 
\begin{equation}
\cos(\tfrac{1}{2}u_N(x,t))^2+\sin(\tfrac{1}{2}u_N(x,t))^2=1 
\end{equation}
through terms of order $\mathcal{O}(\epsilon^{2/5})$.
Also, since $h(\tau^*)=h(\tau)^*$, the fact that it is the real part $\mathrm{Re}\{h(\tau)\}$ that appears in the correction terms is significant because it is consistent with $\cos(\tfrac{1}{2}u_N(x,t))$ and $\sin(\tfrac{1}{2}u_N(x,t))$ being even functions of $x$, as can be shown from the precise form of Definition~\ref{def:fluxon-condensate} to be explained in Section~\ref{sec:RHP} below (cf., \eqref{eq:condensate-even}).
Note furthermore that 
the factor of $\mathrm{Re}\{h(\tau)\}$ with $\epsilon^{4/5}\tau=\ii ax+b(t-t_\mathrm{gc})$ in the sub-leading terms varies slowly compared to the period of the leading terms, giving the asymptotic formul\ae\ in \eqref{eq:AwayFromPolesSolution} an inherently multiscale structure.  Since $h(\tau)$ has simple poles (these are excluded by the boundedness condition on $y(\tau)$) there is a curve passing through the image in the $(x,t)$-plane of each pole, along which $\mathrm{Re}\{h(\tau)\}=0$ and hence the sub-leading terms vanish identically.  It appears that each such curve closes on itself to form a loop, and there is an additional unbounded component of the level curve that does not meet any poles in the finite $\tau$-plane.  See Figure~\ref{fig:RealTTHamiltonian}.
\begin{figure}[h]
\begin{center}
\includegraphics[height=0.4\linewidth]{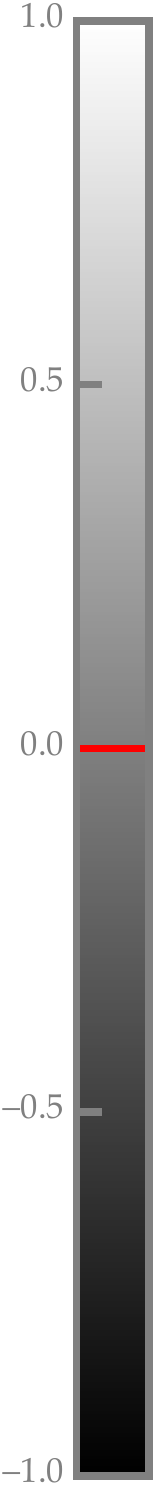}\hspace{0.05\linewidth}%
\includegraphics[height=0.4\linewidth]{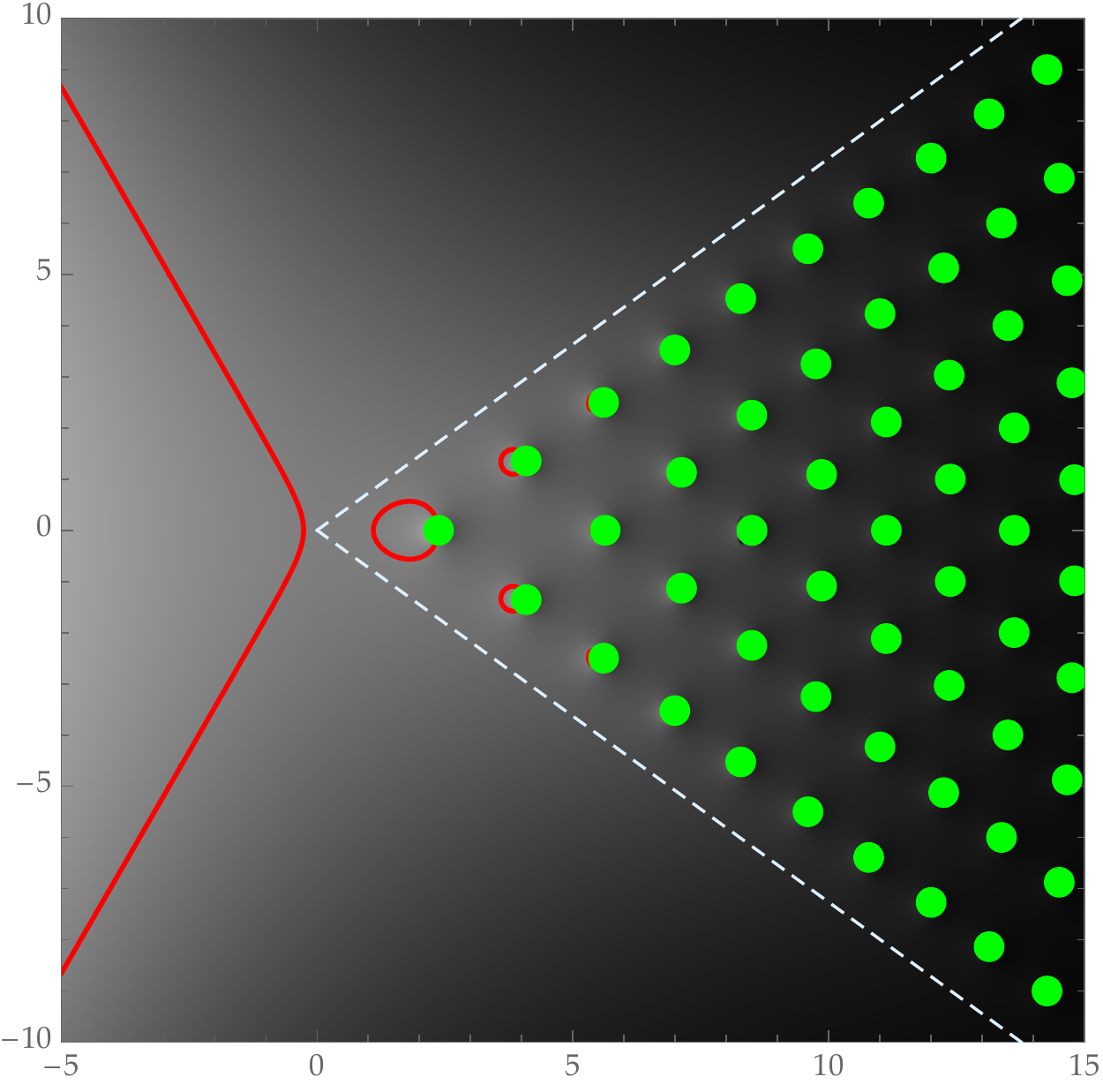}\hspace{0.05\linewidth}%
\includegraphics[height=0.4\linewidth]{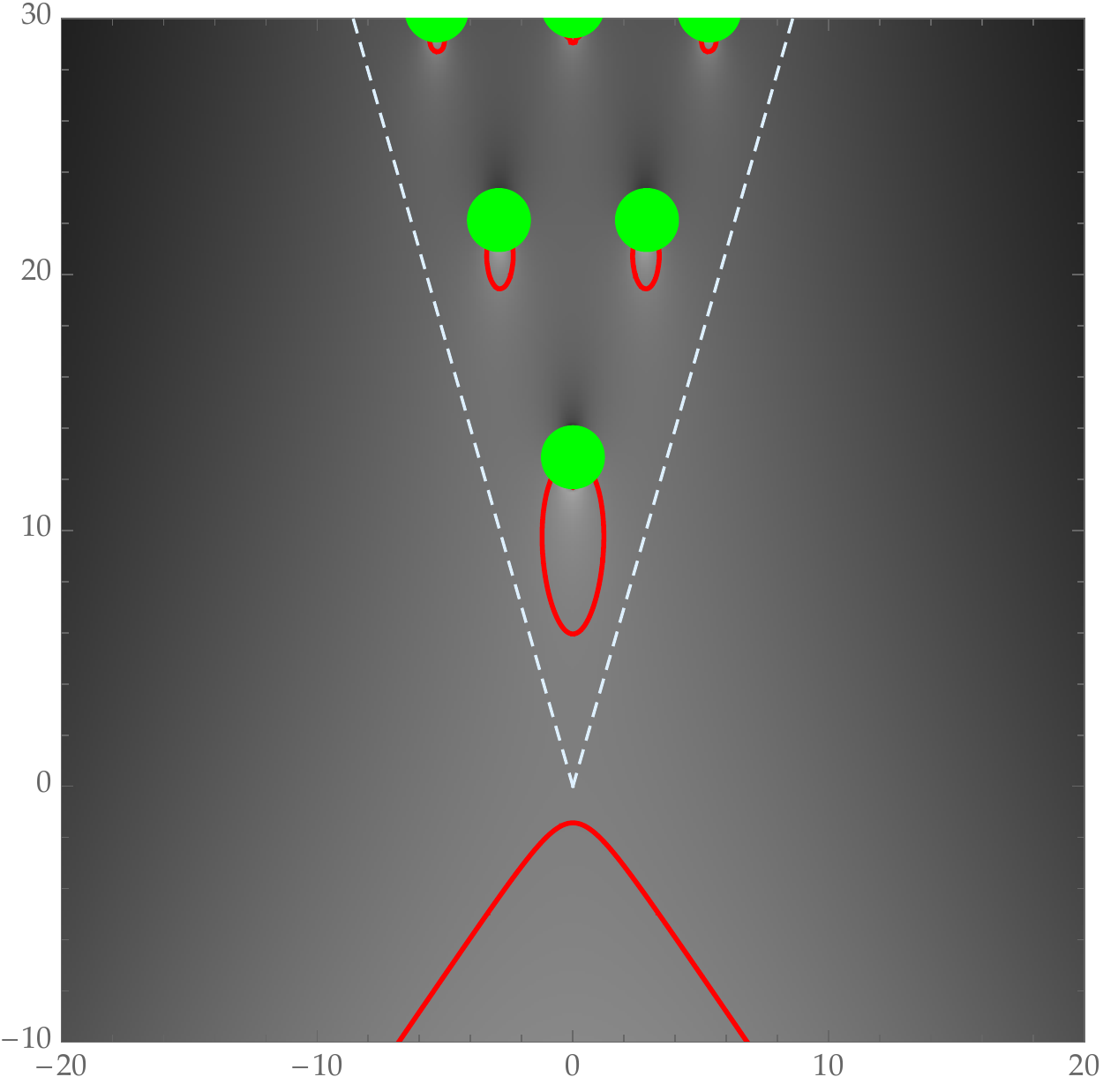}
\end{center}
\caption{Plots of $\tanh(\tfrac{1}{10}\mathrm{Re}\{h\})$ in the $\tau$-plane (left panel) and the $(\tilde{x},\tilde{t})=(\epsilon^{-4/5}x,\epsilon^{-4/5}(t-t_\mathrm{gc}))$-plane where $t_\mathrm{gc}$ and the scale factors $a<0$ and $b>0$ correspond to the initial condition $G(x)=-\mathrm{sech}(x)$ (right panel).  The zero level curve is shown in red; this is where the first correction terms in \eqref{eq:AwayFromPolesSolution} vanish.  Also, the boundary of the pole-confining sector is shown with dashed lines, and a green disk is centered on each pole.  The data for these plots was furnished by M. Fasondini, B. Fornberg, and J.\@ A.\@ C.\@ Weideman, using the numerical methodology for the Painlev\'e equations described in \cite{FornbergWeideman2011}.}
\label{fig:RealTTHamiltonian}
\end{figure}
Finally, if as a relatively slowly-varying function $\mathrm{Re}\{h(\tau)\}$ is treated as a constant, the sub-leading terms amount to a very special exact solution of the linearization of the sine-Gordon equation  about the leading terms.  In other words, starting from $\epsilon^2u''+\sin(u)=0$ and setting $P=\tfrac{1}{2}\epsilon u'$, $\dot{C}=\cos(\tfrac{1}{2}u)$, $\dot{S}=\sin(\tfrac{1}{2}u)$, we deduce the exact first-order system 
\begin{equation}
\begin{split}
\epsilon P'&=-\dot{C}\dot{S}\\
\epsilon\dot{C}'&=-\dot{S}P\\
\epsilon\dot{S}'&=\dot{C}P.
\end{split}
\end{equation}
Linearizing this system about the solution $u(t)$ by replacing $(P,\dot{C},\dot{S})$ by $(P+P_1, \dot{C}+\dot{C}_1,\dot{S}+\dot{S}_1)$ and retaining the terms linear in the perturbation $(P_1,\dot{C}_1,\dot{S}_1)$  yields the linear system 
\begin{equation}
\begin{split}
\epsilon P_1'&=-\dot{C}\dot{S}_1-\dot{C}_1\dot{S}\\
\epsilon\dot{C}_1'&=-\dot{S}P_1-\dot{S}_1P\\
\epsilon\dot{S}_1'&=\dot{C}P_1+\dot{C}_1P.
\end{split}
\end{equation}
It is straightforward to check that if $\kappa$ is any constant, a particular solution of the latter system is proportional to the $t$-derivative of the unperturbed solution:  $(P_1,\dot{C}_1,\dot{S}_1)=(\kappa P',\kappa\dot{C}',\kappa\dot{S}')$.  We then observe that upon replacing $\kappa$ with the ``constant'' (slowly-varying function)
\begin{equation}
\kappa=\frac{\pi M\mathrm{Re}\{h(\tau)\}}{2\omega_\mathrm{gc}\sqrt{m_\mathrm{gc}}\KK(m_\mathrm{gc})}\epsilon^{6/5}, 
\end{equation}
then $\kappa\dot{C}'$ and $\kappa\dot{S}'$ are precisely the explicit sub-leading terms on the two lines of \eqref{eq:AwayFromPolesSolution}.

The fact that $M>0$ in Theorem~\ref{thm:AwayFromPoles} amounts to an indirect proof of an identity involving Riemann theta functions of genus $1$ that we have not been able to prove by other means; see the remark at the end of Appendix~\ref{sec:proof-lemma:C-real} for details.  

In some way, Theorem~\ref{thm:AwayFromPoles} asserts that the dominant effect near the gradient catastrophe point is a weak modulation of the uniformly spatially constant and time-periodic librating wave of period proportional to $\epsilon$ described by the unperturbed terms $(\dot{C}(t),\dot{S}(t))$.  This modulation takes place on space and time scales centered at the catastrophe point and proportional to $\epsilon^{4/5}$, and hence is slowly-varying compared with the period of the unperturbed background.  To illustrate this two-scale phenomenon, we first plot the fluxon condensate for the impulse profile $G(x)=-\mathrm{sech}(x)$, for which $\epsilon_N=(4N)^{-1}$, showing how $\cos(u_N(x,t))=\cos^2(\tfrac{1}{2}u_N(x,t))-\sin^2(\tfrac{1}{2}u_N(x,t))$ behaves as a function of $(x,t)$ for two different values of $N$.  For this particular impulse profile, $\epsilon$-independent numerical calculations described in Section~\ref{sec:boundary} below predict that the gradient catastrophe point occurs at $(x,t)=(0,t_\mathrm{gc})$ for $t_\mathrm{gc}\approx 1.609104$.  See Figure~\ref{fig:G(x)=-sech(x)--cos(u_8,u_16)}.
\begin{figure}[h]
\begin{center}
\includegraphics[height=0.4\linewidth]{fig/Legend-SMALL.pdf}\hspace{0.05\linewidth}
\includegraphics[height=0.4\linewidth]{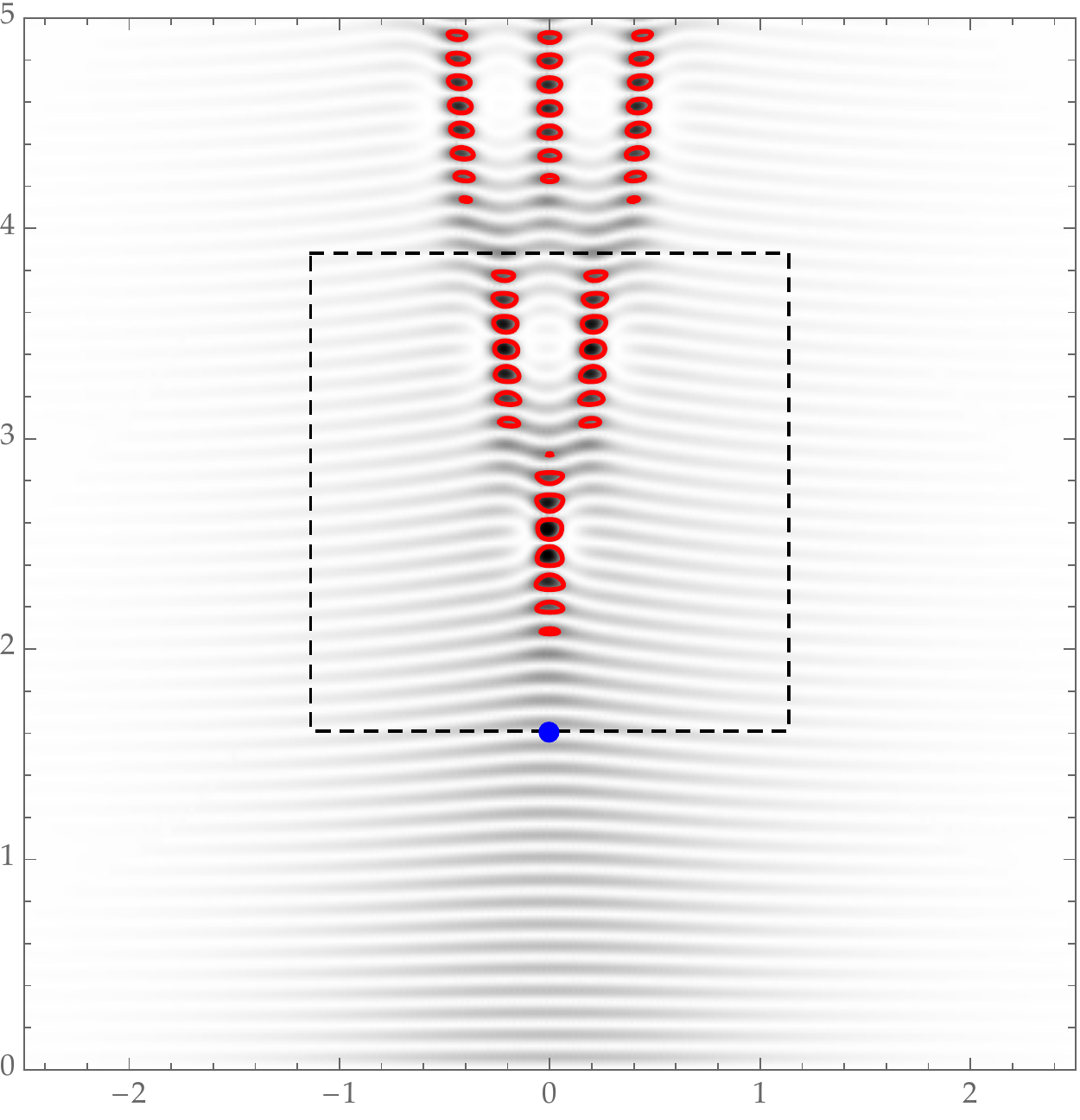}\hspace{0.05\linewidth}
\includegraphics[height=0.4\linewidth]{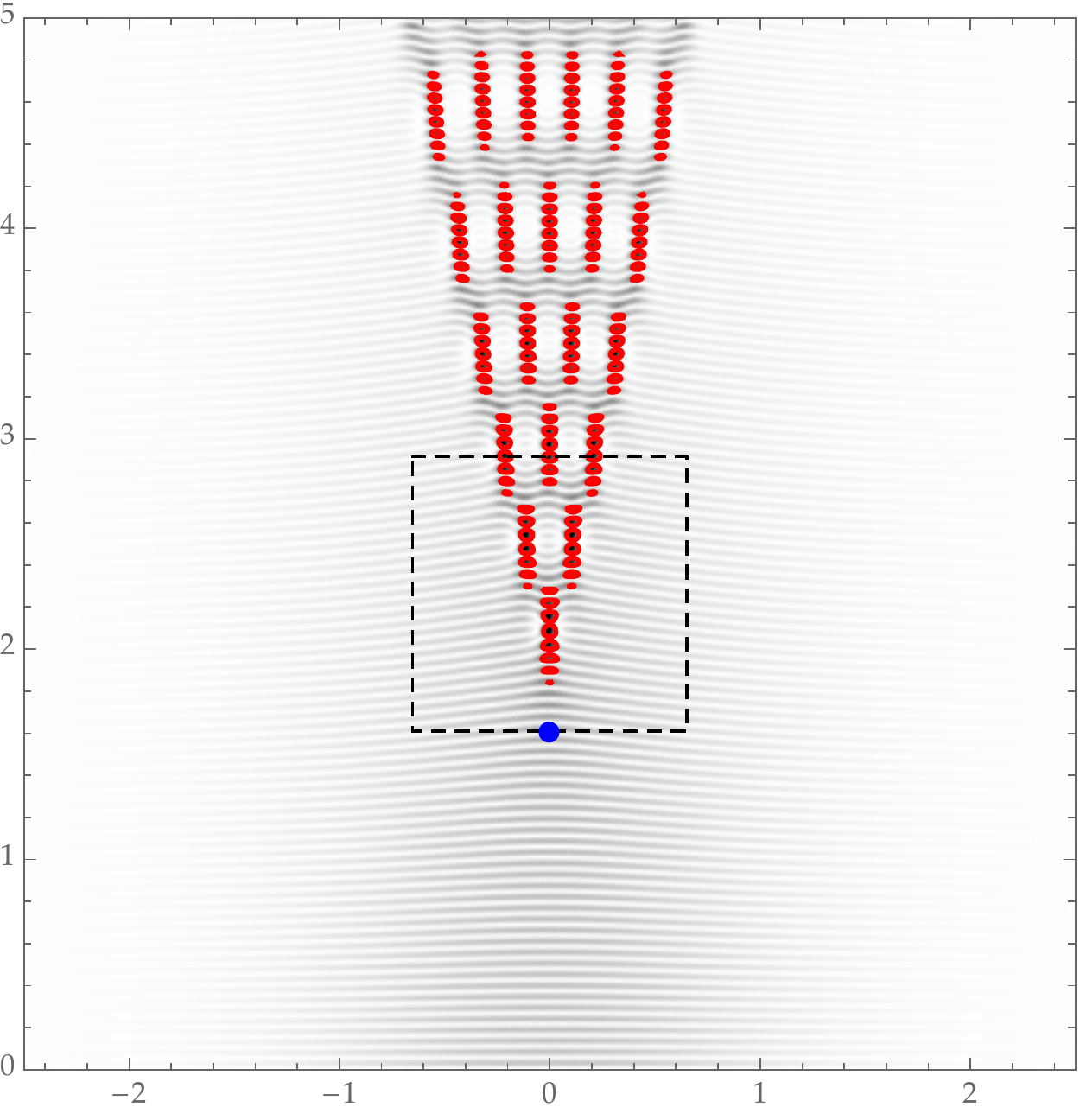}
\end{center}
\caption{Plots of $\cos(u_N(x,t))$ over the $(x,t)$-plane for the fluxon condensate for $G(x)=-\mathrm{sech}(x)$ with $N=8$ (left) and $N=16$ (right).  The blue dot is the theoretical location of the gradient catastrophe point $(x,t)=(0,t_\mathrm{gc})$, where $t_\mathrm{gc}\approx 1.609104$.  The dashed region refers to the zoomed-in plots shown in Figure~\ref{fig:zoom-1} below.}
\label{fig:G(x)=-sech(x)--cos(u_8,u_16)}
\end{figure}
Next, we zoom in on the gradient catastrophe point by introducing new coordinates $(\tilde{x},\tilde{t})=(\epsilon_N^{-4/5}x,\epsilon_N^{-4/5}(t-t_\mathrm{gc}))$ and plotting the same functions in the new coordinates.  (These are the same coordinates used in the right-hand panel of Figure~\ref{fig:RealTTHamiltonian}.) See Figure~\ref{fig:zoom-1}.
\begin{figure}[h]
\begin{center}
\includegraphics[height=0.4\linewidth]{fig/Legend-SMALL.pdf}\hspace{0.05\linewidth}%
\includegraphics[height=0.4\linewidth]{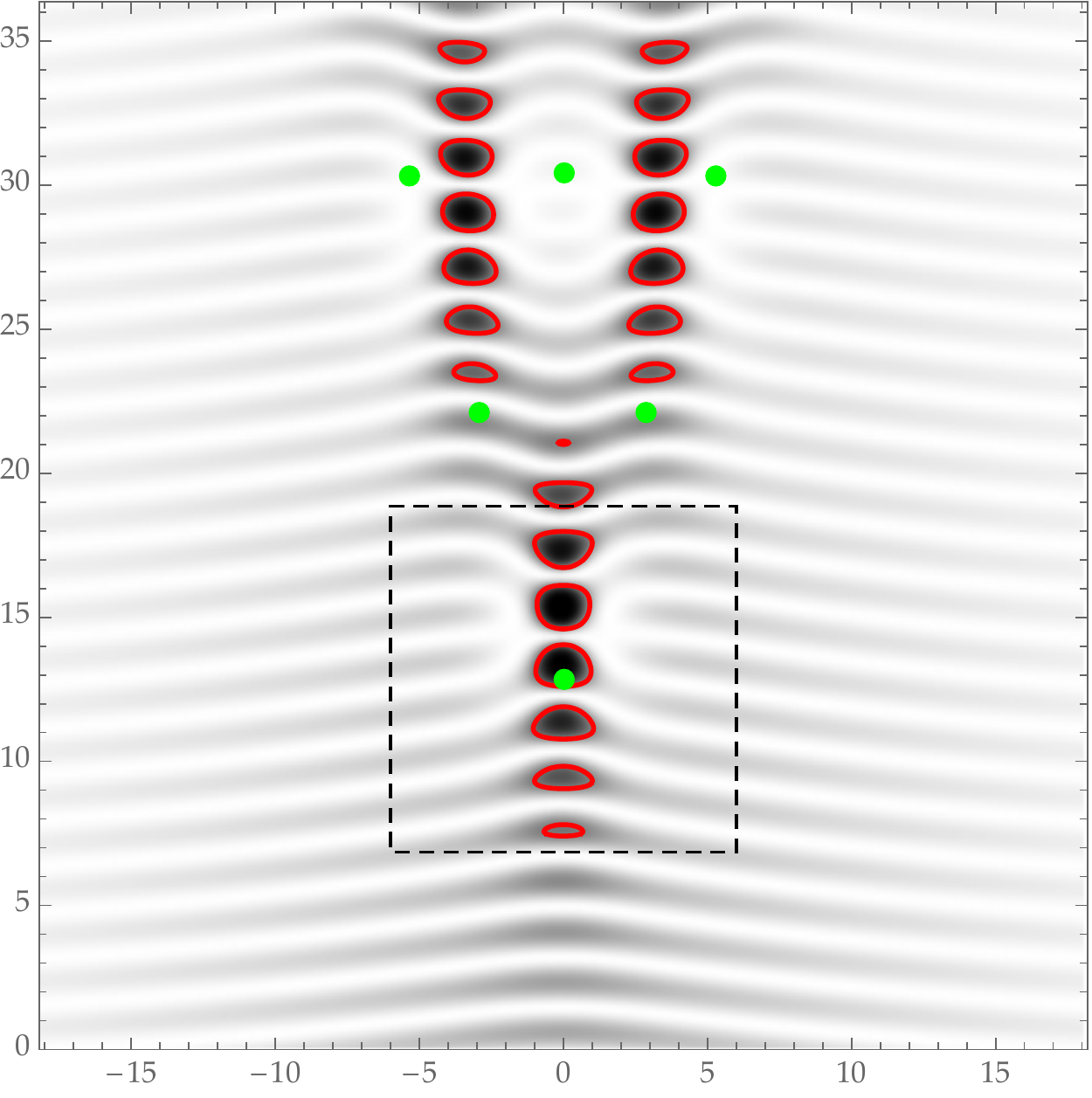}\hspace{0.05\linewidth}%
\includegraphics[height=0.4\linewidth]{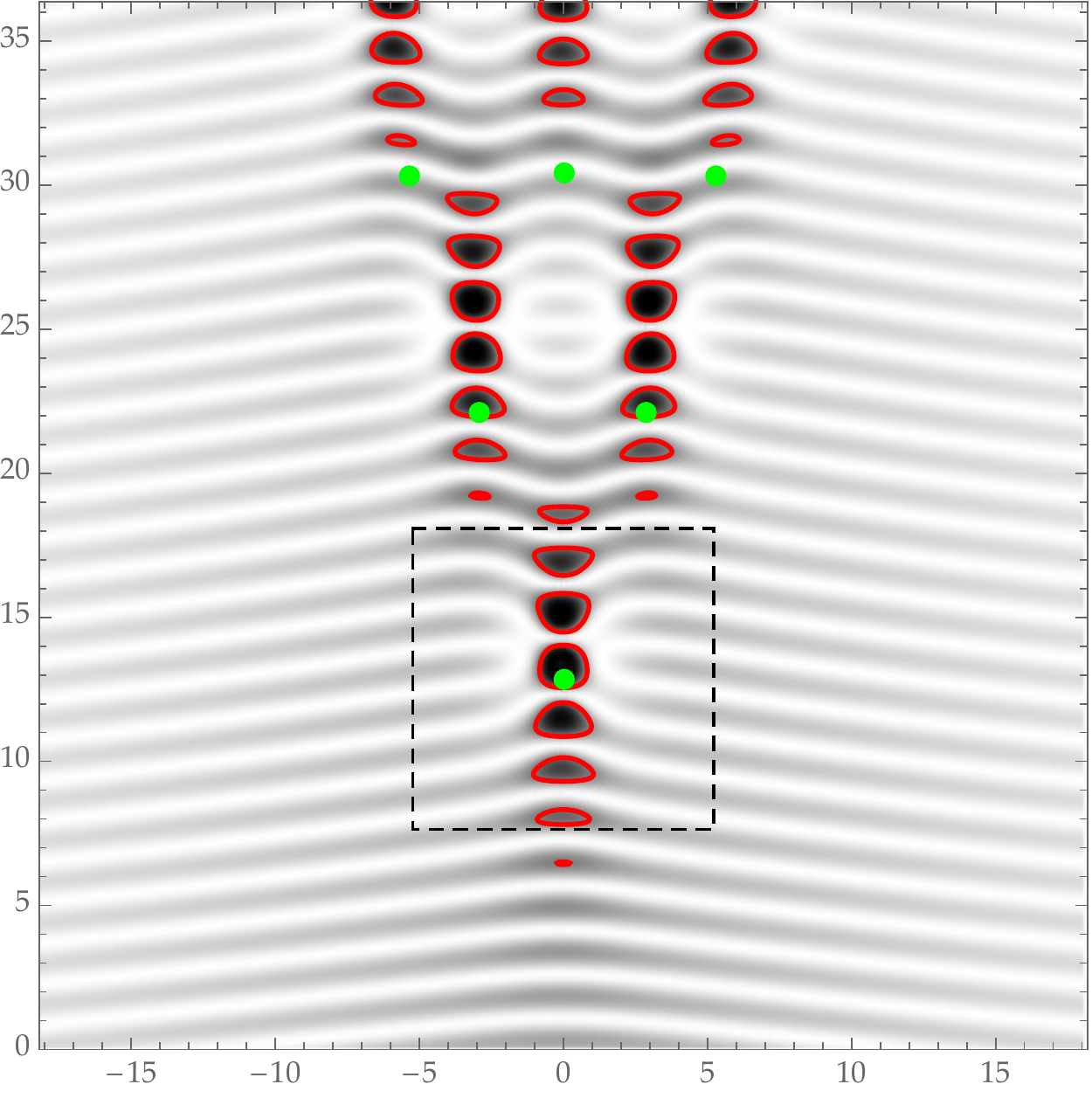}
\end{center}
\caption{Plots of $\cos(u_N(\epsilon_N^{4/5}\tilde{x},t_\mathrm{gc}+\epsilon_N^{4/5}\tilde{t}))$ over the $(\tilde{x},\tilde{t})$-plane for the fluxon condensate for $G(x)=-\mathrm{sech}(x)$ with $N=8$ (left) and $N=16$ (right).  The plot domain is the image in the $(\tilde{x},\tilde{t})$-plane of the dashed squares shown in the corresponding panels of Figure~\ref{fig:G(x)=-sech(x)--cos(u_8,u_16)}.  The green dots are the preimages under $\tau=\ii a\tilde{x}+ b\tilde{t}$ of the six poles of $h(\cdot)$ closest to the origin.  The values of $a$ and $b$ are computed as in the statement of Theorem \ref{thm:AwayFromPoles} with \ $\sigma>0$ specified by \eqref{eq:SIGMA}, and the pole data for $h(\cdot)$ was provided by B. Fornberg and J. A. C. Weideman using the method described in \cite{FornbergWeideman2011}. The dashed region refers to the zoomed-in plots shown in Figure~\ref{fig:zoom-2} below.}
\label{fig:zoom-1}
\end{figure}
The plots in Figure~\ref{fig:zoom-1} suggest a limiting alignment of the visible ``defects'' in the otherwise uniform wavetrain that forms the background in the plots (an ideal uniform wave of reciprocal phase velocity $n_\mathrm{p}=0$ would show up as a pattern of horizontal stripes).  In fact, Theorem~\ref{thm:AwayFromPoles} predicts that the defects must converge, after suitable mapping to the complex coordinate $\tau=\ii a\tilde{x}+b\tilde{t}$, to the poles of the real tritronqu\'ee solution $y(\tau)$ as $N\to\infty$.  

Our next result concerns the defects themselves.  Now Theorem~\ref{thm:AwayFromPoles} does not describe the fluxon condensate near the preimage under $\tau$ of any pole of $y(\tau)$, but if we nonetheless examine the behavior of the approximation on the right-hand sides of \eqref{eq:AwayFromPolesSolution} near such a point, the fact that $h(\tau)$ has simple poles only suggests that the constant $\sigma>0$ cancels out of the leading error terms, since it appears homogeneously in the constants $M>0$, $a<0$, and $b>0$.  As $\sigma$ is the only source of dependence in these terms on the initial data $G(\cdot)$ rather than on quantities evident from the solution in the neighborhood of the gradient catastrophe point only, we may expect that when Theorem~\ref{thm:AwayFromPoles} fails to describe the defects attracted to preimages of poles of the tritronqu\'ee solution, whatever approximation takes over may have a universal character.
To formulate our description of the defects, which indeed verifies this conjecture, we must first describe a particular two-parameter family of exact solutions $U(X,T;m,\Omega)$ of the sine-Gordon equation in the unscaled form $U_{TT}-U_{XX}+\sin(U)=0$, with parameters $0<m<1$ and $\Omega\in\mathbb{R}\pmod{2\pi}$.  
These solutions are constructed as part of the proof of Theorem~\ref{thm:NearThePoles} below, and they can be defined in terms of Jacobian elliptic functions and the complete elliptic integrals $\KK(m)$ and $\EE(m)$ as follows.
\begin{equation}
\begin{bmatrix}
\cos(\tfrac{1}{2}U(X,T;m,\Omega))\\
\sin(\tfrac{1}{2}U(X,T;m,\Omega))\end{bmatrix}=\mathbf{R}(X,T;m,\Omega)
\begin{bmatrix}\dot{C}(T;m,\Omega)\\
\dot{S}(T;m,\Omega)
\end{bmatrix}
\label{eq:ddotCS-compact}
\end{equation}
where
\begin{equation}
\dot{C}(T;m,\Omega):=\mathrm{dn}\quad\text{and}\quad\dot{S}(T;m,\Omega):=-\sqrt{m}\,\mathrm{sn},
\end{equation}
and $\mathbf{R}(X,T;m,\Omega)$ is an orthogonal (rotation) matrix with elements
\begin{equation}
R_{11}(X,T;m,\Omega)=R_{22}(X,T;m,\Omega)=\frac{r(X,T;m,\Omega)^2-q(X,T;m,\Omega)^2}{q(X,T;m,\Omega)^2+r(X,T;m,\Omega)^2}
\end{equation}
and
\begin{equation}
R_{21}(X,T;m,\Omega)=-R_{12}(X,T;m,\Omega)=\frac{2q(X,T;m,\Omega)r(X,T;m,\Omega)}{q(X,T;m,\Omega)^2+r(X,T;m,\Omega)^2},
\end{equation}

in which
\begin{equation}
\begin{split}
q(X,T;m,\Omega)&:=-\frac{\mathrm{sn}\,\mathrm{dn}+\left((1-m)p-\sqrt{m(1-m)}\rho(m)T\right)\mathrm{cn}}{2\sqrt{1-m}}\\
r(X,T;m,\Omega)&:=\frac{m-\mathrm{dn}^2-m(1-m)X^2-\left((1-m)p-\sqrt{m(1-m)}\rho(m)T\right)^2}{4\sqrt{m(1-m)}},
\end{split}
\label{eq:q-and-r-define}
\end{equation}
where $\rho(m)$ is defined by \eqref{eq:rho-func-define} and the following abbreviated notation is used:
\begin{equation}
\mathrm{p}=\mathrm{p}\left(\frac{2\KK(m)\Omega}{\pi}+T;m\right),\quad\text{and}\quad \mathrm{xn}=\mathrm{xn}\left(\frac{2\KK(m)\Omega}{\pi}+T;m\right),\quad \mathrm{x}=\mathrm{c},\mathrm{s},\mathrm{d},
\label{eq:abbreviated-notation-1}
\end{equation}
and where $\mathrm{p}(w;m)$ is a periodic (but not elliptic) function of $w$ defined by
\begin{equation}
\mathrm{p}(w;m):=\int_0^{w+\KK(m)}\left(\left\langle\frac{1}{\mathrm{dn}(\cdot;m)^2}\right\rangle-\frac{1}{\mathrm{dn}(\zeta;m)^2}\right)\,\dd\zeta,
\label{eq:abbreviated-notation-2}
\end{equation}
in which $\langle f(\cdot)\rangle$ denotes the average of a periodic function $f:\mathbb{R}\to\mathbb{R}$,
in this case given explicitly by
\begin{equation}
\left\langle\frac{1}{\mathrm{dn}(\cdot;m)^2}\right\rangle = \frac{\EE(m)}{(1-m)\KK(m)}=1+\sqrt{\frac{m}{1-m}}\rho(m).
\label{eq:abbreviated-notation-3}
\end{equation}
Note that $\dot{C}$ and $\dot{S}$ are essentially the same quantities defined in \eqref{eq:C-dot-define}--\eqref{eq:S-dot-define}, now written in terms of different variables, namely $T$ and the phase $\Omega$.  It is easy to see that $q(X,T;m,\Omega)=\mathcal{O}(T)$, while
$r(X,T;m,\Omega)=-\tfrac{1}{4}\sqrt{m(1-m)}(X^2+\rho(m)^2T^2) + \mathcal{O}(T)$ as $X^2+T^2\to\infty$; this in turn implies that for each $m\in (0,1)$, $\mathbf{R}(X,T;m,\Omega)=\mathbb{I}+\mathcal{O}(T/(X^2+T^2))$ in the same limit.  Therefore, 
these exact solutions take the form of a space-time localized defect of a spatially constant time-periodic background corresponding to a solution of the simple pendulum ordinary differential equation with elliptic modulus $m$.  Despite the fact that the formul\ae\ are complicated, it is easy to plot the defect solutions.  See Appendix~\ref{sec:Catalog} for several such plots displaying how the defect solutions vary with the parameters $m$ and $\Omega$.  
The defect solutions exhibit the following features:
\begin{itemize}
\item The main effect of the parameter $\Omega$ is to position the defect temporally relative to the time-periodic background.  The functions $\cos(U(X,T;m,\Omega))$ and $\sin(U(X,T;m,\Omega))$ are periodic functions of $\Omega$; it is easy to see that $\cos(U(X,T;m,\Omega+\pi))=\cos(U(X,T;m,\Omega))$ and $\sin(U(X,T;m,\Omega+\pi)=-\sin(U(X,T;m,\Omega))$.
\item The temporal localization of the defect depends strongly on the value of $m\in (0,1)$.  For small values of $m$ the defect has a very long duration in $T$, while for larger $m$ the duration is shorter.
\item The spatial localization of the defect seems to be relatively insensitive to the value of $m$.
\end{itemize}
The construction of the solution $U(X,T;m,\Omega)$ of $U_{TT}-U_{XX}+\sin(U)=0$ given in the proof of the following theorem shows that it is obtained from the spatially-constant and time-periodic background solution determined from the leading terms in Theorem~\ref{thm:AwayFromPoles} via a kind of Darboux transformation of the Lax eigenfunctions.  Hence the defects we are considering can also be viewed as \emph{rogue waves on an elliptic function background}.  Similar solutions have been constructed by direct methods for other equations, see for example \cite{ChenPelinovsky2018b,ChenPelinovsky2018a,ChenPelinovsky2019}.
\begin{theorem}[Limiting solution near each tritronqu\'ee pole]
Let $\tau_\mathrm{p}$ be a pole of the real tritronqu\'ee solution $y(\tau)$ of the Painlev\'e-I equation.
Define coordinates $X=\epsilon^{-1}(x-x_\mathrm{p})$ and $T=\epsilon^{-1}(t-t_\mathrm{p})$, where the $\epsilon$-dependent point $(x_\mathrm{p},t_\mathrm{p})$ is determined from $\ii a x_\mathrm{p}+(t_\mathrm{p}-t_\mathrm{gc})b=\epsilon^{4/5}\tau_\mathrm{p}$.  Then, under the same assumptions as in Theorem \ref{thm:AwayFromPoles},
\begin{equation}
\begin{split}
\cos(\tfrac{1}{2}u_N(x_\mathrm{p}+\epsilon X,t_\mathrm{p}+\epsilon T))&=\cos(\tfrac{1}{2}U(X,T;m_\mathrm{gc},\Omega_{\mathrm{p},N}))+\mathcal{O}(\epsilon^{1/5}),
\\
\sin(\tfrac{1}{2}u_N(x_\mathrm{p}+\epsilon X,t_\mathrm{p}+\epsilon T))&=\sin(\tfrac{1}{2}U(X,T;m_\mathrm{gc},\Omega_{\mathrm{p},N}))+\mathcal{O}(\epsilon^{1/5}),
\end{split}
\end{equation}
where $m_\gc=m(0,t_\gc)$, $\Omega_{\mathrm{p},N}:=\epsilon^{-1}(\Phi_\mathrm{gc}-(t_\mathrm{p}-t_\mathrm{gc})\omega_\mathrm{gc})$, $\epsilon=\epsilon_N$, and where the error terms are uniform for bounded $(X,T)$.
\label{thm:NearThePoles}
\end{theorem}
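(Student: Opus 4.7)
The plan is to apply the Deift--Zhou steepest descent method to the discrete Riemann--Hilbert problem defining the fluxon condensate, along the same lines as for Theorem~\ref{thm:AwayFromPoles}, but now zooming at the finer scale $\epsilon$ rather than $\epsilon^{4/5}$ around the target point $(x_\mathrm{p},t_\mathrm{p})$. In coordinates $(X,T)=(\epsilon^{-1}(x-x_\mathrm{p}),\epsilon^{-1}(t-t_\mathrm{p}))$ the semiclassical equation \eqref{eq:SemiClassicalsG} becomes the unscaled sine-Gordon equation $U_{TT}-U_{XX}+\sin(U)=0$, so I expect the limit itself to be an exact (not self-similar) solution of the classical sine-Gordon equation living on a spatially constant, time-periodic elliptic background determined by $(m_\mathrm{gc},\Omega_{\mathrm{p},N})$.

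I would carry the global analysis through all of the steps used to prove Theorem~\ref{thm:AwayFromPoles}: the $g$-function, opening of lenses, the genus-one outer parametrix built from Jacobi theta functions of modulus $m_\mathrm{gc}$ and slow phase $\Omega_{\mathrm{p},N}$, and the Painlev\'e-I tritronqu\'ee local parametrix. That tritronqu\'ee parametrix has a genuine singularity at $\tau=\tau_\mathrm{p}$ coming from the simple pole $h(\tau)\sim-(\tau-\tau_\mathrm{p})^{-1}$. After the further rescaling $\tau-\tau_\mathrm{p}=\epsilon^{1/5}(\ii aX+bT)$ this polar contribution to $\epsilon^{1/5}h$ is of order one, so the tritronqu\'ee parametrix must be replaced on a disk of radius $\sim\epsilon$ around $(x_\mathrm{p},t_\mathrm{p})$ by a finer inner parametrix that I would construct by adjoining a single discrete spectral datum to the genus-one outer parametrix and performing a Darboux (Schlesinger) transformation in the spectral variable $w$. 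The absolute value of the norming constant is dictated by the residue $-1$ of $h$ at $\tau_\mathrm{p}$, and its phase is fixed by matching to the tritronqu\'ee parametrix in an overlap annulus $\epsilon\ll|x-x_\mathrm{p}|,|t-t_\mathrm{p}|\ll\epsilon^{4/5}$. The reconstruction formula for $u$ then reads off as $U(X,T;m_\mathrm{gc},\Omega_{\mathrm{p},N})$: the building blocks $q$ and $r$ in \eqref{eq:q-and-r-define} arise from the Baker--Akhiezer eigenfunctions of the elliptic background evaluated at the adjoined soliton spectral point, the antiderivative $\mathrm{p}$ in \eqref{eq:abbreviated-notation-2} comes from the integral $\int 1/\mathrm{dn}^2$ appearing in the Baker--Akhiezer phase, and $\rho(m_\mathrm{gc})$ reappears through the group velocity of the elliptic carrier wave via the identity $\pi J(\mathcal{E})=8\KK(m)\sqrt{m(1-m)}\rho(m)$ noted after Theorem~\ref{thm:AwayFromPoles}.

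The main obstacle I anticipate is the matching step: one must verify that the tritronqu\'ee parametrix, expanded in a shrinking punctured neighborhood of $\tau_\mathrm{p}$ where $h$ is dominated by its pole, reproduces precisely the far-field of the Darboux-transformed elliptic parametrix, in which the rational growth in $X$ and $T$ inside $q$ and $r$ dominates. This amounts to a nontrivial identity between theta-function residues on the elliptic Riemann surface and the geometric rescaling $b=-a\rho(m_\mathrm{gc})$ used to set up the coordinates, and it is what pins down the phase of the norming constant and thereby the exact formula for $U$. Once the matching is secured, checking that $U$ solves $U_{TT}-U_{XX}+\sin(U)=0$ with the prescribed far-field is a routine consequence of the Lax-pair reconstruction, and the usual small-norm argument for the residual error factor on the inner disk, combined with the already-controlled errors in the outer region, yields the uniform $\mathcal{O}(\epsilon^{1/5})$ bound claimed in the theorem.
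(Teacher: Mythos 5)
Your proposal correctly identifies the essential structural ingredients of the result — the Schlesinger/Darboux transformation, the role of Baker--Akhiezer-type data on the elliptic background, the meaning of $\rho(m_\mathrm{gc})$, and the fact that the limit is an exact solution of $U_{TT}-U_{XX}+\sin(U)=0$ — but the route you propose is genuinely different from the paper's and has an unresolved gap precisely where you flag your ``main obstacle.''

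You set up a \emph{three}-layer parametrix scheme: the elliptic outer parametrix, the tritronqu\'ee inner parametrix on the fixed disk $U$, and then a yet finer inner parametrix on a disk of radius $\sim\epsilon$ about $(x_\mathrm{p},t_\mathrm{p})$ built by adjoining a soliton to the genus-one background and fixing its norming constant by matching to the tritronqu\'ee parametrix in an overlap annulus $\epsilon\ll |x-x_\mathrm{p}|\ll\epsilon^{4/5}$. The paper instead keeps a \emph{two}-layer scheme and never performs such a matching. The key move (Lemma~\ref{lemma:Ttilde}) is to regularize the Painlev\'e-I parametrix itself: one replaces $\mathbf{T}(\xi;\tau)$ by $\widehat{\mathbf{T}}(\xi;\tau)=\mathbf{S}(\xi;\tau)\mathbf{T}(\xi;\tau)$ with $\mathbf{S}$ a \emph{linear} gauge function of $\xi$, obtained by changing the normalization at $\xi=\infty$ from $\xi^{-\sigma_3/4}$ to $\xi^{3\sigma_3/4}$ (Riemann--Hilbert Problem~\ref{rhp:Tilde-T}). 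Because the coefficients of $\mathbf{S}$ are built from $h(\tau)^{-1}$, $y(\tau)$, $z(\tau)$, $T_{2,21}(\tau)$ and the Laurent expansions of Section~\ref{sec:series} show the apparent pole at $\tau_\mathrm{p}$ cancels, $\widehat{\mathbf{T}}$ is analytic in $\tau$ near $\tau_\mathrm{p}$. The global parametrix then uses $\widehat{\mathbf{T}}$ in place of $\mathbf{T}$, the error matrix acquires an $\mathcal{O}(1)$ (not near-identity) jump $\mathbf{C}(w)(W_\mathrm{gc}(w)^{-\sigma_3}+\ell(X,T)\sigma_+)\mathbf{C}(w)^{-1}$ on $\partial U$, and this is modeled by an explicitly solvable ``parametrix for the error'' $\dot{\mathbf{E}}$ (Riemann--Hilbert Problem~\ref{rhp:E-parametrix}, rational ansatz in $\sqrt{-w}$ with poles at $\pm\sqrt{-\alpha_\mathrm{gc}},\pm\sqrt{-\alpha_\mathrm{gc}^*}$). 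The product $\dot{\mathbf{E}}\dot{\mathbf{O}}^\mathrm{out,l}$ is the Darboux-transformed elliptic object you are after; the dressing argument of Sections~\ref{sec:CdotSdot-solve-sG}--\ref{sec:Darboux} shows it solves sine-Gordon in $(X,T)$, and the explicit theta-function computations of Appendix~\ref{sec:system-simplify} produce the closed form $U(X,T;m_\mathrm{gc},\Omega_{\mathrm{p},N})$.

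The concrete gap in your version is the matching step you call ``a nontrivial identity between theta-function residues.'' You offer no argument for it, and it is not a peripheral verification: it is exactly the step that fixes the phase of the norming constant, hence the defect solution. As written you would need to (i) expand the tritronqu\'ee parametrix in the overlap annulus where neither the residue of $h$ at $\tau_\mathrm{p}$ nor the regular part is negligible, (ii) simultaneously expand the Darboux-transformed elliptic parametrix in the same region where the rational growth of $q$ and $r$ is large but not infinite, and (iii) prove agreement to the order needed for a small-norm estimate. Nothing in the proposal makes this plausible beyond analogy, and there is no reason this matching is any easier than the computations it is meant to replace. The paper's route avoids the matching entirely because the linear Schlesinger factor $\mathbf{S}(\xi;\tau)$ absorbs the tritronqu\'ee singularity before any comparison to the outer parametrix is made; the residue $-1$ of $h$ at $\tau_\mathrm{p}$ then enters only through the explicit jump entry $\ell(X,T)$ of $\dot{\mathbf{E}}$ via the linearization $h(\tau)^{-1}\approx-(\tau-\tau_\mathrm{p})$, with no asymptotic matching required. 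If you want to salvage your approach you must supply the overlap-annulus estimates; otherwise the argument does not close.
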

In terms of visual confirmation of this result, we may zoom in further by blowing up the dashed squares in the plots in Figure~\ref{fig:zoom-1}.  Blowing up by a factor of $\epsilon^{-1/5}$ a neighborhood of the image in the $(\tilde{x},\tilde{t})$-plane of the nearest pole of $y(\tau)$ to the origin (say) brings us to the corresponding $(X,T)$-plane of Theorem~\ref{thm:NearThePoles}.  Plots of $\cos(u_N)$ in this ``doubly-zoomed'' frame of reference are shown in Figure~\ref{fig:zoom-2}.
\begin{figure}[h]
\begin{center}
\includegraphics[height=0.4\linewidth]{fig/Legend-SMALL.pdf}\hspace{0.05\linewidth}%
\includegraphics[height=0.4\linewidth]{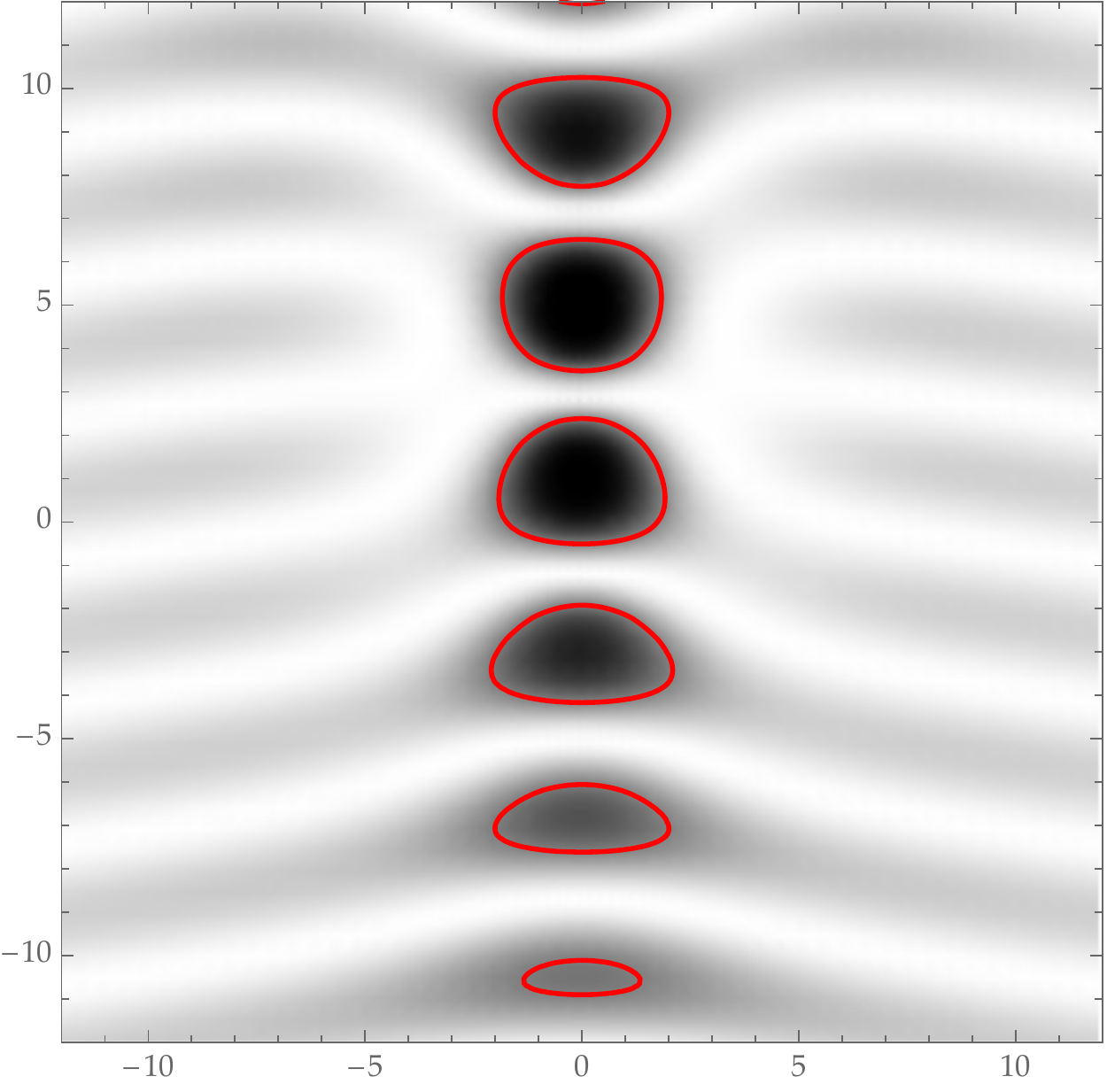}\hspace{0.05\linewidth}%
\includegraphics[height=0.4\linewidth]{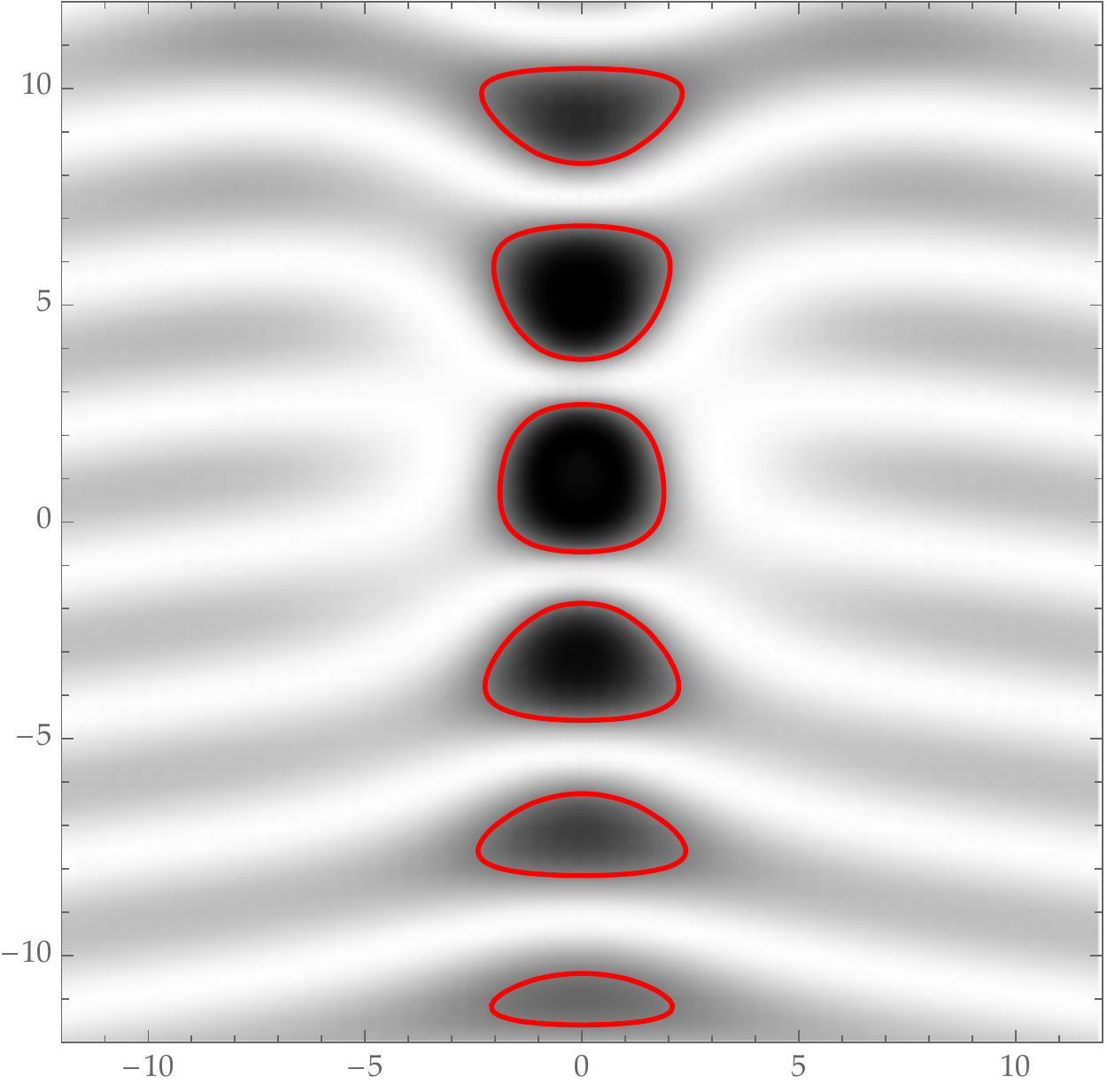}
\end{center}
\caption{Plots of $\cos(u_N(\epsilon_NX,t_\mathrm{gc}+\tau_1b^{-1}\epsilon_N^{4/5}+\epsilon_NT))$ over the $(X,T)$-plane for the fluxon condensate for $G(x)=-\mathrm{sech}(x)$ with $N=8$ (left) and $N=16$ (right).  Here $\tau_1\approx 2.375$ is the (real, positive) closest pole of $h(\tau)$ to the origin and the scaling factor $b$ is given by $b=(m_\gc(1-m_\gc))^{1/4}\rho(m_\gc)/(2\sigma)$ where $\rho(m_\gc)$ is given by \eqref{eq:rho-func-define} and $\sigma$ is defined by \eqref{eq:SIGMA} below.
}
\label{fig:zoom-2}
\end{figure}
The two panels in Figure~\ref{fig:zoom-2} show similar defects.  Moreover, extracting from the $\epsilon$-independent numerics described in Sections~\ref{sec:modulated-librational-wave-region} and \ref{sec:boundary} below the approximate value of $m_\mathrm{gc}\approx 0.416708$, we may compare with plots of the exact solution $U(X,T;m_\mathrm{gc},\Omega_{\mathrm{p},N})$ for different values\footnote{Determining precisely which values of $\Omega_{\mathrm{p},N}$ correspond to the plots in Figure \ref{fig:zoom-2} requires knowing the value of $\Phi_\mathrm{gc}=\Phi(0,t_\mathrm{gc})$ which is global information that we did not compute numerically.} of $\Omega_{\mathrm{p},N}$; see Figure~\ref{fig:zoom-2-compare}.
\begin{figure}[h]
\begin{center}
\includegraphics[height=0.24\linewidth]{fig/Legend-SMALL.pdf}%
\includegraphics[height=0.24\linewidth]{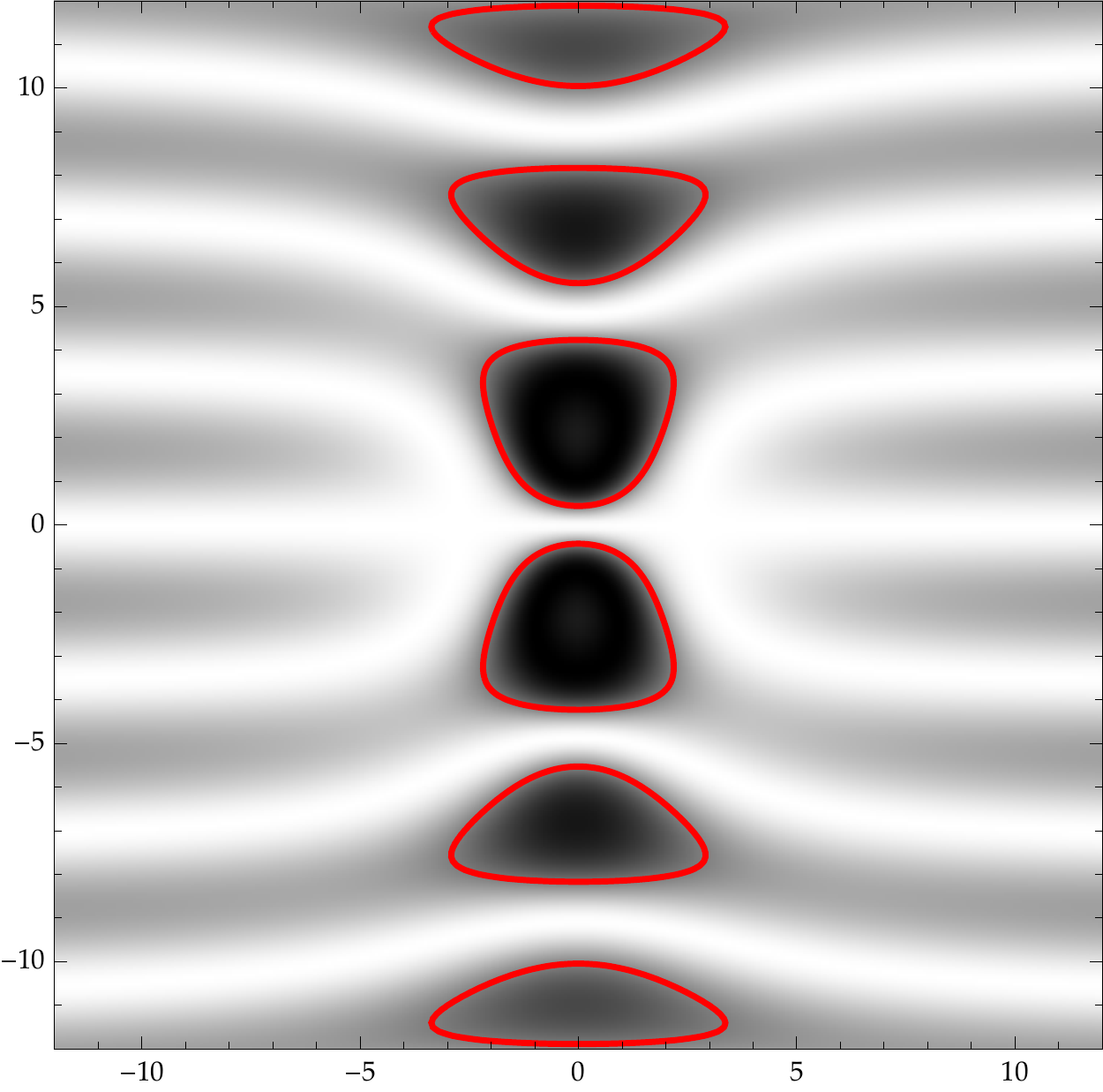}%
\includegraphics[height=0.24\linewidth]{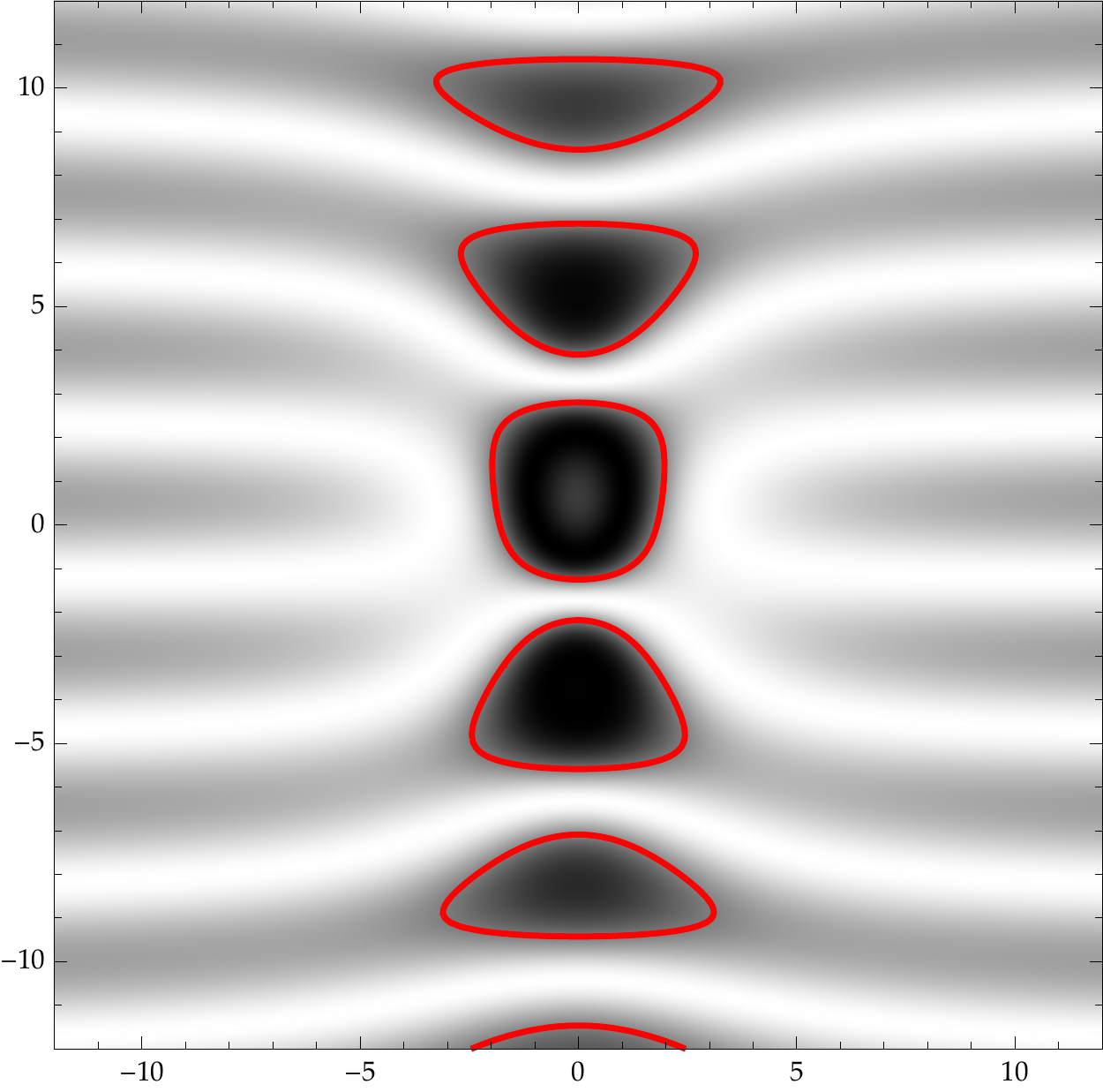}%
\includegraphics[height=0.24\linewidth]{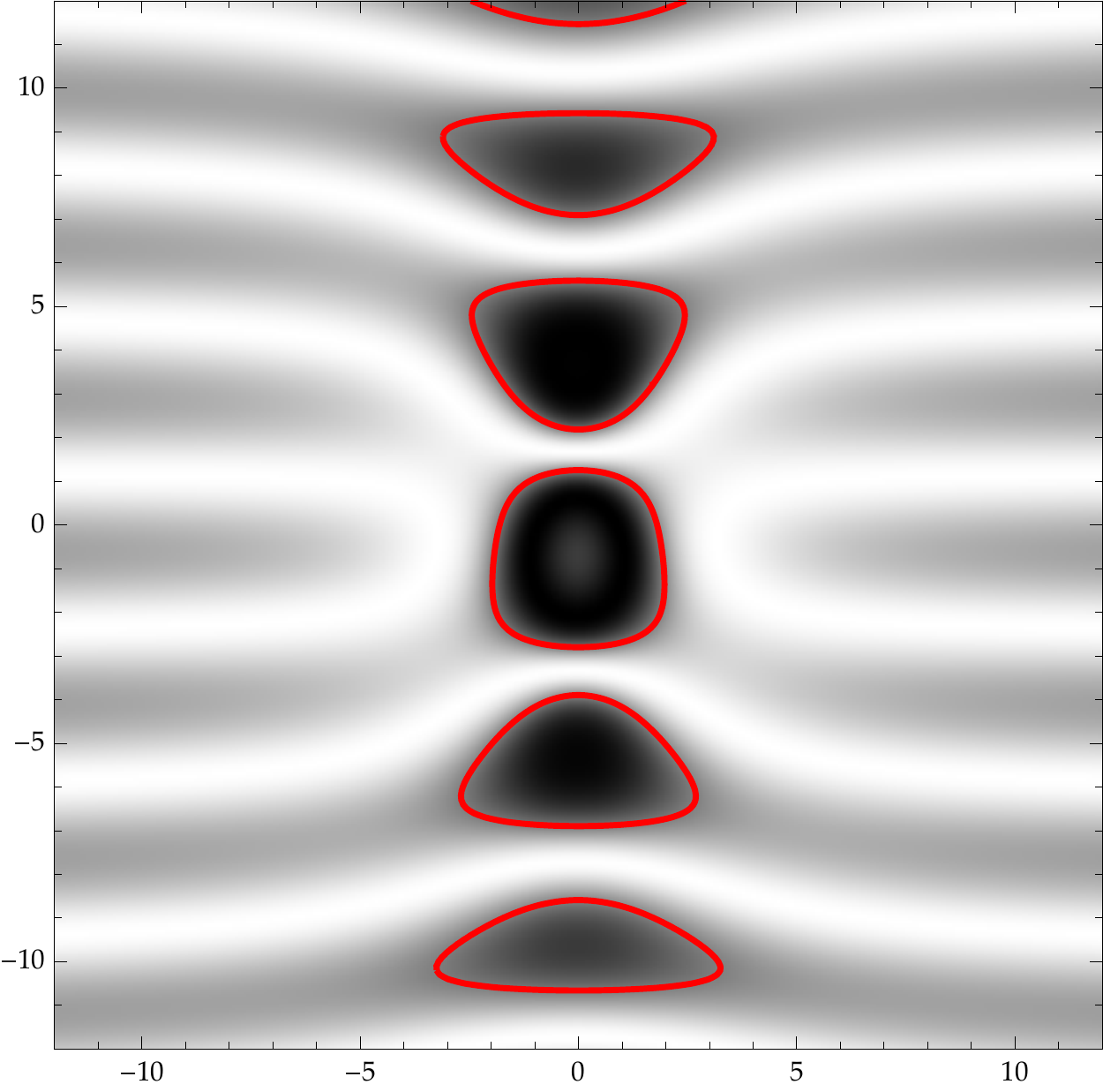}%
\includegraphics[height=0.24\linewidth]{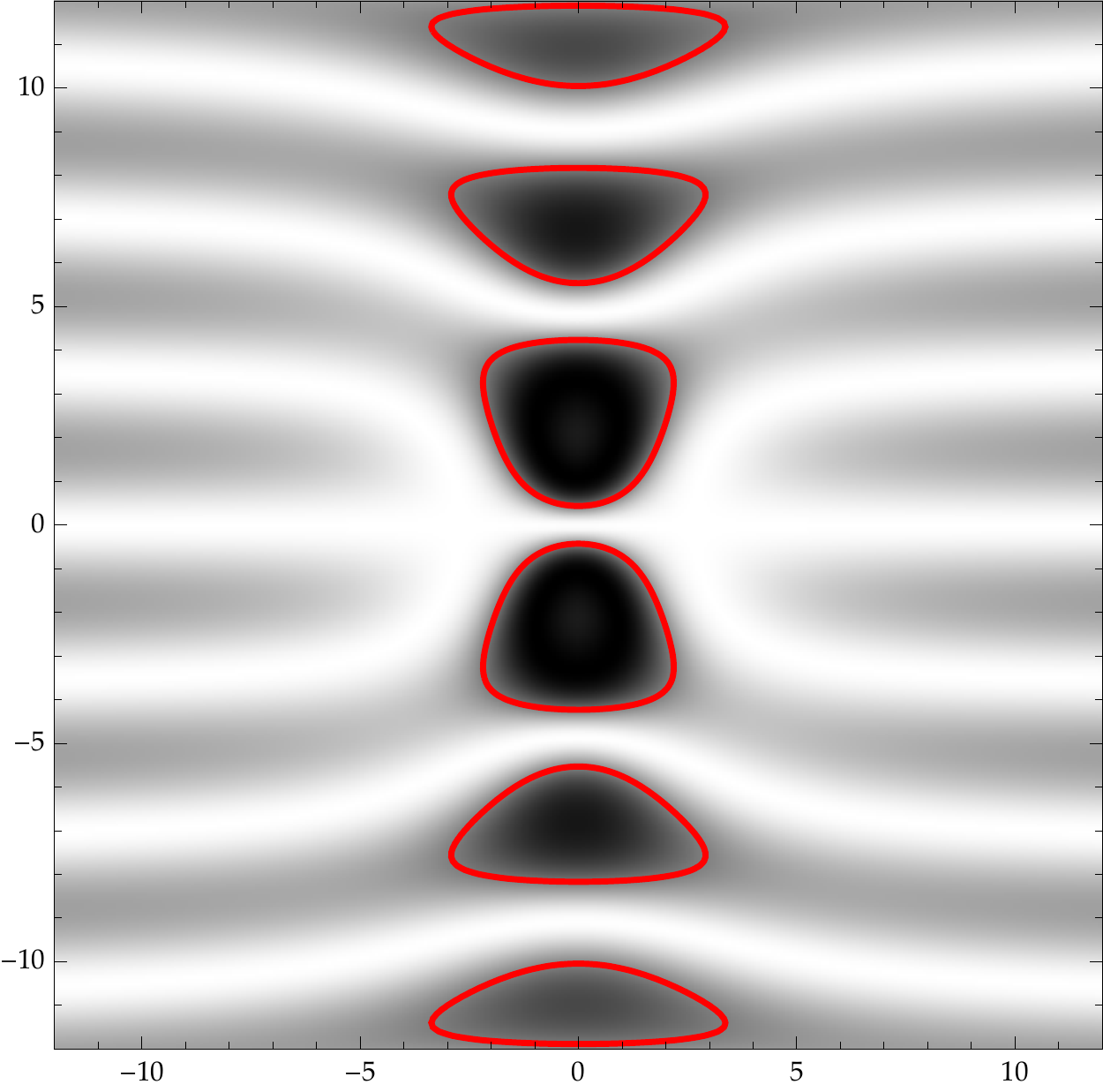}
\end{center}
\caption{Plots of $\cos(U(X,T;0.416708,\Omega))$ over the $(X,T)$-plane for $\Omega=0,\tfrac{1}{3}\pi,\tfrac{2}{3}\pi,\pi$, left-to-right.  Compare with Figure~\ref{fig:zoom-2}.}
\label{fig:zoom-2-compare}
\end{figure}

According to Theorem~\ref{thm:NearThePoles}, similar agreement with the same row of plots would be expected were one to suitably blow up neighborhoods of any of the other defects visible in the plots of $u_N(x,t)$ shown in Figures~\ref{fig:G(x)=-sech(x)--cos(u_8,u_16)}--\ref{fig:zoom-1}.  Indeed, given the impulse profile $G$ defining the fluxon condensate, the value $m=m_\mathrm{gc}$ becomes fixed, so the only thing that can differ in the limiting exact solution defect for each tritronqu\'ee pole is the value of the phase parameter $\Omega=\Omega_{\mathrm{p},N}$, which is asymptotically large, proportional to $N$.  The elliptic modulus parameter $m=m_\mathrm{gc}$ would however be different for different impulse profiles $G$ (also for different catastrophe points for the same impulse profile $G$, should any others exist).  

Finally, we observe that although our results concern fluxon condensates $\{u_N(x,t)\}_{N=1}^\infty$ that, while systematically constructed for general impulse profiles $G$ are only related to the solution of the Cauchy initial value problem \eqref{eq:SemiClassicalsG}--\eqref{eq:InitialConditionFG} by the asymptotic statement\footnote{Recall that the Satsuma-Yajima impulse profiles of the form \eqref{eq:sech-IC} are notable exceptions.  For these profiles, there is no approximation at all when $\epsilon=\epsilon_N$, i.e., the fluxon condensate exactly matches the given Cauchy data at $t=0$.} in Proposition~\ref{prop:IC-approximation}, direct numerical simulations of the solution of the Cauchy problem strongly suggest that similar results hold true in that setting as well.  See \cite{Lu2018} for several such numerical simulations.  

\subsection{Context and relation of results to other works}
The results described above may be considered as an analogue for the semiclassical sine-Gordon equation \eqref{eq:SemiClassicalsG} of corresponding results obtained by Bertola and Tovbis for the semiclassical focusing nonlinear Schr\"odinger equation 
\begin{equation}
\ii \epsilon q_t + \epsilon^2 q_{xx}+2|q|^2q=0.  
\label{eq:NLS}
\end{equation}
Concretely, one may compare \cite[Theorem 5.4]{BertolaTovbis2014} with Theorem~\ref{thm:AwayFromPoles} and \cite[Theorem 6.7]{BertolaTovbis2014} with Theorem~\ref{thm:NearThePoles}.  In particular, for $(x,t)$ near a gradient catastrophe point for the dispersionless limit of \eqref{eq:NLS}, a phase correction proportional to $\epsilon^{1/5}\mathrm{Re}\{\kappa h(\tau)\}$ modifies a finite-amplitude leading pre-breaking approximation of $q$ ($\kappa$ is a complex constant, and $\tau:\mathbb{R}^2\to\mathbb{C}$ is a real-linear map), provided that $\tau$ is bounded away from the poles of the tritronqu\'ee Hamiltonian\footnote{Bertola and Tovbis phrase their result in terms of one of the other four tritronqu\'ee solutions of a Painlev\'e-I equation written with a different normalization than \eqref{eq:PI-intro}.} $h$.  When $\tau$ is near a pole of $h$, there is a new leading term in the approximation of $q$, namely the famous Peregrine breather solution of \eqref{eq:NLS} with amplitude peak at $(X,T)=(0,0)$ in coordinates $X=(x-x_\mathrm{p})/\epsilon$ and $T=(t-t_\mathrm{p})/\epsilon$.  The latter is really a one-parameter family of solutions, parametrized by the finite limiting value of the pre-breaking approximation of the amplitude $|q(x,t)|$ as $(x,t)$ approaches the gradient catastrophe point.  The Peregrine solution is a model for rogue waves, and it exhibits a peak in amplitude at $(X,T)=(0,0)$ exactly three times that of the background value to which it decays as $(X,T)\to\infty$ in all directions.  Notably, the approximation of $q(x,t)$ near every pole of $h$ is given by exactly the same Peregrine solution once the amplitude at the catastrophe point is fixed.  By contrast, one sees from Theorem~\ref{thm:NearThePoles} that in the sine-Gordon problem the limiting solution is generally different for every complex-conjugate pair of poles of $h$ and for every $N$, with the difference entering via the phase parameter $\Omega=\Omega_{\mathrm{p},N}$.  This is a reflection of the fact that the background wave at the catastrophe point is a more complicated type of solution for sine-Gordon (genus $1$, built from elliptic functions) than for the nonlinear Schr\"odinger equation (genus $0$, built from elementary functions).  

The work of Bertola and Tovbis was motivated in part by a universality conjecture formulated by Dubrovin, Grava, and Klein \cite{DubrovinGravaKlein2009} predicting that the behavior of solutions of \eqref{eq:NLS} near a generic gradient catastrophe point of the dispersionless approximation should be independent of initial conditions.  Later, the same authors with Moro \cite{DubrovinGravaKleinMoro2015} extended the notion of universality to the setting of general dispersive perturbations of general elliptic $2\times 2$ quasilinear systems assumed without loss of generality to be in diagonal (Riemann-invariant) form.  Using formal Hamiltonian perturbation theory and the assumption of a solution of the unperturbed elliptic system exhibiting a generic (elliptic-umbilic) gradient catastrophe, the authors of \cite{DubrovinGravaKleinMoro2015} argued that the first correction term induced by the dispersion near the catastrophe point for the leading term should be proportional to the tritronqu\'ee solution $y(\tau)$ of the Painlev\'e-I equation \eqref{eq:PI-intro} in suitable local coordinates.  This universality prediction is therefore stronger, as it asserts that the same asymptotic behavior occurs regardless of both initial conditions \emph{and} the equation of motion.  

Our results, which connect multiscale asymptotics near a catastrophe point of the elliptic Whitham modulation equations \eqref{eq:Whitham-intro} --- a different elliptic system than the dispersionless form of \eqref{eq:NLS} --- with the tritronqu\'ee solution $y(\tau)$, therefore add rigorous evidence to the broader universality conjectures of \cite{DubrovinGravaKleinMoro2015}.  We hesitate to say that our results can be \emph{directly} compared with \cite[Conjecture 4.4]{DubrovinGravaKleinMoro2015} only because our starting point is the sine-Gordon equation \eqref{eq:SemiClassicalsG} itself, from which the corresponding elliptic quasilinear Whitham system \eqref{eq:Whitham-intro} is obtained only after a essential process of averaging over rapid oscillations.  Hence it is not clear to us how to express the exact sine-Gordon equation as a dispersive correction of its Whitham system.  The situation is different for the focusing nonlinear Schr\"odinger equation \eqref{eq:NLS}, which after a change of variables (the Madelung transform $q\mapsto (\rho,\mu):=(|q|^2,\epsilon\mathrm{Im}\{q_x\})$) takes precisely the form of a dispersive correction of an elliptic quasilinear system, \emph{without any averaging}.  

\subsection{Outline of the paper and discussion of techniques}
We begin in Section~\ref{sec:RHP} by making Definition~\ref{def:fluxon-condensate} more precise via the formulation of a Riemann--Hilbert problem given the phase integral $\Psi$ associated with an initial impulse profile $G$ as in \eqref{eq:Psi_Initial_Condition}.  We also introduce some basic deformations of this Riemann--Hilbert problem, in particular removing many pole singularities in favor of jumps along suitable contours.  Then, in Section~\ref{sec:steepest-descent} we use an appropriate $g$-function to stabilize the problem, which is then converted to a small-norm problem in the limit $\epsilon\to 0$ by comparison with a suitable parametrix.  This analysis is valid for $(x,t)$ in the modulated librational wave region, a notion that we define precisely.  We also define in Section~\ref{sec:steepest-descent} the notion of a simple gradient catastrophe point.  

The rest of the paper is concerned with the proofs of Theorem~\ref{thm:AwayFromPoles} (in Section~\ref{sec:AwayFromPoles}) and of Theorem~\ref{thm:NearThePoles} (in Section~\ref{sec:NearThePoles}).  While there are similarities between the multiscale steepest descent analysis in \cite{BertolaTovbis2014} and our proofs, we experience several new complications related to the fact that the background solution that is perturbed near the catastrophe point is associated with an elliptic curve (genus $1$) rather than a Riemann sphere (genus $0$).  We also take a different approach at several key points.  For instance, the gradient catastrophe of the Whitham system is mirrored in a kind of singularity in the $g$-function.  In \cite{BertolaTovbis2014} the singularity of the $g$-function is regularized by working in new local coordinates valid near the catastrophe point, however in our approach we simply modify the $g$-function in a way that unfolds the singularity, so that the modified $g$-function retains all of the desirable properties of the original for $(x,t)$ near $(0,t_\mathrm{gc})$ but has no singularity there at all.  Another difference appears in the way that a local parametrix based on the Painlev\'e-I tritronqu\'ee solution is modified near the poles of the latter.  In \cite{BertolaTovbis2014} the local parametrix is replaced with another one via a connection with a quartic oscillator equation, whereas in our approach a straightforward Schlesinger/Darboux transformation involving left multiplication by a linear function (the corresponding matrix factor in \cite{BertolaTovbis2014} appears to be multi-valued) solves this problem effectively; see Lemma \ref{lemma:Ttilde}.

The appendix of the paper contains proofs of some more technical lemmas that require details of function theory on elliptic curves, as well as a catalog of plots of the defect solutions.  

\subsection{Notation}
Throughout the paper, we use $\sigma_j$, $j=1,2,3$, to denote the Pauli matrices as:
\begin{equation}
\sigma_1:=\begin{bmatrix}0&1\\ 1&0\end{bmatrix},\quad\sigma_2:=\begin{bmatrix}0&-\ii\\ \ii&0\end{bmatrix},\quad\sigma_3:=\begin{bmatrix}1&0\\0&-1\end{bmatrix},
\label{eq:Pauli}
\end{equation}
and we define $\sigma_\pm$ as
\begin{equation}
	\sigma_+:=\begin{bmatrix}0&1\\0&0\end{bmatrix},\quad\sigma_-:=\begin{bmatrix}0&0\\1&0\end{bmatrix}.
\label{eq:sigmapm}
\end{equation}

Except for  these five matrices and the identity matrix $\Id$, all other matrices are denoted as bold capital letters such as $\bfA$.

\subsection{Acknowledgments}

The authors gratefully acknowledge helpful discussions with Marco Bertola, Tamara Grava, Liming Ling, among others. We also thank Marco Fasondini, J. A. C. Weideman and Bengt Fornberg for the numerical data of the Painlev\'e I tritronqu\'ee solution, and Robert Buckingham for the code that produces fluxon condensates for the sine-Gordon equation with $\mathrm{sech}$ initial data. Both authors were supported by the National Science Foundation on grants DMS-1206131 and DMS-1513054.  The second author was additionally supported by the same sponsor on grant number DMS-1812625.

\section{Riemann--Hilbert Problems for Fluxon Condensates}
\label{sec:RHP}
\subsection{A discrete Riemann--Hilbert problem for fluxon condensates}
Recalling the functions $E(w)$ and $D(w)$ defined by \eqref{eq:E-and-D-define},
let $Q(w)$ be defined as
\begin{equation}
Q(w)=Q(w;x,t):=E(w)x+D(w)t,\quad |\arg(-w)|<\pi.
\label{eq:Q-Definition}
\end{equation}
Next, recall the positive imaginary numbers $\lambda_0,\dots,\lambda_{N-1}$ determined by the Bohr-Sommerfeld quantization rule \eqref{eq:BohrSommerfeld}, and 
define the Blaschke product
\begin{equation}
\Pi_N(w):=\displaystyle{\prod_{k=0}^{N-1}\frac{E(w)+\lambda_{k}}{E(w)-\lambda_{k}}},\quad|\arg(-w)|<\pi.
\end{equation}
The approximate eigenvalues lie in the imaginary interval between $\lambda=0$ and $\lambda= -\tfrac{1}{4}\ii G(0)$.  Assuming $-2<G(0)<0$, each approximate eigenvalue is the image under $E$ of exactly two distinct complex-conjugate points on the unit circle, each of which is a potential singularity of $\Pi_N$.  Also, $E(w)$ takes no negative imaginary values, so $\Pi_N(w)$ has poles on the unit circle (at the preimages under $E$ of the approximate eigenvalues) but no zeros in the indicated domain.  It can further be shown under the indicated condition on $G(0)$ that all poles are simple.  In summary, $\Pi_N(w)$ has exactly $2N$ simple poles in complex-conjugate pairs on the unit circle in the $w$-plane, in particular they are confined to the arc of the unit circle joining $w=\ee^{\ii\mu}$ with $w=\ee^{-\ii\mu}$ via $w=1$, where $0<\mu<\pi$ and $E(\ee^{\pm\ii\mu})=-\tfrac{1}{4}\ii G(0)$ is a point on the positive imaginary axis lying below the critical value $\tfrac{1}{2}\ii$ of $E(\cdot)$.  We refer to the indicated arc of the unit circle as $P_\infty$ and to the finite set of poles of $\Pi_N(w)$ as $P_N\subset P_\infty$.  $\Pi_N(w)$ is analytic and nonvanishing for $w\in\mathbb{C}\setminus(P_N\cup\mathbb{R}_+)$.

The fluxon condensate associated with an impulse profile $G$ via the approximate eigenvalues $\lambda_0,\dots,\lambda_N$ is then encoded in the following Riemann--Hilbert problem (cf., \cite[Riemann--Hilbert Problem 2.1]{BuckinghamMiller2013}).
\begin{rhp}[Riemann--Hilbert problem for librational/breather fluxon condensates]
Let $N\in\mathbb{Z}_{>0}$ be given and let $\lambda_0,\dots,\lambda_N$ be the approximate eigenvalues associated with an impulse profile $G(\cdot)$ satisfying Assumption~\ref{assumption:G} and $-2<G(0)<0$.
Find a $2 \times 2$ matrix function $\bfH(w)=\bfH_N(w;x,t)$ that satisfies the following conditions:
\noindent
\begin{itemize}
\item[]\textbf{Analyticity:}
$\bfH(w)$ is analytic for $w\in\mathbb{C}\setminus(P_N\cup \mathbb{R}_+)$ and continuous up to $\mathbb{R}_+$ from both half-planes (so in particular $\mathbf{H}(0)$ is well-defined).
\item[]\textbf{Jump Condition:} The boundary values $\mathbf{H}_\pm(w)$ taken for $w>0$ from $\pm\mathrm{Im}\{w\}>0$ are related by the jump condition 
\begin{equation}
\bfH_+(w)=\sigma_2\bfH_-(w)\sigma_2,\quad w>0.
\end{equation}
Note that in particular the well-defined matrix $\mathbf{H}(0)$ satisfies $\mathbf{H}(0)=\sigma_2\mathbf{H}(0)\sigma_2$.
\item[]\textbf{Singularities:} Each of the points of $P_N$ is a simple pole of $\bfH(w)$. If $y\in P_N$ with $E(y)=\lambda_{k}$ for $k=0,\dots,N-1$ (for each $k$ the points $y$ form a complex-conjugate pair on the unit circle), then
\begin{equation}
\underset{w=y}{\mathrm{Res}}\bfH(w)=\lim_{w\to y}\bfH(w)\begin{bmatrix}
	0&0\\(-1)^{k+1}\underset{w=y}{\mathrm{Res}}\,\ee^{2\ii Q(w;x,t)/\epsilon}\Pi_N(w)&0
\end{bmatrix},\quad \epsilon=\epsilon_N.
\label{eq:basicRHP-singularities}
\end{equation}
\item[]\textbf{Normalization:} The following normalization condition holds:
\begin{equation}
	\lim_{w\to\infty}\bfH(w)=\Id,
\end{equation}
the limit being uniform with respect to direction, including parallel to $\mathbb{R}_+$.
\end{itemize}
\label{rhp:basicRHP}
\end{rhp}
One way to solve this Riemann--Hilbert problem is to make a suitable ansatz that is rational in $\sqrt{-w}$ and consistent with the jump and normalization conditions, with simple poles in $P_N$, and then enforce on the ansatz the conditions \eqref{eq:basicRHP-singularities}.  This practical approach results in a linear algebraic system of dimension proportional to $N$; however, it is not immediately clear whether the determinant of the system could possibly vanish for some or even all $(x,t)\in\mathbb{R}^2$.  A less practical approach that nonetheless establishes unique solvability for all $(x,t)\in\mathbb{R}^2$ is to appeal to the vanishing lemma of Zhou \cite{Zhou1989}.  To verify the conditions of the vanishing lemma one must first unfold the $w$-plane to the upper half-plane by the substitution $\mathbf{F}(\ii\sqrt{-w})=\mathbf{H}(w)$ and then fill in the lower half-plane by defining $\mathbf{F}(z)=\sigma_2\mathbf{F}(-z)\sigma_2$.  The resulting matrix $\mathbf{F}(z)$ has no jump along the real axis but has twice as many poles as $\mathbf{H}(w)$ had.  However the residue conditions inherited by $\mathbf{F}(z)$ have the necessary Schwarz symmetry with respect to reflection in the real axis to allow the vanishing lemma to be applied once the poles are removed by local disk substitutions.  From the uniqueness of the solution of Riemann--Hilbert Problem~\ref{rhp:basicRHP} it follows that $\mathbf{H}(w)$ satisfies the Schwarz symmetry condition
\begin{equation}
\mathbf{H}(w^*)=\mathbf{H}(w)^*,\quad (x,t)\in\mathbb{R}^2,\quad N\in\mathbb{Z}_{>0},
\label{eq:Schwarz-H}
\end{equation}
i.e., all four matrix elements of $\mathbf{H}(w)$ are Schwarz-symmetric functions of $w$.

It follows from the global existence of the solution for all $(x,t)\in\mathbb{R}^2$, via a dressing argument (see \cite[Proposition 2.1]{BuckinghamMiller2013}), that the function $u=u_N(x,t)$ defined modulo $4\pi\mathbb{Z}$ by 
\begin{equation}
\cos\left(\frac{1}{2}u_N(x,t)\right)=H_{N,11}(0;x,t)\quad\text{and}\quad
\sin\left(\frac{1}{2}u_N(x,t)\right)=H_{N,21}(0;x,t)
\label{eq:cos-sin-H0}
\end{equation}
is a global solution of the sine-Gordon equation in the form \eqref{eq:SemiClassicalsG} for $\epsilon=\epsilon_N$.  This is the precise meaning of the heuristic notion of fluxon condensates given in Definition~\ref{def:fluxon-condensate}, under the additional assumption $-2<G(0)<0$ which guarantees that the condensate consists of breathers only.  By following the approach given in \cite[the ``Aside'' beginning on p.\@ 970]{LyngMiller2007}, one can show that 
\begin{equation}
u_N(-x,t)=u_N(x,t),\quad \text{modulo $4\pi\mathbb{Z}$}.
\label{eq:condensate-even}
\end{equation}

If one would like to analyze the solution of Riemann--Hilbert Problem~\ref{rhp:basicRHP} in the limit $N\to\infty$ for general $(x,t)\in\mathbb{R}^2$, it turns out to be useful to first make certain substitutions rational in $\sqrt{-w}$ depending on the coordinates $(x,t)$; in particular one can select a subset $\Delta\subset P_N$ and aim to reverse the triangularity of the residue matrices in \eqref{eq:basicRHP-singularities} for poles $y\in\Delta$ while preserving the triangularity for poles $y\in\nabla:=P_N\setminus\Delta$.  This is potentially useful because when the triangularity is reversed, the sign of the exponent $2\ii Q(w;x,t)/\epsilon$ changes as well, so exponential growth can be converted into exponential decay.  However, it turns out that under the condition $-2<G(0)<0$ that we assume for the rest of this paper, with $x\ge 0$ and $t>0$ as is sufficient given \eqref{eq:condensate-even}, the configuration in which all residue matrices are lower triangular as written already in \eqref{eq:basicRHP-singularities} suffices.  Hence we will take $\Delta=\emptyset$ and $\nabla=P_N$ for readers familiar with the notation in \cite{BuckinghamMiller2013}.  Equivalently, we will simply take Riemann--Hilbert Problem~\ref{rhp:basicRHP} without modification as the starting point for all of our analysis.

\subsection{Interpolation of residues and removal of poles}
Let $\theta_0(w)$ denote the composition of $\Psi(\lambda)$ with $\lambda=E(w)$:
\begin{equation}
	\theta_0(w):=\Psi(E(w)).
\label{eq:theta0_Definition}
\end{equation}
Under Assumption~\ref{assumption:Psi} this function is analytic in the $\mathbb{C}_\pm$ parts of a sufficiently large domain containing the arc $P_\infty$, but it has a jump discontinuity across $\mathbb{R}_+$ near $w=1$ inherited from $E(\cdot)$.  We suppose that domains $\Omega_\pm$, $\Omega_-=\Omega_+^*$ such as are illustrated in Figure~\ref{fig:OmegaPlusMinus} are contained within this domain of analyticity.
\begin{figure}[h]
\begin{center}
\includegraphics{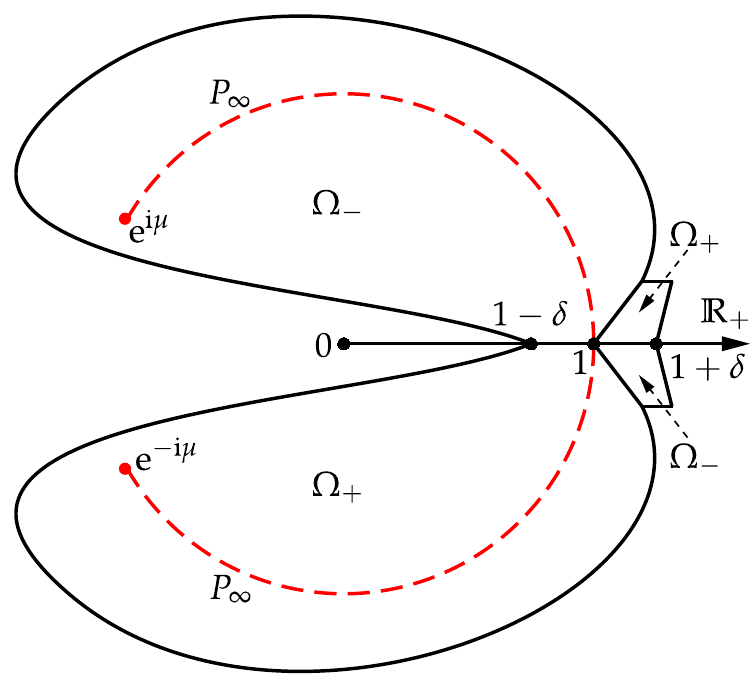}%
\includegraphics{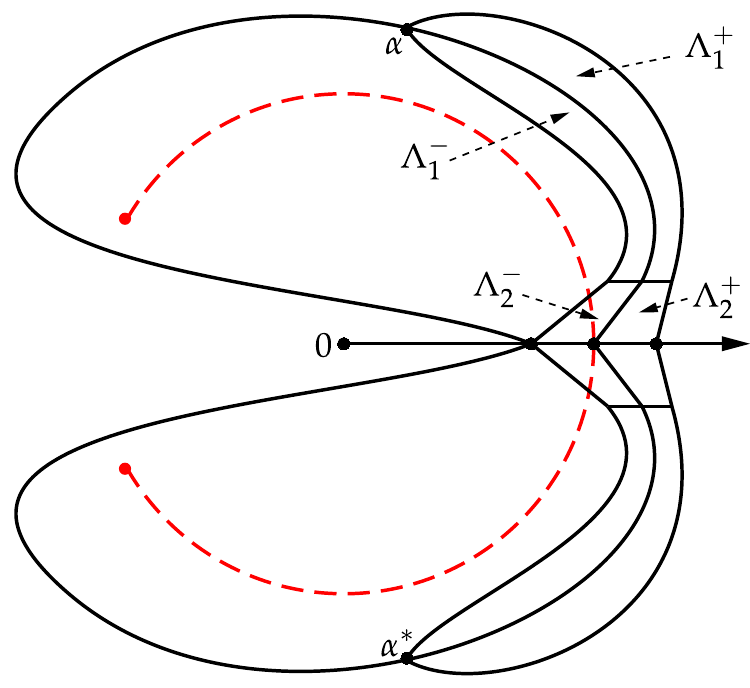}
\end{center}
\caption{Left panel:  the domains $\Omega_+$ and $\Omega_-=\Omega_+^*$ and their relation with the arc $P_\infty$ in which the poles of the solution $\mathbf{H}(w)$ of Riemann--Hilbert Problem~\ref{rhp:basicRHP} are located.  Here $\delta>0$ is a sufficiently small positive number.  Note that $\Omega_-$ (respectively, $\Omega_+$) contains the part of $P_\infty$ in the upper (respectively, lower) half-plane.  Right panel:  the lens domains $\Lambda_1^\pm$ and $\Lambda_2^\pm$ in $\mathbb{C}_+$ and their images under Schwarz reflection in $\mathbb{C}_-$.  Note that $\Lambda_2^+=\Omega_+\cap\mathbb{C}_+$ while $\Lambda_2^-$ necessarily contains points on both sides of $P_\infty$.  Also, $\Lambda_1^\pm$ lie outside of the unit circle.}
\label{fig:OmegaPlusMinus}
\end{figure}
Consider the following definition
(cf., \cite[Eqn. (3.6), lines 1 and 3]{BuckinghamMiller2013}) relative to the domains $\Omega_\pm$ 
\begin{equation}
	\bfM(w):=\begin{cases}
		\bfH(w)\begin{bmatrix}
		1&0\\
		\mp\ii\Pi_N(w)\ee^{[2\ii Q(w;x,t)\pm\ii\theta_0(w)]/\epsilon}&1
		\end{bmatrix}, & w\in\Omega_\pm,\quad \epsilon=\epsilon_N,\\
		\bfH(w), &w\in\mathbb{C}\backslash(\overline{\Omega}\cup\mathbb{R}_+),\quad\Omega:=\Omega_+\cup\Omega_-.
	\end{cases}
	\label{eq:H-to-M}
\end{equation}
It is a consequence of the Bohr-Sommerfeld quantization rule \eqref{eq:BohrSommerfeld} defining the locations of the poles of $\Pi_N$ that $\mathbf{M}(w)=\mathbf{M}_N(w;x,t)$ has only removable singularities at these poles, and hence can be considered to be a matrix-valued analytic function of $w\in\mathbb{C}\setminus(\partial\Omega_+\cup\partial\Omega_-\cup\mathbb{R}_+)$, i.e., $\mathbf{M}(w)$ is analytic in the complement of the solid black contour illustrated in the left-hand panel of Figure~\ref{fig:OmegaPlusMinus}.  Moreover, the matrix function $\mathbf{M}(w)$ inherits the Schwarz symmetry of $\mathbf{H}(w)$ in the form \eqref{eq:Schwarz-H}.

We take the two arcs of $P_\infty$ in the upper and lower half-planes to be oriented toward  $w=1$, and define the analytic function
\begin{equation}
L(w):=\frac{\sqrt{-w}}{\pi}\int_{P_\infty}\frac{\theta_0(y)\,\dd y}{\sqrt{-y}(y-w)},\quad w\in\mathbb{C}\setminus(P_\infty\cup\mathbb{R}_+).
\label{eq:L-function-define}
\end{equation}
For $w\in P_\infty$ with $w\neq 1$, the average of the distinct boundary values taken by $L(w)$ at $w$ is denoted
\begin{equation}
\overline{L}(w):=\frac{1}{2}\left(L_+(w)+L_-(w)\right),\quad w\in P_\infty,\quad w\neq 1.
\label{eq:Lbar-function-define}
\end{equation}
Since the boundary values of $L(w)$ on $P_\infty$ are related by
\begin{equation}
L_+(w)-L_-(w)=2\ii\theta_0(w),\quad w\in P_\infty,
\label{eq:L-function-difference}
\end{equation}
(according to the Plemelj formula) and $2\ii\theta_0(w)$ is analytic on $P_\infty$ with $w\neq 1$, it follows that $\overline{L}(w)$ is analytic where defined as well.  Related functions $Y_N(w)$ and $T_N(w)$ are then given by the following definitions:
\begin{equation}
Y_N(w):=\Pi_N(w)\ee^{-L(w)/\epsilon}\quad\text{and}\quad T_N(w):=2\Pi_N(w)\cos(\epsilon^{-1}\theta_0(w))\ee^{-\overline{L}(w)/\epsilon},\quad\epsilon=\epsilon_N,
\label{eq:Y-and-T-define}
\end{equation}
and these functions are analytic wherever all factors on the right-hand side are defined in each case.  Actually, the Bohr-Sommerfeld quantization rule \eqref{eq:BohrSommerfeld} implies that the poles of $\Pi_N(w)$ are all removable singularities for $T_N(w)$.  Now, taking the curve $(\partial\Omega_-\setminus\partial\Omega_+)\cap\mathbb{C}_+$ to be oriented in the direction away from the point $w=1-\delta$, the jump condition across this arc satisfied by the matrix $\mathbf{M}(w)$ can be written in terms of $Y_N(w)$ as
\begin{equation}
\mathbf{M}_+(w)=\mathbf{M}_-(w)\begin{bmatrix}1 & 0\\-\ii Y_N(w)\ee^{k(w)/\epsilon} & 1\end{bmatrix},\quad w\in (\partial\Omega_-\setminus\partial\Omega_+)\cap\mathbb{C}_+,\quad\epsilon=\epsilon_N,
\label{eq:Jump-M-away}
\end{equation}
in which we have introduced an exponent function $k(w)$ defined by
\begin{equation}
k(w):=2\ii Q(w)+L(w)-\ii\theta_0(w).
\label{eq:k-function-define}
\end{equation}
The jump of $\mathbf{M}(w)$ across $(\partial\Omega_+\setminus\partial\Omega_-)\cap\mathbb{C}_+$ can also be written in terms of $Y_N(w)$ in a similar way.
On the other hand, letting the arc $\partial\Omega_+\cap\partial\Omega_-\cap\mathbb{C}_+$ be oriented toward $w=1$, with the help of \eqref{eq:Lbar-function-define}--\eqref{eq:L-function-difference} (and the fact that the indicated contour lies to the left of $P_\infty$) we obtain the jump condition
\begin{equation}
\mathbf{M}_+(w)=\mathbf{M}_-(w)\begin{bmatrix}1 & 0\\-\ii T_N(w)\ee^{k(w)/\epsilon} & 1\end{bmatrix},\quad w\in\partial\Omega_+\cap\partial\Omega_-\cap\mathbb{C}_+,\quad\epsilon=\epsilon_N.
\label{eq:Jump-M-near}
\end{equation}

The appearance of $Y_N(w)$ and $T_N(w)$ in the jump conditions essentially packages some factors proportional to $\Pi_N(w)$ that can be effectively analyzed for large $N$ by interpreting $\Pi_N(w)$ as the exponential of a Riemann sum.  The details of this analysis are not important here, and they can be found in \cite{BaikKriecherbauerMcLaughlinMiller2007}.  However, we will make use of the following simplified version of \cite[Proposition 3.1]{BuckinghamMiller2013}.
\begin{proposition}
Under Assumption~\ref{assumption:Psi}, $Y_N(w)$ is analytic for $w\in\mathbb{C}\setminus(P_\infty\cup\mathbb{R}_+$) and if $A_1$ is an annulus centered at $w=1$ with sufficiently small outer radius and arbitrarily small inner radius, then $T_N(w)$ is analytic for $w\in A_1\setminus\mathbb{R}$.  Furthermore, for $\epsilon=\epsilon_N$, $Y_N(w)=1+\mathcal{O}(\epsilon)$ holds uniformly for $w\in\mathbb{C}\setminus\mathbb{R}_+$ bounded away from $P_\infty$ and $T_N(w)=1+\mathcal{O}(\epsilon)$ holds uniformly for $w\in A_1\setminus\mathbb{R}$ as $N\to\infty$.
\label{prop:T-and-Y}
\end{proposition}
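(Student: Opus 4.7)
The plan is to establish analyticity directly from the definitions and to obtain the asymptotic estimates by recognizing the Bohr--Sommerfeld quantization \eqref{eq:BohrSommerfeld} as producing a midpoint Riemann partition. For the analyticity of $Y_N$ the observation is immediate: $\Pi_N(w)$ is meromorphic in $\mathbb{C}\setminus\mathbb{R}_+$ with simple poles confined to $P_N\subset P_\infty$, while $L(w)$ is analytic in $\mathbb{C}\setminus(P_\infty\cup\mathbb{R}_+)$ by inspection of \eqref{eq:L-function-define}. For $T_N$: Bohr--Sommerfeld places the simple zeros of $\cos(\epsilon^{-1}\theta_0(w))$ exactly at the simple poles of $\Pi_N(w)$, so $\Pi_N\cos(\epsilon^{-1}\theta_0)$ has only removable singularities there, and the function $\overline{L}$ (defined a~priori only on $P_\infty$ by \eqref{eq:Lbar-function-define}) extends analytically across $P_\infty\setminus\{1\}$ because the two analytic functions $L(w)-\ii\theta_0(w)$ on the $+$-side and $L(w)+\ii\theta_0(w)$ on the $-$-side both take the boundary value $\overline{L}$ on $P_\infty$ in view of \eqref{eq:L-function-difference}, and so serve as mutually analytic continuations across $P_\infty$.

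For the asymptotic estimate on $Y_N$, I would write $\log\Pi_N(w)=\sum_{k=0}^{N-1}h(\lambda_k;w)$ with $h(\lambda;w):=\log\bigl((E(w)+\lambda)/(E(w)-\lambda)\bigr)$. Under the change of variable $s=\Psi(\lambda)$ the points $\{\lambda_k\}$ are precisely the midpoints of a uniform partition of $[0,\Psi(0)]$ of step $\pi\epsilon$, so the composite midpoint rule gives
\begin{equation*}
\pi\epsilon\sum_{k=0}^{N-1} h(\lambda_k;w) = \int_0^{\Psi(0)} h(\Psi^{-1}(s);w)\,ds + \mathcal{O}(\epsilon^2),
\end{equation*}
uniformly in $w$ bounded away from $P_\infty\cup\mathbb{R}_+$ (where $h(\,\cdot\,;w)$ has uniformly bounded second derivatives along the integration path). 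Changing variables to $y$-integration on $P_\infty$ via $\lambda=E(y)$---the two arcs each parameterize the full imaginary interval, which contributes a factor~$1/2$---and then integrating by parts using $\theta_0(\ee^{\pm\ii\mu})=0$ together with $h(E(y);w)\to 0$ as $y\to 1$ (since $E(y)\to 0$) yields
\begin{equation*}
\epsilon\log\Pi_N(w) = -\frac{1}{2\pi}\int_{P_\infty}\theta_0(y)\,\frac{d}{dy}\log\frac{E(w)+E(y)}{E(w)-E(y)}\,dy + \mathcal{O}(\epsilon^2).
\end{equation*}
A direct computation (e.g.\ via the substitution $-w=b^2$, $-y=a^2$) yields $\tfrac{d}{dy}\log\bigl((E(w)+E(y))/(E(w)-E(y))\bigr)=-\bigl(\sqrt{-w}/\sqrt{-y}\bigr)\bigl[(y-w)^{-1}-(wy-1)^{-1}\bigr]$. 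The first bracketed term reproduces $L(w)$ via \eqref{eq:L-function-define}, and the involution $y\mapsto 1/y$ on $P_\infty$---which preserves both orientation and the value $\theta_0(y)$, since $E(1/y)=E(y)$ on the unit circle---converts the second term into $-L(w)$. Summing gives $\epsilon\log\Pi_N(w)=L(w)+\mathcal{O}(\epsilon^2)$, hence $Y_N(w)=1+\mathcal{O}(\epsilon)$ as claimed.

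For $T_N$ in $A_1\setminus\mathbb{R}$ I first handle $w$ bounded away from $P_\infty$: the analytic continuations of $\overline{L}$ give $T_N(w)=Y_N(w)\bigl(1+\ee^{\pm 2\ii\theta_0(w)/\epsilon}\bigr)$, with sign chosen so that $\mathrm{Im}\,\theta_0(w)$ renders the exponential decaying; hence $T_N=Y_N(1+\mathcal{O}(\ee^{-c/\epsilon}))=1+\mathcal{O}(\epsilon)$ in this region. The hard part is propagating this estimate uniformly through neighborhoods of the poles of $\Pi_N$ in $P_\infty\cap A_1$, where $\Pi_N$ and $\cos(\epsilon^{-1}\theta_0)$ are individually singular or vanishing. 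My plan is to invoke Cauchy's integral formula: $T_N-1$ is analytic in $A_1\setminus\mathbb{R}$ by the first part, so its value at any interior point can be recovered from its values on a contour looping around that point while remaining in the region where the previous estimate applies. A cleaner but more technical alternative is to redo the midpoint Riemann sum directly for $\log T_N$ written as a single contour integral, exploiting the pairwise cancellation at the Bohr--Sommerfeld points of the logarithmic singularities of $\log\Pi_N$ and $\log\cos(\epsilon^{-1}\theta_0)$, which would allow the argument used for $Y_N$ to apply uniformly through the poles.
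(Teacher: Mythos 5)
The paper itself does not prove this proposition --- it is stated as a simplified version of Proposition~3.1 of Buckingham--Miller (2013), with technical details deferred to Baik--Kriecherbauer--McLaughlin--Miller (2007) --- so there is no internal proof to compare against and your proposal must stand on its own. The analyticity claims and the $Y_N$ estimate are correct and use the standard mechanism: the Bohr--Sommerfeld points $\Psi(\lambda_k)=\pi\epsilon(k+\tfrac12)$ are midpoints of a uniform partition of $[0,\Psi(0)]$, the composite midpoint rule has error $\mathcal{O}(N\epsilon^3)=\mathcal{O}(\epsilon^2)$ uniformly for $w$ bounded away from $P_\infty$, the boundary terms in the integration by parts vanish ($\Psi$ vanishes at the endpoint $-\tfrac14\ii G(0)$, $h(\lambda;w)$ vanishes at $\lambda=0$), and the kernel identity plus the $y\mapsto 1/y$ symmetry of $P_\infty$ deliver $\epsilon\log\Pi_N=L(w)+\mathcal{O}(\epsilon^2)$. (Your added restriction to $w$ bounded away from $\mathbb{R}_+$ is unnecessary: the $\lambda$-derivatives of $h(\lambda;w)$ stay bounded as $w$ approaches $\mathbb{R}_+$ as long as $E(w)$ avoids the imaginary segment, which is guaranteed by being away from $P_\infty$.)

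The gap is exactly where you flag "the hard part": the uniform estimate of $T_N$ through the poles $P_N\subset P_\infty$. Your Cauchy-integral plan cannot work as stated, for a geometric reason. In each half $A_1\cap\mathbb{C}_\pm$, the arc $P_\infty$ runs from the inner to the outer boundary circle of the annulus, so it separates the half-annulus into two components. Consequently, no Jordan contour in the half-annulus can encircle a point $w_0\in P_\infty$ while staying off a neighborhood of $P_\infty$; it must cross $P_\infty$. So the contour cannot "remain in the region where the previous estimate applies," and a disk-based maximum-modulus argument fails for the same reason. The far-field estimate $T_N=Y_N\bigl(1+\ee^{\pm 2\ii\theta_0/\epsilon}\bigr)=1+\mathcal{O}(\epsilon)$ degenerates in two ways simultaneously near $P_\infty$: the exponential ceases to be small (its modulus is $1$ on $P_\infty$), and $Y_N$ blows up at $P_N$. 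What is needed is an explicit cancellation estimate at each Bohr--Sommerfeld point: writing the value of the removable singularity as the product of the residue of $Y_N$ at $w_k$ with the linear coefficient of the zero of $1+\ee^{2\ii\theta_0/\epsilon}$ there, one must show
\begin{equation*}
\frac{-4\ii\lambda_k\theta_0'(w_k)}{\epsilon E'(w_k)}\Biggl(\prod_{j\ne k}\frac{\lambda_k+\lambda_j}{\lambda_k-\lambda_j}\Biggr)\ee^{-L(w_k)/\epsilon}=1+\mathcal{O}(\epsilon)
\end{equation*}
uniformly in $k$, together with a local Taylor argument to interpolate between $w_k$ and the far field on a disk of radius comparable to the pole spacing. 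This is precisely what your "cleaner but more technical alternative" would have to deliver, and it is not a corollary of the midpoint-rule estimate for $\log\Pi_N$: that estimate degrades like $\epsilon^2/\mathrm{dist}(w,P_\infty)^3$ and gives no information near $P_N$; the log-singularity cancellation between $\log\Pi_N$ and $\log\cos(\epsilon^{-1}\theta_0)$ must be tracked explicitly at each pole. As written, that step is missing.
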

See Figure~\ref{fig:YN-TN-Regions} for an illustration.
\begin{figure}[h]
\begin{center}
\includegraphics{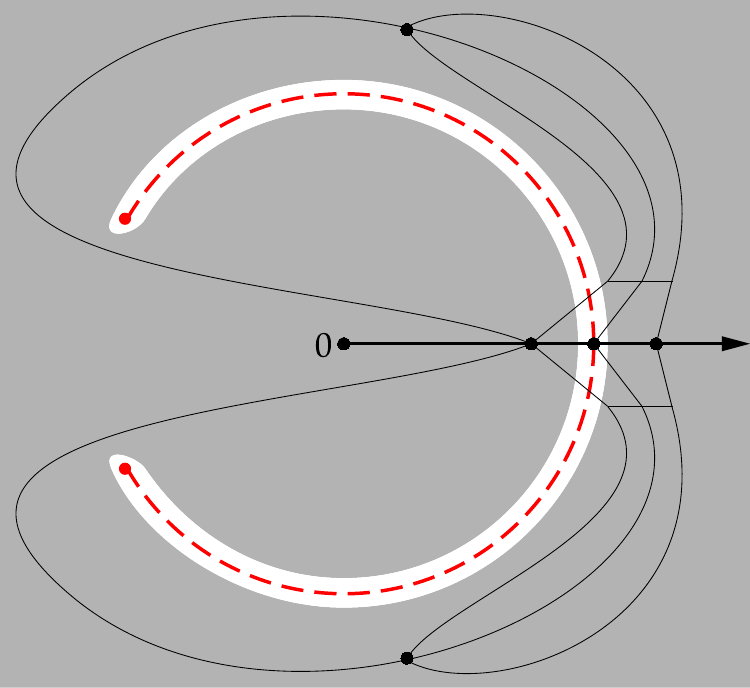}%
\includegraphics{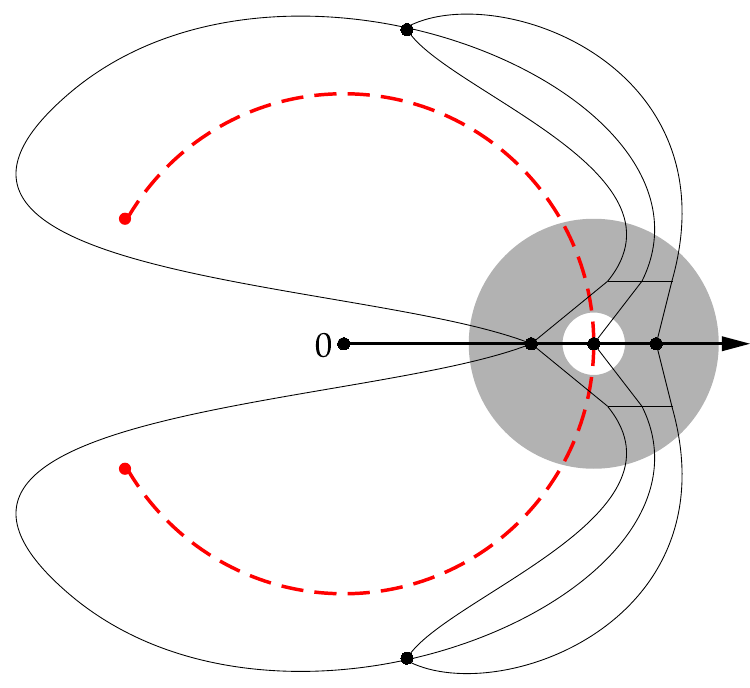}
\end{center}
\caption{Left:  $Y_N(w)$ is analytic and $Y_N(w)=1+\mathcal{O}(\epsilon)$ holds uniformly in the shaded region.  Right:  $T_N(w)$ is analytic and $T_N(w)=1+\mathcal{O}(\epsilon)$ holds uniformly in the shaded region.}
\label{fig:YN-TN-Regions}
\end{figure}

\subsection{Opening lenses on the jump contour}
The jump matrix in \eqref{eq:Jump-M-away} admits the factorization
\begin{multline}
\begin{bmatrix}1 & 0\\-\ii Y_N(w)\ee^{k(w)/\epsilon} & 1\end{bmatrix}\\{}=
Y_N(w)^{-\sigma_3/2}\begin{bmatrix}1 & \ii\ee^{-k(w)/\epsilon}\\0 & 1\end{bmatrix}\begin{bmatrix}0 & -\ii\ee^{-k(w)/\epsilon} \\ -\ii\ee^{k(w)/\epsilon} & 0\end{bmatrix}\begin{bmatrix}1 & \ii\ee^{-k(w)/\epsilon}\\0 & 1\end{bmatrix}Y_N(w)^{\sigma_3/2},
\label{eq:Jump-M-away-factorization}
\end{multline}
in which, according to Proposition~\ref{prop:T-and-Y}, the diagonal factors $Y_N(w)^{\pm\sigma_3/2}$ involving square roots of $Y_N(w)$ can be interpreted as $\mathbb{I}+\mathcal{O}(\epsilon)$ for $w\in (\partial\Omega_-\setminus\partial\Omega_+)\cap\mathbb{C}_+$ as the latter is bounded away from $P_\infty$.
Likewise, the jump matrix in \eqref{eq:Jump-M-near} admits the factorization
\begin{multline}
\begin{bmatrix}1 & 0\\-\ii T_N(w)\ee^{k(w)/\epsilon} & 1\end{bmatrix}\\{}=
T_N(w)^{-\sigma_3/2}\begin{bmatrix}1 & \ii\ee^{-k(w)/\epsilon}\\0 & 1\end{bmatrix}\begin{bmatrix}0 & -\ii\ee^{-k(w)/\epsilon} \\ -\ii\ee^{k(w)/\epsilon} & 0\end{bmatrix}\begin{bmatrix}1 & \ii\ee^{-k(w)/\epsilon}\\0 & 1\end{bmatrix}T_N(w)^{\sigma_3/2},
\label{eq:Jump-M-near-factorization}
\end{multline}
in which Proposition~\ref{prop:T-and-Y} allows us to interpret $T_N(w)^{\pm\sigma_3/2}=\mathbb{I}+\mathcal{O}(\epsilon)$ for $w\in\partial\Omega_+\cap\partial\Omega_-\cap\mathbb{C}_+$.  Based on these factorizations, we pick a point $\alpha\in(\partial\Omega_-\setminus\partial\Omega_+)\cap\mathbb{C}_+$ and ``open up lenses'' by making further substitutions in the domains $\Lambda_1^\pm$ and $\Lambda_2^\pm$ shown in the right-hand panel of Figure~\ref{fig:OmegaPlusMinus} (and their complex-conjugates) to separate the factors in \eqref{eq:Jump-M-away-factorization}--\eqref{eq:Jump-M-near-factorization}.
Specifically, we define $\mathbf{N}(w)=\mathbf{N}_N(w;x,t)$ in terms of $\mathbf{M}(w)$ by setting
\begin{equation}
\mathbf{N}(w):=\mathbf{M}(w)Y_N(w)^{-\sigma_3/2}\begin{bmatrix}1 & \mp\ii\ee^{-k(w)/\epsilon}\\0 & 1\end{bmatrix},\quad w\in \Lambda_1^\pm,
\label{eq:lens-1}
\end{equation}
\begin{equation}
\mathbf{N}(w):=\mathbf{M}(w)T_N(w)^{-\sigma_3/2}\begin{bmatrix}1 & -\ii\ee^{-k(w)/\epsilon}\\0 & 1\end{bmatrix},\quad w\in\Lambda_2^+,\quad\text{and}
\label{eq:lens-2-plus}
\end{equation}
\begin{equation}
\mathbf{N}(w):=\mathbf{M}(w)T_N(w)^{-\sigma_3/2}\begin{bmatrix}1 & \ii\ee^{-\hat{k}(w)/\epsilon}\\0&1
\end{bmatrix},\quad w\in\Lambda_2^-,
\label{eq:lens-2-minus}
\end{equation}
in which $\hat{k}(w)$ denotes the analytic function in $\Lambda_2^-$ that agrees with $k(w)$ outside of the unit circle.  Thus:
\begin{equation}
\hat{k}(w):=\begin{cases} k(w),&\quad w\in\Lambda_2^-,\quad |w|>1\\
k(w)+2\ii\theta_0(w),&\quad w\in\Lambda_2^-,\quad |w|<1
\end{cases}
\label{eq:tilde-k-define}
\end{equation}
(one sees from \eqref{eq:L-function-difference} and \eqref{eq:k-function-define} that the two formul\ae\ agree upon taking boundary values on the unit circle).  Then we set
\begin{equation}
\mathbf{N}(w):=\mathbf{M}(w), \quad\text{elsewhere in $\mathbb{C}_+$,}
\label{eq:M-to-N-elsewhere}
\end{equation}
and then we make corresponding substitutions in $\mathbb{C}_-$ to ensure that Schwarz reflection symmetry is preserved in the form $\mathbf{N}(w^*)=\mathbf{N}(w)^*$.  The matrix function $\mathbf{N}(w)$ so-defined is analytic for $w$ in the complement of the jump contour shown in Figure~\ref{fig:JumpContourN}.  Note that at this juncture, the value of $\alpha$ and other details of the jump contour are not meant to be fully specified; a more precise description that in particular determines the value of $\alpha$ will be given in Section~\ref{sec:steepest-descent} below.
\begin{figure}[h]
\begin{center}
\includegraphics{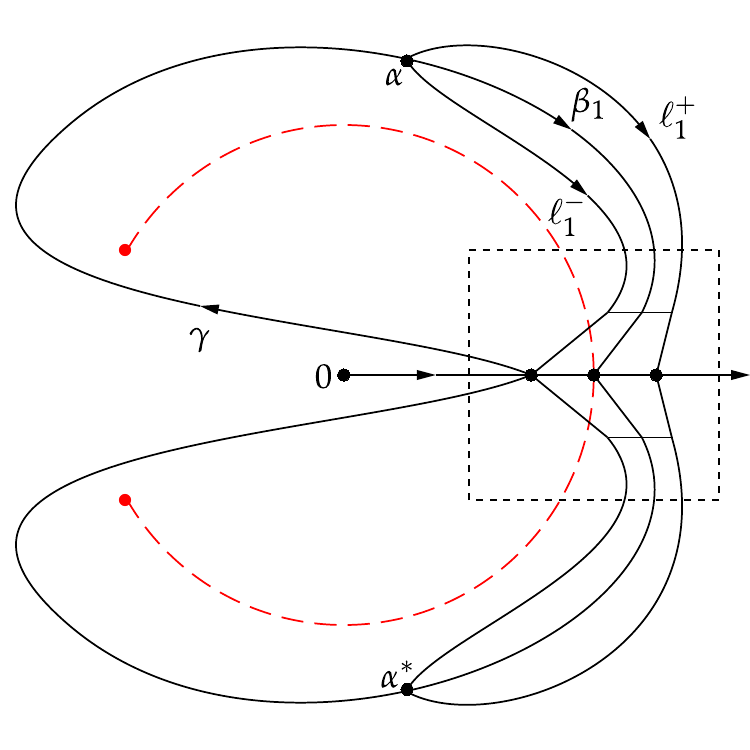}%
\includegraphics{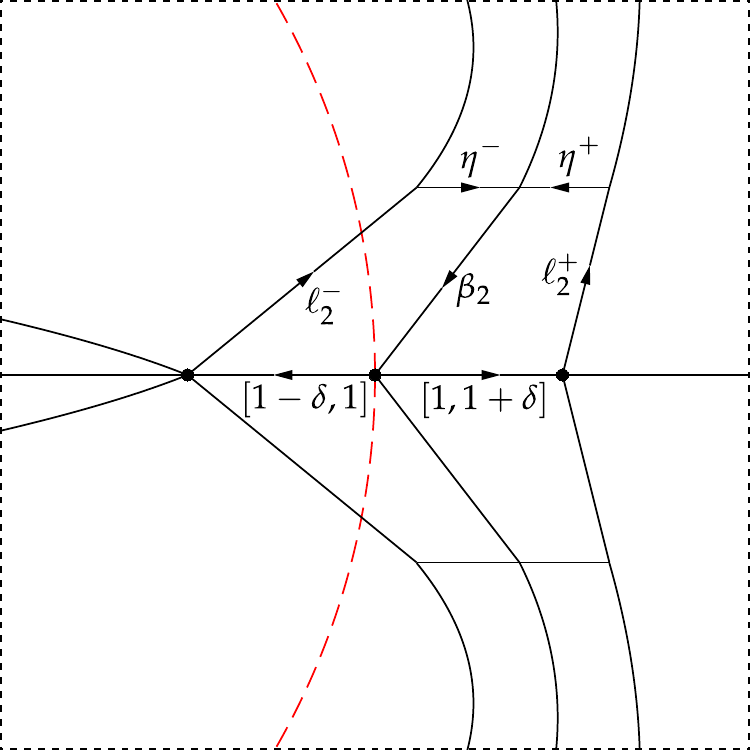}
\end{center}
\caption{Left panel:  the jump contour for $\mathbf{N}(w)$.  Right panel:  a blow up of the jump contour near $w=1$.}
\label{fig:JumpContourN}
\end{figure}

The jump conditions satisfied by $\mathbf{N}(w)$ on the contour arcs that meet at $w=\alpha$ read as follows:
\begin{equation}
\mathbf{N}_+(w)=\mathbf{N}_-(w)\begin{bmatrix}1 & 0\\-\ii Y_N(w)\ee^{k(w)/\epsilon} & 1\end{bmatrix},\quad w\in\gamma,
\end{equation}
\begin{equation}
\mathbf{N}_+(w)=\mathbf{N}_-(w)\begin{bmatrix}1 & \ii\ee^{-k(w)/\epsilon}\\0 & 1\end{bmatrix}Y_N(w)^{\sigma_3/2},\quad w\in\ell_1^+,
\end{equation}
\begin{equation}
\mathbf{N}_+(w)=\mathbf{N}_-(w)Y_N(w)^{-\sigma_3/2}\begin{bmatrix}1 & \ii\ee^{-k(w)/\epsilon}\\0 & 1\end{bmatrix},\quad w\in\ell_1^-,
\end{equation}
\begin{equation}
\mathbf{N}_+(w)=\mathbf{N}_-(w)\begin{bmatrix}0& -\ii\ee^{-k(w)/\epsilon}\\-\ii\ee^{k(w)/\epsilon} & 0\end{bmatrix},\quad w\in \beta_1.
\end{equation}
The other jump conditions satisfied by $\mathbf{N}(w)$ in the upper half-plane are as follows:
\begin{equation}
\mathbf{N}_+(w)=\mathbf{N}_-(w)\begin{bmatrix}1&-\ii\ee^{-\hat{k}(w)/\epsilon}\\0&1\end{bmatrix}T_N(w)^{\sigma_3/2},\quad w\in\ell_2^-,
\end{equation}
\begin{equation}
\mathbf{N}_+(w)=\mathbf{N}_-(w)\begin{bmatrix}0 & -\ii\ee^{-k(w)/\epsilon}\\
-\ii\ee^{k(w)/\epsilon} & 0\end{bmatrix},\quad w\in\beta_2,
\end{equation}
\begin{equation}
\mathbf{N}_+(w)=\mathbf{N}_-(w)\begin{bmatrix}1&0\\-\ii Y_N(w)\ee^{k(w)/\epsilon}\ee^{2\ii\theta_0(w)/\epsilon} & 1\end{bmatrix}T_N(w)^{-\sigma_3/2}\begin{bmatrix}1 & -\ii\ee^{-k(w)/\epsilon}\\0 & 1\end{bmatrix},\quad w\in\ell_2^+,
\end{equation}
\begin{multline}
\mathbf{N}_+(w)=\mathbf{N}_-(w)\begin{bmatrix}T_N(w)^{1/2}Y_N(w)^{-1/2} & \ii(T_N(w)^{1/2}Y_N(w)^{-1/2}-T_N(w)^{-1/2}Y_N(w)^{1/2})\ee^{-k(w)/\epsilon}\\0 & T_N(w)^{-1/2}Y_N(w)^{1/2}\end{bmatrix},\\ w\in \eta^-,
\end{multline}
and
\begin{equation}
\mathbf{N}_+(w)=\mathbf{N}_-(w)\begin{bmatrix}T_N(w)^{1/2}Y_N(w)^{-1/2} & 0\\-\ii T_N(w)^{-1/2}Y_N(w)^{1/2}\ee^{k(w)/\epsilon}\ee^{2\ii\theta_0(w)/\epsilon} & T_N(w)^{-1/2}Y_N(w)^{1/2}\end{bmatrix},\quad w\in \eta^+.
\end{equation}
In computing the latter two jump matrices, we made use of the fact that $T_N(w)=Y_N(w)(1+\ee^{2\ii\theta_0(w)/\epsilon})$ holds for $\epsilon=\epsilon_N$ and $w\in\eta^\pm$ because the latter two contours lie outside of the unit circle where $\overline{L}(w)=L(w)-\ii\theta_0(w)$ holds according to \eqref{eq:Lbar-function-define}--\eqref{eq:L-function-difference}.

\section{Pre-Catastrophe Analysis}
\label{sec:steepest-descent}

\subsection{Introduction of $g$-function}
\label{sec:intro-of-g}
For convenience, let $\beta$ denote the union $\beta=\beta_1\cup\beta_2$ of arcs in the upper half-plane, and let $\Sigma$ denote the jump contour for $\mathbf{N}(w)$ as illustrated in the left-hand panel of Figure~\ref{fig:JumpContourN}.  Suppose that $g(w)$ is a $\epsilon$-independent scalar function analytic for $w\in\mathbb{C}\setminus (\beta\cup\beta^*\cup\mathbb{R}_+)$ and continuous up to $\beta\cup\beta^*\cup\mathbb{R}_+$, enjoying Schwarz symmetry in the form
$g(w^*)=g(w)^*$, and satisfying the jump conditions 
\begin{itemize}
\item $g_+(w)+g_-(w)=0$ for $w>0$.  
\item $g_+(w)+g_-(w)=k(w)-\ii\Phi$ for $w\in\beta$, where $\Phi$ is a real constant (with respect to $w$, but it will necessarily depend on the parameters $(x,t)$).  
\end{itemize}
Set
\begin{equation}
\phi(w):=k(w)-2g(w),
\label{eq:phi-define}
\end{equation}
a function that we assume is analytic on some neighborhood of the jump contour $\Sigma$,  except on the cuts of $g$ and $L$ (this is an assumption about $\theta_0$ only, see also Assumption \ref{assumption:Psi}).  We set 
\begin{equation}
\mathbf{O}(w):=\mathbf{N}(w)\ee^{-g(w)\sigma_3/\epsilon}.
\label{eq:N-to-O}
\end{equation}
Then $\mathbf{O}(w)$ is analytic exactly where $\mathbf{N}(w)$ is, i.e., for $w\in\mathbb{C}\setminus\Sigma$, and the jump of $g$ on $\beta\cup\beta^*\cup\mathbb{R}_+$ will alter the jump conditions for $\mathbf{N}(w)$.  Assuming also that $g(w)\to 0$ as $w\to\infty$, we therefore see that $\mathbf{O}(w)=\mathbf{O}(w^*)^*$ is the solution of the following Riemann--Hilbert problem. 
\begin{rhp}[Riemann--Hilbert problem for $\bfO$] Seek $\bfO:\mathbb{C}\setminus\Sigma\to\mathbb{C}^{2\times 2}$ 
with the following properties:
\begin{itemize}
	\item[]\textbf{Analyticity:} 
		$\mathbf{O}(w)$ is analytic for $w\in\mathbb{C}\setminus\Sigma$
		and continuous up to $\Sigma$ from both sides.
	\item[]\textbf{Jump Conditions:}  Recalling that the subscript ``$+$'' (resp.\@ ``$-$'') indicates a boundary value taken on an oriented arc from the left (resp.\@ right), the boundary values of $\mathbf{O}(w)$ are related across the arcs of $\Sigma$ as follows:
\begin{equation}
			\bfO_+(w)=\bfO_-(w)\begin{bmatrix}
				1 & \ii\ee^{-\phi(w)/\epsilon} \\ 0&1
			\end{bmatrix}Y_N(w)^{\sigma_3/2}, \quad w\in \ell_1^+, 
\label{eq:Ojump-First}
\end{equation}
\begin{equation}
			\bfO_+(w)=\bfO_-(w)Y_N(w)^{-\sigma_3/2}\begin{bmatrix}
				1 & \ii\ee^{-\phi(w)/\epsilon} \\ 0&1
			\end{bmatrix},\quad w\in \ell_1^-,
\end{equation}
\begin{equation}
			\bfO_+(w)=\bfO_-(w)\begin{bmatrix}
				0&-\ii\ee^{-\ii\Phi/\epsilon}\\-\ii\ee^{\ii\Phi/\epsilon}&0
			\end{bmatrix},\quad w\in\beta,
\label{eq:Ojump-beta}
\end{equation}
\begin{equation}
			\bfO_+(w)=\bfO_-(w)\begin{bmatrix}
				1&0\\-\ii Y_N(w)\ee^{\phi(w)/\epsilon}&1
			\end{bmatrix},\quad w\in\gamma,
\label{eq:Ojump-gamma}
\end{equation}
\begin{equation}
			\mathbf{O}_+(w)=\mathbf{O}_-(w)\begin{bmatrix}
				1&-\ii\ee^{-\hat{\phi}(w)/\epsilon}\\0&1
			\end{bmatrix}T_N(w)^{\sigma_3/2},\quad w\in\ell_2^-,
\end{equation}
where in analogy with \eqref{eq:tilde-k-define},
\begin{equation}
\hat{\phi}(w):=\begin{cases}\phi(w),&\quad w\in\ell_2^-,\quad |w|>1\\
\phi(w)+2\ii\theta_0(w),&\quad w\in\ell_2^-,\quad |w|<1,
\end{cases}
\label{eq:PreCatastrophe-tilde-phi-define}
\end{equation}
\begin{equation}
			\mathbf{O}_+(w)=\mathbf{O}_-(w)\begin{bmatrix}
				1&0\\-\ii Y_N(w)\ee^{\phi(w)/\epsilon}\ee^{2\ii\theta_0(w)/\epsilon} & 1
			\end{bmatrix}T_N(w)^{-\sigma_3/2}\begin{bmatrix}
				1 & -\ii\ee^{-\phi(w)/\epsilon}\\0 & 1
			\end{bmatrix},\quad w\in\ell_2^+,
\end{equation}
\begin{multline}
			\mathbf{O}_+(w)=\mathbf{O}_-(w)\cdot \\
			{}\cdot \begin{bmatrix}
				T_N(w)^{1/2}Y_N(w)^{-1/2} & \ii(T_N(w)^{1/2}Y_N(w)^{-1/2}-T_N(w)^{-1/2}Y_N(w)^{1/2})\ee^{-\phi(w)/\epsilon}\\0 & T_N(w)^{-1/2}Y_N(w)^{1/2}
				\end{bmatrix},\\  w\in \eta^-,
\end{multline}
\begin{equation}
			\mathbf{O}_+(w)=\mathbf{O}_-(w)\begin{bmatrix}
				T_N(w)^{1/2}Y_N(w)^{-1/2} & 0\\-\ii T_N(w)^{-1/2}Y_N(w)^{1/2}\ee^{\phi(w)/\epsilon}\ee^{2\ii\theta_0(w)/\epsilon} & T_N(w)^{-1/2}Y_N(w)^{1/2}\end{bmatrix},\quad w\in \eta^+,
\label{eq:Ojump-mid}
\end{equation}
\begin{equation}
			\bfO_+(w)=\sigma_2\bfO_-(w)\sigma_2,\quad w\in\mathbb{R}_+\setminus[1-\delta,1+\delta],
\end{equation}
\begin{multline}
				\mathbf{O}_+(w)=\sigma_2\mathbf{O}_-(w)\sigma_2\cdot\ee^{-g_-(w)\sigma_3/\epsilon}\begin{bmatrix}1&0\\\ii\ee^{-\hat{k}_-(w)/\epsilon} & 1\end{bmatrix} T_{N-}(w)^{-\sigma_3/2}\begin{bmatrix}1 & \ii\Pi_{N-}(w)\ee^{[2\ii Q_-(w)-\ii\theta_{0-}(w)]/\epsilon}\\0&1\end{bmatrix}\cdot\\
	{}\cdot 
				\begin{bmatrix}1&0\\-\ii\Pi_{N-}(w)^*\ee^{[-2\ii Q_-(w)^*+\ii\theta_{0-}(w)^*]/\epsilon} & 1\end{bmatrix}
			T_{N-}(w)^{*-\sigma_3/2}\begin{bmatrix}1&-\ii\ee^{-\hat{k}_-(w)^*/\epsilon}\\0&1\end{bmatrix}\ee^{-g_-(w)^*\sigma_3/\epsilon}, \\ w\in(1-\delta,1),
\label{eq:Ojump-on-(1-delta,1)}
\end{multline}
\begin{multline}
			\bfO_+(w)=\sigma_2\bfO_-(w)\sigma_2\cdot\ee^{-g_+(w)^*\sigma_3/\epsilon}\begin{bmatrix}
				1&0\\ \ii\ee^{k_+(w)^*/\epsilon}&1
			\end{bmatrix}{T_{N+}(w)^*}^{-\sigma_3/2}
			\begin{bmatrix}
				1&\ii\Pi_{N+}(w)^* \ee^{[-2 \ii Q_+(w)^*-\ii\theta_{0+}(w)^*]/\epsilon}
				\\ 0&1
			\end{bmatrix}\cdot\\ 
			{}\cdot \begin{bmatrix}
				1&0\\-\ii\Pi_{N+}(w)\ee^{[2\ii Q_+(w)+\ii\theta_{0+}(w)]/\epsilon}&1
			\end{bmatrix}T_{N+}(w)^{-\sigma_3/2}\begin{bmatrix}
				1&-\ii\ee^{k_+(w)/\epsilon}\\0&1
			\end{bmatrix}\ee^{-g_+(w)\sigma_3/\epsilon},\\ w\in (1,1+\delta).
\label{eq:Ojump-Last}
\end{multline}
The jump conditions on the arcs of $\Sigma$ in the open lower half-plane are consistent with \eqref{eq:Ojump-First}--\eqref{eq:Ojump-mid} under Schwarz symmetry:  $\mathbf{O}(w^*)=\mathbf{O}(w)^*$.		
	\item[]\textbf{Normalization:}
		\begin{equation}
			\lim_{w\to\infty} \mathbf{O}(w)=\mathbb{I}.
		\end{equation}
\end{itemize}
\label{rhp:O}
\end{rhp}

\subsection{Expression for $g$ and determination of $\alpha$}
\label{sec:g-pre-breaking}
We now show that the function $g$ (and the value of $\alpha$) are determined systematically by the basic conditions listed at the beginning of Section~\ref{sec:intro-of-g}, which essentially constitute a scalar Riemann--Hilbert problem.  It turns out that the analytical behavior of the function $\phi(w)$ related to the solution $g(w)$ of this problem by \eqref{eq:phi-define} influences strongly the nature of the jump conditions in Riemann--Hilbert Problem~\ref{rhp:O}.  In particular, for certain $(x,t)\in\mathbb{R}^2$, namely those points in the modulated librational wave region to be defined properly below, we can reduce Riemann--Hilbert Problem~\ref{rhp:O} to a small-norm problem by comparison with a suitable parametrix.  For other values of $(x,t)\in\mathbb{R}^2$ such a simplification is not possible without generalizing the conditions satisfied by $g$ in a substantial way, and this dichotomy is responsible for the observed phase transition in solutions of the sine-Gordon equation near a gradient catastrophe point.  We will illustrate this transition phenomenon clearly in Section~\ref{sec:boundary} below.

In light of \eqref{eq:condensate-even}, let us assume that $x>0$ and $t>0$.  It turns out that to deal with the conditions on $g$ imposed at the beginning of Section~\ref{sec:intro-of-g} it is more convenient to first seek $g'(w)$ and later integrate to obtain $g(w)$.
Indeed, differentiation with respect to $w$ annihilates the constant $\ii\Phi$, so the sum of the boundary values of $g'(w)$ on the jump contour $\beta\cup\beta^*\cup\mathbb{R}_+$ are explicitly given (by $0$ on $\mathbb{R}_+$, by $k'(w)$ on $\beta$, and by $k'(w)=k'(w^*)^*$ on $\beta^*$).  Moreover, $g'(w)$ must be integrable at $w=\infty$.  These conditions imply that the function $g'(w)$ necessarily has the form
\begin{equation}
g'(w)=\frac{R(w)}{2\pi\ii \sqrt{-w}}\int_{\beta\cup\beta^*}\frac{k'(\xi)\sqrt{-\xi}}{R_+(\xi)(\xi-w)}\,\dd \xi,
\label{eq:gprime-1}
\end{equation}
where $R(w)^2:=R(w;\alpha,\alpha^*)^2=(w-\alpha)(w-\alpha^*)$ and $R(w)$ is analytic for $w\in\mathbb{C}\setminus\beta$ and $R(w)=w+\mathcal{O}(1)$ as $w\to\infty$.
Indeed this form automatically guarantees that $g'_+(w)+g'_-(w)=0$ for $w\in\mathbb{R}_+$ and that $g'_+(w)+g'_-(w)=k'(w)$ for $w\in\beta\cup\beta^*$.  To enforce integrability of $g'(w)$ at $w=\infty$ given that $R(w)\sim w$ as $w\to\infty$, we need to insist further that
\begin{equation}
\int_{\beta\cup\beta^*}\frac{k'(\xi)\sqrt{-\xi}}{R_+(\xi)}\,\dd\xi=0.
\label{eq:M-basic}
\end{equation}
Using contour deformation arguments along with \eqref{eq:k-function-define}, the definition of $Q(w)$ \eqref{eq:Q-Definition}, and the identity \eqref{eq:L-function-difference}, this condition can eventually be written in the form
\begin{equation}
M:=\frac{4}{\pi}\int_{\tilde{\gamma}\cup\tilde{\gamma}^*}\frac{\theta_0'(\xi)\sqrt{-\xi}}{R(\xi)}\,\dd\xi + x+t+\frac{x-t}{\sqrt{\alpha\alpha^*}}=0,
\label{eq:M}
\end{equation}
where $\tilde{\gamma}$ is an oriented arc from $\ee^{\ii\mu}$ to $\alpha$ in $\mathbb{C}\setminus(\beta\cup\beta^*\cup\mathbb{R}_+)$.  Indeed, the left-hand side of \eqref{eq:M-basic} is just $-\tfrac{1}{4}\ii\pi M$.  Using similar deformation ideas, one can show that for $w$ near $\beta$ (in particular $|w|>1$),
\begin{equation}
\phi'(w):= k'(w)-2g'(w)=R(w)H(w),\quad H(w):=-\frac{1}{4\sqrt{-w}}\left[\frac{x-t}{w\sqrt{\alpha\alpha^*}}-\frac{2}{\pi}\int_{L}\frac{\theta_0'(\xi)\sqrt{-\xi}}{R(\xi)(\xi-w)}\,\dd\xi\right],
\label{eq:RH}
\end{equation}
where $L$ is a contour such as is shown in the left-hand panel of Figure~\ref{fig:L-Lbeta} that encloses $w$ with $|w|>1$.  
\begin{figure}[h]
\begin{center}
\includegraphics{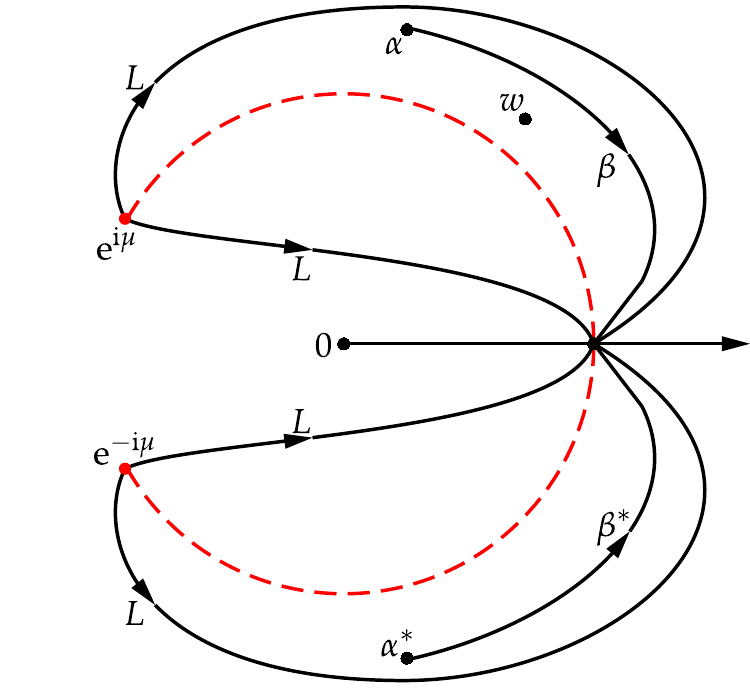}%
\includegraphics{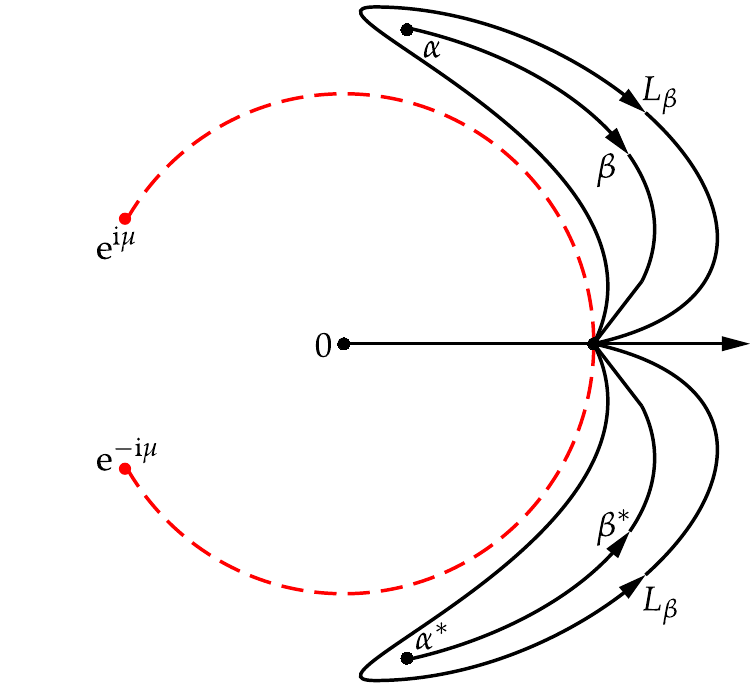}
\end{center}
\caption{Left:  the contour $L$ encloses $P_\infty$, $\beta\cup\beta^*$, and the pole $w$ with $|w|>1$ of the integrand for $H(w)$.  Right:  the contour $L_\beta$ encloses $\beta\cup\beta^*$.}
\label{fig:L-Lbeta}
\end{figure}
When $\alpha^*$ is the complex-conjugate of $\alpha$ and $x$ and $t$ are real, $H(w)$ is a Schwarz-symmetric function analytic except on the contour $L$ of integration and the positive real axis.  With $g'(w)$ given by \eqref{eq:gprime-1}, $g(w)$ is obtained by integration from $w=0$:
\begin{equation}
g(w)=\int_0^w g'(\xi)\,\dd\xi
\label{eq:g-integral}
\end{equation}
which in view of the identity $g'_+(w)+g'_-(w)=0$ for $w>0$ guarantees that also $g_+(w)+g_-(w)=0$ for $0<w<1$.  To extend this latter identity to $w>1$ and simultaneously guarantee that the constant $\Phi$ is real (when $\alpha^*$ is the conjugate of $\alpha$ and $x$ and $t$ are real) we insist that 
\begin{equation}
\int_{L_\beta}g'(w)\,\dd w=0
\label{eq:gprime-on-Lbeta}
\end{equation}
where $L_\beta$ is the contour shown in the right-hand panel of Figure~\ref{fig:L-Lbeta}.
Since $k(w)$ is analytic on $\beta\cup\beta^*$, it is the same to insist that
\begin{equation}
\int_{L_\beta}\phi'(w)\,\dd w=0.
\label{eq:Cbetaphiprime}
\end{equation}
Using \eqref{eq:RH}, this condition can be written in the form
\begin{equation}
I:=\frac{1}{2}\int_{\beta}R_+(\xi)H(\xi)\,\dd\xi +\frac{1}{2}\int_{\beta^*}R_-(\xi)H(\xi)\,\dd\xi = 0.
\label{eq:I}
\end{equation}
Indeed, the left-hand side of \eqref{eq:Cbetaphiprime} is just $4I$.  While the condition \eqref{eq:I} ensures that $g_+(w)+g_-(w)=0$ holds for $w>1$, it also guarantees that $\Phi$ is real when $\alpha^*$ is the conjugate of $\alpha$ and $x$ and $t$ are real.  We omit the argument but it involves checking the value of $\ii\Phi=k(w)-g_+(w)-g_-(w)$ on $\beta$ by evaluating it in the limit along the indicated arc as $w\to 1$ and using \eqref{eq:I} along with known properties of $k(w)$. It is easy to confirm that the condition \eqref{eq:gprime-on-Lbeta} enforced by \eqref{eq:Cbetaphiprime} or equivalently \eqref{eq:I} also guarantees that $g(\infty)=0$ (by splitting the integral of $g'(w)$ along the negative real axis into two equal parts that are closed at infinity in the upper and lower half-planes respectively, and using $g'_+(w)+g'_-(w)=0$ for $w>0$).

The conditions \eqref{eq:M} and \eqref{eq:I} determine $\alpha$ and $\alpha^*$ in terms of $(x,t)\in\mathbb{R}^2$.  In more detail, following \cite[Section 4.2]{BuckinghamMiller2013} one first shows that in the limit $t\downarrow 0$ for $x\ge 0$ fixed, the conditions are satisfied by
$\alpha=\alpha(x,0)=\ee^{\ii\eta_\alpha(x)}$ and $\alpha^*=\alpha^*(x,0)=\ee^{-\ii\eta_\alpha(x)}$ where $\cos(\eta_\alpha(x))=1-\tfrac{1}{2}G(x)^2$ and $\sin(\eta_\alpha(x))>0$ (recall that $G(\cdot)$ is the initial impulse profile, see \eqref{eq:InitialConditionFG}).  By an implicit function theorem argument, the solution can be continued to nearby $(x,t)\in\mathbb{R}^2$, yielding $\alpha=\alpha(x,t)$ and $\alpha^*=\alpha^*(x,t)$ and hence also $g(w)=g(w;x,t)$ and $H(w)=H(w;x,t)$, provided that $\alpha\neq\ee^{\ii\mu}$ and $H(\alpha(x,t);x,t)\neq 0$ (cf., \cite[Proposition 4.6]{BuckinghamMiller2013} or \cite[Proposition 3.1.5]{Lu2018}).  This construction implies that $\alpha^*(x,t)=\alpha(x,t)^*$ for all $(x,t)\in\mathbb{R}^2$ to which the solution can be continued.  Note that once $g(w)=g(w;x,t)$ is determined, then so is the real quantity $\Phi=\Phi(x,t)$, which is a real-analytic function of $(x,t)\in\mathbb{R}^2$ provided $H(\alpha(x,t);x,t)\neq 0$.  One can also check that for small $t>0$ one has $|\alpha|=|\alpha^*|>1$ when $x\ge 0$.  

\subsection{The modulated librational wave region}
\label{sec:modulated-librational-wave-region}
The introduction of the function $g(w)=g(w;x,t)$ is the most useful in the setting of Riemann--Hilbert Problem~\ref{rhp:O} if certain additional conditions are satisfied, which we formalize in the following definition, in which the functions $\phi(w)=\phi(w;x,t)$ and $H(w)=H(w;x,t)$ are given by \eqref{eq:phi-define} and \eqref{eq:RH} respectively.
\begin{definition}[The modulated librational wave region]
Suppose that $\alpha=\alpha(x,t)$ and $\alpha^*=\alpha^*(x,t)$ are chosen so that the equations \eqref{eq:M} and \eqref{eq:I} both hold, and let $g(w)=g(w;x,t)$ be given by \eqref{eq:gprime-1} and \eqref{eq:g-integral}.  We say that $(x,t)$ belongs to the modulated librational wave region of $\mathbb{R}^2$ if in addition the following conditions hold.
\begin{enumerate}[(i)]
	\item
		The oriented arc $\beta$ in $\mathbb{C}_+$ connecting the point $w=\alpha(x,t)$ to the point $w=1$, on which the definition of $g(w;x,t)$ depends, can be chosen such that the boundary values of $\phi(w;x,t)$ taken on $\beta$ satisfy $\mathrm{Re}\{\phi_\pm(w;x,t)\}=0$, and so that the angle $\theta_\beta$ between the ray $w>1$ and the tangent line in $\mathbb{C}_+$ to $\beta$ at $w=1$ satisfies $0<\theta_\beta<\tfrac{1}{2}\pi$.
		\label{item:MLW-beta}
	\item
		With $\beta$ chosen as above, $H(w;x,t)$ is bounded away from zero for $w\in\beta$.
		\label{item:MLW-H-not-zero}
	\item
		For sufficiently small $\delta>0$, there is an oriented arc $\gamma$ in $\mathbb{C}_+$ connecting the point $w=1-\delta$  to the point $w=\alpha(x,t)$  on which $\mathrm{Re}\{\phi(w;x,t)\}<0$ holds and such that $\gamma\cup\beta$ forms a loop enclosing the part of $P_\infty$ in the upper half-plane. 
		\label{item:MLW-gamma}
\end{enumerate}
\label{def:MLW}
\end{definition}
For $x> 0$, if $t>0$ is small and hence $|\alpha(x,t)|>1$, then one can show that $(x,t)$ lies in the modulated librational wave region with the arc $\beta$ lying outside the unit circle except at its terminal point $w=1$.  This explains why $\beta$ follows $\gamma$ in the clockwise direction of orientation of the loop referred to in condition (\ref{item:MLW-gamma}); see Figure~\ref{fig:JumpContourN}.  Also, note that local analysis near $w=1$ shows that $\mathrm{Re}\{\phi(w+\ii 0;x,t)\}>0$ holds for $w-1$ sufficiently small and positive, while $\mathrm{Re}\{\phi(w+\ii 0;x,t)\}<0$ holds for $w-1$ sufficiently small and negative.  This immediately proves that if $(x,t)$ lies in the modulated librational wave region, then 
the inequality $\mathrm{Re}\{\phi(w;x,t)\}>0$ holds on both sides of the arc $\beta$, and $\mathrm{Re}\{\phi(w;x,t)\}<0$ holds near $w=1-\delta$.  Also, since $H(\alpha(x,t);x,t)\neq 0$ according to condition (\ref{item:MLW-H-not-zero}), the formula \eqref{eq:RH} shows that in addition to $\beta$, exactly two more zero level curves of $\mathrm{Re}\{\phi(w;x,t)\}$ emanate from $w=\alpha(x,t)$, between which the inequality $\mathrm{Re}\{\phi(w;x,t)\}<0$ holds.  Therefore condition (\ref{item:MLW-gamma}) automatically holds locally near the endpoints of $\gamma$.

Note that the conditions of Definition~\ref{def:MLW} are independent of the semiclassical parameter $\epsilon=\epsilon_N$.  Whether or not a given point $(x,t)\in\mathbb{R}^2$ with $x\ge 0$ belongs to the modulated librational wave region can therefore be detected by sufficiently well-resolved $\epsilon$-independent numerical calculations.  The idea is to first implement the continuation of the solution $(\alpha,\alpha^*)$ of the equations \eqref{eq:M} and \eqref{eq:I} from the known initial conditions at $t=0$ to determine $\alpha=\alpha(x,t)$ with $|\alpha|>1$.  With the help of the formula \eqref{eq:RH} for $\phi'(w;x,t)$ and a suitable numerical implementation of the multivalued function $R(w)$ one then computes the level curve of $\mathrm{Re}\{\phi(w;x,t)\}$ emanating into the upper half-plane from $w=1$ and checks whether it approaches the point $w=\alpha(x,t)$ to desired accuracy.  If so, since $\phi(\alpha(x,t);x,t)=\ii\Phi(x,t)\in\ii\mathbb{R}$, then the latter curve, re-oriented toward $w=1$, becomes the arc $\beta$; with a check of the tangent angle $\theta_\beta$ condition (\ref{item:MLW-beta}) in Definition~\ref{def:MLW} is thus confirmed.    The numerical construction of $\beta$ automatically fails if $H(w;x,t)$ vanishes at any point along $\beta$ except possibly for the endpoint $w=\alpha(x,t)$; however it is straightforward to evaluate $H(\alpha(x,t);x,t)$ numerically and in this way one can also confirm condition (\ref{item:MLW-H-not-zero}) from Definition~\ref{def:MLW}.  It remains to check condition (\ref{item:MLW-gamma}).  This requires determining the region of the $w$-plane on which the inequality $\mathrm{Re}\{\phi(w;x,t)\}<0$ holds.  There are numerous reliable ways to carry out this systematic numerical computation, and plotting the resulting sign chart in the $w$-plane allows one to determine whether or not condition (\ref{item:MLW-gamma}) holds, and therefore also whether or not $(x,t)$ lies within the modulated librational wave region.  See Figures~\ref{fig:w-plane-plots-x0p2}--\ref{fig:w-plane-plots-x0}.
\begin{figure}[h]
\begin{center}
\includegraphics{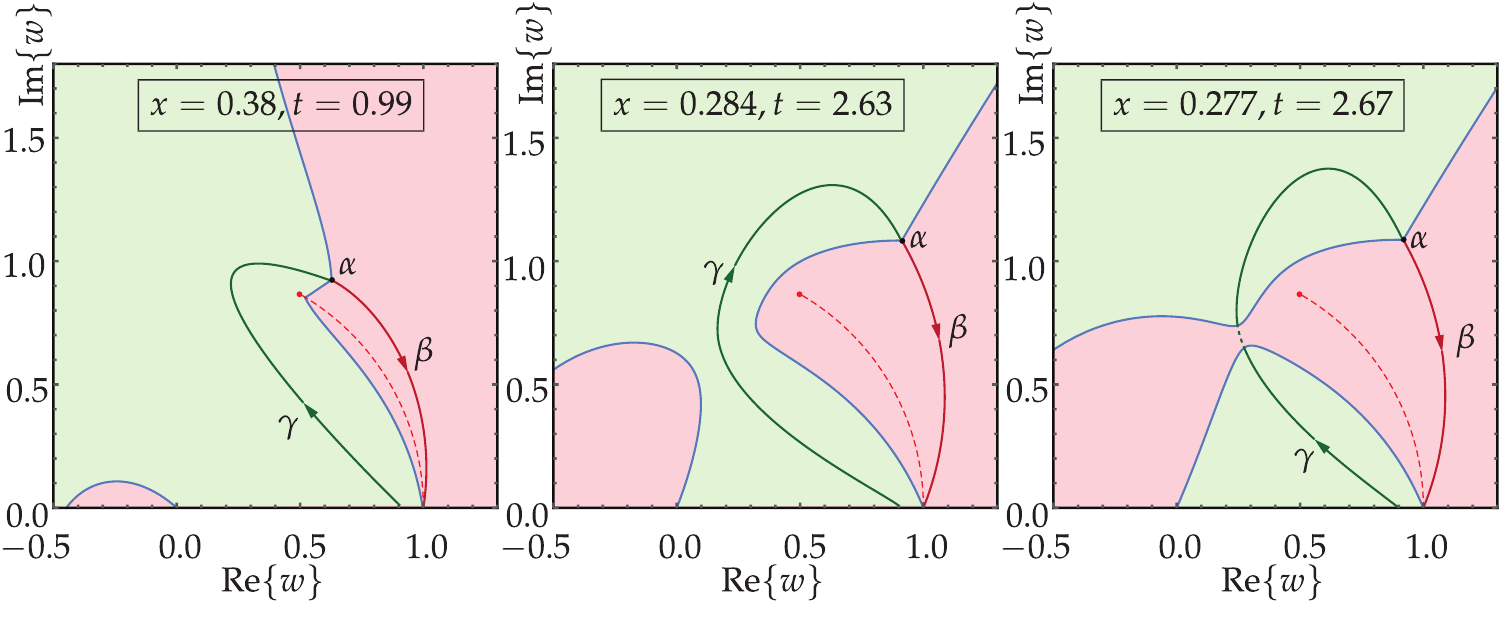}
\end{center}
\caption{Numerically constructed sign charts of $\mathrm{Re}\{\phi(w;x,t)\}$, $w\in\mathbb{C}_+$, for $G(x)=-\mathrm{sech}(x)$ (green for negative, red for positive) and how $\gamma$ can be chosen for $(x,t)$ in the modulated librational wave region.  Left:  $(x,t)=(0.38,0.99)$, in the modulated librational wave region.  Center:  $(x,t)=(0.284,2.63)$, in the modulated librational wave region.  Right:  $(x,t)=(0.277,2.67)$, no longer in the modulated librational wave region (so there can be no $\gamma$ satisfying condition (\ref{item:MLW-gamma}).  
In all plots, the zero level curve of $\mathrm{Re}\{\phi(w;x,t)\}$ includes the arc $\beta$ across which $\mathrm{Re}\{\phi(w;x,t)\}$ is continuous but exhibits a jump discontinuity in its gradient.  Also, the only part of each plot that is artificially superimposed and not numerically calculated from the formula for $g(w;x,t)$ is the green curve representing a choice of $\gamma$ (except in the right-hand panel, where no such choice is possible as suggested by the dotted green arc).}
\label{fig:w-plane-plots-x0p2}
\end{figure}
\begin{figure}[h]
\begin{center}
\includegraphics{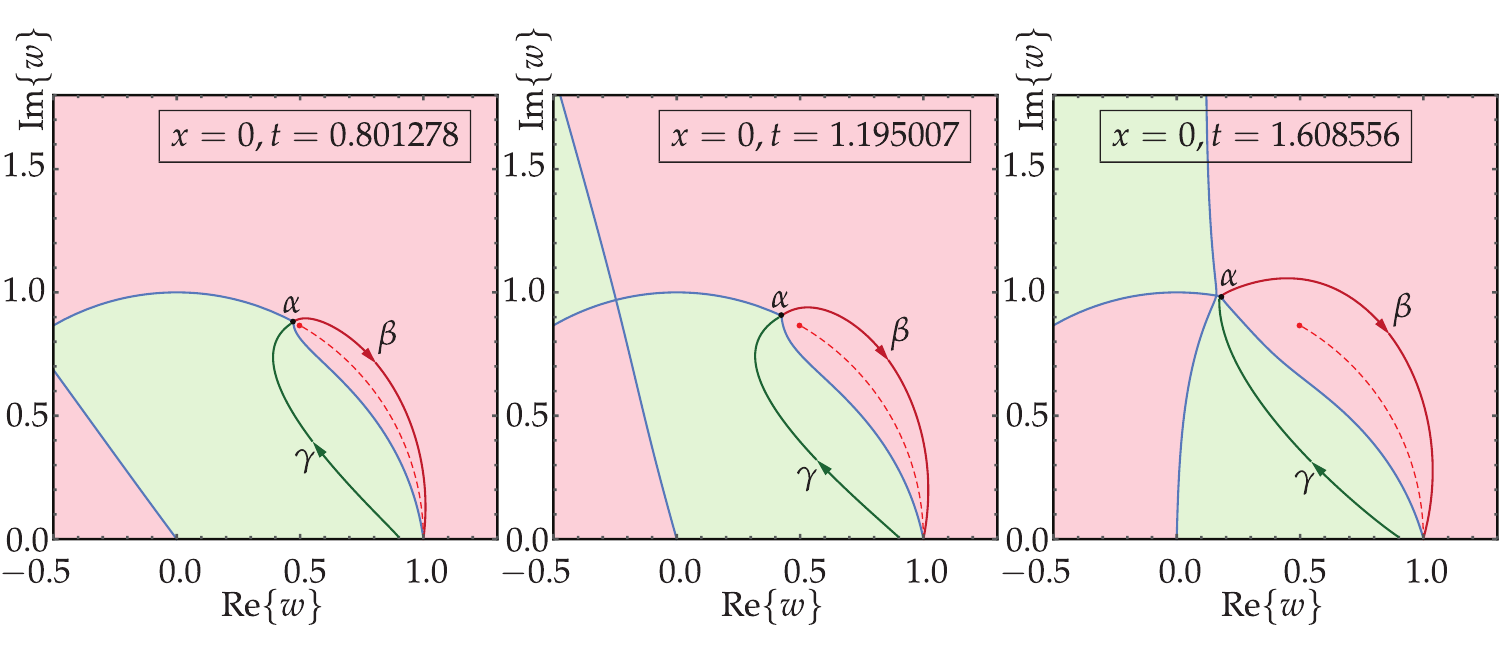}
\end{center}
\caption{As in Figure~\ref{fig:w-plane-plots-x0p2} but for three values of $(x,t)$ with $x=0$ and $t$ increasing toward the gradient catastrophe point $t_\mathrm{gc}\approx 1.609104$ (so for the right-hand plot $t_\mathrm{gc}-t\approx 5\times 10^{-4}$).  In these plots there is a persistent critical point of $\phi(w)$ on the zero level curve of its real part, but it occurs on the branch of the zero level curve emanating from $w=\alpha$ to the left of $\beta$ and hence does not obstruct the placement of the arc $\gamma$ in the green-shaded region.}
\label{fig:w-plane-plots-x0}
\end{figure}
These figures illustrate the fact that, at least for some initial impulse profiles $G(x)$, all points $(x,t)$ with $t>0$ sufficiently small lie in the modulated librational wave region.  Figure~\ref{fig:w-plane-plots-x0} also illustrates the phenomenon that the effect of approaching the gradient catastrophe point is that a critical point of $\phi(w)$ collides with the endpoint $\alpha$ leading to five zero level curves of $\mathrm{Re}\{\phi(w)\}$ emanating from $w=\alpha$ rather than three as is the generic case.  The plots in Figure~\ref{fig:w-plane-plots-x0} also illustrate that when $x=0$ the point $\alpha$ appears to lie exactly on the unit circle along with one of the zero level curves of $\mathrm{Re}\{\phi(w)\}$.  This confinement will be rigorously established in Section~\ref{sec:x=0-Symmetry} below.

\subsection{Proof of Proposition~\ref{prop:BeforeCatastrophe}}
\label{sec:BeforeCatastropheProof}
The statements \eqref{eq:Whitham-intro}--\eqref{eq:omega-k} can be deduced from the equations \eqref{eq:M} and \eqref{eq:I} by cross-differentiation, exactly as in \cite[Proposition 4.2 and Section 4.4]{BuckinghamMiller2013}.  It remains to establish the asymptotic formul\ae\ \eqref{eq:cos-sin-before}.
The proof of these formul\ae\ requires three steps:  showing that certain jump conditions for $\mathbf{O}(w)$ are asymptotically trivial when $\epsilon=\epsilon_N$ is small, construction of a suitable parametrix to deal with the jump conditions for $\mathbf{O}(w)$ that are not asymptotically trivial, and finally the comparison of $\mathbf{O}(w)$ with its parametrix.
\subsubsection{Asymptotically trivial jump conditions for $\mathbf{O}(w)$}
We first show that, since $(x,t)$ lies in the modulated librational wave region, when $\epsilon=\epsilon_N$ is small the jump conditions enumerated in Riemann--Hilbert Problem~\ref{rhp:O} are nearly trivial except when $w\in\beta\cup\beta^*$ and in neighborhoods of $w=\alpha(x,t)$ and $w=\alpha(x,t)^*$.

First note that the sign chart of $\mathrm{Re}\{\phi(w;x,t)\}$ in relation to the arcs $\gamma$ and $\beta$ as implied by $(x,t)$ being in the modulated librational wave region and the fact that the arcs $\ell_1^\pm$ can be taken to lie as close to $\beta_1\subset\beta$ as necessary together imply that the factors $\ee^{\pm\phi(w)/\epsilon}$ appearing in the jump conditions for $w\in\gamma\cup\ell_1^+\cup\ell_1^-$ are exponentially small as $\epsilon\to 0$, uniformly so as long as $w$ is bounded away from $w=\alpha(x,t)$.  Therefore invoking Proposition~\ref{prop:T-and-Y} one sees easily that $\mathbf{O}_+(w)=\mathbf{O}_-(w)(\mathbb{I}+\mathcal{O}(\epsilon))$ holds on these three arcs, uniformly for $w$ bounded away from $w=\alpha(x,t)$ (in fact, the error term is exponentially small for $w\in\gamma$).  

To deal with the jump conditions of $\mathbf{O}(w)$ across the arcs $\ell^\pm_2$ and $\eta^\pm$, we first compute the limiting value of $\phi'(w)=\phi'(w;x,t)$ as $w$ approaches $w=1$ from the upper half-plane to the left of $\beta$ by its orientation.  Using the formula \eqref{eq:RH}, we prepare to take the indicated limit by 
first contracting the contour $L$ pictured in the left-hand panel of Figure~\ref{fig:L-Lbeta} to both sides of $\beta\cup\beta^*$ (these contributions then cancel because $R(\xi)$ changes sign across $\beta\cup\beta^*$) plus a double contribution from the arc $\tilde{\gamma}$ (connecting $w=\ee^{\ii\mu}$ with $w=\alpha(x,t)$) and $\tilde{\gamma}^*$.  This deformation requires taking into account a residue at the point $\xi=w$ in the integral over the outer arc of $L$ in the upper half-plane (because $w$ lies between it and $\beta$ as we prepare to take the limit).  Thus we obtain for such $w$ that
\begin{equation}
\phi'(w)=-\frac{R(w)}{4\sqrt{-w}}\left[\frac{x-t}{w\sqrt{\alpha\alpha^*}}-\frac{4}{\pi}\int_{\tilde{\gamma}\cup\tilde{\gamma}^*}\frac{\theta_0'(\xi)\sqrt{-\xi}}{R(\xi)(\xi-w)}\,\dd\xi\right]-\ii\theta_0'(w).
\end{equation}
Letting $w\to 1$ from the upper half plane to the left of $\beta$ so that $R(w)\to |1-\alpha(x,t)|$ and $\sqrt{-w}\to -\ii$, and (by the chain rule) $\theta_0'(w)\to \tfrac{1}{4}\Psi'(0)$ we see that
\begin{equation}
\phi'(w)\to -\ii\frac{|1-\alpha|}{4}\left[\frac{x-t}{|\alpha|}-\frac{4}{\pi}\int_{\tilde{\gamma}\cup\tilde{\gamma}^*}\frac{\theta_0'(\xi)\sqrt{-\xi}}{R(\xi)(\xi-1)}\,\dd\xi\right] -\frac{1}{4}\ii\Psi'(0).
\end{equation}
Since $\tfrac{1}{4}\ii\Psi'(0)<0$ according to Assumption~\ref{assumption:Psi} and since the quantity in square brackets is purely real, we have found that in the indicated limit, $\phi'(w)\to d$ where $\mathrm{Re}\{d\}>0$.  Property (\ref{item:MLW-beta}) in Definition \ref{def:MLW} then implies that also $\mathrm{Im}\{d\}>0$.
It follows that in the same limit,
\begin{equation}
\phi'(w)+2\ii\theta_0'(w)\to -\ii\frac{|1-\alpha|}{4}\left[\frac{x-t}{|\alpha|}-\frac{4}{\pi}\int_{\tilde{\gamma}\cup\tilde{\gamma}^*}\frac{\theta_0'(\xi)\sqrt{-\xi}}{R(\xi)(\xi-1)}\,\dd\xi\right] +\frac{1}{4}\ii\Psi'(0) = -d^*.
\end{equation}
Since from \eqref{eq:RH} $\phi'(w)$ changes sign across $\beta$, the function $\hat{\phi}'(w)$ (analytic in $\Lambda_2^-$ by the piecewise definition \eqref{eq:PreCatastrophe-tilde-phi-define}) takes the limiting value $-d$ as $w\to 1$ from $\Lambda_2^-$.  
In each case the corresponding limiting values of $\phi(w)$, $\phi(w)+2\ii\theta_0(w)$, and $\hat{\phi}(w)$ are all purely imaginary.  Hence, the above first-order information implies that if the arcs $\ell_2^\pm$ and $\eta^\pm$ (see the right-hand panel of Figure~\ref{fig:JumpContourN}) are taken to be sufficiently short (independent of $\epsilon$), which also requires choosing $\delta>0$ sufficiently small, then as $\epsilon=\epsilon_N\to 0$,
\begin{itemize}
\item $\ee^{-\hat{\phi}(w)/\epsilon}$ is uniformly exponentially small for $w\in\ell_2^-$.
\item $\ee^{-\phi(w)/\epsilon}$ and $\ee^{(\phi(w)+2\ii\theta_0(w))/\epsilon}$ are both uniformly exponentially small for $w\in\ell_2^+$.
\item $|\ee^{-\phi(w)/\epsilon}|\le 1$ holds for $w\in\eta^-$.
\item $\ee^{(\phi(w)+2\ii\theta_0(w))/\epsilon}$ is uniformly exponentially small for $w\in\eta^+$.
\end{itemize}
Therefore, invoking Proposition~\ref{prop:T-and-Y} shows that $\mathbf{O}_+(w)=\mathbf{O}_-(w)(\mathbb{I}+\mathcal{O}(\epsilon))$ holds uniformly for $w\in\ell_2^+\cup\ell_2^-\cup\eta^+\cup\eta^-$.

The jump conditions \eqref{eq:Ojump-on-(1-delta,1)}--\eqref{eq:Ojump-Last} for the real intervals $(1-\delta,1)$ and $(1,1+\delta)$ are also nearly trivial when $\epsilon=\epsilon_N$ is small, in the sense that $\mathbf{O}_+(w)=\sigma_2\mathbf{O}_-(w)\sigma_2(\mathbb{I}+\mathcal{O}(\epsilon))$.  Multiplying out the matrix factors in \eqref{eq:Ojump-on-(1-delta,1)} we arrive at
\begin{equation}
\mathbf{O}_+(w)=\sigma_2\mathbf{O}_-(w)\sigma_2\begin{bmatrix}V_{11}(w) & V_{12}(w)\\V_{21}(w) & V_{22}(W)\end{bmatrix},\quad w\in (1-\delta,1),
\end{equation}
where $V_{11}(w)V_{22}(w)-V_{12}(w)V_{21}(w)=1$ and
\begin{equation}
\begin{split}
V_{11}(w)&:=
|T_{N-}(w)|^{-1}(1+\ee^{\ii[\theta_{0-}(w)^*-\theta_{0-}(w)]/\epsilon}) \\
V_{12}(w)&:=\ii\ee^{-2g_-(w)/\epsilon}|T_{N-}(w)|^{-1}\\
&{}\quad\quad\cdot\left[
\Pi_{N-}(w)T_{N-}(w)^*\ee^{[2\ii Q_-(w)-\ii\theta_{0-}(w)]/\epsilon}-\ee^{-\hat{k}_-(w)^*/\epsilon}(1+\ee^{\ii[\theta_{0-}(w)^*-\theta_{0-}(w)]/\epsilon})\right],
\end{split}
\end{equation}
and $V_{21}(w)=V_{12}(w)^*$ for $1-\delta<w<1$.
In these calculations, we have made use of the following facts:
\begin{itemize}
\item Since $E(w^*)=-E(w)^*$ and since $\Psi(-\lambda^*)=\Psi(\lambda)^*$ it follows that $
\theta_0(w^*)=\theta_0(w)^*$.  This in turn implies via \eqref{eq:L-function-define} that $L(w^*)=L(w)^*$.  As \eqref{eq:L-function-define} also implies that $L_+(w)+L_-(w)=0$ for $w>0$ we find that $\mathrm{Re}\{L_-(w)\}=0$ on $1-\delta<w<1$.  Likewise, since $g(w^*)=g(w)^*$ and $g_+(w)+g_-(w)=0$ for $w>0$ we find that $\mathrm{Re}\{g_-(w)\}=0$ on $1-\delta<w<1$.
\item The boundary values $E_-(w)$, $D_-(w)$, and hence also $Q_-(w)$, taken on $(1-\delta,1)$ from the upper half-plane (as the orientation of the interval is right-to-left) are real-valued.  Moreover, $E_-(w)<0$ holds.  Also, since $E_-(w)$ is real and the approximate eigenvalues $\lambda_k$ are purely imaginary, it follows that $|\Pi_{N-}(w)|=1$ holds for $1-\delta<w<1$.  
\end{itemize}
Now we substitute for $T_N(w)$ from \eqref{eq:Y-and-T-define} and for $\hat{k}(w)$ from \eqref{eq:k-function-define} and \eqref{eq:tilde-k-define}.   Using the relations \eqref{eq:Lbar-function-define} and \eqref{eq:L-function-difference} (in which the subscripts refer to boundary values taken on $P_\infty$) we deduce that (now with subscripts referring to boundary values taken on $(1-\delta,1)$) $\overline{L}_-(w)=L_-(w)+\ii\theta_{0-}(w)$.  Hence 
\begin{equation}
\begin{split}
T_{N-}(w)&=\Pi_{N-}(w)\ee^{-\overline{L}_-(w)/\epsilon}\left(\ee^{\ii\theta_{0-}(w)/\epsilon}+\ee^{-\ii\theta_{0-}(w)/\epsilon}\right)\\
&=\Pi_{N-}(w)\ee^{-L_-(w)/\epsilon}\left(1+\ee^{-2\ii\theta_{0-}(w)/\epsilon}\right),\quad 1-\delta<w<1.
\end{split}
\end{equation}
In particular,
\begin{equation}
|T_{N-}(w)|^{-1}=\left|1+\ee^{-2\ii\theta_{0-}(w)/\epsilon}\right|^{-1},\quad 1-\delta<w<1.
\end{equation}
Furthermore, it follows 
from Assumption~\ref{assumption:Psi} that we can write
\begin{equation}
\theta_{0-}(w)=-\frac{1}{4}\int_\mathbb{R}G(x)\,\dd x + \ii u\lambda +f(\lambda),\quad \lambda=E_-(w)<0,
\end{equation}
where $u>0$ and $f(\lambda)$ denotes the convergent series
\begin{equation}
f(\lambda):=\sum_{n=1}^\infty v_n\lambda^{2n}
\end{equation}
in which $v_n$ are real coefficients.  Note that $f(\lambda)=\mathcal{O}(\lambda^2)$ holds near $\lambda=0$.  In particular, we have
\begin{equation}
\ii[\theta_{0-}(w)^*-\theta_{0-}(w)]=2uE_-(w),\quad E_-(w)<0,\quad 1-\delta<w<1.
\end{equation}
Also, using the fact that $\epsilon=\epsilon_N$ according to \eqref{eq:epsilon-N} we have
\begin{equation}
\ee^{-2\ii\theta_{0-}(w)/\epsilon}=\ee^{2uE_-(w)/\epsilon}\ee^{-2\ii f(E_-(w))/\epsilon} = \ee^{-2 uE_-(w)/\epsilon}(1+\mathcal{O}(\epsilon^{-1}E_-(w)^2)),\quad 1-\delta<w<1
\end{equation}
where we used the fact that $f(E_-(w))$ is real on $(1-\delta,1)$.
From these observations it follows that 
\begin{equation}
V_{11}(w)=\frac{1+\ee^{2u E_-(w)/\epsilon}}{|1+\ee^{2u E_-(w)/\epsilon}\ee^{-2\ii f(E_-(w))}|}=1+\mathcal{O}(\epsilon^{-1}E_-(w)^2\ee^{2u E_-(w)/\epsilon}) = 1+\mathcal{O}(\epsilon)
\end{equation}
because $h(t):=t^2\ee^{2u t}$ is uniformly bounded for $t=E_-(w)/\epsilon<0$.  Also,
\begin{equation}
|V_{12}(w)|=|V_{21}(w)|=|T_{N-}(w)|^{-1}\left|\ee^{2u E_-(w)/\epsilon}\ee^{-\ii\theta_{0-}(w)^*/\epsilon}-\ee^{\ii\theta_{0-}(w)^*/\epsilon}\right|
\end{equation}
Finally, we substitute for $\theta_{0-}(w)^*$ again using the condition \eqref{eq:epsilon-N} to give that the constant terms in both exponents give rise to the same factor of $\pm 1$, and obtain that
\begin{multline}
|V_{12}(w)|=|V_{21}(w)|=|T_{N-}(w)|^{-1}\ee^{u E_-(w)/\epsilon}\left|\ee^{-\ii f(E_-(w))/\epsilon}-\ee^{\ii f(E_-(w))/\epsilon}\right|\\
{}=\mathcal{O}(\epsilon^{-1}E_-(w)^2\ee^{u E_-(w)/\epsilon}) = \mathcal{O}(\epsilon)
\end{multline}
holds uniformly for $1-\delta<w<1$ by almost the same argument.  Since $V_{11}(w)V_{22}(w)-V_{12}(w)V_{21}(w)=1$ it follows that also $V_{22}(w)=1+\mathcal{O}(\epsilon)$.  Likewise, multiplying out the matrix factors in \eqref{eq:Ojump-Last} shows that 
\begin{equation}
\mathbf{O}_+(w)=\sigma_2\mathbf{O}_+(w)\sigma_2\begin{bmatrix}U_{11}(w) & U_{12}(w)\\ U_{21}(w) & U_{22}(w)
\end{bmatrix},\quad 1<w<1+\delta,
\end{equation}
where $U_{11}(w)U_{22}(w)-U_{12}(w)U_{21}(w)=1$ and
\begin{equation}
\begin{split}
U_{11}(w)&:= |T_{N+}(w)|^{-1}(1+\ee^{\ii[\theta_{0+}(w)-\theta_{0+}(w)^*]/\epsilon})\\
U_{12}(w)&:=\ii\ee^{2g_+(w)/\epsilon}|T_{N+}(w)|^{-1}\\
&\quad\quad{}\cdot\left[\Pi_{N+}(w)^*T_{N+}(w)\ee^{[-2\ii Q_+(w)-\ii\theta_{0+}(w)^*]/\epsilon}
-\ee^{-k_+(w)/\epsilon}(1+\ee^{\ii[\theta_{0+}(w)-\theta_{0+}(w)^*]/\epsilon})\right],
\end{split}
\end{equation}
and $U_{21}(w)=U_{12}(w)^*$ for $1<w<1+\delta$.  Similar arguments as in the case of $1-\delta<w<1$ then show that $U_{11}(w)-1$, $U_{22}(w)-1$, $U_{12}(w)$, and $U_{21}(w)$ are all $\mathcal{O}(\epsilon)$ uniformly for $1<w<1+\delta$ under Assumption~\ref{assumption:Psi} and the quantization condition \eqref{eq:epsilon-N} on $\epsilon=\epsilon_N$.  Therefore in both intervals $1-\delta<w<1$ and $1<w<1+\delta$ it holds that $\mathbf{O}_+(w)=\sigma_2\mathbf{O}_-(w)\sigma_2(\mathbb{I}+\mathcal{O}(\epsilon))$.

\subsubsection{Construction of a parametrix for $\mathbf{O}(w)$}
Neglecting terms in the jump conditions for $\mathbf{O}(w)$ that converge to zero with $\epsilon$ leaves only the jump condition \eqref{eq:Ojump-beta} for $w\in\beta$, a corresponding jump condition on the Schwarz reflection $\beta^*$, and the condition $\mathbf{O}_+(w)=\sigma_2\mathbf{O}_-(w)\sigma_2$ for $w>0$.  These are precisely the jump conditions of Riemann--Hilbert Problem~\ref{rhp:OuterParametrixGeneral} for a matrix $\bfY(w;\beta,\nu)$ described in Appendix~\ref{app:theta} in the case of the real parameter $\nu=\epsilon^{-1}\Phi=\epsilon^{-1}\Phi(x,t)$. Hence we define an \emph{outer parametrix} for $\mathbf{O}(w)$ by writing
\begin{equation}
\dot{\mathbf{O}}^\mathrm{out}(w)=\dot{\mathbf{O}}^\mathrm{out}(w;x,t,\epsilon):=\mathbf{Y}(w;\beta,\epsilon^{-1}\Phi(x,t)).
\end{equation}
According to Proposition~\ref{prop:PropertiesOfY}, $\dot{\mathbf{O}}^\mathrm{out}(w)$ has unit determinant and is uniformly bounded and oscillatory with respect to $(x,t)$, except when $w$ lies in neighborhoods of $w=\alpha$ and $w=\alpha^*$.  In such neighborhoods the outer parametrix would be expected to be a poor approximation of $\mathbf{O}(w)$.  

To better approximate $\mathbf{O}(w)$ near $w=\alpha$ and $w=\alpha^*$, let $U$ be a neighborhood of $w=\alpha(x,t)$.  Within $U$, the matrix 
\begin{equation}
\mathbf{P}(w):=\begin{cases}
\mathbf{O}(w)\ee^{-\ii\Phi(x,t)\sigma_3/(2\epsilon)},&\quad w\in U\cap (\Lambda_1^+\cup\Lambda_1^-)\\
\mathbf{O}(w)\ee^{-\ii\Phi(x,t)\sigma_3/(2\epsilon)}Y_N(w)^{-\sigma_3/2},&\quad w\in U\setminus (\Lambda_1^+\cup\Lambda_1^-)
\end{cases}
\end{equation}
satisfies simplified versions of the jump conditions \eqref{eq:Ojump-First}--\eqref{eq:Ojump-gamma} in which $Y_N(w)$ is replaced with $1$, $\Phi(x,t)$ is replaced with $0$, and $\phi(w)$ is replaced with $\phi(w)-\phi(\alpha)$ (note that $\phi(\alpha;x,t)=\ii\Phi(x,t)$).  Since $(x,t)$ lies in the modulated librational wave region, in particular $H(\alpha(x,t);x,t)\neq 0$, so that (from \eqref{eq:RH})
\begin{equation}
	\phi'(w)=k'(w)-2g'(w)=\mathcal{O}((w-\alpha)^{1/2})\quad\text{as $w\to\alpha$,}
	\label{eq:3HalfBehavior}
\end{equation}
which implies that $\phi(w)-\phi(\alpha)$ vanishes at $w=\alpha$ like a branch of $(w-\alpha)^{3/2}$.  In fact, it is easy to see that there exists a conformal mapping $W:U\to\mathbb{C}$ taking $\beta\cap U$ to $\mathbb{R}_-$ and satisfying the equation $W(w)^3=(\phi(w)-\phi(\alpha))^2$.  The jump conditions satisfied by $\mathbf{P}(w)$ for $w\in U$ can thus be written in terms of the exponentials $\ee^{\pm\xi^{3/2}}$ where $\xi:=\epsilon^{-2/3}W(w)$.  Choosing the arcs $\ell^\pm_1$ and $\gamma$ within $U$ to be mapped onto straight rays with $\arg(-\xi)$ equal to $\mp\tfrac{1}{3}\pi$ and $0$ respectively, the jump conditions for $\mathbf{P}(w)$ are expressed in terms of $\xi$ with $\xi\in \epsilon^{-2/3}W(U)$ as follows:
\begin{equation}
\begin{split}
\mathbf{P}_+&=\mathbf{P}_-\begin{bmatrix}1 & 0\\\ii\ee^{-\xi^{3/2}} & 1\end{bmatrix},\quad\arg(\xi)=0,\\
\mathbf{P}_+&=\mathbf{P}_-\begin{bmatrix}1 & \ii\ee^{\xi^{3/2}}\\0 & 1\end{bmatrix},\quad\arg(\xi)=\pm\tfrac{2}{3}\pi,\\
\mathbf{P}_+&=\mathbf{P}_-\begin{bmatrix}0 & -\ii\\-\ii & 0\end{bmatrix},\quad\arg(-\xi)=0.
\end{split}
\label{eq:Airy-jumps}
\end{equation}
Here for convenience the orientation of the contour arc $\gamma$ within $U$ has been reversed so that all four jump rays in the $\xi$-plane are oriented away from the origin.  It is well-known \cite{DeiftZhou1995} that there is a unique matrix function $\mathbf{A}(\xi)$ analytic for $\arg(\xi)\in (0,\tfrac{2}{3}\pi)\cup (-\tfrac{2}{3}\pi,0)\cup(\tfrac{2}{3}\pi,\pi)\cup (-\pi,-\tfrac{2}{3}\pi)$ and continuous up to the four boundary rays of the indicated sectors, such that $\mathbf{A}(\xi)$ satisfies exactly the jump conditions of $\mathbf{P}$ written in \eqref{eq:Airy-jumps} extended to infinite rays with the indicated angles, as well as the normalization condition 
\begin{equation}
\mathbf{A}(\xi)\mathbf{M}\xi^{-\sigma_3/4}=\mathbb{I}+\begin{bmatrix}\mathcal{O}(\xi^{-3}) & \mathcal{O}(\xi^{-1})\\\mathcal{O}(\xi^{-2}) & \mathcal{O}(\xi^{-3})\end{bmatrix},\quad\xi\to\infty,\quad \mathbf{M}:=\frac{1}{\sqrt{2}}\begin{bmatrix}1 & -1\\1 & 1\end{bmatrix}.
\label{eq:Airy-norm}
\end{equation}
The four elements of $\mathbf{A}(\xi)$ can be explicitly written in terms of the classical Airy function $\mathrm{Ai}(\cdot)$ and its derivative \cite[Chapter 9]{NIST:DLMF}.  The inverse of the matrix $\xi^{\sigma_3/4}\mathbf{M}^{-1}$ appears in the above asymptotic expansion because it is an exact solution of the ``twist'' jump condition on the negative real $\xi$-axis.  Taking out a holomorphic left multiplier of $\epsilon^{-\sigma_3/6}$ gives a matrix $W(w)^{\sigma_3/4}\mathbf{M}^{-1}$ with the same property.  Therefore $\dot{\mathbf{O}}^\mathrm{out}(w)\ee^{-\ii\Phi(x,t)\sigma_3/(2\epsilon)}$ and $W(w)^{\sigma_3/4}\mathbf{M}^{-1}$ both have the same domain of analyticity and satisfy the same jump condition for $w\in U$; moreover from Proposition~\ref{prop:PropertiesOfY} it follows that 
\begin{equation}
\mathbf{C}(w):=\dot{\mathbf{O}}^\mathrm{out}(w)\ee^{-\ii\Phi(x,t)\sigma_3/(2\epsilon)}\mathbf{M}W(w)^{-\sigma_3/4},\quad w\in U
\label{eq:CAiry}
\end{equation}
is analytic for $w\in U$, has unit determinant, and is uniformly bounded as $\epsilon\to 0$.  Now we may define an \emph{inner parametrix} for $\mathbf{O}(w)$ valid near $w=\alpha$ by 
\begin{equation}
\dot{\mathbf{O}}^\mathrm{in}(w):=\begin{cases}
\mathbf{C}(w)\epsilon^{\sigma_3/6}\mathbf{A}(\epsilon^{-2/3}W(w))\ee^{\ii\Phi(x,t)\sigma_3/(2\epsilon)},&\quad w\in U\cap (\Lambda_1^+\cup\Lambda_1^-),\\
\mathbf{C}(w)\epsilon^{\sigma_3/6}\mathbf{A}(\epsilon^{-2/3}W(w))\ee^{\ii\Phi(x,t)\sigma_3/(2\epsilon)}Y_N(w)^{\sigma_3/2},&\quad w\in U\setminus(\Lambda_1^+\cup\Lambda_1^-).
\end{cases}
\label{eq:OinAiry}
\end{equation}
Note that $\dot{\mathbf{O}}^\mathrm{in}(w)$ satisfies exactly the same jump conditions within $U$ as does $\mathbf{O}(w)$.

The \emph{global parametrix} for $\mathbf{O}(w)$ is then combines the inner and outer parametrices to approximate $\mathbf{O}(w)$ globally in the complex $w$-plane.  It is defined by the following formula:
\begin{equation}
\dot{\mathbf{O}}(w):=\begin{cases}
\dot{\mathbf{O}}^\mathrm{in}(w),&\quad w\in U\\
\dot{\mathbf{O}}^\mathrm{in}(w^*)^*,&\quad w\in U^*\\
\dot{\mathbf{O}}^\mathrm{out}(w),&\quad w\in\mathbb{C}\setminus (U\cup U^*).
\end{cases}
\label{eq:GlobalParametrix}
\end{equation}

\subsubsection{Error analysis}
The accuracy of approximating $\bfO(w)$ with its global parametrix $\dot{\bfO}(w)$ can be measured with the help of the \emph{error} defined by
\begin{equation}
\mathbf{E}(w):=\mathbf{O}(w)\dot{\mathbf{O}}(w)^{-1}
\label{eq:ErrorMatrix}
\end{equation}
wherever both factors make sense.  In fact, based on the analyticity properties of $\mathbf{O}(w)$ from the conditions of Riemann--Hilbert Problem~\ref{rhp:O} and the definition \eqref{eq:GlobalParametrix} of $\dot{\mathbf{O}}(w)$, one can see that inside the disks $U$ and $U^*$ $\mathbf{E}(w)$ is analytic while outside the disks it is analytic for $w\in\mathbb{C}\setminus (\Sigma\setminus(\beta\cup\beta^*))$.  (There are no jumps across any contours within the disks, nor across $\beta\cup\beta^*$, because the jump conditions of $\dot{\mathbf{O}}(w)$ and $\mathbf{O}(w)$ agree exactly across these arcs.)  There are also jump discontinuities of $\mathbf{E}(w)$ across the boundaries $\partial U$ and $\partial U^*$ of the disks.  For all arcs of $\Sigma\setminus\beta$ outside of $U$ in the open upper half-plane, we have already shown that $\mathbf{O}_+(w)=\mathbf{O}_-(w)(\mathbb{I}+\mathcal{O}(\epsilon))$ holds, so since $\dot{\mathbf{O}}^\mathrm{out}(w)$ is analytic and uniformly bounded on these arcs also as $\epsilon\to 0$ according to Proposition~\ref{prop:PropertiesOfY}, it is easy to check that also $\mathbf{E}_+(w)=\mathbf{E}_-(w)(\mathbb{I}+\mathcal{O}(\epsilon))$ holds.  For $w\in\partial U$ taken with clockwise orientation, it is easy to see that $\mathbf{E}_+(w)=\mathbf{E}_-(w)\dot{\mathbf{O}}^\mathrm{in}(w)\dot{\mathbf{O}}^\mathrm{out}(w)^{-1}$, and from \eqref{eq:CAiry} and \eqref{eq:OinAiry} we get
\begin{equation}
\dot{\mathbf{O}}^\mathrm{in}(w)\dot{\mathbf{O}}^\mathrm{out}(w)^{-1} = 
\begin{cases}
\mathbf{C}(w)\epsilon^{\sigma_3/6}\mathbf{A}(\xi)\mathbf{M}\xi^{-\sigma_3/4}\epsilon^{-\sigma_3/6}\mathbf{C}(w)^{-1},&\quad w\in\partial U\cap (\Lambda_1^+\cup\Lambda_1^-)\\
\mathbf{C}(w)\epsilon^{\sigma_3/6}\mathbf{A}(\xi)\mathbf{M}\xi^{-\sigma_3/4}\epsilon^{-\sigma_3/6}\mathbf{C}(w)^{-1}&\\
\qquad\qquad\qquad{}\cdot\dot{\mathbf{O}}^\mathrm{out}(w)Y_N(w)^{\sigma_3/2}\dot{\mathbf{O}}^\mathrm{out}(w)^{-1},&\quad w\in\partial U\setminus (\Lambda_1^+\cup\Lambda_1^-).
\end{cases}
\label{eq:EjumpAiry}
\end{equation}
Now, since $w\in\partial U$ means that $\xi$ is uniformly proportional to $\epsilon^{-2/3}$, the expansion \eqref{eq:Airy-norm} shows that $\epsilon^{\sigma_3/6}\mathbf{A}(\xi)\mathbf{M}\xi^{-\sigma_3/4}\epsilon^{-\sigma_3/6}=\mathbb{I}+\mathcal{O}(\epsilon)$ holds uniformly for $w\in\partial U$.  Since $\mathbf{C}(w)$ and $\dot{\mathbf{O}}^\mathrm{out}(w)$ have unit determinant and are uniformly bounded for $w\in\partial U$ by Proposition~\ref{prop:PropertiesOfY}, it follows from Proposition~\ref{prop:T-and-Y} that $\mathbf{E}_+(w)=\mathbf{E}_-(w)(\mathbb{I}+\mathcal{O}(\epsilon))$ holds on $\partial U$.  By Schwarz symmetry it follows that similar estimates hold for $w$ in all arcs of the jump contour for $\mathbf{E}(w)$ in the open lower half-plane.  Finally, note that for $w\in\mathbb{R}_+$ we combine the exact jump condition $\dot{\mathbf{O}}^\mathrm{out}_+(w)=\sigma_2\dot{\mathbf{O}}^\mathrm{out}_-(w)\sigma_2$ with the approximate one $\mathbf{O}_+(w)=\sigma_2\mathbf{O}_-(w)\sigma_2(\mathbb{I}+\mathcal{O}(\epsilon))$ where the error term vanishes for $|w-1|>\delta$ to find that $\mathbf{E}_+(w)=\sigma_2\mathbf{E}_-(w)\sigma_2(\mathbb{I}+\mathcal{O}(\epsilon))$ with the same caveat for the error term.   

Noting also that $\mathbf{E}(w)\to\mathbb{I}$ as $w\to\infty$ by the normalization conditions on $\mathbf{O}(w)$ and $\dot{\mathbf{O}}^\mathrm{out}(w)$, one checks that upon carrying $\mathbf{E}(w)$ to the $z=\ii\sqrt{-w}$ plane to deal with the non-standard jump condition for $w>0$ (see the discussion around \eqref{eq:go-to-z-plane} below for more details) one arrives at a small-norm problem for which standard theory guarantees that $\mathbf{E}(w)=\mathbb{I}+\mathcal{O}(\epsilon)$ uniformly for all $w\in\mathbb{C}$.  Now using \eqref{eq:H-to-M}, \eqref{eq:M-to-N-elsewhere}, \eqref{eq:N-to-O}, \eqref{eq:g-integral}, \eqref{eq:GlobalParametrix}, and \eqref{eq:ErrorMatrix}, we have
\begin{equation}
\mathbf{H}(0)=\mathbf{M}(0) = \mathbf{N}(0)=\mathbf{O}(0)\ee^{g(0)\sigma_3/\epsilon} = \mathbf{O}(0)=\mathbf{E}(0)\dot{\mathbf{O}}(0) = \mathbf{E}(0)\dot{\mathbf{O}}^\mathrm{out}(0).
\end{equation}
So, since $\cos(\tfrac{1}{2}u_N(x,t))$ and $\sin(\tfrac{1}{2}u_N(x,t))$ are encoded in the first column of $\mathbf{H}(0)$ according to \eqref{eq:cos-sin-H0} and since $\mathbf{E}(0)=\mathbb{I}+\mathcal{O}(\epsilon)$, the asymptotic formul\ae\ \eqref{eq:cos-sin-before} are confirmed upon using \eqref{eq:general-outer-parametrix-at-origin} from Proposition~\ref{prop:PropertiesOfY}.  This completes the proof of Proposition~\ref{prop:BeforeCatastrophe}.

\subsection{The boundary of the modulated librational wave region and points of gradient catastrophe}
\label{sec:boundary}
The boundary of the modulated librational wave region consists of points $(x,t)$ for which a critical point of $\phi(w)=\phi(w;x,t)$ first appears on the branch of the zero level curve $\mathrm{Re}\{\phi(w)\}=0$ emanating from $w=\alpha$ on the right side of $\beta$.  Indeed, the appearance of this critical point on the zero level signals the closing of the green-shaded channel through which the curve $\gamma$ must pass (see Figures~\ref{fig:w-plane-plots-x0p2}--\ref{fig:w-plane-plots-x0}). 
This gives a criterion for locating points on the boundary of the modulated librational wave region in the $(x,t)$-plane that can be implemented numerically.  We have computed these points for the initial impulse profile $G(x)=-\mathrm{sech}(x)$, and we superimpose the corresponding modulated librational wave region with green shading on density plots of $\cos(u_N(x,t))$ for $N=8$ and $N=16$ in Figure~\ref{fig:Boundary}.
\begin{figure}[h]
\begin{center}
\includegraphics[width=0.4\linewidth]{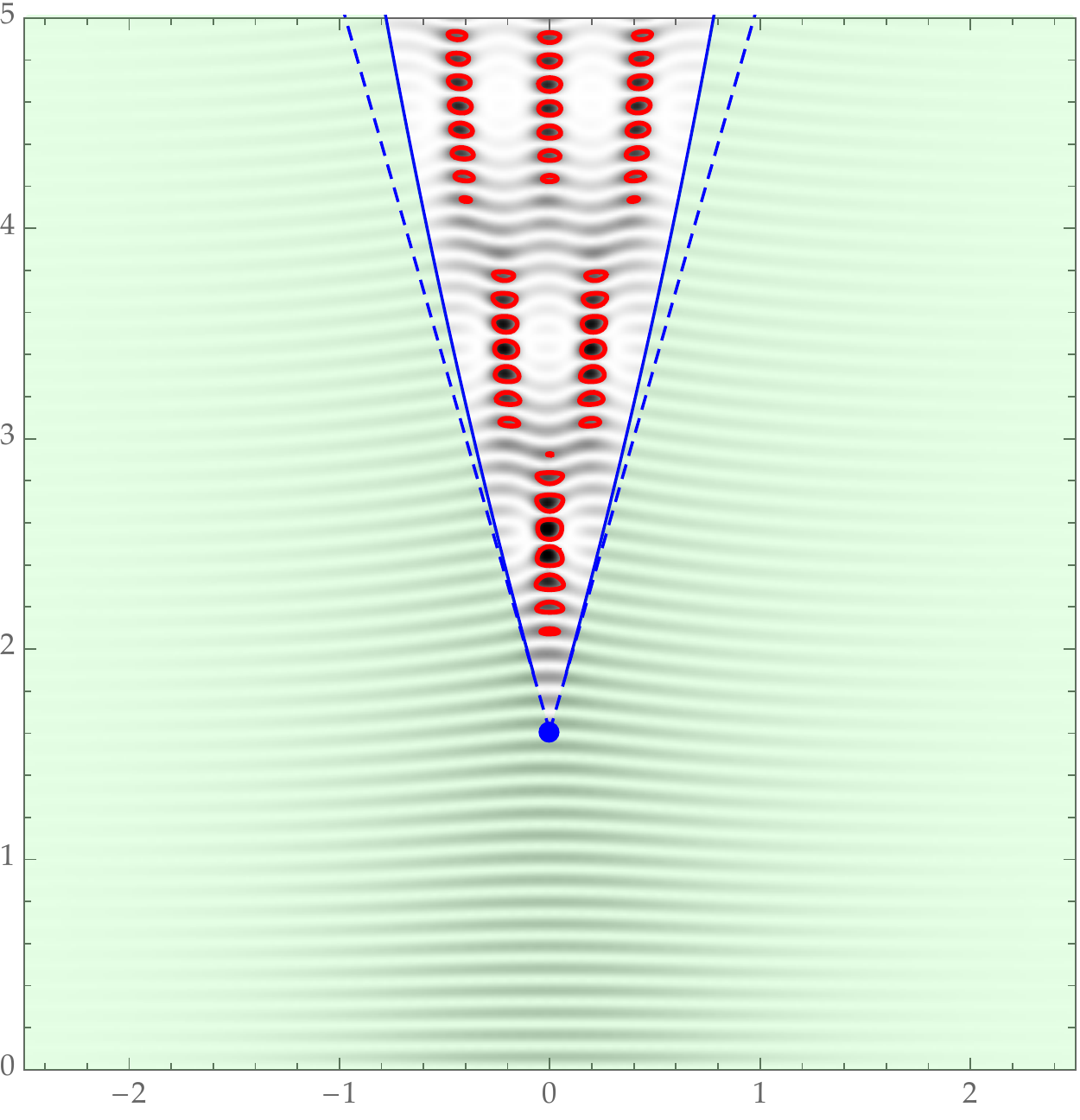}%
\hspace{0.05\linewidth}%
\includegraphics[width=0.4\linewidth]{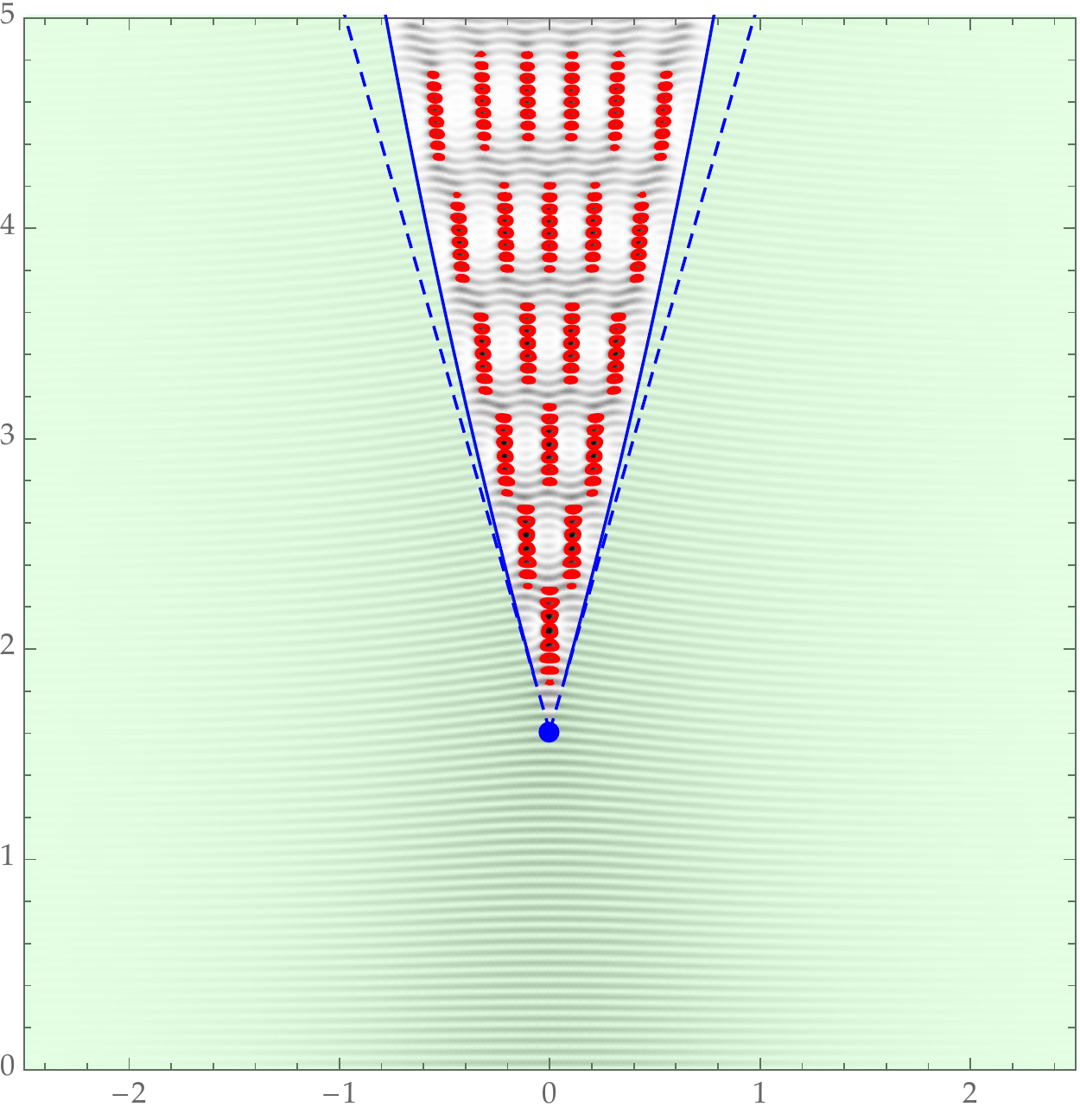}
\end{center}
\caption{Density plots of $\cos(u_N(x,t))$ for the initial impulse profile $G(x)=-\mathrm{sech}(x)$ over the $x$ (horizontal) and $t$ (vertical) plane along with the $N$-independent modulated librational wave region (green shading), its numerically-computed boundary (solid blue curves for $|x|\ge 0.08$), and preimages of $\arg(\tau)=\pm\tfrac{1}{5}\pi$ under the mapping $\tau=\epsilon^{-4/5}(\ii a x + b(t-t_\mathrm{gc}))$ (dashed lines; see \eqref{eq:AwayFromPolesSolution} with $b=-a\rho(m_\gc)$).  Left:  $N=8$.  Right:  $N=16$.  The vertex of the boundary curve is the gradient catastrophe point $(0,t_\mathrm{gc})$ with $t_\mathrm{gc}\approx 1.609104$.  The boundary is difficult to compute reliably for $|x|$ small, but it would be expected to be tangent to the two dashed lines at the catastrophe point.}
\label{fig:Boundary}
\end{figure}

Recall from \eqref{eq:RH} that the critical points of $\phi(w)=\phi(w;x,t)$ other than $w=\alpha$ and $w=\alpha^*$ are the zeros of the function $H(w)=H(w;x,t)$.
Now, the condition (\ref{item:MLW-H-not-zero}) in Definition~\ref{def:MLW} implies that $H(\alpha(x,t);x,t)\neq 0$, as $\alpha(x,t)$ is an endpoint of $\beta$; however it \emph{is} possible that there are points $(x,t)$ on the boundary of the modulated librational wave region at which $H(\alpha(x,t);x,t)=0$.  
\begin{definition}[Gradient catastrophe]
A point $(x,t)$ on the boundary of the modulated librational wave region is called a point of (simple) gradient catastrophe if conditions (\ref{item:MLW-beta}) and (\ref{item:MLW-gamma}) from Definition~\ref{def:MLW} hold but instead of condition (\ref{item:MLW-H-not-zero}) we have
\[
H(\alpha(x,t);x,t)=0\quad\text{and}\quad H'(\alpha(x,t);x,t)\neq 0
\]
and $H(w;x,t)/(w-\alpha(x,t))$ is bounded away from zero for $w\in\beta$.
\label{def:GradientCatastrophe}
\end{definition}
The right-hand plot in Figure~\ref{fig:w-plane-plots-x0} shows a configuration in which a simple zero of $H(w;x,t)$ is very close to $w=\alpha(x,t)$; correspondingly $(x,t)=(0,1.608556)$ is very close to the gradient catastrophe point of $(0,t_\mathrm{gc})$ with $t_\mathrm{gc}\approx 1.609104$ for the initial impulse profile $G(x)=-\mathrm{sech}(x)$.
\subsection{Symmetries for $x=0$}
\label{sec:x=0-Symmetry}
By following the arguments in \cite[Section 4.2]{BuckinghamMiller2013} one shows that in the limit $t\downarrow 0$ with $x>0$, the arc $\beta\cup\beta^*$ becomes a proper sub-arc of $P_\infty$, and the level sets of $\mathrm{Re}\{\phi(w;x,0)\}$ become symmetric with respect to reflection through the unit circle.  In this section, we show that a related symmetry occurs if $x=0$ but $t>0$.  This is important because due to even symmetry in $x$ (see \eqref{eq:condensate-even}) we are restricting our attention in this work to gradient catastrophe points $(x_\mathrm{gc},t_\mathrm{gc})$ with $x_\mathrm{gc}=0$ (it is also possible in principle for simultaneous gradient catastrophe points to occur at opposite nonzero values of $x$, but we do not consider this case here).

We begin with the following elementary observation the proof of which is just an application of the Schwarz reflection principle, using the fact that $\Psi(\lambda)$ is real-valued on the imaginary segment where it is initially defined by \eqref{eq:Psi_Initial_Condition}.
\begin{lemma}
The phase integral $\Psi(\lambda)$, defined in \eqref{eq:Psi_Initial_Condition} on an interval of the imaginary axis and extended to a sufficiently large neighborhood of that interval as an analytic function by Assumption~\ref{assumption:Psi}, satisfies $\Psi(-\lambda^*)=\Psi(\lambda)^*$ on that neighborhood.
\label{lemma:theta0}
\end{lemma}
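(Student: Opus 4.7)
The proof is a standard application of the Schwarz reflection principle, with the reflection being through the imaginary axis (since $\lambda\mapsto -\lambda^*$ is precisely that reflection). The plan proceeds in three short steps.

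First I would verify that $\Psi$ takes real values on its initial domain of definition, the open imaginary segment $\{\lambda = \ii v : 0 < v < -\tfrac{1}{4}G(0)\}$. For $\lambda = \ii v$ with $v$ in this range, $G(s)^2+16\lambda^2 = G(s)^2 - 16v^2$ is a real-valued function of $s$ that is nonnegative precisely on the interval $[x_-(\lambda),x_+(\lambda)]$ (with zeros at the endpoints). Hence the integrand in \eqref{eq:Psi_Initial_Condition} is nonnegative and real on the integration contour, so $\Psi(\ii v) \in \mathbb{R}$.

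Next I would introduce the auxiliary function
\begin{equation}
\tilde{\Psi}(\lambda) := \overline{\Psi(-\lambda^*)}
\end{equation}
defined on the image of the domain of analyticity of $\Psi$ under the reflection $\lambda\mapsto-\lambda^*$. Since composition with the antiholomorphic map $\lambda\mapsto-\lambda^*$ followed by complex conjugation preserves holomorphicity, $\tilde{\Psi}$ is analytic on this reflected domain. The initial imaginary segment is invariant under $\lambda\mapsto -\lambda^*$ (points of the form $\ii v$ with $v$ real map to themselves), so both $\Psi$ and $\tilde{\Psi}$ are analytic on a common open neighborhood of this segment (the reflected neighborhood also contains the segment, and by Assumption~\ref{assumption:Psi} we may intersect the two to get a single neighborhood preserved under reflection).

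Finally I would compare values on the imaginary segment itself: for $\lambda=\ii v$ with $v\in(0,-\tfrac{1}{4}G(0))$, we have $-\lambda^* = \lambda$, so $\tilde{\Psi}(\lambda) = \overline{\Psi(\lambda)} = \Psi(\lambda)$ by the first step. Since $\Psi$ and $\tilde{\Psi}$ are two analytic functions on a connected open neighborhood of this segment that agree on the segment itself (a set with an accumulation point), the identity theorem gives $\tilde{\Psi}\equiv\Psi$ on the common domain, which is the asserted identity $\Psi(-\lambda^*)=\Psi(\lambda)^*$. There is no real obstacle here; the only point that requires a moment of care is ensuring that the domain of analyticity guaranteed by Assumption~\ref{assumption:Psi} may be chosen symmetric under $\lambda\mapsto-\lambda^*$, but this is automatic by intersecting with its image under the reflection, which still contains the imaginary segment.
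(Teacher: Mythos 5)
Your proposal is correct and matches the paper's proof exactly: the paper states that the lemma is proved by an application of the Schwarz reflection principle using the fact that $\Psi(\lambda)$ is real-valued on the initial imaginary segment, which is precisely the argument you develop. You simply spell out the standard reflection-and-identity-theorem details that the paper leaves implicit.
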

In particular, the analytic continuation of $\Psi(\lambda)$ along the imaginary axis above the point $-\tfrac{1}{4}\ii G(0)$ is real-valued.

\begin{proposition}
Suppose that $x=0$ and that $\alpha=\ee^{\ii\arg(\alpha)}$, where $\mu\leq\arg(\alpha)<\pi$ (recall that $\ee^{\ii\mu}$ is the endpoint of $P_\infty$ in $\mathbb{C}_+$). If $\beta$ lies in the exterior of the unit circle except at its two endpoints, then the condition $M=0$ (see \eqref{eq:M}) is automatically satisfied.
\label{proposition:MSymmetry}
\end{proposition}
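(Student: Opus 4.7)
I would split the right-hand side of \eqref{eq:M} into its two summands and argue that each vanishes under the hypotheses. For the non-integral part, $|\alpha|=1$ gives $\alpha\alpha^*=1$, so $\sqrt{\alpha\alpha^*}=1$, and at $x=0$ the quantity $x+t+(x-t)/\sqrt{\alpha\alpha^*}$ collapses to $t-t=0$. It therefore suffices to prove that
\begin{equation*}
\mathcal{J}:=\int_{\tilde\gamma\cup\tilde\gamma^*}\frac{\theta_0'(\xi)\sqrt{-\xi}}{R(\xi)}\,\dd\xi = 0,
\end{equation*}
which I decompose as $\mathcal{J}=I_++I_-$ with $I_\pm$ the integrals over $\tilde\gamma$ and $\tilde\gamma^*$ respectively.

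Because $\beta$ lies exterior to the unit circle by hypothesis, meeting it only at $\alpha$ and $w=1$, the integrand is analytic off $\beta\cup\beta^*\cup\mathbb{R}_+$, so Cauchy's theorem lets me deform $\tilde\gamma$ to the short upper unit-circle arc from $\ee^{\ii\mu}$ to $\alpha$, parametrized by $\xi(s)=\ee^{\ii\phi(s)}$ with $\phi$ increasing from $\mu$ to $\arg\alpha$, while $\tilde\gamma^*$ becomes the Schwarz reflection $\xi(s)=\ee^{-\ii\phi(s)}$.

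The crux is the involution $\xi\mapsto 1/\xi$, which fixes the unit circle and, in these parametrizations, sends $\tilde\gamma^*$ onto $\tilde\gamma$ in an orientation-preserving way. Three identities then collapse the substitution $\xi=1/\eta$ in $I_-$: (i) $E(1/\xi)=E(\xi)$, which follows from \eqref{eq:E-and-D-define} together with $\sqrt{-1/\xi}=1/\sqrt{-\xi}$ on each unit-circle arc (checked via the principal branch $|\arg(-w)|<\pi$); Lemma~\ref{lemma:theta0} then gives $\theta_0(1/\xi)=\theta_0(\xi)$ and hence $\theta_0'(1/\xi)=-\xi^2\theta_0'(\xi)$. (ii) $R(1/\xi)^2=R(\xi)^2/\xi^2$ from $\alpha\alpha^*=1$, with the sign $R(1/\xi)=+R(\xi)/\xi$ on $\tilde\gamma$ pinned down as follows: the Schwarz symmetry $R(\xi^*)=R(\xi)^*$ gives $R(1/\xi)=R(\xi)^*$ on the unit circle, and comparison with the explicit formula $R(\ee^{\ii\theta})^2=2\ee^{\ii\theta}(\cos\theta-\cos\arg\alpha)$ shows that the sign is $+$ precisely when $\cos\theta>\cos\arg\alpha$, i.e.\ on $\tilde\gamma$. (iii) $\eta/\sqrt{-\eta}=-\sqrt{-\eta}$, from squaring. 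Combining (i)--(iii) with $\dd\xi=-\dd\eta/\eta^2$, all powers of $\eta$ cancel and one obtains
\begin{equation*}
I_-=-\int_{\tilde\gamma}\frac{\theta_0'(\eta)\sqrt{-\eta}}{R(\eta)}\,\dd\eta=-I_+,
\end{equation*}
whence $\mathcal{J}=0$.

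The main technical obstacle I anticipate is the sign determination in step (ii). The function $F(\eta):=R(1/\eta)\eta/R(\eta)$ is well-defined and takes values in $\{\pm 1\}$ only off $\beta\cup\beta^*\cup(1/\beta)\cup(1/\beta^*)$, whose complement splits into several connected components on which $F$ may take opposite signs. It is therefore essential to test the sign at a point genuinely on $\tilde\gamma$ rather than at a more convenient outsider like $\eta=\ii$, which for generic $\arg\alpha$ lies in a different component and generally yields the opposite sign; once this is sorted the remainder of the calculation is formal.
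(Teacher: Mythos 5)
Your proof is correct, and it is a genuinely different route from the paper's. The paper's proof folds the two arcs into a single real part: using Schwarz symmetry of $\theta_0$, $R$, and $\sqrt{-\cdot}$ to write $\int_{\tilde\gamma\cup\tilde\gamma^*}=2\mathrm{Re}\int_{\tilde\gamma}$, then changing variables to $\lambda=E(w)$, noting $R(w)/\sqrt{-w}$ is $-\ii$ times a positive real function of $\lambda\in\ii\mathbb{R}$ on the unit-circle arc, and invoking the realness of $\dd\Psi(\lambda)$ (Lemma~\ref{lemma:theta0}) to conclude the $\tilde\gamma$-integral is purely imaginary. You instead exploit the involution $\xi\mapsto 1/\xi$ — which coincides with conjugation on the unit circle — to map $I_-$ onto $-I_+$ by an explicit change of variable, never passing to the $\lambda$-plane. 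Both approaches deform to the same unit-circle arcs and rely on the hypothesis that $\beta$ is exterior (so $R$ is single-valued there) and on the sign analysis of $R$ on the circle. One thing your route buys is that it never actually needs the realness of $\Psi$ on the imaginary axis: the antisymmetry $I_-=-I_+$ is a structural consequence of $E(1/w)=E(w)$, $R(1/w)^2=R(w)^2/w^2$, and the Jacobian. Your caution about the sign of $R(1/\xi)\,\xi/R(\xi)$ is appropriate, and your explicit formula $R(\ee^{\ii\theta})^2=2\ee^{\ii\theta}(\cos\theta-\cos\arg\alpha)$ pins it down correctly: the ratio is $\mathrm{sgn}(\cos\theta-\cos\arg\alpha)=+1$ on $\tilde\gamma$ because cosine decreases on $(0,\pi)$ and $\mu<\theta<\arg\alpha$.

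One small inaccuracy of exposition: you cite Lemma~\ref{lemma:theta0} to get $\theta_0(1/\xi)=\theta_0(\xi)$, but that lemma is the Schwarz symmetry $\Psi(-\lambda^*)=\Psi(\lambda)^*$ — a different statement. What you actually need is the purely algebraic identity $E(1/w)=E(w)$, which you have already established, composed with $\theta_0=\Psi\circ E$; this yields $\theta_0(1/w)=\theta_0(w)$ off the positive real axis with no reference to the lemma, and in fact off the unit circle as well. On the unit circle the two symmetries coincide (since $1/\xi=\xi^*$ there), so your conclusion is unharmed, but dropping the lemma from step (i) makes the logic cleaner and makes explicit that your route does not depend on the realness of $\Psi$ at all.
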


\begin{proof}
Taking $x=0$  and  using $|\alpha|=1$ in  \eqref{eq:M} we get
\begin{equation}
M=\frac{4}{\pi}\int_{\tilde{\gamma}\cup\tilde{\gamma}^*}\frac{\theta'_0(\xi)\sqrt{-\xi}}{R(\xi)}\,\dd\xi = \frac{8}{\pi}\mathrm{Re}\left\{
\int_\gamma\frac{\theta_0'(\xi)\sqrt{-\xi}}{R(\xi)}\,\dd\xi\right\}.
\label{eq:Mx=0}
\end{equation}
Recall that $\tilde{\gamma}$ is an oriented arc from $\ee^{\ii\mu}$ to $\alpha=\ee^{\ii\arg(\alpha)}$; we shall take it to lie along the unit circle, and note that by assumption on the location of $\beta$, $R(\xi)$ is single-valued along $\tilde{\gamma}$.  Now since $\mu\le\arg(\alpha)<\pi$, and hence $\lambda=E(w)$ is a univalent map $\tilde{\gamma}\to\ii\mathbb{R}$, we can instead integrate with respect to $\lambda$, noting that $\theta_0'(\xi)\,\dd\xi = \Psi(\lambda)\,\dd\lambda$.  Therefore, as $R(w)/\sqrt{-w}=-\ii\sqrt{2(1-\cos(\arg(\alpha)))+16\lambda^2}$, where we take the positive square root for $\lambda\in\ii\mathbb{R}$ between $E(\ee^{\ii\mu})$ and $E(\ee^{\ii\arg(\alpha)})$,
\begin{equation}
M=-\frac{8}{\pi}\mathrm{Im}\left\{
\int_{E(\ee^{\ii\mu})}^{E(\ee^{\ii\alpha})}\frac{\Psi'(\lambda)\,\dd\lambda}{\sqrt{2(1-\cos(\arg(\alpha)))+16\lambda^2}}\right\}.
\end{equation}
However, the increment $\Psi'(\lambda)\,\dd\lambda = \dd\Psi(\lambda)$ is real according to Lemma~\ref{lemma:theta0}, so the integral is real, and therefore $M=0$.
\end{proof}
This result is significant in practice because it allows one to reduce the problem of numerical continuation of the solution of the simultaneous nonlinear equations $M=0$ and $I=0$ to the solution of a single real equation $I=0$ to determine $\arg(\alpha)$, under the restriction of $\alpha$ to the unit circle with $P_\infty$ omitted.  This yields a more efficient numerical algorithm for determining $\alpha$ as a function of $t>0$ for $x=0$, which was used to make the plots in Figure~\ref{fig:w-plane-plots-x0}.  Note that in practice the assumption on $\beta$ in the statement of Proposition~\ref{proposition:MSymmetry} can be confirmed after the fact as can also be seen in Figure~\ref{fig:w-plane-plots-x0}.  A similar simplification based on Proposition~\ref{proposition:MSymmetry} allows for the accurate computation of catastrophe points $(0,t_\mathrm{gc})$ and $\theta=\arg(\alpha_\mathrm{gc})$.  Indeed, one can use the equation $H(\ee^{\ii\theta};0,t_\mathrm{gc})=0$ (see Definition~\ref{def:GradientCatastrophe}) to explcitly eliminate $t_\mathrm{gc}$ in favor of $\theta$, after which the calculation of $\theta$ is reduced to a single-variable root search with the equation $I=0$ (see \eqref{eq:I}).  This is how we can accurately compute the value $t_\mathrm{gc}\approx 1.609104$ with $\theta\approx 1.403433$ for the initial impulse profile $G(x)=-\mathrm{sech}(x)$, even though it is difficult to accurately calculate the boundary of the modulated librational wave region near the catastrophe point where $x$, although small, is nonzero.

Now we suppose that for $x=0$ and $t>0$ a point $\alpha=\alpha(0,t)$ with $|\alpha|=1$ has been determined so that the condition \eqref{eq:I} holds, and thus $g(w;x,t)$ is defined, as is the corresponding function $\phi(w;x,t)$.  In this situation, we have the following:
\begin{proposition} \label{proposition:HSymmetry}
$\mathrm{Re}\{\phi(\ee^{\ii\eta};0,t)\}=0$ holds for $\arg(\alpha)<\eta<\pi$.
\end{proposition}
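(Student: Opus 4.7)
The plan is to exploit the antiholomorphic involution $\tau: w \mapsto 1/\bar w$, which fixes the unit circle pointwise and, when $x=0$ and $|\alpha|=1$, is a symmetry of the scalar problem defining $g$. My goal is to prove the reflection identity $\phi(1/\bar w) = -\overline{\phi(w)}$ as an equality of analytic functions in a neighborhood of the arc $\{\ee^{\ii\eta} : \arg(\alpha) < \eta < \pi\}$; evaluating on the arc, where $1/\bar w = w$, then yields $\mathrm{Re}\{\phi\} = 0$ immediately. Note that the arc lies in the analyticity domain of $\phi$ since $P_\infty$ and $\beta$ lie on the opposite side of the unit circle.

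I decompose $\phi = 2\ii Q + L - \ii\theta_0 - 2g$ and verify the $\tau$-reflection behavior of each piece. From the explicit formulas for $E$ and $D$, together with the branch identity $\sqrt{-1/\bar w}=1/\overline{\sqrt{-w}}$, one obtains $E(1/\bar w) = -\overline{E(w)}$ and $D(1/\bar w) = \overline{D(w)}$; combined with Lemma \ref{lemma:theta0} this gives $\theta_0(1/\bar w) = \overline{\theta_0(w)}$. Hence at $x=0$ both $2\ii Q(\cdot;0,t) = 2\ii D(\cdot)t$ and $-\ii\theta_0$ satisfy $f(1/\bar w) = -\overline{f(w)}$. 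For the Cauchy integral $L$ the opposite sign appears: I would prove $L(1/\bar w) = \overline{L(w)}$ (in particular, $L$ is real on the unit circle away from $P_\infty$) by showing that $\hat L(w) := -\overline{L(1/\bar w)}$ is analytic in $\mathbb{C}\setminus(P_\infty \cup \mathbb{R}_+)$, satisfies $\hat L_+ + \hat L_- = 0$ on $\mathbb{R}_+$ and $\hat L_+ - \hat L_- = -2\ii\theta_0$ on $P_\infty$ (using reality of $\theta_0$ on $P_\infty$, itself a consequence of the identities just proved), and vanishes at both $0$ and $\infty$. Then $\hat L + L$ is analytic across $P_\infty$, has only the sign-flip jump on $\mathbb{R}_+$, and vanishes at $0$ and $\infty$, which by the usual uniqueness argument forces $\hat L + L \equiv 0$.

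Assembling these identities gives $k(1/\bar w) = -\overline{k(w)} + 2\overline{L(w)}$, so the desired reflection identity for $\phi$ is equivalent to the functional equation
\[
g(w) + \overline{g(1/\bar w)} = L(w).
\]
To establish this equality between analytic functions, I would deform $\beta \cup \beta^*$ to a $\tau$-invariant contour — permissible because $g$ depends only on the homotopy class of the branch cut in its analyticity domain — and consider $F(w) := g(w) + \overline{g(1/\bar w)} - L(w)$. With $\tau$-invariance in force, the singular sets of $g$ and of $\overline{g\circ\tau}$ coincide, so $F$ is analytic off $\beta \cup \beta^* \cup \mathbb{R}_+$, vanishes at $0$ and $\infty$ (since $g(0)=g(\infty)=L(0)=L(\infty)=0$), and satisfies $F_+ + F_- = 0$ on $\mathbb{R}_+$ (inherited from each term). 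For the jump on $\beta$, I combine $g_+(w) + g_-(w) = k(w) - \ii\Phi$ with the $\tau$-swapped identity $\overline{g_-(w)} + \overline{g_+(w)} = \overline{k(w) - \ii\Phi}$ for the reflected term — the two sides of $\beta$ exchange because $\tau$ inverts across the unit circle — and then invoke the identity $\mathrm{Re}\{k\} = L$ on the unit circle (which follows from paragraphs one and two) to obtain $(k - \ii\Phi) + \overline{k - \ii\Phi} - 2L = 0$ on $\beta$. Hence $F_+ + F_- = 0$ on $\beta$ as well, and the uniqueness for the scalar RHP forces $F \equiv 0$.

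The hardest part will be this last step. The $\tau$-invariant deformation of $\beta$ necessarily forces (part of) it onto the unit circle where it overlaps the upper half of $P_\infty$, introducing additional jumps of $L$ and $k$ along the overlap that must be tracked carefully; moreover, the junctions at $\ee^{\pm\ii\mu}$ where $\beta$ meets $P_\infty$ require delicate bookkeeping to confirm the cancellation of all jumps of $F$ and hence the trivial solution of the RHP.
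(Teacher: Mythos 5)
Your approach is genuinely different from the paper's. The paper writes $\mathrm{Re}\{\phi(\ee^{\ii\eta};0,t)\}=\mathrm{Re}\bigl\{\int_\alpha^{\ee^{\ii\eta}}\phi'(w;0,t)\,\dd w\bigr\}$ (which already uses $\Phi$ real, hence $I=0$), substitutes the explicit residue form \eqref{eq:H-residue} of $H$, and then kills three pieces one at a time: the pieces $\mathrm{I}$ and $\mathrm{II}$ vanish because $R(w)/\sqrt{-w}$ is real while $\dd w/w$ and $\dd\Psi(\lambda)$ are respectively imaginary and real on the arc, and $\mathrm{III}$ vanishes after the paper applies an averaging over $\xi\mapsto\xi^{-1}$ (not your $\tau$) to the inner integral over $\tilde\gamma\cup\tilde\gamma^*$ and then invokes Proposition~\ref{proposition:MSymmetry} ($M=0$) to remove the $-\tfrac{1}{2w}$ term generated by the averaging identity \eqref{eq:averaging}. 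That $\xi\mapsto\xi^{-1}$ trick, applied locally to an integrand, sidesteps entirely the contour/branch-cut interaction that your global argument must confront. Your preliminary calculations of the $\tau$-reflection of $E$, $D$, $\theta_0$, and $L$ (using Lemma~\ref{lemma:theta0}, essentially the same ingredients the paper uses) are sound, and the reduction of the proposition to the functional equation $g+\overline{g\circ\tau}=L$ is correct.

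The gap is in the final step, and it is genuine, as you half-suspect. To make the uniqueness argument work you need $\beta$ to be $\tau$-invariant, which since $\tau$ fixes only the unit circle forces $\beta$ onto the arc $\{\ee^{\ii\eta}:0<\eta<\arg(\alpha)\}$, and that arc necessarily contains $P_\infty\cap\mathbb{C}_+=\{\ee^{\ii\eta}:0<\eta<\mu\}$. On that overlap the jump condition $g_++g_-=k-\ii\Phi$ ceases to be well posed in the form you need it, because $k$ itself jumps across $P_\infty$ ($L_+-L_-=2\ii\theta_0$), so the naive computation $F_++F_-=(k-\ii\Phi)+\overline{(k-\ii\Phi)}-2L=2\mathrm{Re}\{k\}-2L=0$ is unavailable: one must choose a boundary value of $k$, track how $\tau$ swaps the two sides of both $\beta$ and $P_\infty$, and verify that the jump of $L$ is absorbed by the jump of $g$ so that $F_++F_-=0$ persists through the overlap and across the junction $\ee^{\ii\mu}$. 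Without deforming $\beta$, $F$ has the additive jump $F_+-F_-=-2\ii\theta_0\neq 0$ on $P_\infty$, which is not of the sign-flip type and breaks the vanishing argument outright. Neither of these difficulties is cosmetic: the paper's construction in Section~\ref{sec:g-pre-breaking} explicitly requires $\beta$ to lie off $P_\infty$ so that $k'$ is single-valued on $\beta$, and Definition~\ref{def:MLW} keeps $\beta$ outside the unit circle. Until this bookkeeping is done, the functional equation — and hence the proposition — is unproven by your route.
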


\begin{proof}
First note that since $\phi(\alpha;0,t)=\ii\Phi(0,t)$ is purely imaginary, we have
\begin{equation}
\mathrm{Re}\left\{\phi(\ee^{\ii\eta};0,t)\right\}=\mathrm{Re}\left\{\int_{\alpha}^{\ee^{\ii\eta}}\phi'(w;0,t)\,\dd w\right\}.
\end{equation}
To use \eqref{eq:RH} to represent $\phi'(w;0,t)$ in this situation, it is useful to first rewrite $H(w)=H(w;x,t)$ more generally by taking the point $w$ through the outer arc of the contour $L$ pictured in the left-hand panel of Figure~\ref{fig:L-Lbeta} at the cost of a residue and then merging the two arcs of $L$ between $\xi=\ee^{\ii\mu}$ and $\xi=\alpha$ while taking them to lie on opposite sides of $\beta$ between $\xi=\alpha$ and $\xi=1$ (where the contributions to the integral in $H$ cancel as $R$ changes sign across $\beta$).  In other words, we can write
\begin{equation}
H(w;x,t)=-\frac{1}{4\sqrt{-w}}\left[\frac{x-t}{w\sqrt{\alpha\alpha^*}}+4\ii\frac{\theta_0'(w)\sqrt{-w}}{R(w)} - \frac{4}{\pi}\int_{\tilde{\gamma}\cup\tilde{\gamma}^*}\frac{\theta_0'(\xi)\sqrt{-\xi}}{R(\xi)(\xi-w)}\,\dd\xi\right],
\label{eq:H-residue}
\end{equation}
in which we are assuming that $w\in\mathbb{C}_+$ lies outside of the bounded region enclosed by $P_\infty$, $\tilde{\gamma}$, and $\beta$.  Now consider letting $x\downarrow 0$ for given $t>0$ fixed, so that $\alpha$ tends to $\ee^{\ii\arg(\alpha)}$ with $\mu<\arg(\alpha)<\pi$.  The formula \eqref{eq:H-residue} thus holds for $x=0$, with $t>0$, and for $w$ on the unit circle between $\ee^{\ii\arg(\alpha)}$ and $\ee^{\ii\eta}$, and we may take $\tilde{\gamma}$ to be the oriented arc of the unit circle from $\ee^{\ii\mu}$ to $\alpha=\ee^{\ii\arg(\alpha)}$.  Multiplying by $R(w)$ to obtain $\phi'(w;0,t)$ as in \eqref{eq:RH} we arrive at
\begin{equation}
\mathrm{Re}\left\{\phi(\ee^{\ii\eta};0,t)\right\} = \mathrm{I} + \mathrm{II} + \mathrm{III},
\end{equation}
where
\begin{equation}
\mathrm{I}:=\frac{t}{4}\mathrm{Re}\left\{\int_\alpha^{\ee^{\ii\eta}}\frac{R(w)}{\sqrt{-w}}\frac{\dd w}{w}\right\},
\end{equation}
\begin{equation}
\mathrm{II}:=\mathrm{Im}\left\{\int_\alpha^{\ee^{\ii\eta}}\theta_0'(w)\,\dd w\right\},\quad\text{and}
\end{equation}
\begin{equation}
\mathrm{III}:=\frac{1}{\pi}\mathrm{Re}\left\{\int_\alpha^{\ee^{\ii\eta}}\frac{R(w)}{\sqrt{-w}}\int_{\tilde{\gamma}\cup\tilde{\gamma}^*}\frac{\theta_0'(\xi)\sqrt{-\xi}}{R(\xi)(\xi-w)}\,\dd\xi\,\dd w\right\}.
\end{equation}

As the integration variable $w$ now lies on the unit circle between $\alpha$ and $-1$, one can check that $R(w)/\sqrt{-w}$ is, in contrast to the situation in the proof of Proposition~\ref{proposition:MSymmetry}, purely negative real.  On the other hand, the differential $\dd w/w$ is purely imaginary, and we therefore conclude that $\mathrm{I}=0$.  To deal with $\mathrm{II}$, we may use the univalent map $w\mapsto \lambda:=E(w)$ and $\theta_0'(w)\,\dd w = \Psi'(\lambda)\,\dd\lambda = \dd\Psi(\lambda)$ which is real by Lemma~\ref{lemma:theta0} to conclude that also $\mathrm{II}=0$.
The contour $\tilde{\gamma}\cup\tilde{\gamma}^*$ lies on the unit circle and is therefore invariant under the involution $\xi\mapsto \xi^{-1}$, $\dd\xi\mapsto -\xi^{-2}\,\dd\xi$; on the other hand, by the definition \eqref{eq:E-and-D-define} we have $E(\xi^{-1})=E(\xi)$ for all $\xi$, and differentiation of the identity $\theta_0(\xi):=\Psi(E(\xi))$ therefore shows that $\theta_0'(\xi^{-1})=-\xi^2\theta_0'(\xi)$.  One also checks that since $|\alpha|=1$, $\sqrt{-\xi^{-1}}/R(\xi^{-1})=\sqrt{-\xi}/R(\xi)$.  It follows that the inner integral in $\mathrm{III}$ can be written as
\begin{equation}
\int_{\tilde{\gamma}\cup\tilde{\gamma}^*}\frac{\theta_0'(\xi)\sqrt{-\xi}}{R(\xi)(\xi-w)}\,\dd\xi = 
\int_{\tilde{\gamma}\cup\tilde{\gamma}^*}\frac{\theta_0'(\xi)\sqrt{-\xi}}{R(\xi)(\xi^{-1}-w)}\dd\xi.
\end{equation}
Now we average these two equivalent expressions with the help of the identity
\begin{equation}
\frac{1}{2}\left[\frac{1}{\xi-w}+\frac{1}{\xi^{-1}-w}\right]=\frac{E(w)E'(w)}{E(\xi)^2-E(w)^2}-\frac{1}{2w}.
\label{eq:averaging}
\end{equation}
Taking into account that $M=0$ by Proposition~\ref{proposition:MSymmetry}, where $M$ is written in the form \eqref{eq:Mx=0} for $x=0$, shows that the last term on the right-hand side of \eqref{eq:averaging} makes no contribution, and therefore the inner integral in $\mathrm{III}$ is
\begin{equation}
\int_{\tilde{\gamma}\cup\tilde{\gamma}^*}\frac{\theta_0'(\xi)\sqrt{-\xi}}{R(\xi)(\xi-w)}\,\dd\xi=
E(w)E'(w)\int_{\tilde{\gamma}\cup\tilde{\gamma}^*}\frac{\theta_0'(\xi)\sqrt{-\xi}}{R(\xi)(E(\xi)^2-E(w)^2)}\,\dd\xi.
\label{eq:inner-integral-rewrite}
\end{equation}
Now $E(\xi)$ and $E(w)$ are both purely imaginary because $\xi$ and $w$ lie on the unit circle, and as in the proof of Proposition~\ref{proposition:MSymmetry} the ratio $\sqrt{-\xi}/R(\xi)$ is purely imaginary for $\xi\in\tilde{\gamma}\cup\tilde{\gamma}^*$ while the increment $\theta_0'(\xi)\,\dd\xi=\Psi'(\lambda)\,\dd\lambda=\dd\Psi(\lambda)$ is purely real, so the integral on the right-hand side of \eqref{eq:inner-integral-rewrite} is purely imaginary.  Just as in the analysis of $\mathrm{I}$ above, $R(w)/\sqrt{-w}$ is purely real in the integrand of the outer integral in $\mathrm{III}$, and also $E(w)E'(w)\,\dd w = \tfrac{1}{2}\dd E(w)^2$ is a purely real increment, so we finally conclude that $\mathrm{III}=0$ as well.
\end{proof}

The significance of Proposition~\ref{proposition:HSymmetry} in the context of this paper is that at a simple gradient catastrophe point with $x=0$, there are five zero level curves of $\mathrm{Re}\{\phi(w)\}$ emanating from $w=\alpha=\alpha_\mathrm{gc}=\ee^{\ii\theta}$ separated by equal angles of $2\pi/5$ radians, and exactly one of these is locally an arc of the unit circle with $\arg(w)>\theta$ (see the right-hand panel of Figure~\ref{fig:w-plane-plots-x0} for an illustration).  This information will be used in the proofs of Theorems~\ref{thm:AwayFromPoles} and \ref{thm:NearThePoles} below to concretely determine phase factors that appear in the asymptotic formul\ae\ in the statements of these results.

\section{Proof of Theorem~\ref{thm:AwayFromPoles}}
\label{sec:AwayFromPoles}

In this section we prove the first main result, Theorem \ref{thm:AwayFromPoles}.

\subsection{Simplification of jump conditions near $w=\alpha$}
Recall that when $(x,t)$ is fixed in the modulated librational wave region, the local behavior of the exponent function $\phi(w)$ near $w=\alpha=\alpha(x,t)$ is as specified
in \eqref{eq:3HalfBehavior}, and such behavior leads to the installation of a local parametrix constructed from Airy functions with jump contours forming angles at $w=\alpha$ that are integer multiples of $\tfrac{1}{3}\pi$.  However, when $(x,t)$ approaches the gradient catastrophe, the local behavior of $\phi(w)$ changes and thus a new parametrix is needed.  Indeed, at the gradient catastrophe point $(x,t)=(0,t_\mathrm{gc})$, the angles between the arcs $\gamma$, $\beta_1$, and $\ell^\pm_1$ become different for $\phi(w)-\phi(\alpha)$ to remain real-valued on $\gamma$ and $\ell^\pm_1$ and to take purely imaginary boundary values on $\beta$:  the interior angles at the catastrophe point (where also $\alpha=\alpha_\mathrm{gc}=\ee^{\ii\theta}$) become $\angle(\gamma,\ell^-_1)=\tfrac{2}{5}\pi$, $\angle(\ell^-_1,\beta_1)=\tfrac{1}{5}\pi$, and $\angle(\beta_1,\ell^+_1)=\tfrac{1}{5}\pi$ (the notation $\angle(u,v)$ denotes the angle between tangents at $\alpha_\mathrm{gc}$ of arcs $u$ and $v$ meeting at $w=\alpha_\mathrm{gc}$ taken in counterclockwise order about the vertex). 

To study the Riemann--Hilbert problem for $\mathbf{O}(w)$ when $(x,t)$ is near the catastrophe point, we will take the contours meeting at $w=\alpha\approx\alpha_\mathrm{gc}$ to have the above-indicated tangent angles (where we recall that $\beta_1$ is well-defined curve that meets $w=\alpha$ at a given angle, from which the angles of the other three jump contours are then determined).  The jump contour for $\mathbf{O}(w)$ in a neighborhood $U$ of $w=\alpha$ will therefore be taken to be like that shown in Figure~\ref{fig:O-Jumps-Local}.
\begin{figure}[h]
\begin{center}
\includegraphics{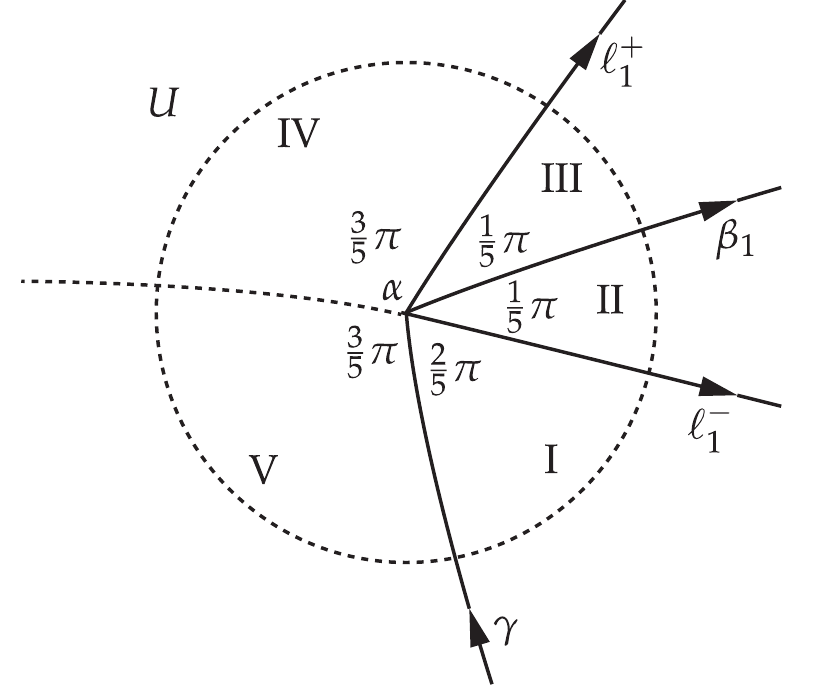}
\end{center}
\caption{The jump contour for $\mathbf{O}(w)$ in a neighborhood $U$ of $w=\alpha$ when $(x,t)$ is near the gradient catastrophe point.  The dotted arc emanating from $w=\alpha$ is arbitrary at the moment, but its tangent line should agree with that of $\ell^-_1$ at $w=\alpha$.  Both of these curves will be tangent to the unit circle when $(x,t)=(0,t_\mathrm{gc})$.  Also indicated are the five sectors $\mathrm{I}$--$\mathrm{V}$ of the neighborhood $U$.  Compare with the right-hand panel of Figure~\ref{fig:w-plane-plots-x0}.}
\label{fig:O-Jumps-Local}
\end{figure}
In order to study the situation when $(x,t)$ is near the gradient catastrophe point, we make a simple substitution in $U$ to simplify and standardize the jump matrices.  We set
\begin{equation}
\widetilde{\mathbf{O}}(w):=\begin{cases}
\mathbf{O}(w)Y_N(w)^{-\sigma_3/2}\ee^{-\ii\Phi\sigma_3/(2\epsilon)}(-\ii\sigma_1),&\quad w\in\mathrm{I}\cap U\\
\mathbf{O}(w)\ee^{-\ii\Phi\sigma_3/(2\epsilon)}(-\ii\sigma_1),&\quad w\in\mathrm{II}\cap U\\
\mathbf{O}(w)\ee^{-\ii\Phi\sigma_3/(2\epsilon)},&\quad w\in\mathrm{III}\cap U\\
\mathbf{O}(w)Y_N(w)^{-\sigma_3/2}\ee^{-\ii\Phi\sigma_3/(2\epsilon)},&\quad w\in\mathrm{IV}\cap U\\
\mathbf{O}(w)Y_N(w)^{-\sigma_3/2}\ee^{-\ii\Phi\sigma_3/(2\epsilon)}(-\ii\sigma_1),&\quad w\in\mathrm{V}\cap U.
\end{cases}
\label{eq:O-Otilde}
\end{equation}
If we define an exponent $\widetilde{\phi}(w)$ as the analytic continuation of $\phi(w)-\ii\Phi$ (which is analytic in $U$ except on $\beta_1$ where its boundary values sum to zero) from the region $\mathrm{I}\cap U$ to the slit domain $U\setminus(\partial\mathrm{IV}\cap\partial\mathrm{V})$:
\begin{equation}
\widetilde{\phi}(w):=\begin{cases}\phi(w)-\ii\Phi,&\quad w\in(\mathrm{I}\cup\mathrm{II}\cup\mathrm{V})\cap U,\\
\ii\Phi-\phi(w),&\quad w\in(\mathrm{III}\cup\mathrm{IV})\cap U,
\end{cases}
\end{equation}
 then $\widetilde{\phi}(w)$ has no jump across $\beta_1\cap U$, and is analytic in $U$ except on $\partial\mathrm{IV}\cap\partial\mathrm{V}$ where its boundary values sum to zero.  Re-orienting the contour $\gamma$ within $U$ for convenience, the jump conditions for $\widetilde{\mathbf{O}}(w)$ within $U$ are as shown in
Figure~\ref{fig:OtildeJumpsNearCatastrophe}.
\begin{figure}[h]
\begin{center}
\includegraphics{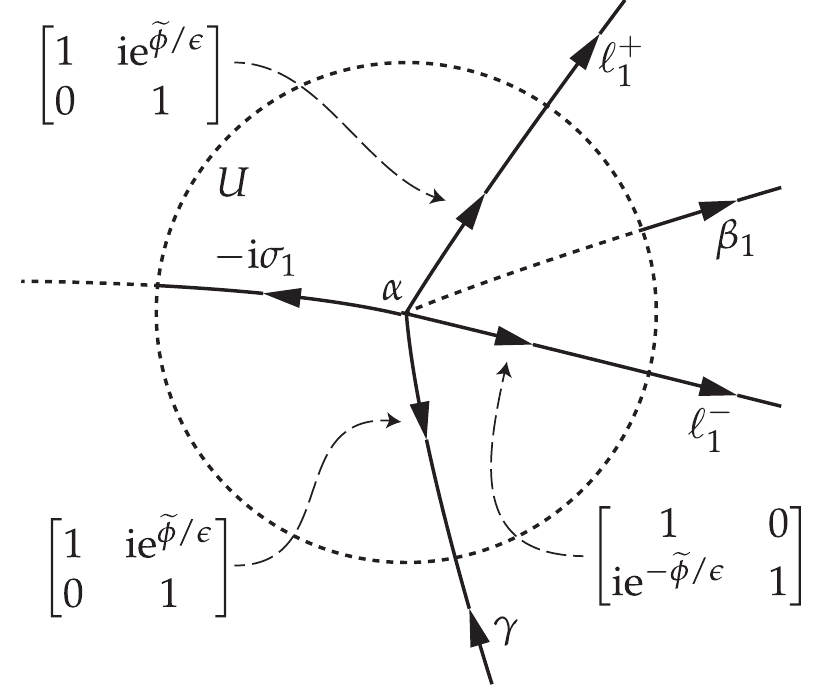}
\end{center}
\caption{The jump conditions satisfied by $\widetilde{\mathbf{O}}(w)$ within $U$.}
\label{fig:OtildeJumpsNearCatastrophe}
\end{figure}

\subsection{Modified $g$-function near the gradient catastrophe}
\label{sec:modified-g}
Recall that the $g$-function was previously obtained by choosing the $(x,t)$-dependence of the endpoints $w=\alpha,\alpha^*$ of $\beta$ in such a way that the conditions $M=I=0$ (see \eqref{eq:M} and \eqref{eq:I} with $H$ defined by \eqref{eq:RH}) are satisfied identically.  This defines $\alpha(x,t)$ and $\alpha^*(x,t)$ as real-analytic functions satisfying $\alpha^*(x,t)=\alpha(x,t)^*$ (i.e., the endpoint $\alpha^*(x,t)$ is the complex conjugate of the other endpoint $\alpha(x,t)$) on some domain of $(x,t)\in\mathbb{R}^2$ having the gradient catastrophe point $(x,t)=(0,t_\mathrm{gc})$ on its boundary.  Recall the notation $\alpha_\mathrm{gc}:=\alpha(0,t_\mathrm{gc})=\ee^{\ii\theta}$.  The Jacobian of the system of equations $M=I=0$ with respect to $(\alpha,\alpha^*)$ is proportional to $H(w;\alpha,\alpha^*,x,t)$ evaluated at $w=\alpha$, and by Definition~\ref{def:GradientCatastrophe} the gradient catastrophe point $(x,t)=(0,t_\mathrm{gc})$ is a point at which this Jacobian vanishes because $H(\alpha_\mathrm{gc};\alpha_\mathrm{gc},\alpha_\mathrm{gc}^*,0,t_\mathrm{gc})=0$.  Thus it is not generally possible to define $\alpha$ and $\alpha^*$ as smooth functions of $(x,t)$ near $(0,t_\mathrm{gc})$ from the conditions $M=I=0$.  Likewise, the $g$-function as obtained in Section~\ref{sec:g-pre-breaking} is generally not well defined on any neighborhood of $(0,t_\mathrm{gc})$.  Since by Definition~\ref{def:GradientCatastrophe} the gradient catastrophe point is simple in the sense that 
\begin{equation}
H'(w;\alpha_\mathrm{gc},\alpha_\mathrm{gc}^*,0,t_\gc)\neq 0\quad\text{for}\quad w=\alpha_\mathrm{gc},
\label{eq:simplicity-condition}
\end{equation}
we will be able to modify the construction in a suitable way.  
Combining \eqref{eq:simplicity-condition} with the analysis in Section~\ref{sec:x=0-Symmetry} then gives the following result:
\begin{lemma}
$\arg(H'(w;\alpha_\mathrm{gc},\alpha_\mathrm{gc}^*,0,t_\mathrm{gc}))=-\tfrac{5}{2}\theta$ when $w=\alpha_\mathrm{gc}=\ee^{\ii\theta}$.
\label{lemma:arg-H'}
\end{lemma}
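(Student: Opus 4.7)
The plan is to combine the local expansion of $\phi(w)$ at the branch point $w=\alpha_\mathrm{gc}$ with the unit-circle symmetry established in Proposition~\ref{proposition:HSymmetry}. Starting from the factorization $\phi'(w)=R(w)H(w)$ in \eqref{eq:RH}, the simple-catastrophe hypothesis $H(\alpha_\mathrm{gc})=0$ and $H'(\alpha_\mathrm{gc})\neq 0$ (Definition~\ref{def:GradientCatastrophe}) yields the Taylor expansion
\begin{equation*}
\phi'(w)=H'(\alpha_\mathrm{gc})\,c_\mathrm{gc}\,(w-\alpha_\mathrm{gc})^{3/2}\bigl(1+O(w-\alpha_\mathrm{gc})\bigr),
\end{equation*}
where $c_\mathrm{gc}$ is the value at $w=\alpha_\mathrm{gc}$ of the locally analytic factor $(w-\alpha_\mathrm{gc}^*)^{1/2}$, with the branch dictated by the normalization $R(w)\sim w$ as $w\to\infty$. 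Integrating gives $\phi(w)-\ii\Phi_\mathrm{gc}\sim A(w-\alpha_\mathrm{gc})^{5/2}$ with $A=\tfrac{2}{5}H'(\alpha_\mathrm{gc})c_\mathrm{gc}$.

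Next I would restrict this expansion to the unit circle. For $w=\ee^{\ii\eta}$ with $\eta$ slightly larger than $\theta$, one has $w-\alpha_\mathrm{gc}=\ii(\eta-\theta)\ee^{\ii\theta}+O((\eta-\theta)^2)$, so
\begin{equation*}
\ee^{\ii\eta}\phi'(\ee^{\ii\eta})\sim H'(\alpha_\mathrm{gc})\,c_\mathrm{gc}\,(\eta-\theta)^{3/2}\,\ee^{\ii(5\theta/2+3\pi/4)}.
\end{equation*}
But Proposition~\ref{proposition:HSymmetry} shows that $\phi(\ee^{\ii\eta};0,t_\mathrm{gc})$ is purely imaginary for $\theta<\eta<\pi$ (as $\phi(\alpha_\mathrm{gc})=\ii\Phi_\mathrm{gc}$ is imaginary and $\mathrm{Re}\{\phi\}$ vanishes along this unit-circle arc). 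Differentiating the identity $\mathrm{Re}\{\phi(\ee^{\ii\eta})\}=0$ in $\eta$ forces $\ee^{\ii\eta}\phi'(\ee^{\ii\eta})\in\mathbb{R}$ for such $\eta$. Matching arguments of the two expressions therefore yields
\begin{equation*}
\arg\bigl(H'(\alpha_\mathrm{gc})\bigr)+\arg(c_\mathrm{gc})+\tfrac{5\theta}{2}+\tfrac{3\pi}{4}\equiv 0\pmod{\pi}.
\end{equation*}

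To finish, I would compute $\arg(c_\mathrm{gc})$ by tracing $R(w)=(w-\alpha_\mathrm{gc})^{1/2}(w-\alpha_\mathrm{gc}^*)^{1/2}$ along a path from $w=+\infty$ (where $R(w)\sim w>0$) to the vicinity of $\alpha_\mathrm{gc}$ from the sector just above $\beta$, staying in $\mathbb{C}\setminus(\beta\cup\beta^*)$. Continuous tracking of $\arg(w-\alpha_\mathrm{gc}^*)$ along this path shows it advances from $0$ to $\pi/2$, so that $\arg(c_\mathrm{gc})=\pi/4$. Substituting gives $\arg(H'(\alpha_\mathrm{gc}))\equiv -\tfrac{5\theta}{2}\pmod{\pi}$. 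The final $\pmod{2\pi}$ disambiguation uses the sign structure of $\mathrm{Re}\{\phi\}$ in the five sectors meeting at $\alpha_\mathrm{gc}$: the requirement from Definition~\ref{def:MLW} that $\mathrm{Re}\{\phi\}>0$ on $\ell^\pm_1$ and $\mathrm{Re}\{\phi\}<0$ on $\gamma$ selects the parity of the relevant integer, identifying $\arg(H'(\alpha_\mathrm{gc}))=-\tfrac{5\theta}{2}$ exactly modulo $2\pi$.

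The main obstacle is the careful branch-tracking: first, correctly continuing $(w-\alpha_\mathrm{gc}^*)^{1/2}$ from infinity to $\alpha_\mathrm{gc}$ so that the phase of $c_\mathrm{gc}$ is unambiguous; and second, reconciling the sign conditions on the arcs $\gamma,\ell^\pm_1$ given by Definition~\ref{def:MLW} with the branch chosen for $(w-\alpha_\mathrm{gc})^{5/2}$, so as to eliminate the residual $\pi$-ambiguity. Everything modulo $\pi$ is a clean consequence of the symmetry in Proposition~\ref{proposition:HSymmetry}; pinning down the actual value requires this geometric sign analysis.
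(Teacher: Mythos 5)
Your proposal is correct and takes exactly the approach the paper indicates but does not spell out: combine the forced local behavior $\phi-\ii\Phi\sim A(w-\alpha_\mathrm{gc})^{5/2}$ (from $H(\alpha_\mathrm{gc})=0$, $H'(\alpha_\mathrm{gc})\neq 0$ in Definition~\ref{def:GradientCatastrophe}) with the unit-circle vanishing of $\mathrm{Re}\{\phi\}$ from Proposition~\ref{proposition:HSymmetry} to pin down $\arg H'(\alpha_\mathrm{gc})\equiv -\tfrac{5}{2}\theta\pmod\pi$, then resolve the residual $\pi$-ambiguity via the sign chart of $\mathrm{Re}\{\phi\}$ in the five sectors. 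One small caveat worth noting: the coefficient $A$ in your leading-order expansion inherits a branch ambiguity of exactly $\pm 1$ depending on which side of $\beta$ (and which sign of $\widetilde R$ vs.\ $R$) one evaluates on, which is precisely the $\pi$-indeterminacy your final sign analysis is designed to kill, so the argument is internally consistent.
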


\subsubsection{A more singular ansatz for $g$}
To get a $g$-function that depends on $(x,t)$ near $(0,t_\mathrm{gc})$ in a real-analytic fashion with the help of condition \eqref{eq:simplicity-condition},
we need to allow 
$g$ to be more singular at $w=\alpha$ and $w=\alpha^*$.  Observe that another solution of the same defining conditions ($k(w)-g_+(w)-g_-(w)=\text{constant}$ on $\beta$ and $g_+(w)+g_-(w)=\text{constant}$ for $w>0$) but with more singular behavior at $w=\alpha$ and $w=\alpha^*$ than \eqref{eq:gprime-1} is
\begin{equation}
g'(w)=\frac{R(w)}{\sqrt{-w}}\left[\frac{n}{w-\alpha}+\frac{n^*}{w-\alpha^*}\right] + g_0'(w)
\end{equation}
where $g_0'(w)$ is the expression on the right-hand side of \eqref{eq:gprime-1}, and where $n$ and $n^*$ are complex parameters to be determined later.  As before, we impose on this formula the condition of integrability at $w=\infty$ which takes the form
\begin{equation}
M:=8n+8n^* + M_0=0
\label{eq:M2}
\end{equation}
where $M_0$ is the expression defined in \eqref{eq:M}.  We then once again obtain $g(w)$ by integration of $g'(w)$ from $w=0$ (see \eqref{eq:g-integral}), and to ensure that $g_+(w)+g_-(w)=0$ for all $w>0$ and that the constant $\Phi$ in the identity $k(w)-g_+(w)-g_-(w)=\ii\Phi$ for $w\in\beta$ is real when $\alpha^*$ is the conjugate of $\alpha$ and $x$ and $t$ are real we impose the condition 
\begin{multline}
I:=-\left[\int_{\beta}\frac{R_+(\xi)\,\dd\xi}{\sqrt{-\xi}(\xi-\alpha)}+\int_{\beta^*}\frac{R_-(\xi)\,\dd\xi}{\sqrt{-\xi}(\xi-\alpha)}\right]n\\
{}-\left[\int_{\beta}\frac{R_+(\xi)\,\dd\xi}{\sqrt{-\xi}(\xi-\alpha^*)}+\int_{\beta^*}\frac{R_-(\xi)\,\dd\xi}{\sqrt{-\xi}(\xi-\alpha^*)}\right]n^* + I_0=0,
\label{eq:I2}
\end{multline}
where $I_0$ is the expression defined in \eqref{eq:I}.

We have thus gone from an unsolvable problem with two equations and two unknowns $(\alpha,\alpha^*)$ to a generalized problem with two equations and four unknowns $(\alpha,\alpha^*,n,n^*)$.  Upon introduction of suitable other constraints this generalized problem will be solvable for $(x,t)$ near $(0,t_\gc)$, but for now we note that the two equations in force (\eqref{eq:M2} and \eqref{eq:I2}) can be used to explicitly eliminate $n$ and $n^*$.  Indeed, these conditions are linear in $n$ and $n^*$ and hence take the form of a $2\times 2$ linear system:
\begin{equation}
\mathbf{L}\begin{bmatrix}n\\n^*\end{bmatrix}=\begin{bmatrix}-M_0\\-I_0\end{bmatrix}.
\label{eq:mmstar}
\end{equation}
The matrix $\mathbf{L}$ depends analytically on $\alpha$ and $\alpha^*$ but not on $(x,t)$.
It is shown in \cite[Lemma 4.1.1]{Lu2018} (see also \eqref{eq:L-detL} below) that $\det(\mathbf{L})\neq 0$ when $\alpha=\alpha_\gc$ and $\alpha^*=\alpha^*_\gc$ form a complex-conjugate pair of distinct complex numbers.  Since $M_0=I_0=0$ when in addition $(x,t)=(0,t_\gc)$, it follows that at the gradient catastrophe point, $n=n_\gc=0$ and $n^*=n_\gc^*=0$.  We have the following result.

\begin{lemma}
The linear system \eqref{eq:mmstar} determines $n$ and $n^*$ as smooth functions of $(\alpha,\alpha^*,x,t)$ near $(\alpha_\gc,\alpha^*_\gc,0,t_\gc)$ with $n(\alpha_\gc,\alpha^*_\gc,0,t_\gc)=n^*(\alpha_\gc,\alpha^*_\gc,0,t_\gc)=0$.  Moreover, the solution has the properties (the strut notation indicates evaluation at $(\alpha,\alpha^*,x,t)=(\alpha_\gc,\alpha^*_\gc,0,t_\gc)$):
\begin{equation}
\left.\frac{\partial n}{\partial\alpha}\right|_\gc=\left.\frac{\partial n}{\partial\alpha^*}\right|_\gc=\left.\frac{\partial n^*}{\partial\alpha}\right|_\gc=\left.\frac{\partial n^*}{\partial \alpha^*}\right|_\gc=0
\label{eq:mmstar-alphaalphastar}
\end{equation}
and
\begin{equation}
\left.\frac{\partial n}{\partial x}\right|_\gc = \left.\frac{\partial n^*}{\partial x}\right|_\gc = -\frac{1}{8}
\label{eq:mmstar-x}
\end{equation}
and
\begin{equation}
\left.\frac{\partial n}{\partial t}\right|_\gc=\frac{\ii}{8}\left[\frac{A_\gc}{B_\gc\sin(\theta)} +\cot(\theta)\right]\quad\text{and}\quad\left.\frac{\partial n^*}{\partial t}\right|_\gc=-\frac{\ii}{8}\left[\frac{A_\gc}{B_\gc\sin(\theta)}+\cot(\theta)\right],
\label{eq:mmstar-t}
\end{equation}
where $A=A(\alpha,\alpha^*)$ and $B=B(\alpha,\alpha^*)$ are defined in \eqref{eq:AB} and are here evaluated at $\alpha=\alpha_\gc=\ee^{\ii\theta}$ and $\alpha^*=\alpha_\gc^*=\ee^{-\ii\theta}$ as the subscript notation indicates.
\label{lemma:mmstar}
\end{lemma}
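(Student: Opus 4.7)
The plan is to apply the implicit function theorem to the linear system \eqref{eq:mmstar} and then compute each derivative explicitly at gc. The coefficient matrix $\mathbf{L}$ depends analytically on $(\alpha,\alpha^*)$ alone and $\det(\mathbf{L})|_\gc\neq 0$ by the cited result from \cite{Lu2018}; hence the IFT gives $n,n^*$ as smooth functions of $(\alpha,\alpha^*,x,t)$ in a neighborhood of $(\alpha_\gc,\alpha_\gc^*,0,t_\gc)$. Since $M_0$ and $I_0$ both vanish at gc (these being the defining conditions \eqref{eq:M}--\eqref{eq:I} for $(\alpha_\gc,\alpha_\gc^*)$ in the pre-catastrophe regime), we obtain $n|_\gc=n^*|_\gc=0$ immediately.

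For the derivatives at gc, I would differentiate the linear system \eqref{eq:mmstar} with respect to each parameter $p\in\{\alpha,\alpha^*,x,t\}$, obtaining
\begin{equation*}
\mathbf{L}_p\begin{bmatrix}n\\n^*\end{bmatrix}+\mathbf{L}\begin{bmatrix}n_p\\n_p^*\end{bmatrix}=-\begin{bmatrix}M_{0,p}\\I_{0,p}\end{bmatrix}.
\end{equation*}
Evaluating at gc, where $n=n^*=0$, kills the first term and leaves
\begin{equation*}
\begin{bmatrix}n_p\\n_p^*\end{bmatrix}\bigg|_\gc=-\mathbf{L}\big|_\gc^{-1}\begin{bmatrix}M_{0,p}\\I_{0,p}\end{bmatrix}\bigg|_\gc.
\end{equation*}
Thus the four claimed identities reduce to computing the right-hand side for each of the four choices of $p$ and inverting the explicit matrix $\mathbf{L}|_\gc$, whose first row is $[8,\,8]$ by \eqref{eq:M2} and whose second row consists of the two bracketed integrals appearing in \eqref{eq:I2} evaluated at $\alpha=\alpha_\gc$.

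For $p=x$ and $p=t$ the computation is direct: from \eqref{eq:M} one reads off $M_{0,x}=1+(\alpha\alpha^*)^{-1/2}$ and $M_{0,t}=1-(\alpha\alpha^*)^{-1/2}$, which at $\alpha_\gc=\ee^{\ii\theta}$ give $2$ and $0$ respectively. For $I_0$ the entire $(x,t)$-dependence enters only through the single term $(x-t)/(4w\sqrt{-w}\sqrt{\alpha\alpha^*})$ inside the formula \eqref{eq:RH} for $H$, so differentiating under the integral sign produces definite contour integrals over $\beta\cup\beta^*$ that, up to normalization, are exactly the constants $A_\gc$ and $B_\gc$ named in \eqref{eq:AB}. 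The formulae \eqref{eq:mmstar-x} and \eqref{eq:mmstar-t} then follow by matrix inversion together with simple trigonometric identities using $|\alpha_\gc|=1$. For $p=\alpha$ and $p=\alpha^*$ the key claim is that $M_{0,\alpha}|_\gc=I_{0,\alpha}|_\gc=0$ (with the analogous vanishing for $\partial_{\alpha^*}$), from which \eqref{eq:mmstar-alphaalphastar} follows immediately. This rests on the proportionality $M_{0,\alpha},\,I_{0,\alpha}\propto H(\alpha;\alpha,\alpha^*,x,t)$, which vanishes at gc by Definition~\ref{def:GradientCatastrophe}; Schwarz symmetry then gives the $\partial_{\alpha^*}$ statements.

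The hard part is establishing this last proportionality. A naive Leibniz differentiation of the integrals in \eqref{eq:M} and \eqref{eq:I} with respect to the moving endpoint $\alpha$ of $\tilde\gamma$ is obstructed by the integrable square-root singularity of the integrand at $\xi=\alpha$ inherited from $1/R(\xi)$. The cleanest route appears to be to first recast $M_0$ and the bracketed integrals in $I_0$ as closed-contour integrals encircling $\beta\cup\beta^*$, exploiting the sign-reversal of $R$ across $\beta$. Inside such a closed contour, differentiation with respect to $\alpha$ produces no boundary contribution, and the residual integrand at the branch point is governed precisely by the value of $H(\alpha)$ via \eqref{eq:RH}. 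All remaining technical points---the bookkeeping for $A,B$ and the trigonometric simplifications at $\alpha_\gc=\ee^{\ii\theta}$---are routine once this proportionality has been pinned down.
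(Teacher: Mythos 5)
Your proposal is correct and follows essentially the same route as the paper: invertibility of $\mathbf{L}$ at the catastrophe point plus the implicit function theorem for existence and smoothness, then implicit differentiation of \eqref{eq:mmstar} with the $n$- and $n^*$-proportional terms killed by $n|_\gc=n^*|_\gc=0$, and finally inversion of $\mathbf{L}|_\gc$ against the explicit partial derivatives of $M_0$ and $I_0$. The one point where you diverge from the paper is your final paragraph: the paper does not attempt to re-derive the proportionality $\partial_\alpha M_0,\ \partial_\alpha I_0\propto H(\alpha)$; it simply cites \cite[Proposition 4.5]{BuckinghamMiller2013}, where this is established. Your sketch of a contour-deformation argument for it (recasting $M_0$, $I_0$ as closed-contour integrals around $\beta\cup\beta^*$ to avoid the moving-endpoint issue) is in the right spirit and does no harm, but it is extra work that the available reference makes unnecessary, and as written it is still only a heuristic rather than a complete derivation.
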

\begin{proof}
The existence of a unique smooth solution of \eqref{eq:mmstar} follows by explicit computation and $\det(\mathbf{L})\neq 0$ at the catastrophe point.  This also shows that $n(\alpha_\gc,\alpha^*_\gc,0,t_\gc)=n^*(\alpha_\gc,\alpha^*_\gc,0,t_\gc)=0$.  

Letting $n$ and $n^*$ depend smoothly on $(\alpha,\alpha^*,x,t)$ in this way, we differentiate \eqref{eq:mmstar} implicitly with respect to $\alpha$ and evaluate at the gradient catastrophe point to annihilate the terms proportional to $n$ and $n^*$ where the partial derivatives fall on $\mathbf{L}$.  Thus we obtain
\begin{equation}
\mathbf{L}_\gc\begin{bmatrix}\left.\partial_\alpha n\right|_\gc \\\left.\partial_\alpha n^*\right|_\gc\end{bmatrix}=\begin{bmatrix}-\left.\partial_\alpha M_0\right|_\gc\\-\left.\partial_\alpha I_0\right|_\gc\end{bmatrix}.
\end{equation}
According to \cite[Proposition 4.5]{BuckinghamMiller2013}, $\partial_\alpha M_0$ and $\partial_\alpha I_0$ are both proportional to $H(\alpha)$ which vanishes by definition at the gradient catastrophe point.  Hence the right-hand side of the above system vanishes and since $\mathbf{L}$ is invertible at the catastrophe point, it follows that $\partial_\alpha n$ and $\partial_\alpha n^*$ vanish at the gradient catastrophe point.  Similar calculations apply to the derivatives with respect to $\alpha^*$ and thus \eqref{eq:mmstar-alphaalphastar} is proven.

By residue computations at $w=0$ and $w=\infty$ we see that
\begin{equation}
\frac{\partial M_0}{\partial x}=1+\frac{1}{\sqrt{\alpha\alpha^*}}\quad\text{and}\quad
\frac{\partial M_0}{\partial t}=1-\frac{1}{\sqrt{\alpha\alpha^*}}.
\end{equation}
Let
\begin{equation}
A:=\int_{\beta}\frac{\sqrt{-\xi}\,\dd\xi}{R_+(\xi)} +\int_{\beta^*}\frac{\sqrt{-\xi}\,\dd\xi}{R_-(\xi)}\quad\text{and}\quad
B:=\int_{\beta}\frac{\dd\xi}{R_+(\xi)\sqrt{-\xi}} +\int_{\beta^*}\frac{\dd\xi}{R_-(\xi)\sqrt{-\xi}}.
\label{eq:AB}
\end{equation}
When $\alpha$ and $\alpha^*$ form a conjugate pair, both $A$ and $B$ are real-valued, and as a complete elliptic integral of the first kind, $B\neq 0$ .  Then it is straightforward to show that
\begin{equation}
\mathbf{L}=\begin{bmatrix}8 & 8\\A +\alpha^*B & A+\alpha B\end{bmatrix} \quad\text{which implies}\quad
\det(\mathbf{L})=8(\alpha-\alpha^*)B,
\label{eq:L-detL}
\end{equation}
and also that
\begin{equation}
\frac{\partial I_0}{\partial x}=\frac{2A + (\alpha+\alpha^*)B}{8\sqrt{\alpha\alpha^*}}\quad\text{and}\quad
\frac{\partial I_0}{\partial t}=-\frac{\partial I_0}{\partial x}=-\frac{2A+(\alpha+\alpha^*)B}{8\sqrt{\alpha\alpha^*}}.
\end{equation}
Then, since by implicit differentiation and evaluation at the gradient catastrophe point,
\begin{equation}
\mathbf{L}_\gc\begin{bmatrix}\left.\partial_{x,t} n\right|_\gc\\\left.\partial_{x,t} n^*\right|_\gc\end{bmatrix}=\begin{bmatrix}-\left.\partial_{x,t} M_0\right|_\gc\\-\left.\partial_{x,t}I_0\right|_\gc\end{bmatrix},
\end{equation}
the results \eqref{eq:mmstar-x} and \eqref{eq:mmstar-t} follow immediately.
\end{proof}
Going forward, we consider $n$ and $n^*$ to be eliminated by means of Lemma~\ref{lemma:mmstar} and thus the more singular version of $g$ still depends parametrically on $\alpha$ and $\alpha^*$ as well as $(x,t)\in\mathbb{R}^2$.

\subsubsection{Redetermination of $\alpha(x,t)$ and $\alpha^*(x,t)$ via construction of a conformal mapping $W:U\to\mathbb{C}$}
Now we take the function $\widetilde{\phi}(w)$ to be defined in terms of the $g$-function specified as above, for which the endpoints $\alpha$ and $\alpha^*$ are yet to be determined.  We will determine them, along with auxiliary parameters $s$ and $s^*$, as smooth functions of $(x,t)$ near $(0,t_\gc)$ so that the identity 
\begin{equation}
\frac{1}{2}\widetilde{\phi}(w)=\frac{4}{5}W^{5/2} + sW^{1/2}, \quad w\in U\setminus(\partial\mathrm{IV}\cap\partial\mathrm{V}).
\label{eq:conformal-map-identity}
\end{equation}
defines a conformal mapping $W$ on a neighborhood $U$ of $w=\alpha$.
We formalize this statement in a lemma.
\begin{lemma}
There exist well-defined real-analytic functions $\alpha=\alpha(x,t)$, $\alpha^*=\alpha(x,t)^*$, and $s=s(x,t)$, and a conformal mapping $W(\cdot;x,t):U\to\mathbb{C}$ defined for $(x,t)$ in a neighborhood of $(x,t)=(0,t_\mathrm{gc})\in\mathbb{R}^2$, such that the identity \eqref{eq:conformal-map-identity} holds, and such that $\alpha(0,t_\mathrm{gc})=\alpha_\mathrm{gc}=\ee^{\ii\theta}$ and $s(0,t_\mathrm{gc})=0$.
\label{lemma:ConformalMap}
\end{lemma}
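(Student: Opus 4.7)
The plan is to view \eqref{eq:conformal-map-identity} as a nonlinear equation for $(\alpha, s, W)$ and solve it by perturbing from an explicit solution at the catastrophe point $(x,t)=(0,t_\gc)$.  By Lemma~\ref{lemma:mmstar}, $n_\gc = n_\gc^* = 0$ at the catastrophe, so the modified $g$-function reduces there to the original one of Section~\ref{sec:g-pre-breaking}, and the simplicity condition \eqref{eq:simplicity-condition} forces $\widetilde\phi(w)$ to vanish like $(w-\alpha_\gc)^{5/2}$.  Consequently $s=0$ together with $W(w)= (25/16)^{1/5}[\widetilde\phi(w)^2]^{1/5}$ (the branch of the fifth root chosen so that $W(w)$ is a conformal mapping tangent to $w-\alpha_\gc$ at $\alpha_\gc$) is an explicit conformal solution of \eqref{eq:conformal-map-identity} at $(0,t_\gc)$.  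The lemma asserts that this explicit solution extends real-analytically to a neighborhood.

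The first step is to extract the local Puiseux structure of $\widetilde\phi$ at $w=\alpha$.  The modified $g'$-ansatz yields $\phi'(w) = R(w)\widetilde H(w)$ with
\begin{equation*}
\widetilde H(w) := H_0(w) - \frac{2}{\sqrt{-w}}\Bigl(\frac{n}{w-\alpha}+\frac{n^*}{w-\alpha^*}\Bigr),
\end{equation*}
where $H_0$ is the function $H$ in \eqref{eq:RH} associated with the original $g$-function.  Integrating from $\alpha$ and using $\widetilde\phi(\alpha) = 0$ (forced by $k - g_+ - g_- = \ii\Phi$ on $\beta$) yields
\begin{equation*}
\widetilde\phi(w) = (w-\alpha)^{1/2} F(w;\alpha,\alpha^*,x,t),
\end{equation*}
with $F$ analytic in $w$ at $\alpha$ and real-analytic in the remaining parameters.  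Its Taylor coefficients at $w=\alpha$ are explicit expressions in $n, n^*$ and in the Taylor data of $H_0$; at the catastrophe, $F(\alpha_\gc;\alpha_\gc,\alpha_\gc^*,0,t_\gc) = \partial_w F|_\gc = 0$, while $\partial_w^2 F|_\gc$ is a nonzero multiple of $H_0'(\alpha_\gc)$, which is nonzero by \eqref{eq:simplicity-condition}.

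Squaring \eqref{eq:conformal-map-identity} removes the half-integer powers and produces the analytic identity $\widetilde\phi(w)^2/4 = W(w)\bigl[\tfrac{4}{5}W(w)^2 + s\bigr]^2$.  Seeking $W$ in the factored form $W(w) = (w-\alpha)G(w)$ with $G$ analytic and $G(\alpha)\ne 0$, and extracting a sign-consistent square root, reduces the problem to
\begin{equation*}
\frac{F(w)}{2} = G(w)^{1/2}\bigl[\tfrac{4}{5}(w-\alpha)^2 G(w)^2 + s\bigr].
\end{equation*}
Expanding both sides in powers of $w-\alpha$ produces a triangular hierarchy:  the order-$0$ matching reads $F(\alpha) = 2s\,G(\alpha)^{1/2}$, the order-$1$ matching determines $G'(\alpha)$, the ``resonant'' order-$2$ matching becomes (after using the two preceding relations) $\tfrac{4}{5}G(\alpha)^{5/2}$ plus terms vanishing at the catastrophe $=\tfrac{1}{4}F''(\alpha)$, and each subsequent order linearly determines the next Taylor coefficient of $G$. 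At $(0,t_\gc)$ the first two relations are trivially satisfied and the order-$2$ one fixes $G(\alpha_\gc)$ uniquely.  Standard majorant estimates applied to this hierarchy propagate the construction to a neighborhood of the catastrophe, yielding convergence of the series for $G$ on a common disk around $\alpha$ uniform in $(x,t)$.

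Finally, $\alpha=\alpha(x,t)$ (with $\alpha^*(x,t)=\alpha(x,t)^*$ by Schwarz symmetry) and $s=s(x,t)$ are fixed by the real-analytic continuation of the catastrophe solution, encoded in the order-$0$ and order-$1$ relations together with a normalization tied to the geometry of the jump contours; at $(\alpha_\gc,\alpha_\gc^*,0,t_\gc)$ this system degenerates consistently with $s=0$ and $\alpha=\alpha_\gc$.  The main obstacle, and the central technical step of the proof, is to verify that the Jacobian of this system with respect to $(\mathrm{Re}\,\alpha,\mathrm{Im}\,\alpha)$ at the catastrophe is nonzero so that the implicit function theorem applies.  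Here I would use the vanishing of $\partial_\alpha n$ and $\partial_{\alpha^*} n$ at the catastrophe given in \eqref{eq:mmstar-alphaalphastar}: to leading order the modification of the $g$-function contributes nothing to the $\alpha$-derivatives of the defining equations, which therefore reduce (up to nonvanishing Puiseux factors from the expansion of $F$) to $H_0'(\alpha_\gc)$, nonzero by \eqref{eq:simplicity-condition}.  The implicit function theorem then produces $\alpha(x,t)$ and $s(x,t)$ real-analytic in a neighborhood of $(0,t_\gc)$, and the conformal map $W(w;x,t) = (w-\alpha)G(w)$ is assembled from the triangular hierarchy above.
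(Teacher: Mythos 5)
Your proposal correctly identifies the Puiseux structure at the catastrophe (reduction via $n,n^*$ to a $q^5$-leading expansion), the normal form $\tfrac{4}{5}W^{5/2}+sW^{1/2}$, and the role of the simplicity condition \eqref{eq:simplicity-condition} in making a nondegenerate implicit-function argument possible. However, the crucial step --- what system of equations actually determines $\alpha(x,t)$ and $s(x,t)$ --- is only hinted at as ``a normalization tied to the geometry of the jump contours,'' and the proposed Taylor/majorant argument will not, as stated, supply that system.

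Concretely, your hierarchy is not triangular away from the catastrophe in the sense you claim. Fixing $\alpha$ (and $\alpha^*$) and $(x,t)$ near $(0,t_\gc)$, the matching equations at orders $0,1,2,\dots$ involve the unknowns $s,G_0,G_1,\dots$ with one more unknown than constraint at every truncation: order $0$ relates $s$ and $G_0$, order $1$ then gives $G_1$, order $2$ then gives $G_2$, and so on, but one of $s,G_0$ is always a free complex parameter. In terms of $Q=W^{1/2}$ this is transparent: $f(q)=\tfrac{4}{5}Q^5+sQ$ gives $f_1=sQ_1$, $f_3=sQ_3$, $f_5=\tfrac{4}{5}Q_1^5+sQ_5$, $\dots$, so for any $s\neq 0$ near $0$ a formal power series $Q$ exists with $\alpha$ arbitrary; there is a two-complex-parameter family of formal solutions. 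Something else has to pick out $\alpha(x,t)$ and $s(x,t)$, and that is exactly what your ``majorant estimates'' slogan conceals. For generic choices of $\alpha,s$ the series for $Q$ does \emph{not} converge on a fixed disk: the polynomial $P(Q)=\tfrac{4}{5}Q^5+sQ$ has critical points at $4Q^4+s=0$, whose preimages under $Q$ approach $q=0$ as $s\to 0$, and $Q=P^{-1}(f)$ develops square-root branch points there unless $f$ has a compatible second-order critical point at the same location. That vanishing is the real constraint --- two complex conditions (by oddness), matching the four real parameters in $(\alpha,s)$ once Schwarz symmetry is imposed --- and it is precisely what the paper enforces by demanding that the numerators in the directional derivatives of $Q$ and $Q^*$ vanish at the roots of $4Q^4+s$ and $4Q^{*4}+s^*$, which produces the closed $4\times 4$ system \eqref{eq:system-1}. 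The paper then integrates the resulting compatible flows on a Banach space of odd germs, invoking $\partial f_3/\partial\alpha\neq 0$ (hence $H_0'(\alpha_\gc)\neq 0$, i.e.\ \eqref{eq:simplicity-condition}) to show the determinant $\Delta$ is nonzero so the vector field is Lipschitz. Your intuition that the Jacobian nonvanishing comes from the simplicity condition is correct, but without identifying the actual equations one cannot set up the implicit function theorem, and the convergence of the formal series is not an independent majorant fact --- it is equivalent to the choice of $(\alpha,s)$ being the right one.

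A minor additional point: your claim that ``the order-1 matching determines $G'(\alpha)$'' holds only when $s\neq 0$ and degenerates to $0=0$ at the catastrophe; there the $G_k$ are determined from order $k+2$ onward, and the transition between the two regimes is precisely where the Banach-space continuation argument does its work.
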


\begin{proof}
Since $\widetilde{\phi}(w)$ is related to $\phi(w)-\ii\Phi$ by analytic continuation that simply moves the branch cut, we will have $\widetilde{\phi}'(w)=\widetilde{R}(w)H(w)$ for $w\in U$, where we re-define $H(w)=H(w;\alpha,\alpha^*,x,t)$ as
\begin{equation}
H(w):=-\frac{2}{\sqrt{-w}}\left[\frac{n}{w-\alpha}+\frac{n^*}{w-\alpha^*}\right]+H_0(w),
\end{equation}
with $H_0(w)=H_0(w;\alpha,\alpha^*,x,t)$ being given by the expression on the right-hand side of the second equation in \eqref{eq:RH} and $n$ and $n^*$ depending on $(\alpha,\alpha^*,x,t)$ as explained in Lemma~\ref{lemma:mmstar}, and where $\widetilde{R}(w)=\widetilde{R}(w;\alpha,\alpha^*)$ is another branch of the square root of $(w-\alpha)(w-\alpha^*)$.  Namely, $\widetilde{R}(w)=-w+\mathcal{O}(1)$ as $w\to\infty$ and has its branch cut in $U$ and $U^*$ coinciding locally with the pre-images under $W$ and $W^*$ of the negative real axis.  
Now, since $W$ is yet to be properly defined, we will temporarily make a concrete choice of branch cut for $\widetilde{R}(w)$ as the unique circular arc with endpoints $w=\alpha$ and $w=\alpha^*$ that contains $w=-1$.  Later, once $W$ is properly defined we will deform the branch cut to correspond to the negative real axis as above.  With the temporary choice of cut, we may write $\widetilde{R}(w)=-(w-\alpha)^{1/2}(w-\alpha^*)^{1/2}$ where each square root factor is cut on the union of the half line $w\le -1$ and the sub-arc of the cut for $\widetilde{R}$ connecting the branch point with $w=-1$, and has argument ranging from $-\pi$ to $\pi$ as $w\to\infty$.

The function $H(w)$ has a simple pole at $w=\alpha$.  Its Laurent expansion about $w=\alpha$ is
\begin{equation}
	H(w)=\frac{H^{(-1)}}{w-\alpha} + \sum_{n=0}^\infty H^{(n)}(w-\alpha)^n,
\end{equation}
where the coefficients $H^{(j)}$ are explicit functions of $(\alpha,\alpha^*,x,t)$.  The first few coefficients are:
\begin{equation}
	\begin{split}
		H^{(-1)}=&-\frac{2n}{\sqrt{-\alpha}},\\
		H^{(0)}=&\frac{n}{\alpha\sqrt{-\alpha}}-\frac{2n^*}{(\alpha-\alpha^*)\sqrt{-\alpha}} + H_0(\alpha),\\
		H^{(1)}=&-\frac{3n}{4\alpha^2\sqrt{-\alpha}}+\frac{(3\alpha-\alpha^*)n^*}{(\alpha-\alpha^*)^2\alpha\sqrt{-\alpha}} + H_0'(\alpha).
	\end{split}
\end{equation}
Since $\widetilde{\phi}(\alpha)=0$, by integrating its derivative we find
\begin{equation}
\begin{split}
\frac{1}{2}\widetilde{\phi}(w)&=\frac{1}{2}\int_{\alpha}^w\widetilde{R}(\xi)H(\xi)\,\dd\xi\\
=&-\frac{1}{2}\int_{\alpha}^w(\xi-\alpha^*)^{1/2}H(\xi)(\xi-\alpha)^{1/2}\,\dd\xi\\
=&-\frac{1}{2}\int_\alpha^w(\alpha-\alpha^* + (\xi-\alpha))^{1/2}\left[\frac{H^{(-1)}}{\xi-\alpha}+H^{(0)} + H^{(1)}(\xi-\alpha)+\cdots\right](\xi-\alpha)^{1/2}\,\dd\xi\\
=&-\frac{1}{2}\int_0^{w-\alpha}(\alpha-\alpha^*+z)^{1/2}\left[\frac{H^{(-1)}}{z}+H^{(0)} + H^{(1)}z + \cdots\right]z^{1/2}\,\dd z\\
=&-(\alpha-\alpha^*)^{1/2}\left[H^{(-1)}q +\frac{1}{3}\left(H^{(0)} +\frac{H^{(-1)}}{2(\alpha-\alpha^*)}\right)q^3 \right. \\
& +\left.\frac{1}{5}\left(H^{(1)}+\frac{H^{(0)}}{2(\alpha-\alpha^*)}-\frac{H^{(-1)}}{8(\alpha-\alpha^*)^2}\right)q^5+\cdots\right]\\
=&f_1q+f_3q^3+f_5q^5+\cdots
\end{split}
\label{eq:phi-q-Expansion}
\end{equation}
with coefficients $f_{2k+1}=f_{2k+1}(\alpha,\alpha^*,x,t)$ for $k=0,1,2,\dots$,
where $q:=(w-\alpha)^{1/2}$ with the branch interpreted in relation to the temporary choice of cuts as indicated above.  In other words, for $w\in U$, the function $f(q;\alpha,\alpha^*,x,t):=\tfrac{1}{2}\widetilde{\phi}(w)$ is an odd analytic function of the variable $q$.  Note that the factor $(\alpha-\alpha^*)^{1/2}$ has to be interpreted as the branch $(w-\alpha^*)^{1/2}$ defined relative to the temporary choice of cut as above, evaluated at $w=\alpha$.  Hence when $\alpha^*$ is the conjugate of $\alpha\in\mathbb{C}_+$, $\arg((\alpha-\alpha^*)^{1/2})=\tfrac{1}{4}\pi$.  

Suppose that $(x,t)=(0,t_\mathrm{gc})$ and that $\alpha=\alpha_\gc:=\ee^{\ii\theta}$ and $\alpha^*=\alpha^*_\gc:=\ee^{-\ii\theta}$, which implies by definition of the gradient catastrophe point that $H_0(\alpha_\mathrm{gc})=0$.  
By the condition \eqref{eq:simplicity-condition}
we have $H_0'(\alpha_\gc)\neq 0$ for $(\alpha,\alpha^*,x,t)=(\alpha_\gc,\alpha^*_\gc,0,t_\gc)$, so applying
Lemma~\ref{lemma:mmstar} shows that $H^{(-1)}=H^{(0)}=0$ while $H^{(1)}=H_0'(\alpha_\mathrm{gc})$ at the indicated values of $(\alpha,\alpha^*,x,t)$, and therefore
\begin{equation}
f_1(\alpha_\gc,\alpha^*_\gc,0,t_\gc)=f_3(\alpha_\gc,\alpha^*_\gc,0,t_\gc)=0\quad\text{but}\quad
f_5(\alpha_\gc,\alpha^*_\gc,0,t_\gc)=-\frac{1}{5}H_0'(\alpha_\mathrm{gc})\ee^{\ii\pi/4}\sqrt{2\sin(\theta)}\neq 0.
\label{eq:f135-at-gc}
\end{equation}
Moreover, invoking 
\eqref{eq:simplicity-condition} again and using Lemma~\ref{lemma:arg-H'} shows that
\begin{equation}
\arg(f_5(\alpha_\mathrm{gc},\alpha_\mathrm{gc}^*,0,t_\mathrm{gc}))=-\frac{3}{4}\pi-\frac{5}{2}\theta.
\label{eq:arg-f5-gc}
\end{equation}
The expansion \eqref{eq:phi-q-Expansion} of $\tfrac{1}{2}\widetilde{\phi}(w)$ and the way the coefficients behave at the gradient catastrophe point as indicated in \eqref{eq:f135-at-gc} suggests that the function $\tfrac{1}{2}\widetilde{\phi}(w)$ might be modeled by an expression such as that appearing on the right-hand side of \eqref{eq:conformal-map-identity} in which $w\mapsto W$ is conformal, under the condition that $s=0$ at the catastrophe point.  

It also follows easily from Lemma~\ref{lemma:mmstar} and 
\eqref{eq:simplicity-condition} that
\begin{equation}
\frac{\partial f_1}{\partial\alpha}(\alpha_\mathrm{gc},\alpha^*_\mathrm{gc},0,t_\mathrm{gc})=\frac{\partial f_1}{\partial\alpha^*}(\alpha_\mathrm{gc},\alpha^*_\mathrm{gc},0,t_\mathrm{gc})=0.
\label{eq:f1-partials}
\end{equation}
Furthermore, using in addition differential identities such as those recorded in \cite[Eqns.\@ (4.82)--(4.83)]{BuckinghamMiller2013}, one checks that
\begin{equation}
\frac{\partial f_3}{\partial\alpha}(\alpha_\mathrm{gc},\alpha^*_\mathrm{gc},0,t_\mathrm{gc})=-\frac{1}{2}(\alpha_\mathrm{gc}-\alpha_\mathrm{gc}^*)^{1/2}H_0'(\alpha_\mathrm{gc};\alpha_\mathrm{gc},\alpha^*_\mathrm{gc},0,t_\mathrm{gc})\neq 0\quad\text{but}\quad
\frac{\partial f_3}{\partial\alpha^*}(\alpha_\mathrm{gc},\alpha^*_\mathrm{gc},0,t_\mathrm{gc})=0.
\label{eq:f3-partials}
\end{equation}

We now describe how \eqref{eq:conformal-map-identity} can be solved for $W=W(w;x,t)$ provided $\alpha=\alpha(x,t)$, $\alpha^*=\alpha^*(x,t)$, and $s=s(x,t)$ are suitably chosen for $(x,t)$ near $(0,t_\mathrm{gc})$.  Note that the functions $\alpha(x,t)$ and $\alpha^*(x,t)$ that result will generally differ from the similarly-named functions defined via the equations $M_0=I_0=0$; in particular, the redefined functions will be real-analytic in $(x,t)$ near $(0,t_\mathrm{gc})$.  However we insist that the functions agree at the catastrophe point:  $\alpha(0,t_\mathrm{gc})=\alpha_\mathrm{gc}=\ee^{\ii\theta}$ and $\alpha^*(0,t_\mathrm{gc})=\alpha_\mathrm{gc}^*=\ee^{-\ii\theta}$.
Introducing $Q:=W^{1/2}$, the right-hand side of the  relation \eqref{eq:conformal-map-identity} becomes a polynomial in $Q$, and therefore we may write the desired relation in the form
\begin{equation}
f(q;\alpha,\alpha^*,x,t) = \frac{4}{5}Q^5 + sQ.
\label{eq:Q-eqn}
\end{equation}
First, assuming that $s=0$ and $(\alpha,\alpha^*,x,t)=(\alpha_\mathrm{gc},\alpha_\mathrm{gc}^*,0,t_\mathrm{gc})$, we appeal to \eqref{eq:f135-at-gc} and hence solve this equation for $Q$ as a function of $q$ by taking 
\begin{equation}
Q_\mathrm{gc}(q):=Q(q;0,t_\mathrm{gc})=\left[\frac{5}{4}f_5(\alpha_\mathrm{gc},\alpha_\mathrm{gc}^*,0,t_\mathrm{gc})\right]^{1/5}\left[1+\frac{f_7(\alpha_\mathrm{gc},\alpha_\mathrm{gc}^*,0,t_\mathrm{gc})}{f_5(\alpha_\mathrm{gc},\alpha_\mathrm{gc}^*,0,t_\mathrm{gc})}q^2+\cdots\right]^{1/5}q.
\label{eq:Qgc}
\end{equation}
Here, the first factor is ambiguous up to a phase factor $\ee^{2\pi\ii k/5}$, $k\in\mathbb{Z}\pmod 5$, while the second factor is meant to be an even analytic function of $q$ that equals $1$ when $q=0$.  Using \eqref{eq:arg-f5-gc} shows that 
\begin{equation}
\arg\left(\left[\frac{5}{4}f_5(\alpha_\mathrm{gc},\alpha_\mathrm{gc}^*,0,t_\mathrm{gc})\right]^{1/5}\right)=\frac{2}{5}\pi k-\frac{3}{20}\pi-\frac{1}{2}\theta,\quad k\in\mathbb{Z}\pmod 5.
\label{eq:arg-QgcPrime}
\end{equation}
Using $W=Q^2$ and $q^2=w-\alpha$ for $\alpha=\alpha_\mathrm{gc}$ shows that $W_\mathrm{gc}(w):=W(w;0,t_\mathrm{gc})$ is an odd analytic function of $w-\alpha_\mathrm{gc}$ near $w=\alpha_\mathrm{gc}$ for which
\begin{equation}
W_\mathrm{gc}'(\alpha_\mathrm{gc})=\left[\frac{5}{4}f_5(\alpha_\mathrm{gc},\alpha_\mathrm{gc}^*,0,t_\mathrm{gc})\right]^{2/5}\neq 0,
\end{equation}
and hence $W_\mathrm{gc}(w)$ is locally univalent and $\arg(W_\mathrm{gc}'(\alpha_\mathrm{gc}))=\tfrac{4}{5}\pi k -\tfrac{3}{10}\pi-\theta$ for $k\in\mathbb{Z}\pmod 5$.  We now choose $k=1$, which gives 
\begin{equation}
\arg(W_\mathrm{gc}'(\alpha_\mathrm{gc}))=\frac{1}{2}\pi-\theta.
\label{eq:arg-Wprime-gc}
\end{equation}
With this choice, it is easy to see that $W_\mathrm{gc}(w)$ is a conformal mapping of a sufficiently small neighborhood $U$ of $\alpha_\mathrm{gc}$ under which the preimage of $W\in\mathbb{R}$ is an analytic arc tangent to the unit circle at $w=\alpha_\mathrm{gc}=\ee^{\ii\theta}$.  See Figure~\ref{fig:TangentAngle}.
\begin{figure}[h]
\begin{center}
\includegraphics{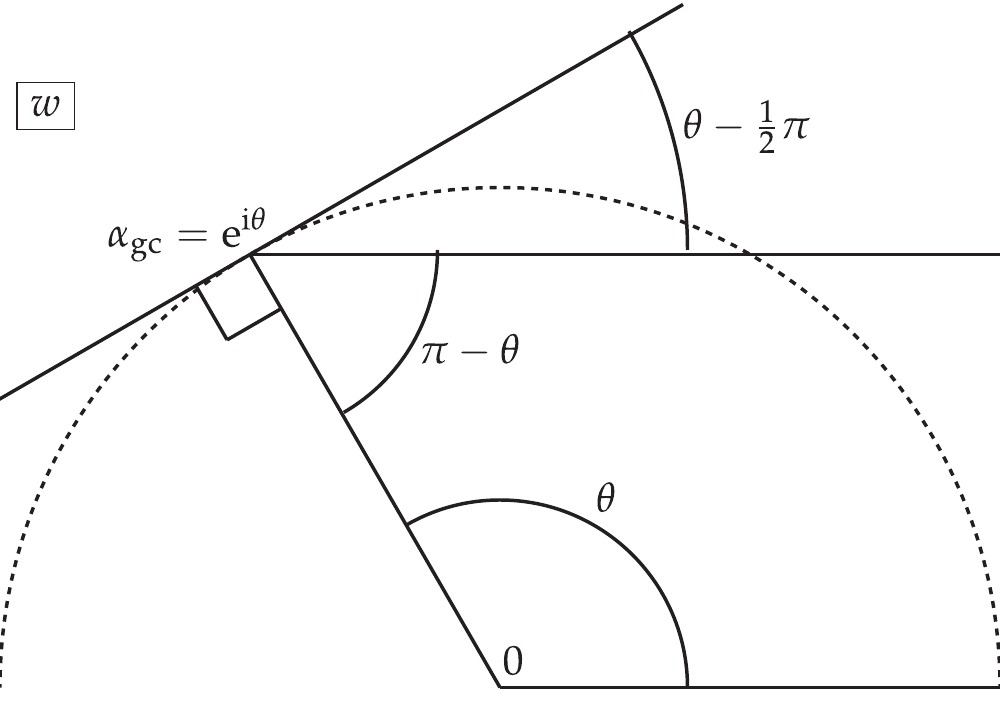}
\end{center}
\caption{Since $\alpha_\mathrm{gc}=\ee^{\ii\theta}$ lies on the unit circle, which also contains $\partial\mathrm{IV}\cap\partial\mathrm{V}\cap U$ and $\ell^-_1\cap U$, $W'_\mathrm{gc}(\alpha_\mathrm{gc})=\ii\ee^{-\ii\theta}|W'_\mathrm{gc}(\alpha_\mathrm{gc})|$.}
\label{fig:TangentAngle}
\end{figure}
In fact, the analysis in Section~\ref{sec:x=0-Symmetry} proves that the local preimage of $\mathbb{R}$ lies exactly on the unit circle.

We will continue the solution $W=W_\mathrm{gc}(w)$ to $(x,t)\neq(0,t_\mathrm{gc})$ by integrating a differential equation defined on a suitable Banach space.  Letting $D_\rho$ denote the open disk given by $|q|<\rho$, the Banach space $\mathcal{B}$ we use is the space of $(\alpha,\alpha^*,s,s^*,Q(\cdot),Q^*(\cdot))$ where $(\alpha,\alpha^*,s,s^*)\in \mathbb{C}^4$ and where for $\rho>0$ sufficiently small, $Q(\cdot)$ and $Q^*(\cdot)$ are odd functions $D_\rho\to\mathbb{C}$ analytic on $D_\rho$ and continuous on $\overline{D_\rho}$.  The norm on this space is taken as
\begin{equation}
\|(\alpha,\alpha^*,s,s^*,Q(\cdot),Q^*(\cdot))\|:= |\alpha|+|\alpha^*| + |s|+|s^*|+\max_{|q|\le\rho}|Q(q)| + \max_{|q|\le\rho}|Q^*(q)|.
\label{eq:BanachSpaceNorm}
\end{equation}
By the maximum modulus principle, the latter maxima can be computed over the circle $|q|=\rho$.
Note that, since $Q_\mathrm{gc}(\cdot)$ and its Schwarz reflection $Q^*_\mathrm{gc}(\cdot)$ are univalent on $D_\rho$ (by choice of $\rho$ sufficiently small), Rouch\'e's theorem guarantees that there exists $\delta>0$ small enough that whenever $Q(\cdot)$ and $Q^*(\cdot)$ correspond to a point in the open ball $B_\delta\subset\mathcal{B}$ of radius $\delta$ about $(\alpha_\mathrm{gc},\alpha^*_\mathrm{gc},0,0,Q_\mathrm{gc}(\cdot),Q^*_\mathrm{gc}(\cdot))\in\mathcal{B}$, then $Q$ and $Q^*$ are also univalent on $D_\rho$.
We formulate a vector field on $B_\delta$ in the direction of an arbitrary linear combination $\partial:=c_x\partial_x+c_t\partial_t$ of coordinate vector fields on $(x,t)\in\mathbb{C}^2$ as follows.  Supposing that $(\alpha,\alpha^*,s,s^*,Q(\cdot),Q^*(\cdot))$ depend on $(x,t)$, we formally differentiate \eqref{eq:Q-eqn} and its complex conjugate $f^*(q;\alpha,\alpha^*,x,t)=\tfrac{4}{5}Q^*(q)^5+s^*Q^*(q)$ (on the left-hand side we mean the complex conjugate of the function $f(q^*;\alpha^*,\alpha,x,t)$ and on the right-hand side we have unknowns $Q^*(q)$ and $s^*$ that will later turn out to be the complex conjugates of $Q(q^*)$ and $s^*$ respectively) in the chosen direction and solve for $\partial Q(\cdot)$ and $\partial Q^*(\cdot)$ to obtain
\begin{equation}
\begin{split}
\partial Q(\cdot)&=
\frac{\partial f(\cdot;\alpha,\alpha^*,x,t)+f_\alpha(\cdot;\alpha,\alpha^*,x,t)\partial\alpha +f_{\alpha^*}(\cdot;\alpha,\alpha^*,x,t)\partial\alpha^*-Q(\cdot)\partial s}{4Q(\cdot)+s}  \\
\partial Q^*(\cdot)&=\frac{\partial f^*(\cdot;\alpha,\alpha^*,x,t)+f^*_\alpha(\cdot;\alpha,\alpha^*,x,t)\partial\alpha +f^*_{\alpha^*}(\cdot;\alpha,\alpha^*,x,t)\partial\alpha^*-Q^*(\cdot)\partial s^*}{4Q^*(\cdot)+s^*}.
\end{split}
\label{eq:QQstar-flow}
\end{equation}
Here in the first term in each numerator, the differential operator $\partial$ acts on the explicit $(x,t)$-dependence in $f$ and $f^*$.  In order for $\partial Q(\cdot)$ and $\partial Q^*(\cdot)$ to be odd analytic functions, it is necessary that the analytic functions in the numerators vanish at each of the zeros of the corresponding denominators, to at least the same order.  At a point of $B_\delta$ we use univalence of $Q(\cdot)$ and $Q^*(\cdot)$ to see that if $s$ (resp.,\@ $s^*$) is small and nonzero then there are exactly four simple roots of $4Q(q)^4+s$, namely $q=Q^{-1}(\pm\Gamma)$ and $q=Q^{-1}(\pm\ii\Gamma)$ where $\Gamma^4=-\tfrac{1}{4}s$ (resp.,\@ there are exactly four simple roots of $4Q^*(q)^4+s^*$, namely $q=Q^{*-1}(\pm\Gamma^*)$ and $q=Q^{*-1}(\pm\ii\Gamma^*)$ where $\Gamma^{*4}=-\tfrac{1}{4}s^*$). Therefore since $f$ and $f^*$ are odd functions of $q$, we need only require that
\begin{equation}
\begin{bmatrix}
\Gamma & 0 & -f_\alpha(Q^{-1}(\Gamma);\diamond)&-f_{\alpha^*}(Q^{-1}(\Gamma);\diamond)\\
\ii\Gamma & 0 & -f_\alpha(Q^{-1}(\ii\Gamma);\diamond)& -f_{\alpha^*}(Q^{-1}(\ii\Gamma);\diamond)\\
0 & \Gamma^* & -f^*_\alpha(Q^{*-1}(\Gamma^*);\diamond)&-f^*_{\alpha^*}(Q^{*-1}(\Gamma^*);\diamond)\\
0 & -\ii\Gamma^* & -f^*_\alpha(Q^{*-1}(-\ii\Gamma^*);\diamond) & -f^*_{\alpha^*}(Q^{*-1}(-\ii\Gamma^*);\diamond)\end{bmatrix}
\begin{bmatrix}\partial s\\
\partial s^*\\
\partial \alpha\\
\partial \alpha^*
\end{bmatrix}
=
\begin{bmatrix}
\partial f(Q^{-1}(\Gamma);\diamond)\\
\partial f(Q^{-1}(\ii\Gamma);\diamond)\\
\partial f^*(Q^{*-1}(\Gamma^*);\diamond)\\
\partial f^*(Q^{*-1}(-\ii\Gamma^*);\diamond)
\end{bmatrix}
\label{eq:system-1}
\end{equation}
where for brevity we are using $\diamond$ to denote the parameter list $\alpha,\alpha^*,x,t$.  We can introduce the contour integral representations (Lagrange-B\"urmann formul\ae):
\begin{equation}
\begin{split}
F(Q^{-1}(\Gamma);\diamond)&=\frac{1}{2\pi\ii}\oint \frac{F(q;\diamond) Q'(q)\,\dd q}{Q(q)-\Gamma}=
\frac{\Gamma}{2\pi\ii}\oint\frac{F(q;\diamond)Q'(q)\,\dd q}{Q(q)^2-\Gamma^2}\\
F(Q^{-1}(\ii\Gamma);\diamond)&=\frac{1}{2\pi\ii}\oint\frac{F(q;\diamond)Q'(q)\,\dd q}{Q(q)-\ii\Gamma} = 
\frac{\ii\Gamma}{2\pi\ii}\oint\frac{F(q;\diamond)Q'(q)\,\dd q}{Q(q)^2+\Gamma^2}\\
F^*(Q^{*-1}(\Gamma^*);\diamond)&=\frac{1}{2\pi\ii}\oint\frac{F^*(q;\diamond)Q^{*\prime}(q)\,\dd q}{Q^*(q)-\Gamma^*}=\frac{\Gamma^*}{2\pi\ii}\oint\frac{F^*(q;\diamond)Q^{*\prime}(q)\,\dd q}{Q^*(q)^2-\Gamma^{*2}}\\
F^*(Q^{*-1}(-\ii\Gamma^*);\diamond)&=\frac{1}{2\pi\ii}\oint\frac{F^*(q;\diamond)Q^{*\prime}(q)\,\dd q}{Q^*(q)+\ii\Gamma^*}=\frac{-\ii\Gamma^*}{2\pi\ii}\oint\frac{F^*(q;\diamond)Q^{*\prime}(q)\,\dd q}{Q^*(q)^2+\Gamma^{*2}},
\end{split}
\end{equation}
where $F$ denotes either $f_\alpha$, $f_{\alpha^*}$, or $\partial f$, and similarly for $F^*$.  Here the contour of integration is the circle $|q|=\rho$ with positive orientation, and the second integral in each case is derived from the first by averaging of each integral with that obtained by the substitution $q\mapsto -q$ using oddness of $F$, $F^*$, $Q$, and $Q^*$ and evenness of $Q'$ and $Q^{*\prime}$.  We then replace the first two rows of \eqref{eq:system-1} with their independent linear combinations having coefficients $(\tfrac{1}{2}\Gamma^{-3},\tfrac{1}{2}\ii\Gamma^{-3})$ and $(\tfrac{1}{2}\Gamma^{-1},-\tfrac{1}{2}\ii\Gamma^{-1})$ respectively, and similarly replace the last two rows of \eqref{eq:system-1} with their independent linear combinations having coefficients $(\tfrac{1}{2}\Gamma^{*-3},-\tfrac{1}{2}\ii\Gamma^{*-3})$ and $(\tfrac{1}{2}\Gamma^{*-1},\tfrac{1}{2}\ii\Gamma^{*-1})$ respectively, which results in the system
\begin{equation}
\begin{bmatrix}
0 & 0 & -4A_0 & -4B_0\\
1 & 0 & -4A_2 & -4B_2\\
0 & 0 & -4A^*_0 & -4 B^*_0\\
0 & 1 & -4A^*_2 & -4 B^*_2\end{bmatrix}\begin{bmatrix}\partial s\\\partial s^*\\\partial\alpha\\\partial\alpha^*\end{bmatrix}=
\begin{bmatrix}4C_0\\
4C_2\\
4C^*_0\\
4C^*_2\end{bmatrix},
\label{eq:system-2}
\end{equation}
where, using $\Gamma^4=-\tfrac{1}{4}s$ and $\Gamma^{*4}=-\tfrac{1}{4}s^*$, for $p=0,2$ we define
\begin{equation}
\begin{split}
A_p(s,\alpha,\alpha^*,x,t)&:=\frac{1}{2\pi\ii}\oint \frac{f_\alpha(q;\alpha,\alpha^*,x,t)Q(q)^pQ'(q)\,\dd q}{4Q(q)^4+s}\\
B_p(s,\alpha,\alpha^*,x,t)&:=\frac{1}{2\pi\ii}\oint \frac{f_{\alpha^*}(q;\alpha,\alpha^*,x,t)Q(q)^pQ'(q)\,\dd q}{4Q(q)^4+s}\\
C_p(s,\alpha,\alpha^*,x,t)&:=\frac{1}{2\pi\ii}\oint \frac{\partial f(q;\alpha,\alpha^*,x,t)Q(q)^pQ'(q)\,\dd q}{4Q(q)^4+s}\\
A_p^*(s^*,\alpha,\alpha^*,x,t)&:=\frac{1}{2\pi\ii}\oint \frac{f^*_\alpha(q;\alpha,\alpha^*,x,t)Q^*(q)^pQ^{*\prime}(q)\,\dd q}{4Q^*(q)^4+s^*}\\
B^*_p(s^*,\alpha,\alpha^*,x,t)&:=\frac{1}{2\pi\ii}\oint \frac{f^*_{\alpha^*}(q;\alpha,\alpha^*,x,t)Q^*(q)^pQ^{*\prime}(q)\,\dd q}{4Q^*(q)^4+s^*}\\
C^*_p(s^*,\alpha,\alpha^*,x,t)&:=\frac{1}{2\pi\ii}\oint \frac{\partial f^*(q;\alpha,\alpha^*,x,t)Q^*(q)^pQ^{*\prime}(q)\,\dd q}{4Q^*(q)^4+s^*}.
\end{split}
\end{equation}
We take the solution of \eqref{eq:system-2} as the finite-dimensional part of our system of differential equations:
\begin{equation}
\begin{split}
\partial s&=4C_2+\frac{(A_2B_0^*-A_0^*B_2)4C_0}{\Delta}+\frac{(A_0B_2-A_2B_0)4C_0^*}{\Delta}\\
\partial s^*&= 4C_2^*+\frac{(A_2^*B_0^*-A_0^*B_2^*)4C_0}{\Delta} +\frac{(A_0B_2^*-A_2^*B_0)4C_0^*}{\Delta}\\
\partial\alpha &=\frac{B_0^*C_0-B_0C_0^*}{\Delta}\\
\partial\alpha^*&=\frac{A_0C_0^*-A_0^*C_0}{\Delta},
\end{split}
\label{eq:finite-dimensional-part}
\end{equation}
where $\Delta$ denotes a constant multiple of the determinant of the coefficient matrix in \eqref{eq:system-2}:
\begin{equation}
\Delta=\Delta(\alpha,\alpha^*,s,s^*,Q(\cdot),Q^*(\cdot),x,t):=A_0^*B_0-A_0B_0^*.
\end{equation}
Adjoining to \eqref{eq:finite-dimensional-part} the two equations \eqref{eq:QQstar-flow} in which \eqref{eq:finite-dimensional-part} has been used to explicitly eliminate $\partial\alpha$, $\partial\alpha^*$, $\partial s$, and $\partial s^*$ on the right-hand side finally defines the vector field on $B_\delta$ corresponding to the directional derivative $\partial$ in the $(x,t)$-space.  

One then needs to show that if $\delta>0$ is sufficiently small, the vector field on $B_\delta$ is Lipschitz with respect to the norm \eqref{eq:BanachSpaceNorm}.  This is fairly straightforward, since by a residue calculation,
\begin{equation}
\Delta(\alpha_\mathrm{gc},\alpha^*_\mathrm{gc},0,0,Q_\mathrm{gc}(\cdot),Q^*_\mathrm{gc}(\cdot),x,t)=-
\left|\frac{1}{4Q'_\mathrm{gc}(0)^3}\frac{\partial f_{3}}{\partial\alpha}(\alpha_\mathrm{gc},\alpha^*_\mathrm{gc},0,t_\mathrm{gc})\right|^2<0
\end{equation}
according to \eqref{eq:f3-partials}, and since all other denominators are bounded away from zero due to integration on the circle $|q|=\rho>0$.  See \cite[Appendix A.4]{BuckinghamMiller2015} for full details of the argument in a similar setting.  Therefore, taking the initial condition at $(x,t)=(0,t_\mathrm{gc})$ to be given by $(\alpha_\mathrm{gc},\alpha_\mathrm{gc}^*,0,0,Q_\mathrm{gc}(\cdot),Q_\mathrm{gc}^*(\cdot))\in B_\delta$, we get a unique local solution to the differential equation.  Finally, we observe that the differential equations corresponding to different choices of the constants in the linear combination $\partial:=c_x\partial_x+c_t\partial_t$ are all compatible (i.e., the corresponding vector fields on $B_\delta$ all commute pairwise), since they were obtained from differentiation of the same identity in different directions.  This implies that the solution $(\alpha(x,t),\alpha^*(x,t),s(x,t),s^*(x,t),Q(\cdot;x,t),Q^*(\cdot;x,t))$ is well-defined and in fact analytic in $(x,t)$ near $(0,t_\mathrm{gc})$, and by uniqueness we learn that for real $(x,t)$ near $(0,t_\mathrm{gc})$, $\alpha^*(x,t)$ and $s^*(x,t)$ are the complex conjugates of $\alpha(x,t)$ and $s(x,t)$, and $Q^*(\cdot;x,t)$ is the Schwarz reflection of $Q(\cdot;x,t)$.  We finish the construction of the conformal mapping by recalling $W=Q^2$ and $q^2=w-\alpha$, which yields $W(w;x,t)$ as an odd analytic function of $w$ near $\alpha=\alpha(x,t)$ as defined near $(x,t)=(0,t_\mathrm{gc})$ as part of the solution of the differential equation.  Since $W'(\alpha(x,t);x,t)\neq 0$ by continuity and the corresponding result $W'_\mathrm{gc}(\alpha_\mathrm{gc})\neq 0$, univalence of $W$ near $w=\alpha(x,t)$ persists in a neighborhood of $(0,t_\mathrm{gc})$.  This completes the construction of the conformal mapping and the proof of Lemma~\ref{lemma:ConformalMap}.
\end{proof}

\subsubsection{Values of the partial derivatives of $s(x,t)$ at the gradient catastrophe}
The existence of the conformal map $W$ and complex quantities $\alpha$, $\alpha^*$, and $s$ as real analytic functions of $(x,t)$ near $(0,t_\gc)$, together with the initial conditions for these quantities at the gradient catastrophe point, is enough to determine all of their derivatives with respect to $(x,t)$ at the catastrophe point, simply by substitution into the defining relation, which we may take as \eqref{eq:Q-eqn}.  This reasoning allows us to prove the following. 

Note that for $\alpha=\alpha_\gc$, \eqref{eq:general-outer-parametrix-at-origin} implies
\begin{equation}
m_\gc=\sin^2(\tfrac{1}{2}\mathrm{arg}(\alpha_\gc))=\sin^2(\tfrac{1}{2}\theta).
\label{eq:mgc-alphagc}
\end{equation}
\begin{lemma}
The real analytic function $s:\mathbb{R}^2\to\mathbb{C}$ satisfies
\begin{equation}
\frac{\partial s}{\partial x}(0,t_\mathrm{gc})=\ii a\quad\text{and}\quad\frac{\partial s}{\partial t}(0,t_\mathrm{gc})=b
\end{equation}
where
\begin{equation}
a=-\frac{(m_\mathrm{gc}(1-m_\mathrm{gc}))^{1/4}}{2|W'_\mathrm{gc}(\alpha_\mathrm{gc})|^{1/2}}<0\quad\text{and}\quad
b=\frac{(m_\mathrm{gc}(1-m_\mathrm{gc}))^{1/4}}{2|W'_\mathrm{gc}(\alpha_\mathrm{gc})|^{1/2}}\rho(m_\mathrm{gc})>0,
\label{eq:a-and-b-formulae}
\end{equation}
in which $\rho(m_\mathrm{gc})>0$ is given explicitly by \eqref{eq:rho-func-define}
in terms of $\KK(m)$ and $\EE(m)$ in \eqref{eq:KE}, the complete elliptic integrals of the first and second kinds, respectively.
\label{lemma:PartialsOfs}
\end{lemma}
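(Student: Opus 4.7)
The plan is to apply implicit differentiation to the defining identity \eqref{eq:Q-eqn} of the conformal map, namely
\begin{equation*}
f(q;\alpha(x,t),\alpha^*(x,t),x,t) = \tfrac{4}{5}Q(q;x,t)^5 + s(x,t)\,Q(q;x,t),
\end{equation*}
and to match the coefficients of the lowest-order power of $q$ on each side. Differentiating with respect to $x$ (or $t$) at fixed $q$ and then evaluating at the catastrophe point, where $s(0,t_\gc)=0$ and $Q(q;0,t_\gc)=Q_\gc(q)=Q'_\gc(0)\,q+\mathcal{O}(q^3)$, one finds that the coefficient of $q^1$ on the right-hand side is simply $\partial_xs|_\gc\cdot Q'_\gc(0)$, since $Q_\gc^4=\mathcal{O}(q^4)$. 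On the left-hand side, the $q^1$-coefficients of $f_\alpha\,\partial_x\alpha|_\gc$ and $f_{\alpha^*}\,\partial_x\alpha^*|_\gc$ both vanish by \eqref{eq:f1-partials}, so matching coefficients yields
\begin{equation*}
\partial_xs|_\gc = \frac{\partial_xf_1|_\gc}{Q'_\gc(0)}, \qquad \partial_ts|_\gc = \frac{\partial_tf_1|_\gc}{Q'_\gc(0)}.
\end{equation*}

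From \eqref{eq:phi-q-Expansion} one reads off $f_1 = 2n(\alpha-\alpha^*)^{1/2}/\sqrt{-\alpha}$. Since $n|_\gc=0$ by Lemma~\ref{lemma:mmstar}, the partial derivatives reduce to
\begin{equation*}
\partial_xf_1|_\gc = \frac{2(\alpha_\gc-\alpha_\gc^*)^{1/2}}{\sqrt{-\alpha_\gc}}\,\partial_xn|_\gc, \qquad \partial_tf_1|_\gc = \frac{2(\alpha_\gc-\alpha_\gc^*)^{1/2}}{\sqrt{-\alpha_\gc}}\,\partial_tn|_\gc,
\end{equation*}
with the values of $\partial_{x}n|_\gc$ and $\partial_{t}n|_\gc$ supplied by \eqref{eq:mmstar-x} and \eqref{eq:mmstar-t}. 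Substituting $\alpha_\gc=\ee^{\ii\theta}$, $(\alpha_\gc-\alpha_\gc^*)^{1/2}=\ee^{\ii\pi/4}\sqrt{2\sin\theta}$, $\sqrt{-\alpha_\gc}=\ee^{\ii(\theta-\pi)/2}$, and the phase $\arg(Q'_\gc(0))=\pi/4-\theta/2$ read off from \eqref{eq:arg-QgcPrime} at $k=1$ (which is dictated by \eqref{eq:arg-Wprime-gc} and which also gives $|W'_\gc(\alpha_\gc)|^{1/2}=|Q'_\gc(0)|$), the net phase in $\partial_xs|_\gc$ collapses to $\ee^{\ii\pi/2}=\ii$ and its magnitude simplifies, via $m_\gc(1-m_\gc)=\tfrac{1}{4}\sin^2\theta$ from \eqref{eq:mgc-alphagc}, to $(m_\gc(1-m_\gc))^{1/4}/(2|W'_\gc(\alpha_\gc)|^{1/2})$. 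This yields $\partial_xs|_\gc = \ii a$ with the stated value of $a$. The analogous computation for $\partial_ts|_\gc$ produces a purely real expression proportional to $-[A_\gc/(B_\gc\sin\theta)+\cot\theta]$.

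To identify $\partial_ts|_\gc$ with $b=-a\rho(m_\gc)$, I need the identity
\begin{equation*}
\frac{A_\gc}{B_\gc} = 1 - \frac{2\EE(m_\gc)}{\KK(m_\gc)},
\end{equation*}
obtained after substituting the definition of $\rho(m_\gc)$ from \eqref{eq:rho-func-define} and applying the half-angle identity $\sin\theta\cot(\theta/2)=1+\cos\theta$. Establishing this identity will be the main obstacle. By Propositions~\ref{proposition:MSymmetry} and \ref{proposition:HSymmetry}, the arc $\beta$ lies on the unit circle at the catastrophe point, so I may parametrize $\beta$ by $w=\ee^{\ii\phi}$ for $\phi\in[0,\theta]$; a direct computation of boundary values gives $R(w)^2 = 2\ee^{\ii\phi}(\cos\phi - \cos\theta)$ on this arc. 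The change of variable $\sin(\phi/2)=\sqrt{m_\gc}\sin\psi$ bijectively maps $\beta$ onto $\psi\in[0,\pi/2]$ with modulus $m_\gc=\sin^2(\theta/2)$, and upon careful bookkeeping of signs and branches of $R_+$ and $\sqrt{-\xi}$, reduces $A_\gc$ and $B_\gc$ to constant multiples of $\KK(m_\gc)-2\EE(m_\gc)$ and $\KK(m_\gc)$ respectively, establishing the identity and completing the proof.
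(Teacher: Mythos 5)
Your argument through the computation of $\partial_x s|_\gc$ and $\partial_t s|_\gc$ reproduces the paper's proof essentially verbatim: you extract the $q^1$-coefficient of \eqref{eq:Q-eqn} to get $f_1 = sQ'(0;x,t)$, differentiate, kill the chain-rule terms using \eqref{eq:f1-partials} and $s(0,t_\gc)=0$, and substitute from Lemma~\ref{lemma:mmstar}, $\arg(Q'_\gc(0))=\tfrac{1}{4}\pi-\tfrac{1}{2}\theta$, and $|Q'_\gc(0)|=|W'_\gc(\alpha_\gc)|^{1/2}$. A small internal inconsistency: $\arg(\partial_xf_1|_\gc)=-\tfrac{1}{4}\pi-\tfrac{1}{2}\theta$, so the net phase of $\partial_xs|_\gc$ is $\ee^{-\ii\pi/2}=-\ii$, not $\ee^{\ii\pi/2}$ as you state; since $a<0$, your conclusion $\partial_xs|_\gc=\ii a$ is still correct but contradicts the phase you claim.

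The genuine misstep is your citation of Propositions~\ref{proposition:MSymmetry} and \ref{proposition:HSymmetry} to conclude that ``the arc $\beta$ lies on the unit circle at the catastrophe point.'' It does not. Proposition~\ref{proposition:MSymmetry} explicitly takes as hypothesis that $\beta$ lies in the \emph{exterior} of the unit circle except at its endpoints; Proposition~\ref{proposition:HSymmetry} shows $\mathrm{Re}\{\phi(\ee^{\ii\eta};0,t)\}=0$ only for $\arg(\alpha)<\eta<\pi$, i.e., on the arc of the unit circle \emph{above} $\alpha$, which is a different branch of the zero level set from $\beta$. Indeed the paper states that exactly one of the five zero level curves emanating from $\alpha_\gc$ lies on the unit circle and it is the one with $\arg(w)>\theta$, so it is not $\beta$. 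Your parametrization $w=\ee^{\ii\phi}$ must therefore be understood as a contour deformation: since $\beta$ and the unit-circle arc from $\alpha_\gc$ to $1$ bound a lens in $\mathbb{C}_+$ free of the branch cuts of $R^\gc$ and of $\sqrt{-\cdot}$, Cauchy's theorem lets you push $A_\gc$ and $B_\gc$ onto that arc, and the substitution $\sin(\phi/2)=\sqrt{m_\gc}\sin\psi$ then gives $B_\gc\propto\KK(m_\gc)$ and $A_\gc\propto \KK(m_\gc)-2\EE(m_\gc)$ as claimed---but one must track the sign change $R^\gc_+\mapsto R^\gc_-=-R^\gc_+$ under the deformation, which is precisely the ``careful bookkeeping'' you defer, and which the mistaken premise would lead you to get wrong. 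The paper itself establishes $A_\gc/(B_\gc\sin\theta)+\cot\theta=-\rho(m_\gc)$ in the remark following Lemma~\ref{lemma:X-T-identities} by a different deformation, rewriting $A_\gc,B_\gc$ via the loop integrals \eqref{eq:Agc-Bgc-loops} and collapsing them onto the negative real axis, then citing the explicit substitutions of \cite[p.~56]{BuckinghamMiller2013}; your route to the same identity is legitimate once the deformation is justified correctly.
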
  
\begin{proof}
Let us assume $Q=Q(q;x,t)$, $s=s(x,t)$, $\alpha=\alpha(x,t)$ and $\alpha^*=\alpha^*(x,t)$ in \eqref{eq:Q-eqn}, and firstly take just the terms linear in $q$ on both sides (this is the same as differentiating in $q$ and setting $q$ to zero).  Since $Q(q;x,t)$ vanishes linearly at $q=0$, we get
\begin{equation}
f_1(\alpha(x,t),\alpha^*(x,t),x,t)=s(x,t)Q'(0;x,t),
\end{equation}
where $f_1$ is defined by \eqref{eq:phi-q-Expansion}.
Differentiation of this identity on $\mathbb{R}^2$ with respect to $x$ gives
\begin{multline}
\frac{\partial f_1}{\partial x}(\alpha(x,t),\alpha^*(x,t),x,t) +
\frac{\partial f_1}{\partial\alpha}(\alpha(x,t),\alpha^*(x,t),x,t)\frac{\partial\alpha}{\partial x}(x,t) + 
\frac{\partial f_1}{\partial\alpha^*}(\alpha(x,t),\alpha^*(x,t),x,t)\frac{\partial\alpha^*}{\partial x}(x,t) \\{}= \frac{\partial s}{\partial x}(x,t)Q'(0;x,t) + s(x,t)\frac{\partial Q'}{\partial x}(0;x,t).
\label{eq:determine-sx}
\end{multline}
Evaluating both sides at $(x,t)=(0,t_\gc)$, using the identities \eqref{eq:f1-partials} and $s(0,t_\mathrm{gc})=0$, gives
\begin{equation}
\frac{\partial s}{\partial x}(0,t_\gc)=\frac{1}{Q_\mathrm{gc}'(0)}\frac{\partial f_1}{\partial x}(\alpha_\mathrm{gc},\alpha^*_\mathrm{gc},0,t_\mathrm{gc}).
\end{equation}
Next, applying Lemma~\ref{lemma:mmstar} to the definition \eqref{eq:phi-q-Expansion} and using \eqref{eq:mgc-alphagc}, we obtain
\begin{equation}
\begin{split}
\frac{\partial f_1}{\partial x}(\alpha_\gc,\alpha^*_\gc,0,t_\gc)&=\frac{2(\alpha_\gc-\alpha^*_\gc)^{1/2}}{\sqrt{-\alpha_\gc}}\frac{\partial n}{\partial x}(\alpha_\gc,\alpha^*_\gc,0,t_\gc)\\ & = -\frac{(\alpha_\gc-\alpha_\gc^*)^{1/2}}{4\sqrt{-\alpha_\gc}}
\\ &=\ee^{-\ii\pi/4}\ee^{-\ii\theta/2}\frac{1}{2}(m_\mathrm{gc}(1-m_\mathrm{gc}))^{1/4},
\end{split}
\label{eq:dalphaf1=0}
\end{equation}
and then we recall that $Q_\mathrm{gc}'(0)^2=W_\mathrm{gc}'(\alpha_\mathrm{gc})$ so that $|Q_\mathrm{gc}'(0)|=|W_\mathrm{gc}'(\alpha_\mathrm{gc})|^{1/2}$, and that $\arg(Q_\mathrm{gc}'(0))=\tfrac{1}{4}\pi-\tfrac{1}{2}\theta$ according to \eqref{eq:Qgc} and 
\eqref{eq:arg-QgcPrime} with index $k=1$.  Therefore
\begin{equation}
\frac{\partial s}{\partial x}(0,t_\mathrm{gc})=-\ii \frac{(m_\mathrm{gc}(1-m_\mathrm{gc}))^{1/4}}{2|W_\mathrm{gc}'(\alpha_\mathrm{gc})|^{1/2}}
\end{equation}
as desired.

Differentiating instead with respect to $t$, we obtain 
\begin{equation}
\frac{\partial s}{\partial t}(0,t_\gc)=\frac{1}{Q_\mathrm{gc}'(0)}\frac{\partial f_1}{\partial t}(\alpha_\gc,\alpha^*_\gc,0,t_\gc),
\end{equation}
in which we again use Lemma~\ref{lemma:mmstar} in \eqref{eq:phi-q-Expansion} to obtain
\begin{equation}
\begin{split}
\frac{\partial f_1}{\partial t}(\alpha_\mathrm{gc},\alpha^*_\mathrm{gc},0,t_\mathrm{gc})&=
\frac{\ii}{4}\frac{(\alpha_\mathrm{gc}-\alpha_\mathrm{gc}^*)^{1/2}}{\sqrt{-\alpha_\mathrm{gc}}}
\left[\frac{A_\mathrm{gc}}{B_\mathrm{gc}\sin(\theta)}+\cot(\theta)\right] \\ &= -\ee^{\ii\pi/4}\ee^{-\ii\theta/2}\frac{1}{2}(m_\mathrm{gc}(1-m_\mathrm{gc}))^{1/4}\left[\frac{A_\mathrm{gc}}{B_\mathrm{gc}\sin(\theta)}+\cot(\theta)\right].
\end{split}
\end{equation}
Now, the value of the quantity in square brackets above can be determined by suitable contour deformations, which will be explained in the remark following the proof of Lemma~\ref{lemma:X-T-identities} in Section~\ref{sec:Darboux} below, with the result that it is precisely the strictly negative quantity $-\rho(m_\mathrm{gc})$ defined in \eqref{eq:rho-func-define}.  Dividing through by $Q'_\mathrm{gc}(0)$ then completes the proof.
\end{proof}
This result shows that locally near $(x,t)=(0,t_\mathrm{gc})$ we may regard the function $s(x,t)$ as a nearly-linear mapping from $\mathbb{R}^2$ to $\mathbb{R}^2\simeq\mathbb{C}$ taking $(0,t_\mathrm{gc})$ to $s=0$ that consists of reflection about the $t$-axis followed by clockwise rotation about $(0,t_\mathrm{gc})$ by $-90^\circ$, followed in turn by independent scalings of the vertical and horizontal coordinates by positive scale factors $-a$ and $b$ respectively.  If the scale factors were to be equal, the mapping would be locally conformal (and in fact anti-holomorphic), but generally these two scalings are not the same, making $(x,t)\mapsto (\mathrm{Re}\{s(x,t)\},\mathrm{Im}\{s(x,t)\})$ a real-differentiable mapping only.  
That $-b/a=\rho(m_\mathrm{gc})\neq 1$ in general is clear from the plots in Figure~\ref{fig:Minus-b-Over-a}.
\begin{figure}[h]
\begin{center}
\includegraphics{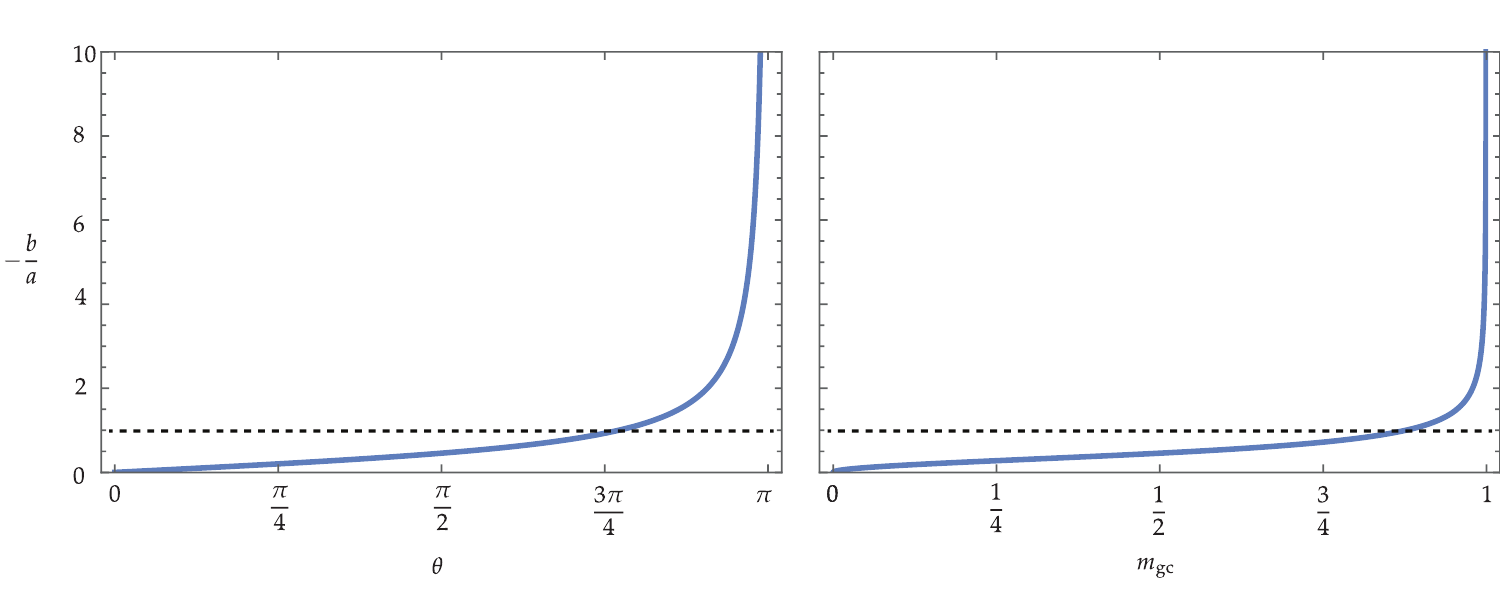}
\end{center}
\caption{The ratio $-b/a=\rho(m_\mathrm{gc})>0$ as a function of $\theta=\arg(\alpha_\mathrm{gc})$ (left) and as a function of $m_\mathrm{gc}=\sin^2(\tfrac{1}{2}\theta)$ (right).  The value $-b/a=1$ is shown with a dotted horizontal line.}
\label{fig:Minus-b-Over-a}
\end{figure}

\subsection{The phase-linearized outer parametrix}
\label{sec:phase-linearized-outer-parametrix}
In order to study the Riemann--Hilbert problem for $\mathbf{O}(w)$ for $(x,t)$ near the gradient catastrophe point $(0,t_\mathrm{gc})$, we will again need an outer parametrix to deal with the jump conditions outside the disks $U$ and $U^*$, but we should keep in mind that the quantities $\alpha(x,t)$ and $\Phi(x,t)$ that parametrize the outer parametrix are now generally different functions of $(x,t)$ since they will be constructed from the modified $g$-function described in Section~\ref{sec:modified-g}.  The modified functions $\alpha(x,t)$ and $\Phi(x,t)$ are related to the modified $g$-function in the same way their original versions were related to the original $g$-function:  $\alpha(x,t)$ is the endpoint of the contour arc $\beta$ and $\Phi(x,t)$ is the real constant (with respect to $w$) value taken by $-\ii (k(w)-g_+(w)-g_-(w))$ along $\beta$.  However, unlike the original definitions of these functions, since they are now related to the modified $g$-function, they are well-defined and smooth functions of $(x,t)$ in a full neighborhood of the gradient catastrophe point. Although a modified $g$-function is involved, the quantities
\begin{equation}
\alpha(0,t_\mathrm{gc}),\quad\alpha(0,t_\mathrm{gc})^*,\quad\Phi(0,t_\mathrm{gc}),\quad
\frac{\partial\Phi}{\partial x}(0,t_\mathrm{gc}),\quad\text{and}\quad\frac{\partial\Phi}{\partial t}(0,t_\mathrm{gc})
\end{equation}
are all equal to their limiting values at $(x,t)=(0,t_\mathrm{gc})$ as computed from the $g$-function appropriate for $(x,t)$ away from the gradient catastrophe point.

Since we are now interested in points $(x,t)$ lying a small distance proportional to $\epsilon^{4/5}$ from $(0,t_\mathrm{gc})$, a further simplification is appropriate.  We do the following for the purpose of defining the outer parametrix: 
\begin{itemize}
\item We \emph{fix the endpoints $\alpha$ and $\alpha^*$} of $\beta$ and $\beta^*$ respectively at their values at the gradient catastrophe point $\alpha(0,t_\mathrm{gc})=\alpha_\mathrm{gc}=\ee^{\ii\theta}$ and $\alpha^*(0,t_\mathrm{gc})=\alpha_\mathrm{gc}^*=\ee^{-\ii\theta}$.  The resulting contours with endpoints $w=1$ and $w=\ee^{\pm\ii\theta}$ will be denoted $\beta_\mathrm{gc}$ and $\beta^*_\mathrm{gc}$ respectively.
\item We \emph{linearize the function} $\Phi(x,t)$ about $(x,t)=(0,t_\mathrm{gc})$, replacing $\Phi(x,t)$ with the linear function 
\begin{equation}
\Phi^l(t):=\Phi_\mathrm{gc}-(t-t_\mathrm{gc})\omega_\mathrm{gc}, 
\label{eq:Phi-l}
\end{equation}
with $\Phi_\mathrm{gc}:=\Phi(0,t_\mathrm{gc})$ and $\omega_\mathrm{gc}:=\omega(0,t_\mathrm{gc})$ where generally $\omega(x,t):=-\partial_t\Phi(x,t)$ (note that $k_\mathrm{gc}=k(0,t_\mathrm{gc})=0$ where $k(x,t):=\partial_x\Phi(x,t)$).  Note that according to \cite[Proposition 4.2]{BuckinghamMiller2013}, 
\begin{equation}
\omega_\mathrm{gc}=-\frac{\pi}{2\KK(m_\mathrm{gc})},
\label{eq:omega0}
\end{equation}
where $\KK(m)$ is given in \eqref{eq:KE}, while $m_\gc$ is defined by \eqref{eq:mgc-alphagc}.
\end{itemize}
We refer to the corresponding outer parametrix determined by these quantities as the \emph{phase-linearized outer parametrix} and denote it by $\dot{\mathbf{O}}^{\mathrm{out,l}}(w)$.  The phase-linearized outer parametrix is given precisely by the formula $\dot{\mathbf{O}}^{\mathrm{out,l}}(w):=\mathbf{Y}(w;\beta_\mathrm{gc},\epsilon^{-1}\Phi^l(t))$ in terms of the solution of Riemann--Hilbert Problem~\ref{rhp:OuterParametrixGeneral} described in some detail in Appendix~\ref{app:theta}.  
Note that $\dot{\mathbf{O}}^\mathrm{out,l}(w)$ depends parametrically on $t$ and $\epsilon$ but not on $x$.  

Since the branch points $\alpha_\mathrm{gc}$ and $\alpha_\mathrm{gc}^*$ for $\dot{\mathbf{O}}^\mathrm{out,l}(w)$ are fixed while as defined in terms of $g$ the arcs $\beta$ and $\beta^*$ depend on $(x,t)$ and have moving endpoints $\alpha(x,t)$ and $\alpha^*(x,t)=\alpha(x,t)^*$ respectively, to compare $\dot{\mathbf{O}}^\mathrm{out,l}(w)$ with $\mathbf{O}(w)$ we will make use of the fact that the jump matrix for both of these matrix functions is analytic on $\beta\cup\beta^*$, so we can move both jump contours a bit and in particular ensure that they coincide outside of the neighborhood $U$ of $w=\alpha(x,t)$ and its Schwarz reflection $U^*$.  We will specify in Section~\ref{sec:inner-parametrix} below a certain $(x,t)$-dependent point we will denote $w_\beta\in\partial U$; then we may and will assume that the $\beta$ arcs for $\mathbf{O}(w)$ and $\dot{\mathbf{O}}^\mathrm{out,l}(w)$ coincide outside of $U\cup U^*$ as a single curve joining $w_\beta$ with $w_\beta^*$ via $w=1$.  Within $U$, the two $\beta$ arcs must be different in general as they have different terminal points; however the terminal point for $\dot{\mathbf{O}}^\mathrm{out,l}(w)$ will be irrelevant while that for $\mathbf{O}(w)$ will be specified precisely in Section~\ref{sec:inner-parametrix} below.

The simplification obtained with the use of the phase-linearized outer parametrix comes at the cost that although both $\dot{\mathbf{O}}^\mathrm{out,l}(w)$ and $\mathbf{O}(w)$ have jumps across the same curve $\beta\cup\beta^*$ outside of $U\cup U^*$, $\dot{\mathbf{O}}^\mathrm{out,l}(w)$ does not exactly satisfy the same jump condition there as does $\mathbf{O}(w)$:  indeed, instead of 
\begin{equation}
\mathbf{O}_+(w)=\mathbf{O}_-(w)\ii\sigma_1\ee^{\ii\Phi(x,t)\sigma_3/\epsilon},\quad w\in\beta,\quad w\not\in U,
\end{equation}
we have
\begin{equation}
\dot{\mathbf{O}}^\mathrm{out,l}_+(w)=\dot{\mathbf{O}}^\mathrm{out,l}_-(w)\ii\sigma_1\ee^{\ii\Phi^l(t)\sigma_3/\epsilon},\quad w\in\beta,\quad w\not\in U,
\end{equation}
with similar Schwarz-symmetric jump conditions holding on $\beta^*$.
However, since by Proposition~\ref{prop:PropertiesOfY} $\dot{\mathbf{O}}^\mathrm{out,l}(w)$ has unit determinant and is uniformly bounded when $w\not\in U\cup U^*$, for such $w$ also along $\beta$ we have the jump condition
\begin{equation}
\begin{split}
\mathbf{O}_+(w)\dot{\mathbf{O}}^\mathrm{out,l}_+(w)^{-1}&=
\mathbf{O}_-(w)\dot{\mathbf{O}}^\mathrm{out,l}_-(w)^{-1}\cdot
\dot{\mathbf{O}}^\mathrm{out,l}_-(w)\ee^{-\ii(\Phi(x,t)-\Phi^l(t))\sigma_3/\epsilon}\dot{\mathbf{O}}^\mathrm{out,l}_-(w)^{-1} \\ &= \mathbf{O}_-(w)\dot{\mathbf{O}}^\mathrm{out,l}_-(w)^{-1}\cdot\left(\mathbb{I} + \mathcal{O}(\epsilon^{-1}(x^2+(t-t_\mathrm{gc})^2))\right),\quad w\in\beta,\quad w\not\in U,
\end{split}
\label{eq:comparison-outside}
\end{equation}
as $\epsilon\to 0$, because the linearization $\Phi^l(t)$ approximates $\Phi(x,t)$ up to terms of second order.  Restriction to $(x,t)$ in a neighborhood of radius $\epsilon^{4/5}$ of the gradient catastrophe point then makes the error term above proportional to $\epsilon^{3/5}\ll 1$.  

\subsection{Use of the modified $g$-function and inner parametrix for $\mathbf{O}(w)$ near $w=\alpha$}
\label{sec:inner-parametrix}

Recall from Lemma~\ref{lemma:ConformalMap} the conformal mapping $w\mapsto W(w;x,t)$ defined on the neighborhood $U$ of $w=\alpha(x,t)$.  We now choose the jump contours for $\widetilde{\mathbf{O}}(w)$ within $U$ to be local pre-images under $W$ of the straight rays $\arg(W)=0,\pm\tfrac{2}{5}\pi$ and $\arg(-W)=0$.  We also assume that the image under $W$ of the arc $\partial\mathrm{II}\cap\partial\mathrm{III}\cap U=\beta\cap U$ lies along the ray $\arg(W)=\tfrac{1}{5}\pi$, and define $w=w_\beta$ as the point at which this arc meets $\partial U$.  Introducing the scaled variables
\[
\xi:=\epsilon^{-2/5}W\quad\text{and}\quad\tau:=\epsilon^{-4/5}s
\]
the jump conditions for $\widetilde{\mathbf{O}}(w)$ within $U$ take the simple form shown in Figure~\ref{fig:PI-Standard-Tritronquee}.
\begin{figure}[h]
\begin{center}
\includegraphics{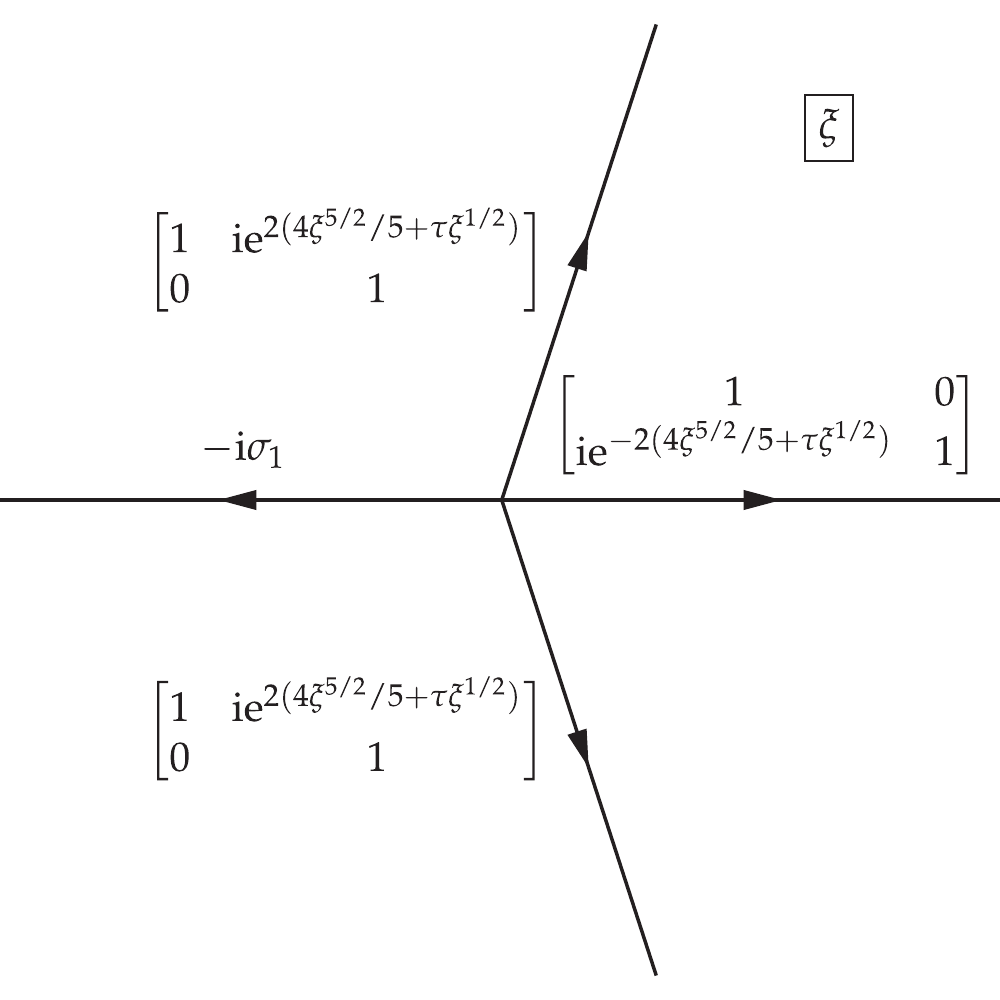}
\end{center}
\caption{The jump conditions satisfied by $\widetilde{\mathbf{O}}(w)$ within $U$ rendered in the $\xi$-plane.  These are also the jump conditions corresponding to the real tritronqu\'ee solution $y(\tau)$ of the Painlev\'e-I equation $y''(\tau)=6y(\tau)^2+\tau$, which is real-valued for real $\tau$ and analytic for $|\arg(-\tau)|<\tfrac{4}{5}\pi$.}
\label{fig:PI-Standard-Tritronquee}
\end{figure}

We will find an exact solution of these jump conditions that matches well onto the phase-linearized outer parametrix $\dot{\mathbf{O}}^\mathrm{out,l}(w)$ for $w\in \partial U$.  To do this, we first express the phase-linearized outer parametrix for $w\in U$ in terms of the conformal coordinate $W(w;x,t)$ \emph{at the gradient catastrophe point} $(0,t_\mathrm{gc})$, i.e., in terms of $W_\mathrm{gc}(w)$.  We apply  to $\dot{\mathbf{O}}^\mathrm{out,l}(w)$ a version of the transformation \eqref{eq:O-Otilde} omitting only the factor $Y_N(w)^{-\sigma_3/2}$ and replacing $\Phi(x,t)$ with $\Phi^l(t)$ to obtain:
\begin{equation}
\widetilde{\mathbf{O}}^\mathrm{out,l}(w):=\begin{cases}
\dot{\mathbf{O}}^\mathrm{out,l}(w)\ee^{-\ii\Phi^l(t)\sigma_3/(2\epsilon)}(-\ii\sigma_1),&\quad w\in\mathrm{I}\cap U\\
\dot{\mathbf{O}}^\mathrm{out,l}(w)\ee^{-\ii\Phi^l(t)\sigma_3/(2\epsilon)}(-\ii\sigma_1),&\quad w\in\mathrm{II}\cap U\\
\dot{\mathbf{O}}^\mathrm{out,l}(w)\ee^{-\ii\Phi^l(t)\sigma_3/(2\epsilon)},&\quad w\in\mathrm{III}\cap U\\
\dot{\mathbf{O}}^\mathrm{out,l}(w)\ee^{-\ii\Phi^l(t)\sigma_3/(2\epsilon)},&\quad w\in\mathrm{IV}\cap U\\
\dot{\mathbf{O}}^\mathrm{out,l}(w)\ee^{-\ii\Phi^l(t)\sigma_3/(2\epsilon)}(-\ii\sigma_1),&\quad w\in\mathrm{V}\cap U.
\end{cases}
\label{eq:Odot-Otilde}
\end{equation}
Within $U$, $\widetilde{\bfO}^{\mathrm{out},l}$ has only the jump $\widetilde{\mathbf{O}}^\mathrm{out,l}_+(w)=\widetilde{\mathbf{O}}^\mathrm{out,l}_-(w)(-\ii\sigma_1)$ along the contour $\partial\mathrm{IV}\cap\partial\mathrm{V}$; so composing with the conformal mapping $W_\mathrm{gc}(w)$ we see that an exact local solution of this jump condition is the matrix 
\begin{equation}
W_\mathrm{gc}(w)^{\sigma_3/4}\mathbf{M}^{-1},\quad\mathbf{M}:=\frac{1}{\sqrt{2}}\begin{bmatrix}1 & -1\\1 & 1\end{bmatrix},
\end{equation}
where the power function is interpreted as the principal branch (cut along the negative real $W$-axis, the image of the jump contour $\partial\mathrm{IV}\cap\partial\mathrm{V}$).  Now define
\begin{equation}
\mathbf{C}(w)=\mathbf{C}(w;\theta,\epsilon^{-1}\Phi^l(t),W_\mathrm{gc}(\cdot)):=\widetilde{\mathbf{O}}^\mathrm{out,l}(w)\mathbf{M}W_\mathrm{gc}(w)^{-\sigma_3/4},\quad w\in U.
\label{eq:CmatrixDefine}
\end{equation}
The parametric dependence on $\theta=\arg(\alpha_\mathrm{gc})$ and $\epsilon^{-1}\Phi^l(t)$ enters via the phase-linearized outer parametrix $\dot{\mathbf{O}}^\mathrm{out,l}(w)$.
The matrix $\mathbf{C}(w)$ is easily seen to be holomorphic within $U$; indeed it has no jump discontinuity along $W<0$ and its singularity at $w=\alpha_\mathrm{gc}=\ee^{\ii\theta}$ is at worst a divergence proportional to $(w-\alpha_\mathrm{gc})^{-1/2}$ hence this isolated singularity is necessarily removable.  Also, although $\dot{\mathbf{O}}^\mathrm{out,l}(w)$ depends on $\epsilon$, it follows from Proposition~\ref{prop:PropertiesOfY} that $\mathbf{C}(w)$ is uniformly bounded on $U$ as $\epsilon\to 0$.

Noting that for $(x,t)=(0,t_\mathrm{gc})+\mathcal{O}(\epsilon^{4/5})$ we have 
$W(w;x,t)=W_\mathrm{gc}(w)(1+\mathcal{O}(\epsilon^{4/5}))$ holding uniformly for $w\in\partial U$, and observing that it is $W(w;x,t)$ that is proportional to the variable $\xi$ in which the jump conditions for $\widetilde{\mathbf{O}}(w)$ take a simple form in $U$, we define an inner parametrix in $U$ as follows.  Since \eqref{eq:CmatrixDefine} can be rearranged to read 
$\widetilde{\mathbf{O}}^\mathrm{out,l}(w)=\mathbf{C}(w)W_\mathrm{gc}(w)^{\sigma_3/4}\mathbf{M}^{-1}$ with $\mathbf{C}(w)$ holomorphic in $U$, we first assume the existence of a solution of the following Riemann--Hilbert problem.
\begin{rhp}[Painlev\'e-I tritronqu\'ee solution]
Given $\tau\in\mathbb{C}$, seek a $2\times 2$ matrix function $\mathbf{T}=\mathbf{T}(\xi;\tau)$ defined in the four sectors $0<\arg(\xi)<\tfrac{2}{5}\pi$, $-\tfrac{2}{5}\pi<\arg(\xi)<0$, $\tfrac{2}{5}\pi<\arg(\xi)<\pi$, and $-\pi<\arg(\xi)<-\tfrac{2}{5}\pi$, that satisfies the following additional conditions:
\begin{itemize}
\item[]\textbf{Analyticity:}  $\mathbf{T}(\xi;\tau)$ is analytic for $\xi$ in each sector of definition, taking continuous boundary values from each sector on the union of rays forming its boundary.
\item[]\textbf{Jump conditions:}  Let each excluded ray be oriented in the direction away from the origin and denote the boundary value taken by $\bfT(\xi;\tau)$ on the ray from the left (resp., right) by $\bfT_+(\xi;\tau)$ (resp., $\bfT_-(\xi;\tau)$). Then the boundary values satisfy $\mathbf{T}_+(\xi;\tau)=\mathbf{T}_-(\xi;\tau)\mathbf{V}_\mathbf{T}(\xi;\tau)$ where the matrix $\mathbf{V}_\mathbf{T}(\xi;\tau)$ is defined on each ray as shown in Figure~\ref{fig:PI-Standard-Tritronquee}.
\item[]\textbf{Normalization:}  The matrix $\mathbf{T}(\xi;\tau)$ satisfies the normalization condition
\begin{equation}
\lim_{\xi\to\infty}\mathbf{T}(\xi;\tau)\mathbf{M}\xi^{-\sigma_3/4}=\mathbb{I}.
\label{eq:T-norm}
\end{equation}
\end{itemize}
\label{rhp:tritronquee}
\end{rhp}
Given a solution $\mathbf{T}(\xi;\tau)$ of Riemann--Hilbert Problem~\ref{rhp:tritronquee}, we build an inner parametrix for $\mathbf{O}(w)$ by defining
\begin{equation}
\dot{\mathbf{O}}^\mathrm{in}(w):=\begin{cases}
\mathbf{C}(w)\epsilon^{\sigma_3/10}\mathbf{T}(\epsilon^{-2/5}W(w;x,t);\epsilon^{-4/5}s(x,t))(\ii\sigma_1)\ee^{\ii\Phi(x,t)\sigma_3/(2\epsilon)}Y_N(w)^{\sigma_3/2},&\quad w\in\mathrm{I}\cap U\\
\mathbf{C}(w)\epsilon^{\sigma_3/10}\mathbf{T}(\epsilon^{-2/5}W(w;x,t);\epsilon^{-4/5}s(x,t))(\ii\sigma_1)\ee^{\ii\Phi(x,t)\sigma_3/(2\epsilon)},&\quad w\in \mathrm{II}\cap U\\
\mathbf{C}(w)\epsilon^{\sigma_3/10}\mathbf{T}(\epsilon^{-2/5}W(w;x,t);\epsilon^{-4/5}s(x,t))\ee^{\ii\Phi(x,t)\sigma_3/(2\epsilon)},&\quad w\in\mathrm{III}\cap U\\
\mathbf{C}(w)\epsilon^{\sigma_3/10}\mathbf{T}(\epsilon^{-2/5}W(w;x,t);\epsilon^{-4/5}s(x,t))\ee^{\ii\Phi(x,t)\sigma_3/(2\epsilon)}Y_N(w)^{\sigma_3/2},&\quad w\in\mathrm{IV}\cap U\\
\mathbf{C}(w)\epsilon^{\sigma_3/10}\mathbf{T}(\epsilon^{-2/5}W(w;x,t);\epsilon^{-4/5}s(x,t))(\ii\sigma_1)\ee^{\ii\Phi(x,t)\sigma_3/(2\epsilon)}Y_N(w)^{\sigma_3/2},&\quad w\in\mathrm{V}\cap U.
\end{cases}
\label{eq:inner-parametrix}
\end{equation}
Since $\mathbf{T}(\xi;\tau)$ satisfies exactly the jump conditions indicated in Figure~\ref{fig:PI-Standard-Tritronquee}, which are the images under the rescaled conformal mapping $\xi=\epsilon^{-2/5}W(w;x,t)$ and the identification $\tau=\epsilon^{-4/5}s(x,t)$ of the jump conditions for $\widetilde{\mathbf{O}}(w)$, upon comparing \eqref{eq:O-Otilde} with \eqref{eq:inner-parametrix} we see that $\dot{\mathbf{O}}^\mathrm{in}(w)$ is an exact local solution of the jump conditions for $\mathbf{O}(w)$ within $U$.  
Also, for $w\in\partial U\cap(\mathrm{II}\cup\mathrm{III})$ it is easy to see that with
\begin{equation}
p(w;x,t):=\frac{W(w;x,t)}{W_\mathrm{gc}(w)},
\label{eq:p-ratio}
\end{equation}
we calculate the mismatch
\begin{multline}
\dot{\mathbf{O}}^\mathrm{in}(w)\dot{\mathbf{O}}^\mathrm{out,l}(w)^{-1}=\\
\mathbf{C}(w)\epsilon^{\sigma_3/10}\mathbf{T}(\xi;\tau)\mathbf{M}\xi^{-\sigma_3/4}\epsilon^{-\sigma_3/10}p(w;x,t)^{\sigma_3/4}\mathbf{C}(w)^{-1},\quad w\in\partial U\cap(\mathrm{II}\cup\mathrm{III}),
\label{eq:mismatch-simple}
\end{multline}
whereas otherwise on the boundary of $U$
\begin{multline}
\dot{\mathbf{O}}^\mathrm{in}(w)\dot{\mathbf{O}}^\mathrm{out,l}(w)^{-1}=\\
\mathbf{C}(w)\epsilon^{\sigma_3/10}\mathbf{T}(\xi;\tau)\mathbf{M}\xi^{-\sigma_3/4}\epsilon^{-\sigma_3/10}p(w;x,t)^{\sigma_3/4}\mathbf{C}(w)^{-1}\cdot\dot{\mathbf{O}}^\mathrm{out,l}(w)Y_N(w)^{\sigma_3/2}\dot{\mathbf{O}}^\mathrm{out,l}(w)^{-1},\\
w\in\partial U\cap(\mathrm{I}\cup\mathrm{IV}\cup\mathrm{V}).
\label{eq:mismatch-Y}
\end{multline}
Since $\dot{\mathbf{O}}^\mathrm{out}(w)$ and its inverse are uniformly bounded independent of $\epsilon$ on $\partial U$, the product of the latter three factors is uniformly $\mathbb{I}+\mathcal{O}(\epsilon)$.  Also note that $p(w;x,t)=1+\mathcal{O}(\epsilon^{4/5})$ holds uniformly on $\partial U$ because $(x,t)=(0,t_\mathrm{gc})+\mathcal{O}(\epsilon^{4/5})$.

\subsection{Characterization of $\mathbf{T}(\xi;\tau)$}
The conditions placed on $\mathbf{T}(\xi;\tau)$ in the formulation of Riemann--Hilbert Problem~\ref{rhp:tritronquee} turn out to relate it to a specific solution of the Painlev\'e-I equation.  Riemann--Hilbert Problem~\ref{rhp:tritronquee} can be recognized in some form in several references, including \cite{Kapaev2004}, \cite{BertolaTovbis2014}, and \cite{BuckinghamMiller2015}.  The latter reference is the closest match, with the main difference being that the spectral variable $\xi$ is replaced throughout with $-\xi$.  However, the description of $\mathbf{T}(\xi;\tau)$ in \cite{BuckinghamMiller2015} contains some inessential errors related to the use of a different matrix $\mathbf{M}$ in the normalization condition \eqref{eq:T-norm}.  For this reason, we take the opportunity to correct the exposition in \cite{BuckinghamMiller2015} and also develop for the reader's convenience the relation between $\mathbf{T}(\xi;\tau)$ and the Painlev\'e-I equation.

\subsubsection{Riemann--Hilbert Problem~\ref{rhp:tritronquee} and the Painlev\'e-I equation}
If $\tau\in\mathbb{C}$ is such that $\mathbf{T}(\xi;\tau)$ exists, then it is unique and satisfies $\det(\mathbf{T}(\xi;\tau))=1$ and $\mathbf{T}(\xi^*;\tau^*)^*=\mathbf{T}(\xi;\tau)$.
Assuming existence, the matrix $\mathbf{L}(\xi;\tau):=\mathbf{T}(\xi;\tau)\ee^{(4\xi^{5/2}/5 + \tau\xi^{1/2})\sigma_3}$ has constant jumps along each ray from which it follows that the matrices 
\begin{equation}
\mathbf{A}(\xi;\tau):=\frac{\partial\mathbf{L}}{\partial\xi}(\xi;\tau)\mathbf{L}(\xi;\tau)^{-1}\quad\text{and}\quad
\mathbf{U}(\xi;\tau):=\frac{\partial\mathbf{L}}{\partial\tau}(\xi;\tau)\mathbf{L}(\xi;\tau)^{-1}
\end{equation}
are entire functions of $\xi$.  They are characterized in terms of the coefficients in the asymptotic expansion of $\mathbf{T}(\xi;\tau)\mathbf{M}\xi^{-\sigma_3/4}$ as $\xi\to\infty$:
\begin{equation}
\mathbf{T}(\xi;\tau)\mathbf{M}\xi^{-\sigma_3/4}\sim\mathbb{I}+\sum_{p=1}^\infty\xi^{-p}\mathbf{T}_p(\tau),\quad\xi\to\infty.
\label{eq:T-expansion}
\end{equation}
One first checks that this expansion (the existence of which given solvability of Riemann--Hilbert Problem~\ref{rhp:tritronquee} follows via standard arguments from exponential convergence of the jump matrices to the identity)  and the definition of $\mathbf{U}(\xi;\tau)$ implies that
\begin{equation}
\mathbf{U}(\xi;\tau)=-\xi\sigma_+-\sigma_--[\mathbf{T}_1(\tau),\sigma_+] + \mathcal{O}(\xi^{-1}),\quad\xi\to\infty,
\end{equation}
where $\sigma_+$ and $\sigma_-$ are given in \eqref{eq:sigmapm}. Therefore by Liouville's theorem $\mathbf{U}(\xi;\tau)$ is linear in $\xi$:
\begin{equation}
\mathbf{U}(\xi;\tau)=-\xi\sigma_+-\sigma_--[\mathbf{T}_1(\tau),\sigma_+].
\end{equation}
Similarly, the definition of $\mathbf{A}(\xi;\tau)$ implies the expansion
\begin{multline}
\mathbf{A}(\xi;\tau)=2\xi\mathbf{U}(\xi;\tau) -2[\mathbf{T}_2(\tau),\sigma_+]+2[\mathbf{T}_1(\tau),\sigma_+]\mathbf{T}_1(\tau)-2[\mathbf{T}_1(\tau),\sigma_-]-\frac{1}{2}\tau\sigma_+ \\{}+ \xi^{-1}\mathbf{A}_1(\tau) +\xi^{-2}\mathbf{A}_2(\tau)+\mathcal{O}(\xi^{-3}),\quad\xi\to\infty
\end{multline}
for some computable coefficients $\mathbf{A}_1(\tau)$ and $\mathbf{A}_2(\tau)$, and therefore by Liouville's theorem again $\mathbf{A}(\xi;\tau)$ is a quadratic polynomial in $\xi$:
\begin{equation}
\mathbf{A}(\xi;\tau)=2\xi\mathbf{U}(\xi;\tau) -2[\mathbf{T}_2(\tau),\sigma_+]+2[\mathbf{T}_1(\tau),\sigma_+]\mathbf{T}_1(\tau)-2[\mathbf{T}_1(\tau),\sigma_-]-\frac{1}{2}\tau\sigma_+
\end{equation}
and the coefficients $\mathbf{A}_1(\tau)$ and $\mathbf{A}_2(\tau)$ both vanish identically.  Using that $A_{1,21}(\tau)$ vanishes (an algebraic identity among the elements of the matrices $\mathbf{T}_p(\tau)$) and also the condition (following from $\det(\mathbf{T}(\xi;\tau))=1$) that $T_{1,11}(\tau)+T_{1,22}(\tau)=0$, the matrices $\mathbf{U}(\xi;\tau)$ and $\mathbf{A}(\xi;\tau)$ can be parametrized by just three functions of $\tau$:
\begin{equation}
\begin{split}
h(\tau)&:=-T_{1,21}(\tau)\\
y(\tau)&:=T_{1,21}(\tau)^2+2T_{1,11}(\tau)\\
z(\tau)&:=2T_{2,21}(\tau)-2T_{1,12}(\tau)+2T_{1,21}(\tau)^3+6T_{1,11}(\tau)T_{1,21}(\tau).
\end{split}
\label{eq:hyz}
\end{equation}
Then we find that
\begin{equation}
\mathbf{U}(\xi;\tau)=\begin{bmatrix}-h&-\xi+h^2-y\\-1 & h\end{bmatrix}
\label{eq:PI-U-matrix}
\end{equation}
and
\begin{equation}
\mathbf{A}(\xi;\tau)=2\xi\mathbf{U}(\xi;\tau) + \begin{bmatrix}z+2hy & -\tau-2y^2-2hz-2h^2y\\
2y & -z-2hy\end{bmatrix},
\label{eq:PI-A-matrix}
\end{equation}
in which $h=h(\tau)$, $y=y(\tau)$, and $z=z(\tau)$ are given by \eqref{eq:hyz}.

Since the mixed partial derivatives of $\mathbf{L}(\xi;\tau)$ must be equal, the Lax pair equations
\begin{equation}
\frac{\partial\mathbf{L}}{\partial\xi}(\xi;\tau)=\mathbf{A}(\xi;\tau)\mathbf{L}(\xi;\tau)\quad\text{and}\quad\frac{\partial\mathbf{L}}{\partial\tau}(\xi;\tau)=\mathbf{U}(\xi;\tau)\mathbf{L}(\xi;\tau)
\label{eq:PI-Lax-pair}
\end{equation}
must be compatible, which implies that the coefficient matrices satisfy the zero curvature condition
\begin{equation}
\frac{\partial\mathbf{A}}{\partial\tau}(\xi;\tau)-\frac{\partial\mathbf{U}}{\partial\xi}(\xi;\tau) + [\mathbf{A}(\xi;\tau),\mathbf{U}(\xi;\tau)]=\mathbf{0}.
\end{equation}
This matrix equation is equivalent, upon separating the coefficients of the different powers of $\xi$, to the first-order system
\begin{equation}
\begin{split}
h'(\tau)&=y(\tau)\\
y'(\tau)&=-z(\tau)\\
z'(\tau)&=-6y(\tau)^2-\tau.
\end{split}
\label{eq:PIsystem}
\end{equation}
Eliminating $z(\tau)$ between the second and third equations yields the Painlev\'e-I equation on $y(\tau)$ in exactly the form \eqref{eq:PI-intro}.
Moreover, taking advantage of the identities $T_{1,11}(\tau)+T_{1,22}(\tau)=0$, $A_{1,11}(\tau)=0$, $A_{1,12}(\tau)=0$, $A_{1,21}(\tau)=0$ (again), and $A_{2,21}(\tau)=0$ we obtain a further algebraic identity among the functions $h$, $y$, and $z$, which amounts to exactly the definition \eqref{eq:Hamiltonian-intro} of $h$ as the Hamiltonian associated with $y$.
It follows that the system \eqref{eq:PIsystem} can be written in Hamiltonian form associated with $y$, in which $y'(\tau)$ is replaced by $-z(\tau)$.
\begin{equation}
\frac{\dd y}{\dd\tau}=\frac{\partial h}{\partial z},\quad\frac{\dd z}{\dd\tau}=-\frac{\partial h}{\partial y},\quad\frac{\dd h}{\dd\tau}=\frac{\partial h}{\partial\tau}.
\end{equation}

\subsubsection{Identification of the real tritronqu\'ee solution}
The paper of Kapaev \cite{Kapaev2004} contains an exhaustive description of the asymptotic behavior for large complex $\tau$ of solutions of the Painlev\'e-I equation \eqref{eq:PI-intro}, and how it depends on the specific choice of solution as encoded in the Stokes multipliers associated to the irregular singular point at $\xi=\infty$ for the linear equation $\mathbf{L}_\xi=\mathbf{A}\mathbf{L}$.  The Stokes multipliers are in turn determined by the jump matrices in the Riemann--Hilbert conditions for $\mathbf{T}(\xi;\tau)$, which are just a formulation of the inverse monodromy problem of determining the coefficients $h,y,z,\tau$ in the matrix $\mathbf{A}$ from the Stokes multipliers.  One can deduce from the paper of Kapaev \cite{Kapaev2004} that the specific solution associated with the jump conditions illustrated in Figure~\ref{fig:PI-Standard-Tritronquee} is the \emph{real tritronqu\'ee} solution, i.e., the unique solution $y=y(\tau)$ of \eqref{eq:PI-intro} that satisfies the asymptotic condition given in \eqref{eq:y-asymp}.
This solution is real-valued for real $\tau$ and has been proven \cite{CostinHuangTanveer2014} to be analytic for all $|\tau|$ in the indicated asymptotically pole-free sector.  What makes the solution a \emph{tritronqu\'ee} is the fact that among the one-parameter family of \emph{tronqu\'ee} solutions having square-root asymptotics in the sector $|\arg(-\tau)|<\tfrac{2}{5}\pi$ it is the unique solution for which the same asymptotic holds in the larger sector indicated in \eqref{eq:y-asymp}.
In fact, the Painlev\'e-I equation \eqref{eq:PI-intro} has exactly five tritronqu\'ee solutions related by the symmetry $(\tau,y)\mapsto (\ee^{2\pi\ii n/5}\tau,\ee^{\ii n\pi/5}y)$ for $n\in\mathbb{Z}\pmod{5}$, and exactly one of which is real-valued for real $\tau$.  Kapaev's paper \cite{Kapaev2004} shows that this discrete symmetry is reflected in a corresponding symmetry of the Stokes multipliers in which the diagram in Figure~\ref{fig:PI-Standard-Tritronquee} is essentially rotated by integer multiples of $\tfrac{2}{5}\pi$ radians.  One of the other four tritronqu\'ee solutions appeared in the work of Bertola and Tovbis \cite{BertolaTovbis2014} for instance.

\subsubsection{Laurent expansions and additional identities}
\label{sec:series}
For later use in Section~\ref{sec:NearThePoles}, we develop some further properties of the functions $h(\tau)$, $y(\tau)$, and $z(\tau)$, and of some other coefficients in the expansion \eqref{eq:T-expansion}.
We first derive from \eqref{eq:PIsystem} the second-order second-degree equation satisfied by the Hamiltonian $h(\tau)$:
\begin{equation}
h''(\tau)^2+2h(\tau)-4h'(\tau)^3-2\tau h'(\tau)=0
\label{eq:PIsigma}
\end{equation}
which is frequently called the ``$\sigma$-form'' of the Painlev\'e-I equation.  Now let $\tau_\mathrm{p}$ be a pole of the real tritronqu\'ee solution $y(\tau)$.  Then also $h(\tau)$ has a pole at $\tau=\tau_\mathrm{p}$, and from \eqref{eq:PIsigma} it is easy to see that the pole is simple, with residue $-1$.  In fact, every solution of \eqref{eq:PIsigma} having a pole at $\tau=\tau_\mathrm{p}$ has a Laurent expansion of the form
\begin{equation}
h(\tau)=-\frac{1}{\tau-\tau_\mathrm{p}} + h_0 + \sum_{n=1}^\infty h_n(\tau-\tau_\mathrm{p})^n,
\label{eq:h-Laurent}
\end{equation}
where the coefficients $\{h_n\}_{n\ge 1}$ are systematically determined in terms of $\tau_\mathrm{p}$ and $h_0$ by substitution into \eqref{eq:PIsigma}.  This series then implies corresponding series for $y(\tau)$ and $z(\tau)$ by the relations (cf., \eqref{eq:PIsystem}) $y(\tau)=h'(\tau)$ and $z(\tau)=-y'(\tau)$.  Hence $y(\tau)$ and $z(\tau)$ have double and triple poles respectively at $\tau=\tau_\mathrm{p}$, but no residue. We will also need an expansion of $T_{2,21}(\tau)$.  To this end,
we can start from the Lax pair equations \eqref{eq:PI-Lax-pair} to define matrices
\begin{equation}
\begin{split}
\mathbf{W}(\xi;\tau)&:=\left(\frac{\partial\mathbf{L}}{\partial\xi}(\xi;\tau)-\mathbf{A}(\xi;\tau)\mathbf{L}(\xi;\tau)\right)\ee^{-(4\xi^{5/2}/5+\tau\xi^{1/2})\sigma_3}\mathbf{M}\xi^{-\sigma_3/4}\\
\mathbf{Z}(\xi;\tau)&:=\left(\frac{\partial\mathbf{L}}{\partial\tau}(\xi;\tau)-\mathbf{A}(\xi;\tau)\mathbf{L}(\xi;\tau)\right)\ee^{-(4\xi^{5/2}/5+\tau\xi^{1/2})\sigma_3}\mathbf{M}\xi^{-\sigma_3/4}
\end{split}
\label{eq:WZ}
\end{equation}
which must vanish identically.  On the other hand, substituting $\mathbf{L}(\xi;\tau)=\mathbf{T}(\xi;\tau)\ee^{(4\xi^{5/2}/5+\tau\xi^{1/2})\sigma_3}$ and \eqref{eq:T-expansion} one sees that $\mathbf{W}(\xi;\tau)$ and $\mathbf{Z}(\xi;\tau)$ both have Laurent expansions about $\xi=\infty$ with coefficients explicitly expressed in terms of $h(\tau)$, $y(\tau)$, $z(\tau)$, and the matrix coefficients $\mathbf{T}_p(\tau)$, $p\ge 1$.  Setting to zero the  $\xi^{-1}$ term in the Laurent expansion of $\mathbf{Z}(\xi;\tau)$ one obtains with the help of the definitions \eqref{eq:hyz} a differential equation for $T_{1,12}(\tau)$:
\begin{equation}
T_{1,12}'(\tau)=h(\tau)z(\tau)+y(\tau)^2+h(\tau)^2y(\tau)+\frac{1}{4}\tau.
\end{equation}
Expanding the right-hand side around $\tau=\tau_\mathrm{p}$ using the computed Laurent expansions of $h(\tau)$, $y(\tau)$, and $z(\tau)$ shows that $T_{1,12}'(\tau)$ generally has a double pole, but has no residue:
\begin{equation}
T_{1,12}'(\tau)=\frac{h_0^2}{(\tau-\tau_\mathrm{p})^2} + \sum_{n=0}^\infty (n+1)t_{n+1}(\tau-\tau_\mathrm{p})^n,
\end{equation}
for some coefficients $\{t_n\}_{n\ge 1}$ that are systematically determined from $h_0$ and $\tau_\mathrm{p}$.
Therefore, the series representation of $T_{1,12}'(\tau)$ can be integrated up to yield
\begin{equation}
T_{1,12}(\tau)=-\frac{h_0^2}{\tau-\tau_\mathrm{p}} +t_0 + \sum_{n=1}^\infty t_n(\tau-\tau_\mathrm{p})^n
\label{eq:T112series}
\end{equation}
where $t_0$ is an integration constant not determined by this procedure.  Next, we set to zero the $\xi^0$ term in the Laurent expansion of $\mathbf{W}(\xi;\tau)$ and obtain that
\begin{equation}
T_{2,21}(\tau)=-\frac{1}{2}h(\tau)^3+T_{1,12}(\tau)+\frac{3}{2}h(\tau)y(\tau)+\frac{1}{2}z(\tau).
\end{equation}
Substituting the already-obtained Laurent series about $\tau=\tau_\mathrm{p}$ for $h(\tau)$, $y(\tau)$, $z(\tau)$, and $T_{1,12}(\tau)$ then gives the result
\begin{equation}
T_{2,21}(\tau)=\frac{1}{2}h_0^2\frac{1}{\tau-\tau_\mathrm{p}}+\left(t_0-\frac{1}{2}h_0^3\right) + \sum_{n=1}^\infty u_n(\tau-\tau_\mathrm{p})^n
\label{eq:T221series}
\end{equation}
where $\{u_n\}_{n\ge 1}$ are determined systematically as well.  

\subsection{Global parametrix for $\mathbf{O}(w)$, error analysis, and expressions for the potentials}
\subsubsection{Global parametrix definition}
To successfully use the phase-linearized outer parametrix we needed the assumption that $(x,t)=(0,t_\mathrm{gc})+\mathcal{O}(\epsilon^{4/5})$ where the difference is measured in the Euclidean norm in the $(x,t)$-plane.  Since $\tau=\epsilon^{-4/5}s(x,t)$ where $s(x,t)$ is a differentiable function $\mathbb{R}^2\to\mathbb{C}$ with $s(0,t_\mathrm{gc})=0$, the same assumption implies that $\tau$ is bounded as $\epsilon\to 0$. 

While it has no poles in the sector $|\arg(-\tau)|<\tfrac{4}{5}\pi$, the real tritronqu\'ee solution $y(\tau)$ of the Painlev\'e-I equation \eqref{eq:PI-intro} has infinitely many (double) poles in the complementary sector, and these values of $\tau$ are precisely those for which Riemann--Hilbert Problem~\ref{rhp:tritronquee} has no solution.  These points have to be avoided, and moreover to ensure that $\mathbf{T}(\xi;\tau)$ remains bounded, 
we use the condition that $y(\epsilon^{-4/5}(\ii ax+b(t-t_\gc)))$ is bounded, or equivalently since $|x|+|t-t_\gc|=\mathcal{O}(\epsilon^{4/5})$, that $y(\tau)$ is bounded where $\tau=\epsilon^{-4/5}s(x,t)$. Thus, $(x,t)$ is such that $\tau$ is bounded away from all poles of $y(\cdot)$ by a fixed $\tau$-distance, or equivalently the minimum distance between $(x,t)$ and the preimage under $\tau=\epsilon^{-4/5}s(x,t)$ of each pole of $y$ is bounded below by a multiple of $\epsilon^{4/5}$. So the size of the exclusions associated with the poles of $y$ is proportional to the allowed Euclidean distance between $(x,t)$ and the catastrophe point $(0,t_\mathrm{gc})$, both being $\mathcal{O}(\epsilon^{4/5})$.  Later in Section~\ref{sec:NearThePoles} we will remove the restriction that $\tau$ cannot lie close to any pole of $y(\tau)$, but this will require a further modification of the parametrix.

To analyze $\mathbf{O}(w)$ under these conditions, we define a global parametrix for $\mathbf{O}(w)$ as follows:
\begin{equation}
\dot{\mathbf{O}}(w):=\begin{cases}\dot{\mathbf{O}}^\mathrm{in}(w),&\quad w\in U\\
\dot{\mathbf{O}}^\mathrm{in}(w^*)^*,&\quad w\in U^*\\
\dot{\mathbf{O}}^\mathrm{out,l}(w),&\quad w\in \mathbb{C}\setminus (\overline{U}\cup\overline{U}^*).
\end{cases}
\label{eq:global-parametrix}
\end{equation}
Here $\dot{\mathbf{O}}^\mathrm{out,l}(w)$ is the phase-linearized outer parametrix described in Section~\ref{sec:phase-linearized-outer-parametrix}, and $\dot{\bfO}^{\mathrm{in}}(w)$ is the inner parametrix defined in terms of $\bfT(\xi;\tau)$ by \eqref{eq:CmatrixDefine} and \eqref{eq:inner-parametrix}.

\subsubsection{Error analysis}
\label{sec:AwayErrorAnalysis}
The error in approximating $\mathbf{O}(w)$ by its global parametrix $\dot{\mathbf{O}}(w)$ is measured by the matrix $\mathbf{E}(w):=\mathbf{O}(w)\dot{\mathbf{O}}(w)^{-1}$.  By construction it satisfies $\mathbf{E}(w)\to\mathbb{I}$ as $w\to\infty$ and it is analytic except (in general) along the union of jump contours of the two factors.  Because $(x,t)$ is close to the gradient catastrophe point, where the phase-linearized outer parametrix agrees with the outer parametrix described in the proof of Proposition~\ref{prop:BeforeCatastrophe} and as shown also in that proof approximates the jump conditions of $\mathbf{O}(w)$ with $\mathcal{O}(\epsilon)$ accuracy outside of $U$ and $U^*$, it is easy to see that under our assumptions on $(x,t)$ we have $\mathbf{E}_+(w)=\mathbf{E}_-(w)(\mathbb{I}+\mathcal{O}(\epsilon^{3/5}))$ on all jump contours for $\mathbf{E}(w)$ in the open upper half-plane outside of $U$ (and so by Schwarz symmetry the same is true in the open lower half-plane outside of $U^*$) because the error term is largest on $\beta$ where the estimate \eqref{eq:comparison-outside} holds and on $\beta^*$ where a Schwarz reflection of the same estimate holds, while on $\mathbb{R}_+$ we have $\mathbf{E}_+(w)=\sigma_2\mathbf{E}_-(w)\sigma_2$ exactly for $0<w<1-\delta$ and for $1+\delta<w$ and we have $\mathbf{E}_+(w)=\sigma_2\mathbf{E}_-(w)\sigma_2(\mathbb{I}+\mathcal{O}(\epsilon))$ for $1-\delta<w<1+\delta$.  The point is that everywhere on the jump contour that the original $g$-function led to jump matrix elements that were uniformly exponentially small, the same is true with the modified $g$-function by continuity of the $\epsilon$-independent factors in the exponents.  Since $\dot{\mathbf{O}}^\mathrm{in}(w)$ is an exact local solution of the jump conditions for $\mathbf{O}(w)$ within $U$, a Morera argument shows that $\mathbf{E}(w)$ is analytic within $U$ and $U^*$.  The remaining information we need about $\mathbf{E}(w)$ is the nature of its jump discontinuity across $\partial U$.  To compute this, we assume clockwise orientation for $\partial U$ and use the fact that $\mathbf{O}(w)$ has no jump across $\partial U$ to find
\begin{equation}
\mathbf{E}_+(w)=\mathbf{O}_+(w)\dot{\mathbf{O}}^\mathrm{out,l}(w)^{-1}=\mathbf{O}_-(w)\dot{\mathbf{O}}^\mathrm{out,l}(w)^{-1}=\mathbf{E}_-(w)\dot{\mathbf{O}}^\mathrm{in}(w)\dot{\mathbf{O}}^\mathrm{out}(w)^{-1},\quad w\in\partial U.
\end{equation}
Combining \eqref{eq:mismatch-simple}--\eqref{eq:mismatch-Y} with the estimates $\mathbf{C}(w)=\mathcal{O}(1)$, $\dot{\mathbf{O}}^{\mathrm{out,l}}(w)=\mathcal{O}(1)$, $\dot{\mathbf{O}}^\mathrm{out,l}(w)^{-1}=\mathcal{O}(1)$, $p(w;x,t)=1+\mathcal{O}(\epsilon^{4/5})$ ($p$ is defined in \eqref{eq:p-ratio}), and $Y_N(w)=1+\mathcal{O}(\epsilon)$ all of which hold uniformly in $w$ on $\partial U$ for $(x,t)$ values under consideration then shows that
\begin{equation}
\mathbf{E}_+(w)=\mathbf{E}_-(w)\mathbf{C}(w)\epsilon^{\sigma_3/10}\mathbf{T}(\xi;\tau)\mathbf{M}\xi^{-\sigma_3/4}\epsilon^{-\sigma_3/10}\mathbf{C}(w)^{-1}(\mathbb{I}+\mathcal{O}(\epsilon^{4/5})),\quad w\in\partial U,
\label{eq:away-jump-on-partial-U-1}
\end{equation}
where $\xi=\epsilon^{-2/5}W(w;x,t)$ and $\tau=\epsilon^{-4/5}s(x,t)$.  Since $\partial U$ is independent of $\epsilon$ and bounded away from $w=\alpha$ which maps to $W=0$ and hence $\xi=0$, we see that $\xi$ is uniformly large, of size proportional to $\epsilon^{-2/5}$ when $w\in\partial U$.  Therefore, we may use the asymptotic expansion \eqref{eq:T-expansion}, which in the present context takes the form
\begin{equation}
\mathbf{T}(\xi;\tau)\mathbf{M}\xi^{-\sigma_3/4}=\mathbb{I} + \frac{\epsilon^{2/5}}{W(w;x,t)}\begin{bmatrix}T_{1,11}(\tau) & T_{1,12}(\tau)\\-h(\tau) & T_{1,22}(\tau)\end{bmatrix} + \mathcal{O}(\epsilon^{4/5}),\quad w\in\partial U,
\end{equation}
which implies also that
\begin{equation}
\epsilon^{\sigma_3/10}\mathbf{T}(\xi;\tau)\mathbf{M}\xi^{-\sigma_3/4}\epsilon^{-\sigma_3/10}=
\mathbb{I}-\frac{\epsilon^{1/5}h(\tau)}{W(w;x,t)}\sigma_- + \mathcal{O}(\epsilon^{2/5}),\quad w\in\partial U.
\end{equation}
Finally, from \eqref{eq:away-jump-on-partial-U-1}, the jump of $\mathbf{E}(w)$ across $\partial U$ may be characterized as
\begin{equation}
\mathbf{E}_+(w)=\mathbf{E}_-(w)\left[\mathbb{I}-\frac{\epsilon^{1/5}h(\tau)}{W(w;x,t)}\mathbf{C}(w)\sigma_-\mathbf{C}(w)^{-1} + \mathcal{O}(\epsilon^{2/5})\right],\quad w\in\partial U
\label{eq:away-E-jump-partial-U-final}
\end{equation}
where the $\mathcal{O}(\epsilon^{2/5})$ error term is uniform on $\partial U$.  Invoking Schwarz symmetry of $\mathbf{E}(w)$ gives the corresponding jump on $\partial U^*$ (with corresponding counter-clockwise orientation):
\begin{equation}
\mathbf{E}_+(w)=\mathbf{E}_-(w)\left[\mathbb{I}+\frac{\epsilon^{1/5}h(\tau)^*}{W(w^*;x,t)^*}\mathbf{C}(w^*)^*\sigma_-\mathbf{C}(w^*)^{*-1}+\mathcal{O}(\epsilon^{2/5})\right],\quad w\in\partial U^*.
\label{eq:away-E-jump-partial-U-star-final}
\end{equation}

This is nearly a small-norm problem for $\mathbf{E}(w)$, with dominant contributions to the jump conditions coming from $\partial U$ and $\partial U^*$.  To make it a standard small-norm problem, it only remains to build in further symmetry to deal with the non-standard jump condition on $\mathbb{R}_+$ by defining
\begin{equation}
\mathbf{F}(z):=\begin{cases}\mathbf{E}(z^2),&\quad \mathrm{Im}\{z\}>0\\
\sigma_2\mathbf{E}(z^2)\sigma_2,&\quad\mathrm{Im}\{z\}<0.
\end{cases}
\label{eq:go-to-z-plane}
\end{equation}
Then $\mathbf{F}(z)$ has only jump conditions of the standard right-multiplication form: $\mathbf{F}_+(z)=\mathbf{F}_-(z)\mathbf{V}_\mathbf{F}(z)$, and the jump contour is compact, disjoint from $z=0$, and the jump matrix satisfies $\mathbf{V}_\mathbf{F}(z)=\mathbb{I}+\mathcal{O}(\epsilon^{3/5})$ uniformly except on four closed contours surrounding the two $z$-plane preimages of each of the points $w=\alpha$ and $w=\alpha^*$.  On these contours, according to \eqref{eq:away-E-jump-partial-U-final}--\eqref{eq:away-E-jump-partial-U-star-final} we have instead $\mathbf{V}_\mathbf{F}(z)=\mathbb{I}+\mathcal{O}(\epsilon^{1/5})$.  Since also $\mathbf{F}(z)\to\mathbb{I}$ as $z\to\infty$ it follows from standard small-norm theory that $\mathbf{F}(z)-\mathbb{I}$ takes boundary values on its jump contour that are $\mathcal{O}(\epsilon^{1/5})$ in the $L^2$ sense.  Therefore, if $\Sigma_\mathbf{F}$ denotes the jump contour for $\mathbf{F}$, by the Plemelj formula it must hold that
\begin{equation}
\begin{split}
\mathbf{F}(z)&=\mathbb{I} + \frac{1}{2\pi\ii}\int_{\Sigma_\mathbf{F}}\frac{\mathbf{F}_-(s)(\mathbf{V}_\mathbf{F}(s)-\mathbb{I})}{s-z}\,\dd s \\
&= \mathbb{I}+\frac{1}{2\pi\ii}\int_{\Sigma_\mathbf{F}}\frac{\mathbf{V}_\mathbf{F}(s)-\mathbb{I}}{s-z}\,\dd s + \frac{1}{2\pi\ii}\int_{\Sigma_\mathbf{F}}\frac{(\mathbf{F}_-(s)-\mathbb{I})(\mathbf{V}_\mathbf{F}(s)-\mathbb{I})}{s-z}\,\dd s
\end{split}
\end{equation} 
for any $z$ disjoint from $\Sigma_\mathbf{F}$.  In particular, since $\Sigma_\mathbf{F}$ does not contain the origin, we have
\begin{equation}
\mathbf{F}(0)=\mathbb{I}+\frac{1}{2\pi\ii}\int_{\Sigma_\mathbf{F}}(\mathbf{V}_\mathbf{F}(s)-\mathbb{I})\frac{\dd s}{s} +\frac{1}{2\pi\ii}\int_{\Sigma_\mathbf{F}}(\mathbf{F}_-(s)-\mathbb{I})(\mathbf{V}_\mathbf{F}(s)-\mathbb{I})\frac{\dd s}{s}.
\end{equation}
Since $\mathbf{F}_--\mathbb{I}=\mathcal{O}(\epsilon^{1/5})$ in $L^2(\Sigma_\mathbf{F})$, $s^{-1}\in L^2(\Sigma_\mathbf{F})$, and $\mathbf{V}_\mathbf{F}(s)-\mathbb{I}=\mathcal{O}(\epsilon^{1/5})$ in $L^\infty(\Sigma_\mathbf{F})$, by Cauchy-Schwarz it follows that
\begin{equation}
\mathbf{F}(0)=\mathbb{I}+\frac{1}{2\pi\ii}\int_{\Sigma_\mathbf{F}}(\mathbf{V}_\mathbf{F}(s)-\mathbb{I})\frac{\dd s}{s} + \mathcal{O}(\epsilon^{2/5}).
\end{equation}
The contour $\Sigma_\mathbf{F}$ can be replaced with the union of $z$-plane preimages of $\partial U$ and $\partial U^*$, and also $\mathbf{V}_\mathbf{F}(s)-\mathbb{I}$ can be replaced with its leading contribution explicitly proportional to $\epsilon^{1/5}$, both at the cost of an error that can be absorbed into the $\mathcal{O}(\epsilon^{2/5})$ term already present.  Thus, we have
\begin{multline}
\mathbf{F}(0)=\mathbb{I} +\frac{\epsilon^{1/5}h(\tau)}{2\pi\ii}\oint_{\ii\sqrt{-\alpha}}\frac{\mathbf{C}(s^2)\sigma_-\mathbf{C}(s^2)^{-1}}{sW(s^2;x,t)}\dd s + 
\frac{\epsilon^{1/5}h(\tau)^*}{2\pi\ii}\oint_{\ii\sqrt{-\alpha^*}}\frac{\mathbf{C}(s^{*2})^*\sigma_-\mathbf{C}(s^{*2})^{*-1}}{sW(s^{*2};x,t)^*}\,\dd s\\
{}+\frac{\epsilon^{1/5}h(\tau)}{2\pi\ii}\oint_{-\ii\sqrt{-\alpha}}\frac{\sigma_2\mathbf{C}(s^2)\sigma_-\mathbf{C}(s^2)^{-1}\sigma_2}{sW(s^2;x,t)}\,\dd s\\
{}+\frac{\epsilon^{1/5}h(\tau)^*}{2\pi\ii}\oint_{-\ii\sqrt{-\alpha^*}}\frac{\sigma_2\mathbf{C}(s^{*2})^*\sigma_-\mathbf{C}(s^{*2})^{*-1}\sigma_2}{sW(s^{*2};x,t)^*}\,\dd s + \mathcal{O}(\epsilon^{2/5}),
\end{multline}
where in each term the integral is (now) a positively-oriented loop surrounding the indicated point. 
Making the substitution $s=\ii\sqrt{-w}$ in the first two integrals, and $s=-\ii\sqrt{-w}$ in the second two integrals yields
\begin{multline}
\mathbf{F}(0)=\mathbb{I}+\frac{\epsilon^{1/5}h(\tau)}{4\pi\ii}\oint_{\alpha}\frac{\mathbf{C}(w)\sigma_-\mathbf{C}(w)^{-1}}{wW(w;x,t)}\,\dd w + \frac{\epsilon^{1/5}h(\tau)^*}{4\pi\ii}\oint_{\alpha^*}\frac{\mathbf{C}(w^*)^*\sigma_-\mathbf{C}(w^*)^{*-1}}{wW(w^*;x,t)^*}\,\dd w\\
{}+\frac{\epsilon^{1/5}h(\tau)}{4\pi\ii}\oint_\alpha\frac{\sigma_2\mathbf{C}(w)\sigma_-\mathbf{C}(w)^{-1}\sigma_2}{wW(w;x,t)}\,\dd w\\
{}+\frac{\epsilon^{1/5}h(\tau)^*}{4\pi\ii}\oint_{\alpha^*}\frac{\sigma_2\mathbf{C}(w^*)^*\sigma_-\mathbf{C}(w^*)^{*-1}\sigma_2}{wW(w^*;x,t)^*}\,\dd w + \mathcal{O}(\epsilon^{2/5}).
\end{multline} 
Since $\mathbf{C}(w)$ is analytic with unit determinant within the contour of integration about $w=\alpha$, which does not also contain $w=0$, while $W(w;x,t)$ has a simple zero at $w=\alpha$ all four explicit integrals can be evaluated by residues:
\begin{multline}
\mathbf{F}(0)=\mathbb{I}+\frac{\epsilon^{1/5}h(\tau)}{2\alpha W'(\alpha;x,t)}\mathbf{C}(\alpha)\sigma_-\mathbf{C}(\alpha)^{-1} + \frac{\epsilon^{1/5}h(\tau)^*}{2\alpha^*W'(\alpha;x,t)^*}\mathbf{C}(\alpha)^*\sigma_-\mathbf{C}(\alpha)^{*-1}\\
{}+\frac{\epsilon^{1/5}h(\tau)}{2\alpha W'(\alpha;x,t)}\sigma_2\mathbf{C}(\alpha)\sigma_-\mathbf{C}(\alpha)^{-1}\sigma_2+\frac{\epsilon^{1/5}h(\tau)^*}{2\alpha^*W'(\alpha;x,t)^*}\sigma_2\mathbf{C}(\alpha)^*\sigma_-\mathbf{C}(\alpha)^{*-1}\sigma_2 +\mathcal{O}(\epsilon^{2/5}).
\end{multline}
Noting that the first and last two terms proportional to $\epsilon^{1/5}$ are complex conjugates, this can also be written as
\begin{equation}
\begin{split}
\mathbf{F}(0)&=\mathbb{I}+\epsilon^{1/5}\mathrm{Re}\left\{\frac{h(\tau)}{\alpha W'(\alpha;x,t)}\left(\mathbf{C}(\alpha)\sigma_-\mathbf{C}(\alpha)^{-1}+\sigma_2\mathbf{C}(\alpha)\sigma_-\mathbf{C}(\alpha)^{-1}\sigma_2\right)\right\}+\mathcal{O}(\epsilon^{2/5})\\
&=\mathbb{I}+\epsilon^{1/5}\mathrm{Re}\left\{\frac{h(\tau)(C_{12}(\alpha)^2+C_{22}(\alpha)^2)}{\alpha W'(\alpha;x,t)}\right\}(\sigma_--\sigma_+) + \mathcal{O}(\epsilon^{2/5}),
\end{split}
\end{equation}
where on the second line we used $\det(\mathbf{C}(w))=1$.  Since $z=0$ if and only if $w=0$, and $\mathbf{E}(0)=\sigma_2\mathbf{E}(0)\sigma_2$ by continuity at $w=0$ (see also \cite{BuckinghamMiller2013}), we have $\mathbf{E}(0)=\mathbf{F}(0)$ given as above.  

\subsubsection{Asymptotic formul\ae\ for $\cos(\tfrac{1}{2}u_N(x,t))$ and $\sin(\tfrac{1}{2}u_N(x,t))$}
It only remains to extract the potentials $\cos(\tfrac{1}{2}u_N(x,t))$ and $\sin(\tfrac{1}{2}u_N(x,t))$, which by definition are obtained from the first column of the matrix $\mathbf{H}(w)$:
\begin{equation}
\cos(\tfrac{1}{2}u_N(x,t))=H_{11}(0)\quad\text{and}\quad\sin(\tfrac{1}{2}u_N(x,t))=H_{21}(0).
\end{equation}
But $\mathbf{H}(0)=\mathbf{M}(0)=\mathbf{N}(0)$ because none of the changes of variable involve the neighborhood of $w=0$.  Since $g(0)=0$ (because $g(w)$ is continuous at $w=0$ and $g_+(w)+g_-(w)=0$ for $w>0$) it then follows that also $\mathbf{N}(0)=\mathbf{O}(0)$.  The last step is to bring in the outer parametrix and the error matrix via $\mathbf{O}(0)=\mathbf{E}(0)\dot{\mathbf{O}}(0)=\mathbf{E}(0)\dot{\mathbf{O}}^\mathrm{out,l}(0)$.  Thus,
\begin{equation}
\cos(\tfrac{1}{2}u_N(x,t))=(\mathbf{E}(0)\dot{\mathbf{O}}^\mathrm{out,l}(0))_{11}\quad\text{and}\quad
\sin(\tfrac{1}{2}u_N(x,t))=(\mathbf{E}(0)\dot{\mathbf{O}}^\mathrm{out,l}(0))_{21}.
\label{eq:AwayFormulaeForCosSin}
\end{equation}
Using Proposition~\ref{prop:PropertiesOfY}, the phase-linearized outer parametrix at the origin can be written in terms of Jacobi elliptic functions and the complete elliptic integral of the first kind $\KK(m_\mathrm{gc})$ as follows:
\begin{equation}
\dot{\mathbf{O}}^\mathrm{out,l}(0)=\begin{bmatrix}\displaystyle\mathrm{dn}\left(\frac{2\KK(m_\mathrm{gc})}{\pi\epsilon}\Phi^l(t);m_\mathrm{gc}\right) & \displaystyle \sqrt{m_\mathrm{gc}}\,\mathrm{sn}\left(\frac{2\KK(m_\mathrm{gc})}{\pi\epsilon}\Phi^l(t);m_\mathrm{gc}\right)\\
\displaystyle -\sqrt{m_\mathrm{gc}}\,\mathrm{sn}\left(\frac{2\KK(m_\mathrm{gc})}{\pi\epsilon}\Phi^l(t);m_\mathrm{gc}\right) & \displaystyle 
\mathrm{dn}\left(\frac{2\KK(m_\mathrm{gc})}{\pi\epsilon}\Phi^l(t);m_\mathrm{gc}\right)\end{bmatrix}=\begin{bmatrix}\dot{C}(t) &-\dot{S}(t)\\\dot{S}(t) & \dot{C}(t)\end{bmatrix}
\label{eq:PhaseLinearizedOuterParametrix-at-zero}
\end{equation}
where $\Phi^l(t)$ is the linearization of $\Phi(x,t)$ defined in \eqref{eq:Phi-l}, $m_\mathrm{gc}\in (0,1)$ is defined in \eqref{eq:mgc-alphagc}, and the second equality is simply a comparison with the definitions \eqref{eq:C-dot-define}--\eqref{eq:S-dot-define}.  Therefore,
\begin{equation}
\begin{split}
\cos(\tfrac{1}{2}u_N(x,t))&=\dot{C}(t)-
\epsilon^{1/5}\mathrm{Re}\left\{\frac{h(\tau)(C_{12}(\alpha(x,t))^2+C_{22}(\alpha(x,t))^2)}{\alpha(x,t) W'(\alpha(x,t);x,t)}\right\}\dot{S}(t) + \mathcal{O}(\epsilon^{2/5}),\\
\sin(\tfrac{1}{2}u_N(x,t))&=\dot{S}(t)+
\epsilon^{1/5}\mathrm{Re}\left\{\frac{h(\tau)(C_{12}(\alpha(x,t))^2+C_{22}(\alpha(x,t))^2)}{\alpha(x,t) W'(\alpha(x,t);x,t)}\right\}
\dot{C}(t)+\mathcal{O}(\epsilon^{2/5}).
\end{split}
\label{eq:cos-sin-before-argument-linearization}
\end{equation}
We further simplify by first noting that the assumption $(x,t)=(0,t_\mathrm{gc})+\mathcal{O}(\epsilon^{4/5})$ implies
\begin{equation}
\frac{C_{12}(\alpha(x,t))^2+C_{22}(\alpha(x,t))^2}{\alpha(x,t)W'(\alpha;x,t);x,t)}=
\frac{C_{12}(\alpha_\mathrm{gc})^2+C_{22}(\alpha_\mathrm{gc})^2}{\alpha_\mathrm{gc}W_\mathrm{gc}'(\alpha_\mathrm{gc})}+\mathcal{O}(\epsilon^{4/5}).
\end{equation}
Similarly, 
\begin{equation}
\begin{split}
\tau &=\epsilon^{-4/5}s(x,t)\\
&=\epsilon^{-4/5}\left(\frac{\partial s}{\partial x}(0,t_\mathrm{gc})x + \frac{\partial s}{\partial t}(0,t_\mathrm{gc})(t-t_\mathrm{gc}) + \mathcal{O}(x^2+(t-t_\mathrm{gc})^2)\right)\\
&=\epsilon^{-4/5}s^l(x,t)+ \mathcal{O}(\epsilon^{4/5}),
\end{split}
\end{equation}
where, using Lemma~\ref{lemma:PartialsOfs},
$s^l(x,t)$ is the linear function
\begin{equation}
s^l(x,t):=\ii a x + b(t-t_\mathrm{gc}).
\end{equation}
Therefore, since all derivatives of the function $h(\tau)$ are uniformly bounded due to the assumption bounding the image of $(x,t)$ in the $\tau$-plane away from all poles of $y(\tau)$, 
\begin{equation}
h(\tau)=h\left(\frac{s^l(x,t)}{\epsilon^{4/5}}\right) + \mathcal{O}(\epsilon^{4/5}).
\end{equation}
These observations allow us to rewrite \eqref{eq:cos-sin-before-argument-linearization} in the form
\begin{equation}
\begin{split}
\cos(\tfrac{1}{2}u_N(x,t))&=\dot{C}(t)-
\epsilon^{1/5}\mathrm{Re}\left\{h\left(\frac{s^l(x,t)}{\epsilon^{4/5}}\right)\frac{C_{12}(\alpha_\mathrm{gc}))^2+C_{22}(\alpha_\mathrm{gc})^2}{\alpha_\mathrm{gc} W_\mathrm{gc}'(\alpha_\mathrm{gc})}\right\}\dot{S}(t)+\mathcal{O}(\epsilon^{2/5})\\
\sin(\tfrac{1}{2}u_N(x,t))&=\dot{S}(t)+\epsilon^{1/5}\mathrm{Re}\left\{h\left(\frac{s^l(x,t)}{\epsilon^{4/5}}\right)\frac{C_{12}(\alpha_\mathrm{gc})^2+C_{22}(\alpha_\mathrm{gc})^2}{\alpha_\mathrm{gc} W_\mathrm{gc}'(\alpha_\mathrm{gc})}\right\}
\dot{C}(t)+\mathcal{O}(\epsilon^{2/5}).
\end{split}
\label{eq:cos-sin-after-argument-linearization}
\end{equation}
Recall that the condition that $\tau$ is bounded away from any poles of $y(\tau)$ is guaranteed by the condition that $y(\epsilon^{-4/5}(iax+b(t-t_\mathrm{gc})))$ is bounded.
Just one more ingredient is needed to complete the proof of Theorem~\ref{thm:AwayFromPoles}:
\begin{lemma}
\begin{equation}
\frac{C_{12}(\alpha_\mathrm{gc})^2+C_{22}(\alpha_\mathrm{gc})^2}{\alpha_\mathrm{gc}W'_\mathrm{gc}(\alpha_\mathrm{gc})}=\frac{2}{|W'_\mathrm{gc}(\alpha_\mathrm{gc})|^{1/2}}\left(\frac{m_\mathrm{gc}}{1-m_\mathrm{gc}}\right)^{1/4}
\mathrm{cn}\left(\frac{2\KK(m_\mathrm{gc})}{\pi\epsilon}\Phi^l(t);m_\mathrm{gc}\right).
\label{eq:real-thing-IV}
\end{equation}
\label{lemma:C-real}
\end{lemma}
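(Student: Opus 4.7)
The plan is to evaluate $\bfC(\alpha_\gc)$ by carefully taking the limit in \eqref{eq:CmatrixDefine}, exploiting the fact that $\bfC$ is analytic throughout $U$. I would work in a specific sector (say region $\mathrm{III}$), in which \eqref{eq:Odot-Otilde} reads $\widetilde{\bfO}^{\mathrm{out,l}}(w)=\dot\bfO^{\mathrm{out,l}}(w)\ee^{-\ii\Phi^l(t)\sigma_3/(2\epsilon)}$. Although $W_\gc(\alpha_\gc)=0$, the endpoint factor $\bfM W_\gc(w)^{-\sigma_3/4}$ in \eqref{eq:CmatrixDefine} is precisely the one that cancels the algebraic $(w-\alpha_\gc)^{-1/4}$ singularity of $\dot\bfO^{\mathrm{out,l}}(w)=\bfY(w;\beta_\gc,\epsilon^{-1}\Phi^l(t))$ at $\alpha_\gc$, producing a finite limit.

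The first step is to invoke the explicit endpoint asymptotics of $\bfY(w;\beta_\gc,\nu)$ as furnished by the genus-one theta-function construction of Appendix~\ref{app:theta} (cf.\ Proposition~\ref{prop:PropertiesOfY}). Near $\alpha_\gc$, $\bfY(w)$ factors as an analytic prefactor times a local ``Szeg\H{o}-type'' model solution carrying the explicit $(w-\alpha_\gc)^{-\sigma_3/4}$ singular behavior; combining this with $W_\gc(w)=W_\gc'(\alpha_\gc)(w-\alpha_\gc)+\mathcal{O}((w-\alpha_\gc)^2)$, I would extract the second column of $\bfC(\alpha_\gc)$ in the form $W_\gc'(\alpha_\gc)^{1/4}\ee^{\ii\Phi^l(t)/(2\epsilon)}\mathbf{v}$, where $\mathbf{v}$ is a vector whose entries are ratios of genus-one theta functions depending on $\theta=\arg(\alpha_\gc)$ and on the phase $\epsilon^{-1}\Phi^l(t)$.

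Next I would square and sum the components to form $C_{12}(\alpha_\gc)^2+C_{22}(\alpha_\gc)^2$ and divide by $\alpha_\gc W_\gc'(\alpha_\gc)$. Using $\alpha_\gc=\ee^{\ii\theta}$ and $\arg(W_\gc'(\alpha_\gc))=\tfrac12\pi-\theta$ from \eqref{eq:arg-Wprime-gc}, the combined phase factor reduces $1/(\alpha_\gc W_\gc'(\alpha_\gc)^{1/2})$ to $-\ii|W_\gc'(\alpha_\gc)|^{-1/2}$ up to a fixed branch choice. After this explicit bookkeeping, the magnitude factor $|W_\gc'(\alpha_\gc)|^{-1/2}$ matches the one on the right-hand side of \eqref{eq:real-thing-IV}, so the lemma reduces to the purely theta-function identity that $\ee^{\ii\Phi^l(t)/\epsilon}(v_1^2+v_2^2)=2\ii(m_\gc/(1-m_\gc))^{1/4}\,\mathrm{cn}(2\KK(m_\gc)\Phi^l(t)/(\pi\epsilon);m_\gc)$ holds under the identification $m_\gc=\sin^2(\theta/2)$ from \eqref{eq:mgc-alphagc}.

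The main obstacle is this final theta-function identity. It is analogous to, but not implied by, the identities used in Proposition~\ref{prop:PropertiesOfY} to produce the $\mathrm{dn}$ and $\sqrt{m_\gc}\,\mathrm{sn}$ entries appearing in \eqref{eq:PhaseLinearizedOuterParametrix-at-zero}: the new ingredient is the specific quadratic combination of theta values at the branch point $\alpha_\gc$ that yields Jacobi's $\mathrm{cn}$. Establishing it is best carried out in the appendix, where all the theta-function apparatus is already deployed; indeed, the remark at the end of Appendix~\ref{sec:proof-lemma:C-real} signals that the reality (and positivity) of the answer --- i.e.\ $M>0$ --- can only be established indirectly, confirming that the heart of the lemma is a genuinely nontrivial theta-function computation rather than a formal manipulation.
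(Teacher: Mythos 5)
Your proposal is correct and takes essentially the same approach as the paper: express $\mathbf{C}(w)$ near $w=\alpha_\mathrm{gc}$ in sectors $\mathrm{III}\cup\mathrm{IV}$ using the theta-function formulas for $\mathbf{Y}(w;\beta_\mathrm{gc},\epsilon^{-1}\Phi^l(t))$ from Appendix~\ref{app:theta}, let the vanishing of $\Theta(A(w)+\mathcal{K})$ and $q(w)$ cancel against $W_\mathrm{gc}(w)^{-\sigma_3/4}$ to produce a finite limit, then square and sum and reduce by Jacobi/Watson-type theta identities to the stated $\mathrm{cn}$. The one point worth noting is that your description of the second column of $\mathbf{C}(\alpha_\mathrm{gc})$ as carrying a single overall phase $\ee^{\ii\Phi^l(t)/(2\epsilon)}$ is imprecise (each of $C_{12},C_{22}$ mixes $\ee^{\pm\ii\Phi^l(t)/(2\epsilon)}$ through the $\mathbf{M}$ factor, and the net $\ee^{\ii\Phi^l(t)/\epsilon}$ emerges only after showing all four cross-terms in the squared sum share a common limiting value), but the overall strategy—including the indirect reality argument coming from evenness of $u_N$ in $x$—matches the paper's proof.
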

The proof of this result is technical, relying on numerous identities involving Riemann theta functions of genus $1$, and will be given in Appendix~\ref{sec:proof-lemma:C-real}.

Using Lemma~\ref{lemma:C-real} in \eqref{eq:cos-sin-after-argument-linearization} 
completes the proof of Theorem~\ref{thm:AwayFromPoles}, in which the constant $\sigma>0$ is given by
\begin{equation}
\sigma:=|W'_\mathrm{gc}(\alpha_\mathrm{gc})|^{1/2}.
\label{eq:SIGMA}
\end{equation}

\section{Proof of Theorem~\ref{thm:NearThePoles}}
\label{sec:NearThePoles}
All of the analysis of Section~\ref{sec:AwayFromPoles} applies, up to the introduction of the matrix $\mathbf{T}(\xi;\tau)$ solving Riemann--Hilbert Problem~\ref{rhp:tritronquee}.  The new ingredients required to prove Theorem~\ref{thm:NearThePoles} all relate to the fact that $\mathbf{T}(\xi;\tau)$ fails to exist whenever $\tau$ is a pole $\tau_\mathrm{p}$ of the real tritronqu\'ee solution $y(\tau)$ of the Painlev\'e-I equation $y''(\tau)=6y(\tau)^2+\tau$, and is unbounded in every deleted neighborhood of $\tau_\mathrm{p}$.  While we can still make use of the phase-linearized outer parametrix $\dot{\mathbf{O}}^\mathrm{out,l}(w)$, to prove the theorem we will need to replace the inner parametrix with one based on some modification of $\mathbf{T}(\xi;\tau)$ that makes sense when $\tau$ is near $\tau_\mathrm{p}$.  Making this replacement means that we will then need to account for the difference when we study the analogue of the error matrix $\mathbf{E}(w)$.  It will turn out that $\mathbf{E}(w)$ can no longer be studied by small norm theory alone, but first it will be necessary to build a \emph{parametrix for the error}.  The leading terms in the solution of the sine-Gordon equation will then be extracted from that parametrix.

\subsection{Modifying the inner parametrix}
Let $\tau_\mathrm{p}$ be one of the infinitely-many poles of the real tritronqu\'ee solution $y(\tau)$ of the Painlev\'e-I equation, all of which are known \cite{CostinHuangTanveer2014} to lie in the sector $|\arg(\tau)|<\tfrac{1}{5}\pi$.  In formulating a replacement for $\mathbf{T}(\xi;\tau)$ for $\tau$ near $\tau_\mathrm{p}$ we wish to maintain the analyticity and jump conditions of Riemann--Hilbert Problem~\ref{rhp:tritronquee} because these are the properties that lead to the inner parametrix being an exact local solution of the Riemann--Hilbert problem for $\mathbf{O}(w)$ for $w\in U$.  However, as we will see, the normalization condition is less essential.  

To this end, we seek a solution $\widehat{\mathbf{T}}(\xi;\tau)$ of the following Riemann--Hilbert problem.
\begin{rhp}[Regularization of $\bfT(\xi;\tau)$]
Given $\tau\in\mathbb{C}$, seek a $2\times 2$ matrix function $\widehat{\mathbf{T}}=\widehat{\mathbf{T}}(\xi;\tau)$ defined in the four sectors $0<\arg(\xi)<\tfrac{2}{5}\pi$, $-\tfrac{2}{5}\pi<\arg(\xi)<0$, $\tfrac{2}{5}\pi<\arg(\xi)<\pi$, and $-\pi<\arg(\xi)<-\tfrac{2}{5}\pi$, that satisfies the following additional conditions:
\begin{itemize}
\item[]\textbf{Analyticity:}  $\widehat{\mathbf{T}}(\xi;\tau)$ is analytic for $\xi$ in each sector of definition, taking continuous boundary values from each sector on the union of rays forming its boundary.
\item[]\textbf{Jump conditions:}  Let each excluded ray be oriented in the direction away from the origin and denote the boundary value taken by $\widehat{\bfT}(\xi;\tau)$ on the ray from the left (resp., right) by $\widehat{\bfT}_+(\xi;\tau)$ (resp., $\widehat{\bfT}_-(\xi;\tau)$). Then the boundary values satisfy $\widehat{\mathbf{T}}_+(\xi;\tau)=\widehat{\mathbf{T}}_-(\xi;\tau)\mathbf{V}_\mathbf{T}(\xi;\tau)$ where the matrix $\mathbf{V}_\mathbf{T}(\xi;\tau)$ is defined on each ray as shown in Figure~\ref{fig:PI-Standard-Tritronquee}.
\item[]\textbf{Normalization:}  The matrix $\widehat{\mathbf{T}}(\xi;\tau)$ satisfies the normalization condition
\begin{equation}
\lim_{\xi\to\infty}\widehat{\mathbf{T}}(\xi;\tau)\mathbf{M}\xi^{3\sigma_3/4}=\mathbb{I}.
\label{eq:Tilde-T-norm}
\end{equation}
\end{itemize}
\label{rhp:Tilde-T}
\end{rhp}
Comparing with Riemann--Hilbert Problem~\ref{rhp:tritronquee}, the only difference is in the normalization condition \eqref{eq:Tilde-T-norm} which replaces the diagonal matrix $\xi^{-\sigma_3/4}$ with $\xi^{3\sigma_3/4}$.

\begin{lemma}
Let $\tau_\mathrm{p}$ be a pole of the real Painlev\'e-I tritronqu\'ee function $y(\tau)$.  Then there is a neighborhood $N$ of $\tau_\mathrm{p}$ such that Riemann--Hilbert Problem~\ref{rhp:Tilde-T} has a unique solution $\widehat{\mathbf{T}}(\xi;\tau)$ that is an analytic function of $\tau\in N$. 
\label{lemma:Ttilde}
\end{lemma}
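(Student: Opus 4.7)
The plan is to construct $\widehat{\mathbf{T}}(\xi;\tau)$ as a Schlesinger--Darboux transformation of $\mathbf{T}(\xi;\tau)$, seeking it in the form $\widehat{\mathbf{T}}(\xi;\tau) = \mathbf{L}(\xi;\tau)\mathbf{T}(\xi;\tau)$ where $\mathbf{L}(\xi;\tau) = \xi\mathbf{L}_1 + \mathbf{L}_0(\tau)$ is a matrix polynomial of degree one in $\xi$. Since $\mathbf{L}(\xi;\tau)$ is entire in $\xi$, left multiplication by it does not alter the jump discontinuities, so that the analyticity and jump conditions of Riemann--Hilbert Problem~\ref{rhp:Tilde-T} are automatically inherited from those of Riemann--Hilbert Problem~\ref{rhp:tritronquee}; only the normalization \eqref{eq:Tilde-T-norm} and analyticity at $\tau=\tau_\mathrm{p}$ require verification.

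First I would determine $\mathbf{L}_1$ and $\mathbf{L}_0(\tau)$ by matching the first three orders in the asymptotic expansion of $(\xi\mathbf{L}_1 + \mathbf{L}_0(\tau))\mathbf{T}(\xi;\tau)\mathbf{M}\xi^{3\sigma_3/4}$ as $\xi\to\infty$ against the identity matrix. Using \eqref{eq:T-expansion} and \eqref{eq:hyz}, a direct computation forces
\begin{equation*}
\mathbf{L}_1 = \begin{pmatrix} 0 & 0 \\ 0 & 1 \end{pmatrix}, \qquad \mathbf{L}_0(\tau) = \begin{pmatrix} 0 & -1/h(\tau) \\ h(\tau) & T_{1,11}(\tau) + T_{2,21}(\tau)/h(\tau) \end{pmatrix},
\end{equation*}
and one verifies at once that $\det\mathbf{L}(\xi;\tau)\equiv 1$, so that also $\det\widehat{\mathbf{T}}(\xi;\tau)\equiv 1$. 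Appealing to the Laurent expansions collected in Section~\ref{sec:series}, the second row of $\mathbf{L}_0(\tau)$ carries simple poles at $\tau=\tau_\mathrm{p}$ (the entry $h$ has residue $-1$, while $T_{1,11}=(y-h^2)/2$ has residue $h_0$ and $T_{2,21}/h$ is analytic there), reflecting the failure of Riemann--Hilbert Problem~\ref{rhp:tritronquee} at $\tau_\mathrm{p}$.

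The main obstacle is showing that the product $\widehat{\mathbf{T}}(\xi;\tau) = \mathbf{L}(\xi;\tau)\mathbf{T}(\xi;\tau)$ extends analytically across $\tau=\tau_\mathrm{p}$ despite the pole singularities of both factors there. For $\tau$ in a punctured neighborhood of $\tau_\mathrm{p}$, $\mathbf{T}(\xi;\tau)$ has itself a simple pole in $\tau$ whose principal part---computed from the $\tau$-equation of the Lax pair \eqref{eq:PI-Lax-pair} with $\mathbf{U}$ given by \eqref{eq:PI-U-matrix}---lies in a rank-one matrix subspace determined by the residue of $\mathbf{U}$ at $\tau_\mathrm{p}$. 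I would then verify directly, by substituting the Laurent series of $h,y,z,T_{1,11},T_{1,12},T_{2,21}$ from Section~\ref{sec:series} into both $\mathbf{L}_0(\tau)$ and the principal part of $\mathbf{T}(\xi;\tau)$, that the singular contributions in $\mathbf{L}(\xi;\tau)\mathbf{T}(\xi;\tau)$ cancel identically. By Riemann's removable singularity theorem, $\widehat{\mathbf{T}}$ then extends to an analytic function of $\tau$ on some open neighborhood $N$ of $\tau_\mathrm{p}$, and both the normalization \eqref{eq:Tilde-T-norm} and the jump relations hold at $\tau=\tau_\mathrm{p}$ by continuity of the jump matrices $\mathbf{V}_\mathbf{T}(\xi;\tau)$ in $\tau$.

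Uniqueness is then immediate: if $\widehat{\mathbf{T}}_1$ and $\widehat{\mathbf{T}}_2$ were two solutions, the ratio $\mathbf{R}(\xi):=\widehat{\mathbf{T}}_1(\xi;\tau)\widehat{\mathbf{T}}_2(\xi;\tau)^{-1}$ would be entire in $\xi$ (its jumps across the rays cancel), and the normalization \eqref{eq:Tilde-T-norm} together with $\det\widehat{\mathbf{T}}_j\equiv 1$ gives $\mathbf{R}(\xi) = \mathbb{I} + \mathcal{O}(\xi^{-1})$ as $\xi\to\infty$, so $\mathbf{R}\equiv\mathbb{I}$ by Liouville's theorem.
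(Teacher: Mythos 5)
Your proposal follows the paper's route quite closely: both seek $\widehat{\mathbf{T}}(\xi;\tau)$ as a linear Schlesinger gauge transformation of $\mathbf{T}(\xi;\tau)$, and matching the normalization at $\xi=\infty$ forces exactly the same gauge matrix (in the paper's notation $\mathbf{S}_1=\mathrm{diag}(0,1)$ and $\mathbf{S}_0$, which agrees with your $\mathbf{L}_1$, $\mathbf{L}_0$ after the substitutions $T_{1,21}=-h$ and $T_{1,11}=\tfrac{1}{2}(y-h^2)$). Your determinant check and your Liouville-type uniqueness argument are also the standard ones and match the paper.

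The one place where you diverge is the crux — showing that the product $\mathbf{S}(\xi;\tau)\mathbf{T}(\xi;\tau)$ extends analytically across $\tau_\mathrm{p}$. You propose to compute the principal part of $\mathbf{T}(\xi;\tau)$ in $\tau$ (using the residue of $\mathbf{U}$ from \eqref{eq:PI-U-matrix}) and then verify cancellation against the singular second row of $\mathbf{L}_0(\tau)$ by substituting Laurent series. This is workable but genuinely more delicate than it looks: the residue of $\mathbf{U}$ at $\tau_\mathrm{p}$ has eigenvalues $\pm 1$, so the linear ODE $\partial_\tau\mathbf{L}=\mathbf{U}\mathbf{L}$ has a resonant regular singular point, and controlling the $\xi$-dependence of the principal part of $\mathbf{T}(\xi;\tau)$ requires some care; it is not simply determined by one Laurent coefficient. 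The paper avoids this entirely by an indirect argument: it forms the transformed Lax pair coefficient $\widehat{\mathbf{A}}(\xi;\tau)$ via \eqref{eq:tilde-coefficients}, which is an explicit cubic polynomial in $\xi$ with coefficients built only from $h$, $y$, $z$, $T_{2,21}$ (all of whose Laurent expansions at $\tau_\mathrm{p}$ are written down in Section~\ref{sec:series}), checks that this $\widehat{\mathbf{A}}$ is regular in $\tau$ at $\tau_\mathrm{p}$, and then invokes the direct monodromy construction to conclude that the Riemann--Hilbert problem has a solution analytic in $\tau$ near $\tau_\mathrm{p}$. The advantage is that one never has to characterize the $\tau$-singularity of the implicitly-defined matrix $\mathbf{T}(\xi;\tau)$; all the Laurent manipulation happens at the level of the explicit polynomial coefficient matrices. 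Your plan can be made rigorous, but the paper's detour through $\widehat{\mathbf{A}}$ is the cleaner way to close the gap you identify as "the main obstacle," and I would recommend adopting it unless you want to carry out the more hands-on residue calculation for $\mathbf{T}$ itself.
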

\begin{proof}
Let $N$ be a neighborhood of $\tau_\mathrm{p}$ containing only the one pole.  Then $\mathbf{T}(\xi;\tau)$ exists for $\tau\in N\setminus\{\tau_\mathrm{p}\}$ satisfying all of the conditions of Riemann--Hilbert Problem~\ref{rhp:tritronquee}, and we consider trying to build $\widehat{\mathbf{T}}(\xi;\tau)$ for such $\tau$ from $\mathbf{T}(\xi;\tau)$ by a Schlesinger gauge transformation (or Darboux transformation):
\begin{equation}
\widehat{\mathbf{T}}(\xi;\tau)=\mathbf{S}(\xi;\tau)\mathbf{T}(\xi;\tau),\quad\tau\in N\setminus\{\tau_\mathrm{p}\},\quad
\mathbf{S}(\xi;\tau):=\mathbf{S}_1(\tau)\xi+\mathbf{S}_0(\tau),
\label{eq:Schlesinger}
\end{equation}
where $\mathbf{S}_1(\tau)$ and $\mathbf{S}_0(\tau)$ are matrix functions to be determined.  We observe that the choice of the gauge matrix $\mathbf{S}(\xi;\tau)$ as a linear function of $\xi$ implies that the analyticity and jump conditions for $\widehat{\mathbf{T}}(\xi;\tau)$ are automatically satisfied, so it only remains to impose the normalization condition \eqref{eq:Tilde-T-norm} which will determine $\mathbf{S}_1(\tau)$ and $\mathbf{S}_0(\tau)$ in terms of the expansion coefficients $\mathbf{T}_j(\tau)$, $j=1,2,3,\dots$.  Indeed, assuming \eqref{eq:Tilde-T-norm} holds in the form
\begin{equation}
\widehat{\mathbf{T}}(\xi;\tau)\mathbf{M}\xi^{3\sigma_3/4}=\mathbb{I}+\widehat{\mathbf{T}}_1(\tau)\xi^{-1}+\widehat{\mathbf{T}}_2(\tau)\xi^{-2}+\mathcal{O}(\xi^{-3}),\quad\xi\to\infty,
\label{eq:Ttilde-expansion}
\end{equation}
we substitute from \eqref{eq:Schlesinger} on the left-hand side and then recall the expansion \eqref{eq:T-expansion} of $\mathbf{T}(\xi;\tau)\mathbf{M}\xi^{-\sigma_3/4}$.
This yields the identity
\begin{equation}
(\mathbf{S}_1(\tau)\xi +\mathbf{S}_0)(\mathbb{I}+\mathbf{T}_1(\tau)\xi^{-1}+\mathbf{T}_2(\tau)\xi^{-2}+\mathcal{O}(\xi^{-3}))=(\mathbb{I}+\widehat{\mathbf{T}}_1(\tau)\xi^{-1}+\widehat{\mathbf{T}}_2(\tau)\xi^{-2}+\mathcal{O}(\xi^{-3}))\xi^{-\sigma_3},\quad\xi\to\infty.
\end{equation}
Separating out the coefficients of different powers of $\xi$ then yields
\begin{equation}
\mathbf{S}_1(\tau)=\begin{bmatrix}0&0\\0&1\end{bmatrix}\quad\text{and}\quad\mathbf{S}_0(\tau)=\begin{bmatrix}0&T_{1,21}(\tau)^{-1}\\-T_{1,21}(\tau) & (T_{1,11}(\tau)T_{1,21}(\tau)-T_{2,21}(\tau))T_{1,21}(\tau)^{-1}\end{bmatrix}
\end{equation}
along with infinitely many relations between the coefficients $\{\widehat{\mathbf{T}}_j(\tau)\}_{j\ge 1}$ and $\{\mathbf{T}_j(\tau)\}_{j\ge 1}$, for instance
\begin{equation}
\begin{split}
\widehat{T}_{1,12}(\tau)&=S_{0,12}(\tau)+S_{1,11}(\tau)T_{1,12}(\tau)+S_{1,12}(\tau)T_{1,22}(\tau)\\
&=T_{1,21}(\tau)^{-1}\\
&=-h(\tau)^{-1},
\end{split}
\label{eq:Ttilde112}
\end{equation}
where $h(\tau)$ is the Hamiltonian associated with the real tritronqu\'ee solution $y(\tau)$.  It has a simple pole at $\tau=\tau_\mathrm{p}$, so $\widehat{T}_{1,12}(\tau)$ can be interpreted as an analytic function on $N$ with a simple zero at $\tau=\tau_\mathrm{p}$.  Recalling also the definition \eqref{eq:hyz} of $y(\tau)$ in terms of the coefficients $\mathbf{T}_j(\tau)$ we can write $\mathbf{S}_0(\tau)$ in the form
\begin{equation}
\mathbf{S}_0(\tau)=\begin{bmatrix}0 & -h(\tau)^{-1}\\h(\tau) & \tfrac{1}{2}(y(\tau)-h(\tau)^2)+T_{2,21}(\tau)h(\tau)^{-1}\end{bmatrix}
\end{equation}
where only $T_{2,21}(\tau)$ is not easily expressed in terms of $y$, $z$, and $h$.

Now both $\mathbf{T}(\xi;\tau)$ and $\mathbf{S}_0(\tau)$ have a pole at $\tau=\tau_\mathrm{p}$ but we can see that the singularities cancel in the matrix multiplication by the following indirect argument.  
For $\tau\in N\setminus\{\tau_\mathrm{p}\}$, the gauge matrix $\mathbf{S}(\xi;\tau)$ and $\mathbf{T}(\xi;\tau)$ can both be differentiated with respect to both $\xi$ and $\tau$, and one checks easily that the matrix $\widehat{\mathbf{L}}(\xi;\tau):=\widehat{\mathbf{T}}(\xi;\tau)\ee^{(4\xi^{5/2}+\tau\xi^{1/2})\sigma_3}$ satisfies simultaneously the differential equations 
\begin{equation}
\frac{\partial\widehat{\mathbf{L}}}{\partial\xi}=\widehat{\mathbf{A}}\widehat{\mathbf{L}}\quad\text{and}\quad
\frac{\partial\widehat{\mathbf{L}}}{\partial\tau}=\widehat{\mathbf{U}}\widehat{\mathbf{L}}
\label{eq:tilde-Lax-pair}
\end{equation}
where $\widehat{\mathbf{A}}(\xi;\tau)$ and $\widehat{\mathbf{U}}(\xi;\tau)$ are given in terms of $\mathbf{A}(\xi;\tau)$ and $\mathbf{U}(\xi;\tau)$ (cf., \eqref{eq:PI-U-matrix}--\eqref{eq:PI-A-matrix}) by
\begin{equation}
\begin{split}
\widehat{\mathbf{A}}(\xi;\tau)&:=\mathbf{S}(\xi;\tau)\mathbf{A}(\xi;\tau)\mathbf{S}(\xi;\tau)^{-1}+\frac{\partial\mathbf{S}}{\partial\xi}(\xi;\tau)\mathbf{S}(\xi;\tau)^{-1}\\
\widehat{\mathbf{U}}(\xi;\tau)&:=\mathbf{S}(\xi;\tau)\mathbf{U}(\xi;\tau)\mathbf{S}(\xi;\tau)^{-1}+
\frac{\partial\mathbf{S}}{\partial\tau}(\xi;\tau)\mathbf{S}(\xi;\tau)^{-1}.
\end{split}
\label{eq:tilde-coefficients}
\end{equation}
It is easy to see that $\widehat{\mathbf{A}}(\xi;\tau)$ is a cubic matrix polynomial in $\xi$ with coefficients that are analytic functions of $\tau\in N\setminus\{\tau_\mathrm{p}\}$ possibly having a pole of some finite order at $\tau=\tau_\mathrm{p}$.  However, more detailed analysis based on the Laurent series for $h(\tau)$, $y(\tau)$, $z(\tau)$, and $T_{2,21}(\tau)$ developed earlier in Section~\ref{sec:series} shows that any singularity in $\widehat{\mathbf{A}}(\xi;\tau)$ at $\tau=\tau_\mathrm{p}$ must be removable.  In fact, one can show that
\begin{equation}
\widehat{\mathbf{A}}(\xi;\tau_\mathrm{p})=\begin{bmatrix}2h_0\xi-2t_0 & -2\\
-2\xi^3+2h_0^2\xi^2-(\tau_\mathrm{p}+4h_0t_0)\xi+2t_0^2+2h_0 & -2h_0\xi+2t_0\end{bmatrix},
\end{equation}
where $h_0$ and $t_0$ are the $(\tau-\tau_\mathrm{p})^0$ terms in the expansions \eqref{eq:h-Laurent} and \eqref{eq:T112series} respectively.  Since $\widehat{\mathbf{A}}(\xi;\tau)$ is regular at $\tau=\tau_\mathrm{p}$, the differential equation $\widehat{\mathbf{L}}_\xi(\xi;\tau)=\widehat{\mathbf{A}}(\xi;\tau)\widehat{\mathbf{L}}(\xi;\tau)$ has a fundamental matrix solution that is an entire function of $\xi$. Then since $\widehat{\mathbf{T}}(\xi;\tau)$ can be constructed from the latter fundamental matrix (this is the direct monodromy construction for the Lax pair \eqref{eq:tilde-Lax-pair}), it follows that Riemann--Hilbert Problem~\ref{rhp:Tilde-T} has a solution that depends analytically on $\tau\in N$ with no need to exclude the pole $\tau_\mathrm{p}$ of $\mathbf{T}(\xi;\tau)$.  This solution is unique by standard Liouville arguments and therefore coincides with the formula $\widehat{\mathbf{T}}(\xi;\tau)=\mathbf{S}(\xi;\tau)\mathbf{T}(\xi;\tau)$ in which $\widehat{\mathbf{T}}(\xi;\tau)$ is obtained from $\mathbf{T}(\xi;\tau)$ via the Schlesinger gauge transformation $\mathbf{S}(\xi;\tau)$.
\end{proof}

To use the matrix $\widehat{\mathbf{T}}(\xi;\tau)$ instead of $\mathbf{T}(\xi;\tau)$ when $\tau\approx \tau_\mathrm{p}$ to build an inner parametrix for $\mathbf{O}(w)$ for $w\in U$, we simply replace $\mathbf{T}(\xi;\tau)$ in the definition \eqref{eq:inner-parametrix} by $\epsilon^{-2\sigma_3/5}\widehat{\mathbf{T}}(\xi;\tau)$.  As desired, since $\widehat{\mathbf{T}}(\xi;\tau)$ and $\mathbf{T}(\xi;\tau)$ satisfy the same jump conditions, it is again clear that $\dot{\mathbf{O}}^\mathrm{in}(w)$ is an exact local solution within $U$ of the Riemann--Hilbert problem for $\mathbf{O}(w)$.  However, due to the modification of the asymptotic normalization condition in the limit $\xi\to\infty$, the comparison to the phase-linearized outer parametrix on $\partial U$ will no longer be near-identity and will therefore require further modeling.  Indeed, recalling that $\xi=\epsilon^{-2/5}W(w;x,t)$ and $\tau=\epsilon^{-4/5}s(x,t)$ as well as the definition \eqref{eq:p-ratio} of $p(w;x,t)$, the analogue of \eqref{eq:mismatch-simple} now takes the form
\begin{multline}
\dot{\mathbf{O}}^\mathrm{in}(w)\dot{\mathbf{O}}^\mathrm{out,l}(w)^{-1}=\mathbf{C}(w)\epsilon^{-3\sigma_3/10}\widehat{\mathbf{T}}(\xi;\tau)\mathbf{M}\xi^{3\sigma_3/4}W_\mathrm{gc}(w)^{-\sigma_3}\epsilon^{3\sigma_3/10}p(w;x,t)^{-3\sigma_3/4}\mathbf{C}(w)^{-1},\\ 
w\in\partial U\cap(\mathrm{II}\cup\mathrm{III}),
\label{eq:mismatch-simple-tilde}
\end{multline}
while instead of \eqref{eq:mismatch-Y} we get
\begin{multline}
\dot{\mathbf{O}}^\mathrm{in}(w)\dot{\mathbf{O}}^\mathrm{out,l}(w)^{-1}=\mathbf{C}(w)\epsilon^{-3\sigma_3/10}\widehat{\mathbf{T}}(\xi;\tau)\mathbf{M}\xi^{3\sigma_3/4}W_\mathrm{gc}(w)^{-\sigma_3}\epsilon^{3\sigma_3/10}p(w;x,t)^{-3\sigma_3/4}\mathbf{C}(w)^{-1}\\
{}\cdot\dot{\mathbf{O}}^\mathrm{out,l}(w)Y_N(w)^{\sigma_3/2}\dot{\mathbf{O}}^\mathrm{out,l}(w)^{-1},\quad
w\in\partial U\cap(\mathrm{I}\cup\mathrm{IV}\cup\mathrm{V}).
\label{eq:mismatch-Y-tilde}
\end{multline}
As before, the product of factors on the second line of \eqref{eq:mismatch-Y-tilde} have the form $\mathbb{I}+\mathcal{O}(\epsilon)$ uniformly on $\partial U$.  Similarly, $p(w;x,t)=1+\mathcal{O}(\epsilon^{4/5})$ uniformly on $\partial U$ because for $(x,t)$ to approach the preimage of a given pole $\tau=\tau_\mathrm{p}$ we in particular have $(x,t)=(0,t_\mathrm{gc})+\mathcal{O}(\epsilon^{4/5})$; hence as $\mathbf{C}(w)^{-1}$ is uniformly bounded we can write $p(w;x,t)^{\sigma_3/4}\mathbf{C}(w)^{-1}=\mathbf{C}(w)^{-1}(\mathbb{I}+\mathcal{O}(\epsilon^{4/5}))$.  This means that the important factors are common in the formul\ae\ \eqref{eq:mismatch-simple-tilde}--\eqref{eq:mismatch-Y-tilde}; using \eqref{eq:Ttilde-expansion} and \eqref{eq:Ttilde112} shows that
\begin{multline}
\mathbf{C}(w)\epsilon^{-3\sigma_3/10}\widehat{\mathbf{T}}(\xi;\tau)\mathbf{M}\xi^{3\sigma_3/4}W_\mathrm{gc}(w)^{-\sigma_3}\epsilon^{3\sigma_3/10}\mathbf{C}(w)^{-1}\\
\begin{aligned}=&\mathbf{C}(w)
\left[W_\mathrm{gc}(w)^{-\sigma_3}+\epsilon^{-1/5}\widehat{T}_{1,12}(\tau)\sigma_+ + \mathcal{O}(\epsilon^{1/5})\right]\mathbf{C}(w)^{-1}\\
=&\mathbf{C}(w)
\left[W_\mathrm{gc}(w)^{-\sigma_3}-\epsilon^{-1/5}h(\tau)^{-1}\sigma_+ + \mathcal{O}(\epsilon^{1/5})\right]\mathbf{C}(w)^{-1},\quad\text{uniformly for $w\in\partial U$}.
\end{aligned}
\label{eq:mismatch-near-pole}
\end{multline}
Since $h(\tau)$ has a simple pole at $\tau=\tau_\mathrm{p}$ with residue $-1$, $h(\tau)^{-1}$ vanishes exactly to first order in $\tau-\tau_\mathrm{p}$, so under the assumptions of Theorem~\ref{thm:NearThePoles}, we see that $\epsilon^{-1/5}h(\tau)^{-1}$ is bounded but is not necessarily small.  Indeed, for $(X,T)$ bounded we have $\tau=\epsilon^{-4/5}s(x,t)=\epsilon^{-4/5}s(x_\mathrm{p}+\epsilon X,t_\mathrm{gc}+(t_\mathrm{p}-t_\mathrm{gc} +\epsilon T)$, and by definition of $(x_\mathrm{p},t_\mathrm{p})$, we also have $\tau_\mathrm{p}=\epsilon^{-4/5}(\ii a x_\mathrm{p}+(t_\mathrm{p}-t_\mathrm{gc})b)$.  Hence $\tau-\tau_\mathrm{p}=\epsilon^{1/5}\ell(X,T) + \mathcal{O}(\epsilon^{4/5})$ where
\begin{equation}
\ell=\ell(X,T):=\ii a X +bT,
\label{eq:ell-def}
\end{equation}
and therefore 
\begin{equation}
\epsilon^{-1/5}h(\tau)^{-1}=-\epsilon^{-1/5}[(\tau-\tau_\mathrm{p})+\mathcal{O}((\tau-\tau_\mathrm{p})^2)]=-\ell(X,T)+\mathcal{O}(\epsilon^{2/5}).
\label{eq:one-over-h-linearized}
\end{equation}

\subsection{Initial global parametrix, corresponding error matrix, and parametrix for the error}
With the inner parametrix for $\mathbf{O}(w)$ redefined within $U$ as indicated above we appeal again to \eqref{eq:global-parametrix} to define an initial global parametrix $\dot{\mathbf{O}}(w)$.  The corresponding error matrix is defined formally exactly as before:  $\mathbf{E}(w):=\mathbf{O}(w)\dot{\mathbf{O}}(w)^{-1}$, only the definition of $\dot{\mathbf{O}}(w)$ has changed within the neighborhoods $U$ and $U^*$.  Therefore, the error $\mathbf{E}(w)$ has exactly the same domain of analyticity as in the previous situation, and its jump conditions aside from those across $\partial U\cup\partial U^*$ are also identical.  Just as before the dominant jump conditions satisfied by $\mathbf{E}(w)$ occur across $\partial U$ and $\partial U^*$, but unlike in the previous setting these are no longer near-identity in general.  Indeed, combining \eqref{eq:mismatch-simple-tilde}, \eqref{eq:mismatch-Y-tilde}, and \eqref{eq:mismatch-near-pole}, along with \eqref{eq:one-over-h-linearized} shows that taking clockwise orientation of $\partial U$,
\begin{equation}
\mathbf{E}_+(w)=\mathbf{E}_-(w)\left[\mathbf{C}(w)\left(W_\mathrm{gc}(w)^{-\sigma_3}+\ell(X,T)\sigma_+\right)\mathbf{C}(w)^{-1}+\mathcal{O}(\epsilon^{1/5})\right],\quad w\in\partial U.
\label{eq:Near-E-Jump-partialU}
\end{equation}
Since the explicit terms are not converging to $\mathbb{I}$ for $w\in\partial U$ as $\epsilon\to 0$, we have not yet reduced the problem to a small-norm setting.  We must first model the error itself with a parametrix that captures the dominant terms in the jump conditions across $\partial U$ and $\partial U^*$.

To this end, let $\dot{\mathbf{E}}(w)$ be a solution of the following Riemann--Hilbert problem.
\begin{rhp}[Parametrix for the error]
Fix constants $\theta\in(0,\pi)$, $a<0$, and $b>0$, as well as a conformal map $W_\mathrm{gc}(\cdot)$ with $W_\mathrm{gc}(\ee^{\ii\theta})=0$ and $\arg(W'_\mathrm{gc}(\ee^{\ii\theta}))=\tfrac{1}{2}\pi-\theta$.
For additional real variable parameters $X,T,\nu$, seek a $2\times 2$ matrix function $\dot{\mathbf{E}}(w)=\dot{\mathbf{E}}(w;X,T,\nu)$ with the following properties:
\begin{itemize}
\item[]\textbf{Analyticity:} $\dot{\mathbf{E}}(w)$ is analytic for $w\in\mathbb{C}\setminus(\mathbb{R}_+\cup\partial U\cup\partial U^*)$ and takes continuous boundary values on the jump contour from its complement.
\item[]\textbf{Jump conditions:} The boundary values are related by the jump conditions:
\begin{equation}
\dot{\mathbf{E}}_+(w)=\dot{\mathbf{E}}_-(w)\mathbf{C}(w)\left(W_\mathrm{gc}(w)^{-\sigma_3}+\ell(X,T)\sigma_+\right)\mathbf{C}(w)^{-1},\quad w\in\partial U,
\label{eq:EdotJumpPartialU}
\end{equation}
\begin{equation}
\dot{\mathbf{E}}_+(w)=\dot{\mathbf{E}}_-(w)\mathbf{C}(w^*)^*\left(W_\mathrm{gc}(w^*)^{*-\sigma_3}+\ell(X,T)^*\sigma_+\right)^{-1}\mathbf{C}(w^*)^{*-1},\quad w\in\partial U^*
\end{equation}
(here $\partial U$ has clockwise orientation and $\partial U^*$ has counter-clockwise orientation), and
\begin{equation}
\dot{\mathbf{E}}_+(w)=\sigma_2\dot{\mathbf{E}}_-(w)\sigma_2,\quad w>0.
\label{eq:EdotJumpRplus}
\end{equation}
Here $\mathbf{C}(w)=\mathbf{C}(w;\theta,\nu,W_\mathrm{gc}(\cdot))$ is defined by \eqref{eq:CmatrixDefine} and the parameters $a$ and $b$ enter via the linear function $\ell(X,T)$ defined by \eqref{eq:ell-def}.
\item[]\textbf{Normalization:} The following normalization condition holds:  $\dot{\mathbf{E}}(w)\to\mathbb{I}$ as $w\to\infty$.  
\end{itemize}
\label{rhp:E-parametrix}
\end{rhp}
\begin{lemma}
Let $W_\mathrm{gc}(\cdot)$, $\theta\in (0,\pi)$, $a<0$, and $b>0$ be fixed.  Then Riemann--Hilbert Problem~\ref{rhp:E-parametrix} has a unique solution $\dot{\mathbf{E}}(w)=\dot{\mathbf{E}}(w;X,T,\nu)$ with unit determinant for all $(X,T,\nu)\in\mathbb{R}^3$, and given a compact set $K\subset\mathbb{R}^2$, $\dot{\mathbf{E}}(w;X,T,\nu)$ is uniformly bounded for $w\in\mathbb{C}\setminus (\mathbb{R}_+\cup\partial U\cup \partial U^*)$, $(X,T)\in K$, and $\nu\in\mathbb{R}$.
\label{lemma:dotE-exists}
\end{lemma}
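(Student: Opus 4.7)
The plan is to establish the four claims in sequence: the unit-determinant property from the structure of the jump matrices, uniqueness via a vanishing-lemma/Liouville argument, existence via the Fredholm alternative, and finally uniform boundedness via continuous dependence on parameters together with quasi-periodicity in $\nu$. The unit-determinant assertion is essentially immediate: in \eqref{eq:EdotJumpPartialU} the middle factor $W_\mathrm{gc}(w)^{-\sigma_3}+\ell(X,T)\sigma_+$ is upper triangular with diagonal entries $W_\mathrm{gc}(w)^{-1}$ and $W_\mathrm{gc}(w)$ and hence has unit determinant, and conjugation by $\mathbf{C}(w)$ preserves determinants; the same applies on $\partial U^*$; conjugation by $\sigma_2$ in \eqref{eq:EdotJumpRplus} also preserves determinants. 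Hence $\det\dot{\mathbf{E}}(w)$ extends across the entire jump contour as an entire function tending to $1$ at infinity, so $\det\dot{\mathbf{E}}\equiv 1$ by Liouville.

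For uniqueness, supposing $\dot{\mathbf{E}}^{(1)}$ and $\dot{\mathbf{E}}^{(2)}$ both solve the problem, I would set $\mathbf{R}(w):=\dot{\mathbf{E}}^{(1)}(w)\dot{\mathbf{E}}^{(2)}(w)^{-1}$ and verify that the left-multiplicative jumps across $\partial U\cup\partial U^*$ cancel out, leaving only $\mathbf{R}_+(w)=\sigma_2\mathbf{R}_-(w)\sigma_2$ on $\mathbb{R}_+$. Lifting to the $z=\ii\sqrt{-w}$ plane exactly as in the discussion around \eqref{eq:go-to-z-plane}, by setting $\widetilde{\mathbf{R}}(z):=\mathbf{R}(z^2)$ for $\mathrm{Im}\{z\}>0$ and $\widetilde{\mathbf{R}}(z):=\sigma_2\mathbf{R}(z^2)\sigma_2$ for $\mathrm{Im}\{z\}<0$ makes the jumps on $\mathbb{R}$ cancel, so $\widetilde{\mathbf{R}}$ is entire on $\mathbb{C}$ with limit $\mathbb{I}$ at infinity, hence $\widetilde{\mathbf{R}}\equiv\mathbb{I}$ by Liouville. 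For existence, after the same $z$-plane lift the problem becomes a classical Riemann--Hilbert problem on the disjoint union of four smooth closed loops (the preimages of $\partial U$ and $\partial U^*$ in the two $z$-half-planes), with smooth bounded unit-determinant jumps; the associated Cauchy singular integral operator is Fredholm of index zero, so the Fredholm alternative together with the uniqueness just established forces existence.

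The main obstacle will be uniform boundedness in $\nu\in\mathbb{R}$, since $\nu$ ranges over a non-compact set. The dependence on $\nu$ enters only through $\mathbf{C}(w)=\mathbf{C}(w;\theta,\nu,W_\mathrm{gc}(\cdot))$, which by \eqref{eq:CmatrixDefine} is built from the phase-linearized outer parametrix $\mathbf{Y}(w;\beta_\mathrm{gc},\nu)$. The explicit formula \eqref{eq:PhaseLinearizedOuterParametrix-at-zero} together with the general theta-function construction in Appendix~\ref{app:theta} and Proposition~\ref{prop:PropertiesOfY} shows that $\mathbf{Y}$ is quasi-periodic in $\nu$ with a period $\nu_0$ determined by the underlying genus-one spectral data and remains uniformly bounded on $\partial U\cup\partial U^*$ as $\nu$ varies over $\mathbb{R}$. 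Up to a left multiplication by a diagonal unimodular phase, which cancels in the conjugation $\mathbf{C}(w)J\mathbf{C}(w)^{-1}$ defining the jump matrix, the $\nu$-dependence of the jump data reduces to a compact fundamental domain $[0,\nu_0]$. On $K\times[0,\nu_0]$, continuity of the jump data together with the Fredholm inversion from the existence step yields continuous (hence uniformly bounded) dependence of $\dot{\mathbf{E}}$ on the parameters in an $L^2$-based norm, and translating back through the diagonal phase gives the claimed uniform bound for all $(X,T)\in K$ and $\nu\in\mathbb{R}$.
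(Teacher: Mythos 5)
Your argument for unit determinant, for uniqueness assuming existence, and for uniform boundedness all agree in substance with the paper's proof. The main defect is in the existence step, and it is a genuine logical gap.

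You assert that the Cauchy operator associated with the $z$-plane problem is Fredholm of index zero and then claim that ``the Fredholm alternative together with the uniqueness just established forces existence.'' This inference is invalid. The Fredholm alternative for an index-zero operator says that injectivity is equivalent to surjectivity, where injectivity means the \emph{homogeneous} Riemann--Hilbert problem (same jumps, but normalization $\widetilde{\mathbf{R}}(z)\to\mathbf{0}$ as $z\to\infty$) has only the trivial solution. Your uniqueness argument is of a different nature: it is conditional, showing that \emph{if} two solutions of the problem normalized to $\mathbb{I}$ exist, they coincide. That argument proceeds by forming $\mathbf{R}=\dot{\mathbf{E}}^{(1)}(\dot{\mathbf{E}}^{(2)})^{-1}$, which requires a solution $\dot{\mathbf{E}}^{(2)}$ to divide by; it cannot be applied to a homogeneous solution, which is not invertible. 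A priori one could have a nontrivial homogeneous solution \emph{and} no solution normalized to $\mathbb{I}$, and your argument does not exclude this. What is needed is a direct proof that the homogeneous problem vanishes, which is exactly what Zhou's vanishing lemma \cite{Zhou1989} supplies, using the Schwarz-reflection structure of the jump matrix on $\mathbb{R}$ in the $z$-plane (positivity of $\mathbf{V}_{\widetilde{\mathbf{F}}}+\mathbf{V}_{\widetilde{\mathbf{F}}}^\dagger$ there) together with the fact that the jump on $\partial U$ is the Schwarz reflection of the jump on $\partial U^*$. The paper's proof invokes precisely this lemma, and your proof needs an analogous step.

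As a smaller matter, your periodicity argument in $\nu$ introduces an unnecessary complication. You posit a ``quasi-period $\nu_0$ determined by the genus-one spectral data'' and a ``diagonal unimodular phase'' that cancels. In fact the relevant statement is simpler: by Proposition~\ref{prop:PropertiesOfY} the outer parametrix $\mathbf{Y}(w;\beta,\nu)$ is exactly $2\pi$-periodic in $\nu$, and the $\ee^{-\ii\nu\sigma_3/2}$ factor inside $\mathbf{C}(w)$ merely contributes a sign under $\nu\mapsto\nu+2\pi$ which cancels in the conjugation $\mathbf{C}(w)J\mathbf{C}(w)^{-1}$; the jump data is thus $2\pi$-periodic in $\nu$, so $\nu$ effectively ranges over the circle $S^1$, and compactness of $S^1\times K$ together with analytic Fredholm dependence on the parameters yields the uniform bound. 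That is the conclusion you reach, but the reasoning about a modulus-dependent period $\nu_0$ and a phase ``cancelling'' before the conjugation is applied is not accurate and should be tightened.
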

\begin{proof}
By a standard Liouville argument, it is easy to check that this problem has at most one solution which must satisfy Schwarz symmetry in the form $\dot{\mathbf{E}}(w^*)^*=\dot{\mathbf{E}}(w)$ and which has unit determinant.  Moreover by removing the non-standard jump condition across the positive real axis by mapping to the $z$-plane via a definition analogous to \eqref{eq:go-to-z-plane}, one easily confirms that the corresponding equivalent Riemann--Hilbert problem satisfies the conditions of Zhou's vanishing lemma \cite{Zhou1989} and hence there is a unique solution.  Since the parameters $(X,T,\nu)$ enter the jump conditions analytically, it follows via analytic Fredholm theory that the solution is also real-analytic in these variables.  The desired bound then follows because the jump matrices are $2\pi$-periodic in $\nu$ (this dependence is hidden in $\mathbf{C}(w)$ via the phase-linearized outer parametrix $\dot{\mathbf{O}}^\mathrm{out,l}(w)$), so that $\nu\in S^1$, and of course $S^1\times K$ is a compact subset of $\mathbb{R}^3$.
\end{proof}
In practice, we will replace the angle $\nu$ with $\nu=\epsilon^{-1}\Phi^l(t)$.

\subsection{Corrected error matrix, small-norm problem, and formul\ae\ for the potentials}
\label{sec:corrected-error}
Taking fixed parameters $\theta=\arg(\alpha_\mathrm{gc})$, $a<0$ and $b>0$ from Lemma~\ref{lemma:PartialsOfs}, and conformal map $W_\mathrm{gc}(\cdot)=W(\cdot;0,t_\mathrm{gc})$ from Lemma~\ref{lemma:ConformalMap}, Let $\dot{\mathbf{E}}(w)=\dot{\mathbf{E}}(w;X,T,\epsilon^{-1}\Phi^l(t))$ be the solution of Riemann--Hilbert Problem~\ref{rhp:E-parametrix}.  We compare $\dot{\mathbf{E}}(w)$ with $\mathbf{E}(w)$ itself by defining a \emph{corrected error matrix} $\mathbf{E}^\mathrm{c}(w):=\mathbf{E}(w)\dot{\mathbf{E}}(w)^{-1}$.  

Observe that $\mathbf{E}^\mathrm{c}(w)$ is analytic exactly where $\mathbf{E}(w)$ is, because the jump contour for $\dot{\mathbf{E}}(w)$ is contained within that for $\mathbf{E}(w)$, and like $\mathbf{E}(w)$, $\mathbf{E}^\mathrm{c}(w)$ is required to extend continuously to the jump contour from each component of its complement.  First we calculate the the jump for $\mathbf{E}^\mathrm{c}(w)$ across $\mathbb{R}_+$, where we pointed out  that $\mathbf{E}(w)$ satisfies $\mathbf{E}_+(w)=\sigma_2\mathbf{E}_-(w)\sigma_2(\mathbb{I}+\mathcal{O}(\epsilon))$ in which the error term $\mathcal{O}(\epsilon)$ vanishes identically for $|w-1|>\delta$.  Using the jump condition \eqref{eq:EdotJumpRplus}, it follows that
\begin{equation}
\begin{split}
\mathbf{E}^\mathrm{c}_+(w)&=\sigma_2\mathbf{E}^\mathrm{c}_-(w)\sigma_2\cdot\sigma_2\dot{\mathbf{E}}_-(w)\sigma_2(\mathbb{I}+\mathcal{O}(\epsilon))\sigma_2\dot{\mathbf{E}}_-(w)^{-1}\sigma_2 \\ &= \sigma_2\mathbf{E}^\mathrm{c}_-(w)\sigma_2(\mathbb{I}+\mathcal{O}(\epsilon)),\quad w\in\mathbb{R}_+
\end{split}
\end{equation}
where the second line follows from Lemma~\ref{lemma:dotE-exists} under the assumption $(X,T)\in K$ for $K$ compact.  Note that the error term on the second line is generally different than on the first line, but still has support $[1-\delta,1+\delta]$.  On the remaining parts of the jump contour with the exception of the closed curves $\partial U\cup\partial U^*$, we have $\dot{\mathbf{E}}_+(w)=\dot{\mathbf{E}}_-(w)=\dot{\mathbf{E}}(w)$ and we pointed out in Section~\ref{sec:AwayErrorAnalysis} that $\mathbf{E}_+(w)=\mathbf{E}_-(w)(\mathbb{I}+\mathcal{O}(\epsilon^{3/5}))$ holds uniformly.  Therefore on all such arcs,
\begin{equation}
\begin{split}
\mathbf{E}_+^\mathrm{c}(w)&=\mathbf{E}_-^\mathrm{c}(w)\dot{\mathbf{E}}(w)(\mathbb{I}+\mathcal{O}(\epsilon^{3/5}))\dot{\mathbf{E}}(w)^{-1}\\
&=\mathbf{E}_-^\mathrm{c}(w)(\mathbb{I}+\mathcal{O}(\epsilon^{3/5})),
\end{split}
\end{equation}
where again we used Lemma~\ref{lemma:dotE-exists} and the error term on the second line is generally a different function from that on the first line.  Next we calculate the remaining jump conditions holding for $w\in\partial U\cup\partial U^*$.  For $w\in\partial U$, we combine \eqref{eq:Near-E-Jump-partialU} with \eqref{eq:EdotJumpPartialU} to obtain
\begin{equation}
\begin{split}
\mathbf{E}^\mathrm{c}_+(w)&=\mathbf{E}^\mathrm{c}_-(w)\dot{\mathbf{E}}_-(w)\left[\mathbf{C}(w)(W_\mathrm{gc}(w)^{-\sigma_3}+\ell(X,T)\sigma_+)\mathbf{C}(w)^{-1}+\mathcal{O}(\epsilon^{1/5})\right]\\
&\qquad\qquad\qquad{}\cdot\left[\mathbf{C}(w)(W_\mathrm{gc}(w)^{-\sigma_3}+\ell(X,T)\sigma_+)\mathbf{C}(w)^{-1}\right]^{-1}\dot{\mathbf{E}}_-(w)^{-1}\\
&=\mathbf{E}^\mathrm{c}_-(w)(\mathbb{I}+\mathcal{O}(\epsilon^{1/5})),\quad \text{uniformly for $w\in\partial U$,}
\end{split}
\end{equation}
where we used again Lemma~\ref{lemma:dotE-exists} and the fact that $W_\mathrm{gc}(w)^{-\sigma_3}+\ell(X,T)\sigma_+$ has unit determinant and is uniformly bounded on $\partial U$.  Similar calculations show that in the same uniform sense,
\begin{equation}
\mathbf{E}^\mathrm{c}_+(w)=\mathbf{E}^\mathrm{c}_-(w)(\mathbb{I}+\mathcal{O}(\epsilon^{1/5})),\quad w\in\partial U^*.
\end{equation}
Finally, we note that since $\mathbf{E}(w)\to \mathbb{I}$ (because $\mathbf{O}(w)$ does by hypothesis and $\dot{\mathbf{O}}(w)$ does by explicit construction), the normalization condition on $\dot{\mathbf{E}}(w)$ in Riemann--Hilbert Problem~\ref{rhp:E-parametrix} guarantees that also $\mathbf{E}^\mathrm{c}(w)\to\mathbb{I}$ as $w\to\infty$.  

It follows that upon carrying $\mathbf{E}^\mathrm{c}(w)$ over to the $z$-plane as in \eqref{eq:go-to-z-plane} we arrive at a Riemann--Hilbert problem of small-norm type, and we therefore deduce that $\mathbf{E}^\mathrm{c}(w)=\mathbb{I}+\mathcal{O}(\epsilon^{1/5})$ holds uniformly on the complement of the jump contour.  In particular, $\mathbf{E}^\mathrm{c}(0)=\mathbb{I}+\mathcal{O}(\epsilon^{1/5})$, which implies that $\mathbf{E}(0)=\mathbf{E}^\mathrm{c}(0)\dot{\mathbf{E}}(0)=(\mathbb{I}+\mathcal{O}(\epsilon^{1/5}))\dot{\mathbf{E}}(0)$.  Next we substitute into the exact formul\ae\ \eqref{eq:AwayFormulaeForCosSin} which are also valid in the current situation to find
\begin{equation}
\cos(\tfrac{1}{2}u_N(x,t))=\ddot{C} + \mathcal{O}(\epsilon^{1/5})\quad\text{and}\quad\sin(\tfrac{1}{2}u_N(x,t))=\ddot{S}+\mathcal{O}(\epsilon^{1/5})
\end{equation}
where
\begin{equation}
\ddot{C}:=(\dot{\mathbf{E}}(0)\dot{\mathbf{O}}^\mathrm{out,l}(0))_{11}\quad\text{and}\quad\ddot{S}:=(\dot{\mathbf{E}}(0)\dot{\mathbf{O}}^\mathrm{out,l}(0))_{21}.
\label{eq:ddotCddotS}
\end{equation}

\subsection{Differential equations satisfied by the leading terms}
Here we will show that the leading terms $\ddot{C}$ and $\ddot{S}$ for $\cos(\tfrac{1}{2}u_N(x,t))$ and $\sin(\tfrac{1}{2}u_N(x,t))$ respectively can themselves be written in the form
\begin{equation}
\ddot{C}=\cos(\tfrac{1}{2}U(X,T))\quad\text{and}\quad\ddot{S}=\sin(\tfrac{1}{2}U(X,T))
\end{equation}
where $U(X,T)$ is an exact solution of the sine-Gordon equation in the form
\begin{equation}
\frac{\partial^2U}{\partial T^2}-\frac{\partial^2U}{\partial X^2}+\sin(U)=0.
\label{eq:sG-XT}
\end{equation}
To this end, we first prove the analogous result for $\dot{C}$ and $\dot{S}$, using their connection with the phase-linearized outer parametrix $\dot{\mathbf{O}}^\mathrm{out,l}(w)$ as given in \eqref{eq:PhaseLinearizedOuterParametrix-at-zero}.  Of course a direct proof using instead the formul\ae\ \eqref{eq:C-dot-define}--\eqref{eq:S-dot-define} would be easier, but we will leverage the connection with the phase-linearized outer parametrix to prove the more interesting result for $\ddot{C}$ and $\ddot{S}$.
\subsubsection{Relating $\dot{C}$ and $\dot{S}$ to sine-Gordon}
\label{sec:CdotSdot-solve-sG}
Note that by definition of $\dot{\mathbf{O}}^\mathrm{out,l}(w)$ all dependence on $(x,t)$ actually enters via the combination $t/\epsilon$, so we regard $\dot{\mathbf{O}}^\mathrm{out,l}(w)$ as depending parametrically on $(X,T)=(\epsilon^{-1}(x-x_\mathrm{p}),\epsilon^{-1}(t-t_\mathrm{p}))$.  We wish to apply a dressing argument to $\dot{\mathbf{O}}^\mathrm{out,l}(w)=\dot{\mathbf{O}}^\mathrm{out,l}(w;X,T)$, but such an argument requires that all dependence on $(X,T)$ in the jump matrices can be removed by conjugation by $\ee^{\ii \mathcal{Q}(w)\sigma_3}$ where $\mathcal{Q}(w)=\mathcal{Q}(w;X,T):=E(w)X+D(w)T$, in which $E(w)$ and $D(w)$ are given by \eqref{eq:E-and-D-define}.  To arrive at such jump conditions, we attempt to introduce a scalar function $G(w)=G(w;X,T)$ analytic off all jump contours for $\dot{\mathbf{O}}^\mathrm{out,l}(w;X,T)$, uniformly bounded in $w$ and continuous up to the jump contours, satisfying $G(w)\to 0$ as $w\to\infty$, as well as jump conditions of the form:
\begin{equation}
G_+(w)+G_-(w)=\begin{cases}0,&\quad w>0,\\
2\ii \mathcal{Q}(w;X,T)+\ii \eta_0,&\quad w\in\beta,\\
2\ii \mathcal{Q}(w;X,T)-\ii \eta_0, &\quad w\in\beta^*,
\end{cases}
\end{equation}
where $\eta_0$ is real and independent of $w$.   The value of $\eta_0$ needs to be chosen to ensure the existence of $G(w)$ but it will turn out to be closely related to the linearized phase in the Riemann--Hilbert conditions for $\dot{\mathbf{O}}^\mathrm{out,l}(w)$.  To see this, we write $G(w)$ in the form
\begin{equation}
G(w)=\ii \mathcal{Q}(w;X,T) + \frac{R^\mathrm{gc}(w)}{\sqrt{-w}}\upsilon(w)
\end{equation}
for a function $\upsilon(w)$ to be determined.  Here, $R^\mathrm{gc}(w)$ simply denotes the special case of the square-root function $R(w)$ for which the branch points are the gradient catastrophe values $(\alpha,\alpha^*)=(\alpha_\mathrm{gc},\alpha^*_\mathrm{gc})=(\ee^{\ii\theta},\ee^{-\ii\theta})$ and whose branch cut is $\beta\cup\beta^*=\beta_\mathrm{gc}\cup\beta_\mathrm{gc}^*$.  Since $R^\mathrm{gc}(w)=w+\mathcal{O}(1)$ as $w\to\infty$, in order that $G(w)\to 0$ as $w\to\infty$, we require that $\upsilon(w)\to -(X+T)/4$ as $w\to\infty$.  Taking into account that the ratio $R^\mathrm{gc}(w)/\sqrt{-w}$ changes sign across all jump contours where the sum of boundary values is specified for $G$, we see that the Plemelj formula can be used to solve for $\upsilon(w)$:
\begin{equation}
\upsilon(w)=-\frac{1}{4}(X+T)+\frac{\eta_0}{2\pi}\int_{\beta}\frac{\sqrt{-w'}\,\dd w'}{R^\mathrm{gc}_+(w')(w'-w)}-\frac{\eta_0}{2\pi}\int_{\beta^*}\frac{\sqrt{-w'}\,\dd w'}{R^\mathrm{gc}_+(w')(w'-w)}.
\end{equation}
This formula yields an expression for $G(w)$ that satisfies all conditions required except that $G(w)$ is bounded as $w\to 0$.  For this, we need to choose $\eta_0$ so that $\upsilon(0)=(X-T)/(4R^\mathrm{gc}(0))$.  Note that $R^\mathrm{gc}(0)=-1$ for $\alpha_\mathrm{gc}=\ee^{\ii\theta}$.  The integration contours in the expression for $\upsilon(0)$ can be deformed by Cauchy's Theorem so that 
\begin{equation}
\upsilon(0)=-\frac{1}{4}(X+T)+\frac{\eta_0}{2\pi}\int_{-\infty}^0\frac{\dd w'}{R^\mathrm{gc}(w')\sqrt{-w'}}.
\end{equation}
The latter integral turns out to be a complete elliptic integral of the first kind (see the remark at the end of Section~\ref{sec:Darboux} for details):
\begin{equation}
\upsilon(0)=-\frac{1}{4}(X+T)-\frac{\eta_0}{\pi}\KK(m_\mathrm{gc}).
\end{equation}
Therefore imposing the condition $\upsilon(0)=(X-T)/(4R^\mathrm{gc}(0))$ determines $\eta_0$ as
\begin{equation}
\eta_0=-\frac{\pi T}{2\KK(m_\mathrm{gc})}=\omega_\mathrm{gc} T,\quad \omega_\mathrm{gc}:=\omega(0,t_\mathrm{gc}).
\end{equation}
Note that $G(0)=0$ holds unambiguously.

Now we use the function $G(w)$ to define a related matrix by 
\begin{equation}
\mathbf{K}(w;X,T):=\dot{\mathbf{O}}^\mathrm{out,l}(w;X,T)\ee^{G(w;X,T)\sigma_3}.
\end{equation}
We see that $\mathbf{K}(w;X,T)$ has the following properties:
\begin{itemize}
\item It is analytic for $w\in\mathbb{C}\setminus(\mathbb{R}_+\cup\beta\cup\beta^*)$, is bounded and continuous up to the jump contour except near $w=\alpha_\mathrm{gc}$ and $w=\alpha_\mathrm{gc}^*$ where it blows up no worse than a negative one-fourth power.
\item It tends to $\mathbb{I}$ as $w\to\infty$.
\item It satisfies the following jump conditions:
\begin{equation}
\mathbf{K}_+(w;X,T)=\sigma_2\mathbf{K}_-(w;X,T)\sigma_2,\quad w>0,
\end{equation}
\begin{equation}
\mathbf{K}_+(w;X,T)=\mathbf{K}_-(w;X,T)\ee^{-\ii \mathcal{Q}(w;X,T)\sigma_3}\ii\sigma_1\ee^{\ii (\Phi_\mathrm{gc}+t_\mathrm{gc}\omega_\mathrm{gc})\sigma_3/(2\epsilon)}\ee^{\ii \mathcal{Q}(w;X,T)\sigma_3},\quad w\in\beta,
\end{equation}
and a similar jump condition consistent with the Schwarz symmetry $\mathbf{K}(w^*;X,T)=\mathbf{K}(w;X,T)^*$ for $w\in\beta^*$.
\item $\mathbf{K}(0;X,T)=\dot{\mathbf{O}}^\mathrm{out,l}(0;X,T)$.
\end{itemize}
The jump conditions for this problem now depend upon $(X,T)$ only via conjugation by $\ee^{\ii \mathcal{Q}(w;X,T)\sigma_3}$, so the dressing method applies.  That is, one defines the matrix $\mathbf{L}(w;X,T):=\mathbf{K}(w;X,T)\ee^{-\ii\mathcal{Q}(w;X,T)\sigma_3}$ and checks that its jump conditions are independent of $(X,T)$.  Hence $\partial_X\mathbf{L}(w;X,T)\cdot\mathbf{L}(w;X,T)^{-1}$ and $\partial_T\mathbf{L}(w;X,T)\cdot\mathbf{L}(w;X,T)^{-1}$ are functions analytic in $\mathbb{C}\setminus\mathbb{R}_+$ with boundary behavior along $\mathbb{R}_+$ and asymptotic behavior near $w=0$ and $w=\infty$ sufficient to conclude that these are Laurent polynomials of degree $(1,1)$ in $\sqrt{-w}$ with coefficients extracted from expansions of $\mathbf{K}(w;X,T)$ near $w=0,\infty$.  These matrices can in turn be identified with the coefficient matrices in \eqref{eq:wLaxPair} (in which $\epsilon=1$ and $(x,t)$ is replaced with $(X,T)$).  Thus one derives the Zakharov--Faddeev--Takhtajan Lax pair for \eqref{eq:sG-XT} directly from the Riemann--Hilbert conditions for $\mathbf{K}(w;X,T)$, and this in turn implies that the coefficients in the Lax pair are subject to the compatibility condition, which is the nonlinear partial differential equation \eqref{eq:sG-XT} itself.  Full details can be found in the proof of \cite[Proposition 2.1]{BuckinghamMiller2013}, but in any case this argument shows that 
\begin{equation}
\dot{C}:=K_{11}(0;X,T)
=\dot{O}_{11}^\mathrm{out,l}(0;X,T)\quad\text{and}\quad
\dot{S}:=K_{21}(0;X,T)
=\dot{O}_{21}^\mathrm{out,l}(0;X,T)
\end{equation}
are connected with an exact global solution $U(X,T)$ of the sine-Gordon equation in the form \eqref{eq:sG-XT} by $\dot{C}=\cos(\tfrac{1}{2}U(X,T))$ and $\dot{S}=\sin(\tfrac{1}{2}U(X,T))$.

\subsubsection{$\dot{\mathbf{E}}(w)$ as a Darboux transformation matrix for $\mathbf{K}(w;X,T)$}
\label{sec:Darboux}
Let $\dot{\mathbf{E}}(w)$ denote the solution of Riemann--Hilbert Problem~\ref{rhp:E-parametrix} for the parameter values indicated at the beginning of Section~\ref{sec:corrected-error}.  Again it is clear that this matrix depends on $(x,t)$ via the combinations $(X,T)$ so we write $\dot{\mathbf{E}}(w;X,T)$.   Consider the product
\begin{equation}
\widehat{\mathbf{K}}(w)=\widehat{\bfK}(w;X,T):=\dot{\mathbf{E}}(w;X,T)\mathbf{K}(w;X,T).
\end{equation}
It is straightforward to check that $\widehat{\mathbf{K}}(w;X,T)$ has all of the previously enumerated properties of $\mathbf{K}(w;X,T)$ except that $\widehat{\mathbf{K}}(w;X,T)$ has additional jump discontinuities across the closed contours $\partial U$ and $\partial U^*$.  The jump across $\partial U$ reads:
\begin{multline}
\widehat{\mathbf{K}}_+(w;X,T)=\widehat{\mathbf{K}}_-(w;X,T)
\ee^{-\ii\Omega_{\mathrm{p},N}\sigma_3/2}\ee^{-(G(w;X,T)-\ii\omega_\mathrm{gc}T/2)\sigma_3}\mathbf{M}W_\mathrm{gc}(w)^{-\sigma_3/4}\\
{}\cdot\left(W_\mathrm{gc}(w)^{-\sigma_3}+\ell(X,T)\sigma_+\right)W_\mathrm{gc}(w)^{\sigma_3/4}\mathbf{M}^{-1}\ee^{(G(w;X,T)-\ii\omega_\mathrm{gc}T/2)\sigma_3}\ee^{\ii\Omega_{\mathrm{p},N}\sigma_3/2},\quad w\in\partial U,
\end{multline}
where 
\begin{equation}
\Omega_{\mathrm{p},N}:=\frac{\Phi_\mathrm{gc}-(t_\mathrm{p}-t_\mathrm{gc})\omega_\mathrm{gc}}{\epsilon}.
\label{eq:Omega}
\end{equation}
Here, for simplicity we have written $W_\mathrm{gc}(w)^{\pm 1/4}$ where we mean the analytic continuation of $W_\mathrm{gc}(w)^{\pm 1/4}$ from $U\cap(\mathrm{III}\cup\mathrm{IV})$ to $U\setminus\beta$. Note that a similar interpretation of fractional powers of $W_\mathrm{gc}(w)$ holds throughout the following calculations.  The jump condition across $\partial U^*$ is consistent with the Schwarz symmetry $\widehat{\mathbf{K}}(w^*)=\widehat{\mathbf{K}}(w)^*$.

We have already argued that the sine-Gordon Lax pair equations can be deduced from $\mathbf{K}(w;X,T)$ by a dressing argument.  However we cannot immediately apply the same argument to $\widehat{\mathbf{K}}(w;X,T)$ because the jumps across $\partial U\cup\partial U^*$ do not become independent of $(X,T)$ upon conjugation by $\ee^{\ii \mathcal{Q}(w;X,T)\sigma_3}$.  Nonetheless, we begin the same way, by defining $\widehat{\mathbf{L}}(w;X,T):=\widehat{\mathbf{K}}(w;X,T)\ee^{-\ii \mathcal{Q}(w;X,T)\sigma_3}$.  It is easy to check that $\widehat{\mathbf{L}}(w;X,T)$ has the following properties:
\begin{itemize}
\item It is analytic for $w\in\mathbb{C}\setminus(\mathbb{R}_+\cup\beta\cup\beta^*\cup\partial U\cup\partial U^*)$,  is bounded and continuous up to the jump contour, except near $w=\alpha_\mathrm{gc}$ and $w=\alpha_\mathrm{gc}^*$ where it blows up no worse than a negative one-fourth power, and except near $w=0$ where $\widehat{\mathbf{L}}(w;X,T)\ee^{\ii \mathcal{Q}(w;X,T)\sigma_3}$ is bounded, and except near $w=\infty$.
\item 
$\widehat{\mathbf{L}}(w;X,T)\ee^{\ii \mathcal{Q}(w;X,T)\sigma_3}$ tends to $\mathbb{I}$ as $w\to\infty$.
\item 
It satisfies the following jump conditions:
\begin{equation}
\widehat{\mathbf{L}}_+(w;X,T)=\sigma_2\widehat{\mathbf{L}}_-(w;X,T)\sigma_2,\quad w>0,
\end{equation}
\begin{equation}
\widehat{\mathbf{L}}_+(w;X,T)=\widehat{\mathbf{L}}_-(w;X,T)\ii\sigma_1\ee^{\ii(\Phi_\mathrm{gc}+t_\mathrm{gc}\omega_\mathrm{gc})\sigma_3/(2\epsilon)},\quad w\in\beta,
\end{equation}
\begin{equation}
\widehat{\mathbf{L}}_+(w;X,T)=\widehat{\mathbf{L}}_-(w;X,T)\mathbf{V}(w;X,T),\quad w\in\partial U,
\end{equation}
where
\begin{multline}
\mathbf{V}(w;X,T):=\ee^{-\ii\Omega_{\mathrm{p},N}\sigma_3/2}\ee^{-(G(w;X,T)-\ii \mathcal{Q}(w;X,T)-\ii\omega_\mathrm{gc}T/2)\sigma_3}\mathbf{M}W_\mathrm{gc}(w)^{-\sigma_3/4}\\
{}\cdot\left(W_\mathrm{gc}(w)^{-\sigma_3}+\ell(X,T)\sigma_+\right)W_\mathrm{gc}(w)^{\sigma_3/4}\mathbf{M}^{-1}\ee^{(G(w;X,T)-\ii \mathcal{Q}(w;X,T)-\ii\omega_\mathrm{gc}T/2)\sigma_3}\ee^{\ii\Omega_{\mathrm{p},N}\sigma_3/2}.
\end{multline}
Similar jump conditions consistent with the Schwarz symmetry $\widehat{\mathbf{L}}(w^*;X,T)=\widehat{\mathbf{L}}(w;X,T)^*$ hold on the contours $\beta^*$ and $\partial U^*$.
\end{itemize}
We now consider the matrix $\mathbf{X}(w;X,T):=\partial\widehat{\mathbf{L}}(w;X,T)\cdot\widehat{\mathbf{L}}(w;X,T)^{-1}$ where $\partial=\partial_X$ or $\partial=\partial_T$.  Since the jump conditions for $\widehat{\mathbf{L}}(w;X,T)$ on $\mathbb{R}_+\cup\beta\cup\beta^*$ are independent of $(X,T)$, it follows that $\mathbf{X}(w;X,T)$ is analytic on $\beta\cup\beta^*$ and satisfies the jump condition
$\mathbf{X}_+(w;X,T)=\sigma_2\mathbf{X}_-(w;X,T)\sigma_2$ for $w>0$.
It is also easy to check that $\mathbf{X}(w;X,T)$ has removable singularities at $w=\alpha_\mathrm{gc}$ and $w=\alpha_\mathrm{gc}^*$ and takes continuous boundary values on $\mathbb{R}_+$ except at the origin.  Now the jump matrix $\mathbf{V}(w;X,T)$ depends on $(X,T)$, so the jump condition for $\mathbf{X}(w;X,T)$ on $\partial U$ reads
\begin{equation}
\mathbf{X}_+(w;X,T)=\mathbf{X}_-(w;X,T) + \widehat{\mathbf{L}}_-(w;X,T)\partial \mathbf{V}(w;X,T)\cdot\mathbf{V}(w;X,T)^{-1}\widehat{\mathbf{L}}_-(w;X,T)^{-1},\quad w\in\partial U.
\end{equation}
Since $\mathbf{X}_-(w;X,T)$ is the boundary value taken on $\partial U$ by the function $\mathbf{X}(w;X,T)$ analytic in $U$ from the interior, this formula shows that $\mathbf{X}(w;X,T)$ will admit an analytic continuation into $U$ through $\partial U$ from the exterior if $\widehat{\mathbf{L}}_-(w;X,T)\partial\mathbf{V}(w;X,T)\cdot\mathbf{V}(w;X,T)^{-1}\widehat{\mathbf{L}}_-(w;X,T)^{-1}$ is analytic in $U$.  

This condition is not so convenient to check, because although $\mathbf{V}(w;X,T)$ is known and has a simple form, $\widehat{\mathbf{L}}_-(w;X,T)$ is more implicitly determined (writing it down in any form requires Riemann theta functions of genus $1$).  However, we may observe that 
\begin{equation}
W_\mathrm{gc}(w)^{\sigma_3/4}\mathbf{M}^{-1}\ee^{(G(w;X,T)-\ii \mathcal{Q}(w;X,T)-\ii\omega_\mathrm{gc}T/2)\sigma_3}\ee^{\ii\Omega_{\mathrm{p},N}\sigma_3/2}
\end{equation}
is an exact local solution of the Riemann--Hilbert problem for $\widehat{\mathbf{L}}(w;X,T)$ within $U$ (i.e., it is analytic except on $\beta\cap U$ where it satisfies the jump condition satisfied by $\widehat{\mathbf{L}}(w;X,T)$, and it blows up to the same order at $w=\alpha_\mathrm{gc}$), and so 
\begin{equation}
\mathbf{F}_-(w;X,T):=\widehat{\mathbf{L}}_-(w;X,T)\ee^{-\ii\Omega_{\mathrm{p},N}\sigma_3/2}\ee^{-(G(w;X,T)-\ii \mathcal{Q}(w;X,T)-\ii\omega_\mathrm{gc}T/2)\sigma_3}\mathbf{M}W_\mathrm{gc}(w)^{-\sigma_3/4}
\end{equation} 
admits analytic continuation from $\partial U$ into $U$.  Thus $\widehat{\mathbf{L}}_-(w;X,T)\partial\mathbf{V}(w;X,T)\cdot\mathbf{V}(w;X,T)^{-1}\widehat{\mathbf{L}}_-(w;X,T)^{-1}=\mathbf{F}_-(w;X,T)\mathbf{R}(w;X,T)\mathbf{F}_-(w;X,T)^{-1}$ admits analytic continuation into $U$ if and only if $\mathbf{R}(w;X,T)$ does, where $\mathbf{R}(w;X,T)$ is explicitly given by
\begin{multline}
\mathbf{R}(w;X,T):=\\ W_\mathrm{gc}(w)^{\sigma_3/4}\mathbf{M}^{-1}\ee^{(G(w;X,T)-\ii \mathcal{Q}(w;X,T)-\ii\omega_\mathrm{gc}T/2)\sigma_3}\ee^{\ii\Omega_{\mathrm{p},N}\sigma_3/2}\cdot\partial\mathbf{V}(w;X,T)\cdot\mathbf{V}(w;X,T)^{-1}\\
{}\cdot\ee^{-\ii\Omega_{\mathrm{p},N}\sigma_3/2}\ee^{-(G(w;X,T)-\ii \mathcal{Q}(w;X,T)-\ii\omega_\mathrm{gc}T/2)\sigma_3}\mathbf{M}W_\mathrm{gc}(w)^{-\sigma_3/4}.
\end{multline}
Now, we calculate the derivatives with respect to $X$ and $T$, which enter $\mathbf{V}(w;X,T)$ only via $G(w;X,T)-\ii \mathcal{Q}(w;X,T)-\tfrac{1}{2}\ii\omega_\mathrm{gc}T$ and $\ell(X,T)$.  Thus:
\begin{multline}
\mathbf{R}(w;X,T)=\partial\left(G(w;X,T)-\ii \mathcal{Q}(w;X,T)-\frac{1}{2}\ii\omega_\mathrm{gc} T\right)\\
{}\cdot
\begin{bmatrix}-\ell(X,T)W_\mathrm{gc}(w)^{1/2} & W_\mathrm{gc}(w)^{1/2}-W_\mathrm{gc}(w)^{-3/2}+\ell(X,T)^2W_\mathrm{gc}(w)^{-1/2}\\W_\mathrm{gc}(w)^{-1/2}-W_\mathrm{gc}(w)^{3/2} & \ell(X,T)W_\mathrm{gc}(w)^{1/2}\end{bmatrix}
\\
{}+ \partial \ell(X,T) W_\mathrm{gc}(w)^{-1}\sigma_+.
\end{multline}
The second term on the right-hand side (on the third line) is clearly meromorphic in $U$ with a simple pole only at $w=\alpha_\mathrm{gc}$.  A simple contour deformation shows that 
\begin{equation}
G(w;X,T)-\ii \mathcal{Q}(w;X,T)-\frac{1}{2}\ii\omega_\mathrm{gc}T = \frac{R^\mathrm{gc}(w)}{\sqrt{-w}}j(w;X,T),
\end{equation}
where $j(w;X,T)$ is the function analytic at $w=\alpha_\mathrm{gc}$ defined by
\begin{equation}
j(w;X,T):=-\frac{1}{4}(X+T)+\frac{\omega_\mathrm{gc}T}{4\pi}\int_\lambda\frac{\sqrt{-w'}\,\dd w'}{R^\mathrm{gc}(w')(w'-w)}-\frac{\omega_\mathrm{gc}T}{2\pi}\int_{\beta^*}\frac{\sqrt{-w'}\,\dd w'}{R_+^\mathrm{gc}(w')(w'-w)},\quad w\in U
\label{eq:j-of-w}
\end{equation}
in which $\lambda$ denotes a clockwise-oriented loop beginning and terminating at $w=1$ on opposite sides of $\beta$ and enclosing $\beta$ and $U$.  Since $R^\mathrm{gc}(w)$ is independent of $(X,T)$, all dependence on $(X,T)$ is explicit and linear.  Moreover, $R^\mathrm{gc}(w)$ is locally proportional to $W_\mathrm{gc}(w)^{1/2}$, which implies that $\partial (G(w;X,T)-\ii \mathcal{Q}(w;X,T)-\tfrac{1}{2}\ii\omega_\mathrm{gc}T)$ has the form of the product of $W_\mathrm{gc}(w)^{1/2}$ with a function analytic and non-vanishing within $U$.  It follows that all elements of $\mathbf{R}(w;X,T)$ except possibly $R_{12}(w;X,T)$ are analytic within $U$.  The element $R_{12}(w;X,T)$ has an apparent simple pole at $w=\alpha_\mathrm{gc}$ only, but this singularity is removable if and only if
\begin{multline}
\partial \ell(X,T)=\left[\lim_{w\to \alpha_\mathrm{gc}}\frac{R^\mathrm{gc}(w)}{\sqrt{-w}W_\mathrm{gc}(w)^{1/2}}\right]\\
{}\cdot\partial\left[-\frac{1}{4}(X+T)+\frac{\omega_\mathrm{gc}T}{4\pi}
\int_\lambda\frac{\sqrt{-w'}\,\dd w'}{R^\mathrm{gc}(w')(w'-\alpha_\mathrm{gc})} -
\frac{\omega_\mathrm{gc}T}{2\pi}\int_{\beta^*}\frac{\sqrt{-w'}\,\dd w'}{R_+^\mathrm{gc}(w')(w'-\alpha_\mathrm{gc})}\right].
\end{multline}
Taking $\partial=\partial_X$ and $\partial=\partial_T$ and recalling \eqref{eq:ell-def} yields two conditions:
\begin{equation}
\ii a =   \left[\lim_{w\to \alpha_\mathrm{gc}}\frac{R^\mathrm{gc}(w)}{\sqrt{-w}W_\mathrm{gc}(w)^{1/2}}\right]\left(-\frac{1}{4}\right)
\label{eq:x-derivative-condition}
\end{equation}
and
\begin{equation}
b=\left[\lim_{w\to \alpha_\mathrm{gc}}\frac{R^\mathrm{gc}(w)}{\sqrt{-w}W_\mathrm{gc}(w)^{1/2}}\right]
\left(-\frac{1}{4} +\frac{\omega_\mathrm{gc}}{4\pi}\int_\lambda\frac{\sqrt{-w'}\,\dd w'}{R^\mathrm{gc}(w')(w'-\alpha_\mathrm{gc})} -
\frac{\omega_\mathrm{gc}}{2\pi}\int_{\beta^*}\frac{\sqrt{-w'}\,\dd w'}{R_+^\mathrm{gc}(w')(w'-\alpha_\mathrm{gc})}\right),
\label{eq:t-derivative-condition}
\end{equation}
where the coefficients $a<0$ and $b>0$ in $\ell(X,T)$ relate to partial derivatives of $s$ at the gradient catastrophe point as described in Lemma~\ref{lemma:PartialsOfs}.
\begin{lemma}
The conditions \eqref{eq:x-derivative-condition}--\eqref{eq:t-derivative-condition} hold, and therefore $R_{12}(w;X,T)$ may be considered to be analytic within $U$.
\label{lemma:X-T-identities}
\end{lemma}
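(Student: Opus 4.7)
The plan is to prove the two identities \eqref{eq:x-derivative-condition} and \eqref{eq:t-derivative-condition} separately, reducing each to a computation that matches the explicit formulas for $a$ and $b$ recorded in Lemma~\ref{lemma:PartialsOfs}. The first is a direct limit calculation; the second requires a contour deformation followed by evaluation of complete elliptic integrals, and in fact yields along the way the promised identification $A_\mathrm{gc}/(B_\mathrm{gc}\sin\theta)+\cot\theta = -\rho(m_\mathrm{gc})$ used (without proof) in the computation of $\partial_t s$ at the gradient catastrophe in Lemma~\ref{lemma:PartialsOfs}.

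For \eqref{eq:x-derivative-condition}, I would use that near $w=\alpha_\mathrm{gc}$, both $R^\mathrm{gc}(w)$ and $W_\mathrm{gc}(w)^{1/2}$ vanish to order $(w-\alpha_\mathrm{gc})^{1/2}$, so the limit equals $(\alpha_\mathrm{gc}-\alpha_\mathrm{gc}^*)^{1/2}/\bigl(\sqrt{-\alpha_\mathrm{gc}}\,W'_\mathrm{gc}(\alpha_\mathrm{gc})^{1/2}\bigr)$, with branches chosen consistent with the analytic continuations specified in the text. Using $\alpha_\mathrm{gc}=\ee^{\ii\theta}$, the identity $2\sin\theta = 4\sqrt{m_\mathrm{gc}(1-m_\mathrm{gc})}$, and the phase relation $\arg(W'_\mathrm{gc}(\alpha_\mathrm{gc}))=\tfrac{1}{2}\pi-\theta$ established in \eqref{eq:arg-Wprime-gc}, the limit is seen to be $2\ii (m_\mathrm{gc}(1-m_\mathrm{gc}))^{1/4}/|W'_\mathrm{gc}(\alpha_\mathrm{gc})|^{1/2}$. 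Multiplying by $-1/4$ matches $\ii a$ upon invoking the explicit value of $a$ from Lemma~\ref{lemma:PartialsOfs}, which completes the first identity.

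For \eqref{eq:t-derivative-condition}, I would first collapse the loop integral. The integrand $\sqrt{-w'}/(R^\mathrm{gc}(w')(w'-\alpha_\mathrm{gc}))$ has at $w'=\alpha_\mathrm{gc}$ a (locally integrable) branch-type singularity of order $-3/2$, and its only branch cut inside $\lambda$ is $\beta$; hence $\lambda$ can be collapsed onto both sides of $\beta$, giving $\int_\lambda(\cdot) = -2\int_\beta\sqrt{-w'}\,dw'/(R^\mathrm{gc}_+(w')(w'-\alpha_\mathrm{gc}))$. Substituting this in \eqref{eq:t-derivative-condition}, symmetrizing the $\beta$ and $\beta^*$ integrals using Schwarz symmetry $R^\mathrm{gc}_-(w') = -R^\mathrm{gc}_+(w')$ across each cut, and using $(w'-\alpha_\mathrm{gc})(w'-\alpha_\mathrm{gc}^*) = R^\mathrm{gc}(w')^2$ together with the partial fraction $1/(w'-\alpha_\mathrm{gc}) = (w'-\alpha_\mathrm{gc}^*)/R^\mathrm{gc}(w')^2$, reduces the bracketed quantity in \eqref{eq:t-derivative-condition} to a linear combination of the elliptic integrals $A_\mathrm{gc}$ and $B_\mathrm{gc}$ from \eqref{eq:AB}. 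Combining this with the value of the prefactor already computed in part (i), the identity \eqref{eq:t-derivative-condition} is equivalent to the claim
\begin{equation*}
\frac{A_\mathrm{gc}}{B_\mathrm{gc}\sin\theta}+\cot\theta = -\rho(m_\mathrm{gc}).
\end{equation*}

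Finally, this last claim is verified by direct evaluation of $A_\mathrm{gc}$ and $B_\mathrm{gc}$ as complete elliptic integrals. Since at the catastrophe point $\beta\cup\beta^*$ lies on the unit circle (see Section~\ref{sec:x=0-Symmetry}), I parametrize $w'=\ee^{\ii\phi}$ with $\phi$ running from $-\theta$ to $\theta$; then $R^\mathrm{gc}_+(\ee^{\ii\phi})^2 = 2\ee^{\ii\phi}(\cos\phi-\cos\theta)$, and the substitution $\sin(\phi/2) = \sqrt{m_\mathrm{gc}}\sin u$ with $m_\mathrm{gc}=\sin^2(\theta/2)$ brings both integrals to Legendre normal form, producing $B_\mathrm{gc} = 4\KK(m_\mathrm{gc})$ and $A_\mathrm{gc} = 4\bigl[2\EE(m_\mathrm{gc}) - (2-m_\mathrm{gc})\KK(m_\mathrm{gc})\bigr]$ after simplification. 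Inserting these expressions and $\sin\theta = 2\sqrt{m_\mathrm{gc}(1-m_\mathrm{gc})}$, $\cot\theta = (1-2m_\mathrm{gc})/(2\sqrt{m_\mathrm{gc}(1-m_\mathrm{gc})})$ into the left-hand side and collecting terms gives exactly $-\rho(m_\mathrm{gc})$ as defined in \eqref{eq:rho-func-define}. The main obstacle in the argument is the careful branch-tracking for $R^\mathrm{gc}$ and $W_\mathrm{gc}^{1/2}$ throughout all the phase computations and the contour deformation, especially at the branch-point endpoint $\alpha_\mathrm{gc}$ where the apparent simple pole collides with the $R^\mathrm{gc}$ singularity; but once the orientations and branches are pinned down, the remaining manipulations are standard.
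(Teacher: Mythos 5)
Your treatment of \eqref{eq:x-derivative-condition} is essentially the paper's argument: same local expansion of $R^\mathrm{gc}$ and $W_\mathrm{gc}^{1/2}$, same branch bookkeeping, same final match against $\ii a$. For \eqref{eq:t-derivative-condition}, however, your route diverges from the paper's and contains a concrete error that blocks it. You propose to collapse the loop $\lambda$ onto $\beta$, citing that the integrand has a ``(locally integrable) branch-type singularity of order $-3/2$'' at $w'=\alpha_\mathrm{gc}$. This is not locally integrable: in one real dimension $\int_0^\epsilon r^{-3/2}\,dr$ diverges, and correspondingly, if you shrink $\lambda$ to a dumbbell hugging both sides of $\beta$ plus a small circle of radius $r$ around $\alpha_\mathrm{gc}$, the circle and the cut segments each contribute $\mathcal{O}(r^{-1/2})$ and only the \emph{sum} is finite. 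So the naive formula $\int_\lambda = -2\int_\beta$ is $\infty=\infty$ and the partial-fraction reduction to $A_\mathrm{gc}$, $B_\mathrm{gc}$ does not follow without tracking the cancelling divergences. The paper circumvents this entirely: after combining the $\lambda$ and $\lambda^*$ integrals with $A_\mathrm{gc}$, $B_\mathrm{gc}$ into \eqref{eq:t-consistency}, it recognizes the integrand on $\lambda\cup\lambda^*$ as the exact derivative of $\sqrt{-w}(w-\alpha_\mathrm{gc}^*)/R^\mathrm{gc}(w)$, a function that vanishes like $(w-\alpha_\mathrm{gc})^{1/2}$ at the endpoints and is single-valued on the closed loop $\lambda\cup(-\lambda^*)$; the integral is therefore zero with no divergence to cancel. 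You should either reproduce that exact-differential observation or carry out the endpoint regularization carefully.

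Two further remarks. First, your plan to prove the identity $A_\mathrm{gc}/(B_\mathrm{gc}\sin\theta)+\cot\theta=-\rho(m_\mathrm{gc})$ inside this lemma is legitimate and would fill a gap the paper leaves to the following remark; but note the paper deliberately runs the proof of Lemma~\ref{lemma:X-T-identities} against the intermediate form \eqref{eq:t-derivative-old-formula} of $b$ (involving $A_\mathrm{gc}$, $B_\mathrm{gc}$ directly) rather than the $\rho$-form \eqref{eq:a-and-b-formulae}, precisely to avoid the circularity you would otherwise create. Second, your claimed value $B_\mathrm{gc}=4\KK(m_\mathrm{gc})$ disagrees with the paper's $B_\mathrm{gc}=2\KK(m_\mathrm{gc})$ (quoted from \cite{BuckinghamMiller2013}); this looks like a factor-of-two slip in the unit-circle parametrization and should be rechecked before the final identity can close.
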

\begin{proof}
Recall the expression for $\ii a = \partial s/\partial x (0,t_\mathrm{gc})$ obtained in Lemma~\ref{lemma:PartialsOfs}:
\begin{equation}
\ii a=\frac{\partial s}{\partial x}(0,t_\mathrm{gc})=-\ii\frac{(m_\mathrm{gc}(1-m_\mathrm{gc}))^{1/4}}{2|W_\mathrm{gc}'(\alpha_\mathrm{gc})|^{1/2}}.
\label{eq:x-derivative-old-formula}
\end{equation}
Now we compute the limit in \eqref{eq:x-derivative-condition} assuming that $w\in(\mathrm{III}\cup\mathrm{IV})\cap U$ so that $W_\mathrm{gc}(w)^{1/2}$ may be taken to have its original meaning with its branch cut on the negative real axis in the $W$-plane.  Assuming that $(w-\alpha_\mathrm{gc})^{1/2}$ is taken to agree with the principal branch in the distant right half $w$-plane and to have its branch cut being the union of an arc of the unit circle between $\alpha_\mathrm{gc}=\ee^{\ii\theta}$ and $-1$ with the ray $(-\infty,-1)$, we can then see easily that 
\begin{equation}
\mathop{\lim_{w\to\alpha_\mathrm{gc}}}_{w\in\mathrm{III}\cup\mathrm{IV}}\frac{R^\mathrm{gc}(w)}{(w-\alpha_\mathrm{gc})^{1/2}}=(\alpha_\mathrm{gc}-\alpha_\mathrm{gc}^*)^{1/2}=\ee^{\ii\pi/4}\sqrt{2\sin(\theta)},
\end{equation}
a number with argument equal to $\tfrac{1}{4}\pi$.  Also, 
\begin{equation}
\mathop{\lim_{w\to\alpha_\mathrm{gc}}}_{w\in\mathrm{III}\cup\mathrm{IV}}\frac{W_\mathrm{gc}(w)^{1/2}}{(w-\alpha_\mathrm{gc})^{1/2}}=Q'_\mathrm{gc}(0)=|W_\mathrm{gc}'(\alpha_\mathrm{gc})|^{1/2}\ee^{\ii\pi/4}\ee^{-\ii\theta/2}.
\end{equation}
Combining these proves \eqref{eq:x-derivative-condition} where \eqref{eq:x-derivative-old-formula} is used on the left-hand side.

In the proof of Lemma~\ref{lemma:PartialsOfs} we also obtained the formula
\begin{equation}
b=\frac{\partial s}{\partial t}(0,t_\mathrm{gc})=
-\ii\frac{(m_\mathrm{gc}(1-m_\mathrm{gc}))^{1/4}}{2|W'_\mathrm{gc}(\alpha_\mathrm{gc})|^{1/2}}\left[-\ii\frac{A_\mathrm{gc}}{B_\mathrm{gc}\sin(\theta)}-\ii\cot(\theta)\right],
\label{eq:t-derivative-old-formula}
\end{equation}
in which (cf., \eqref{eq:AB})
\begin{equation}
A_\mathrm{gc}:=\int_{\beta}\frac{\sqrt{-\xi}\,\dd\xi}{R^\mathrm{gc}_+(\xi)}+
\int_{\beta^*}\frac{\sqrt{-\xi}\,\dd\xi}{R^\mathrm{gc}_-(\xi)}\quad\text{and}\quad
B_\mathrm{gc}:=\int_{\beta}\frac{\dd\xi}{R_+^\mathrm{gc}(\xi)\sqrt{-\xi}} + 
\int_{\beta^*}\frac{\dd\xi}{R_-^\mathrm{gc}(\xi)\sqrt{-\xi}}.
\end{equation}
The latter integrals can be rewritten with the help of the loop contour $\lambda$ appearing in \eqref{eq:j-of-w} as follows:
\begin{equation}
A_\mathrm{gc}=\frac{1}{2}\int_\lambda\frac{\sqrt{-w}\,\dd w}{R^\mathrm{gc}(w)} +\frac{1}{2}
\int_{\lambda^*}\frac{\sqrt{-w}\,\dd w}{R^\mathrm{gc}(w)}\quad\text{and}\quad
B_\mathrm{gc}=\frac{1}{2}\int_\lambda\frac{\dd w}{R^\mathrm{gc}(w)\sqrt{-w}}+\frac{1}{2}\int_{\lambda^*}\frac{\dd w}{R^\mathrm{gc}(w)\sqrt{-w}}.
\label{eq:Agc-Bgc-loops}
\end{equation}
In light of the established consistency of \eqref{eq:x-derivative-condition} and \eqref{eq:x-derivative-old-formula}, to prove consistency of \eqref{eq:t-derivative-condition} and \eqref{eq:t-derivative-old-formula} it suffices to show that 
\begin{equation}
\sin(\theta)B_\mathrm{gc}+\sin(\theta)\int_\lambda\frac{\sqrt{-w}\,\dd w}{R^\mathrm{gc}(w)(w-\alpha_\mathrm{gc})}+\sin(\theta)\int_{\lambda^*}\frac{\sqrt{-w}\,\dd w}{R^\mathrm{gc}(w)(w-\alpha_\mathrm{gc})}+\ii A_\mathrm{gc}+\ii\cos(\theta)B_\mathrm{gc}=0
\label{eq:t-consistency}
\end{equation}
where we have also rewritten the last integral in the parentheses on the right-hand side of \eqref{eq:t-derivative-condition} in terms of a loop integral over $\lambda^*$ and taken into account the identity $\omega_\mathrm{gc}B_\mathrm{gc}=-\pi$.  Combining the integrals over $\lambda$ and $\lambda^*$ and using $\alpha_\mathrm{gc}=\ee^{\ii\theta}$, the left-hand side above reads
\begin{equation}
-\frac{\ii}{2}\int_{\lambda\cup\lambda^*}\frac{w^2-2\alpha_\mathrm{gc}w+\alpha_\mathrm{gc}\alpha_\mathrm{gc}^*}{\sqrt{-w}R^\mathrm{gc}(w)(w-\alpha_\mathrm{gc})}\,\dd w = \ii\int_{\lambda\cup\lambda^*}\frac{\dd}{\dd w}\left(\frac{\sqrt{-w}(w-\alpha_\mathrm{gc}^*)}{R^\mathrm{gc}(w)}\right)\,\dd w
\end{equation}
which vanishes because on $\lambda$ we can write $\sqrt{-w}=-\ii\sqrt{w}$ while on $\lambda^*$ we can write $\sqrt{-w}=\ii\sqrt{w}$ and because $\lambda\cup (-\lambda^*)$ is a closed clockwise-oriented loop around $\beta$ on which $\sqrt{w}(w-\alpha_\mathrm{gc}^*)/R^\mathrm{gc}(w)$ is single-valued.
\end{proof}
\begin{remark}
Taking the imaginary part of the identity \eqref{eq:t-consistency}, some contour deformations along the lines indicated above show that
\begin{equation}
\frac{A_\mathrm{gc}}{B_\mathrm{gc}\sin(\theta)}+\cot(\theta)=\frac{2\sin(\theta)}{B_\mathrm{gc}}\int_{-\infty}^0\frac{\sqrt{-w}\,\dd w}{R^\mathrm{gc}(w)^3}.
\label{eq:Agc-etc-integral}
\end{equation}
Similar contour deformations applied to $B_\mathrm{gc}$ given by \eqref{eq:Agc-Bgc-loops} can be written as
\begin{equation}
B_\mathrm{gc}=-\int_{-\infty}^0\frac{\dd w}{R^\mathrm{gc}(w)\sqrt{-w}}.
\end{equation}
Since $R^\mathrm{gc}(w)<0$ for real $w<1$, it follows that $A_\mathrm{gc}/(B_\mathrm{gc}\sin(\theta))+\cot(\theta)<0$.  In \cite[Pg.\@ 56]{BuckinghamMiller2013} explicit substitutions are given that show that $B_\mathrm{gc}$ is related to the complete elliptic integral of the first kind:  $B_\mathrm{gc}=2\KK(m_\mathrm{gc})$ where $m_\mathrm{gc}$ is given by \eqref{eq:mgc-alphagc}.  Similar substitutions in the integral in \eqref{eq:Agc-etc-integral} show that it involves elliptic integrals of both the first and second kinds, and indeed that $A_\mathrm{gc}/(B_\mathrm{gc}\sin(\theta))+\cot(\theta)$ can be written in the form $-\rho(m_\mathrm{gc})$ (see \eqref{eq:rho-func-define}) and that the quantity $\rho(m_\mathrm{gc})$ is strictly positive for $0<m_\mathrm{gc}<1$.
\end{remark}

It follows that $\mathbf{X}(w;X,T)$ can be re-defined within $U$ so as to be analytic across $\partial U$ and within $U$ (this function will not necessarily take the boundary value $\mathbf{X}_-(w;X,T)$ on $\partial U$ from within $U$).  Appealing to Schwarz symmetry shows that a similar redefinition can be made within $U^*$.  Thus, $\mathbf{X}(w;X,T)$ can be regarded as a function analytic in $\mathbb{C}$ except for along $\mathbb{R}_+$ where it satisfies the jump condition $\mathbf{X}_+(w;X,T)=\sigma_2\mathbf{X}_-(w;X,T)\sigma_2$ and having computable behavior as $w\to 0$ and $w\to\infty$.  Following now the same sort of arguments described above in Section~\ref{sec:CdotSdot-solve-sG}, one identifies $\mathbf{X}(w;X,T)$ with a Laurent polynomial in $\sqrt{-w}$ of degree $(1,1)$.  The asymptotics of $\widehat{\mathbf{L}}(w;X,T)$ as $w\to 0$ and $w\to\infty$ allow the identification of $\mathbf{X}(w;X,T)$ with the coefficient matrix in the $X$ or $T$ equation (depending on the interpretation of $\partial$) of the Zakharov--Faddeev--Takhtajan Lax pair for the sine-Gordon equation in the form \eqref{eq:sG-XT}, for potentials 
\begin{equation}
\begin{split}
\ddot{C}(X,T):=\widehat{K}_{11}(0;X,T)
&=\dot{E}_{11}(0;X,T)K_{11}(0;X,T) + \dot{E}_{12}(0;X,T)K_{21}(0;X,T)\\
&=\dot{E}_{11}(0;X,T)\dot{O}^\mathrm{out,l}_{11}(0;X,T)+\dot{E}_{12}(0;X,T)\dot{O}^\mathrm{out,l}_{21}(0;X,T)
\quad\text{and}\\
\ddot{S}(X,T):=\widehat{K}_{21}(0;X,T)
&=\dot{E}_{21}(0;X,T)K_{11}(0;X,T) + \dot{E}_{22}(0;X,T)K_{21}(0;X,T)\\&=\dot{E}_{21}(0;X,T)\dot{O}^\mathrm{out,l}_{11}(0;X,T)+\dot{E}_{22}(0;X,T)\dot{O}^\mathrm{out,l}_{21}(0;X,T)
\end{split}
\label{eq:cos-sin-linearized}
\end{equation}
that can be written in the form $\ddot{C}(X,T)=\cos(\tfrac{1}{2}U(X,T))$ and $\ddot{S}(X,T)=\sin(\tfrac{1}{2}U(X,T))$ for some global solution $U(X,T)$ of the sine-Gordon equation in the form \eqref{eq:sG-XT}.  The solution $U(X,T)$ related to $\ddot{C}$ and $\ddot{S}$ is a \emph{different solution} than that related to $\dot{C}$ and $\dot{S}$ in Section~\ref{sec:CdotSdot-solve-sG}.  In particular the former solution has nontrivial $X$-dependence while the latter is independent of $X$.  The link between the two solutions is given by a B\"acklund transformation of \eqref{eq:sG-XT} implied by the Darboux transformation defined by the matrix $\dot{\mathbf{E}}(w;X,T)$.

\subsection{Explicit solution of Riemann--Hilbert Problem~\ref{rhp:E-parametrix}}
To solve for $\dot{\mathbf{E}}(w)$, we assume that in the domain exterior to the neighborhoods $U$ and $U^*$, $\dot{\mathbf{E}}(w)$ is a rational function of $\sqrt{-w}$ given by:
\begin{multline}
\dot{\mathbf{E}}(w)=\mathbb{I}+\frac{\mathbf{G}}{\sqrt{-w}-\sqrt{-\alpha_\mathrm{gc}}}-\frac{\sigma_2\mathbf{G}\sigma_2}{\sqrt{-w}+\sqrt{-\alpha_\mathrm{gc}}}+\frac{\mathbf{G}^*}{\sqrt{-w}-\sqrt{-\alpha_\mathrm{gc}^*}}-
\frac{\sigma_2\mathbf{G}^*\sigma_2}{\sqrt{-w}+\sqrt{-\alpha_\mathrm{gc}^*}},\\
w\in\mathbb{C}\setminus(U\cup U^*\cup\mathbb{R}_+),
\end{multline}
where $\mathbf{G}$ is a $2\times 2$ matrix independent of $w$ to be determined.  Being discontinuous only along the branch cut $\mathbb{R}_+$ of $\sqrt{-w}$, this ``exterior'' ansatz is clearly consistent with the analyticity condition of Riemann--Hilbert Problem~\ref{rhp:E-parametrix}, and it automatically satisfies the needed jump condition for $w>0$.  It also satisfies the normalization condition as $w\to\infty$, and is Schwarz-symmetric.  Note that by finding two common denominators, we can also write the exterior ansatz in the form
\begin{multline}
\dot{\mathbf{E}}(w)=\mathbb{I} + \frac{(\sigma_2\mathbf{G}\sigma_2-\mathbf{G})\sqrt{-w}-(\sigma_2\mathbf{G}\sigma_2+\mathbf{G})\sqrt{-\alpha_\mathrm{gc}}}{w-\alpha_\mathrm{gc}} \\{}+
\frac{(\sigma_2\mathbf{G}^*\sigma_2-\mathbf{G}^*)\sqrt{-w}-(\sigma_2\mathbf{G}^*\sigma_2+\mathbf{G}^*)\sqrt{-\alpha_\mathrm{gc}^*}}{w-\alpha_\mathrm{gc}^*},\quad w\in\mathbb{C}\setminus(U\cup U^*\cup\mathbb{R}_+).
\label{eq:exterior-ansatz}
\end{multline}
It is furthermore convenient to represent the matrices $\sigma_2\mathbf{G}\sigma_2\pm\mathbf{G}$ equivalently in the form
\begin{equation}
\sigma_2\mathbf{G}\sigma_2-\mathbf{G}=\ii A_3\sigma_3+\ii A_1\sigma_1\quad\text{and}\quad
\sigma_2\mathbf{G}\sigma_2+\mathbf{G}=\frac{A_0}{\sqrt{-\alpha_\mathrm{gc}}}\mathbb{I} +\frac{\ii A_2}{\sqrt{-\alpha_\mathrm{gc}}}\sigma_2.
\label{eq:A's}
\end{equation}
where $A_j$, $j=0,1,2,3$, are complex scalars that parametrize the four elements of $\mathbf{G}$.
These scalars are determined by the condition that if $\dot{\mathbf{E}}_+(w)$ is the exterior boundary value of $\dot{\mathbf{E}}(w)$ given by \eqref{eq:exterior-ansatz} for $w\in\partial U$, then as $\dot{\mathbf{E}}_-(w)\mathbf{C}(w)$ represents the boundary value taken on $\partial U$ of a function analytic in $U$, the principal part of the Laurent series of
\begin{equation}
\dot{\mathbf{E}}_-(w)\mathbf{C}(w)=\dot{\mathbf{E}}_+(w)\left[\mathbf{C}(w)\left(W_\mathrm{gc}(w)^{-\sigma_3}+\ell(X,T)\sigma_+\right)\mathbf{C}(w)^{-1}\right]^{-1}\mathbf{C}(w)
\end{equation}
about $w=\alpha_\mathrm{gc}$ must vanish.  Since the right-hand side can be viewed as a product of two factors each having a simple pole at $w=\alpha_\mathrm{gc}$, the coefficients of $(w-\alpha_\mathrm{gc})^{-2}$ and $(w-\alpha_\mathrm{gc})^{-1}$ in the Laurent expansion of the product must both be set to zero.  These conditions, along with their complex conjugates (or equivalently, imposing similar conditions for the jump across $\partial U^*$), will uniquely determine $A_j$, $j=0,\dots,3,$ and hence $\mathbf{G}$.

Using \eqref{eq:A's} in \eqref{eq:exterior-ansatz}, the Laurent expansion of $\dot{\mathbf{E}}_+(w)$ about $w=\alpha_\mathrm{gc}$ is
\begin{equation}
\dot{\mathbf{E}}_+(w)=\frac{\mathbf{E}_1}{w-\alpha_\mathrm{gc}}+\mathbf{E}_0+\mathcal{O}(w-\alpha_\mathrm{gc}),\quad w\to\alpha_\mathrm{gc}
\end{equation}
where  
\begin{equation}
\mathbf{E}_1:=\ii\sqrt{-\alpha_\mathrm{gc}}(A_3\sigma_3+A_1\sigma_1)-(A_0\mathbb{I}+\ii A_2\sigma_2)
\end{equation}
and 
\begin{equation}
\mathbf{E}_0:=\mathbb{I}-\frac{\ii}{2\sqrt{-\alpha_\mathrm{gc}}}(A_3\sigma_3+A_1\sigma_1)-\frac{1}{2\ii\mathrm{Im}\{\alpha_\mathrm{gc}\}}\left[\ii\sqrt{-\alpha_\mathrm{gc}}(A_3^*\sigma_3+A_1^*\sigma_1)+(A_0^*\mathbb{I}+\ii A_2^*\sigma_2)\right].
\end{equation}
We then similarly obtain the Laurent expansion
\begin{multline}
\left[\mathbf{C}(w)\left(W_\mathrm{gc}(w)^{-\sigma_3}+\ell(X,T)\sigma_+\right)\mathbf{C}(w)^{-1}\right]^{-1}\mathbf{C}(w)=\\
\begin{bmatrix}0 & M_1\\
0 & M_2\end{bmatrix}\frac{1}{w-\alpha_\mathrm{gc}} +\begin{bmatrix}
0 & \delta  M_1 +m_1\\
0 & \delta M_2 +m_2 \end{bmatrix} + \mathcal{O}(w-\alpha_\mathrm{gc}),\quad
w\to\alpha_\mathrm{gc}
\end{multline}
where
\begin{equation}
M_j:=\frac{C_{j2}(\alpha_\mathrm{gc})}{W_\mathrm{gc}'(\alpha_\mathrm{gc})} \quad\text{and}\quad m_j:=\frac{C_{j2}'(\alpha_\mathrm{gc})}{W_\mathrm{gc}'(\alpha_\mathrm{gc})}-\ell(X,T) C_{j1}(\alpha_\mathrm{gc}),\quad j=1,2
\end{equation}
and
\begin{equation}
\delta:=-\frac{W_\mathrm{gc}''(\alpha_\mathrm{gc})}{2W_\mathrm{gc}'(\alpha_\mathrm{gc})}.
\end{equation}
Therefore, the condition that the coefficient of $(w-\alpha_\mathrm{gc})^{-2}$ in the Laurent expansion of $\dot{\mathbf{E}}_-(w)$ about $w=\alpha_\mathrm{gc}$ should vanish is the vector equation
\begin{equation}
\mathbf{E}_1\begin{bmatrix}M_1\\M_2\end{bmatrix}=\mathbf{0},
\label{eq:double-pole}
\end{equation}
and the condition that the residue of $\dot{\mathbf{E}}_-(w)$ at $w=\alpha_\mathrm{gc}$ should vanish is the vector equation
\begin{equation}
\mathbf{E}_0\begin{bmatrix}M_1\\M_2\end{bmatrix} + \delta\mathbf{E}_1\begin{bmatrix}M_1\\M_2\end{bmatrix} +\mathbf{E}_1\begin{bmatrix}m_1\\m_2\end{bmatrix}=\mathbf{0}.
\label{eq:simple-pole}
\end{equation}
(Note that taking into account \eqref{eq:double-pole} we may omit the term proportional to $\delta$.)
The condition \eqref{eq:double-pole} indicates that $\mathbf{E}_1$ has rank at most one, and given that $[M_1;M_2]^\top$ is in its kernel it may be written in the form
\begin{equation}
\mathbf{E}_1=\begin{bmatrix}u_1\\u_2\end{bmatrix}\begin{bmatrix}C_{22}(\alpha_\mathrm{gc});&-C_{12}(\alpha_\mathrm{gc})\end{bmatrix}.
\end{equation}
The scalars $A_j$ for $j=0,\dots,3$ can then be expressed in terms of $u_1$ and $u_2$ by 
\begin{equation}
\begin{split}
A_0&=-\frac{1}{2}(C_{22}(\alpha_\mathrm{gc})u_1-C_{12}(\alpha_\mathrm{gc})u_2)\\
A_1&=\frac{1}{2\ii\sqrt{-\alpha_\mathrm{gc}}}(C_{22}(\alpha_\mathrm{gc})u_2-C_{12}(\alpha_\mathrm{gc})u_1)\\
A_2&=\frac{1}{2}(C_{22}(\alpha_\mathrm{gc})u_2+C_{12}(\alpha_\mathrm{gc})u_1)\\
A_3&=\frac{1}{2\ii\sqrt{-\alpha_\mathrm{gc}}}(C_{22}(\alpha_\mathrm{gc})u_1+C_{12}(\alpha_\mathrm{gc})u_2).
\end{split}
\label{eq:As-us}
\end{equation}
So, it remains to determine $u_j$, $j=1,2$.  With the help of \eqref{eq:As-us}, we first express the elements of $\mathbf{E}_0$ explicitly in terms of $u_j$, $j=1,2$.  Then we observe that the terms in \eqref{eq:simple-pole} proportional to $\ell(X,T)$ simplify due to the fact that $\det(\mathbf{C}(\alpha_\mathrm{gc}))=1$, so ultimately \eqref{eq:simple-pole} takes the form
\begin{equation}
\mathbf{P}\mathbf{u} +\mathbf{Q}\mathbf{u}^*=-\begin{bmatrix}C_{12}(\alpha_\mathrm{gc})\\C_{22}(\alpha_\mathrm{gc})\end{bmatrix},
\label{eq:u-ustar}
\end{equation}
where $\mathbf{u}=[u_1; u_2]^\top$ and where 
\begin{equation}
\mathbf{P}:=(C_{22}(\alpha_\mathrm{gc})C_{12}'(\alpha_\mathrm{gc})-C_{22}'(\alpha_\mathrm{gc})C_{12}(\alpha_\mathrm{gc})-\ell(X,T) W_\mathrm{gc}'(\alpha_\mathrm{gc}))\mathbb{I} + \frac{1}{4\alpha_\mathrm{gc}}(C_{12}(\alpha_\mathrm{gc})^2+C_{22}(\alpha_\mathrm{gc})^2)\ii\sigma_2
\label{eq:P-matrix-define}
\end{equation}
and 
\begin{multline}
\mathbf{Q}:=\frac{1}{4\ii\mathrm{Im}\{\alpha_\mathrm{gc}\}}\left(\frac{\sqrt{-\alpha_\mathrm{gc}}}{\sqrt{-\alpha_\mathrm{gc}^*}}+1\right)(C_{12}(\alpha_\mathrm{gc})C_{22}(\alpha_\mathrm{gc})^*-C_{12}(\alpha_\mathrm{gc})^*C_{22}(\alpha_\mathrm{gc}))\mathbb{I} \\{}+ 
\frac{1}{4\ii\mathrm{Im}\{\alpha_\mathrm{gc}\}}\left(\frac{\sqrt{-\alpha_\mathrm{gc}}}{\sqrt{-\alpha_\mathrm{gc}^*}}-1\right)(|C_{12}(\alpha_\mathrm{gc})|^2 + |C_{22}(\alpha_\mathrm{gc})|^2)\ii\sigma_2.
\label{eq:Q-matrix-define}
\end{multline}
Observe that \eqref{eq:u-ustar} combines with its complex conjugate to form a $4\times 4$ square linear system to determine $u_1$, $u_2$, $u_1^*$ and $u_2^*$.  This system is necessarily uniquely solvable because it is equivalent to the solution of  Riemann--Hilbert Problem~\ref{rhp:E-parametrix} for which Lemma~\ref{lemma:dotE-exists} asserts unique solvability.   

To further analyze the matrix $\dot{\mathbf{E}}(0)$ appearing in the definitions \eqref{eq:ddotCddotS} of $\ddot{C}$ and $\ddot{S}$, we first use \eqref{eq:exterior-ansatz} which is valid for $w=0$, and combine with \eqref{eq:A's} to get
\begin{equation}
\dot{\mathbf{E}}(0)=\begin{bmatrix}1+2\mathrm{Re}\{\alpha_\mathrm{gc}^{-1}A_0\} & 2\mathrm{Re}\{\alpha_\mathrm{gc}^{-1}A_2\}\\
-2\mathrm{Re}\{\alpha_\mathrm{gc}^{-1}A_2\} & 1+2\mathrm{Re}\{\alpha_\mathrm{gc}^{-1}A_0\}\end{bmatrix}.
\label{eq:dotEzero-As}
\end{equation}
Starting from \eqref{eq:cos-sin-linearized} with $\dot{C}:=\dot{O}^\mathrm{out,l}_{11}(0)$ and $\dot{S}:=\dot{O}^\mathrm{out,l}_{21}(0)$, and using \eqref{eq:dotEzero-As}, the defect potentials $\ddot{C}$ and $\ddot{S}$ can be written as
\begin{equation}
\begin{split}
\ddot{C}&=\dot{C} + \mathrm{Re}\{2\ee^{-\ii\theta}A_0\}\dot{C} + \mathrm{Re}\{2\ee^{-\ii\theta}A_2\}\dot{S}\\
\ddot{S}&=\dot{S} -\mathrm{Re}\{2\ee^{-\ii\theta}A_2\}\dot{C} + \mathrm{Re}\{2\ee^{-\ii\theta}A_0\}\dot{S}.
\end{split}
\label{eq:ddotCS-simple}
\end{equation}
From \eqref{eq:As-us} we can express the vector $\mathbf{v}:=\left[2A_0;\;2A_2\right]^\top$ in terms of $\mathbf{u}$ by
\begin{equation}
\mathbf{v}=-\sigma_3\mathbf{B}\mathbf{u},\quad\mathbf{B}:=\begin{bmatrix}C_{22}(\alpha_\mathrm{gc}) & -C_{12}(\alpha_\mathrm{gc})\\
C_{12}(\alpha_\mathrm{gc}) & C_{22}(\alpha_\mathrm{gc})\end{bmatrix}.
\end{equation}
According to Lemma~\ref{lemma:C-real}, the matrix $\mathbf{B}$ is invertible provided that  $\epsilon^{-1}\Phi^l(t)=\Omega_{\mathrm{p},N}-\omega_\mathrm{gc}T$ is not a half-integer multiple of $\pi$; regarding these isolated values of $T$ as eventually removable singularities, we can invert $\mathbf{B}$ explicitly:
\begin{equation}
\mathbf{B}^{-1}=\frac{1}{C_{12}(\alpha_\mathrm{gc})^2+C_{22}(\alpha_\mathrm{gc})^2}\begin{bmatrix}C_{22}(\alpha_\mathrm{gc}) &C_{12}(\alpha_\mathrm{gc})\\
-C_{12}(\alpha_\mathrm{gc}) & C_{22}(\alpha_\mathrm{gc})\end{bmatrix}\quad\implies\quad
\mathbf{u}=-\mathbf{B}^{-1}\sigma_3\mathbf{v}.
\end{equation}
Note that as $\mathbf{P}$, $\mathbf{Q}$, and $\mathbf{B}$ are all scalar linear combinations of $\mathbb{I}$ and $\ii\sigma_2$, they and their conjugates all mutually commute.
The right-hand side of \eqref{eq:u-ustar} can be written in the form
\begin{equation}
-\begin{bmatrix}C_{12}(\alpha_\mathrm{gc})\\C_{22}(\alpha_\mathrm{gc})\end{bmatrix}=\sigma_3\mathbf{B}\begin{bmatrix}0\\1\end{bmatrix}.
\end{equation}
Hence, eliminating $\mathbf{u}$ in favor of $\mathbf{v}$ and including the complex conjugate as a second equation gives the $4\times 4$ system on $\mathbf{v}$ and $\mathbf{v}^*$
\begin{equation}
\begin{split}
-\mathbf{PB}^{-1}\sigma_3\mathbf{v} - \mathbf{QB}^{-*}\sigma_3\mathbf{v}^* &=\sigma_3\mathbf{B}\begin{bmatrix}0\\1\end{bmatrix}\\
-\mathbf{Q}^*\mathbf{B}^{-1}\sigma_3\mathbf{v}-\mathbf{P}^*\mathbf{B}^{-*}\sigma_3\mathbf{v}^*&=\sigma_3\mathbf{B}^*\begin{bmatrix}0\\1\end{bmatrix},
\end{split}
\end{equation}
from which $\mathbf{v}^*$ is easily eliminated giving
a $2\times 2$ system on $\mathbf{v}$ alone:
\begin{equation}
(\mathbf{QQ}^*-\mathbf{P}^*\mathbf{P})\mathbf{B}^{-1}\sigma_3\mathbf{v}=(\mathbf{P}^*\sigma_3\mathbf{B}-\mathbf{Q}\sigma_3\mathbf{B}^*)\begin{bmatrix}0\\1\end{bmatrix}.
\end{equation}
Multiplying on the left by $\mathbf{B}$ which commutes with everything except $\sigma_3$ gives
\begin{equation}
(\mathbf{QQ}^*-\mathbf{P}^*\mathbf{P})\sigma_3\mathbf{v}=(\mathbf{P}^*\mathbf{B}\sigma_3\mathbf{B}-\mathbf{Q}\mathbf{B}\sigma_3\mathbf{B}^*)\begin{bmatrix}0\\1\end{bmatrix}.
\end{equation}
Now we pass to the variables appearing in the formul\ae\ \eqref{eq:ddotCS-simple} by noting that $\mathbf{Q}\mathbf{Q}^*-\mathbf{P}^*\mathbf{P}$ is a real matrix, multiplying by $\ee^{-\ii\theta}\sigma_3$ and taking the real part:
\begin{equation}
\sigma_3(\mathbf{QQ}^*-\mathbf{P}^*\mathbf{P})\sigma_3\mathbf{w}=\mathrm{Re}\left\{\ee^{-\ii\theta}\sigma_3\mathbf{P}^*\mathbf{B}\sigma_3\mathbf{B}-\ee^{-\ii\theta}\sigma_3\mathbf{QB}\sigma_3\mathbf{B}^*\right\}\begin{bmatrix}0\\1\end{bmatrix},
\label{eq:w-system}
\end{equation}
where
\begin{equation}
\mathbf{w}=\mathrm{Re}\{\ee^{-\ii\theta}\mathbf{v}\}=\begin{bmatrix}\mathrm{Re}\{2\ee^{-\ii\theta}A_0\}\\\mathrm{Re}\{2\ee^{-\ii\theta}A_2\}\end{bmatrix}.
\end{equation}
\begin{lemma}
We have the following explicit representations, in which $\rho(\cdot)$ is defined by \eqref{eq:rho-func-define} and we use the abbreviated notation \eqref{eq:abbreviated-notation-1}--\eqref{eq:abbreviated-notation-3} for $m=m_\mathrm{gc}$ and $\Omega=\Omega_{\mathrm{p},N}$:
\begin{equation}
|W_\mathrm{gc}'(\alpha_\mathrm{gc})|^{-1}\sigma_3\mathbf{QQ}^*\sigma_3=\frac{1-2\mathrm{dn}^2}{4\sqrt{m_\mathrm{gc}(1-m_\mathrm{gc})}}\cdot\mathbb{I}+\frac{\mathrm{sn}\,\mathrm{dn}}{2\sqrt{1-m_\mathrm{gc}}}\cdot\ii\sigma_2,
\label{eq:system-LHS-Q}
\end{equation}
and
\begin{multline}
|W_\mathrm{gc}'(\alpha_\mathrm{gc})|^{-1}\sigma_3\mathbf{P}^*\mathbf{P}\sigma_3 = 
\frac{1}{4}\left[\left(\frac{(1-m_\mathrm{gc})^{3/4}}{m_\mathrm{gc}^{1/4}}\mathrm{p}-(m_\mathrm{gc}(1-m_\mathrm{gc}))^{1/4}\rho(m_\mathrm{gc})T\right)^2\right.\\
\left.+\sqrt{m_\mathrm{gc}(1-m_\mathrm{gc})}X^2 -
\sqrt{\frac{m_\mathrm{gc}}{1-m_\mathrm{gc}}}\mathrm{cn}^2\right]\cdot \mathbb{I}\\
-\frac{1}{2}\left[\sqrt{1-m_\mathrm{gc}}\mathrm{p}-\sqrt{m_\mathrm{gc}}\rho(m_\mathrm{gc})T\right]\mathrm{cn}\cdot\ii\sigma_2,
\label{eq:system-LHS-P}
\end{multline}
and
\begin{multline}
|W'_\mathrm{gc}(\alpha_\mathrm{gc})|^{-1}\mathrm{Re}\left\{\ee^{-\ii\theta}\sigma_3\mathbf{P}^*\mathbf{B}\sigma_3\mathbf{B}-\ee^{-\ii\theta}\sigma_3\mathbf{QB}\sigma_3\mathbf{B}^*\right\} = \\
\left(\left(\sqrt{1-m_\mathrm{gc}}\mathrm{p}-\sqrt{m_\mathrm{gc}}\rho(m_\mathrm{gc})T\right)\mathrm{cn}+\frac{\mathrm{sn}\,\mathrm{dn}}{\sqrt{1-m_\mathrm{gc}}}\right)\cdot\mathbb{I}.
\label{eq:system-RHS}
\end{multline}
\label{lemma:system-simplify}
\end{lemma}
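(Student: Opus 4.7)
The plan is to reduce each of the three identities to an exercise in the explicit theta-function description of the phase-linearized outer parametrix, and then to simplify using Jacobi elliptic identities together with Lemma~\ref{lemma:C-real} (for the quadratic invariants of $C_{12}(\alpha_\gc),C_{22}(\alpha_\gc)$) and Lemma~\ref{lemma:PartialsOfs} (for $a$ and $b$). First I would unwind $\mathbf{C}(w)=\widetilde{\bfO}^{\mathrm{out},l}(w)\mathbf{M}W_\gc(w)^{-\sigma_3/4}$ via \eqref{eq:CmatrixDefine}--\eqref{eq:Odot-Otilde}, using the fact that $\dot{\bfO}^{\mathrm{out},l}(w)=\mathbf{Y}(w;\beta_\gc,\epsilon^{-1}\Phi^l(t))$ is built from genus-one Riemann theta functions in Appendix~\ref{app:theta}. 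At $w=\alpha_\gc$ the branch points of $\mathbf{Y}$ are balanced against the simple zero of $W_\gc$, so $C_{12}(\alpha_\gc),C_{22}(\alpha_\gc)$ and their derivatives $C_{12}'(\alpha_\gc),C_{22}'(\alpha_\gc)$ all reduce to closed expressions in $\mathrm{sn},\mathrm{cn},\mathrm{dn}$ (and, for the derivatives, in $\mathrm{p}$) evaluated at the argument $2\KK(m_\gc)\Omega_{\mathrm{p},N}/\pi + T$ that emerges once $\epsilon^{-1}\Phi^l(t)=\Omega_{\mathrm{p},N}-\omega_\gc T$ is inserted and $\omega_\gc=-\pi/(2\KK(m_\gc))$ is applied.

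For \eqref{eq:system-LHS-Q}, note from \eqref{eq:Q-matrix-define} that $\mathbf{Q}$ and $\mathbf{Q}^*$ are scalar combinations of $\mathbb{I}$ and $\ii\sigma_2$ with coefficients involving only $|C_{12}(\alpha_\gc)|^2+|C_{22}(\alpha_\gc)|^2$ and $C_{12}(\alpha_\gc)C_{22}(\alpha_\gc)^*-C_{12}(\alpha_\gc)^*C_{22}(\alpha_\gc)$; conjugation by $\sigma_3$ merely flips the sign of the $\ii\sigma_2$ coefficient, so $\sigma_3\mathbf{QQ}^*\sigma_3$ remains in the commutative algebra generated by $\mathbb{I}$ and $\ii\sigma_2$. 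Expressing each of the two quadratic invariants as a ratio of theta-function squares at the branch point and reducing via the standard identities between theta functions and $\mathrm{sn},\mathrm{dn}$ gives exactly the combination $(1-2\mathrm{dn}^2)\mathbb{I}+2\sqrt{m_\gc}\,\mathrm{sn}\,\mathrm{dn}\cdot\ii\sigma_2$ (up to the stated prefactors involving $\sqrt{m_\gc(1-m_\gc)}$), with $|W_\gc'(\alpha_\gc)|$ arising from the balance of $W_\gc(w)^{-1/4}$ against the branch-point singularity of $\mathbf{Y}$ in \eqref{eq:CmatrixDefine}. The known identity $C_{12}(\alpha_\gc)^2+C_{22}(\alpha_\gc)^2 = 2\alpha_\gc W_\gc'(\alpha_\gc)|W_\gc'(\alpha_\gc)|^{-1/2}(m_\gc/(1-m_\gc))^{1/4}\mathrm{cn}$ from Lemma~\ref{lemma:C-real} serves as a convenient cross-check.

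For \eqref{eq:system-LHS-P} I would use \eqref{eq:P-matrix-define}, which introduces two new ingredients compared with $\mathbf{Q}$: the Wronskian $C_{22}(\alpha_\gc)C_{12}'(\alpha_\gc)-C_{22}'(\alpha_\gc)C_{12}(\alpha_\gc)$, and the linear term $-\ell(X,T)W_\gc'(\alpha_\gc)$. The Wronskian can be computed directly from theta-function representations; after subtracting its $T$-average (which is forced by the constancy of the Wronskian of the solution of Riemann--Hilbert Problem~\ref{rhp:OuterParametrixGeneral} on each branch) the remainder is precisely the non-elliptic $T$-periodic function $\mathrm{p}$ defined by \eqref{eq:abbreviated-notation-2}, because $\mathrm{p}'(w;m_\gc)$ has the explicit form dictated by \eqref{eq:abbreviated-notation-3}. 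Substituting $a=-(m_\gc(1-m_\gc))^{1/4}/(2|W_\gc'(\alpha_\gc)|^{1/2})$ and $b=-a\rho(m_\gc)$ from Lemma~\ref{lemma:PartialsOfs} and using Lemma~\ref{lemma:C-real} for $C_{12}^2+C_{22}^2$ then groups the $\mathbb{I}$-part into the perfect square $(\sqrt{(1-m_\gc)^{3/4}/m_\gc^{1/4}}\,\mathrm{p}-(m_\gc(1-m_\gc))^{1/4}\rho(m_\gc)T)^2+\sqrt{m_\gc(1-m_\gc)}X^2-\sqrt{m_\gc/(1-m_\gc)}\mathrm{cn}^2$ and the $\ii\sigma_2$-part into $-2(\sqrt{1-m_\gc}\,\mathrm{p}-\sqrt{m_\gc}\,\rho(m_\gc)T)\mathrm{cn}$, which is \eqref{eq:system-LHS-P} after multiplying by $1/4$.

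Finally, for \eqref{eq:system-RHS} I would expand $\sigma_3\mathbf{P}^*\mathbf{B}\sigma_3\mathbf{B}$ and $\sigma_3\mathbf{Q}\mathbf{B}\sigma_3\mathbf{B}^*$ using the same theta-function inputs. The matrix $\mathbf{B}$ is a linear combination of $\mathbb{I}$ and $\ii\sigma_2$ with entries $C_{12}(\alpha_\gc)$, $C_{22}(\alpha_\gc)$, and a short computation shows that $\mathrm{Re}\{\ee^{-\ii\theta}\sigma_3\mathbf{P}^*\mathbf{B}\sigma_3\mathbf{B}-\ee^{-\ii\theta}\sigma_3\mathbf{Q}\mathbf{B}\sigma_3\mathbf{B}^*\}$ collapses to a scalar multiple of $\mathbb{I}$, the $\ii\sigma_2$ coefficient vanishing after the real part is taken because the factor $\ee^{-\ii\theta}\alpha_\gc=1$ kills the imaginary part of $C_{12}(\alpha_\gc)^2+C_{22}(\alpha_\gc)^2$ (this is again Lemma~\ref{lemma:C-real}). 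The remaining $\mathbb{I}$-coefficient is the sum $(\sqrt{1-m_\gc}\,\mathrm{p}-\sqrt{m_\gc}\,\rho(m_\gc)T)\mathrm{cn}+\mathrm{sn}\,\mathrm{dn}/\sqrt{1-m_\gc}$ as claimed. The main obstacle throughout is a careful accounting of the branch choices for $\sqrt{-\alpha_\gc}$, $\sqrt{-\alpha_\gc^*}$, $\alpha_\gc^{1/2}$ and $W_\gc'(\alpha_\gc)^{1/2}$ (pinned by \eqref{eq:arg-Wprime-gc} and the proof of Lemma~\ref{lemma:ConformalMap}), together with the theta-to-Jacobi conversions; with those conventions fixed the computation is routine but lengthy.
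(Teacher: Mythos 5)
Your high-level plan matches the paper's proof closely: reduce $\mathbf{P},\mathbf{Q},\mathbf{B}$ to the commutative algebra generated by $\mathbb{I}$ and $\ii\sigma_2$, then express the scalar coefficients $G,H,I,J$ (in the paper's notation) via the theta-function formul\ae\ for the phase-linearized outer parametrix, invoking Lemma~\ref{lemma:C-real} for $H=C_{12}^2+C_{22}^2$ and Lemma~\ref{lemma:PartialsOfs} for $a,b$, and finally pass to Jacobi elliptic functions. Two of the steps you sketch, however, do not hold up as stated.

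First, the identification of the Wronskian-like quantity $G=C_{22}(\alpha_\gc)C_{12}'(\alpha_\gc)-C_{22}'(\alpha_\gc)C_{12}(\alpha_\gc)$ with a multiple of the periodic-but-non-elliptic function $\mathrm{p}$ is not ``forced by the constancy of the Wronskian of the solution of Riemann--Hilbert Problem~\ref{rhp:OuterParametrixGeneral}.'' The constant determinant $\det\mathbf{Y}\equiv 1$ is a different algebraic combination from $G$, whose primes are $w$-derivatives at the branch point, and there is no evident route from that constancy to subtracting a $T$-average. The paper's mechanism is structural: writing $G=C_{22}(\alpha_\gc)^2\,\tfrac{\dd}{\dd w}[C_{12}/C_{22}]\big|_{w=\alpha_\gc}$, Taylor-expanding the ratio in the uniformizing coordinate $A$ via the even entire functions $\mathcal{F}(A),\mathcal{G}(A)$, and reducing the resulting combination of theta functions with the parameter-halving identity \eqref{eq:theta-last-identity} until the logarithmic derivative $\Theta_0'(z)/\Theta_0(z)$ appears. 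The subtracted average $\langle 1/\mathrm{dn}^2\rangle$ that defines $\mathrm{p}$ then arises not from any Wronskian constancy but from the elementary fact that $\tfrac{\dd^2}{\dd z^2}\log\Theta_0(z)+\text{const}\cdot\mathrm{dn}^{-2}$ is an entire elliptic function (hence constant), and the constant is evaluated by $2\pi\ii$-periodic averaging of $\Theta_0'/\Theta_0$. Without this argument your deduction that $G$ involves exactly $\mathrm{p}$ is unsupported.

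Second, the cancellation of the $\ii\sigma_2$ component in \eqref{eq:system-RHS} is not due to ``$\ee^{-\ii\theta}\alpha_\gc=1$ killing the imaginary part of $C_{12}(\alpha_\gc)^2+C_{22}(\alpha_\gc)^2$.'' Both matrices $\mathrm{Re}\{\ee^{-\ii\theta}\sigma_3\mathbf{P}^*\mathbf{B}\sigma_3\mathbf{B}\}$ and $\mathrm{Re}\{\ee^{-\ii\theta}\sigma_3\mathbf{QB}\sigma_3\mathbf{B}^*\}$ individually carry a nonzero $\ii\sigma_2$ coefficient; the former is $-|H|^2/4$ and the latter reduces (using $I^2,J^2$ expressed in $\mathrm{sn},\mathrm{dn}$) to the same value $-\sqrt{m_\gc/(1-m_\gc)}\,\mathrm{cn}^2$ only after invoking the Jacobi relations $\mathrm{sn}^2+\mathrm{cn}^2=1$ and $\mathrm{dn}^2+m_\gc\mathrm{sn}^2=1$. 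The cancellation therefore happens in the difference of two matching nonzero contributions, not by reality of any single quadratic invariant. You will need to actually compute both $\ii\sigma_2$ coefficients and exhibit this identity; the argument you give would leave the desired collapse to a multiple of $\mathbb{I}$ unproven.
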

The proof requires many identities involving functions on elliptic curves and will be given in Appendix~\ref{sec:system-simplify}.
Solving the system \eqref{eq:w-system} by Cramer's rule, one observes that
\begin{equation}
\frac{\det(\sigma_3(\mathbf{QQ}^*-\mathbf{P}^*\mathbf{P})\sigma_3)}{|W_\mathrm{gc}'(\alpha_\mathrm{gc})|^2}=
q(X,T;m_\mathrm{gc},\Omega_{\mathrm{p},N})^2+r(X,T;m_\mathrm{gc},\Omega_{\mathrm{p},N})^2.
\end{equation}
and also that $|W'_\mathrm{gc}(\alpha_\mathrm{gc})|$ cancels out of the solution, which is given explicitly by
\begin{equation}
\mathrm{Re}\{2\ee^{-\ii\theta}A_0\}=w_1=-\frac{2q(X,T;m_\mathrm{gc},\Omega_{\mathrm{p},N})^2}{q(X,T;m_\mathrm{gc},\Omega_{\mathrm{p},N})^2+r(X,T;m_\mathrm{gc},\Omega_{\mathrm{p},N})^2}
\end{equation}
and
\begin{equation}
\mathrm{Re}\{2\ee^{-\ii\theta}A_2\}=w_2=-\frac{2q(X,T;m_\mathrm{gc},\Omega_{\mathrm{p},N})r(X,T;m_\mathrm{gc},\Omega_{\mathrm{p},N})}{q(X,T;m_\mathrm{gc},\Omega_{\mathrm{p},N})^2+r(X,T;m_\mathrm{gc},\Omega_{\mathrm{p},N})^2},
\end{equation}
in which $q(X,T;m,\Omega)$ and $r(X,T;m,\Omega)$ are defined by \eqref{eq:q-and-r-define}.  Using these results in \eqref{eq:ddotCS-simple} then produces the formula \eqref{eq:ddotCS-compact}, in which $m=m_\mathrm{gc}$ and $\Omega=\Omega_{\mathrm{p},N}$.

This completes the proof of Theorem~\ref{thm:NearThePoles}.

\appendix
\section{An Outer Parametrix Built from Elliptic Functions}
\label{app:theta}
Consider the following Riemann--Hilbert problem.
\begin{rhp}
Let a real number $\nu$ be given along with a contour $\beta$ lying in $\mathbb{C}_+$ with initial endpoint $w=\alpha\in\mathbb{C}_+$ and terminal endpoint $w=1$. Seek a $2\times 2$ matrix function $\mathbf{Y}(w)=\mathbf{Y}(w;\beta,\nu)$, $\mathbf{Y}:\mathbb{C}\setminus (\beta\cup\beta^*\cup\mathbb{R}_+)\to\mathbb{C}^{2\times 2}$ with the following properties.
\begin{itemize}
\item[]\textbf{Analyticity:}  $\mathbf{Y}(\cdot)$ is analytic in its domain of definition, taking continuous boundary values from the domain at each point of $\beta\cup\beta^*\cup\mathbb{R}_+$ except for the endpoints of $\beta\cup\beta^*$, namely $\alpha$ and $\alpha^*$, where we require that $\mathbf{Y}(w)=\mathcal{O}(|w-\alpha|^{-1/4})$ and $\mathbf{Y}(w)=\mathcal{O}(|w-\alpha^*|^{-1/4})$ respectively.
\item[]\textbf{Jump Conditions:}  The boundary values taken by $\mathbf{Y}(w)$ from each side along the arcs $\beta$ and $\beta^*$, and along $\mathbb{R}_+$ (oriented from $w=0$ to $w=+\infty$) are related by 
\begin{equation}
\mathbf{Y}_+(w;\beta,\nu)=\mathbf{Y}_-(w;\beta,\nu)(-\ii\sigma_1)\ee^{\ii\nu\sigma_3},\quad w\in\beta\quad\text{and}\quad
\mathbf{Y}_+(w;\beta,\nu)=\mathbf{Y}_-(w;\beta,\nu)(-\ii\sigma_1)\ee^{-\ii\nu\sigma_3},\quad w\in\beta^*,
\end{equation}
and
\begin{equation}
\mathbf{Y}_+(w;\beta,\nu)=\sigma_2\mathbf{Y}_-(w;\beta,\nu)\sigma_2,\quad w\in\mathbb{R}_+.
\end{equation}
\item[]\textbf{Normalization:}  $\mathbf{Y}(w;\beta,\nu)\to\mathbb{I}$ as $w\to\infty$ in all directions, including tangent to $\mathbb{R}_+$.
\end{itemize}
\label{rhp:OuterParametrixGeneral}
\end{rhp}
This problem is solved in \cite[Appendix B.1]{BuckinghamMiller2013}.  The following proposition characterizes the most important properties of the solution.
\begin{proposition}[Properties of $\mathbf{Y}$]
Riemann--Hilbert Problem~\ref{rhp:OuterParametrixGeneral} has a unique solution for every $(\beta,\nu)$ as indicated in the problem statement.  The solution $\mathbf{Y}(w;\beta,\nu)$ has unit determinant, is $2\pi$-periodic in $\nu$, and $(1+|w-\alpha|^{-1/4}+|w-\alpha^*|^{-1/4})^{-1}\mathbf{Y}(w;\beta,\nu)$ is uniformly bounded for $(w,\nu)\in(\mathbb{C}\setminus(\beta\cup\mathbb{R}_+))\times (-\pi,\pi]$.  The bound is also uniform with respect to curves $\beta$ whose free endpoint varies in a small neighborhood of a fixed point $\alpha\in\mathbb{C}_+$.  Finally, the solution has a well-defined value at $w=0$, namely
\begin{equation}
\mathbf{Y}(0;\beta,\nu)=\begin{bmatrix}\displaystyle\mathrm{dn}\left(\frac{2\KK(m)}{\pi}\nu;m\right) & \displaystyle \sqrt{m}\mathrm{sn}\left(\frac{2\KK(m)}{\pi}\nu;m\right)\\
\displaystyle -\sqrt{m}\mathrm{sn}\left(\frac{2\KK(m)}{\pi}\nu;m\right) & \displaystyle 
\mathrm{dn}\left(\frac{2\KK(m)}{\pi}\nu;m\right)\end{bmatrix},\quad m:=\sin^2(\tfrac{1}{2}\arg(\alpha)).
\label{eq:general-outer-parametrix-at-origin}
\end{equation}
\label{prop:PropertiesOfY}
\end{proposition}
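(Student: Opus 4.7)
The plan is to solve Riemann--Hilbert Problem~\ref{rhp:OuterParametrixGeneral} explicitly in terms of theta functions on the elliptic curve $\mathcal{R}$ of genus one defined by $y^{2}=(w-\alpha)(w-\alpha^{*})$, and to read off all six assertions from the resulting formula. As a first step I would remove the non-standard jump across $\mathbb{R}_+$ by the Schwarz doubling $\mathbf{F}(z):=\mathbf{Y}(-z^{2})$ for $\mathrm{Im}\{z\}>0$ and $\mathbf{F}(z):=\sigma_{2}\mathbf{Y}(-z^{2})\sigma_{2}$ for $\mathrm{Im}\{z\}<0$: then $\mathbf{F}$ is analytic off a single Schwarz-symmetric closed loop (the union of the four $z$-plane preimages of $\beta$ and $\beta^{*}$), inherits on that loop the twist-and-phase jump $-\ii\sigma_{1}\,\ee^{\pm\ii\nu\sigma_{3}}$, and the problem reduces to a standard two-cut Riemann--Hilbert problem.

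Next I would construct $\mathbf{F}$ (and thereby $\mathbf{Y}$) via Baker--Akhiezer/theta function techniques, following \cite[Appendix B.1]{BuckinghamMiller2013}: a scalar outer factor carrying the admissible quarter-power endpoint singularities (built from $((w-\alpha)/(w-\alpha^{*}))^{1/4}$) is combined with the constant matrix $\mathbf{M}$ and a diagonal matrix of ratios of Riemann theta functions of $\mathcal{R}$, whose characteristics depend linearly on $\nu$ through the Abel map. Once this explicit representation is in hand, uniqueness, unit determinant and $2\pi$-periodicity in $\nu$ all follow from classical Liouville arguments applied respectively to the ratio of two candidate solutions, to the scalar $\det\mathbf{Y}(w)$, and to $\mathbf{Y}(w;\beta,\nu+2\pi)\mathbf{Y}(w;\beta,\nu)^{-1}$: in each case all jumps cancel, at most integrable endpoint singularities survive, and the normalization at $w=\infty$ forces the result to be $\mathbb{I}$. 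The weighted boundedness of $\mathbf{Y}$ and its uniformity in $\nu\in\mathbb{R}/2\pi\mathbb{Z}$ and in small perturbations of the endpoint $\alpha$ are then read off the explicit formula: the theta zeros in numerator and denominator cancel except at $\alpha$ and $\alpha^{*}$, the quarter-root factor supplies exactly the permitted endpoint singularities, and the theta ratios depend continuously on the modular parameter of $\mathcal{R}$, which in turn depends continuously on $\alpha$.

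The main technical step, and the one I expect to be the principal obstacle, is the closed-form evaluation of $\mathbf{Y}(0;\beta,\nu)$. Because $\alpha\alpha^{*}=|\alpha|^{2}>0$, the point $w=0$ sits symmetrically with respect to the branch cut $\beta\cup\beta^{*}$, and the Abel map of $\mathcal{R}$ evaluated at $w=0$ lands on a distinguished half-period, so the theta characteristics collapse. One then identifies $m=\sin^{2}(\tfrac{1}{2}\arg(\alpha))$ as the Jacobi modulus obtained from the cross-ratio of the four $z$-plane branch points $\pm\ii\sqrt{-\alpha}$, $\pm\ii\sqrt{-\alpha^{*}}$, and applies the standard conversions
\begin{equation*}
\frac{\vartheta_{3}(\zeta;q)}{\vartheta_{0}(\zeta;q)}=\mathrm{dn}(2\KK(m)\zeta;m),\qquad \frac{\vartheta_{1}(\zeta;q)}{\vartheta_{0}(\zeta;q)}=\sqrt{m}\,\mathrm{sn}(2\KK(m)\zeta;m),
\end{equation*}
with the identification $\zeta=\nu/\pi$, to collapse the theta ratios into Jacobi elliptic functions of argument $2\KK(m)\nu/\pi$. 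The rotational $2\times 2$ structure in \eqref{eq:general-outer-parametrix-at-origin} then emerges from the action of the constant conjugation by $\mathbf{M}$ on the resulting diagonal scalar data, so that the off-diagonal entry inherits the $\sqrt{m}\,\mathrm{sn}$ factor with the correct sign dictated by the Schwarz symmetry imposed across $\mathbb{R}_+$.
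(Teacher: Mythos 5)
Your plan is essentially the paper's own route: the paper defers the construction to \cite[Appendix B.1]{BuckinghamMiller2013} and then records the explicit theta-function formulas \eqref{eq:OuterParametrixGeneral-Solution-1}--\eqref{eq:OuterParametrixGeneral-Solution-2}, and the Liouville arguments for uniqueness, unit determinant, and $2\pi$-periodicity, together with the reduction of the theta ratios to Jacobi elliptic functions at $w=0$ (identifying $m=\sin^{2}(\tfrac{1}{2}\arg\alpha)$ from the branch points), are all as you describe. One small correction: to fold the $\sigma_{2}$-jump on $\mathbb{R}_{+}$ into a Schwarz-symmetric problem you should set $w=z^{2}$, i.e.\ $z=\ii\sqrt{-w}$ as in \eqref{eq:go-to-z-plane}, since your substitution $w=-z^{2}$ sends the real $z$-axis to the negative real $w$-axis where $\mathbf{Y}$ has no jump.
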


In \eqref{eq:general-outer-parametrix-at-origin}, $\KK(\cdot)$ denotes the complete elliptic integral of the first kind \eqref{eq:KE}, while $\mathrm{dn}(z;m)$ and $\mathrm{sn}(z;m)$ are Jacobi elliptic functions.

We will need also an explicit formula for the solution $\mathbf{Y}(w;\beta,\nu)$ valid for $w$ near $\alpha$, and we take the opportunity to simplify the formula in transcribing it from \cite[Appendix B.1]{BuckinghamMiller2013}.  Let $q(w)$ denote the function uniquely determined by the properties that 
\begin{itemize}
\item $q(\cdot)$ is analytic for $w\in\mathbb{C}\setminus(\beta\cup\beta^*)$,
\item $q(w)^4 = (w-\alpha)/(w-\alpha^*)$,
\item $q(\infty)=1$.
\end{itemize}
Likewise, let $S(w)$ denote the function uniquely determined by the properties that
\begin{itemize}
\item $S(\cdot)$ is analytic for $w\in\mathbb{C}\setminus(\beta\cup\beta^*\cup\mathbb{R}_-)$,
\item $S(w)^2 = w(w-\alpha)(w-\alpha^*)$,
\item $S(w) = w^{3/2}(1+o(1))$ as $w\to\infty$ (principal branch of $w^{3/2}$ intended).
\end{itemize}
Letting $a$ denote a positively-oriented Jordan curve with $\beta\cup\beta^*$ in its interior and $\mathbb{R}_-$ in its exterior, define a constant $c$ by
\begin{equation}
c:=2\pi\ii\left[\oint_a\frac{\dd w}{S(w)}\right]^{-1}
\label{eq:Abel-c-define}
\end{equation}
and a constant $\mathcal{H}$ by
\begin{equation}
\mathcal{H}:=2c\int_0^{\alpha^*}\frac{\dd w}{S(w)},
\label{eq:general-H-define}
\end{equation}
where the path of integration is arbitrary in the simply-connected domain $\mathbb{C}_-\setminus\beta^*$.  It can be shown that $\mathrm{Re}\{\mathcal{H}\}<0$ holds.  Given such a constant $\mathcal{H}$, the Riemann theta function $\Theta(z;\mathcal{H})$ is defined by the convergent series
\begin{equation}
\Theta(z;\mathcal{H}):=\sum_{n=-\infty}^{\infty}\ee^{\tfrac{1}{2}\mathcal{H}n^2}\ee^{nz},\quad \mathrm{Re}\{\mathcal{H}\}<0,\quad z\in\mathbb{C}.
\label{eq:Riemann-Theta}
\end{equation}
For $w\in\mathbb{C}_+\setminus\beta$ we define a branch of the Abel map by the formula
\begin{equation}
A(w):=\int_\alpha^w\frac{c\,\dd\xi}{S(\xi)}
\label{eq:Abel-branch}
\end{equation}
for a path of integration from $\alpha$ to $w$ that is arbitrary in the simply-connected domain $\mathbb{C}_+\setminus\beta$.  Defining the \emph{Riemann constant} $\mathcal{K}$ by
\begin{equation}
\mathcal{K}:=\ii\pi +\frac{1}{2}\mathcal{H}
\label{eq:Riemann-K-define}
\end{equation}
and shifted phases $\varphi^\pm$ by
\begin{equation}
\varphi^\pm:=\nu\pm\frac{1}{2}\pi,
\label{eq:shifted-phases-define}
\end{equation}
we have the following explicit formula for $\mathbf{Y}(w)$ when $\mathrm{Im}\{w\}>0$:
\begin{equation}
\mathbf{Y}(w)=\frac{q(w)}{2}\cdot\frac{\Theta(\ii\pi;\mathcal{H})}{\Theta(A(w)+\mathcal{K};\mathcal{H})}
\mathbf{Z}(w),\quad \mathrm{Im}\{w\}>0,
\label{eq:OuterParametrixGeneral-Solution-1}
\end{equation}
where $\mathbf{Z}(w)$ is the $2\times 2$ matrix function with elements
\begin{equation}
\begin{split}
Z_{11}(w)&:=
\frac{\Theta(A(w)+\mathcal{K}-\ii\varphi^-;\mathcal{H})}{\Theta(\ii\pi-\ii\varphi^-;\mathcal{H})} +
\frac{\Theta(A(w)+\mathcal{K}-\ii\varphi^+;\mathcal{H})}{\Theta(\ii\pi-\ii\varphi^+;\mathcal{H})} \\
Z_{12}(w)&:=
\ii\left(\frac{\Theta(A(w)+\mathcal{K}+\ii\varphi^+;\mathcal{H})}{\Theta(\ii\pi+\ii\varphi^+;\mathcal{H})}-
\frac{\Theta(A(w)+\mathcal{K}+\ii\varphi^-;\mathcal{H})}{\Theta(\ii\pi+\ii\varphi^-;\mathcal{H})}\right)\\
Z_{21}(w)&:=
\ii\left(\frac{\Theta(A(w)+\mathcal{K}-\ii\varphi^-;\mathcal{H})}{\Theta(\ii\pi-\ii\varphi^-;\mathcal{H})}-
\frac{\Theta(A(w)+\mathcal{K}-\ii\varphi^+;\mathcal{H})}{\Theta(\ii\pi-\ii\varphi^+;\mathcal{H})}\right)\\
Z_{22}(w)&:=
\frac{\Theta(A(w)+\mathcal{K}+\ii\varphi^-;\mathcal{H})}{\Theta(\ii\pi+\ii\varphi^-;\mathcal{H})} +
\frac{\Theta(A(w)+\mathcal{K}+\ii\varphi^+;\mathcal{H})}{\Theta(\ii\pi+\ii\varphi^+;\mathcal{H})}.
\end{split}
\label{eq:OuterParametrixGeneral-Solution-2}
\end{equation}

In general, the significance of $\mathcal{K}$ given by \eqref{eq:Riemann-K-define} is that it is the only zero, modulo integer multiples of $2\pi\ii$ and $\mathcal{H}$, of the entire function $z\mapsto\Theta(z;\mathcal{H})$, and it is a simple zero.  In addition to 
\begin{equation}
\Theta(\mathcal{K};\mathcal{H})=0\quad\text{and}\quad\Theta'(\mathcal{K};\mathcal{H})\neq 0,
\label{eq:Theta-zero}
\end{equation}
the series given by \eqref{eq:Riemann-Theta} has the following well-known automorphic properties
\begin{equation}
\Theta(-z;\mathcal{H})=\Theta(z;\mathcal{H}),\quad\Theta(z+2\pi\ii;\mathcal{H})=\Theta(z;\mathcal{H}),\quad\text{and}\quad
\Theta(z+\mathcal{H};\mathcal{H})=\ee^{-\tfrac{1}{2}\mathcal{H}}\ee^{-z}\Theta(z;\mathcal{H}).
\label{eq:Theta-identities}
\end{equation}
Some additional identities for theta functions will be useful shortly, so we remark that all of the identities we will need can be found in the Digital Library of Mathematical Functions \cite{NIST:DLMF}, which however uses a different classical notation.  The key relation one needs to translate is $\Theta(z;\mathcal{H})=\theta_3(w|\tau)=\theta_3(w,q)$ where $z=2\ii w$, $\mathcal{H}=2\pi\ii\tau$, and $q=\ee^{\ii\pi\tau}$.  

For instance, using ``Jacobi's identity'' and the definitions of the other three theta functions $\theta_j$, $j=1,2,4$, one can extract from \cite[Chap.\@ 20]{NIST:DLMF} that
\begin{equation}
\Theta'(\mathcal{K};\mathcal{H})=\frac{1}{2}\Theta(\tfrac{1}{2}\mathcal{H};\mathcal{H})\Theta(0;\mathcal{H})\Theta(\ii\pi;\mathcal{H}).
\label{eq:Theta-prime-K}
\end{equation}
Also, using the addition formula \cite[Eqn.\@ 20.7.9]{NIST:DLMF} with $w=z$ gives the double-argument identity
\begin{equation}
\Theta(z;\mathcal{H})^4-\ee^{\mathcal{H}/2}\ee^{2z}\Theta(z+\tfrac{1}{2}\mathcal{H};\mathcal{H})^4 = \Theta(\ii\pi;\mathcal{H})^3\Theta(2z;\mathcal{H}).
\label{eq:theta-identity-0}
\end{equation}
We will also need some general identities that allow us to transform the parameter $\mathcal{H}$.
First, by combining the identities \cite[Eqn.\@ 20.7.28]{NIST:DLMF} and \cite[Eqn.\@ 20.2.14]{NIST:DLMF} we obtain
\begin{equation}
\Theta(z;\mathcal{H}+2\pi\ii)=\Theta(z+\ii\pi;\mathcal{H}),\quad\mathrm{Re}\{\mathcal{H}\}<0,\quad z\in\mathbb{C}.
\label{eq:theta-identity-1}
\end{equation}
Second, combining ``Watson's identity'' in the form \cite[Eqn.\@ 20.7.14]{NIST:DLMF} with \cite[Eqns.\@ 20.2.10 and 20.2.12]{NIST:DLMF} we obtain the identity
\begin{equation}
\Theta(z_1;\mathcal{H})\Theta(z_2;\mathcal{H})=\Theta(z_1+z_2;2\mathcal{H})\Theta(z_1-z_2;2\mathcal{H})+\ee^{\mathcal{H}/2}\ee^{z_1}\Theta(z_1+z_2+\mathcal{H};2\mathcal{H})\Theta(z_1-z_2+\mathcal{H};2\mathcal{H}),
\label{eq:theta-identity-2}
\end{equation}
which is a kind of addition formula for $\Theta$ that also rescales the parameter $\mathcal{H}$.

\section{Proof of Lemma~\ref{lemma:C-real}}
\label{sec:proof-lemma:C-real}
To evaluate $C_{j2}(\alpha_\mathrm{gc})$, we use the fact that analyticity of $\mathbf{C}(w)$ within $U$ means we may choose to approach the point $w=\alpha_\mathrm{gc}$ from the sectors $\mathrm{III}\cup\mathrm{IV}$ in which according to \eqref{eq:Odot-Otilde} $\widetilde{\mathbf{O}}^\mathrm{out,l}(w)$ coincides with $\dot{\mathbf{O}}^\mathrm{out,l}(w)\ee^{-\ii\Phi^l(t)\sigma_3/(2\epsilon)}$, in which $\dot{\mathbf{O}}^\mathrm{out,l}(w):=\mathbf{Y}(w;\beta_\mathrm{gc},\epsilon^{-1}\Phi^l(t))$ where $\mathbf{Y}(w;\beta,\nu)$ denotes the solution of Riemann--Hilbert Problem~\ref{rhp:OuterParametrixGeneral} and $\beta_\mathrm{gc}$ has endpoint $\alpha_\mathrm{gc}$.  Assuming throughout the calculation below that $w\in(\mathrm{III}\cup\mathrm{IV})\cap U$, we therefore have from \eqref{eq:CmatrixDefine} that
\begin{equation}
\mathbf{C}(w)=\dot{\mathbf{O}}^\mathrm{out,l}(w)\ee^{-\ii\Phi^l(t)\sigma_3/(2\epsilon)}\mathbf{M}W_\mathrm{gc}(w)^{-\sigma_3/4}.
\end{equation}
Referring to \eqref{eq:OuterParametrixGeneral-Solution-1}--\eqref{eq:OuterParametrixGeneral-Solution-2}
we obtain
\begin{multline}
C_{12}(w)=\frac{q(w)W_\mathrm{gc}(w)^{1/4}}{\Theta(A+\mathcal{K})}\frac{\Theta(\ii\pi)}{2\sqrt{2}}
\left[-\ii\ee^{\ii\Phi^l(t)/(2\epsilon)}\frac{\Theta(A+\mathcal{K}+\ii\varphi^-)}{\Theta(\ii\pi +\ii\varphi^-)}+\ii\ee^{\ii\Phi^l(t)/(2\epsilon)}\frac{\Theta(A+\mathcal{K}+\ii\varphi^+)}{\Theta(\ii\pi+\ii\varphi^+)}\right.\\
\left.{}-\ee^{-\ii\Phi^l(t)/(2\epsilon)}\frac{\Theta(A+\mathcal{K}-\ii\varphi^-)}{\Theta(\ii\pi-\ii\varphi^-)}-\ee^{-\ii\Phi^l(t)/(2\epsilon)}\frac{\Theta(A+\mathcal{K}-\ii\varphi^+)}{\Theta(\ii\pi-\ii\varphi^+)}\right]
\label{eq:C12}
\end{multline}
and
\begin{multline}
C_{22}(w)=\frac{q(w)W_\mathrm{gc}(w)^{1/4}}{\Theta(A+\mathcal{K})}\frac{\Theta(\ii\pi)}{2\sqrt{2}}
\left[\ee^{\ii\Phi^l(t)/(2\epsilon)}\frac{\Theta(A+\mathcal{K}+\ii\varphi^-)}{\Theta(\ii\pi +\ii\varphi^-)}+\ee^{\ii\Phi^l(t)/(2\epsilon)}\frac{\Theta(A+\mathcal{K}+\ii\varphi^+)}{\Theta(\ii\pi+\ii\varphi^+)}\right.\\
\left.{}-\ii\ee^{-\ii\Phi^l(t)/(2\epsilon)}\frac{\Theta(A+\mathcal{K}-\ii\varphi^-)}{\Theta(\ii\pi-\ii\varphi^-)}+\ii\ee^{-\ii\Phi^l(t)/(2\epsilon)}\frac{\Theta(A+\mathcal{K}-\ii\varphi^+)}{\Theta(\ii\pi-\ii\varphi^+)}\right],
\label{eq:C22}
\end{multline}
in which we are using the simplified notation 
\begin{equation}
\Theta(z):=\Theta(z;\mathcal{H}^\mathrm{gc})
\label{eq:Theta-simple}
\end{equation}
where the right-hand side is defined for general $\mathcal{H}$ in the left half-plane by 
\eqref{eq:Riemann-Theta}, and $A$ is shorthand for the branch $A(w)$ of the Abel map defined by \eqref{eq:Abel-branch}.  The Riemann constant $\mathcal{K}$ and the shifted phases $\varphi^\pm$ are given in terms of $\mathcal{H}=\mathcal{H}^\mathrm{gc}$ and $\nu=\epsilon^{-1}\Phi^l(t)$ respectively by \eqref{eq:Riemann-K-define} and \eqref{eq:shifted-phases-define}.  $\mathcal{H}^\mathrm{gc}$ is in turn defined from the contour $\beta=\beta_\mathrm{gc}$ via \eqref{eq:Abel-c-define}--\eqref{eq:general-H-define}.
Recall from Appendix~\ref{app:theta} that $q(w)$ is a function that, since $\beta=\beta_\mathrm{gc}$,
vanishes like $(w-\alpha_\mathrm{gc})^{1/4}$ at $w=\alpha_\mathrm{gc}$, and likewise as a function of $w$, $A$ vanishes like $(w-\alpha_\mathrm{gc})^{1/2}$ at $w=\alpha_\mathrm{gc}$.

It follows from these expressions that the sum of squares that we need to calculate is 
\begin{multline}
C_{12}(w)^2+C_{22}(w)^2=\frac{q(w)^2W_\mathrm{gc}(w)^{1/2}}{\Theta(A+\mathcal{K})^2}\frac{\Theta(\ii\pi)^2}{2}
\left[\ee^{\ii\Phi^l(t)/\epsilon}\frac{\Theta(A+\mathcal{K}+\ii\varphi^-)\Theta(A+\mathcal{K}+\ii\varphi^+)}{\Theta(\ii\pi+\ii\varphi^-)\Theta(\ii\pi+\ii\varphi^+)}\right.\\
{}+\ee^{-\ii\Phi^l(t)/\epsilon}\frac{\Theta(A+\mathcal{K}-\ii\varphi^-)\Theta(A+\mathcal{K}-\ii\varphi^+)}{\Theta(\ii\pi-\ii\varphi^-)\Theta(\ii\pi-\ii\varphi^+)}\\
\left.{}+\ii\frac{\Theta(A+\mathcal{K}+\ii\varphi^-)\Theta(A+\mathcal{K}-\ii\varphi^+)}{\Theta(\ii\pi+\ii\varphi^-)\Theta(\ii\pi-\ii\varphi^+)}-\ii
\frac{\Theta(A+\mathcal{K}-\ii\varphi^-)\Theta(A+\mathcal{K}+\ii\varphi^+)}{\Theta(\ii\pi-\ii\varphi^-)\Theta(\ii\pi+\ii\varphi^+)}\right].
\end{multline}
Now we consider letting $w\to\alpha_\mathrm{gc}$ from sectors $\mathrm{III}\cup\mathrm{IV}$ within $U$.  As mentioned above, $A\to 0$ in this limit.  The four terms in square brackets above all therefore have well-defined limiting values obtained simply by replacing $A$ by $0$ in the arguments of the theta functions in the numerators.  
Using the identities \eqref{eq:Theta-identities} one checks furthermore that all four terms in brackets above have \emph{exactly the same limiting value}.  The numerator and denominator of the initial fraction on the right-hand side both vanish in the limit, so this is the only delicate part of the computation.  
Thus,
\begin{multline}
C_{12}(\alpha_\mathrm{gc})^2+C_{22}(\alpha_\mathrm{gc})^2=\\
2\Theta(\ii\pi)^2 \ee^{\ii\Phi^l(t)/\epsilon}\frac{\Theta(\mathcal{K}+\ii\epsilon^{-1}\Phi^l(t)-\tfrac{1}{2}\ii\pi)\Theta(\mathcal{K}+\ii\epsilon^{-1}\Phi^l(t)+\tfrac{1}{2}\ii\pi)}{\Theta(\ii\pi+\ii\epsilon^{-1}\Phi^l(t)-\tfrac{1}{2}\ii\pi)\Theta(\ii\pi+\ii\epsilon^{-1}\Phi^l(t)+\tfrac{1}{2}\ii\pi)}\lim_{w\to\alpha_\mathrm{gc}}\left[\frac{q(w)^2W_\mathrm{gc}(w)^{1/2}}{\Theta(A+\mathcal{K})^2}\right],
\label{eq:sum-of-squares-1}
\end{multline}
where the limit is taken from sector $\mathrm{III}$ or $\mathrm{IV}$ of $U$.    
Now, using the defnition \eqref{eq:Abel-branch} of $A=A(w)$ it is easy to show that
\begin{equation}
A^2=\frac{4c_\mathrm{gc}^2}{\alpha_\mathrm{gc}(\alpha_\mathrm{gc}-\alpha_\mathrm{gc}^*)}(w-\alpha_\mathrm{gc})+\mathcal{O}((w-\alpha_\mathrm{gc})^2),\quad w\to\alpha_\mathrm{gc},
\label{eq:A1-squared}
\end{equation}
in which the constant $c_\mathrm{gc}$ is defined from $\beta=\beta_\mathrm{gc}$ via \eqref{eq:Abel-c-define}.
This calculation is useful because $\Theta(\mathcal{K})=0$, so by Taylor expansion,
\begin{equation}
\Theta(A+\mathcal{K})^2=\Theta'(\mathcal{K})^2A^2 + o(A^2) = \frac{4c_\mathrm{gc}^2\Theta'(\mathcal{K})^2}{\alpha_\mathrm{gc}(\alpha_\mathrm{gc}-\alpha_\mathrm{gc}^*)}(w-\alpha_\mathrm{gc})(1+o(1)),\quad w\to\alpha_\mathrm{gc}.
\label{eq:Theta-expand}
\end{equation}
In view of this calculation, we may rewrite \eqref{eq:sum-of-squares-1} as
\begin{equation}
C_{12}(\alpha_\mathrm{gc})^2+C_{22}(\alpha_\mathrm{gc})^2=
\frac{\alpha_\mathrm{gc}(\alpha_\mathrm{gc}-\alpha_\mathrm{gc}^*)\Theta(\ii\pi)^2L}{2c_\mathrm{gc}^2\Theta'(\mathcal{K})^2}\ee^{\ii\Phi^l(t)/\epsilon}\frac{\Theta(\mathcal{K}+\ii\epsilon^{-1}\Phi^l(t)-\tfrac{1}{2}\ii\pi)\Theta(\mathcal{K}+\ii\epsilon^{-1}\Phi^l(t)+\tfrac{1}{2}\ii\pi)}{\Theta(\ii\pi+\ii\epsilon^{-1}\Phi^l(t)-\tfrac{1}{2}\ii\pi)\Theta(\ii\pi+\ii\epsilon^{-1}\Phi^l(t)+\tfrac{1}{2}\ii\pi)}.
\label{eq:sum-of-squares-2}
\end{equation}
where $L$ is a constant defined by the limit taken from $(\mathrm{III}\cup\mathrm{IV})\cap U$
\begin{equation}
L:=\lim_{w\to\alpha_\mathrm{gc}}\left[\frac{q(w)^2W_\mathrm{gc}(w)^{1/2}}{w-\alpha_\mathrm{gc}}\right].
\label{eq:L-define}
\end{equation}
The hypothesis that the gradient catastrophe point is simple ensures $W_\mathrm{gc}(w)$ is conformal at $w=\alpha_\mathrm{gc}$ implying that $q(w)^2W_\mathrm{gc}(w)^{1/2}$ vanishes there precisely to first order, and hence $L$ is a nonzero constant with modulus
\begin{equation}
|L|=\sqrt{\frac{|W'_\mathrm{gc}(\alpha_\mathrm{gc})|}{|\alpha_\mathrm{gc}-\alpha_\mathrm{gc}^*|}}=\frac{|W_\mathrm{gc}'(\alpha_\mathrm{gc})|^{1/2}}{\sqrt{2\sin(\theta)}}=\frac{|W_\mathrm{gc}'(\alpha_\mathrm{gc})|^{1/2}}{2\sqrt{\sin(\tfrac{1}{2}\theta)\cos(\tfrac{1}{2}\theta)}}=\frac{|W_\mathrm{gc}'(\alpha_\mathrm{gc})|^{1/2}}{2(m_\mathrm{gc}(1-m_\mathrm{gc}))^{1/4}}\neq 0.
\label{eq:modulus-of-L}
\end{equation}
The argument of $L$ can be evaluated as follows:
\begin{itemize}
\item Since the limit $w\to\alpha_\mathrm{gc}$ is taken from sectors $\mathrm{III}\cup\mathrm{IV}$, we may represent $q(w)$ in terms of principal branches as 
\begin{equation}
q(w)=\frac{\ee^{\ii\pi/8}(-\ii(w-\alpha_\mathrm{gc}))^{1/4}}{\ee^{\ii\pi/8}(-\ii(w-\alpha_\mathrm{gc}^*))^{1/4}}=\frac{(-\ii(w-\alpha_\mathrm{gc}))^{1/4}}{(-\ii(\alpha_\mathrm{gc}-\alpha_\mathrm{gc}^*))^{1/4}}(1+\mathcal{O}(w-\alpha_\mathrm{gc})),\quad w\in (\mathrm{III}\cup\mathrm{IV})\cap U.
\end{equation}
Note that the denominator is positive real:  $(-\ii(\alpha_\mathrm{gc}-\alpha_\mathrm{gc}^*))^{1/4}=(2\sin(\theta))^{1/4}>0$.  Assuming further that $w$ approaches $\alpha_\mathrm{gc}$ along the boundary of sector $\mathrm{IV}$ where it meets sector $\mathrm{V}$, some simple geometry shows that $\arg(q(w))=\tfrac{1}{4}\theta$.
\item Again assuming that $w$ lies along the boundary of sector $\mathrm{IV}$ where $W_\mathrm{gc}(w)$ is negative real, it follows easily that $\arg(W_\mathrm{gc}(w)^{1/4})=\tfrac{1}{4}\pi$.
\item Similarly, when $w$ approaches $\alpha_\mathrm{gc}$ along the common boundary of sectors $\mathrm{IV}$ and $\mathrm{V}$, $\arg(w-\alpha_\mathrm{gc})\to \theta+\tfrac{1}{2}\pi$.
\end{itemize}
Therefore combining these results we see that $L=|L|\ee^{-\ii\theta/2}$.

Now, using \eqref{eq:Theta-prime-K} for $\mathcal{H}=\mathcal{H}^\mathrm{gc}$ to eliminate $\Theta'(\mathcal{K})$ we get
\begin{equation}
C_{12}(\alpha_\mathrm{gc})^2+C_{22}(\alpha_\mathrm{gc})^2=
\frac{2\alpha_\mathrm{gc}(\alpha_\mathrm{gc}-\alpha_\mathrm{gc}^*)L}{c_\mathrm{gc}^2\Theta(\tfrac{1}{2}\mathcal{H}^\mathrm{gc})^2\Theta(0)^2}\ee^{\ii\Phi^l(t)/\epsilon}\frac{\Theta(\mathcal{K}+\ii\epsilon^{-1}\Phi^l(t)-\tfrac{1}{2}\ii\pi)\Theta(\mathcal{K}+\ii\epsilon^{-1}\Phi^l(t)+\tfrac{1}{2}\ii\pi)}{\Theta(\ii\pi+\ii\epsilon^{-1}\Phi^l(t)-\tfrac{1}{2}\ii\pi)\Theta(\ii\pi+\ii\epsilon^{-1}\Phi^l(t)+\tfrac{1}{2}\ii\pi)}.
\label{eq:sum-of-squares-3}
\end{equation}
Next, we note that (as in \cite[Pgs.\@ 123--124]{BuckinghamMiller2013}) while the theta parameter $\mathcal{H}^\mathrm{gc}$ is complex, it is related to a real negative parameter $\mathcal{H}_0^\mathrm{gc}$ in a simple way:
\begin{equation}
\mathcal{H}^\mathrm{gc}=\frac{1}{2}(\mathcal{H}^\mathrm{gc}_0+2\pi\ii)
\label{eq:H-decomposition}
\end{equation}
where $\mathcal{H}^\mathrm{gc}_0$ is given explicitly in terms of the complete elliptic integral of the first kind $\KK(\cdot)$ by
\begin{equation}
\mathcal{H}^\mathrm{gc}_0=-2\pi\frac{\KK(1-m_\mathrm{gc})}{\KK(m_\mathrm{gc})}<0
\label{eq:H0}
\end{equation}
and $m_\mathrm{gc}$ is defined in terms of $\alpha_\mathrm{gc}=\ee^{\ii\theta}$ by \eqref{eq:mgc-alphagc}.  To express the theta functions in terms of Jacobi elliptic functions, we need to first rewrite them in terms of theta functions with parameter $\mathcal{H}^\mathrm{gc}_0$ in place of $\mathcal{H}^\mathrm{gc}$.  
Having already suppressed the parameter $\mathcal{H}^\mathrm{gc}$, we now expand on our abbreviated notation with the definition
\begin{equation}
\Theta_0(z):=\Theta(z;\mathcal{H}_0^\mathrm{gc}).
\label{eq:Theta-0}
\end{equation}
Combining \eqref{eq:theta-identity-1} and \eqref{eq:theta-identity-2} and using \eqref{eq:H-decomposition} gives
\begin{multline}
\Theta(z_1)\Theta(z_2)=\Theta_0(z_1+z_2+\ii\pi)\Theta_0(z_1-z_2+\ii\pi)\\{}+\ii\ee^{\mathcal{H}^\mathrm{gc}_0/4}\ee^{z_1}\Theta_0(z_1+z_2+\tfrac{1}{2}\mathcal{H}^\mathrm{gc}_0)
\Theta_0(z_1-z_2+\tfrac{1}{2}\mathcal{H}^\mathrm{gc}_0)
\label{eq:theta-last-identity}
\end{multline}
where we have also used the periodic property of $\Theta$ written in \eqref{eq:Theta-identities} to simplify the result. We use \eqref{eq:theta-last-identity} to deal with the products of theta functions appearing in \eqref{eq:sum-of-squares-3} as follows:  first taking $z_1=0$ and $z_2=\tfrac{1}{2}\mathcal{H}^\mathrm{gc}=\tfrac{1}{4}\mathcal{H}^\mathrm{gc}_0+\tfrac{1}{2}\ii\pi$ gives
\begin{equation}
\Theta(0)^2\Theta(\tfrac{1}{2}\mathcal{H}^\mathrm{gc})^2=
4\Theta_0(\tfrac{1}{4}\mathcal{H}^\mathrm{gc}_0-\tfrac{1}{2}\ii\pi)^4,
\label{eq:Theta-fourth-identity}
\end{equation}
where we also made liberal use of the automorphic identities \eqref{eq:Theta-identities}.
Next, taking $z_1=\ii\pi+\ii\epsilon^{-1}\Phi^l(t)-\tfrac{1}{2}\ii\pi$ and $z_2=\ii\pi+\ii\epsilon^{-1}\Phi^l(t)+\tfrac{1}{2}\ii\pi$ gives
\begin{equation}
\Theta(\ii\pi+\ii\epsilon^{-1}\Phi^l(t)-\tfrac{1}{2}\ii\pi)\Theta(\ii\pi+\ii\epsilon^{-1}\Phi^l(t)+\tfrac{1}{2}\ii\pi)=\Theta_0(2\ii\epsilon^{-1}\Phi^l(t)+\ii\pi)\Theta_0(0),
\end{equation}
where we also used \eqref{eq:Theta-zero} and the definition \eqref{eq:Riemann-K-define} to eliminate some terms.
Similarly, using $\mathcal{K}=\tfrac{1}{2}\mathcal{H}^\mathrm{gc}+\ii\pi = \tfrac{1}{4}\mathcal{H}^\mathrm{gc}_0+\tfrac{3}{2}\ii\pi$ and taking $z_1=\mathcal{K}+\ii\epsilon^{-1}\Phi^l(t)-\tfrac{1}{2}\ii\pi$ and $z_2=\mathcal{K}+\ii\epsilon^{-1}\Phi^l(t)+\tfrac{1}{2}\ii\pi$ gives
\begin{equation}
\Theta(\mathcal{K}+\ii\epsilon^{-1}\Phi^l(t)-\tfrac{1}{2}\ii\pi)\Theta(\mathcal{K}+\ii\epsilon^{-1}\Phi^l(t)+\tfrac{1}{2}\ii\pi)=\Theta_0(2\ii\epsilon^{-1}\Phi^l(t)+\tfrac{1}{2}\mathcal{H}^\mathrm{gc}_0)\Theta_0(0).
\end{equation}
These results allow us to rewrite \eqref{eq:sum-of-squares-3} as follows:
\begin{equation}
C_{12}(\alpha_\mathrm{gc})^2+C_{22}(\alpha_\mathrm{gc})^2=\frac{\alpha_\mathrm{gc}(\alpha_\mathrm{gc}-\alpha_\mathrm{gc}^*)L}{2c_\mathrm{gc}^2\Theta_0(\tfrac{1}{4}\mathcal{H}^\mathrm{gc}_0-\tfrac{1}{2}\ii\pi)^4}\ee^{\ii\Phi^l(t)/\epsilon}\frac{\Theta_0(2\ii\epsilon^{-1}\Phi^l(t)+\tfrac{1}{2}\mathcal{H}^\mathrm{gc}_0)}{\Theta_0(2\ii\epsilon^{-1}\Phi^l(t)+\ii\pi)}.
\end{equation}
Finally, we bring in Jacobi elliptic functions; combining \cite[Eqn.\@ 22.2.5]{NIST:DLMF} with the results in \cite[\S 20.2(iii)]{NIST:DLMF} gives the identity
\begin{equation}
\mathrm{cn}\left(\frac{\KK(m_\mathrm{gc})}{\pi}z;m_\mathrm{gc}\right)=\ee^{\ii z/2}\frac{\Theta_0(\ii\pi)\Theta_0(\ii z+\tfrac{1}{2}\mathcal{H}^\mathrm{gc}_0)}{\Theta_0(\tfrac{1}{2}\mathcal{H}^\mathrm{gc}_0)\Theta_0(\ii z+\ii\pi)}.
\end{equation}
Therefore, also dividing by $\alpha_\mathrm{gc} W_\mathrm{gc}'(\alpha_\mathrm{gc})$ we obtain
\begin{equation}
\frac{C_{12}(\alpha_\mathrm{gc})^2+C_{22}(\alpha_\mathrm{gc})^2}{\alpha_\mathrm{gc} W_\mathrm{gc}'(\alpha_\mathrm{gc})}=\frac{(\alpha_\mathrm{gc}-\alpha_\mathrm{gc}^*)\Theta_0(\tfrac{1}{2}\mathcal{H}^\mathrm{gc}_0)L}{2c_\mathrm{gc}^2W_\mathrm{gc}'(\alpha_\mathrm{gc})\Theta_0(\ii\pi)\Theta_0(\tfrac{1}{4}\mathcal{H}^\mathrm{gc}_0-\tfrac{1}{2}\ii\pi)^4}\cdot\mathrm{cn}\left(\frac{2\KK(m_\mathrm{gc})}{\pi\epsilon}\Phi^l(t);m_\mathrm{gc}\right).
\label{eq:real-thing}
\end{equation}
Now since $\alpha_\mathrm{gc}=\ee^{\ii\theta}$ with $0<\theta<\pi$, we have $\alpha_\mathrm{gc}-\alpha_\mathrm{gc}^*=2\ii\sin(\theta)=4\ii\sin(\tfrac{1}{2}\theta)\cos(\tfrac{1}{2}\theta)=4\ii\sqrt{m_\mathrm{gc}(1-m_\mathrm{gc})}$.  Also, 
according to \cite[Proposition 4.2 and Pg.\@ 125]{BuckinghamMiller2013}, the normalization constant $c_\mathrm{gc}$ for the Abel map with branch points $(w_0,w_1)=(\alpha_\mathrm{gc},\alpha_\mathrm{gc}^*)$ can be identified with $\partial_t\Phi(0,t_\mathrm{gc})=-\omega_\mathrm{gc}$.  Hence using \eqref{eq:omega0} we find that $c_\mathrm{gc}^2=\pi^2/(4\KK(m_\mathrm{gc})^2)$.  Furthermore, according to \eqref{eq:arg-Wprime-gc} we have $W_\mathrm{gc}'(\alpha_\mathrm{gc})=\ii\ee^{-\ii\theta}|W_\mathrm{gc}'(\alpha_\mathrm{gc})|$, so using \eqref{eq:modulus-of-L} together with $L=|L|\ee^{-\ii\theta/2}$ shows that
\begin{multline}
\frac{C_{12}(\alpha_\mathrm{gc})^2+C_{22}(\alpha_\mathrm{gc})^2}{\alpha_\mathrm{gc} W_\mathrm{gc}'(\alpha_\mathrm{gc})}\\
{}=
\frac{4(m_\mathrm{gc}(1-m_\mathrm{gc}))^{1/4} \KK(m_\mathrm{gc})^2}{\pi^2|W_\mathrm{gc}'(\alpha_\mathrm{gc})|^{1/2}}
\frac{\Theta_0(\tfrac{1}{2}\mathcal{H}_0^\mathrm{gc})}{\Theta_0(\ii\pi)\ee^{-\ii\theta/2}\Theta_0(\tfrac{1}{4}\mathcal{H}_0^\mathrm{gc}-\tfrac{1}{2}\ii\pi)^4}
\cdot\mathrm{cn}\left(\frac{2\KK(m_\mathrm{gc})}{\pi\epsilon}\Phi^l(t);m_\mathrm{gc}\right).
\label{eq:real-thing-II}
\end{multline}
Since the zeros of $\Theta_0(z)$ are precisely the lattice points $(2m+1)\ii\pi + (2n+1)\tfrac{1}{2}\mathcal{H}_0^\mathrm{gc}$ for $(m,n)\in\mathbb{Z}^2$, none of the three theta functions on the right-hand side vanishes.  Moreover, since $\mathcal{H}_0^\mathrm{gc}<0$, the Fourier series \eqref{eq:Riemann-Theta} shows that $\Theta_0(\tfrac{1}{2}H_0^\mathrm{gc})$ is positive 
and that $\Theta_0(\ii\pi)$ is real.  Furthermore, since $\Theta_0(\ii\pi)\neq 0$ for all $\mathcal{H}_0^\mathrm{gc}<0$ has the same sign for all such $\mathcal{H}_0^\mathrm{gc}$, by taking the limit $\mathcal{H}_0^\mathrm{gc}\to -\infty$ one sees that in fact $\Theta_0(\ii\pi)$ is positive as well.  Now we have the following identities (cf., \cite[Eqns.\@ 20.9.1--20.9.2]{NIST:DLMF}):
\begin{equation}
m_\mathrm{gc}=\ee^{\mathcal{H}_0^\mathrm{gc}/2}\frac{\Theta_0(\tfrac{1}{2}\mathcal{H}_0^\mathrm{gc})^4}{\Theta_0(0)^4}\quad\text{and}\quad
\KK(m_\mathrm{gc})=\frac{\pi}{2}\Theta_0(0)^2.
\end{equation}
Combining \cite[Eqn.\@ 20.7.5]{NIST:DLMF} with the first equation one gets that
\begin{equation}
1-m_\mathrm{gc}=\frac{\Theta_0(\ii\pi)^4}{\Theta_0(0)^4}.
\end{equation}
Since it is easily shown that $\mathcal{H}_0^\mathrm{gc}<0$ implies that $\Theta_0(0)>0$, we may take roots and obtain the identities
\begin{equation}
\begin{split}
\Theta_0(0)&=\sqrt{\frac{2}{\pi}}\sqrt{\KK(m_\mathrm{gc})}\\
\Theta_0(\ii\pi)&=(1-m_\mathrm{gc})^{1/4}\sqrt{\frac{2}{\pi}}\sqrt{\KK(m_\mathrm{gc})}\\
\Theta_0(\tfrac{1}{2}\mathcal{H}_0^\mathrm{gc})&=m_\mathrm{gc}^{1/4}\ee^{-\mathcal{H}_0^\mathrm{gc}/8}\sqrt{\frac{2}{\pi}}\sqrt{\KK(m_\mathrm{gc})}.
\end{split}
\label{eq:eliminate-thetas}
\end{equation}
Hence, \eqref{eq:real-thing-II} becomes
\begin{equation}
\frac{C_{12}(\alpha_\mathrm{gc})^2+C_{22}(\alpha_\mathrm{gc})^2}{\alpha_\mathrm{gc}W_\mathrm{gc}'(\alpha_\mathrm{gc})}=\frac{4\sqrt{m_\mathrm{gc}}\ee^{-\mathcal{H}_0^\mathrm{gc}/8}\KK(m_\mathrm{gc})^2}{\pi^2|W_\mathrm{gc}'(\alpha_\mathrm{gc})|^{1/2}\ee^{-\ii\theta/2}\Theta_0(\tfrac{1}{4}\mathcal{H}_0^\mathrm{gc}-\tfrac{1}{2}\ii\pi)^4}\cdot
\mathrm{cn}\left(\frac{2\KK(m_\mathrm{gc})}{\pi\epsilon}\Phi^l(t);m_\mathrm{gc}\right).
\label{eq:real-thing-III}
\end{equation}
Again since $\tfrac{1}{4}\mathcal{H}_0^\mathrm{gc}-\tfrac{1}{2}\ii\pi$ does not belong to the lattice of zeros of $\Theta_0(\cdot)$, the product $\ee^{-\ii\theta/2}\Theta_0(\tfrac{1}{4}\mathcal{H}_0^\mathrm{gc}-\tfrac{1}{2}\ii\pi)^4$ is nonzero.  Moreover, we argue as follows: noting how the expression on the left-hand side of \eqref{eq:real-thing-III} enters into the expressions \eqref{eq:cos-sin-after-argument-linearization}, after taking into account that $h(\tau^*)=h(\tau)^*$ one can let $\epsilon\to 0$ and conclude that, due to the evenness in $x$ of the quantities on the left-hand side (cf., \eqref{eq:condensate-even}), it must be the case that $\ee^{-\ii\theta}\Theta(\tfrac{1}{4}\mathcal{H}_0^\mathrm{gc}-\tfrac{1}{2}\ii\pi)^4$ is real-valued.  Using this fact we can in turn determine both the sign and the modulus of the latter quantity.  Indeed, to this end we take $z=\tfrac{1}{4}\mathcal{H}_0^\mathrm{gc}-\tfrac{1}{2}\ii\pi$ and $\mathcal{H}=\mathcal{H}_0^\mathrm{gc}$ in \eqref{eq:theta-identity-0} after which we use \eqref{eq:Theta-identities} to shift an argument by $\mathcal{H}_0^\mathrm{gc}$, and obtain the identity
\begin{equation}
\begin{split}
\Theta_0(\tfrac{1}{4}\mathcal{H}_0^\mathrm{gc}-\tfrac{1}{2}\ii\pi)^4+\Theta_0(\tfrac{1}{4}\mathcal{H}_0^\mathrm{gc}+\tfrac{1}{2}\ii\pi)^4&=\Theta_0(\ii\pi)^3\Theta_0(\tfrac{1}{2}\mathcal{H}_0^\mathrm{gc})\\
&= m_\mathrm{gc}^{1/4}(1-m_\mathrm{gc})^{3/4}\ee^{-\mathcal{H}_0^\mathrm{gc}/8}\frac{4\KK(m_\mathrm{gc})^2}{\pi^2},
\end{split}
\end{equation}
where on the second line we used \eqref{eq:eliminate-thetas}.
Since $\mathcal{H}_0^\mathrm{gc}<0$, it follows that the left-hand side is $2\kappa_0$ where $\kappa_0:=\mathrm{Re}\{\Theta_0(\tfrac{1}{4}\mathcal{H}_0^\mathrm{gc}-\tfrac{1}{2}\ii\pi)^4\}$, which is clearly positive.  Now let $\lambda_0:=\mathrm{Im}\{\Theta_0(\tfrac{1}{4}\mathcal{H}_0^\mathrm{gc}-\tfrac{1}{2}\ii\pi)^4\}$.  Then writing the established condition $\mathrm{Im}\{\ee^{-\ii\theta/2}\Theta_0(\tfrac{1}{4}\mathcal{H}_0^\mathrm{gc}-\tfrac{1}{2}\ii\pi)^4\}=0$ in the form
\begin{equation}
0=\mathrm{Im}\{\ee^{-\ii\theta/2}(\kappa_0+\ii\lambda_0)\}=\mathrm{Im}\left\{\left(\sqrt{1-m_\mathrm{gc}}-\ii\sqrt{m_\mathrm{gc}}\right)(\kappa_0+\ii\lambda_0)\right\}=\sqrt{1-m_\mathrm{gc}}\lambda_0-\sqrt{m_\mathrm{gc}}\kappa_0
\end{equation}
we get that 
\begin{equation}
\lambda_0=\kappa_0\sqrt{\frac{m_\mathrm{gc}}{1-m_\mathrm{gc}}}.
\end{equation}
Therefore,
\begin{equation}
\begin{split}
\ee^{-\ii\theta/2}\Theta_0(\tfrac{1}{4}\mathcal{H}_0^\mathrm{gc}-\tfrac{1}{2}\ii\pi)^4 &= \mathrm{Re}\{\ee^{-\ii\theta/2}(\kappa_0+\ii\lambda_0)\}\\ &=\mathrm{Re}\left\{\left(\sqrt{1-m_\mathrm{gc}}-\ii\sqrt{m_\mathrm{gc}}\right)(\kappa_0+\ii\lambda_0)\right\}\\ &=\sqrt{1-m_\mathrm{gc}}\kappa_0+\sqrt{m_\mathrm{gc}}\lambda_0\\ &=
\frac{\kappa_0}{\sqrt{1-m_\mathrm{gc}}}\\
&=\frac{2(m_\mathrm{gc}(1-m_\mathrm{gc}))^{1/4}\ee^{-\mathcal{H}_0^\mathrm{gc}/8}\KK(m_\mathrm{gc})^2}{\pi^2}.
\end{split}
\label{eq:Theta-fourth-final-identity}
\end{equation}
Using this in \eqref{eq:real-thing-III} finally gives \eqref{eq:real-thing-IV} and completes the proof.

\begin{remark}
We stress that the derivation of the simple formula \eqref{eq:real-thing-IV} hinges in part on an indirect argument that $\ee^{-\ii\theta/2}\Theta(\tfrac{1}{4}\mathcal{H}_0^\mathrm{gc}-\tfrac{1}{2}\ii\pi;\mathcal{H}_0^\mathrm{gc})$ is real-valued, where $\mathcal{H}_0^\mathrm{gc}<0$ is given by \eqref{eq:H0} in which $m_\mathrm{gc}$ is related to $\theta\in (0,\pi)$ by \eqref{eq:mgc-alphagc}.  We have not found any other more direct proof of this fact.
\end{remark}

\section{Proof of Lemma~\ref{lemma:system-simplify}}
\label{sec:system-simplify}
To write $\mathbf{P}$ and $\mathbf{Q}$ given by \eqref{eq:P-matrix-define} and \eqref{eq:Q-matrix-define} respectively in a compact form, let us introduce
\begin{equation}
\begin{split}
G&:=C_{22}(\alpha_\mathrm{gc})C_{12}'(\alpha_\mathrm{gc})-C_{22}'(\alpha_\mathrm{gc})C_{12}(\alpha_\mathrm{gc})\\
H&:=C_{12}(\alpha_\mathrm{gc})^2+C_{22}(\alpha_\mathrm{gc})^2\\
I&:=C_{12}(\alpha_\mathrm{gc})C_{22}(\alpha_\mathrm{gc})^*-C_{12}(\alpha_\mathrm{gc})^*C_{22}(\alpha_\mathrm{gc})\\
J&:=|C_{12}(\alpha_\mathrm{gc})|^2+|C_{22}(\alpha_\mathrm{gc})|^2,
\end{split}
\end{equation}
so that, using $\alpha_\mathrm{gc}=\ee^{\ii\theta}$, we get
\begin{equation}
\mathbf{P}=(G-\ell(X,T)W'_\mathrm{gc}(\alpha_\mathrm{gc}))\mathbb{I}+\frac{\ee^{-\ii\theta}H}{4}i\sigma_2
\quad\text{and}\quad
\mathbf{Q}=\frac{\ii (\ee^{\ii\theta}-1)I}{4\sin(\theta)}\mathbb{I}+\frac{\ii (\ee^{\ii\theta}+1)J}{4\sin(\theta)} \ii\sigma_2,
\end{equation}
and also
\begin{equation}
\mathbf{B}\sigma_3\mathbf{B}= H\sigma_3\quad\text{and}\quad \mathbf{B}\sigma_3\mathbf{B}^*=\sigma_3\left(J\mathbb{I}+I\ii\sigma_2\right).
\end{equation}
Therefore, looking at the terms on the right-hand side of \eqref{eq:w-system} we see that
\begin{equation}
\ee^{-\ii\theta}\sigma_3\mathbf{P}^*\mathbf{B}\sigma_3\mathbf{B}=\ee^{-\ii\theta}H(G^*-\ell(X,T)^*W_\mathrm{gc}'(\alpha_\mathrm{gc})^*)\mathbb{I}-\frac{|H|^2}{4}\ii\sigma_2
\label{eq:RHS-1}
\end{equation}
and
\begin{equation}
\ee^{-\ii\theta}\sigma_3\mathbf{QB}\sigma_3\mathbf{B}^*=\frac{\ii IJ}{2\sin(\theta)}\mathbb{I}+\frac{\ii((1-\ee^{-\ii\theta})I^2-(1+\ee^{-\ii\theta})J^2)}{4\sin(\theta)}\ii\sigma_2.
\label{eq:RHS-2}
\end{equation}
Now, taking into account that $\alpha_\mathrm{gc}W'_\mathrm{gc}(\alpha_\mathrm{gc})=\ii |W'_\mathrm{gc}(\alpha_\mathrm{gc})|$ due to \eqref{eq:arg-Wprime-gc}, the quantity $H$ is completely characterized by Lemma~\ref{lemma:C-real}.  
Indeed, using $\epsilon^{-1}\Phi^l(t)=\Omega_{\mathrm{p},N}-\omega_\mathrm{gc}T$ and \eqref{eq:omega0} we have
\begin{equation}
H=2\ii |W'_\mathrm{gc}(\alpha_\mathrm{gc})|^{1/2}\left(\frac{m_\mathrm{gc}}{1-m_\mathrm{gc}}\right)^{1/4}
\mathrm{cn}\left(\frac{2\KK(m_\mathrm{gc})\Omega_{\mathrm{p},N}}{\pi}+T;m_\mathrm{gc}\right).
\label{eq:H-simple}
\end{equation}
The quantities $I$ and $J$ are obviously purely imaginary and positive real, respectively.  By following similar reasoning as in Appendix~\ref{sec:proof-lemma:C-real} (we leave the details to the reader) one can show that
\begin{equation}
I=2\ii |W'_\mathrm{gc}(\alpha_\mathrm{gc})|^{1/2}(m_\mathrm{gc}(1-m_\mathrm{gc}))^{1/4}\mathrm{sn}\left(\frac{2\KK(m_\mathrm{gc})\Omega_{\mathrm{p},N}}{\pi}+T;m_\mathrm{gc}\right)
\label{eq:I-simple}
\end{equation}
and
\begin{equation}
J=2|W'_\mathrm{gc}(\alpha_\mathrm{gc})|^{1/2}\left(\frac{m_\mathrm{gc}}{1-m_\mathrm{gc}}\right)^{1/4}\mathrm{dn}\left(\frac{2\KK(m_\mathrm{gc})\Omega_{\mathrm{p},N}}{\pi}+T;m_\mathrm{gc}\right).
\label{eq:J-simple}
\end{equation}

To express $G$ in terms of elliptic functions is more difficult; in fact $G$ is not an elliptic function of $T$ at all, although it is periodic in the real direction, as we will now show. 
First, we write
\begin{equation}
G=C_{22}(\alpha_\mathrm{gc})^2\frac{\dd}{\dd w}\left[\frac{C_{12}(w)}{C_{22}(w)}\right]_{w=\alpha_\mathrm{gc}}.
\label{eq:coefficient-derivative-fraction}
\end{equation}
Now, from \eqref{eq:C12}--\eqref{eq:C22} we see that some factors cancel in the fraction $C_{12}(w)/C_{22}(w)$ and in fact we can write the latter in the form
\begin{equation}
\frac{C_{12}(w)}{C_{22}(w)}=-\ii\frac{\mathcal{F}(A)-\mathcal{G}(A)}{\mathcal{F}(A)+\mathcal{G}(A)},
\end{equation}
where $A=A(w)$ is the branch of the Abel map defined in \eqref{eq:Abel-branch}, and where entire functions $\mathcal{F}(\cdot)$ and $\mathcal{G}(\cdot)$ are defined by
\begin{equation}
\mathcal{F}(A):=\frac{\ee^{\ii\Phi^l(t)/(2\epsilon)}\ee^{A/2}\Theta(A+\mathcal{K}+\ii\varphi^-)-\ii\ee^{-\ii\Phi^l(t)/(2\epsilon)}\ee^{A/2}\Theta(A+\mathcal{K}-\ii\varphi^-)}{\Theta(\ii\pi+\ii\varphi^-)}
\end{equation}
and
\begin{equation}
\mathcal{G}(A):=\frac{\ee^{\ii\Phi^l(t)/(2\epsilon)}\ee^{A/2}\Theta(A+\mathcal{K}+\ii\varphi^+)+\ii\ee^{-\ii\Phi^l(t)/(2\epsilon)}\ee^{A/2}\Theta(A+\mathcal{K}-\ii\varphi^+)}{\Theta(\ii\pi+\ii\varphi^+)}.
\end{equation}
The definition \eqref{eq:shifted-phases-define} with $\nu=\epsilon^{-1}\Phi^l(t)$ and the identities \eqref{eq:Theta-identities} can then be used to show that $\mathcal{F}(\cdot)$ and $\mathcal{G}(\cdot)$ are both even functions.  In fact, Taylor expansion about $A=0$ shows that
\begin{multline}
\Theta(\ii\pi+\ii\varphi^-)\mathcal{F}(A)=\ee^{\ii\Phi^l(t)/(2\epsilon)}\Theta(\mathcal{K}+\ii\varphi^-)-\ii\ee^{-\ii\Phi^l(t)/(2\epsilon)}\Theta(\mathcal{K}-\ii\varphi^-) \\
+\left[\ee^{\ii\Phi^l(t)/(2\epsilon)}\left(\Theta'(\mathcal{K}+\ii\varphi^-) + \frac{1}{2}\Theta(\mathcal{K}+\ii\varphi^-)\right)-\ii\ee^{-\ii\Phi^l(t)/(2\epsilon)}\left(\Theta'(\mathcal{K}-\ii\varphi^-)+\frac{1}{2}\Theta(\mathcal{K}-\ii\varphi^-)\right)\right]A\\
+
\left[\frac{1}{2}\ee^{\ii\Phi^l(t)/(2\epsilon)}\left(\Theta''(\mathcal{K}+\ii\varphi^-)+\Theta'(\mathcal{K}+\ii\varphi^-)+\frac{1}{4}\Theta(\mathcal{K}+\ii\varphi^-)\right) \right.\\
\left.{}-\frac{1}{2}\ii\ee^{-\ii\Phi^l(t)/(2\epsilon)}\left(\Theta''(\mathcal{K}-\ii\varphi^-)+\Theta'(\mathcal{K}-\ii\varphi^-)+\frac{1}{4}\Theta(\mathcal{K}-\ii\varphi^-)\right)\right]A^2+\mathcal{O}(A^3),
\end{multline}
and using the fact that $\mathcal{F}$ is an even function of $A$ the coefficient of $A$ necessarily vanishes (this can also be checked directly), and the error term is proportional to $A^4$.  The fact that the coefficient of $A$ vanishes can then be used to eliminate the first derivatives of $\Theta$ in the coefficient of $A^2$ as follows:
\begin{multline}
\Theta(\ii\pi+\ii\varphi^-)\mathcal{F}(A)=\ee^{\ii\Phi^l(t)/(2\epsilon)}\Theta(\mathcal{K}+\ii\varphi^-)-\ii\ee^{-\ii\Phi^l(t)/(2\epsilon)}\Theta(\mathcal{K}-\ii\varphi^-) \\
+
\left[\frac{1}{2}\ee^{\ii\Phi^l(t)/(2\epsilon)}\left(\Theta''(\mathcal{K}+\ii\varphi^-)-\frac{1}{4}\Theta(\mathcal{K}+\ii\varphi^-)\right) \right.\\
\left.{}-\frac{1}{2}\ii\ee^{-\ii\Phi^l(t)/(2\epsilon)}\left(\Theta''(\mathcal{K}-\ii\varphi^-)-\frac{1}{4}\Theta(\mathcal{K}-\ii\varphi^-)\right)\right]A^2+\mathcal{O}(A^4).
\label{eq:F-expansion}
\end{multline}
In exactly the same way, we find the expansion
\begin{multline}
\Theta(\ii\pi+\ii\varphi^+)\mathcal{G}(A)=\ee^{\ii\Phi^l(t)/(2\epsilon)}\Theta(\mathcal{K}+\ii\varphi^+)+\ii\ee^{-\ii\Phi^l(t)/(2\epsilon)}\Theta(\mathcal{K}-\ii\varphi^+)\\
+\left[\frac{1}{2}\ee^{\ii\Phi^l(t)/(2\epsilon)}\left(\Theta''(\mathcal{K}+\ii\varphi^+)-\frac{1}{4}\Theta(\mathcal{K}+\ii\varphi^+)\right)\right.\\
\left.{}+\frac{1}{2}\ii\ee^{-\ii\Phi^l(t)/(2\epsilon)}\left(\Theta''(\mathcal{K}-\ii\varphi^+)-\frac{1}{4}\Theta(\mathcal{K}-\ii\varphi^+)\right)\right]A^2 + \mathcal{O}(A^4).
\label{eq:G-expansion}
\end{multline}
To abbreviate the above Taylor coefficients we write $\mathcal{F}(A)=\mathcal{F}_0+\mathcal{F}_1A^2+\mathcal{O}(A^4)$ and $\mathcal{G}(A)=\mathcal{G}_0+\mathcal{G}_1A^2+\mathcal{O}(A^4)$.  Then we have the corresponding Taylor expansion
\begin{equation}
\frac{C_{12}(w)}{C_{22}(w)}=-\ii\frac{\mathcal{F}_0-\mathcal{G}_0}{\mathcal{F}_0+\mathcal{G}_0} + 2\ii\frac{\mathcal{F}_0\mathcal{G}_1-\mathcal{F}_1\mathcal{G}_0}{(\mathcal{F}_0+\mathcal{G}_0)^2}A^2+\mathcal{O}(A^4).
\end{equation}
Therefore, using \eqref{eq:A1-squared},
\begin{equation}
\frac{C_{12}(w)}{C_{22}(w)}=-\ii\frac{\mathcal{F}_0-\mathcal{G}_0}{\mathcal{F}_0+\mathcal{G}_0}+\frac{8\ii c_\mathrm{gc}^2}{\alpha_\mathrm{gc}(\alpha_\mathrm{gc}-\alpha_\mathrm{gc}^*)}\cdot\frac{\mathcal{F}_0\mathcal{G}_1-\mathcal{F}_1\mathcal{G}_0}{(\mathcal{F}_0+\mathcal{G}_0)^2}(w-\alpha_\mathrm{gc})+\mathcal{O}((w-\alpha_\mathrm{gc})^2),
\end{equation}
so that
\begin{equation}
\frac{\dd}{\dd w}\left[\frac{C_{12}(w)}{C_{22}(w)}\right]_{w=\alpha_\mathrm{gc}}=\frac{8\ii c_\mathrm{gc}^2}{\alpha_\mathrm{gc}(\alpha_\mathrm{gc}-\alpha_\mathrm{gc}^*)}\cdot\frac{\mathcal{F}_0\mathcal{G}_1-\mathcal{F}_1\mathcal{G}_0}{(\mathcal{F}_0+\mathcal{G}_0)^2}.
\end{equation}
Next, observing that in terms of $\mathcal{F}_0$ and $\mathcal{G}_0$, $C_{22}(\alpha_\mathrm{gc})^2$ as given by \eqref{eq:C22} can be rewritten in the form
\begin{equation}
C_{22}(\alpha_\mathrm{gc})^2=\left[\frac{q(w)^2W_\mathrm{gc}(w)^{1/2}}{\Theta(A+\mathcal{K})^2}\right]_{w=\alpha_\mathrm{gc}}\frac{\Theta(\ii\pi)^2}{8}(\mathcal{F}_0+\mathcal{G}_0)^2,
\end{equation}
in which the evaluation at $w=\alpha_\mathrm{gc}$ is understood as a limit taken from the sectors $\mathrm{III}$ or $\mathrm{IV}$ of $U$, we arrive at
\begin{equation}
G=\left[\frac{q(w)^2W_\mathrm{gc}(w)^{1/2}}{\Theta(A+\mathcal{K})^2}\right]_{w=\alpha_\mathrm{gc}}\frac{\ii c_\mathrm{gc}^2\Theta(\ii\pi)^2}{\alpha_\mathrm{gc}(\alpha_\mathrm{gc}-\alpha_\mathrm{gc}^*)}(\mathcal{F}_0\mathcal{G}_1-\mathcal{F}_1\mathcal{G}_0).
\label{eq:C12primeC22-minus-swap}
\end{equation}

Using \eqref{eq:Theta-expand} with \eqref{eq:L-define} and then \eqref{eq:Theta-prime-K} and \eqref{eq:Theta-fourth-identity} followed by \eqref{eq:Theta-fourth-final-identity} and using the facts that $\arg(L)=-\tfrac{1}{2}\theta$ and $\sin(\theta)=2m_\mathrm{gc}^{1/2}(1-m_\mathrm{gc})^{1/2}$ puts this in the simpler form
\begin{equation}
G=\ee^{-\ii\theta}\ii |W_\mathrm{gc}(\alpha_\mathrm{gc})|^{1/2}
\frac{\pi^2\ee^{\mathcal{H}_0^\mathrm{gc}/8}}{16 \KK(m_\mathrm{gc})^2\sqrt{m_\mathrm{gc}(1-m_\mathrm{gc})}}(\mathcal{F}_0\mathcal{G}_1-\mathcal{F}_1\mathcal{G}_0).
\label{eq:G-again}
\end{equation}
So it remains to calculate $\mathcal{F}_0\mathcal{G}_1-\mathcal{F}_1\mathcal{G}_0$; some terms cancel immediately and we obtain the expression
\begin{multline}
2\Theta(\ii\pi+\ii\varphi^-)\Theta(\ii\pi+\ii\varphi^+)(\mathcal{F}_0\mathcal{G}_1-\mathcal{F}_1\mathcal{G}_0)=\\
\ee^{\ii\Phi^l(t)/\epsilon}\left(\Theta(\mathcal{K}+\ii\varphi^-)\Theta''(\mathcal{K}+\ii\varphi^+)-
\Theta''(\mathcal{K}+\ii\varphi^-)\Theta(\mathcal{K}+\ii\varphi^+)\right)\\
{}+\ee^{-\ii\Phi^l(t)/\epsilon}\left(\Theta(\mathcal{K}-\ii\varphi^-)\Theta''(\mathcal{K}-\ii\varphi^+)-
\Theta''(\mathcal{K}-\ii\varphi^-)\Theta(\mathcal{K}-\ii\varphi^+)\right)\\
{}+\ii\left(\Theta(\mathcal{K}+\ii\varphi^-)\Theta''(\mathcal{K}-\ii\varphi^+)-\Theta''(\mathcal{K}+\ii\varphi^-)\Theta(\mathcal{K}-\ii\varphi^+)\right)\\
{}-\ii\left(\Theta(\mathcal{K}-\ii\varphi^-)\Theta''(\mathcal{K}+\ii\varphi^+)-\Theta''(\mathcal{K}-\ii\varphi^-)\Theta(\mathcal{K}+\ii\varphi^+)\right),
\end{multline}
where we recall \eqref{eq:Theta-simple} and $\varphi^\pm=\epsilon^{-1}\Phi^l(t)\pm\tfrac{1}{2}\ii \pi$.
We now wish to convert expressions involving $\Theta(z)=\Theta(z;\mathcal{H}^\mathrm{gc})$ into equivalent expressions involving $\Theta_0(z):=\Theta(z;\mathcal{H}^\mathrm{gc}_0)$.  This can be accomplished by the use of \eqref{eq:theta-last-identity} and its second derivatives with respect to $z_1$ and $z_2$ which can be combined to yield the identity
\begin{multline}
\Theta(z_1)\Theta''(z_2)-\Theta''(z_1)\Theta(z_2)=-4\Theta_0'(z_1+z_2+\ii\pi)\Theta_0'(z_1-z_2+\ii\pi)\\{}-
\ii\ee^{\mathcal{H}_0^\mathrm{gc}/4}\ee^{z_1}\Theta_0(z_1+z_2+\tfrac{1}{2}\mathcal{H}_0^\mathrm{gc})\Theta_0(z_1-z_2+\tfrac{1}{2}\mathcal{H}_0^\mathrm{gc})
-4\ii\ee^{\mathcal{H}_0^\mathrm{gc}/4}\ee^{z_1}\Theta_0'(z_1+z_2+\tfrac{1}{2}\mathcal{H}_0^\mathrm{gc})\Theta_0'(z_1-z_2+\tfrac{1}{2}\mathcal{H}_0^\mathrm{gc})\\{} - 2\ii\ee^{\mathcal{H}_0^\mathrm{gc}/4}\ee^{z_1}
\Theta_0'(z_1+z_2+\tfrac{1}{2}\mathcal{H}_0^\mathrm{gc})\Theta_0(z_1-z_2+\tfrac{1}{2}\mathcal{H}_0^\mathrm{gc})
-2\ii\ee^{\mathcal{H}_0^\mathrm{gc}/4}\ee^{z_1}\Theta_0(z_1+z_2+\tfrac{1}{2}\mathcal{H}_0^\mathrm{gc})\Theta_0'(z_1-z_2+\tfrac{1}{2}\mathcal{H}_0^\mathrm{gc}).
\end{multline}
Therefore all second derivatives of theta functions cancel, and after much use of the identities \eqref{eq:Theta-identities} we arrive at
\begin{equation}
\mathcal{F}_0\mathcal{G}_1-\mathcal{F}_1\mathcal{G}_0=-8\ii\frac{\Theta_0'(\mathcal{K}_0)}{\Theta_0(0)}\frac{\Theta_0'(2\ii\epsilon^{-1}\Phi^l(t)+\ii\pi)}{\Theta_0(2\ii\epsilon^{-1}\Phi^l(t)+\ii\pi)},
\end{equation}
in which $\mathcal{K}_0:=\tfrac{1}{2}\mathcal{H}_0^\mathrm{gc}+\ii\pi$ is the Riemann constant for $\Theta_0(z)$, i.e., all zeros of $\Theta_0(z)$ lie at the points $\mathcal{K}_0+2\pi\ii m + \mathcal{H}_0^\mathrm{gc}n$ for $(m,n)\in\mathbb{Z}^2$.
Using \eqref{eq:Theta-prime-K} and \eqref{eq:eliminate-thetas} to eliminate $\Theta_0'(\mathcal{K}_0)/\Theta_0(0)$ and substituting into \eqref{eq:G-again} gives
\begin{equation}
G=\ee^{-\ii\theta}|W_\mathrm{gc}'(\alpha_\mathrm{gc})|^{1/2}\frac{\pi}{2\KK(m_\mathrm{gc})(m_\mathrm{gc}(1-m_\mathrm{gc}))^{1/4}}
\cdot\frac{\Theta_0'(2\ii\epsilon^{-1}\Phi^l(t)+\ii\pi)}{\Theta_0(2\ii\epsilon^{-1}\Phi^l(t)+\ii\pi)}.
\label{eq:G-latest}
\end{equation}
Since $\epsilon^{-1}\Phi^l(t)=\Omega_{\mathrm{p},N}-\omega_\mathrm{gc}T$ with $\omega_\mathrm{gc}<0$ defined by \eqref{eq:omega0}, it is obvious from the $2\pi\ii$-periodicity of $\Theta_0(z)$ that $G$ is a periodic function of $T$.  Now $\Theta_0'(z)/\Theta_0(z)=\dd\log(\Theta_0(z))/\dd z$ is not an elliptic function, since according to \eqref{eq:Theta-identities} it grows linearly in the $\mathcal{H}_0^\mathrm{gc}$-direction.  But the same formula shows that its derivative is an elliptic function because
\begin{equation}
\left.\frac{\dd^2}{\dd z^2}\log(\Theta_0(z))\right|_{z=w+2\pi\ii}=\left.\frac{\dd^2}{\dd z^2}\log(\Theta_0(z))\right|_{z=w+\mathcal{H}_\mathrm{gc}^0}=\left.\frac{\dd^2}{\dd z^2}\log(\Theta_0(z))\right|_{z=w}.
\end{equation}
This elliptic function has only one singularity in its period rectangle $(\mathrm{Re}\{z\},\mathrm{Im}\{z\})\in (\mathcal{H}_0^\mathrm{gc},0]\times (0,2\pi]$, namely a double pole at the center  $z=\mathcal{K}_0$ of the rectangle.  Since $\mathcal{K}_0$ is a simple zero of the entire function $\Theta_0(z)$, it follows that
in a neighborhood of $z=\mathcal{K}_0$,
\begin{equation}
\frac{\Theta_0'(z)}{\Theta(z)}=\frac{\dd}{\dd z}\log(\Theta_0(z))=\frac{1}{z-\mathcal{K}_0}+\text{holomorphic,}
\end{equation}
so the corresponding Laurent expansion of its derivative is
\begin{equation}
\frac{\dd^2}{\dd z^2}\log(\Theta_0(z))=\frac{-1}{(z-\mathcal{K}_0)^2}+\text{holomorphic}.
\label{eq:Laurent-doubleprime}
\end{equation}
Recalling the expression \eqref{eq:H0} for $\mathcal{H}_0^\mathrm{gc}$, it is convenient to introduce the rescaled independent variable $\zeta:=-\ii \KK(m_\mathrm{gc})z/\pi$.  In terms of $\zeta$, the period rectangle becomes $(\mathrm{Re}\{\zeta\},\mathrm{Im}\{\zeta\})\in (0,2\KK(m_\mathrm{gc})]\times (0,2\KK(1-m_\mathrm{gc})]$, and the Laurent expansion \eqref{eq:Laurent-doubleprime} becomes
\begin{equation}
\frac{\dd^2}{\dd z^2}\log(\Theta_0(z))=\frac{\KK(m_\mathrm{gc})^2}{\pi^2}\frac{1}{(\zeta-(\KK(m_\mathrm{gc})+\ii \KK(1-m_\mathrm{gc})))^2} + \text{holomorphic.}
\end{equation}
Now, according to \cite[\S 22.4]{NIST:DLMF}, $\mathrm{dn}(\zeta;m_\mathrm{gc})^{-2}$ is an elliptic function with the same periods, and 
\begin{equation}
\frac{1}{\mathrm{dn}(\zeta;m_\mathrm{gc})^2}=-\frac{1}{1-m_\mathrm{gc}}\cdot\frac{1}{(\zeta-(\KK(m_\mathrm{gc})+\ii \KK(1-m_\mathrm{gc})))^2} + \text{holomorphic.}
\end{equation}
From this it follows that 
\begin{equation}
F(z):=\frac{\dd^2}{\dd z^2}\log(\Theta_0(z)) + \frac{(1-m_\mathrm{gc})\KK(m_\mathrm{gc})^2}{\pi^2}\cdot\frac{1}{\mathrm{dn}(\zeta;m_\mathrm{gc})^2}
\label{eq:constant-function}
\end{equation}
is an entire elliptic function of $z$, hence a constant.  The constant may be computed by evaluating at $z=0$ and $\zeta=0$:
\begin{equation}
F(z)=F(0)=\frac{\Theta_0''(0)}{\Theta_0(0)}+\frac{(1-m_\mathrm{gc})\KK(m_\mathrm{gc})^2}{\pi^2}.
\end{equation}
However, an alternative way to evaluate the constant that seems more useful is to use the fact that the logarithmic derivative $\dd\log(\Theta_0(z))/\dd z$ is $2\pi\ii$-periodic, so
\begin{equation}
\begin{split}
F(z)=\frac{1}{2\pi\ii}\int_0^{2\pi\ii}F(z)\,\dd z &= \frac{(1-m_\mathrm{gc})\KK(m_\mathrm{gc})^2}{2\pi^3\ii}\int_0^{2\pi\ii}\frac{\dd z}{\mathrm{dn}(-\ii \KK(m_\mathrm{gc})z/\pi;m_\mathrm{gc})^2}\\ & = \frac{(1-m_\mathrm{gc})\KK(m_\mathrm{gc})^2}{\pi^2}\frac{1}{2\KK(m_\mathrm{gc})}\int_0^{2\KK(m_\mathrm{gc})}\frac{\dd\zeta}{\mathrm{dn}(\zeta;m_\mathrm{gc})^2}\\
&=\frac{(1-m_\mathrm{gc})\KK(m_\mathrm{gc})^2}{\pi^2}\left\langle\frac{1}{\mathrm{dn}(\cdot;m_\mathrm{gc})^2}\right\rangle.
\end{split}
\end{equation}
where for a periodic function $f$ of a real variable $\langle f(\cdot)\rangle$ denotes the average value.
Using the latter expression for the constant $F(z)$ on the left-hand side of \eqref{eq:constant-function}
gives
\begin{equation}
\frac{\dd^2}{\dd z^2}\log(\Theta_0(z))=\frac{(1-m_\mathrm{gc})\KK(m_\mathrm{gc})^2}{\pi^2}\left(\left\langle
\frac{1}{\mathrm{dn}(\cdot;m_\mathrm{gc})^2}\right\rangle-\frac{1}{\mathrm{dn}(-\ii \KK(m_\mathrm{gc})z/\pi;m_\mathrm{gc})^2}\right).
\end{equation}
Finally, since $\Theta_0(z)$ is an even function of $z$, $\Theta_0'(0)/\Theta_0(0)=0$, so we obtain the expression
\begin{equation}
\begin{split}
\frac{\Theta_0'(z)}{\Theta_0(z)}&=\frac{(1-m_\mathrm{gc})\KK(m_\mathrm{gc})^2}{\pi^2}\int_0^z
\left(\left\langle
\frac{1}{\mathrm{dn}(\cdot;m_\mathrm{gc})^2}\right\rangle-\frac{1}{\mathrm{dn}(-\ii \KK(m_\mathrm{gc})z'/\pi;m_\mathrm{gc})^2}\right)\,\dd z'\\
&=\ii\frac{(1-m_\mathrm{gc})\KK(m_\mathrm{gc})}{\pi}\int_0^{-\ii \KK(m_\mathrm{gc})z/\pi}\left(
\left\langle\frac{1}{\mathrm{dn}(\cdot;m_\mathrm{gc})^2}\right\rangle-\frac{1}{\mathrm{dn}(\zeta;m_\mathrm{gc})^2}\right)\,\dd\zeta.
\end{split}
\end{equation}
So, returning to \eqref{eq:G-latest}, we arrive at the formula
\begin{equation}
G=\ii\ee^{-\ii\theta}
|W'_\mathrm{gc}(\alpha_\mathrm{gc})|^{1/2}
\frac{(1-m_\mathrm{gc})^{3/4}}{2m_\mathrm{gc}^{1/4}}
\mathrm{p}\left(\frac{2\KK(m_\mathrm{gc})\Omega_{\mathrm{p},N}}{\pi}+T;m_\mathrm{gc}\right),
\label{eq:G-simple}
\end{equation}
in which the $2\pi$-periodic function $\mathrm{p}(w;m_\mathrm{gc})$ is defined by \eqref{eq:abbreviated-notation-2}--\eqref{eq:abbreviated-notation-3} (in particular, the compact formula \eqref{eq:abbreviated-notation-3} for the average can be found by making the substitution $t=\mathrm{cn}(\zeta;m_\mathrm{gc})/\mathrm{dn}(\zeta;m_\mathrm{gc})$ and comparing with \eqref{eq:KE}).

Using the results \eqref{eq:H-simple}--\eqref{eq:J-simple} and \eqref{eq:G-simple} in \eqref{eq:RHS-1}, recalling also \eqref{eq:a-and-b-formulae} and the abbreviated notation \eqref{eq:abbreviated-notation-1}--\eqref{eq:abbreviated-notation-3}, we easily establish \eqref{eq:system-LHS-Q}--\eqref{eq:system-LHS-P}, and we also get that
\begin{equation}
|W'_\mathrm{gc}(\alpha_\mathrm{gc})|^{-1}\mathrm{Re}\left\{\ee^{-\ii\theta}\sigma_3\mathbf{P}^*\mathbf{B}\sigma_3\mathbf{B}\right\}=
\left(\sqrt{1-m_\mathrm{gc}}\mathrm{p}-\sqrt{m_\mathrm{gc}}\rho(m_\mathrm{gc})T\right)
\mathrm{cn}\cdot\mathbb{I} 
-
\sqrt{\frac{m_\mathrm{gc}}{1-m_\mathrm{gc}}}\mathrm{cn}^2\ii\sigma_2
\end{equation}
where $\rho(m_\mathrm{gc})$ is the explicit expression \eqref{eq:rho-func-define}.
Likewise, from \eqref{eq:RHS-2} we get
\begin{equation}
|W'_\mathrm{gc}(\alpha_\mathrm{gc})|^{-1}\mathrm{Re}\left\{\ee^{-\ii\theta}\sigma_3\mathbf{QB}\sigma_3\mathbf{B}^*\right\}=-\frac{1}{\sqrt{1-m_\mathrm{gc}}}\mathrm{sn}\,\mathrm{dn}\cdot\mathbb{I}-\sqrt{\frac{m_\mathrm{gc}}{1-m_\mathrm{gc}}}\mathrm{cn}^2\ii\sigma_2.
\end{equation}
Combining these latter two results, one observes the cancellation of the terms proportional to $\ii\sigma_2$ and recovers \eqref{eq:system-RHS}, completing the proof of the lemma.  

\section{A Catalog of Defects}
\label{sec:Catalog}
Since the defect solutions $U(X,T)=U(X,T;m,\Omega)$ of the sine-Gordon equation in the form $U_{TT}-U_{XX}+\sin(U)=0$ as described in the introduction depend on two real parameters, an elliptic modulus $m\in (0,1)$ and a phase $\Omega\in\mathbb{R}\pmod{2\pi}$, there is a wide variety of possible behavior of the solutions.  In this appendix, we use the exact solution formul\ae\ \eqref{eq:ddotCS-compact}--\eqref{eq:abbreviated-notation-3} to plot $\cos(U(X,T;m,\Omega))$ and $\sin(U(X,T;m,\Omega))$ for various choices of the parameters.  The plots in each of the below figures correspond to the same value of $m$, which would be fixed for all defects in the neighborhood of a given simple gradient catastrophe point.  Given the value of $m=m_\mathrm{gc}$ associated with a given catastrophe point, different defects in the neighborhood of the same point and different values of $N$ would yield different values of the phase parameter $\Omega$.
\begin{figure}[h!]
\begin{center}
\includegraphics[height=.24\linewidth]{fig/Legend-SMALL.pdf}%
\includegraphics[height=.24\linewidth]{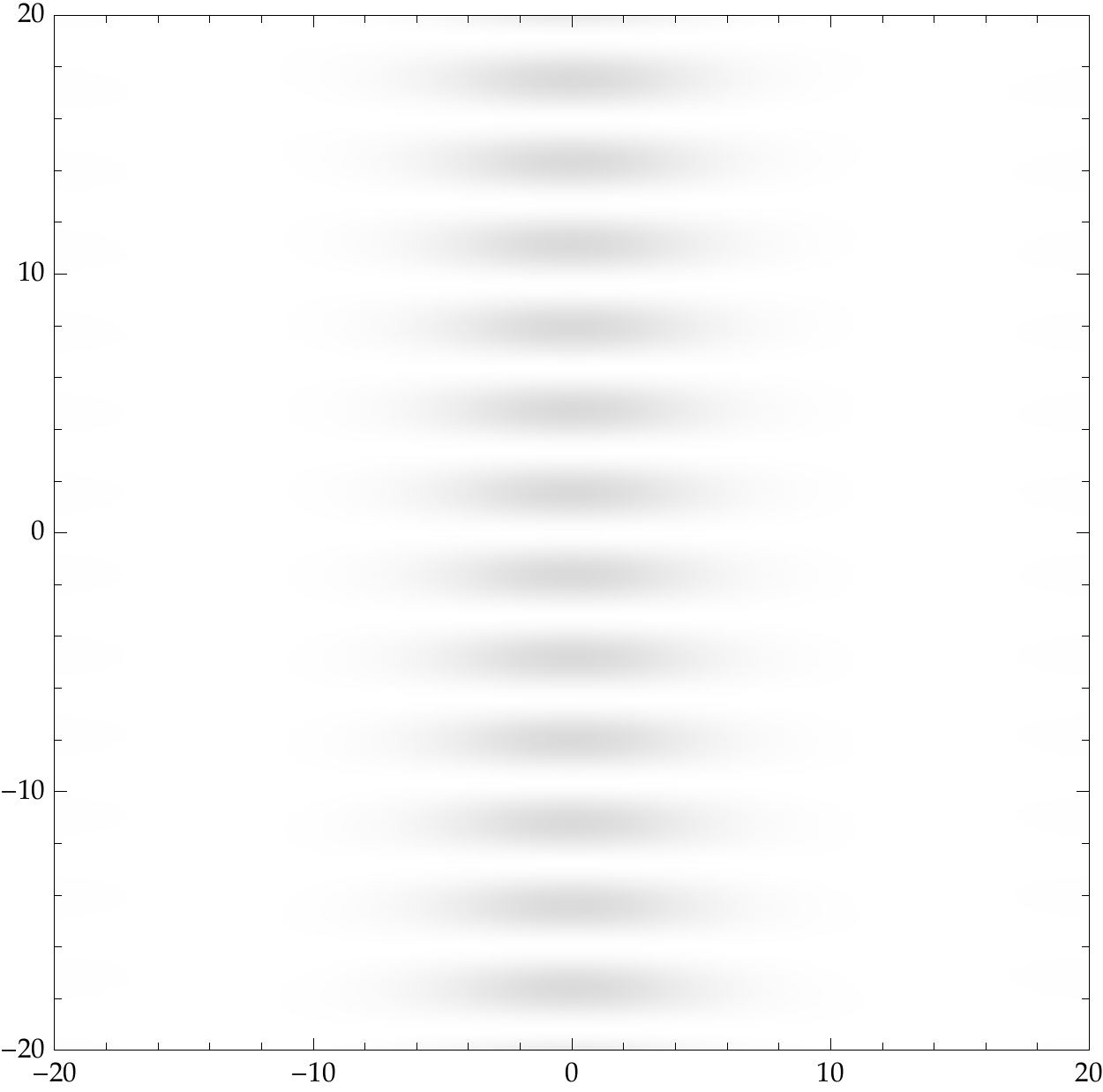}%
\includegraphics[height=.24\linewidth]{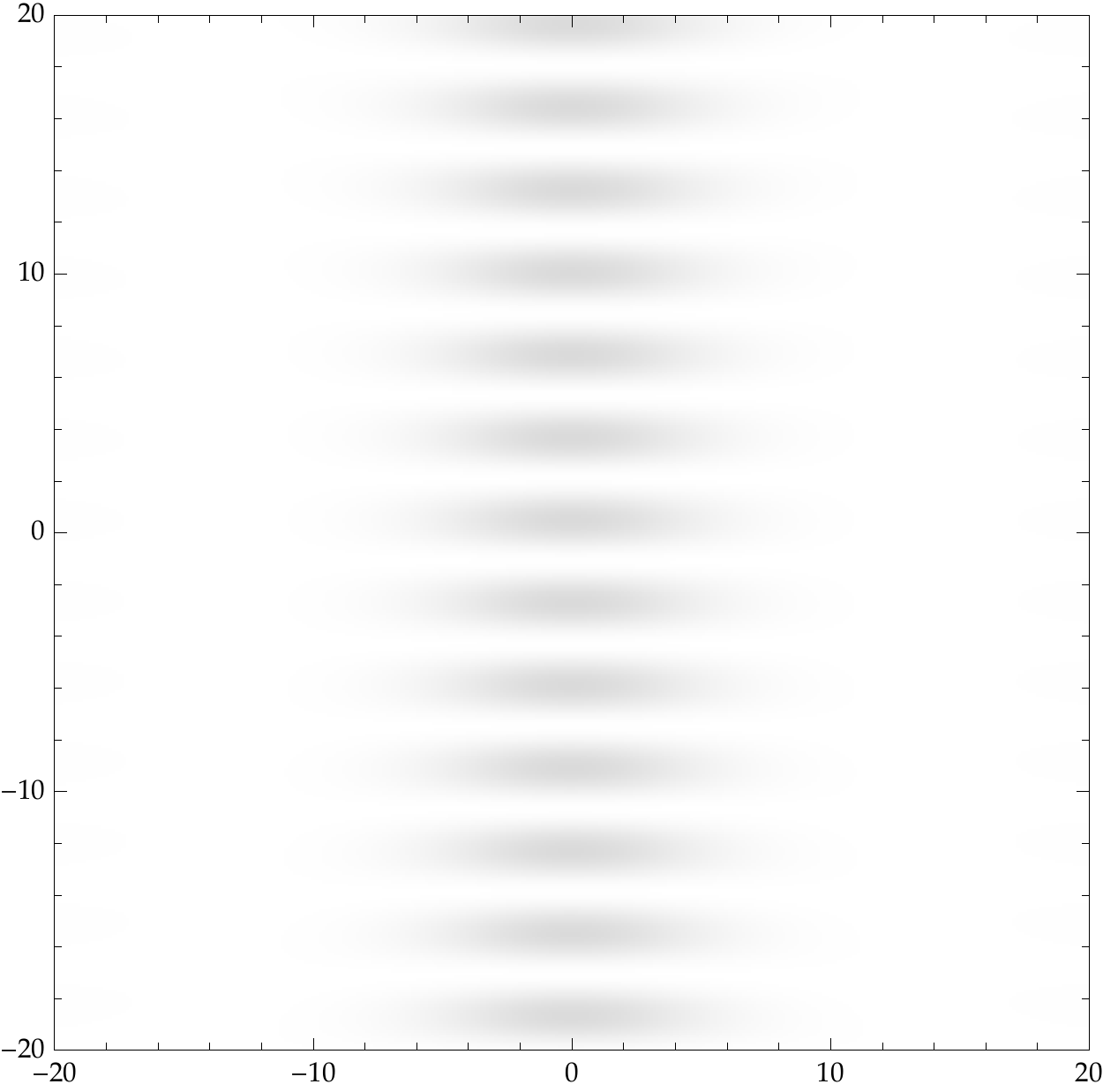}%
\includegraphics[height=.24\linewidth]{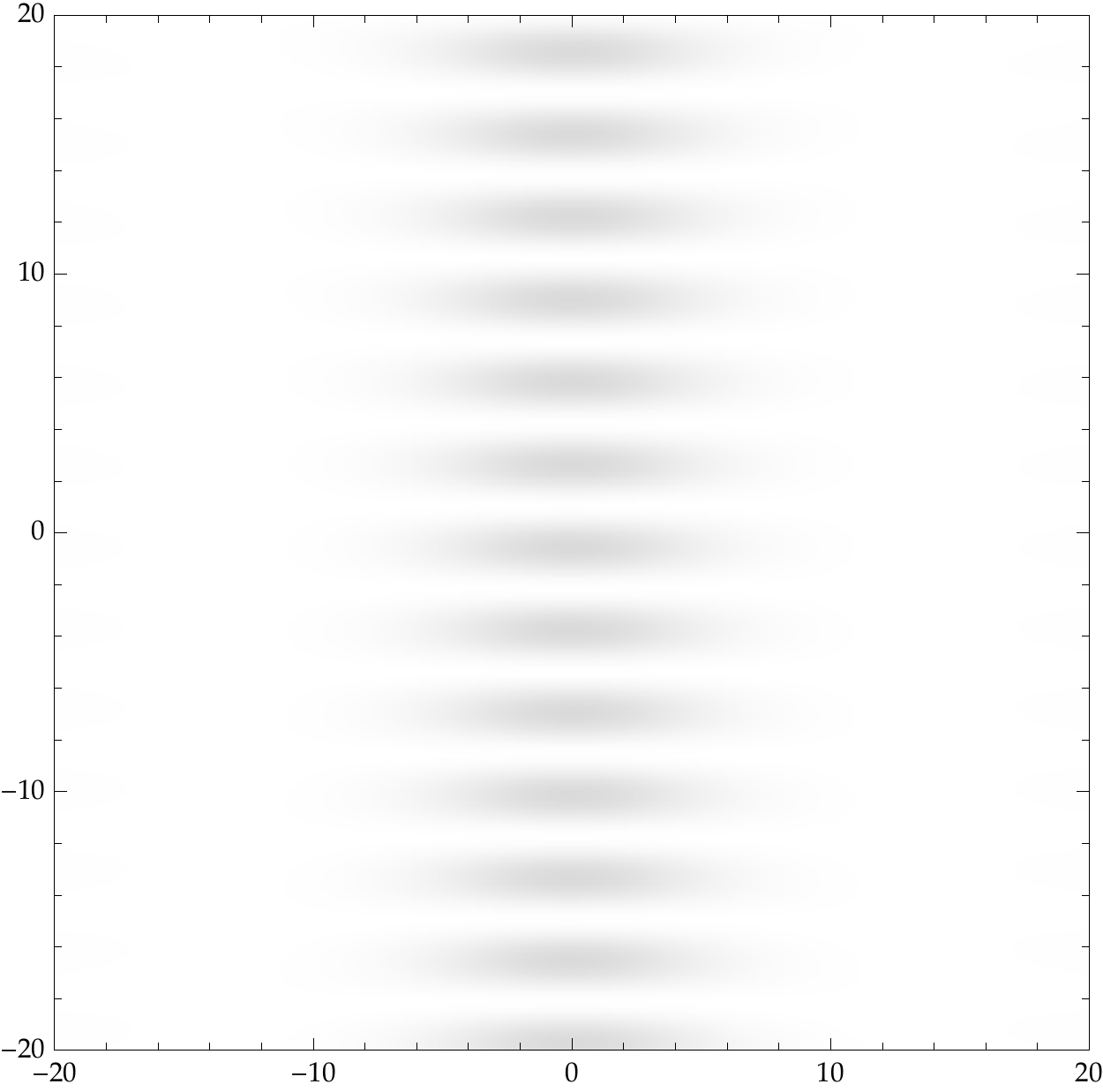}%
\includegraphics[height=.24\linewidth]{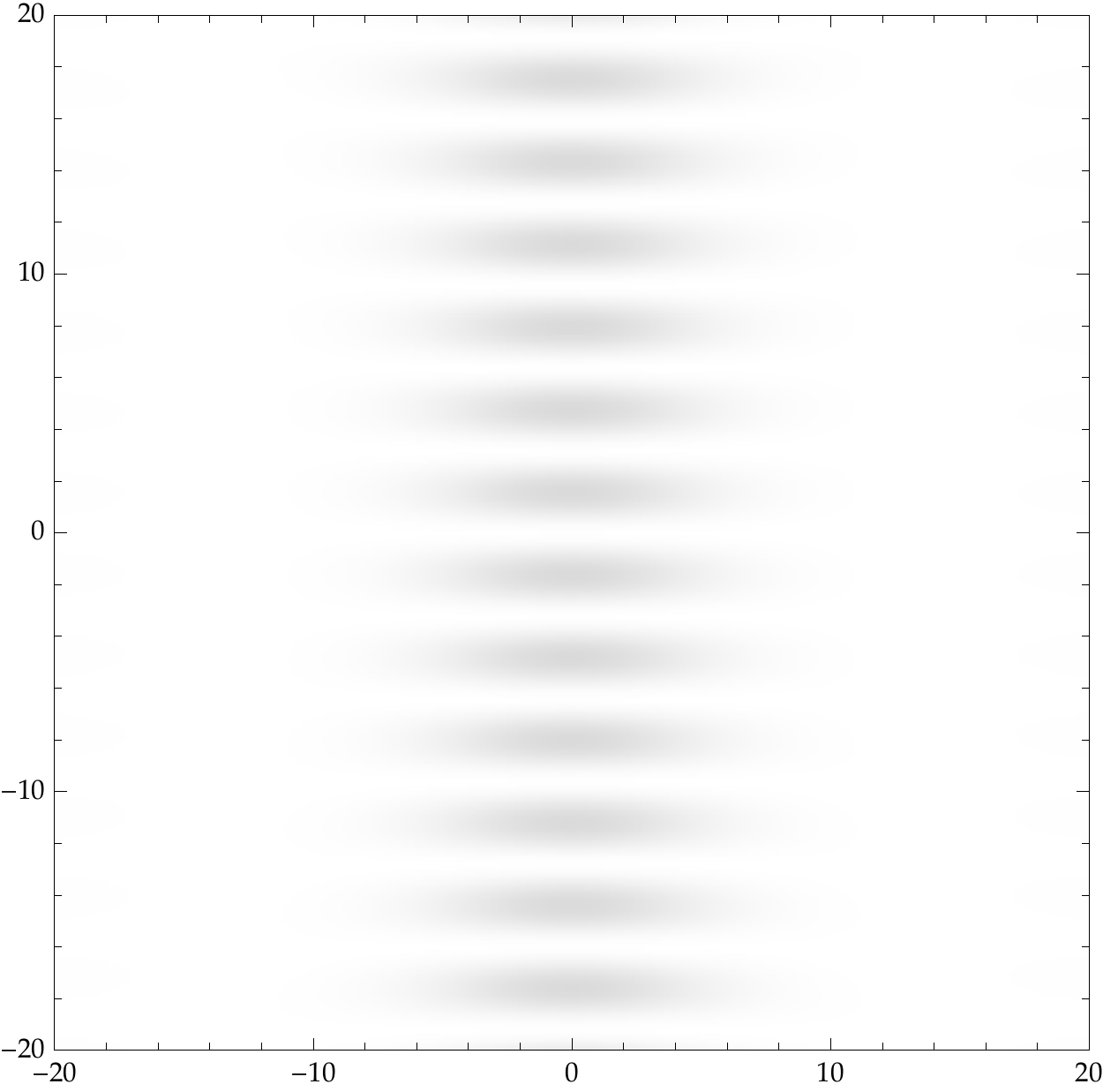}\\
\includegraphics[height=.24\linewidth]{fig/Legend-SMALL.pdf}%
\includegraphics[height=.24\linewidth]{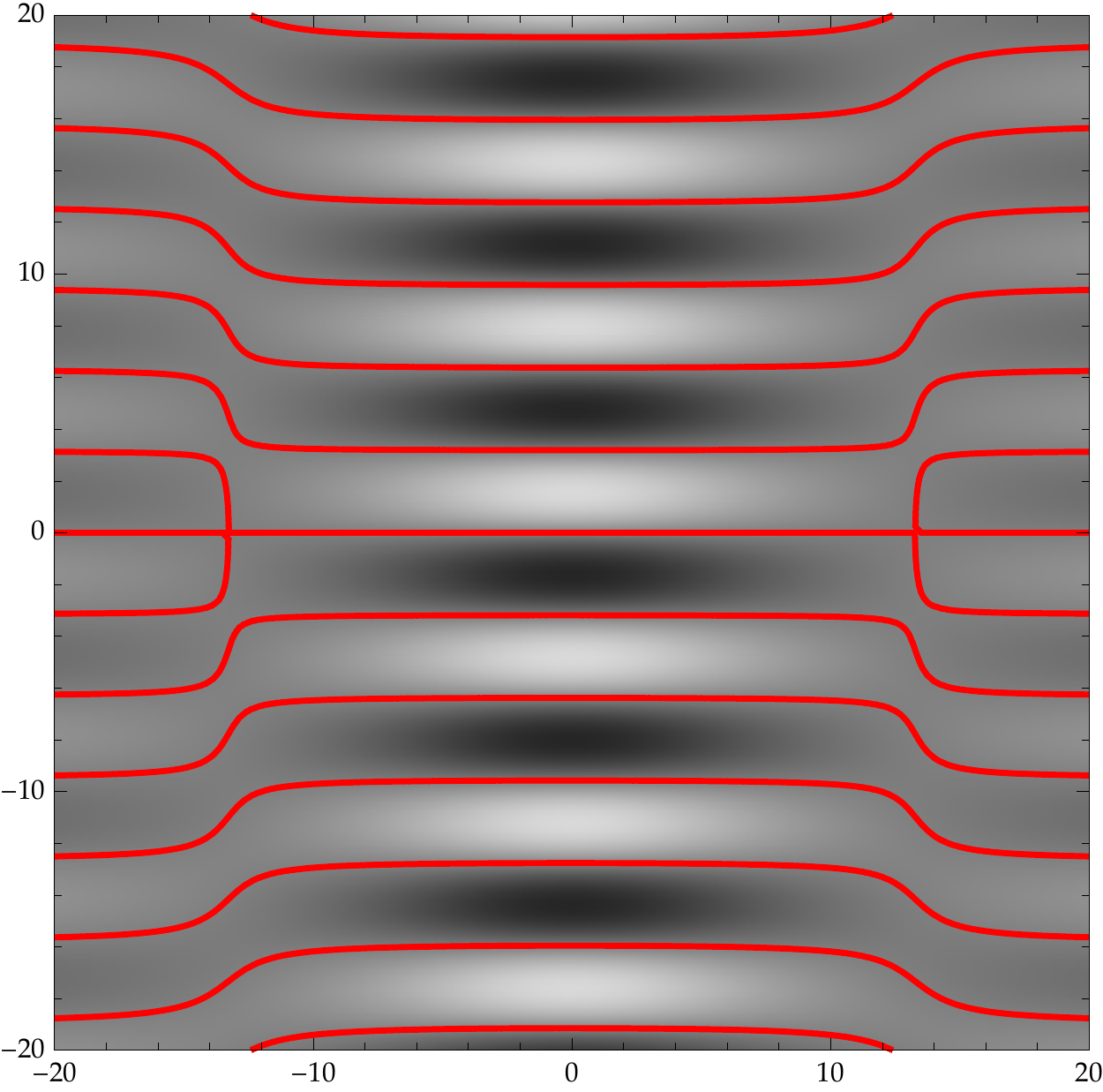}%
\includegraphics[height=.24\linewidth]{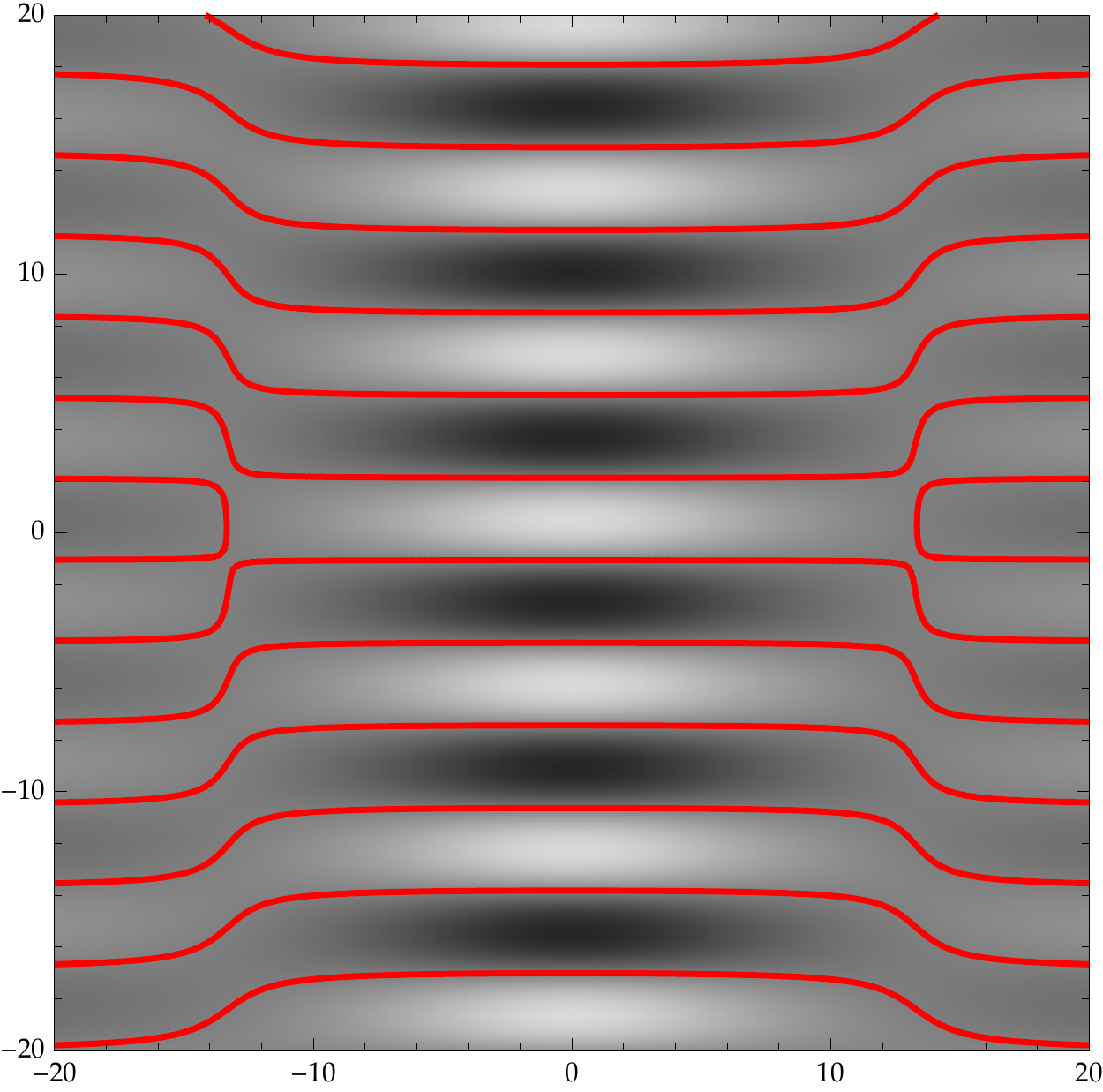}%
\includegraphics[height=.24\linewidth]{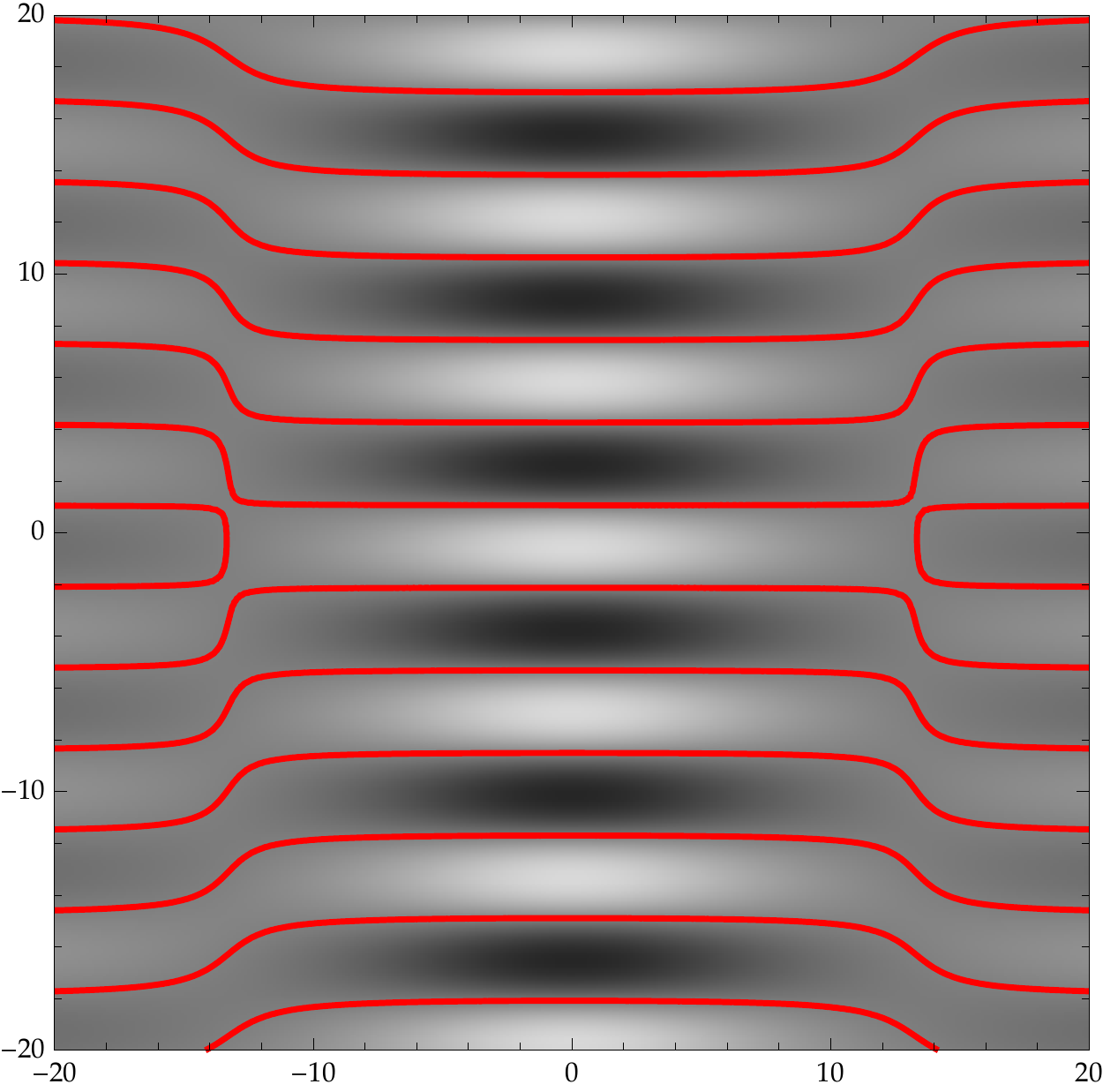}%
\includegraphics[height=.24\linewidth]{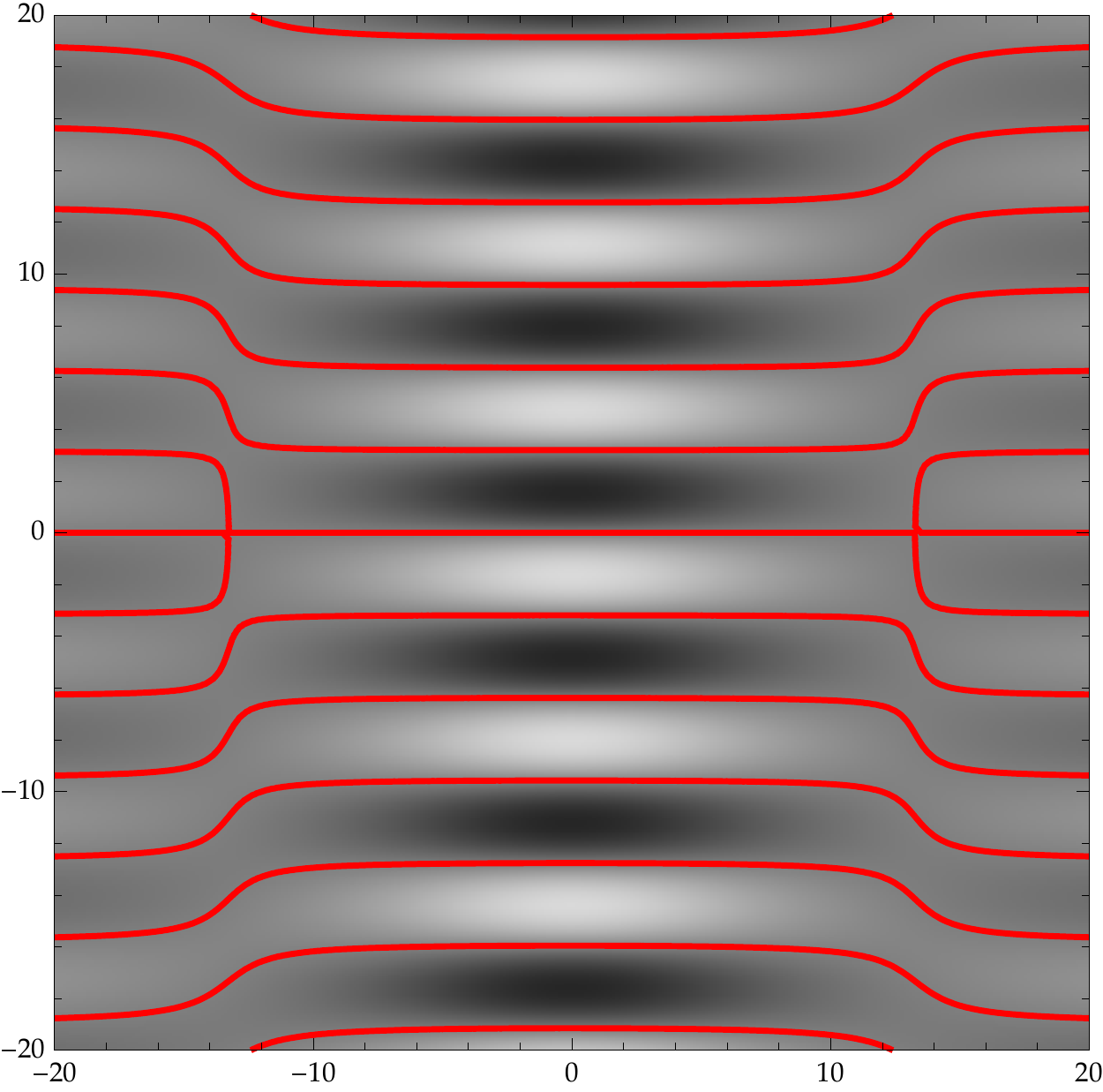}
\end{center}
\caption{Exact solutions corresponding to $m=\sin^2(\tfrac{1}{24}\pi)$ plotted for $-20<X<20$ (horizontal coordinate) and $-20<T<20$ (vertical coordinate), with zero level curves shown in red and grayscale map as indicated in the legends on the left.  Left-to-right:  $\Omega=0,\pi/3,2\pi/3,\pi$.  Top row:  $\cos(U(X,T;m,\Omega))$.  Bottom row:  $\sin(U(X,T;m,\Omega)$.}
\label{fig:exact-solutions-first}
\end{figure}
\begin{figure}[h!]
\begin{center}
\includegraphics[height=.24\linewidth]{fig/Legend-SMALL.pdf}%
\includegraphics[height=.24\linewidth]{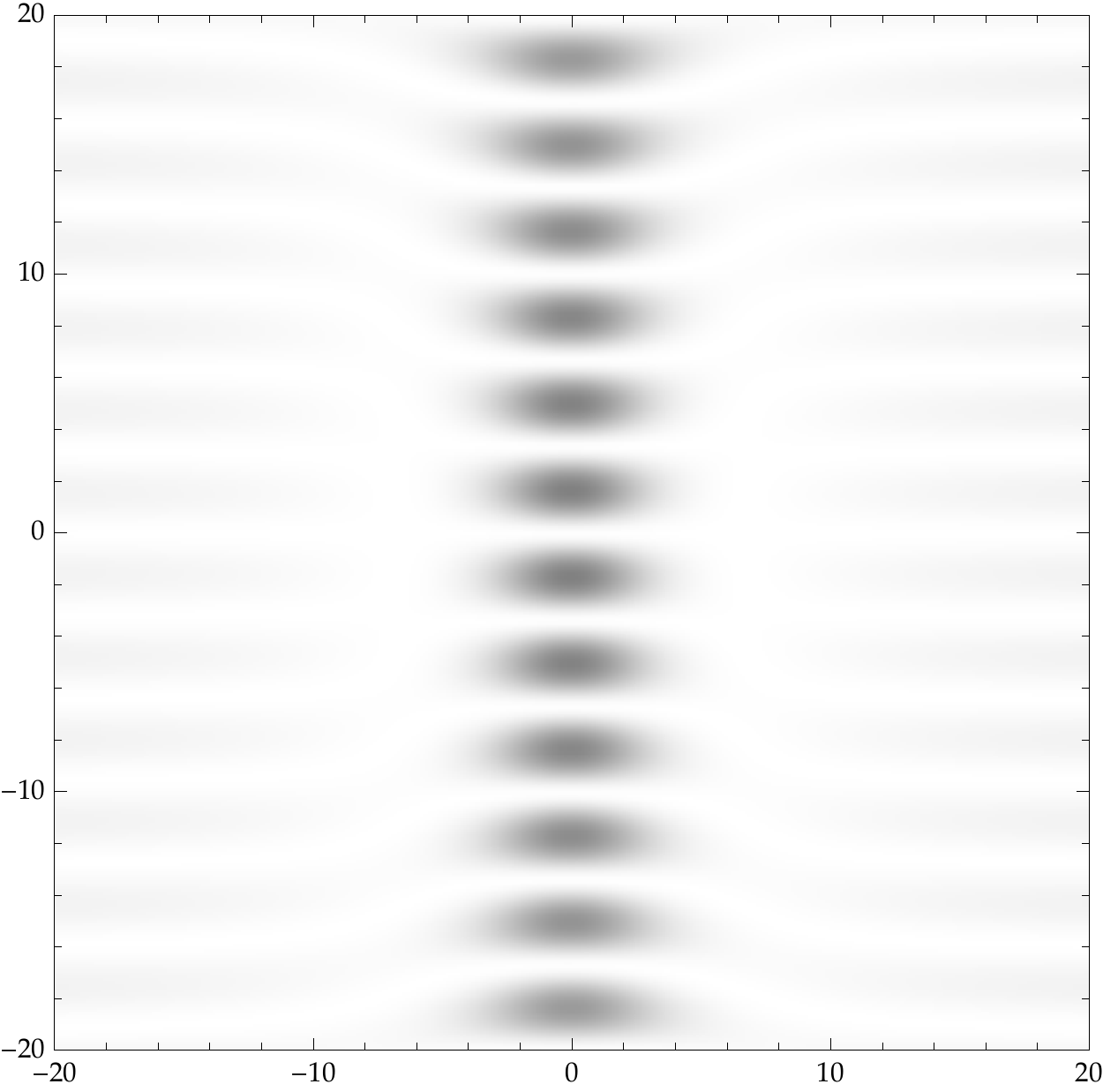}%
\includegraphics[height=.24\linewidth]{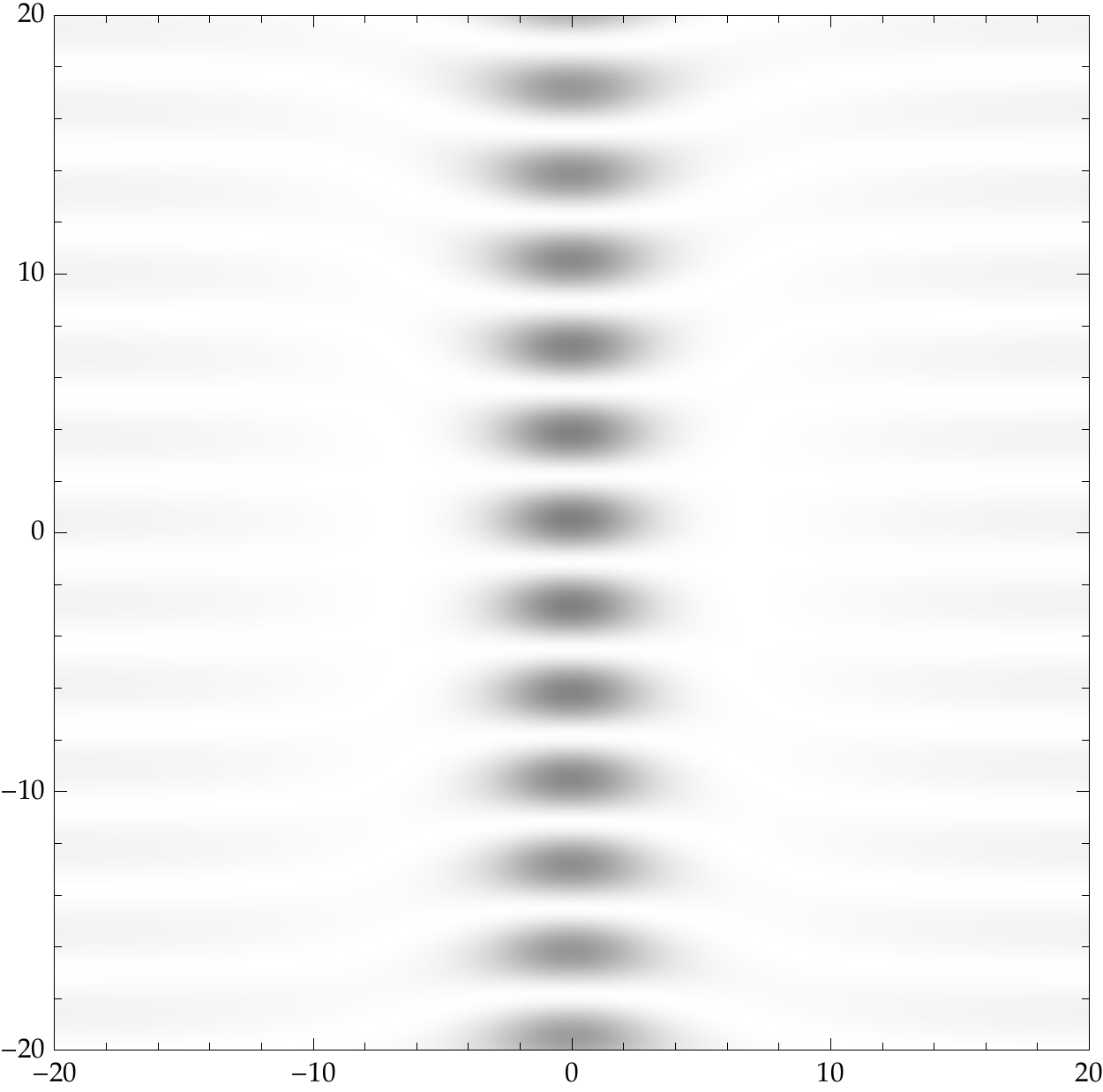}%
\includegraphics[height=.24\linewidth]{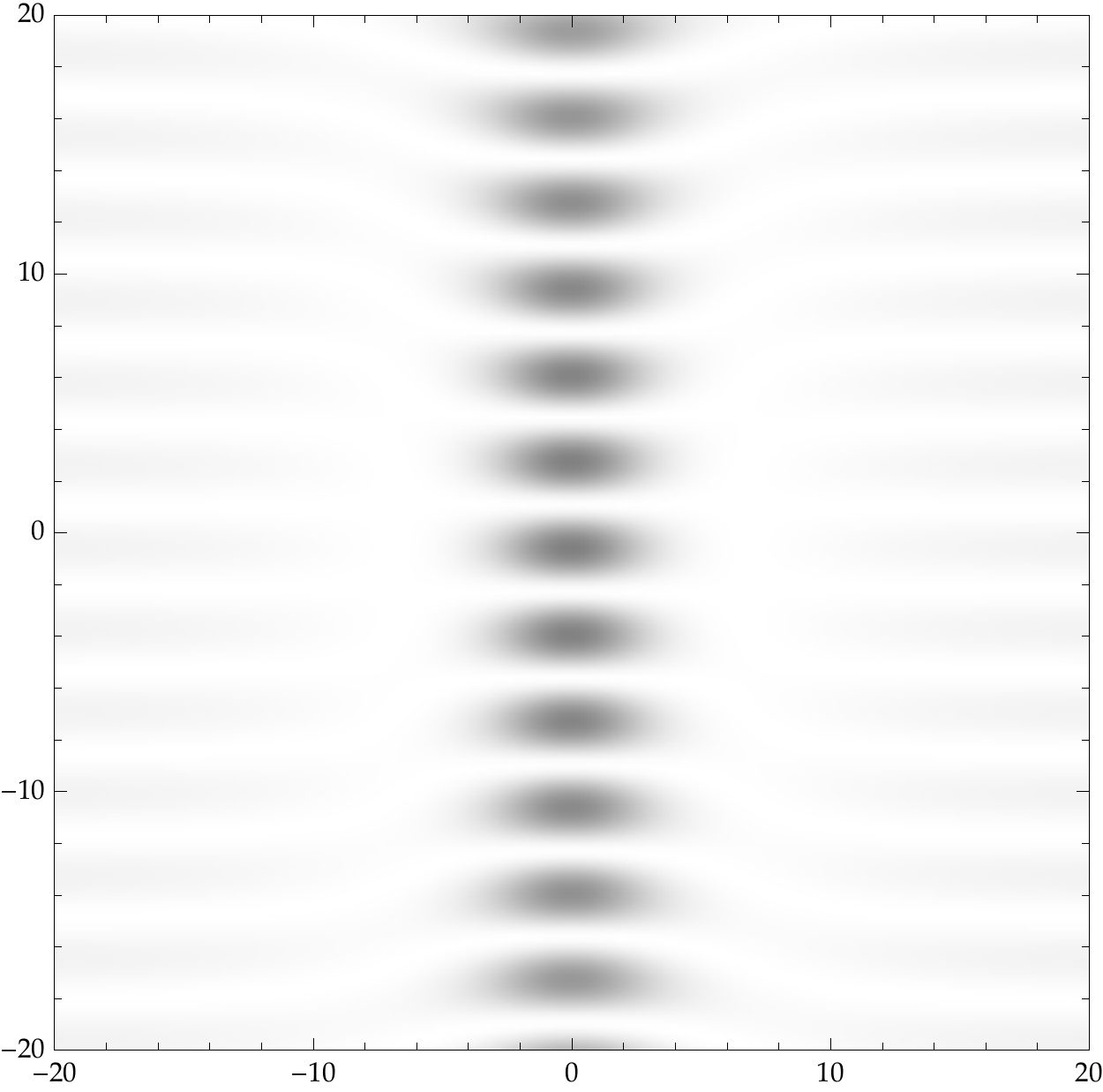}%
\includegraphics[height=.24\linewidth]{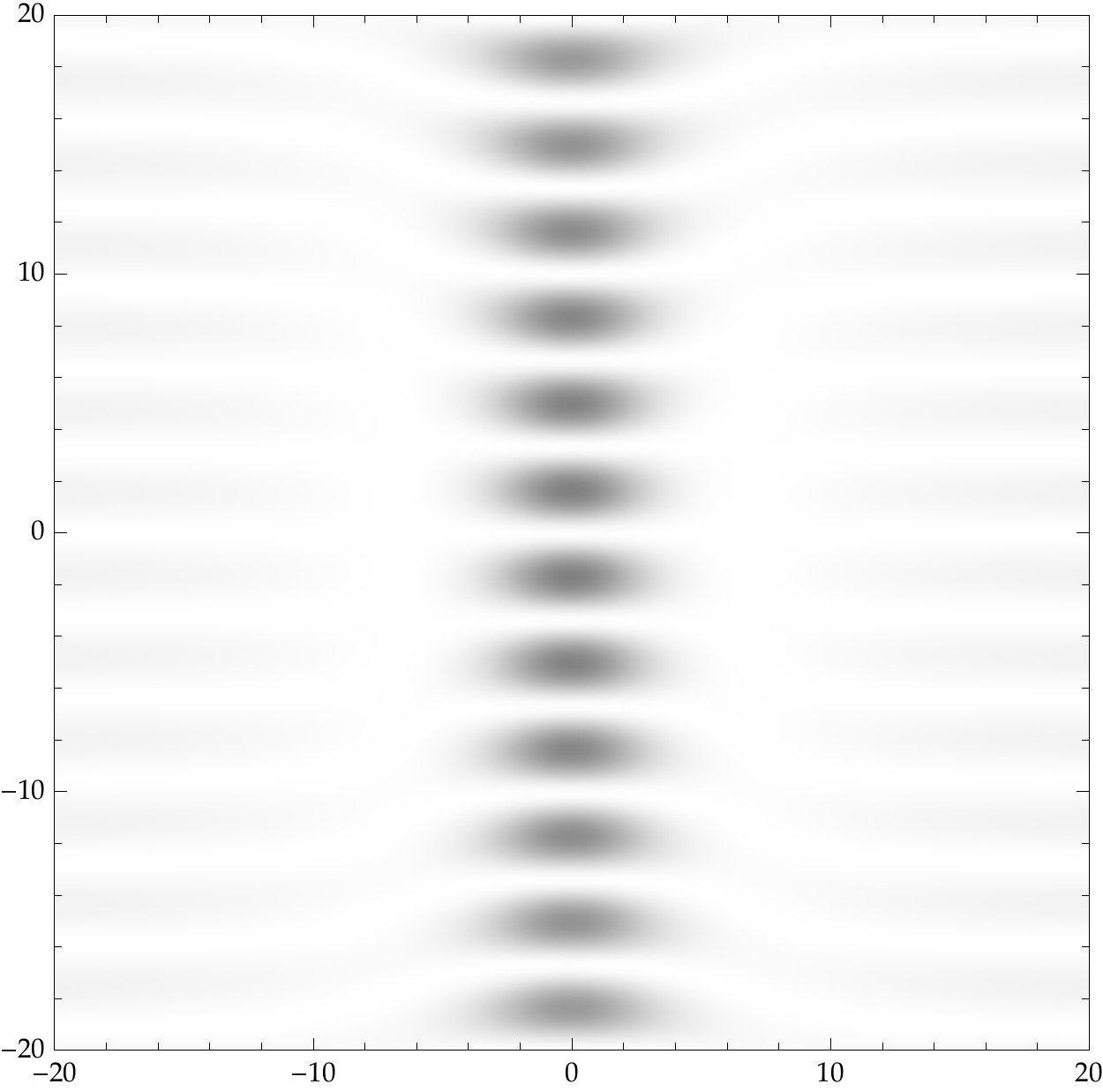}\\
\includegraphics[height=.24\linewidth]{fig/Legend-SMALL.pdf}%
\includegraphics[height=.24\linewidth]{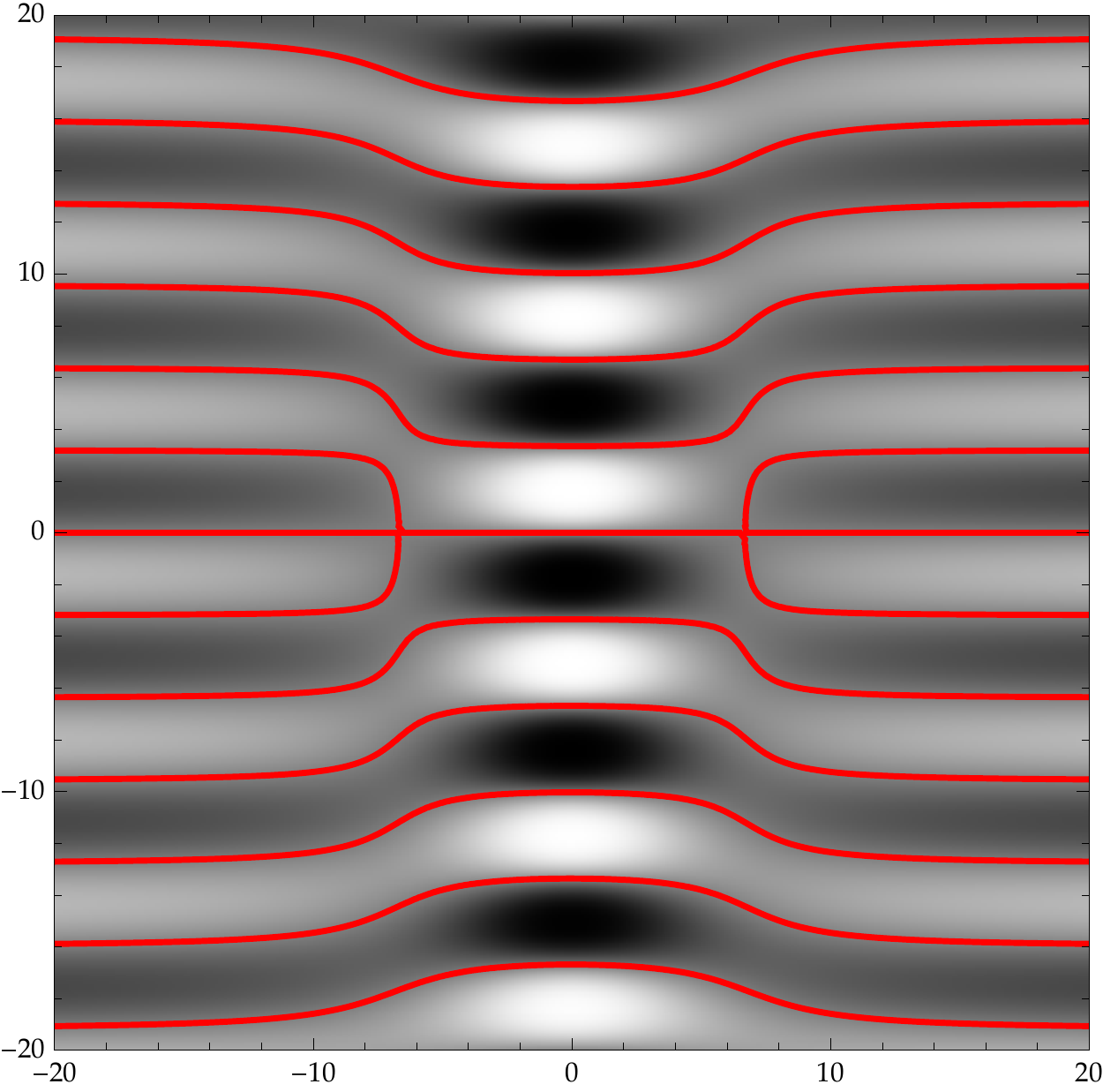}%
\includegraphics[height=.24\linewidth]{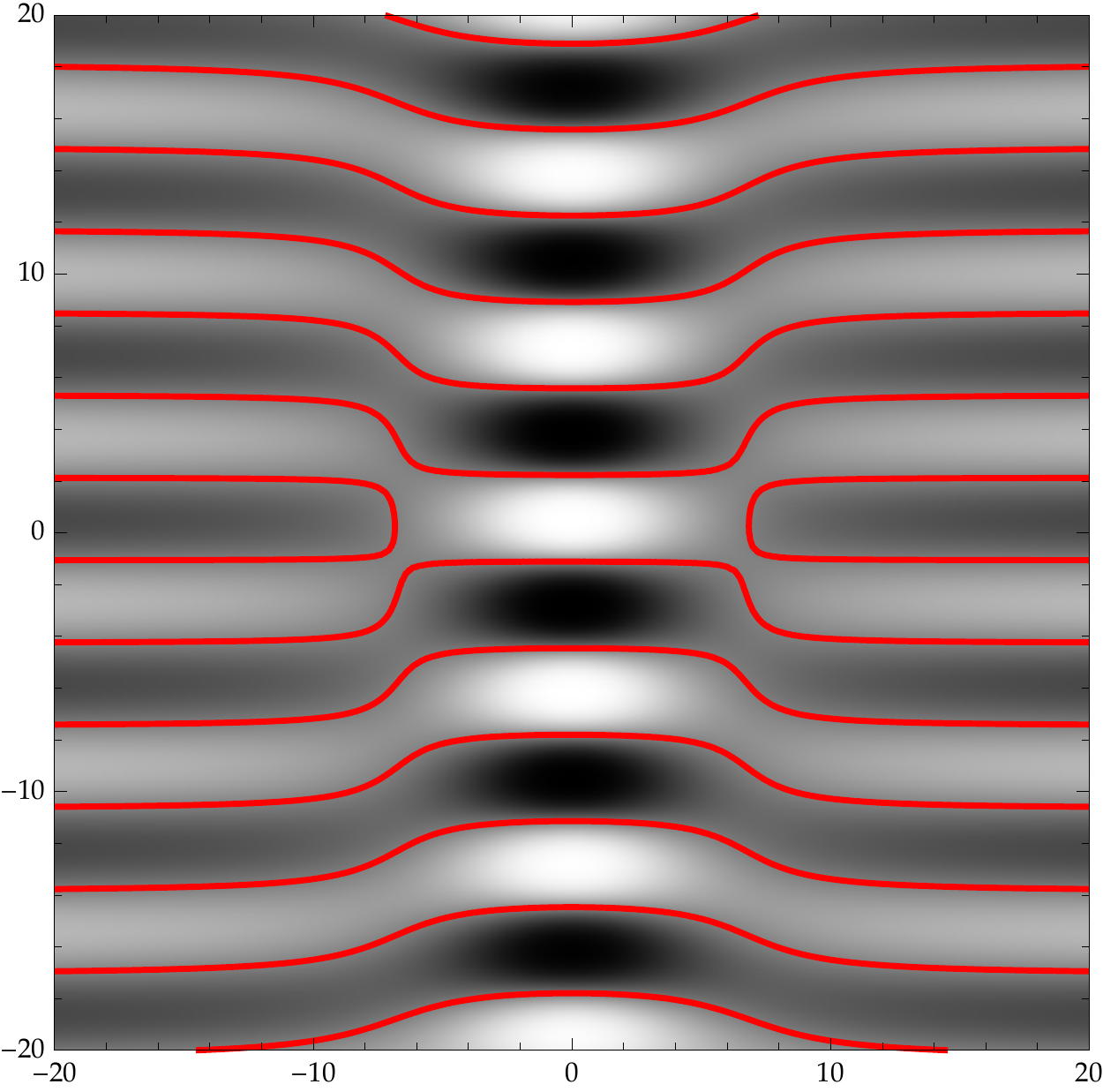}%
\includegraphics[height=.24\linewidth]{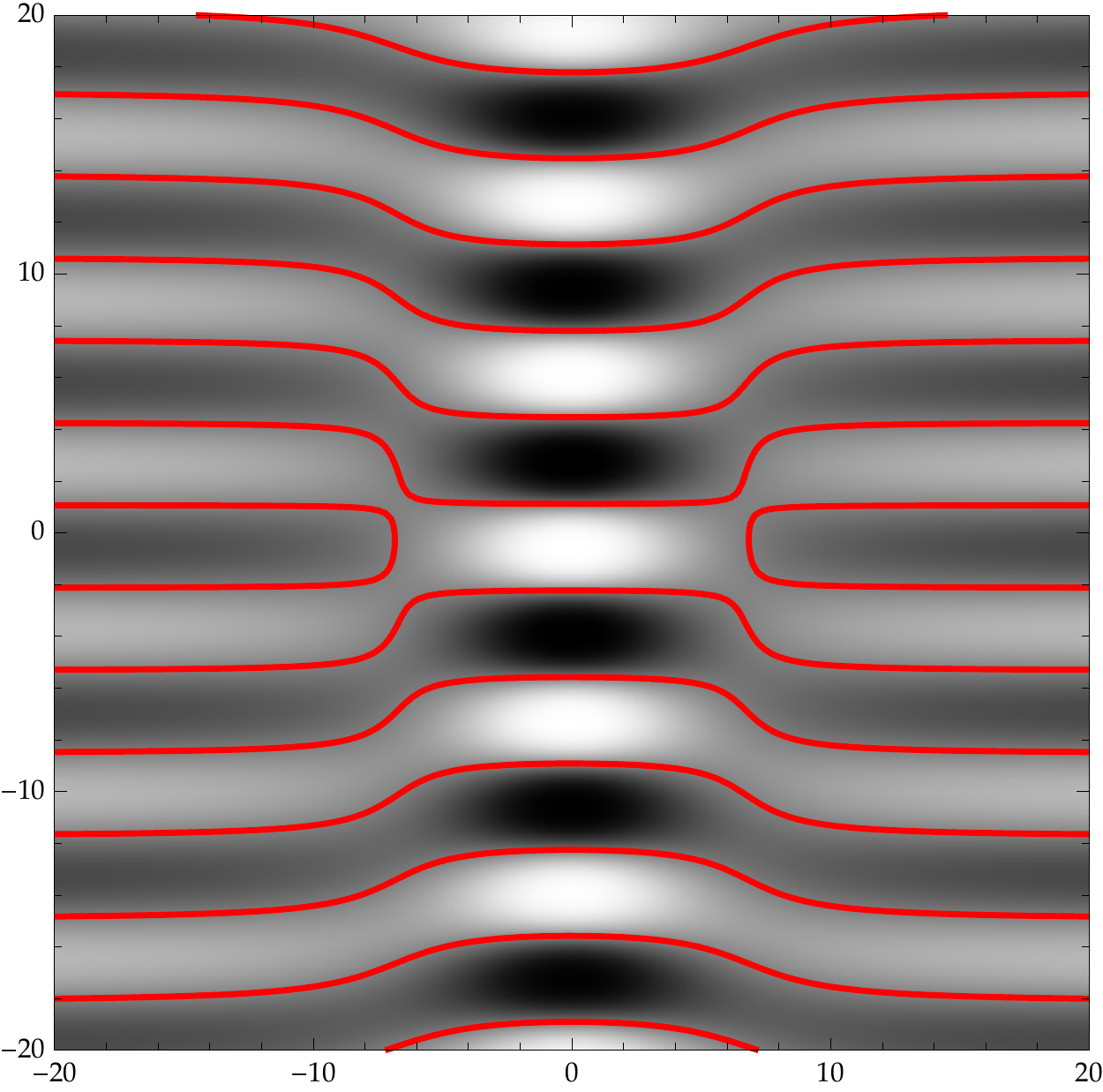}%
\includegraphics[height=.24\linewidth]{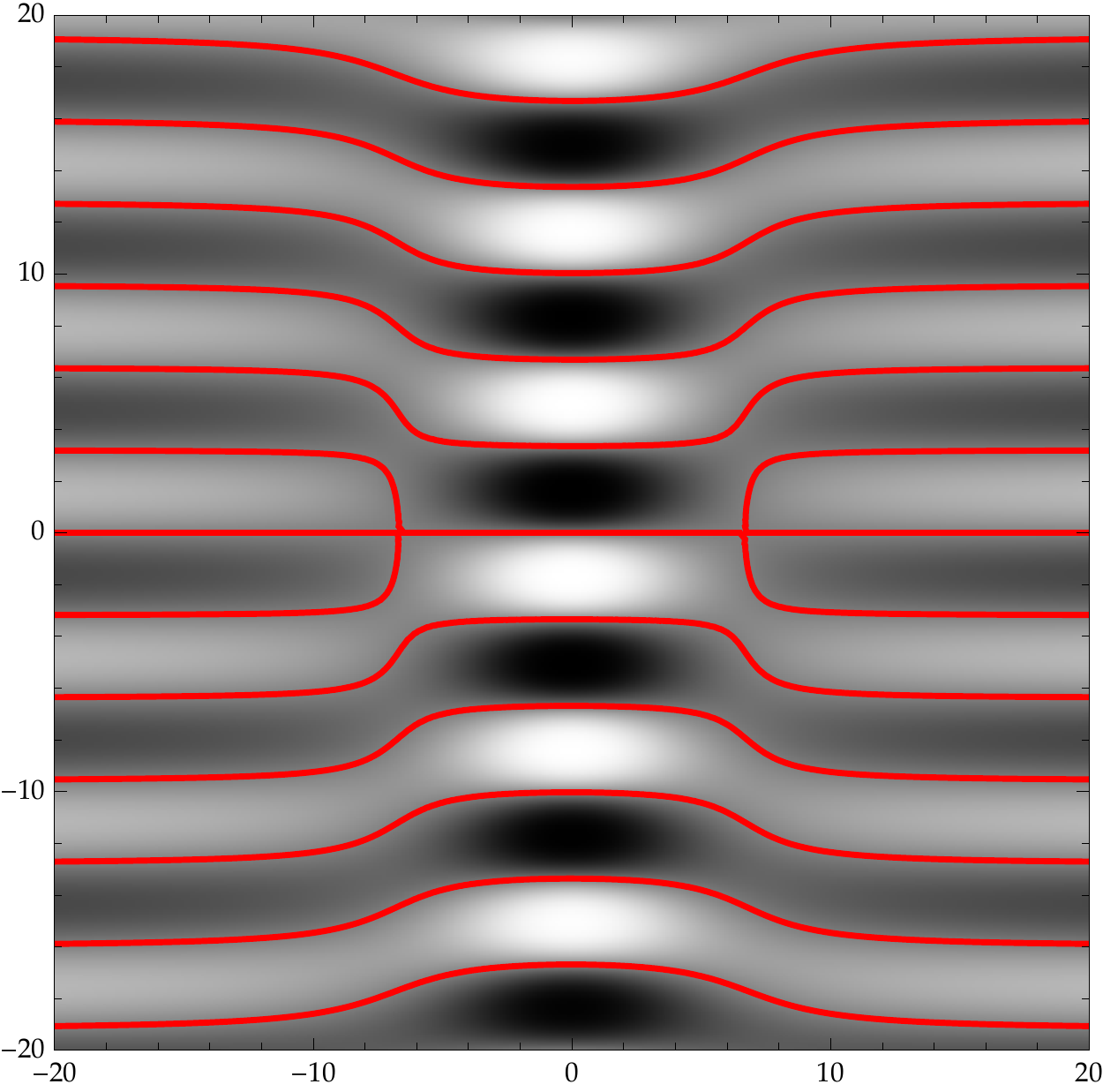}
\end{center}
\caption{As in Figure~\ref{fig:exact-solutions-first} but for $m=\sin^2(\tfrac{1}{12}\pi)$.}
\end{figure}
\begin{figure}[h!]
\begin{center}
\includegraphics[height=.24\linewidth]{fig/Legend-SMALL.pdf}%
\includegraphics[height=.24\linewidth]{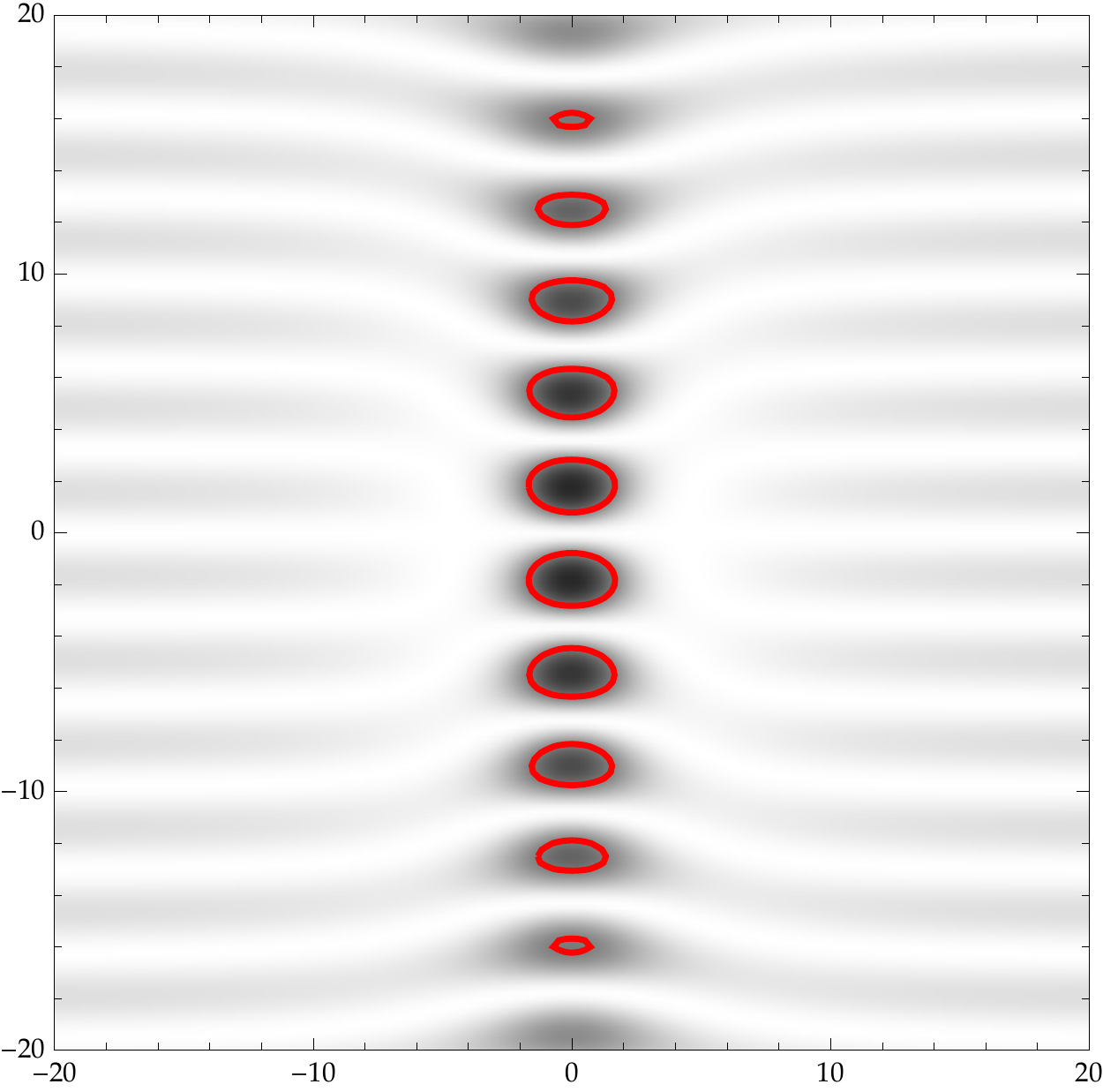}%
\includegraphics[height=.24\linewidth]{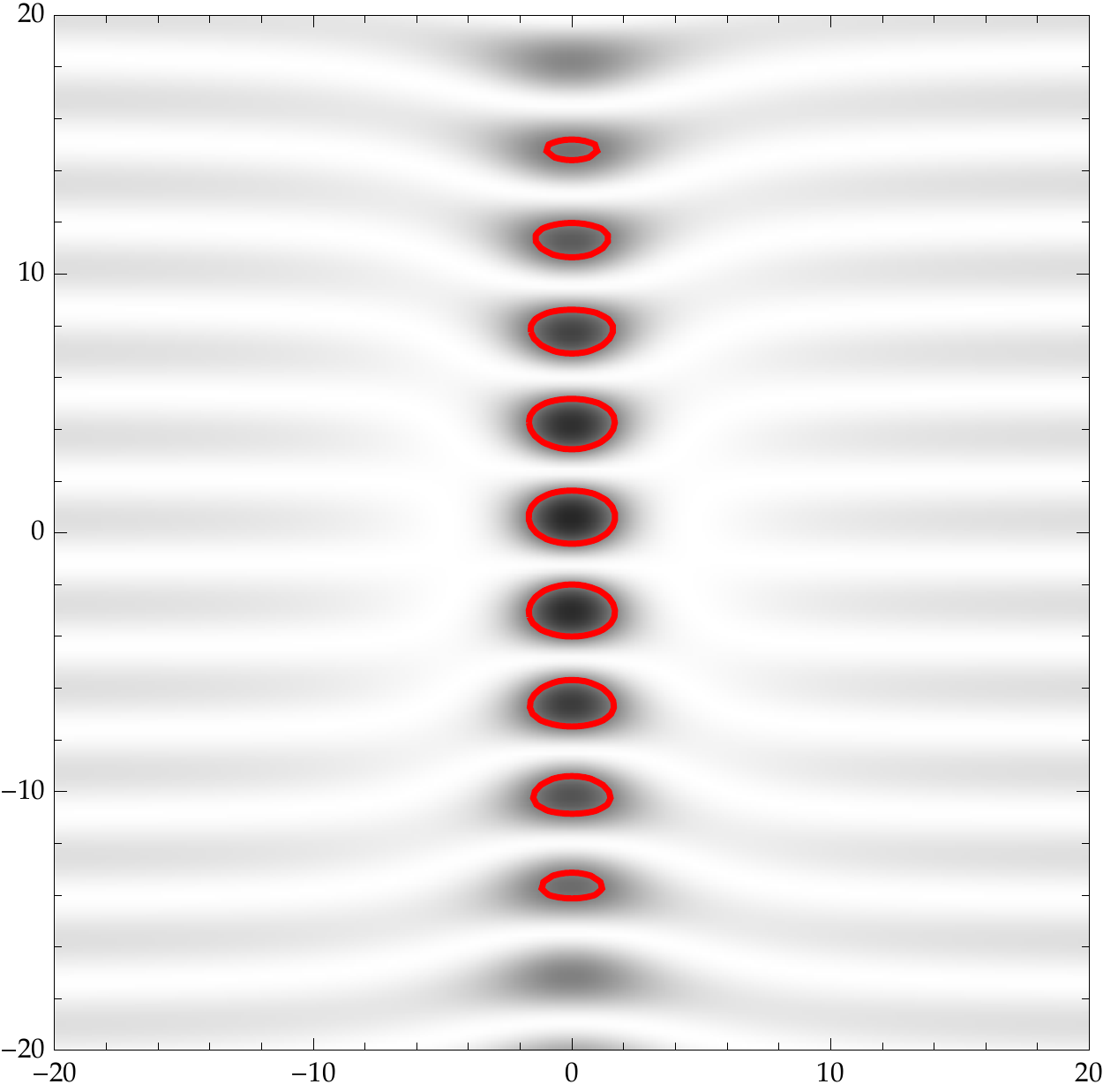}%
\includegraphics[height=.24\linewidth]{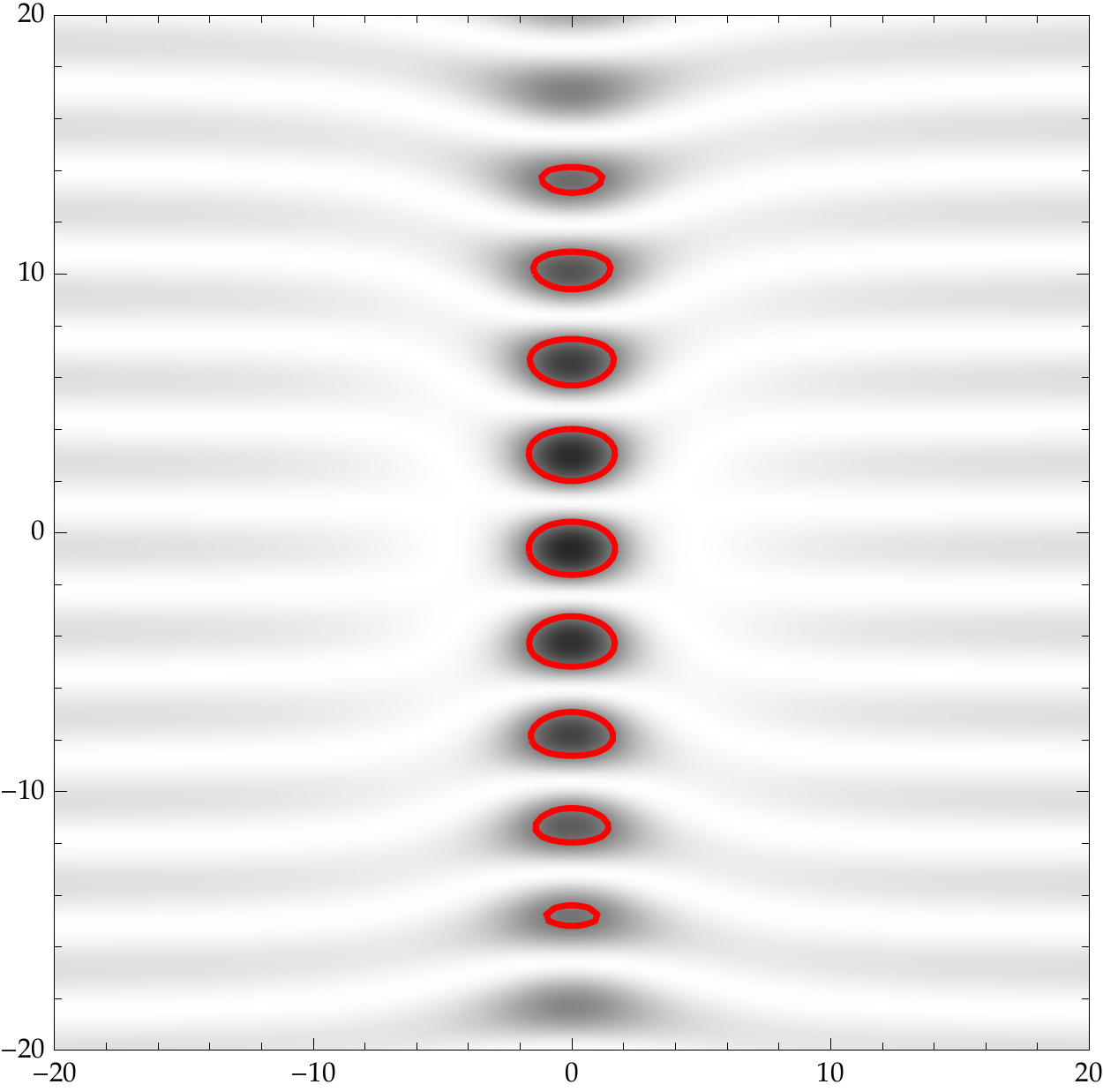}%
\includegraphics[height=.24\linewidth]{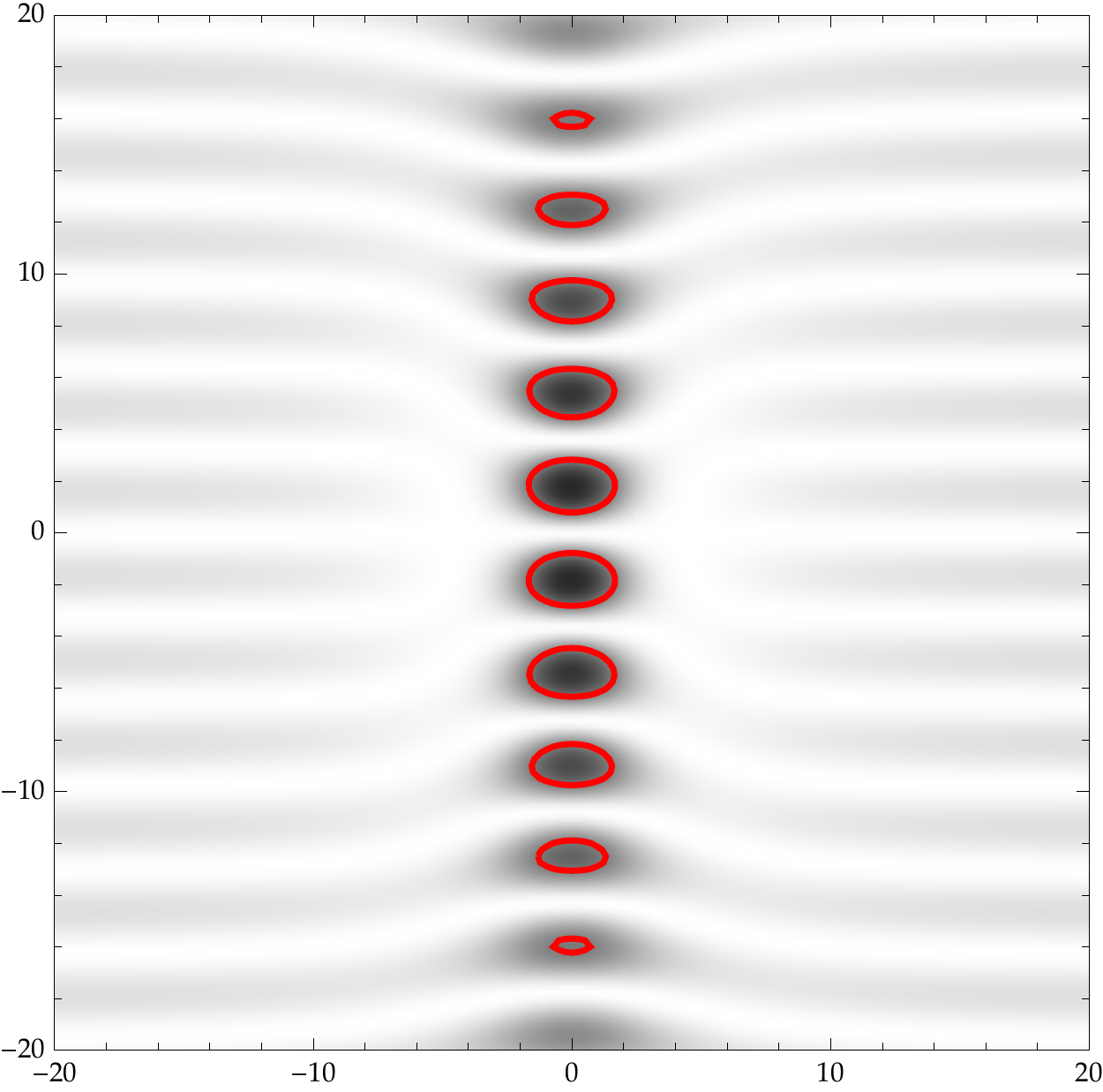}\\
\includegraphics[height=.24\linewidth]{fig/Legend-SMALL.pdf}%
\includegraphics[height=.24\linewidth]{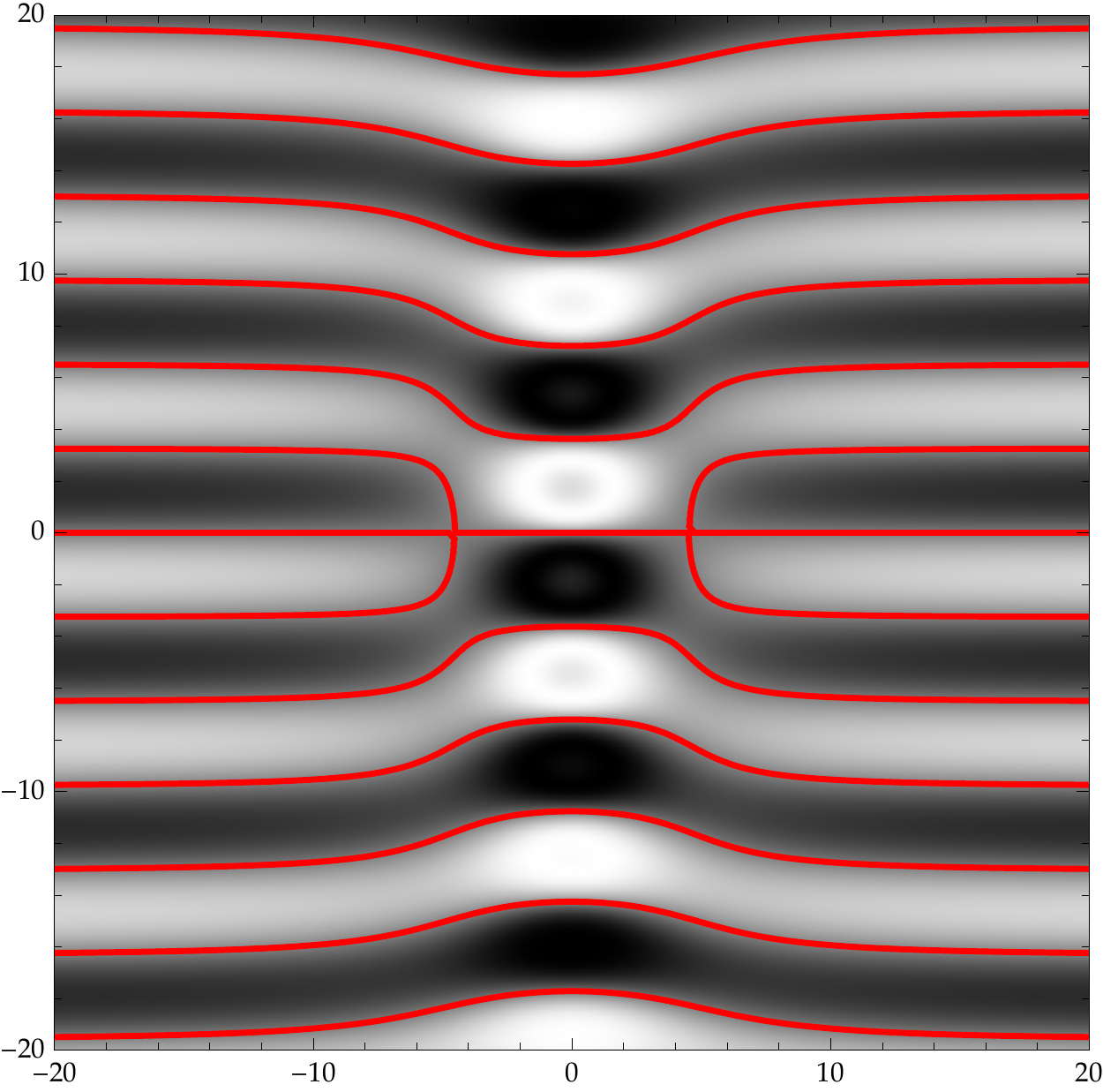}%
\includegraphics[height=.24\linewidth]{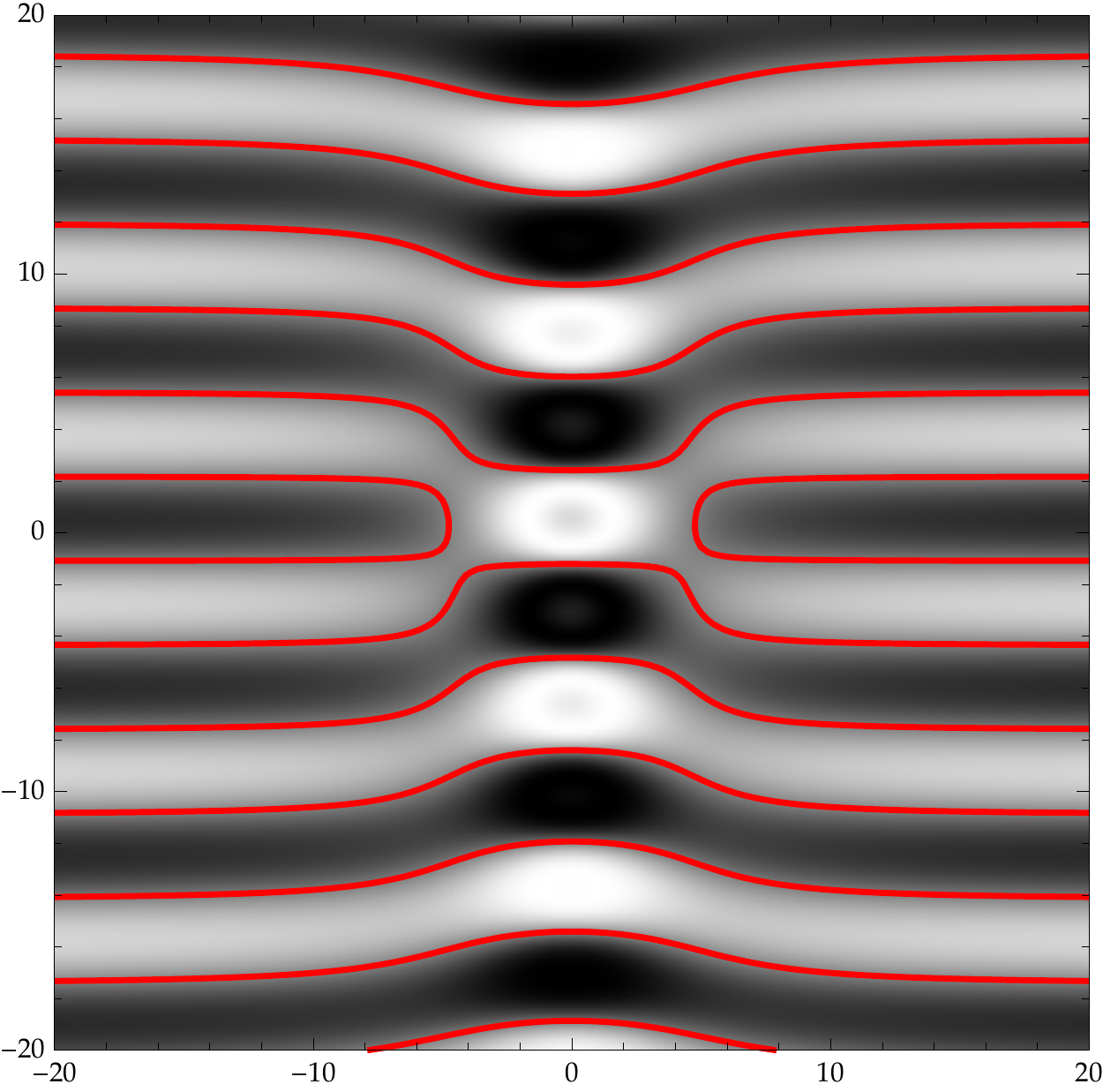}%
\includegraphics[height=.24\linewidth]{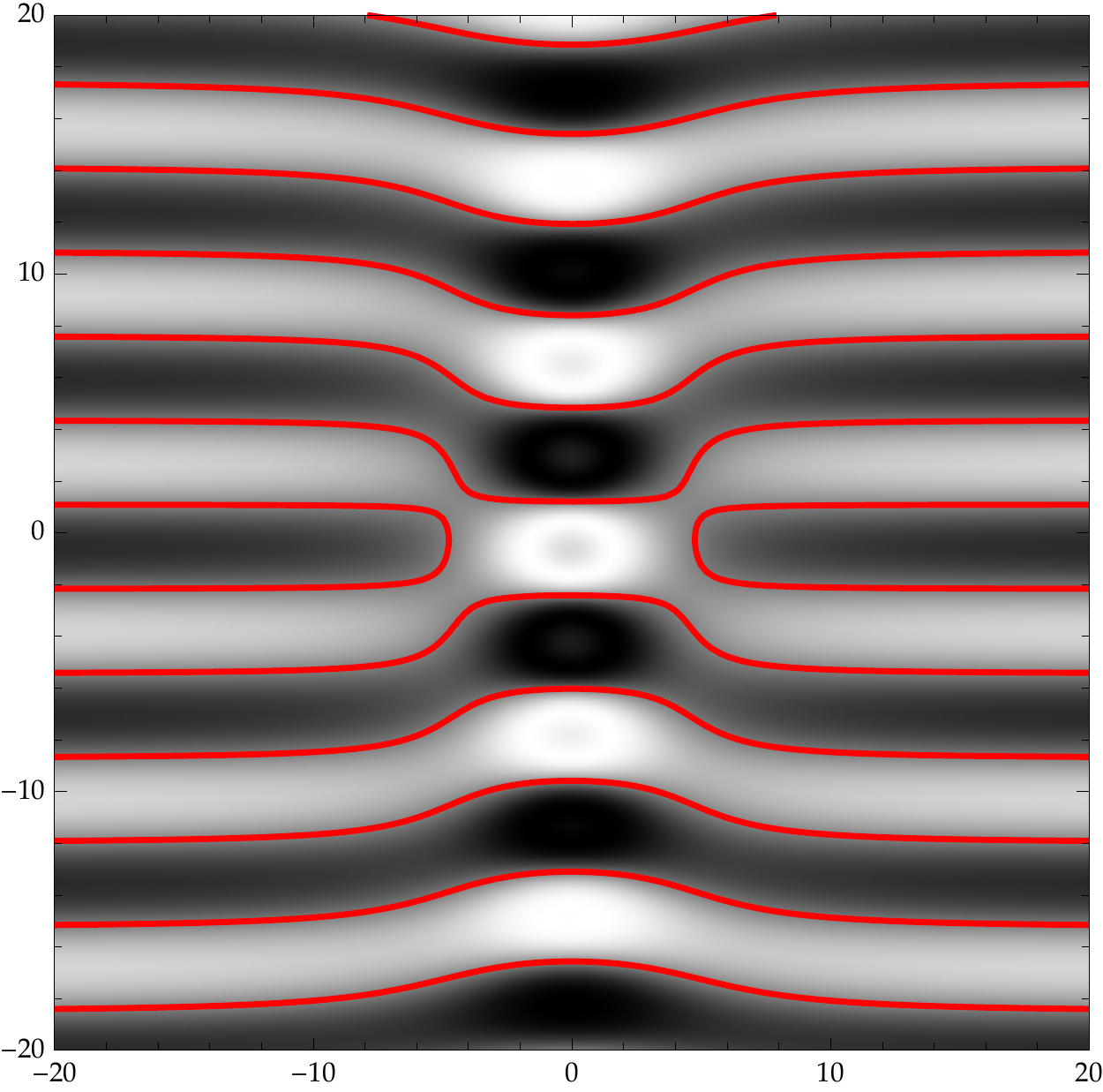}%
\includegraphics[height=.24\linewidth]{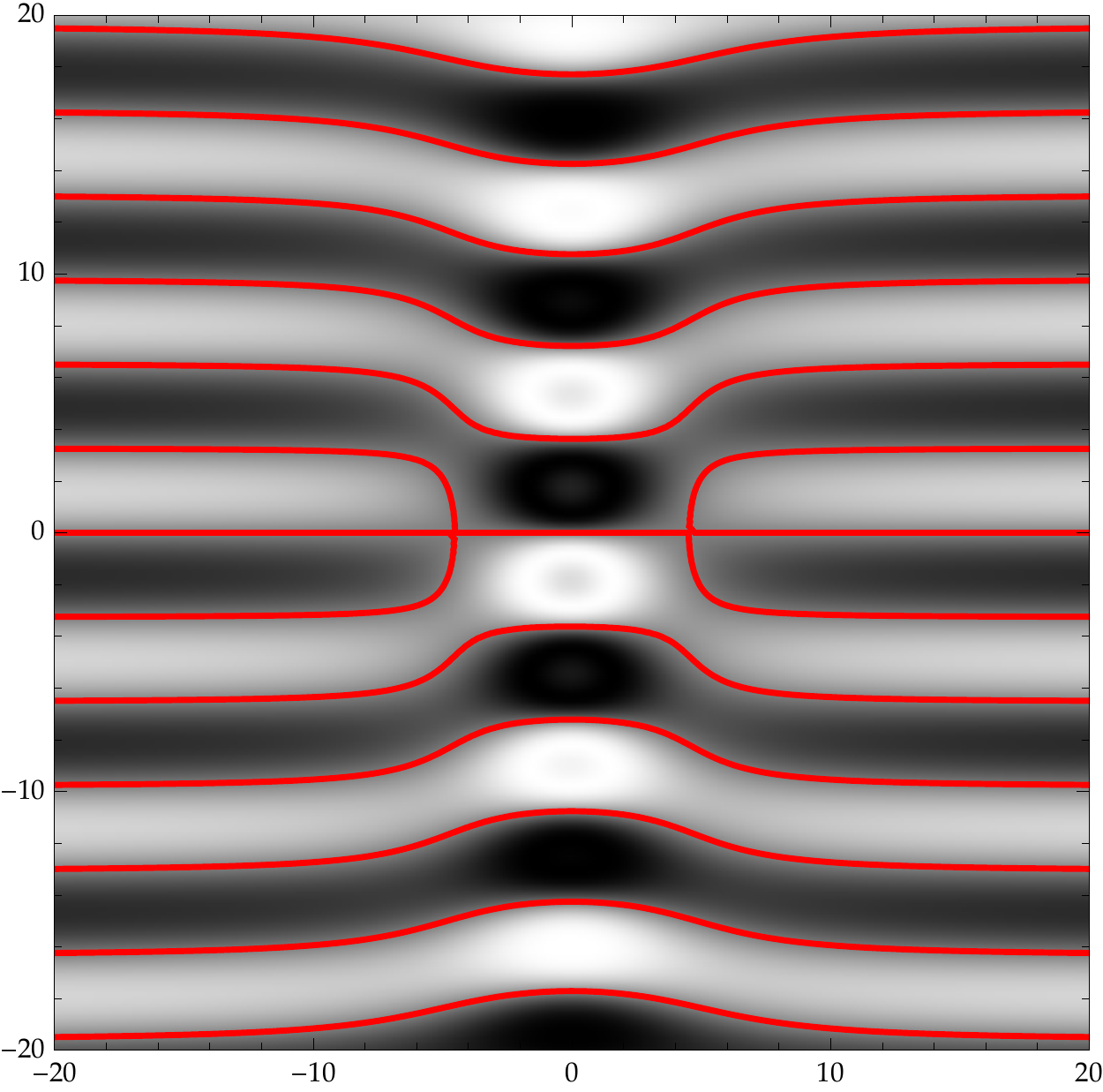}
\end{center}
\caption{As in Figure~\ref{fig:exact-solutions-first} but for $m=\sin^2(\tfrac{1}{8}\pi)$.}
\end{figure}
\begin{figure}[h!]
\begin{center}
\includegraphics[height=.24\linewidth]{fig/Legend-SMALL.pdf}%
\includegraphics[height=.24\linewidth]{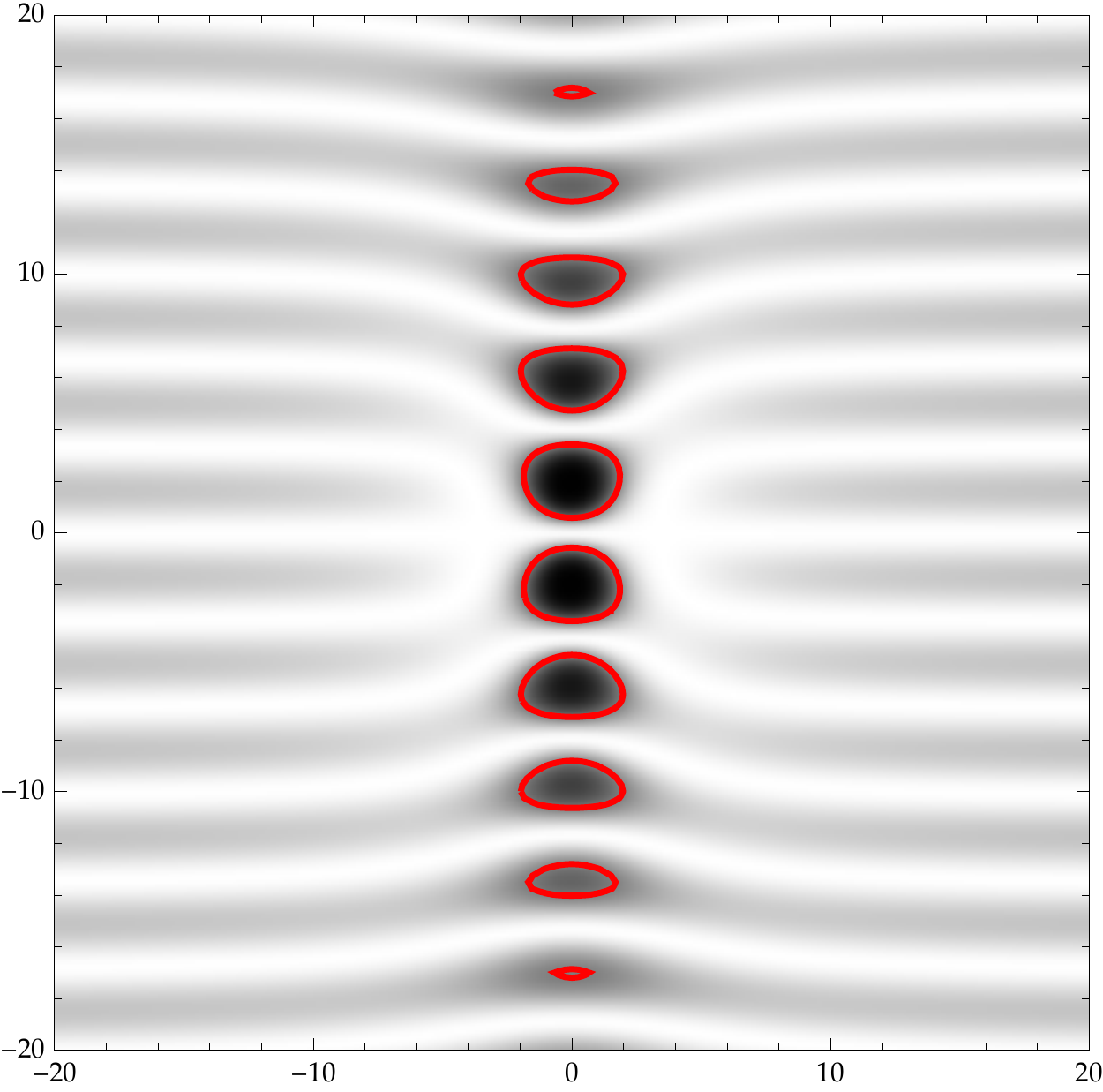}%
\includegraphics[height=.24\linewidth]{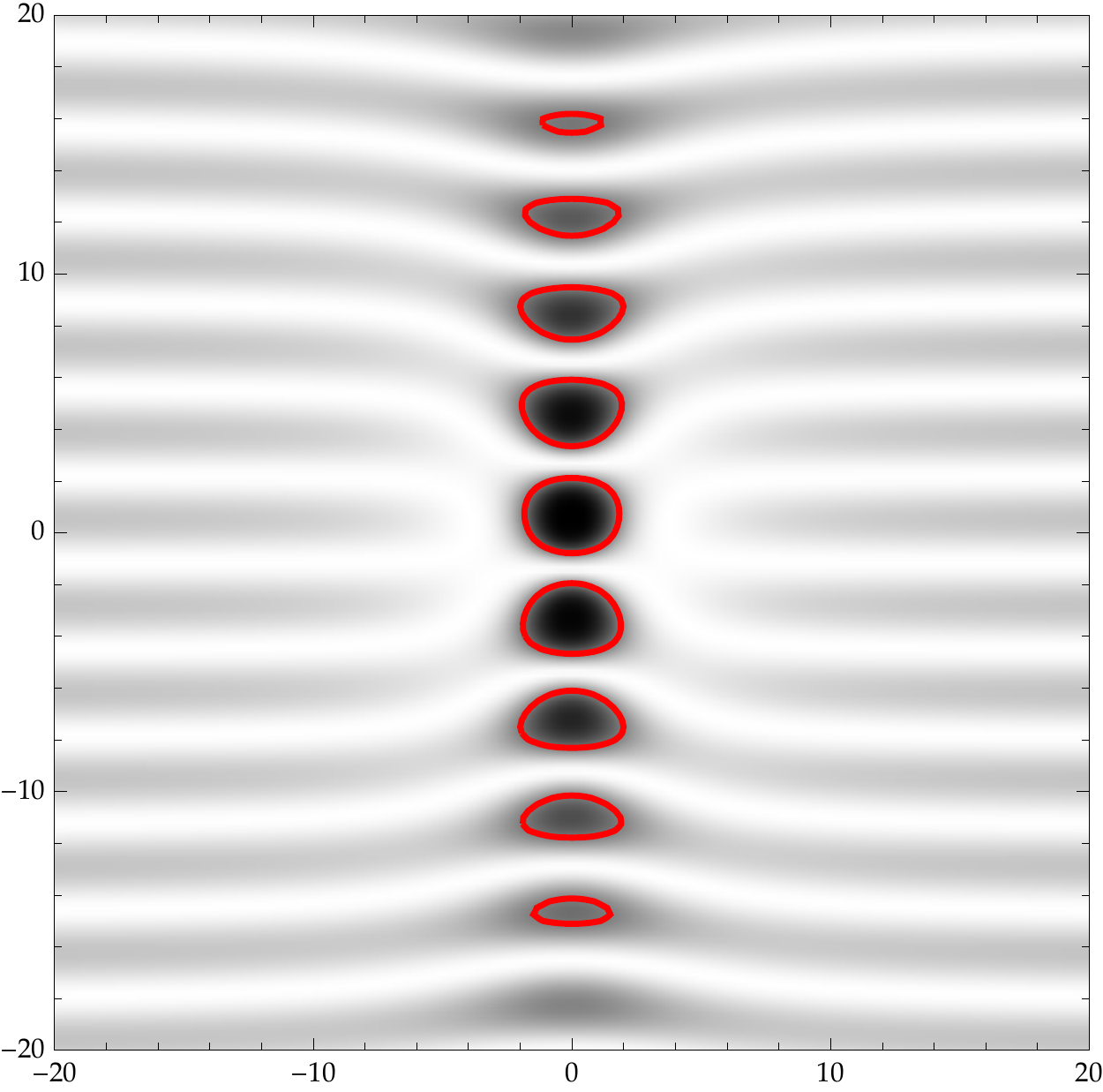}%
\includegraphics[height=.24\linewidth]{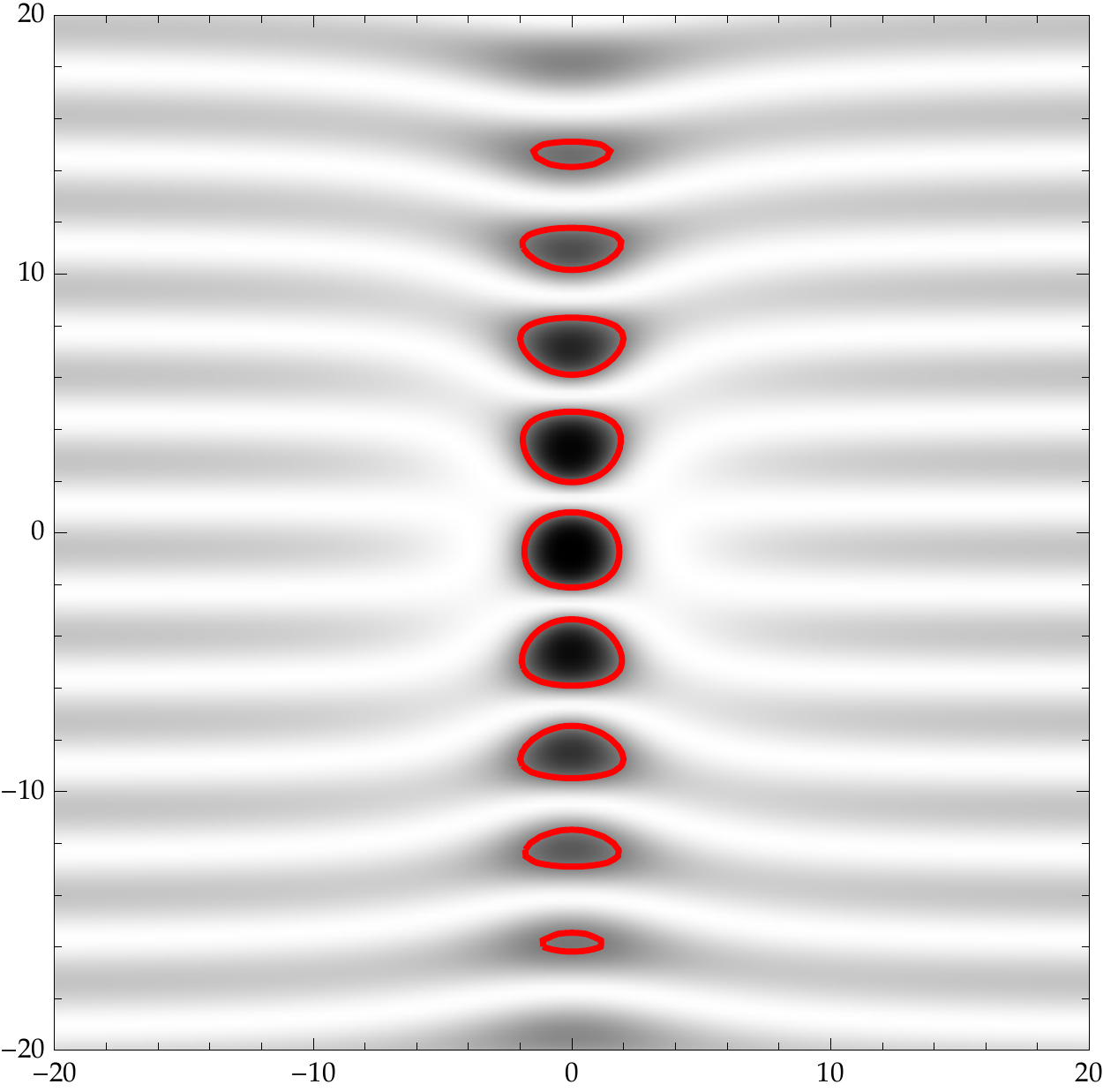}%
\includegraphics[height=.24\linewidth]{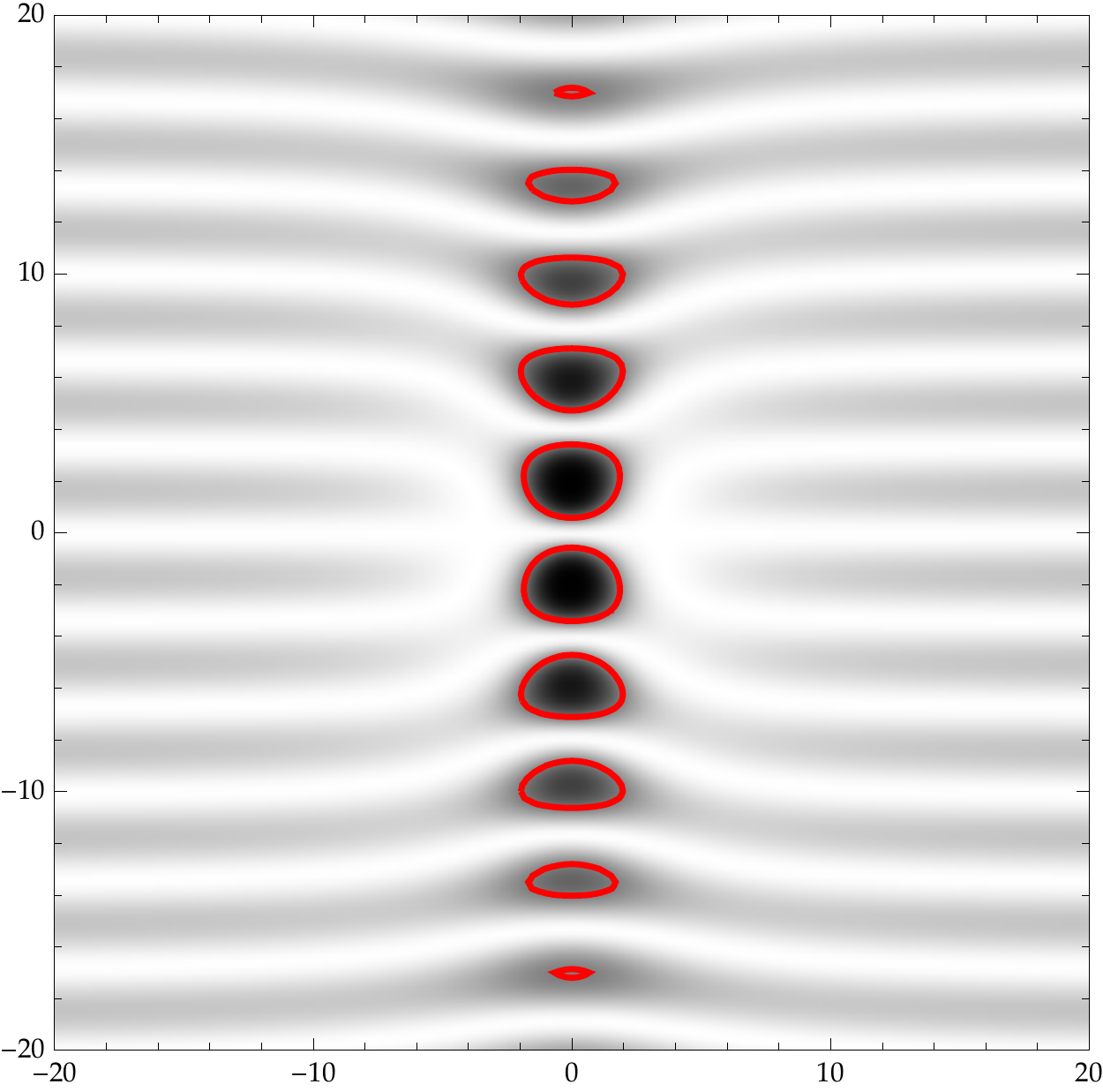}\\
\includegraphics[height=.24\linewidth]{fig/Legend-SMALL.pdf}%
\includegraphics[height=.24\linewidth]{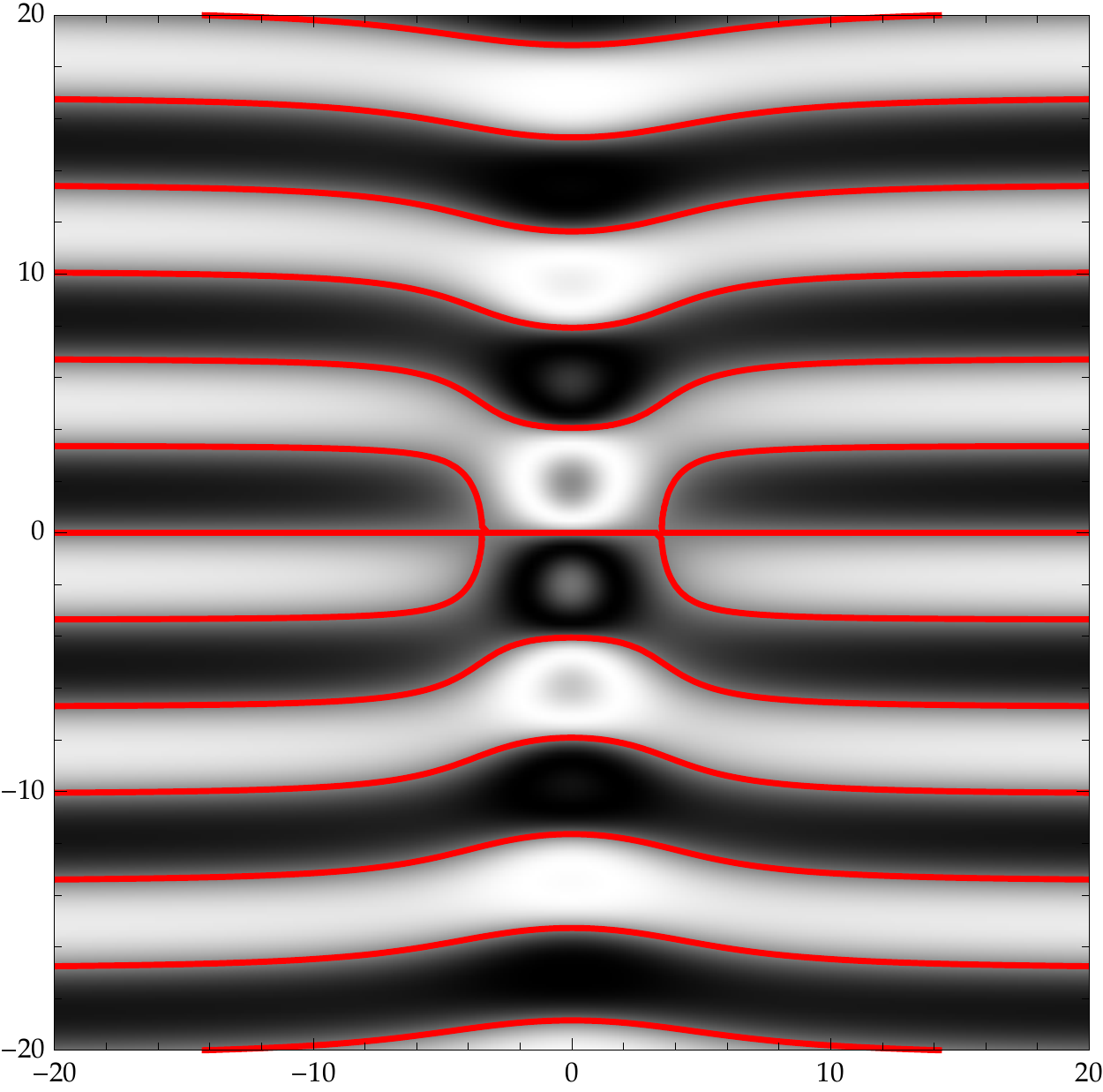}%
\includegraphics[height=.24\linewidth]{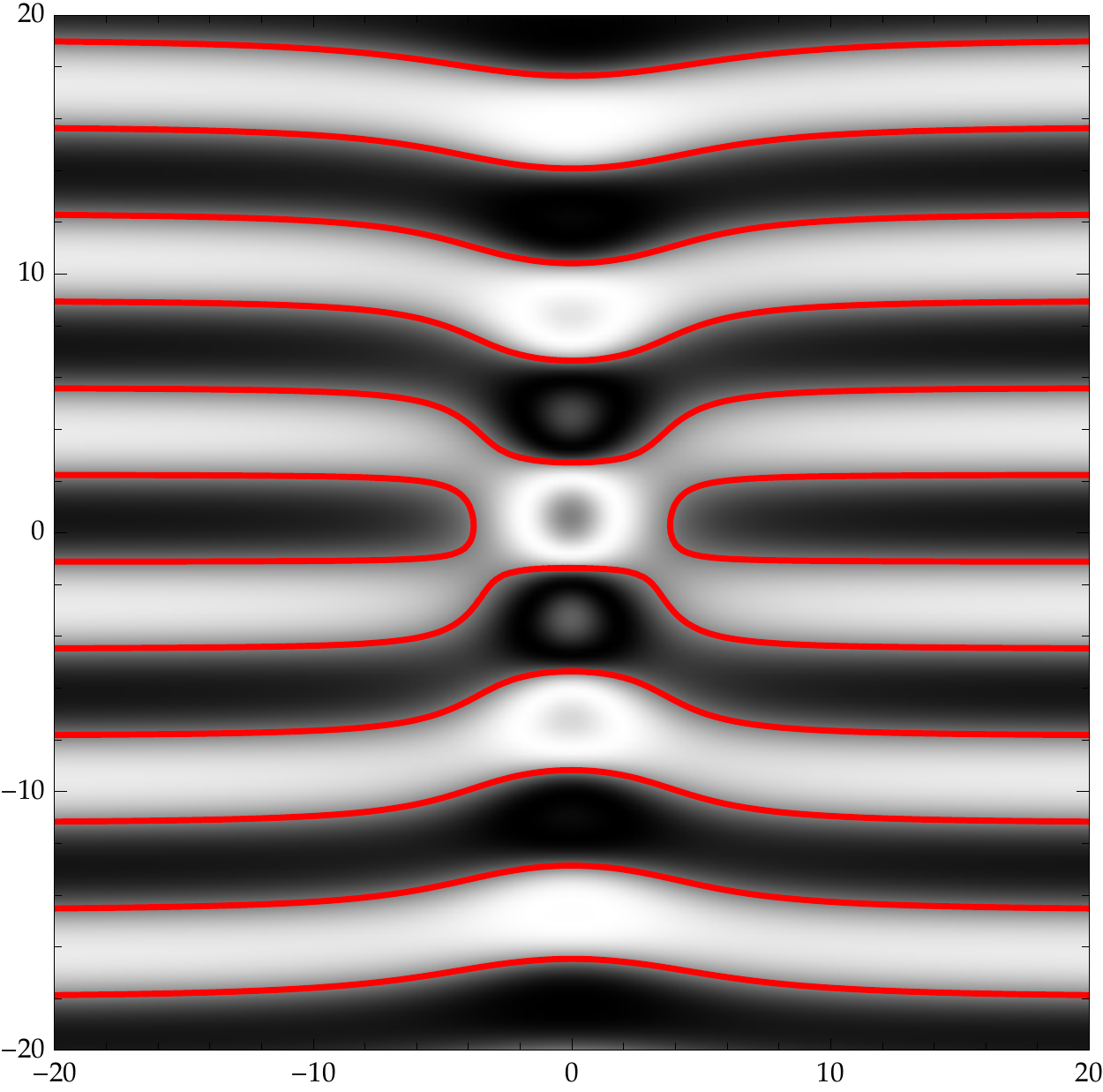}%
\includegraphics[height=.24\linewidth]{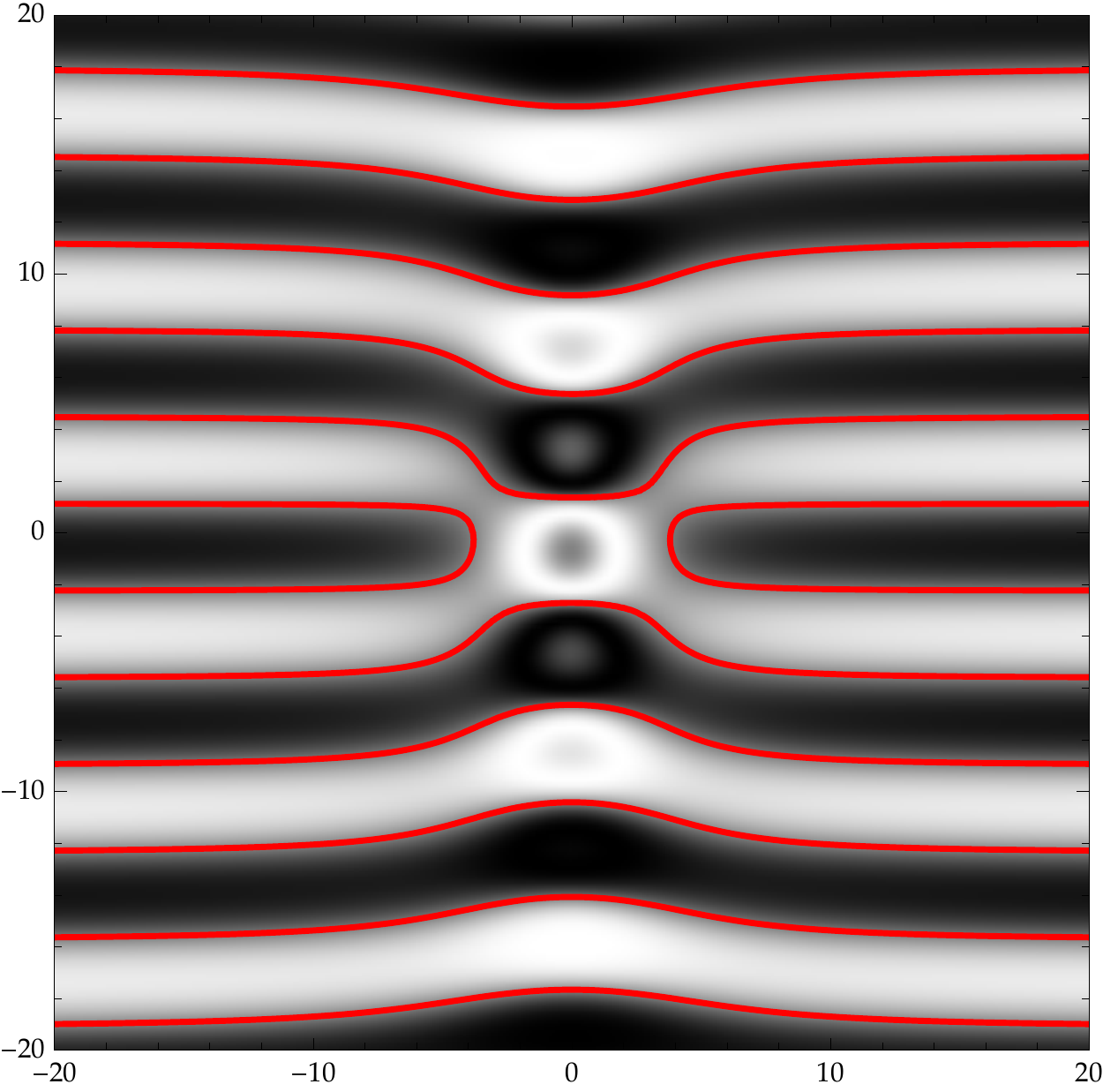}%
\includegraphics[height=.24\linewidth]{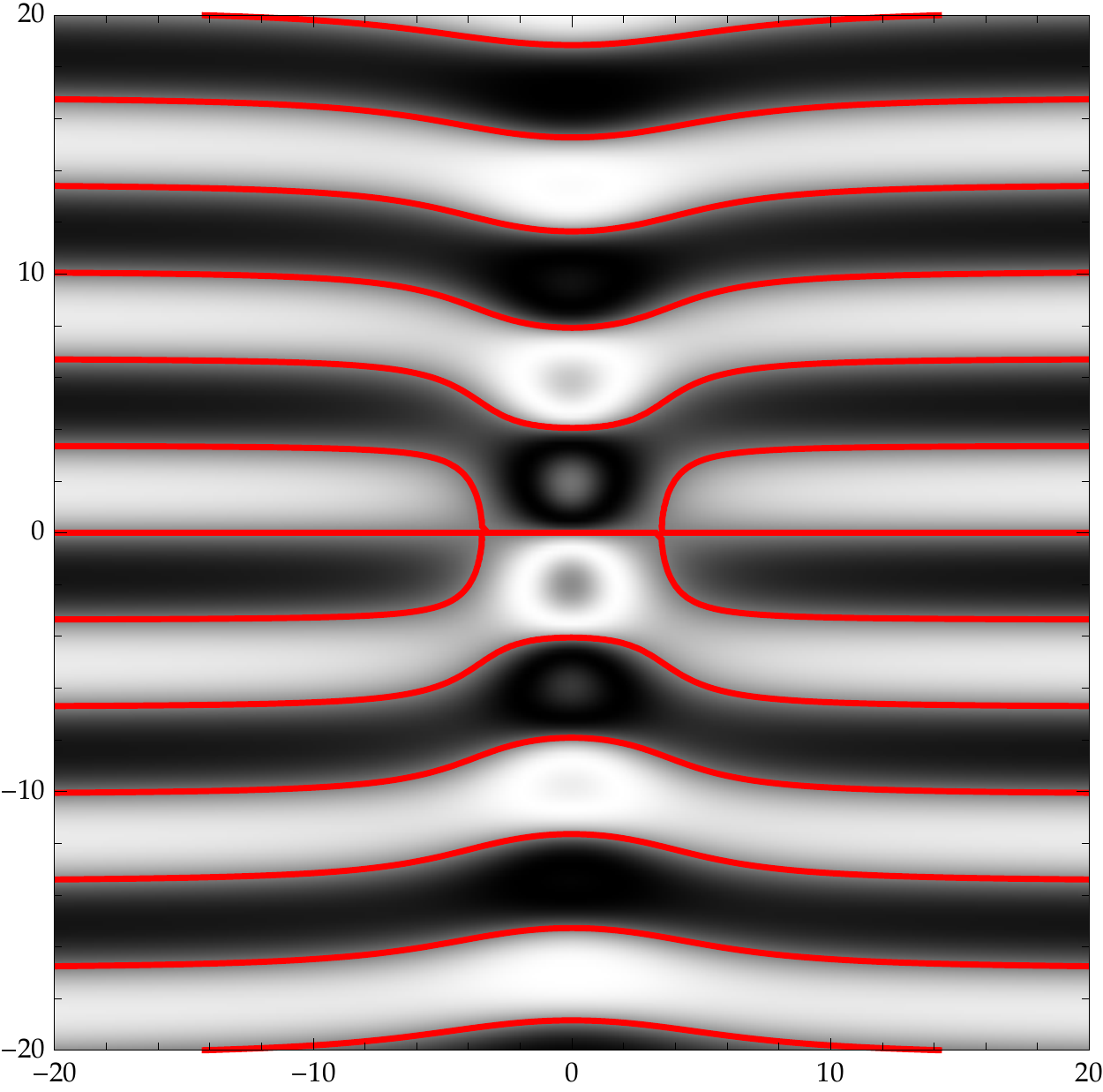}
\end{center}
\caption{As in Figure~\ref{fig:exact-solutions-first} but for $m=\sin^2(\tfrac{1}{6}\pi)$.}
\end{figure}
\begin{figure}[h!]
\begin{center}
\includegraphics[height=.24\linewidth]{fig/Legend-SMALL.pdf}%
\includegraphics[height=.24\linewidth]{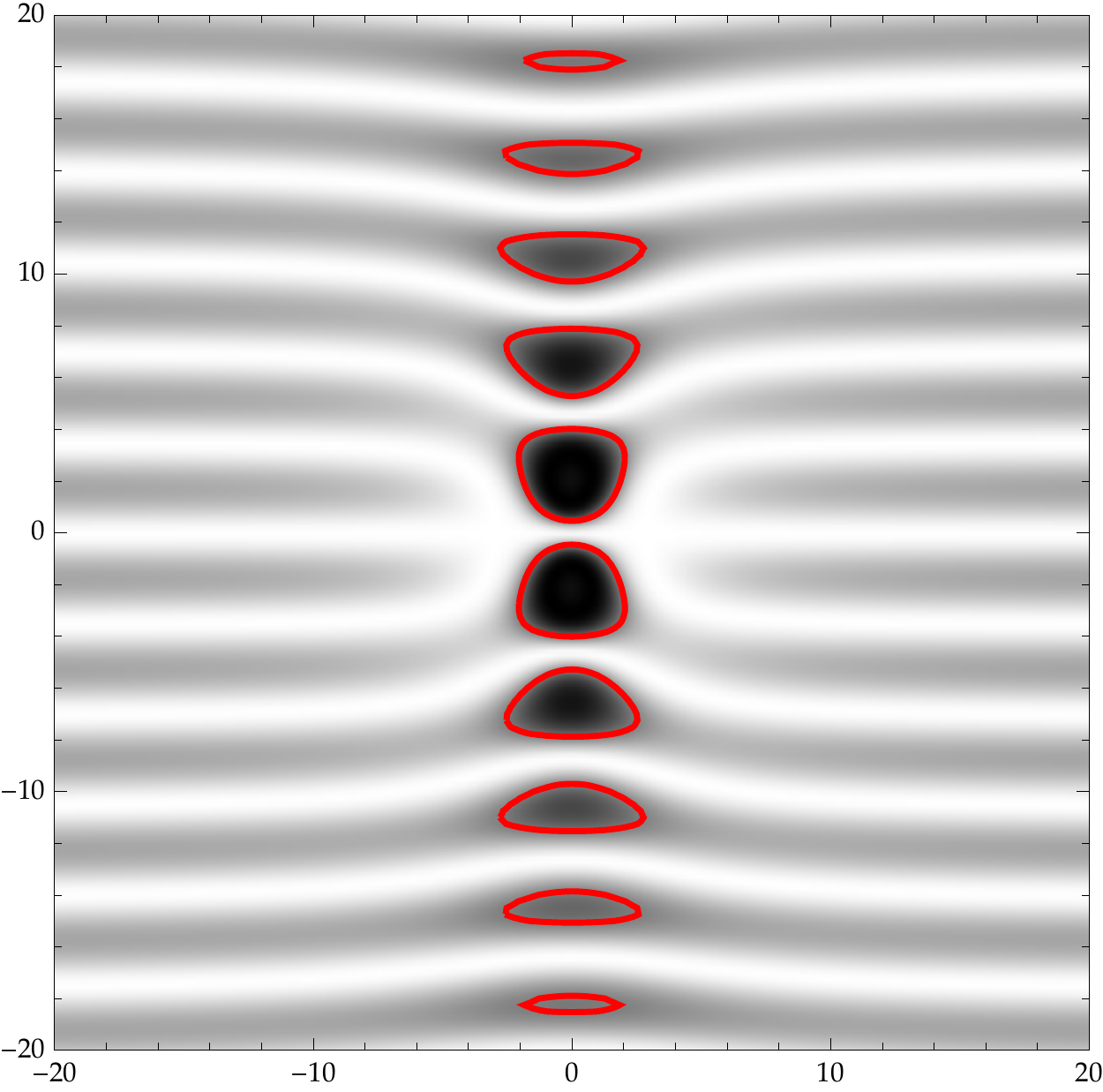}%
\includegraphics[height=.24\linewidth]{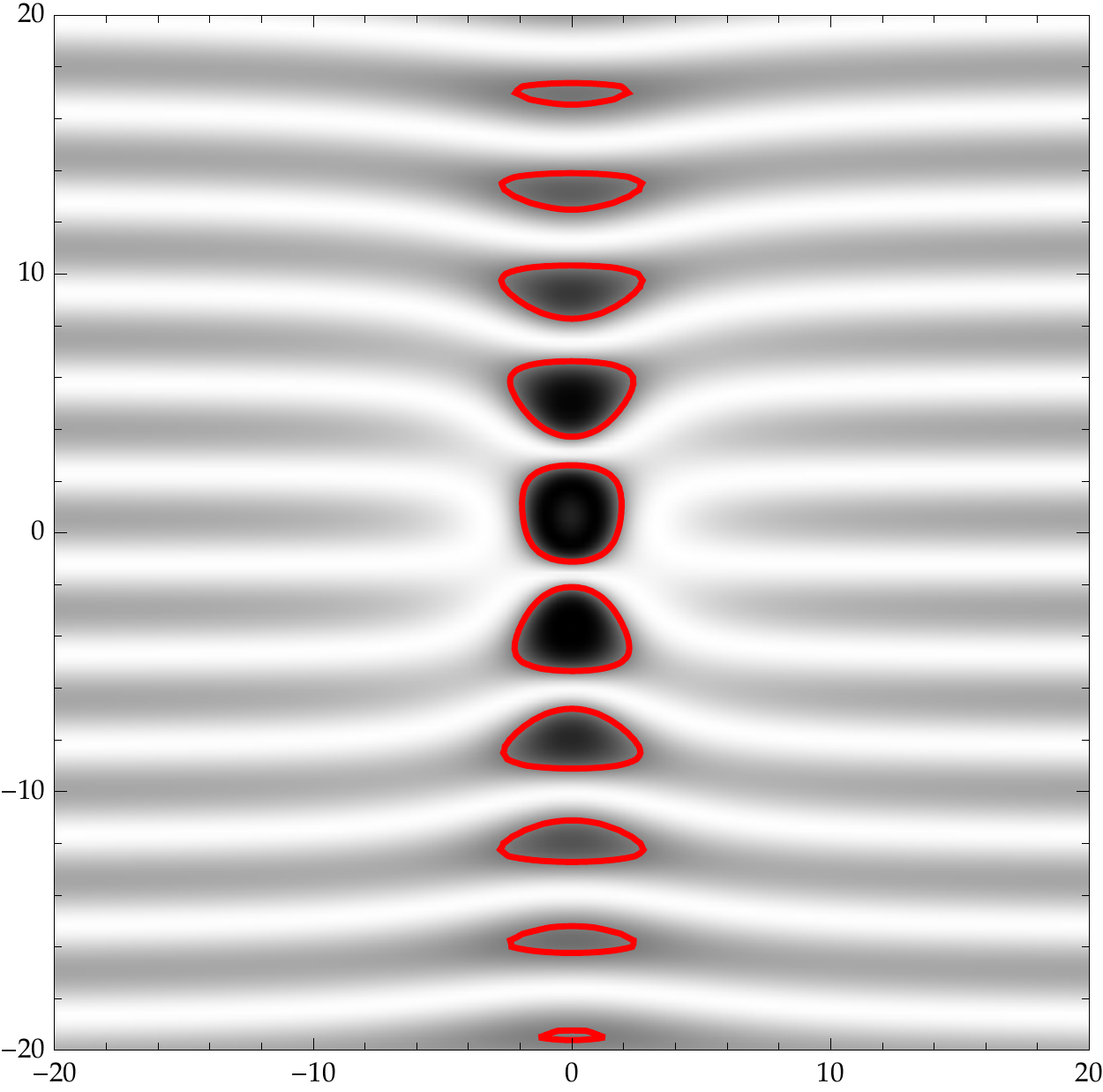}%
\includegraphics[height=.24\linewidth]{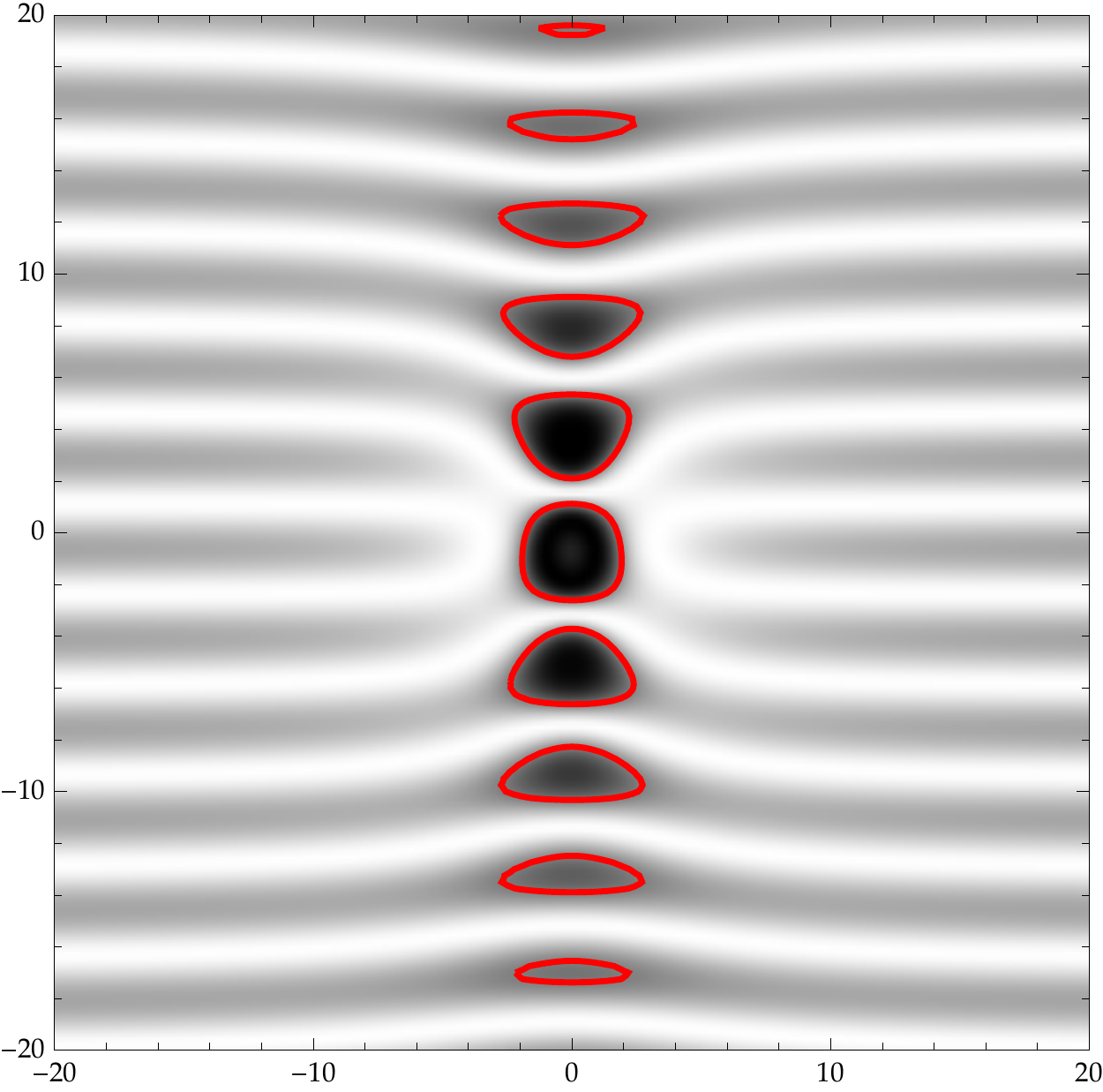}%
\includegraphics[height=.24\linewidth]{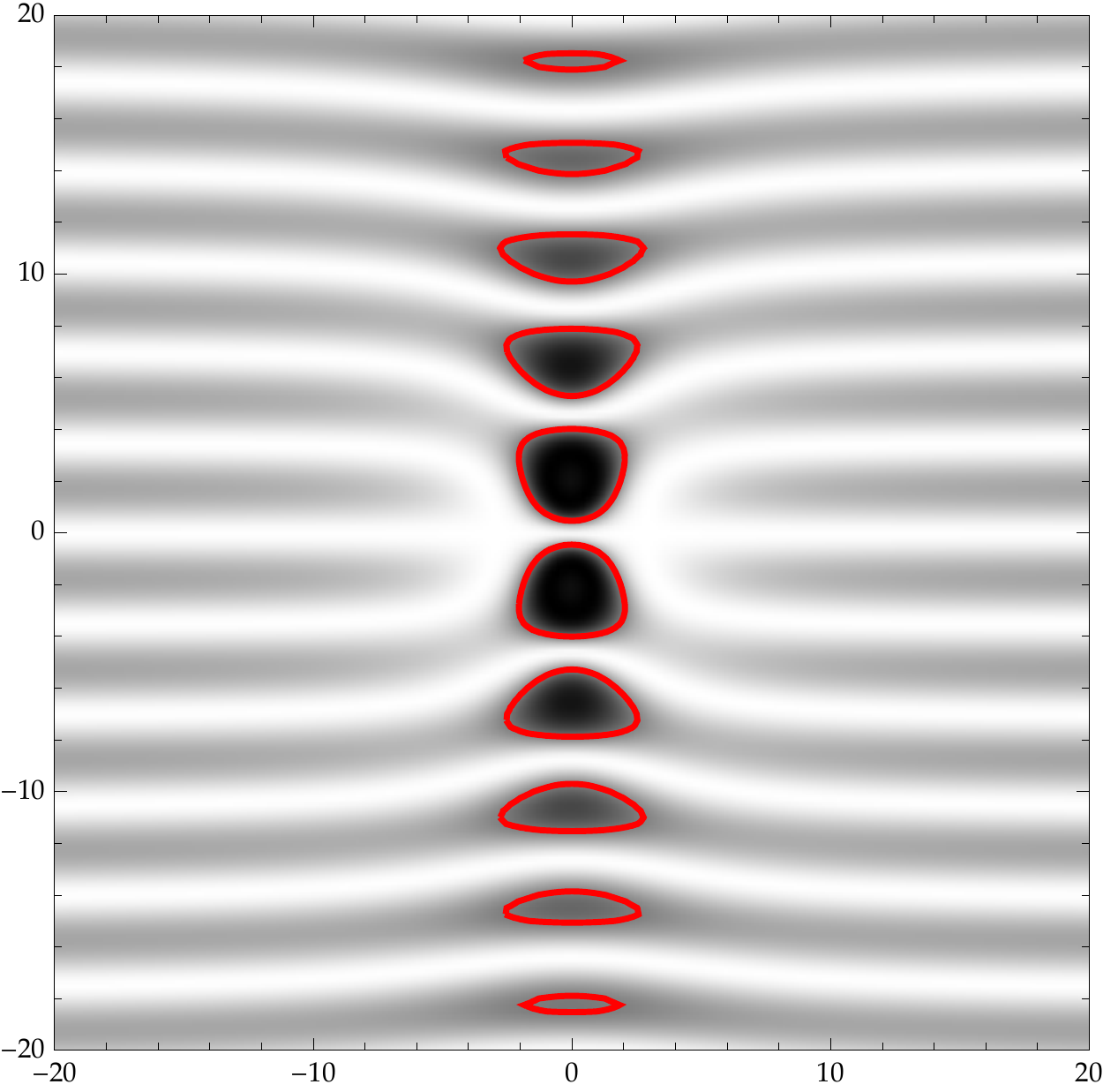}\\
\includegraphics[height=.24\linewidth]{fig/Legend-SMALL.pdf}%
\includegraphics[height=.24\linewidth]{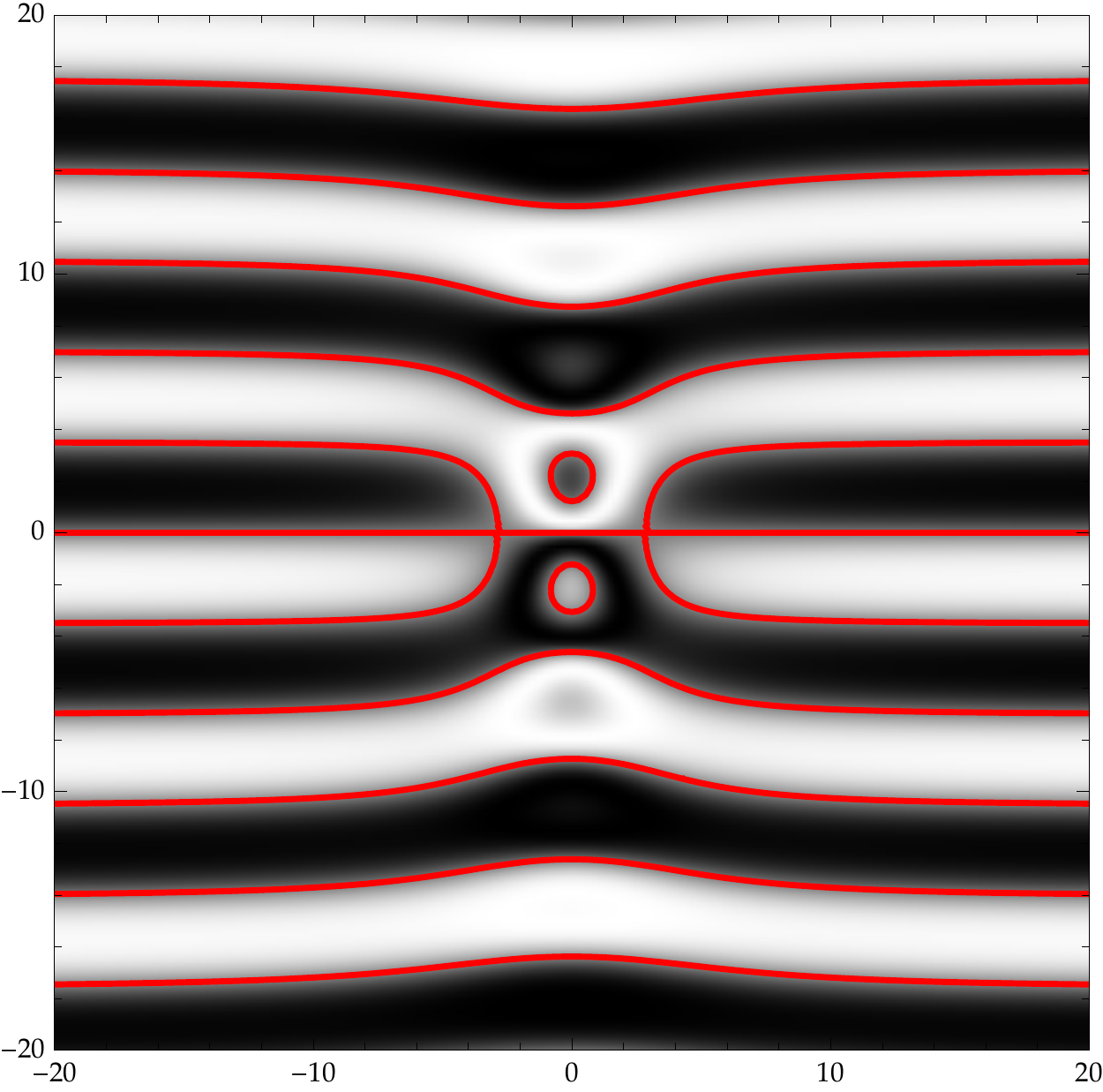}%
\includegraphics[height=.24\linewidth]{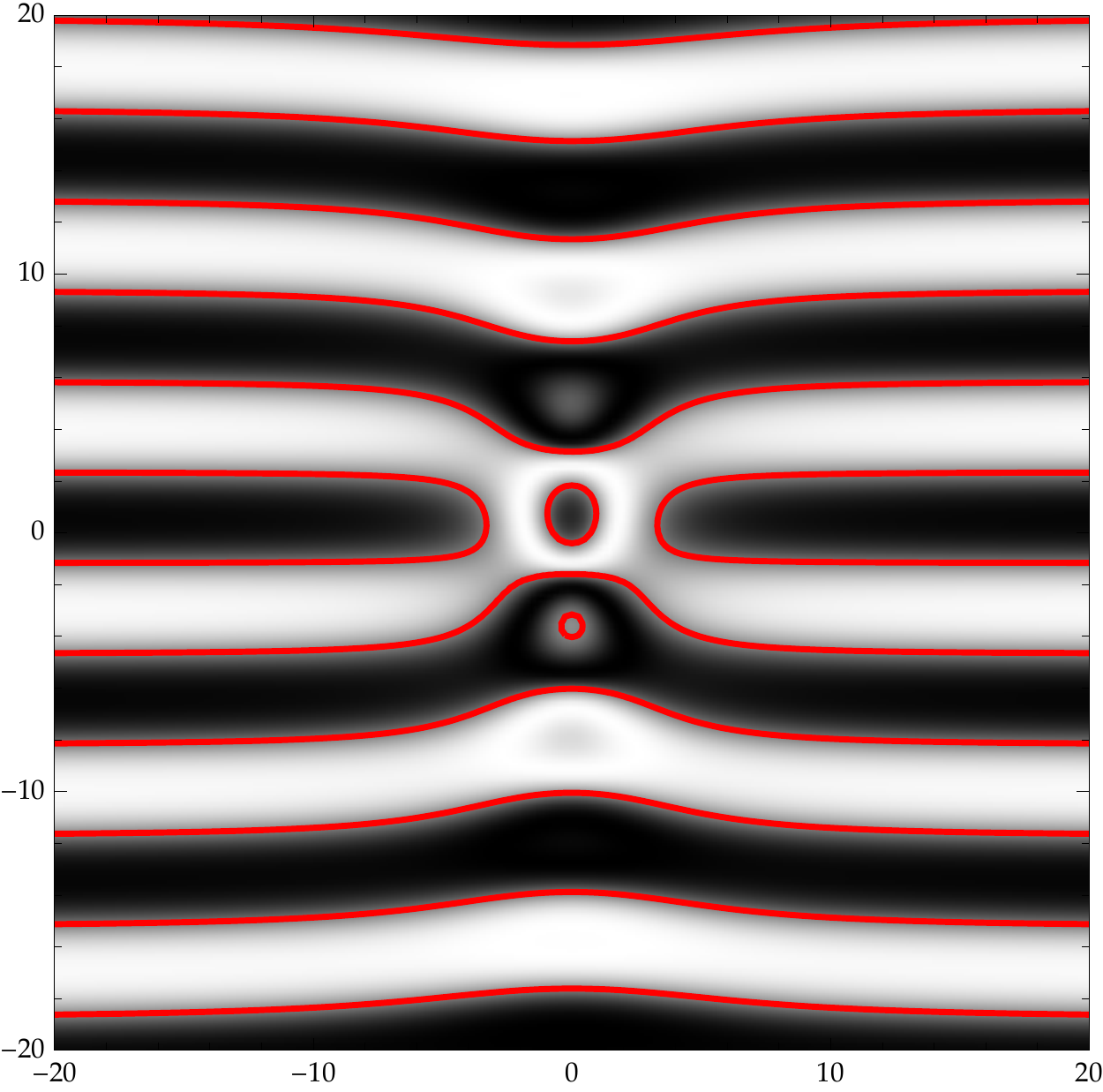}%
\includegraphics[height=.24\linewidth]{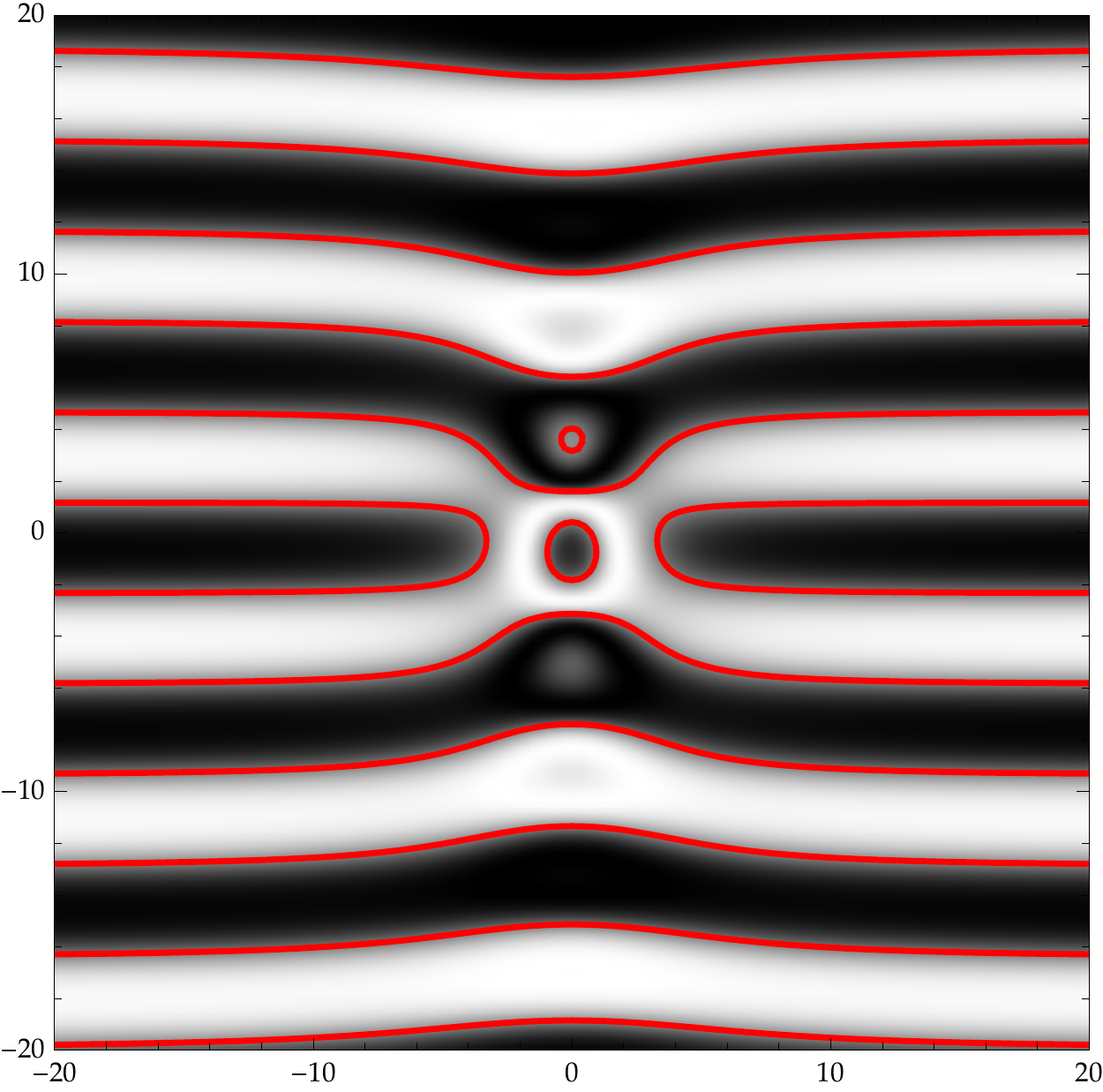}%
\includegraphics[height=.24\linewidth]{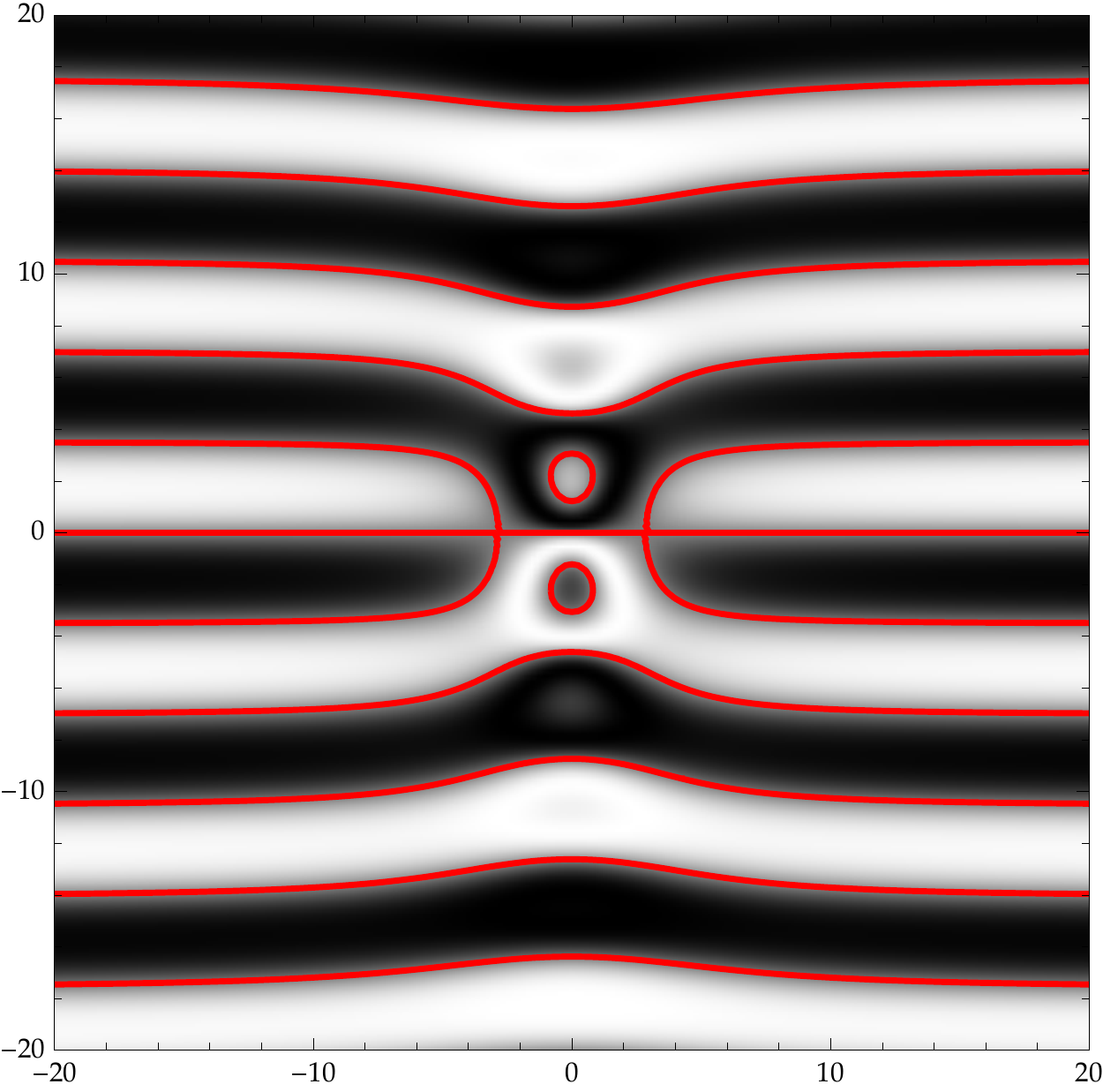}
\end{center}
\caption{As in Figure~\ref{fig:exact-solutions-first} but for $m=\sin^2(\tfrac{5}{24}\pi)$.}
\label{fig:exact-solutions-closest}
\end{figure}
\begin{figure}[h!]
\begin{center}
\includegraphics[height=.24\linewidth]{fig/Legend-SMALL.pdf}%
\includegraphics[height=.24\linewidth]{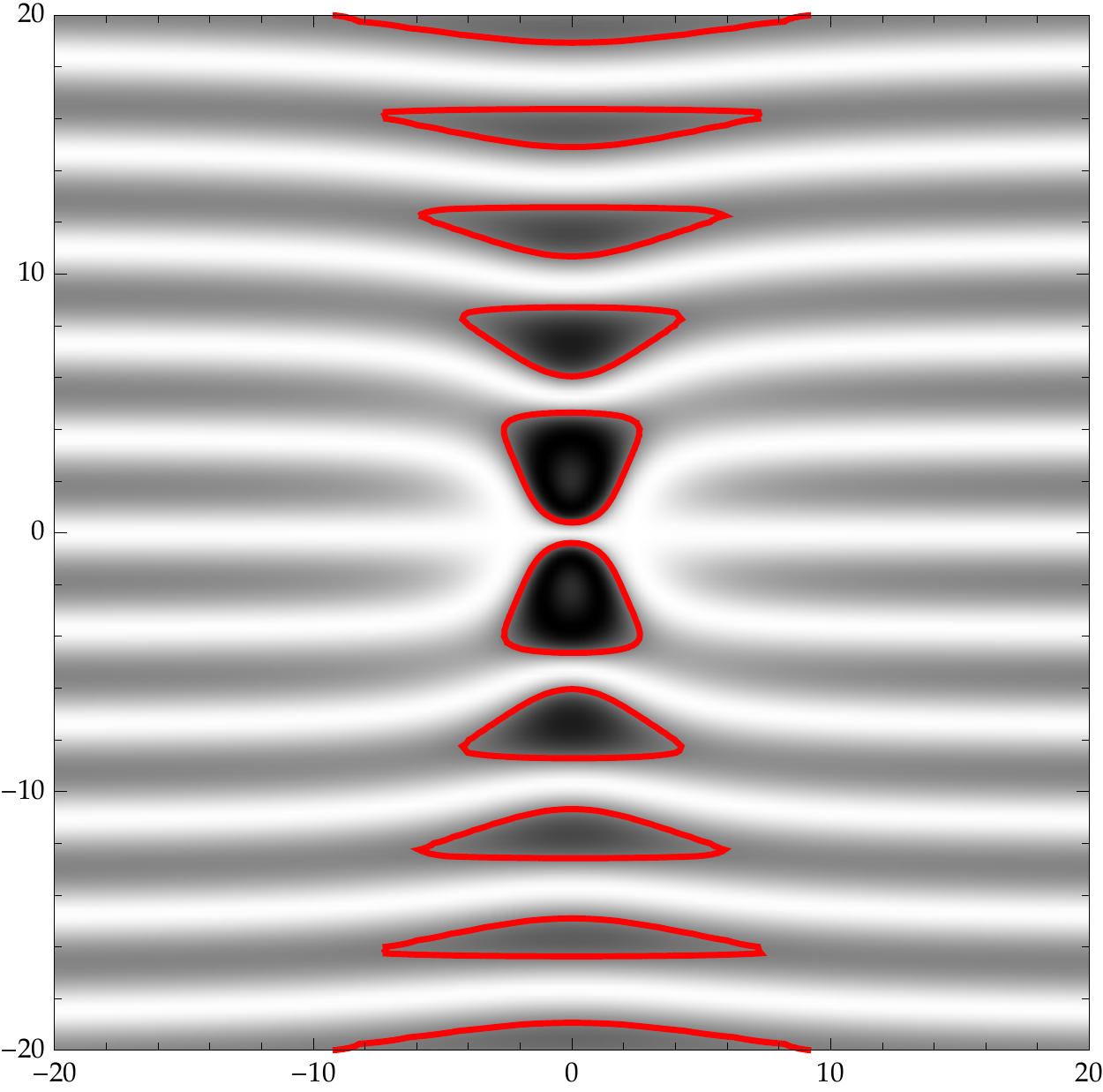}%
\includegraphics[height=.24\linewidth]{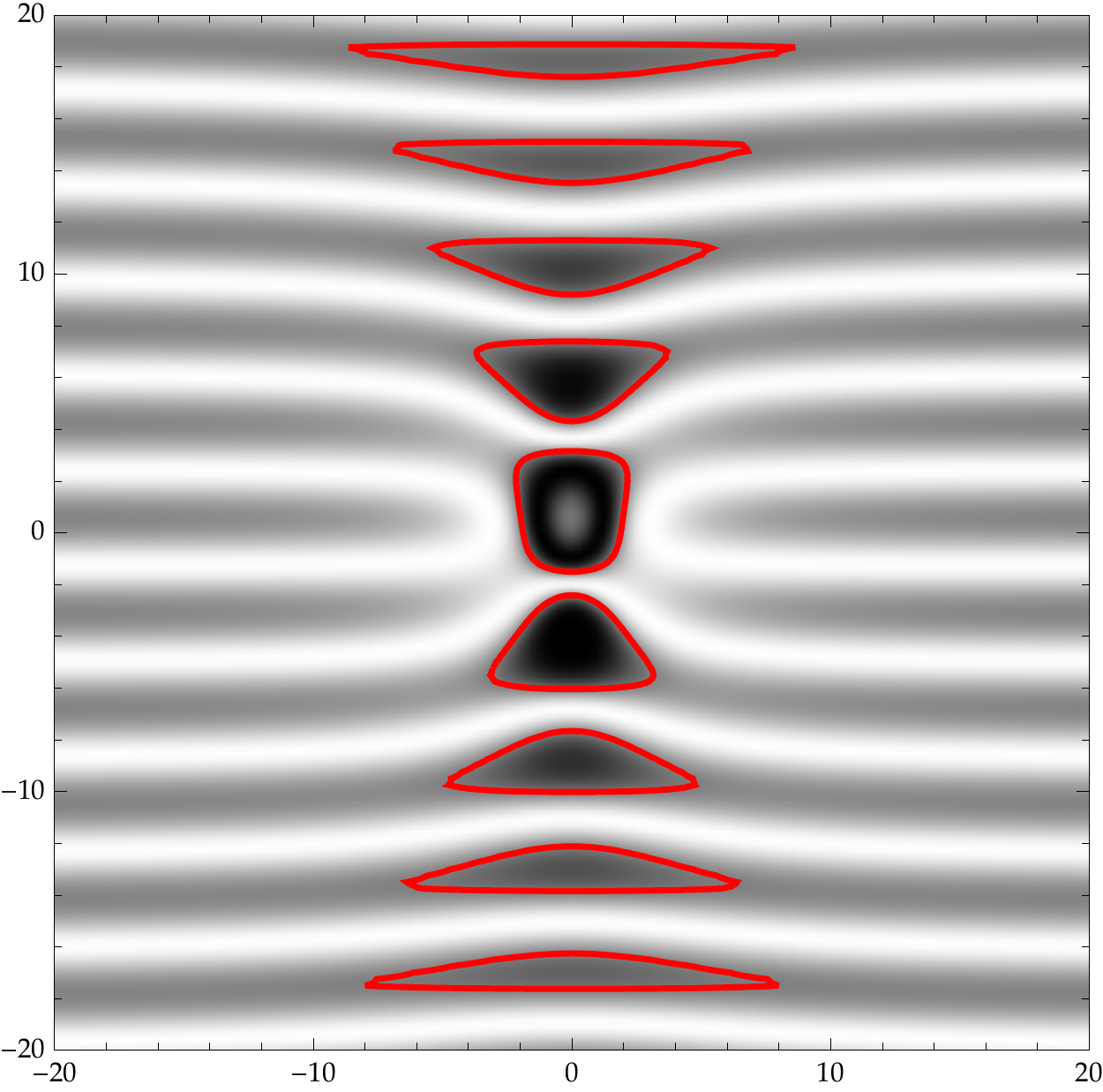}%
\includegraphics[height=.24\linewidth]{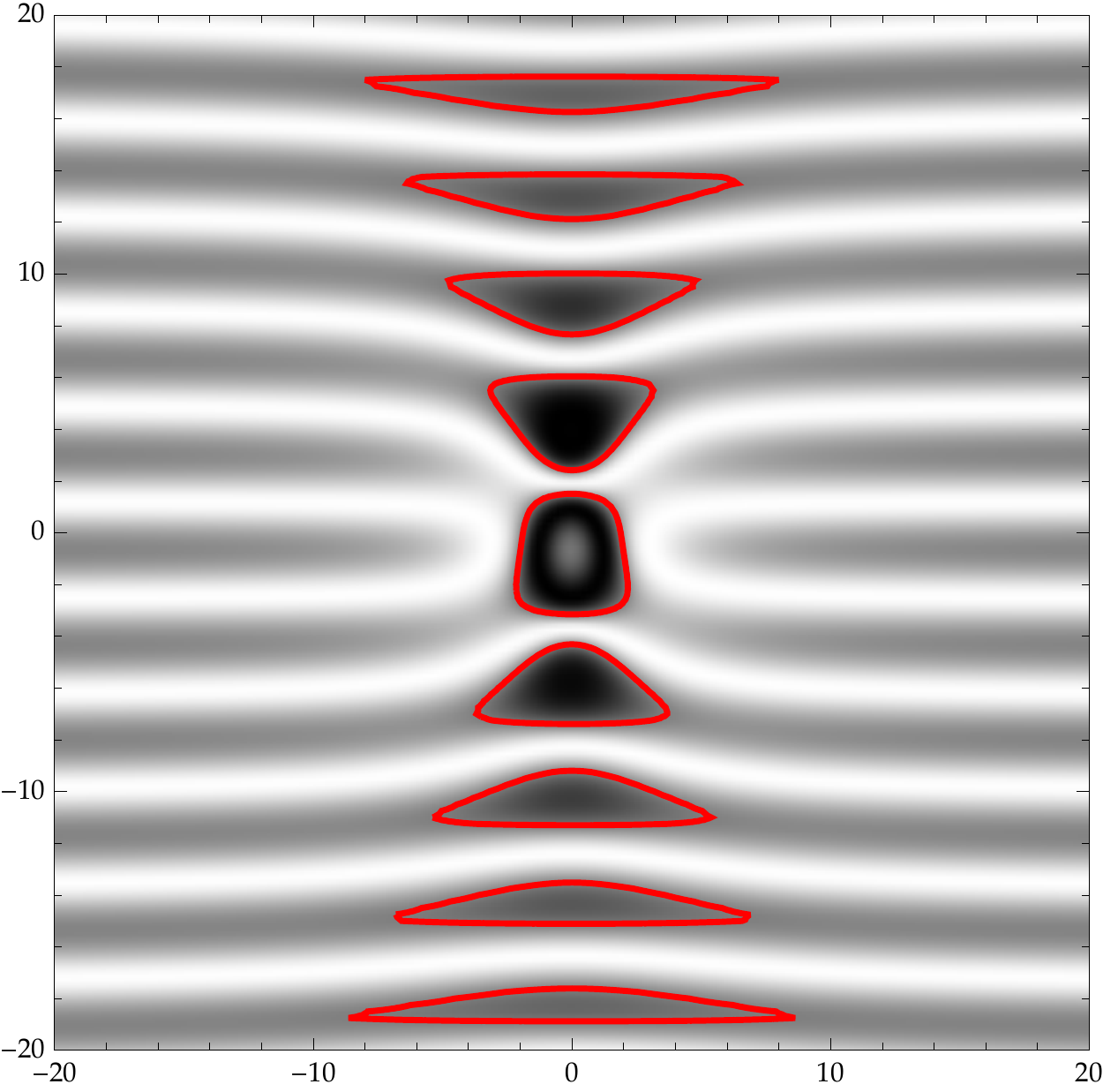}%
\includegraphics[height=.24\linewidth]{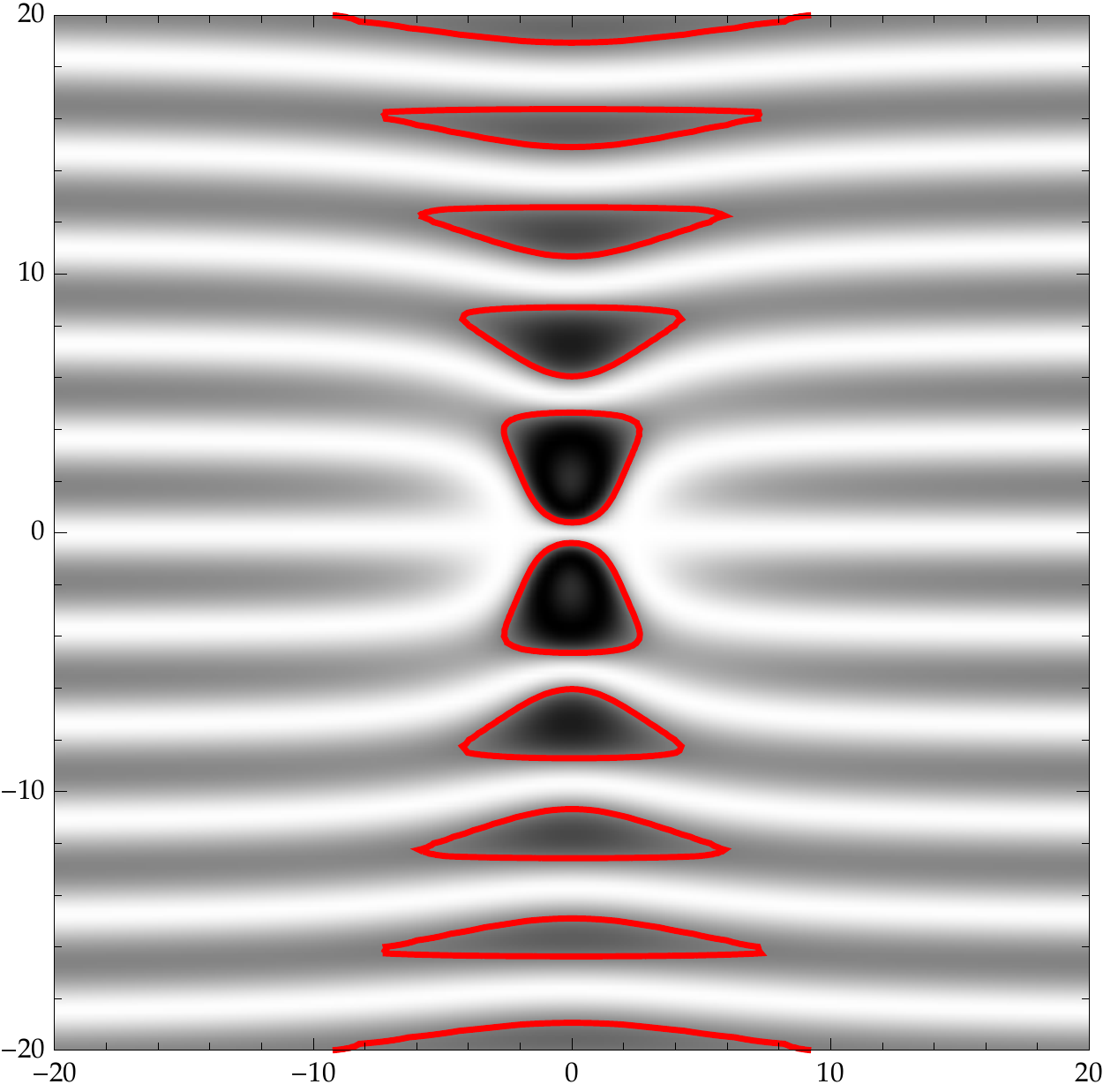}\\
\includegraphics[height=.24\linewidth]{fig/Legend-SMALL.pdf}%
\includegraphics[height=.24\linewidth]{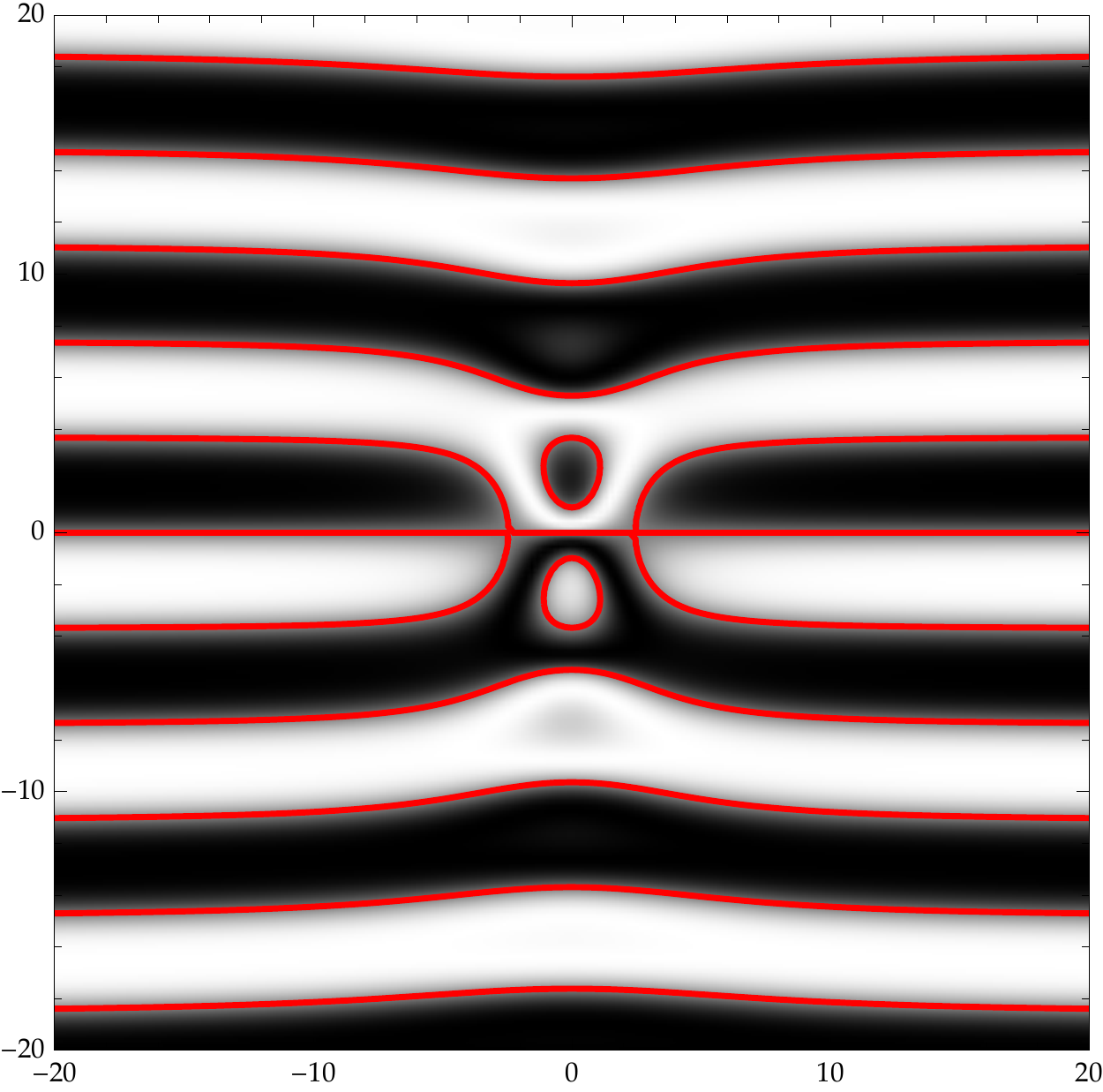}%
\includegraphics[height=.24\linewidth]{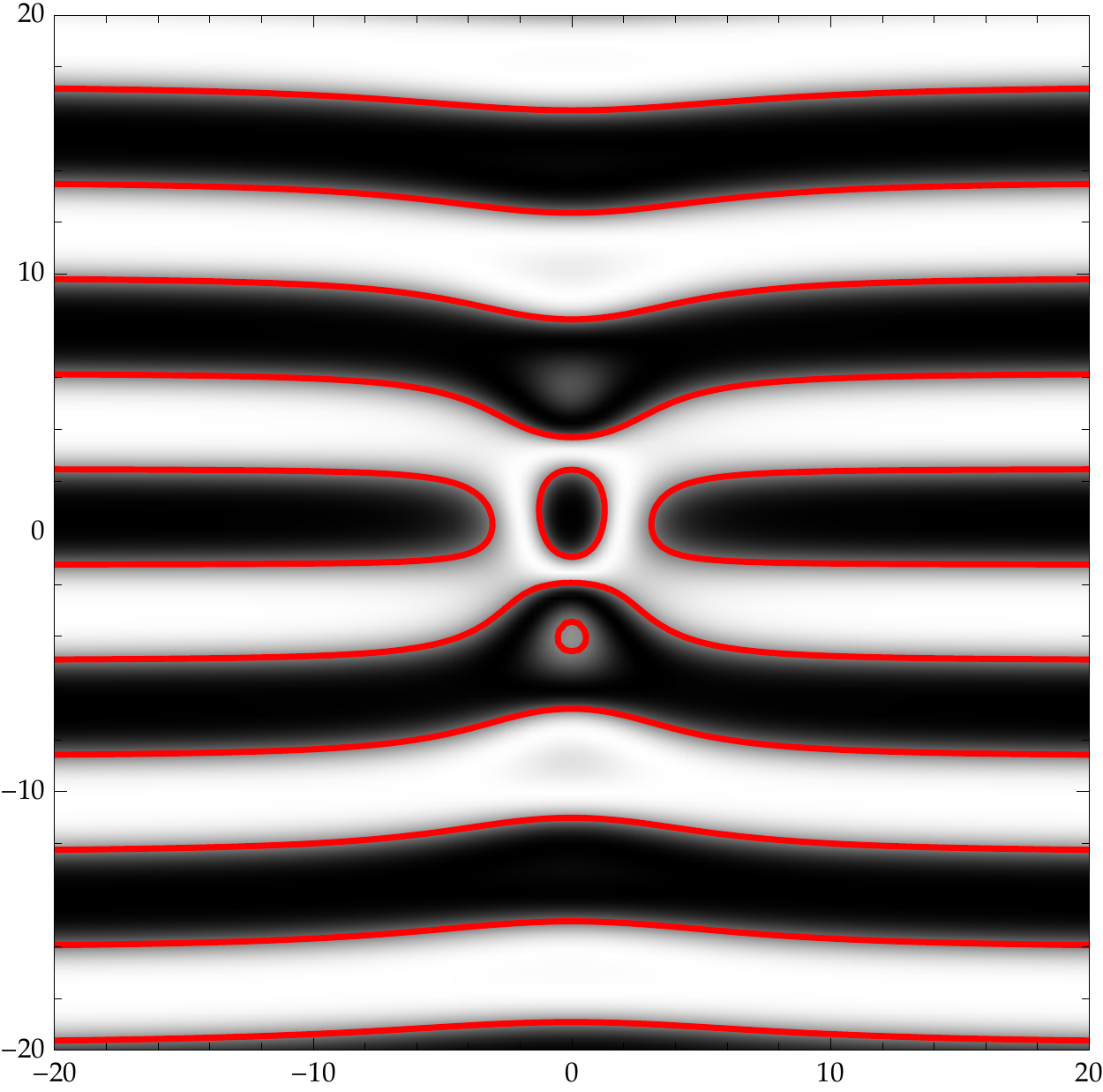}%
\includegraphics[height=.24\linewidth]{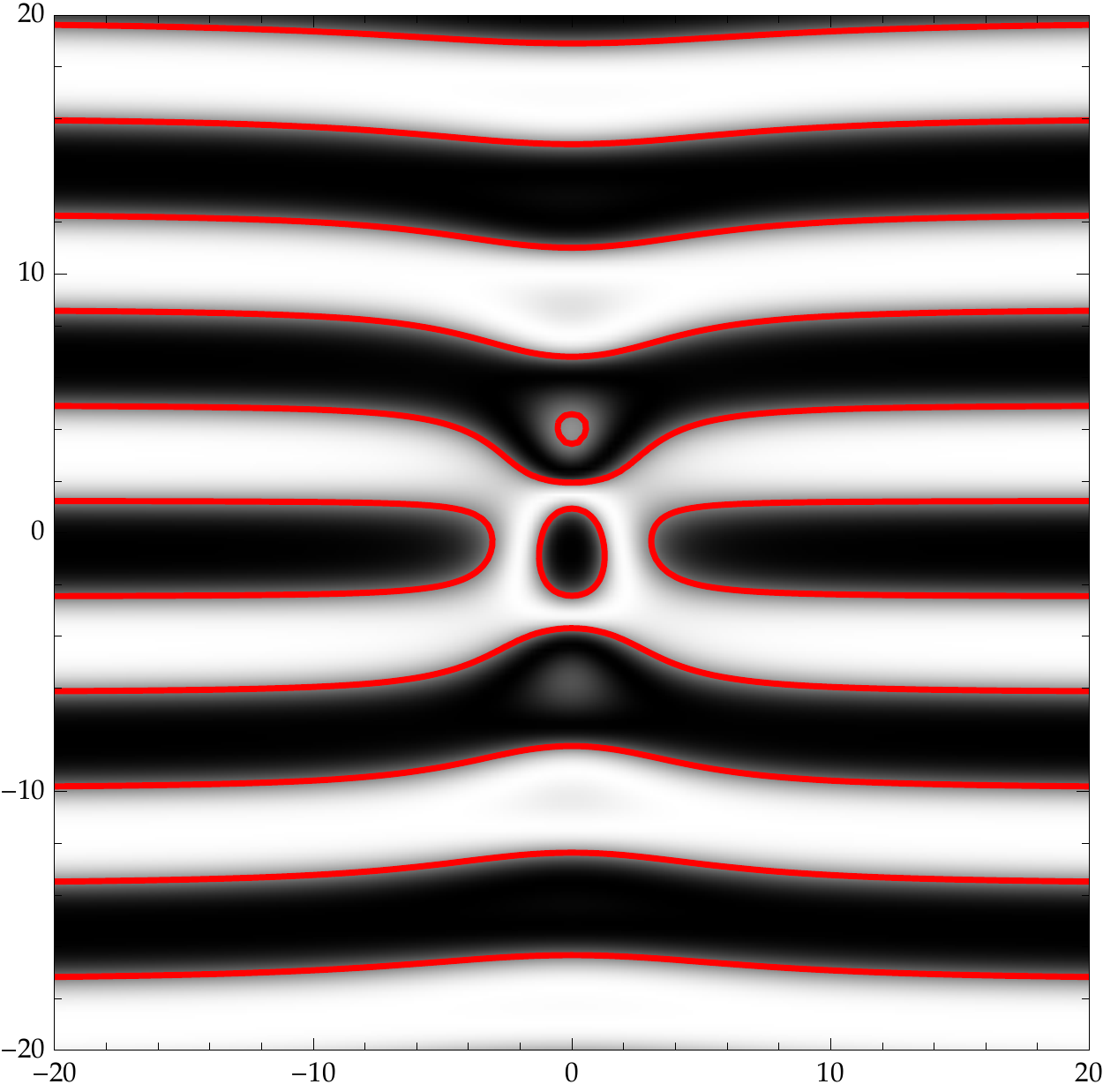}%
\includegraphics[height=.24\linewidth]{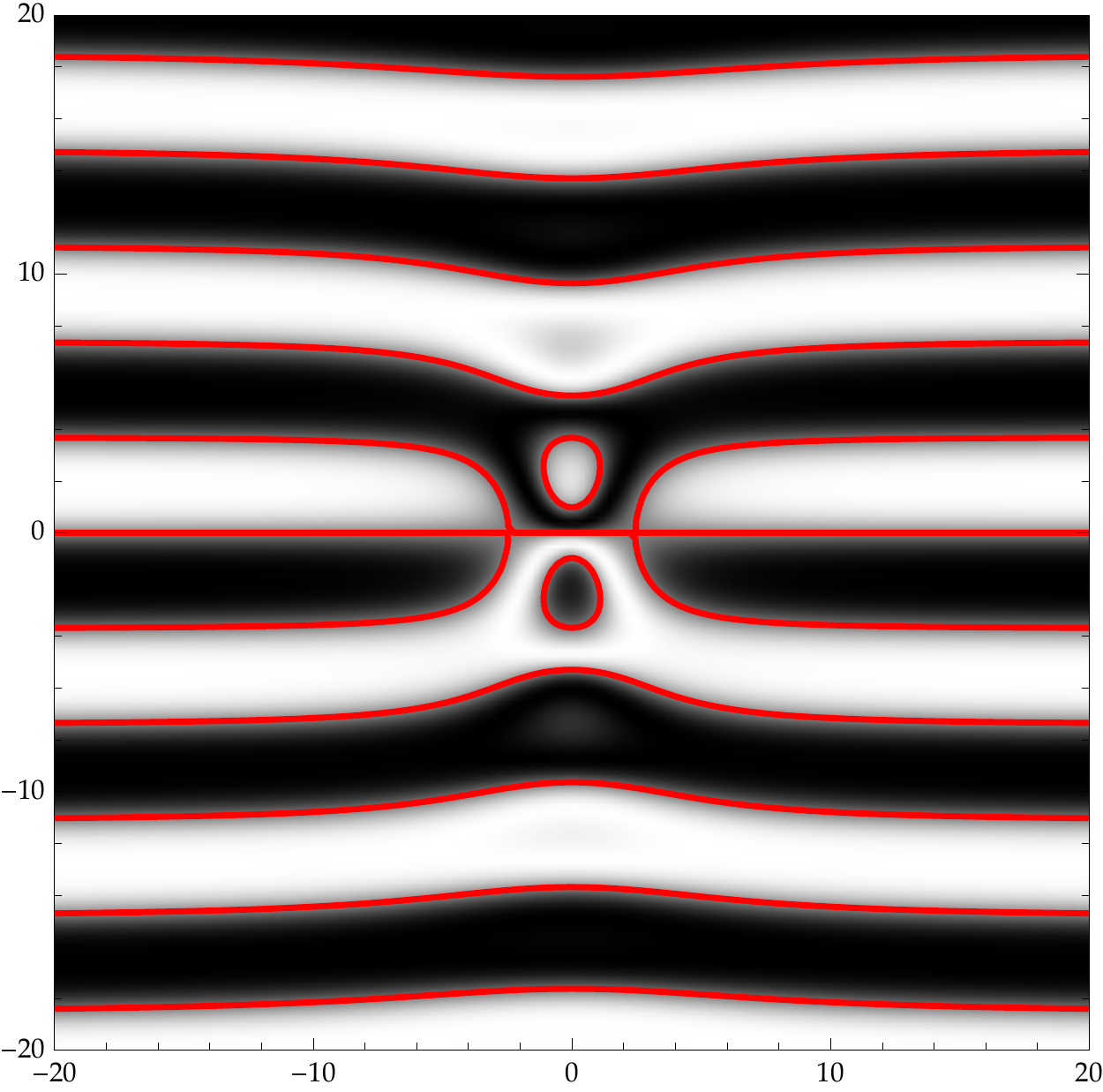}
\end{center}
\caption{As in Figure~\ref{fig:exact-solutions-first} but for $m=\sin^2(\tfrac{1}{4}\pi)$.  Note the horizontal expansion of the red curves in the top row of plots, indicating $\cos(U(X,T;m,\Omega))=0$.  This is occurring because the amplitude of the background wave is increasing with $m$, and in a neighborhood of $(X,T)\to\infty$ $U$ first exits the interval $[-\tfrac{1}{2}\pi,\tfrac{1}{2}\pi]$ for $m>\sin^2(\tfrac{1}{4}\pi)=\tfrac{1}{2}$.  Compare with the first row of Figure~\ref{fig:next-plot}.}
\end{figure}
\begin{figure}[h!]
\begin{center}
\includegraphics[height=.24\linewidth]{fig/Legend-SMALL.pdf}%
\includegraphics[height=.24\linewidth]{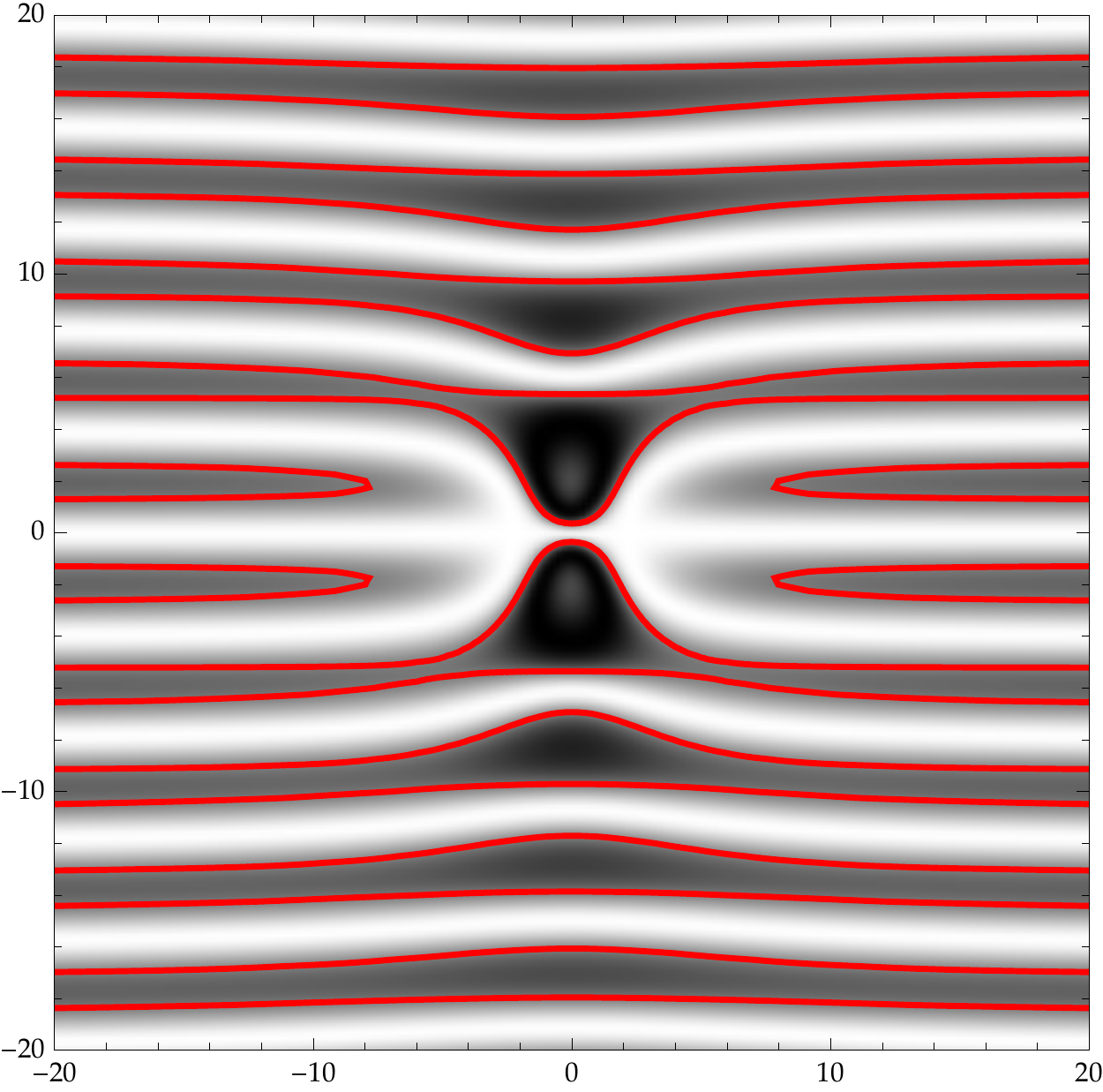}%
\includegraphics[height=.24\linewidth]{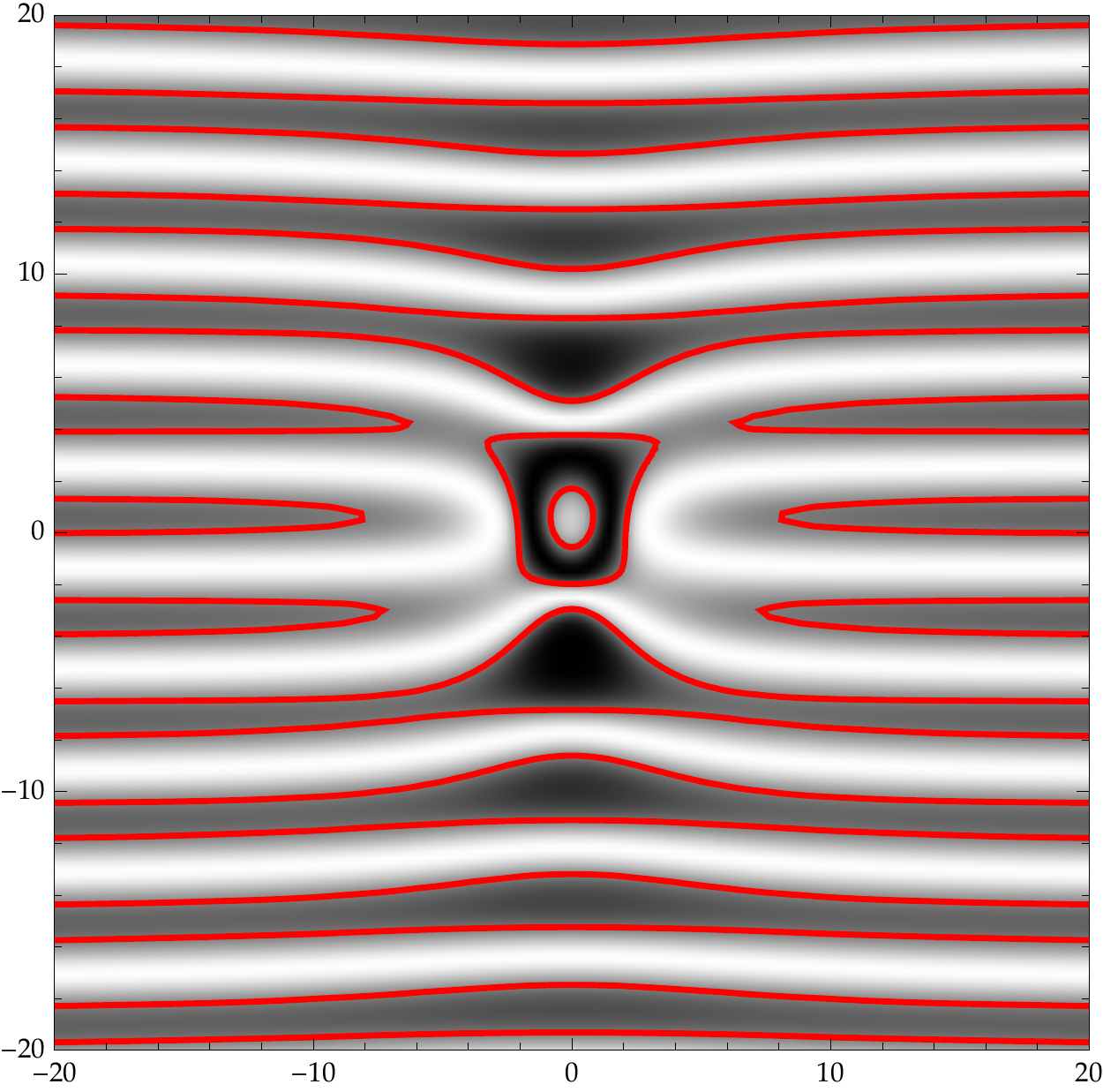}%
\includegraphics[height=.24\linewidth]{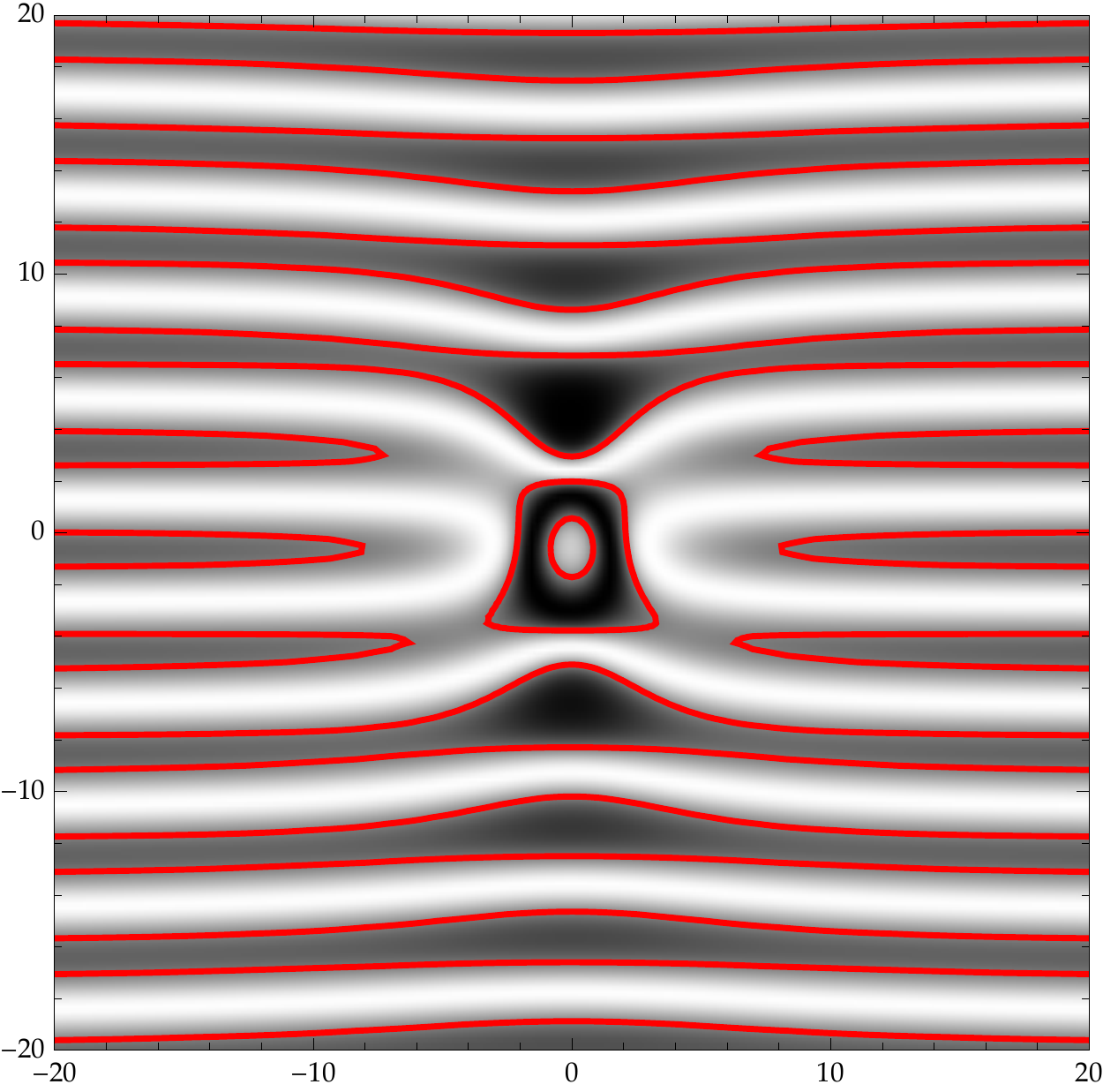}%
\includegraphics[height=.24\linewidth]{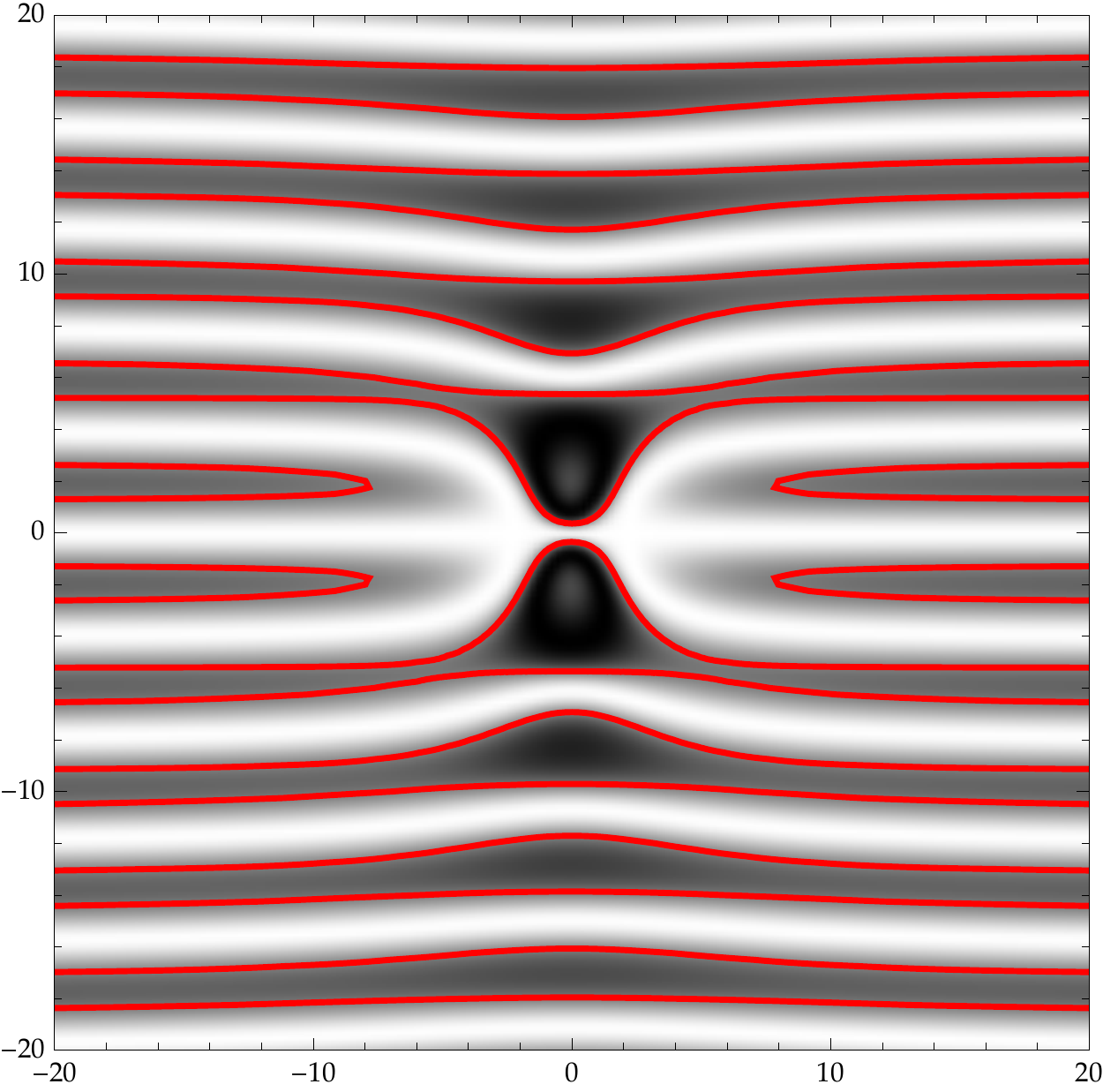}\\
\includegraphics[height=.24\linewidth]{fig/Legend-SMALL.pdf}%
\includegraphics[height=.24\linewidth]{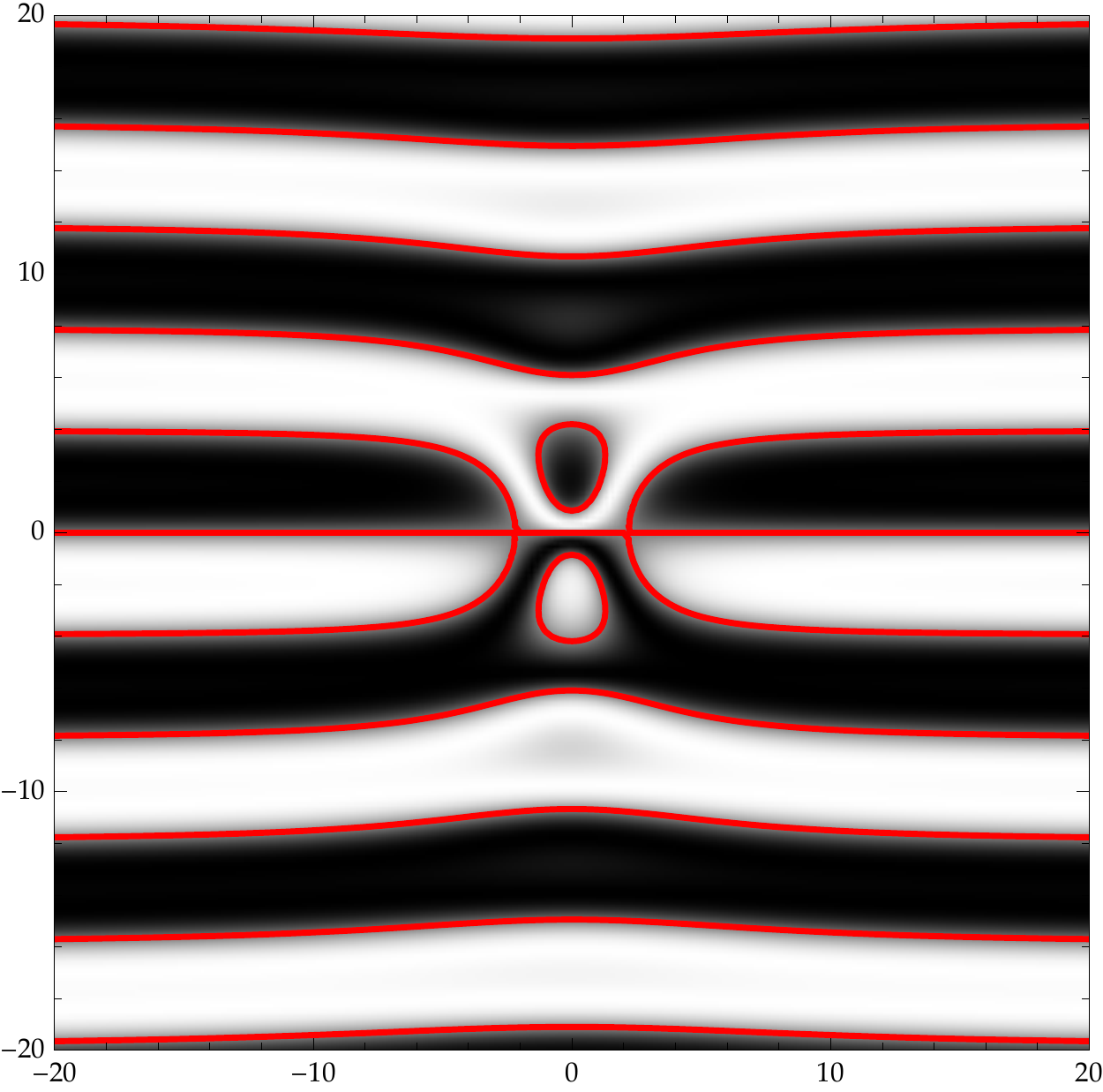}%
\includegraphics[height=.24\linewidth]{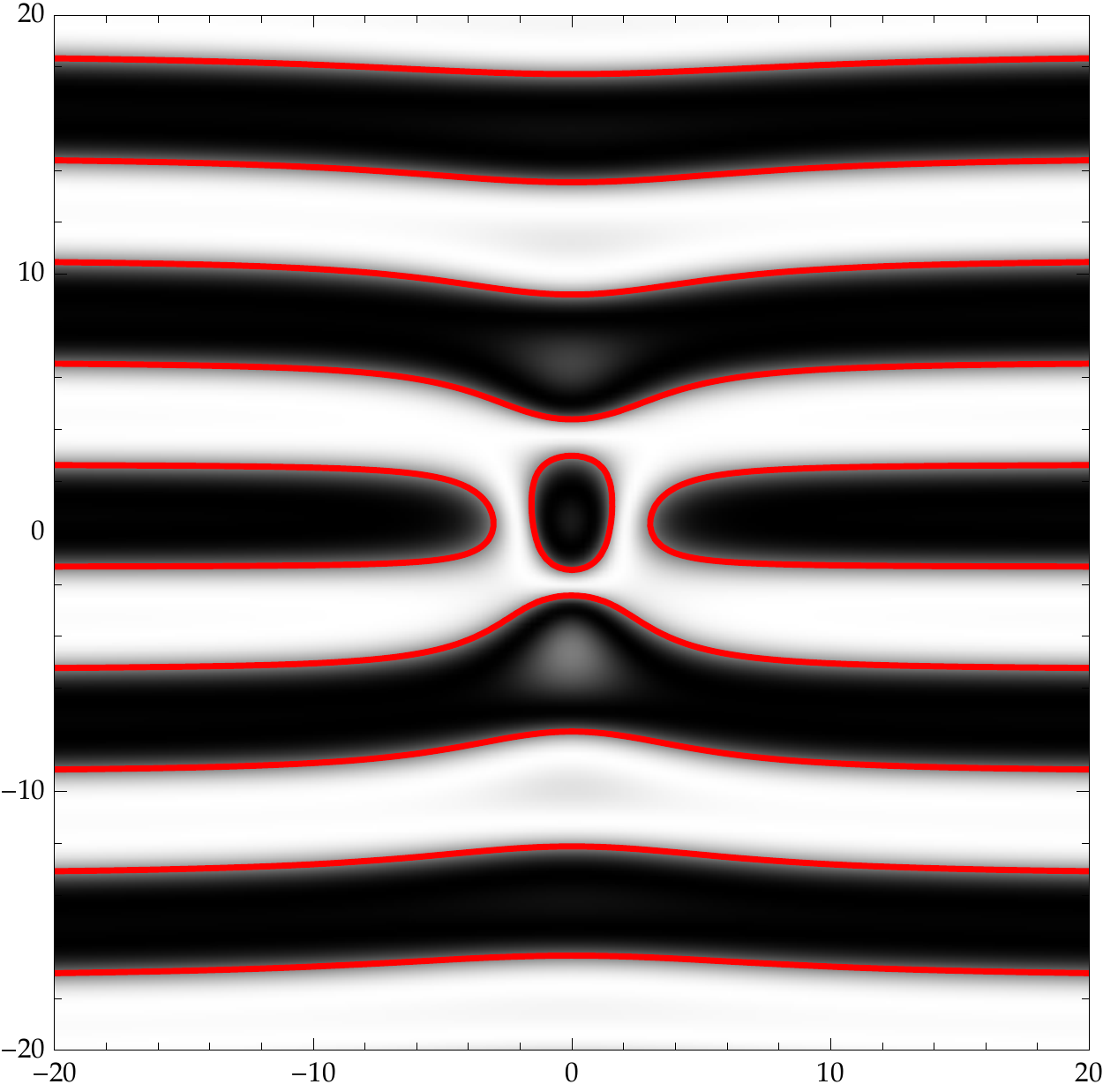}%
\includegraphics[height=.24\linewidth]{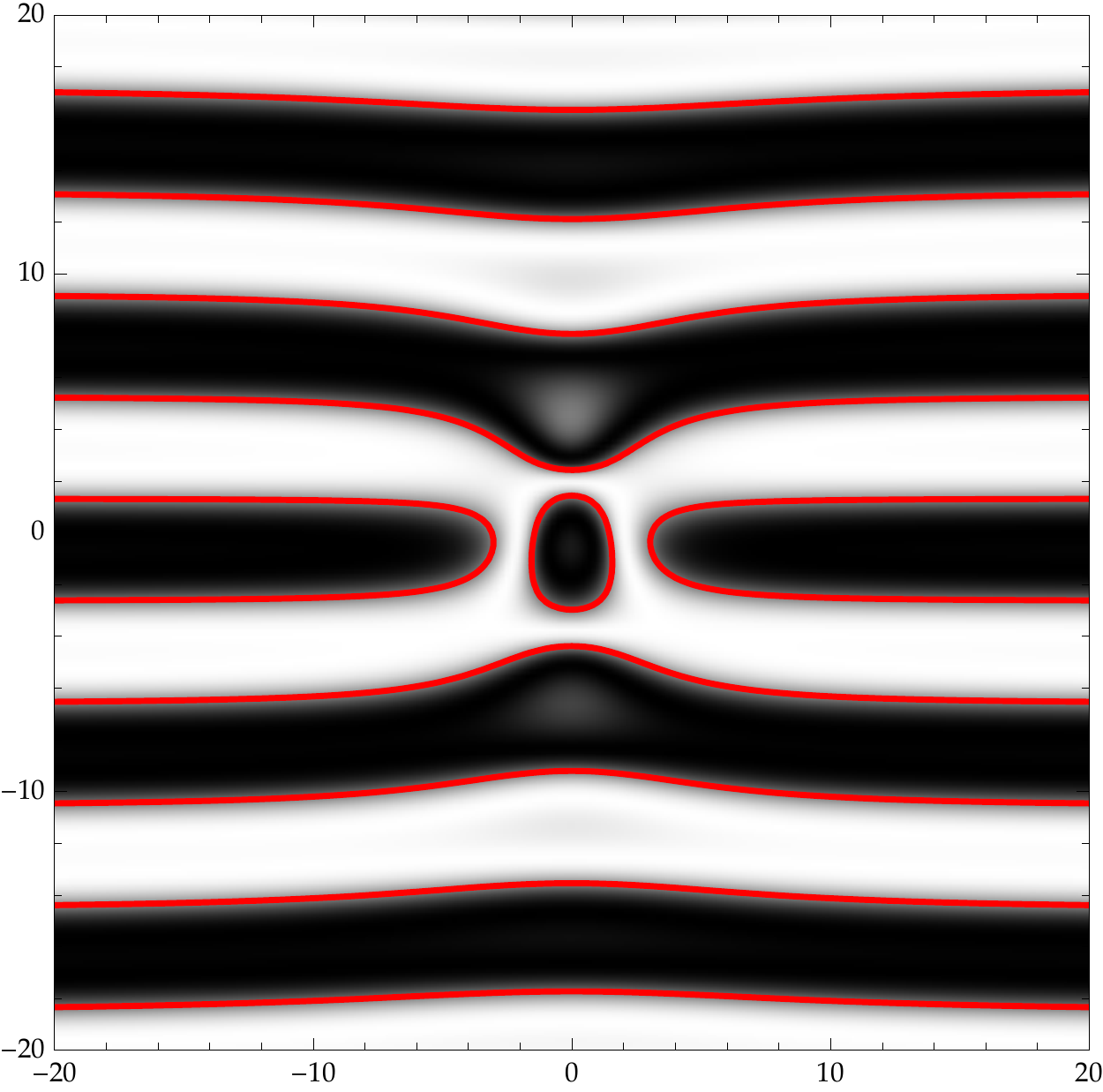}%
\includegraphics[height=.24\linewidth]{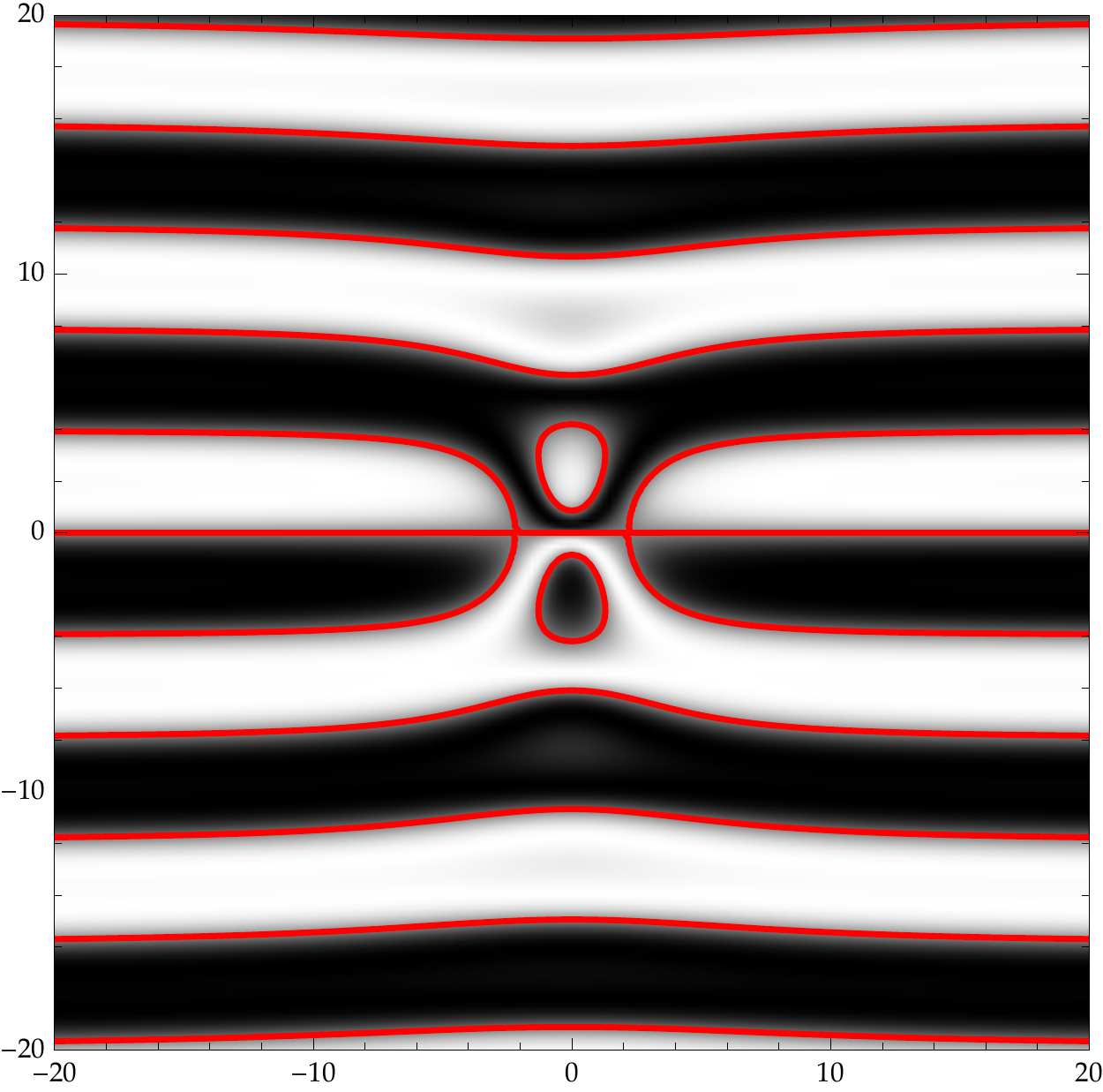}
\end{center}
\caption{As in Figure~\ref{fig:exact-solutions-first} but for $m=\sin^2(\tfrac{7}{24}\pi)$.}
\label{fig:next-plot}
\end{figure}
\begin{figure}[h!]
\begin{center}
\includegraphics[height=.24\linewidth]{fig/Legend-SMALL.pdf}%
\includegraphics[height=.24\linewidth]{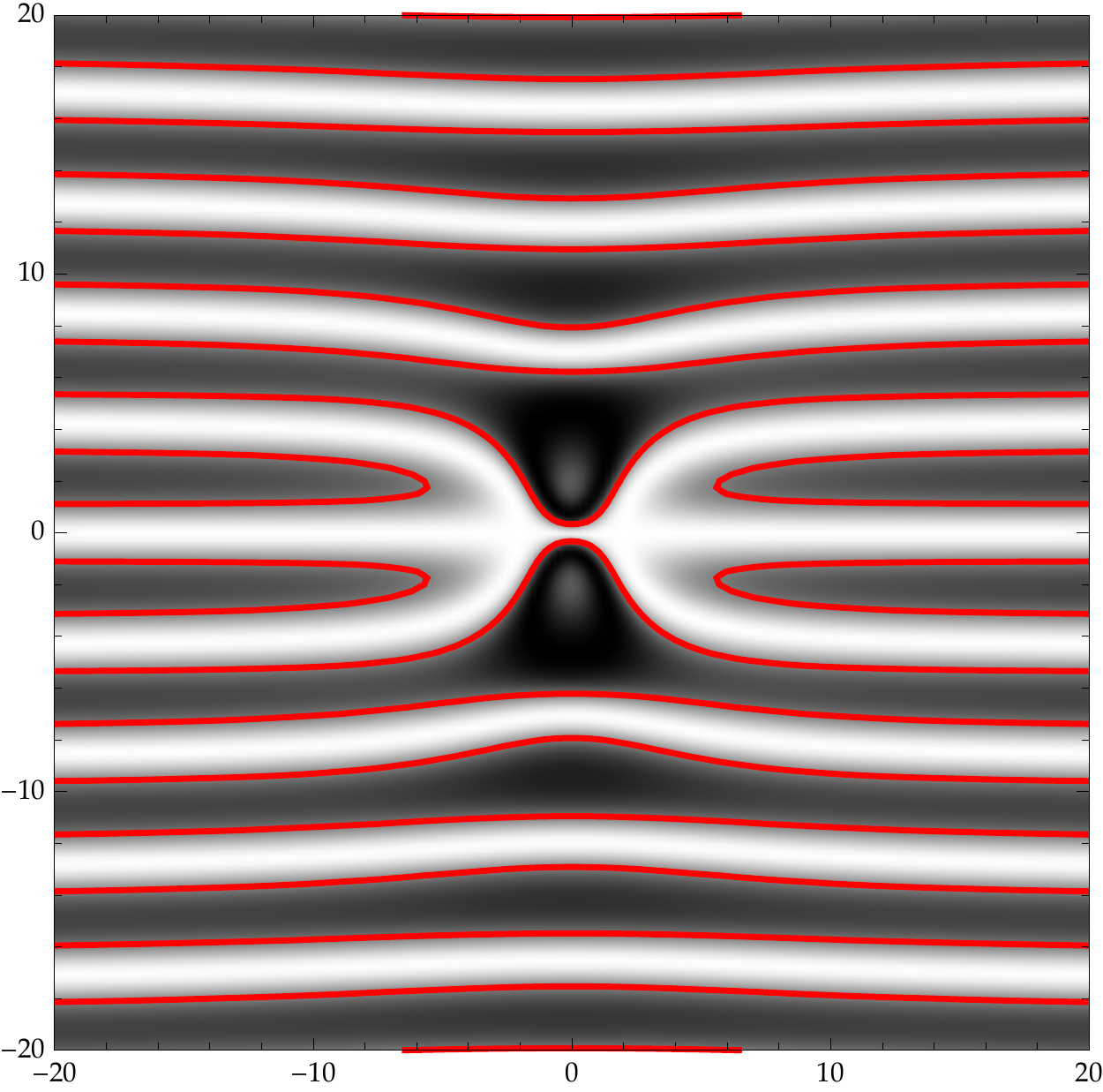}%
\includegraphics[height=.24\linewidth]{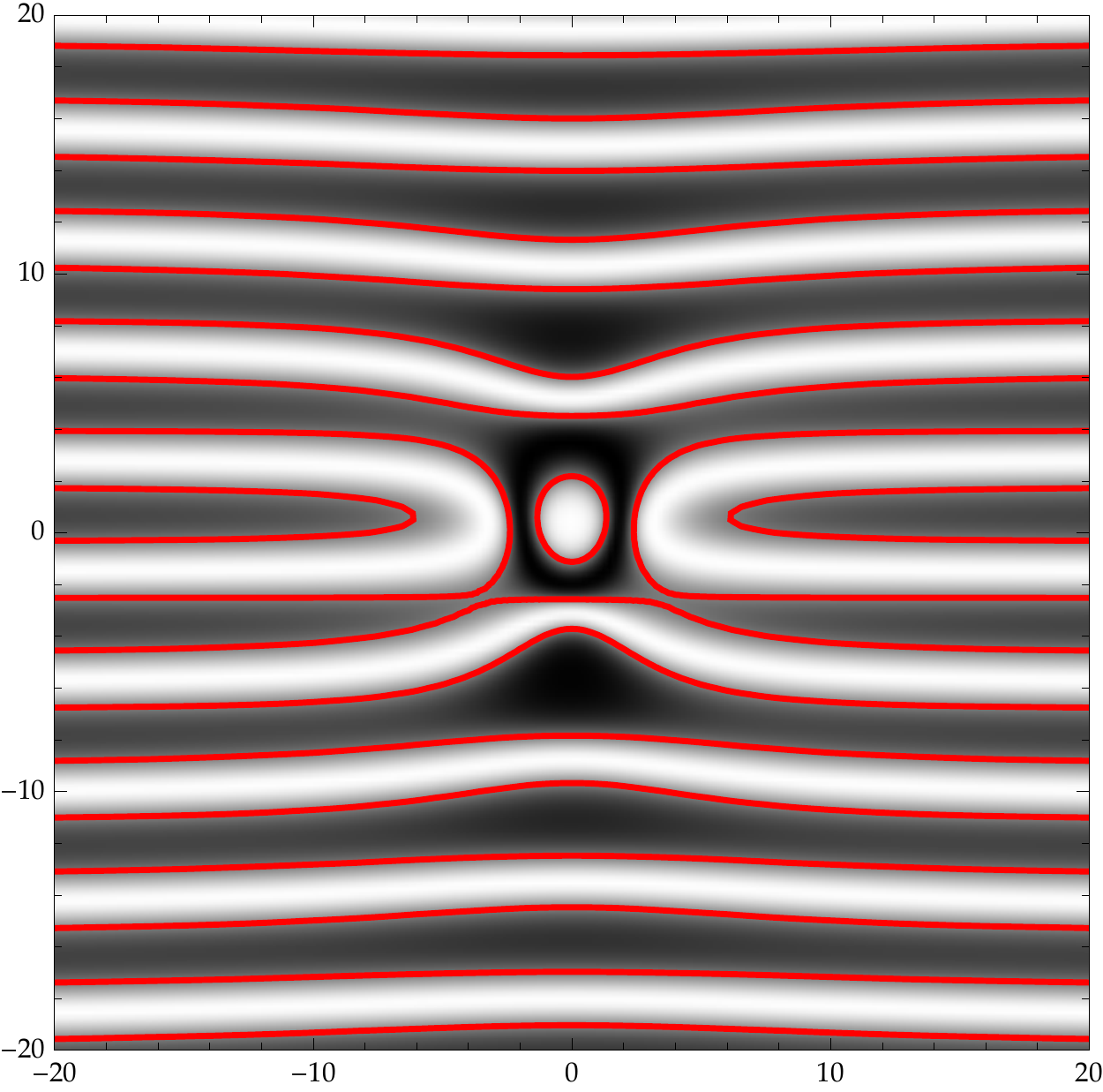}%
\includegraphics[height=.24\linewidth]{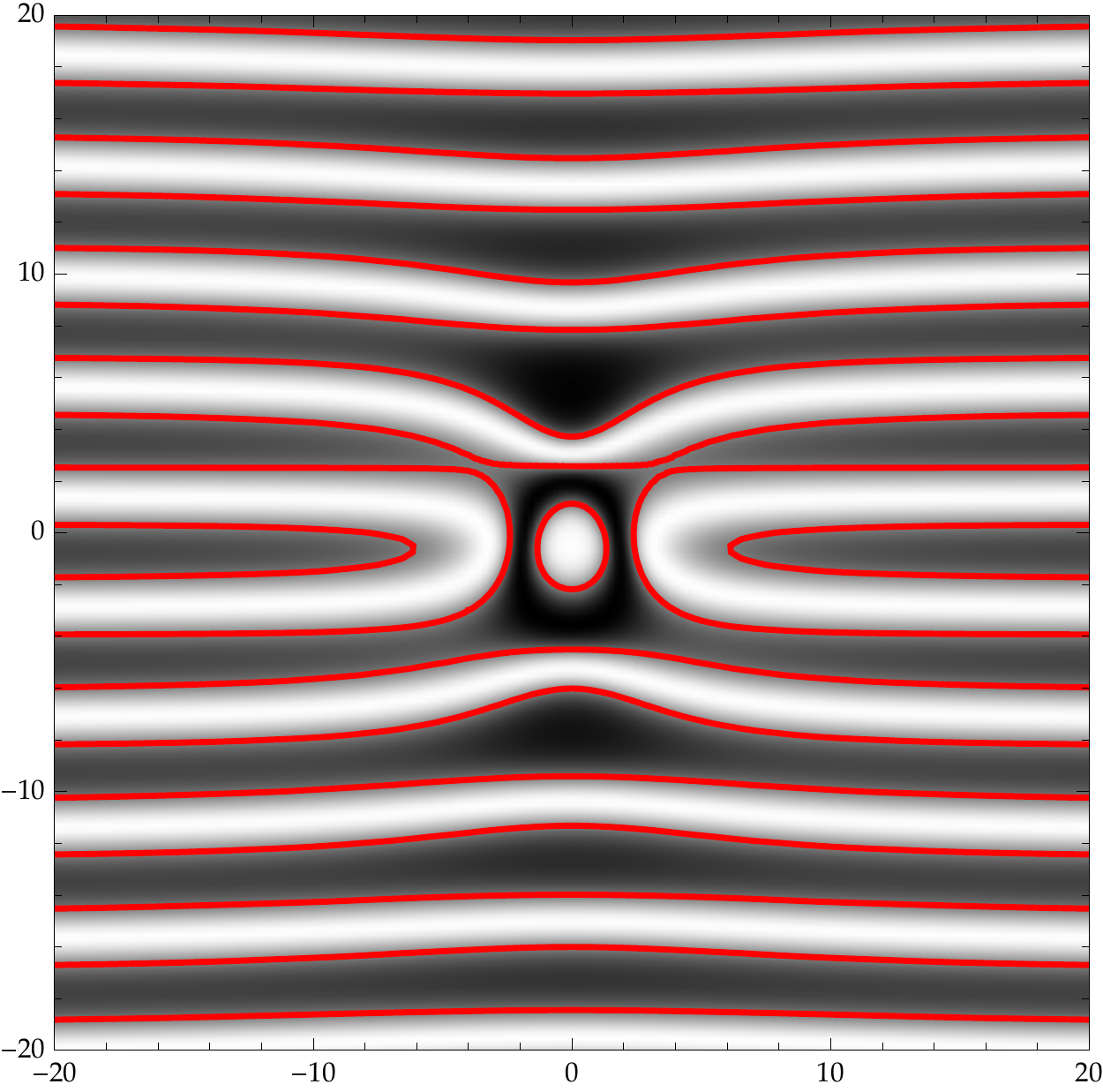}%
\includegraphics[height=.24\linewidth]{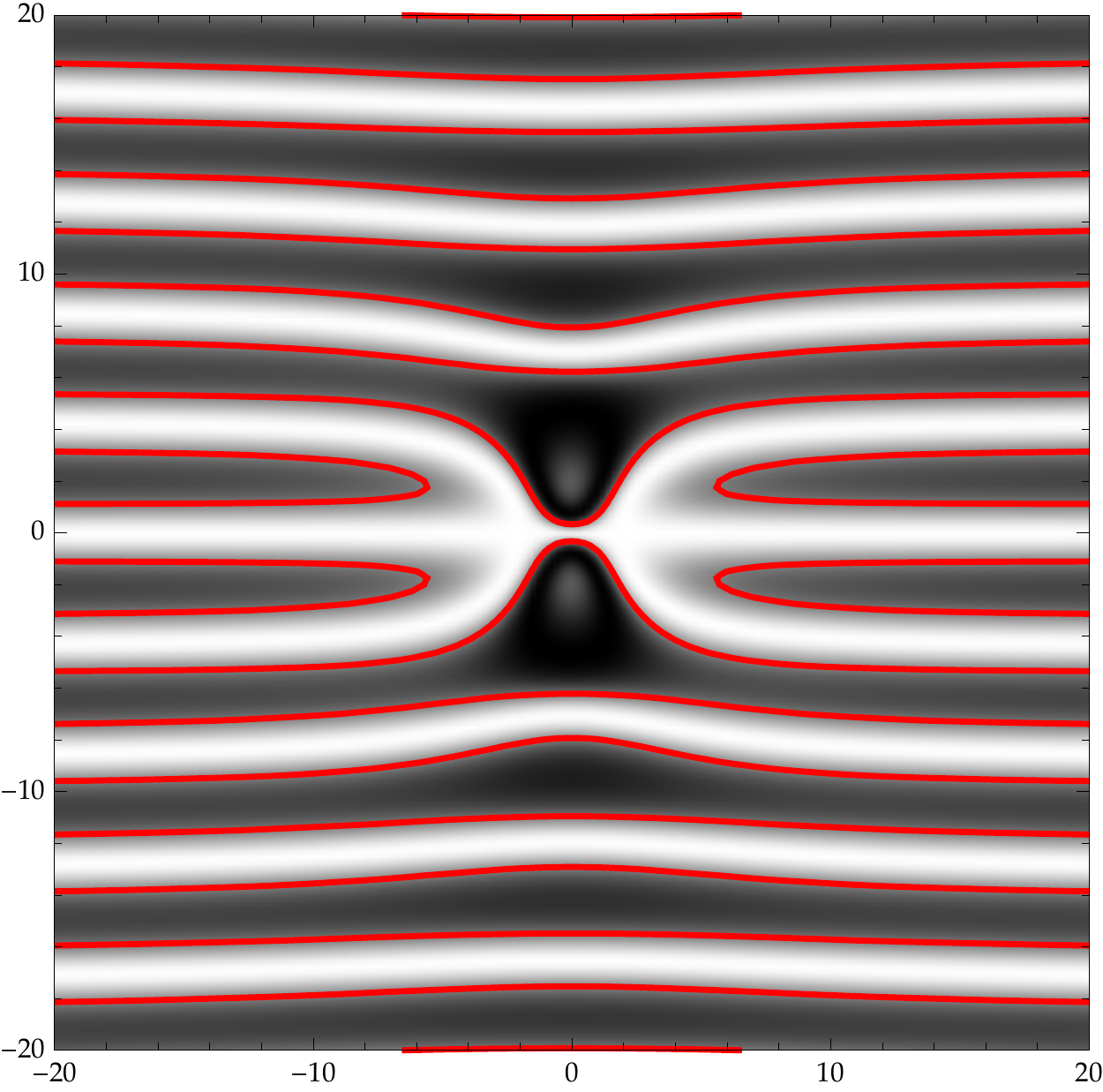}\\
\includegraphics[height=.24\linewidth]{fig/Legend-SMALL.pdf}%
\includegraphics[height=.24\linewidth]{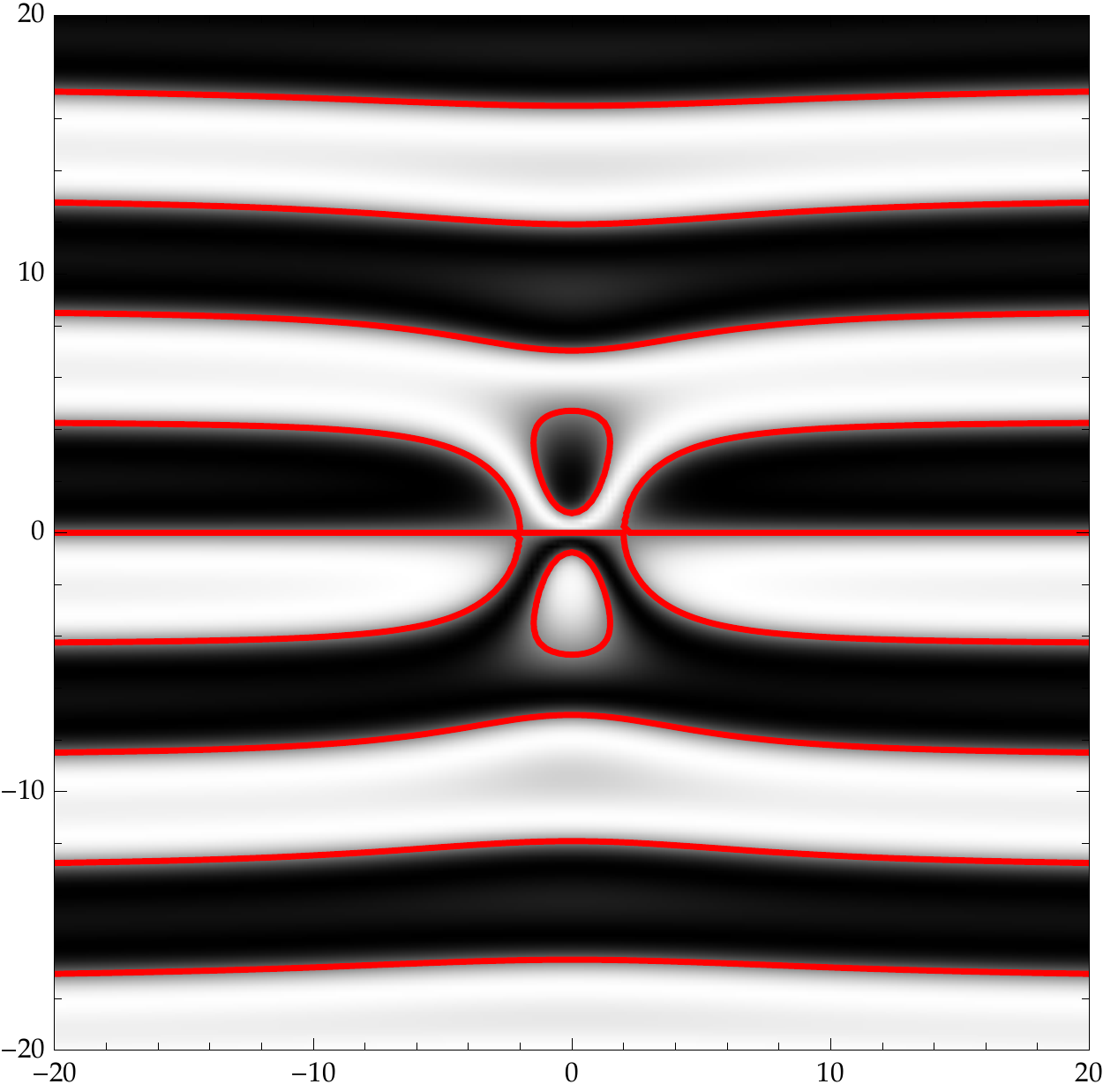}%
\includegraphics[height=.24\linewidth]{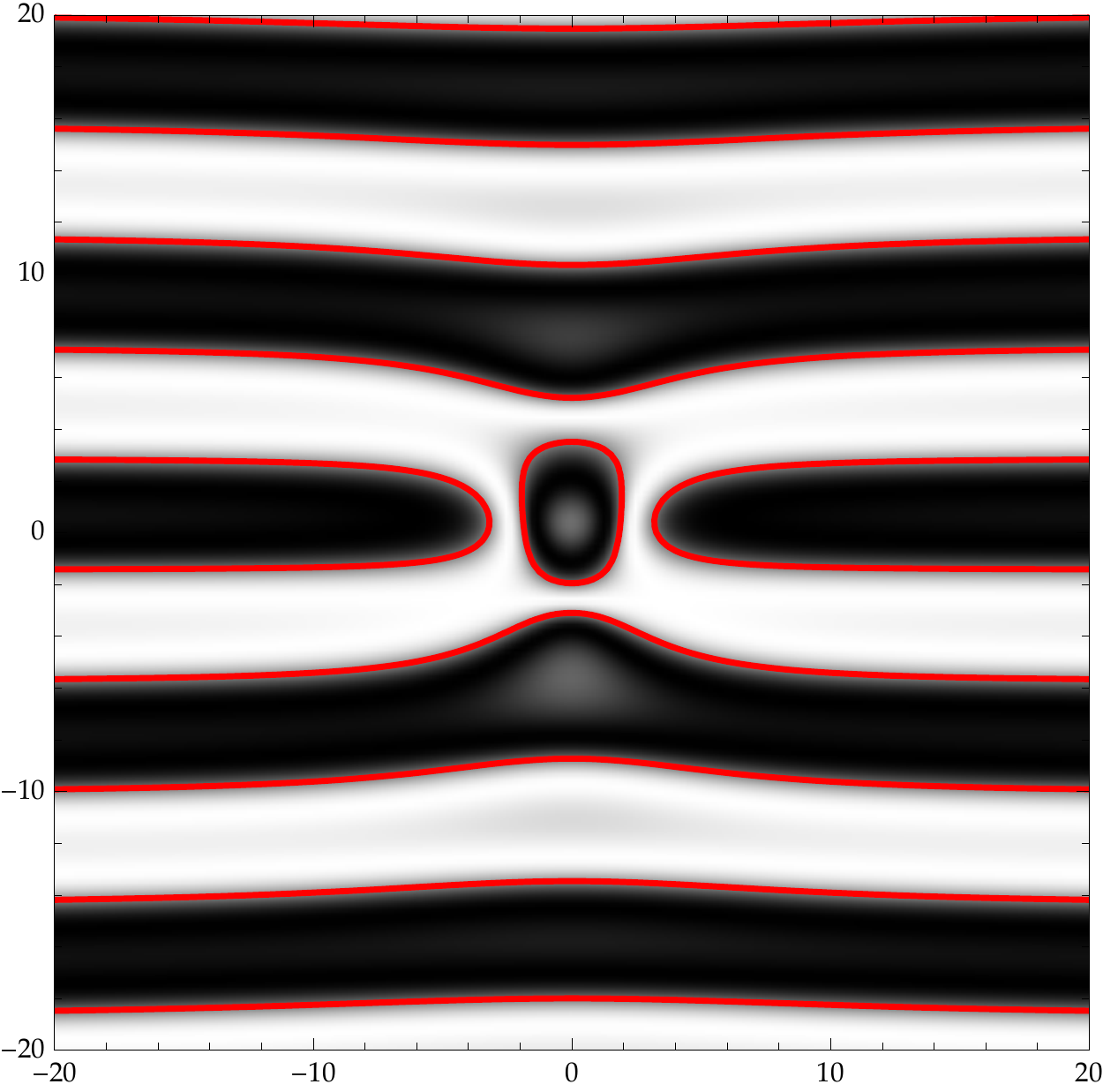}%
\includegraphics[height=.24\linewidth]{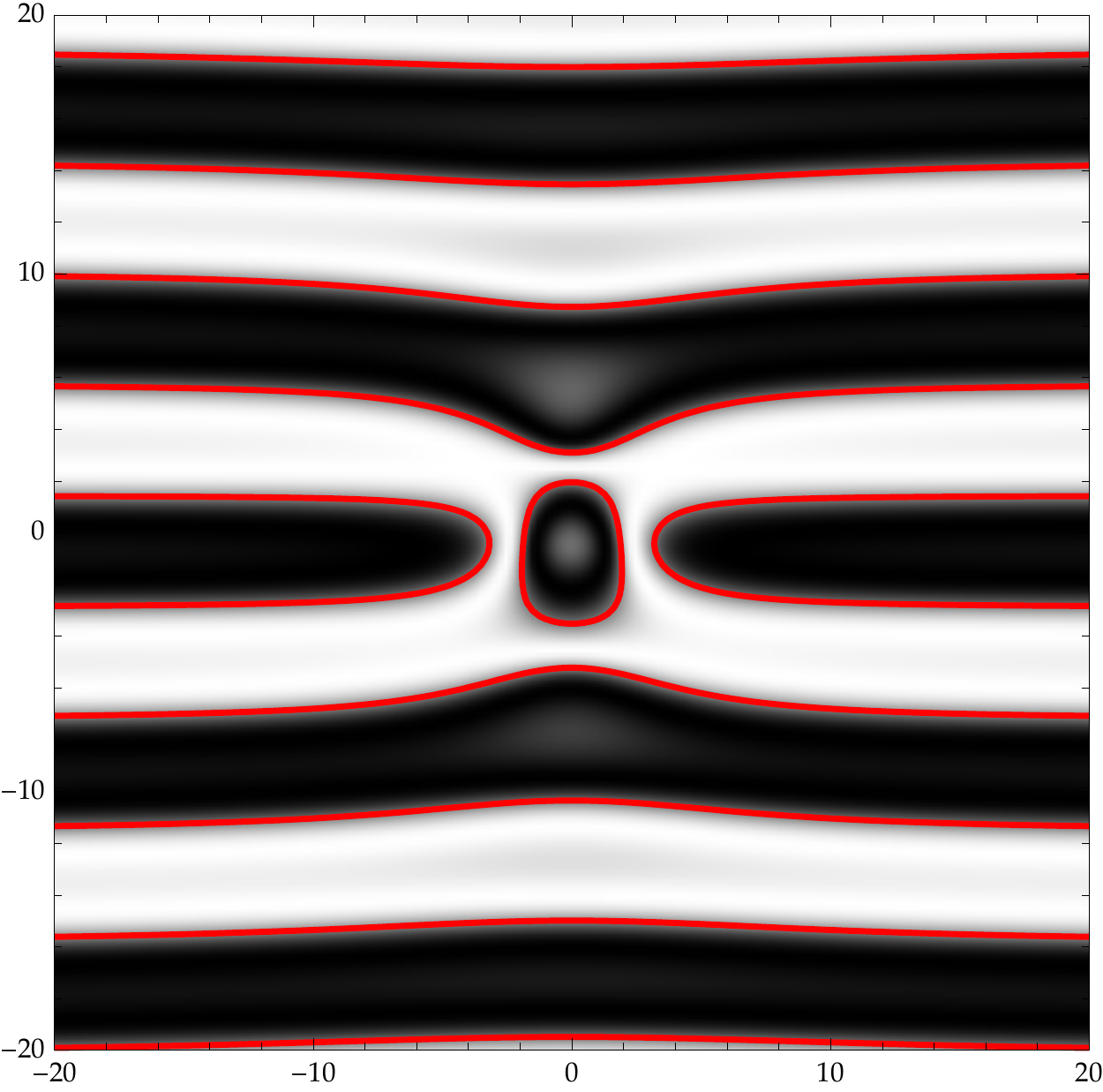}%
\includegraphics[height=.24\linewidth]{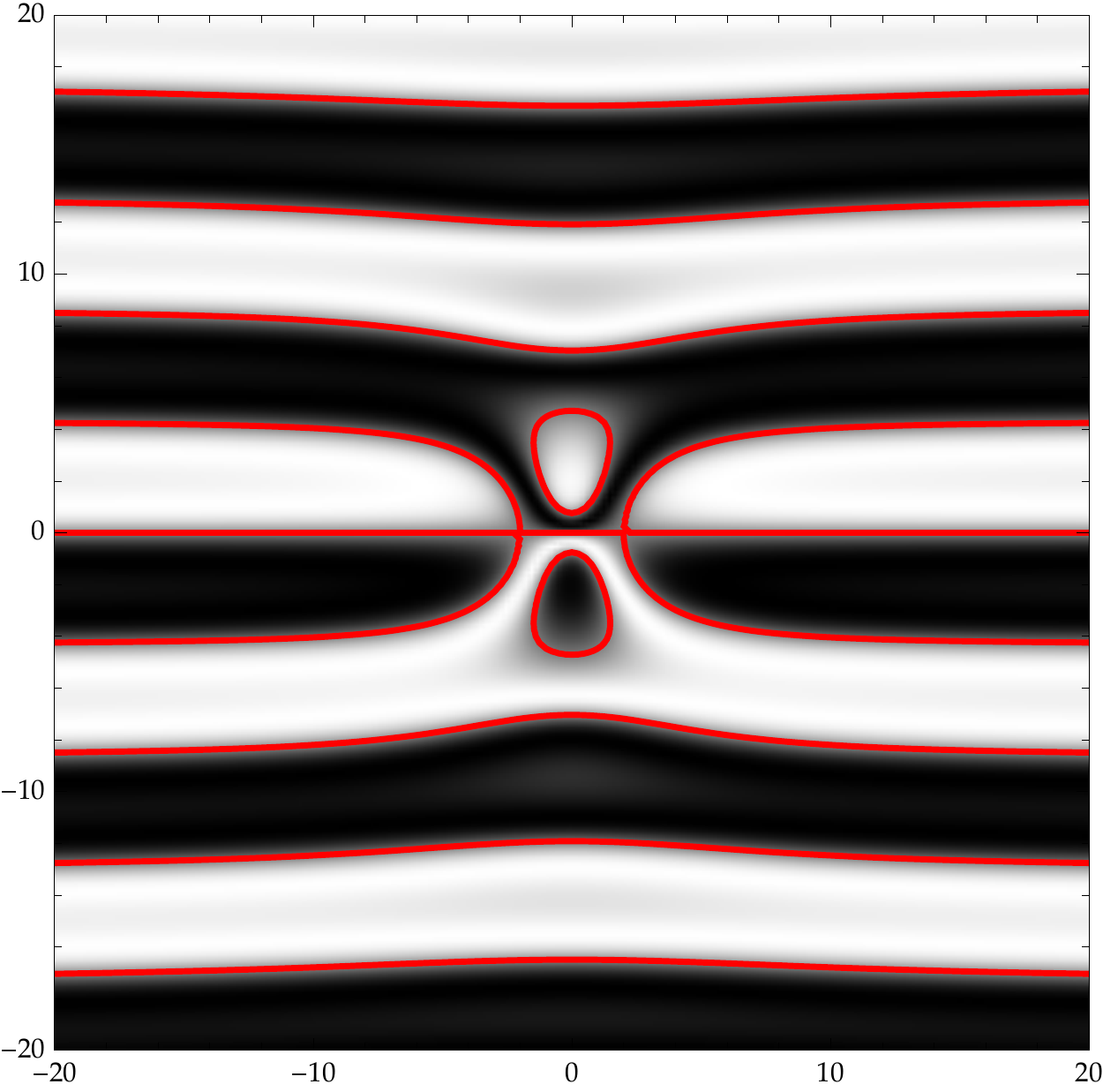}
\end{center}
\caption{As in Figure~\ref{fig:exact-solutions-first} but for $m=\sin^2(\tfrac{1}{3}\pi)$.}
\end{figure}
\begin{figure}[h!]
\begin{center}
\includegraphics[height=.24\linewidth]{fig/Legend-SMALL.pdf}%
\includegraphics[height=.24\linewidth]{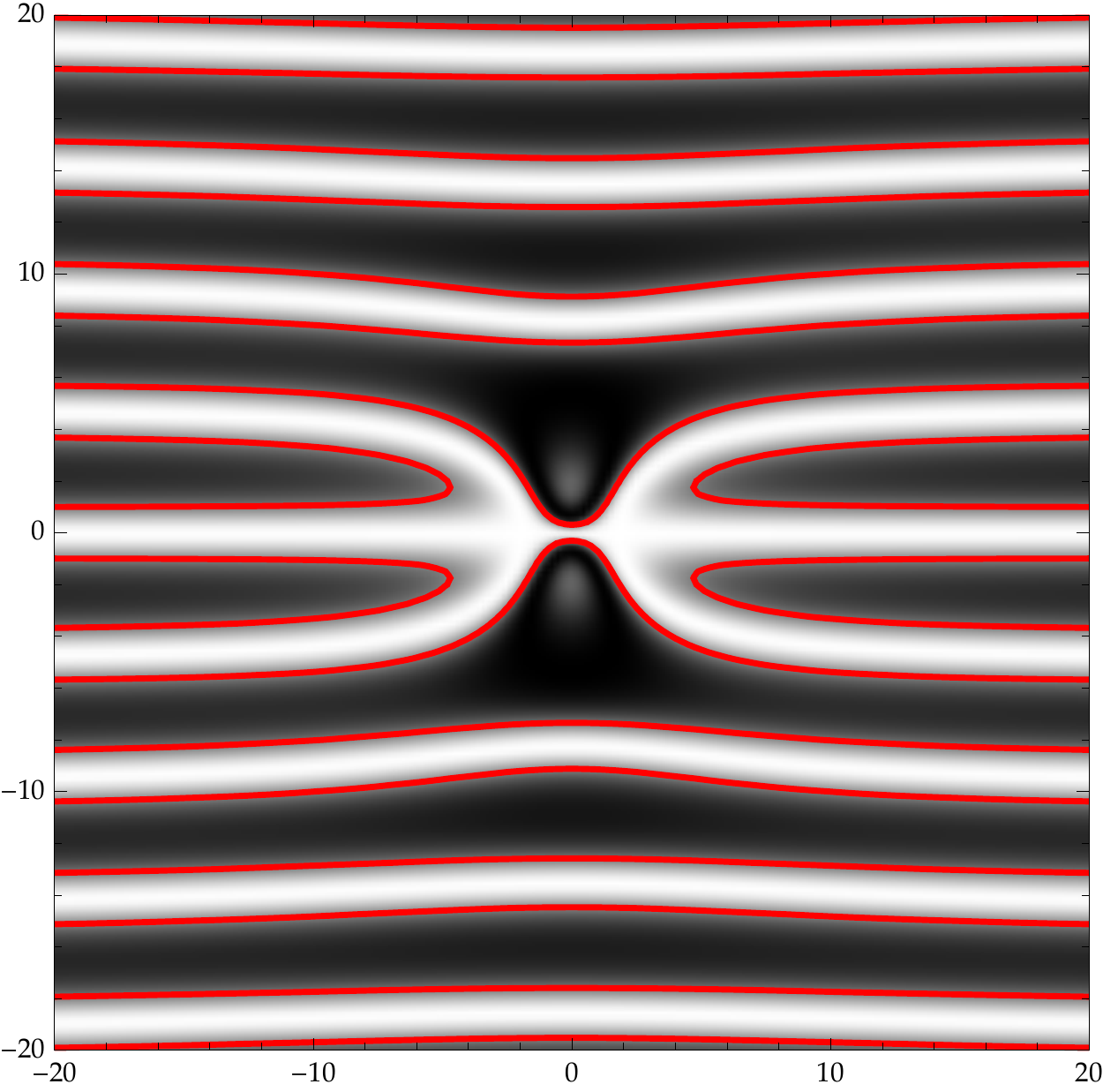}%
\includegraphics[height=.24\linewidth]{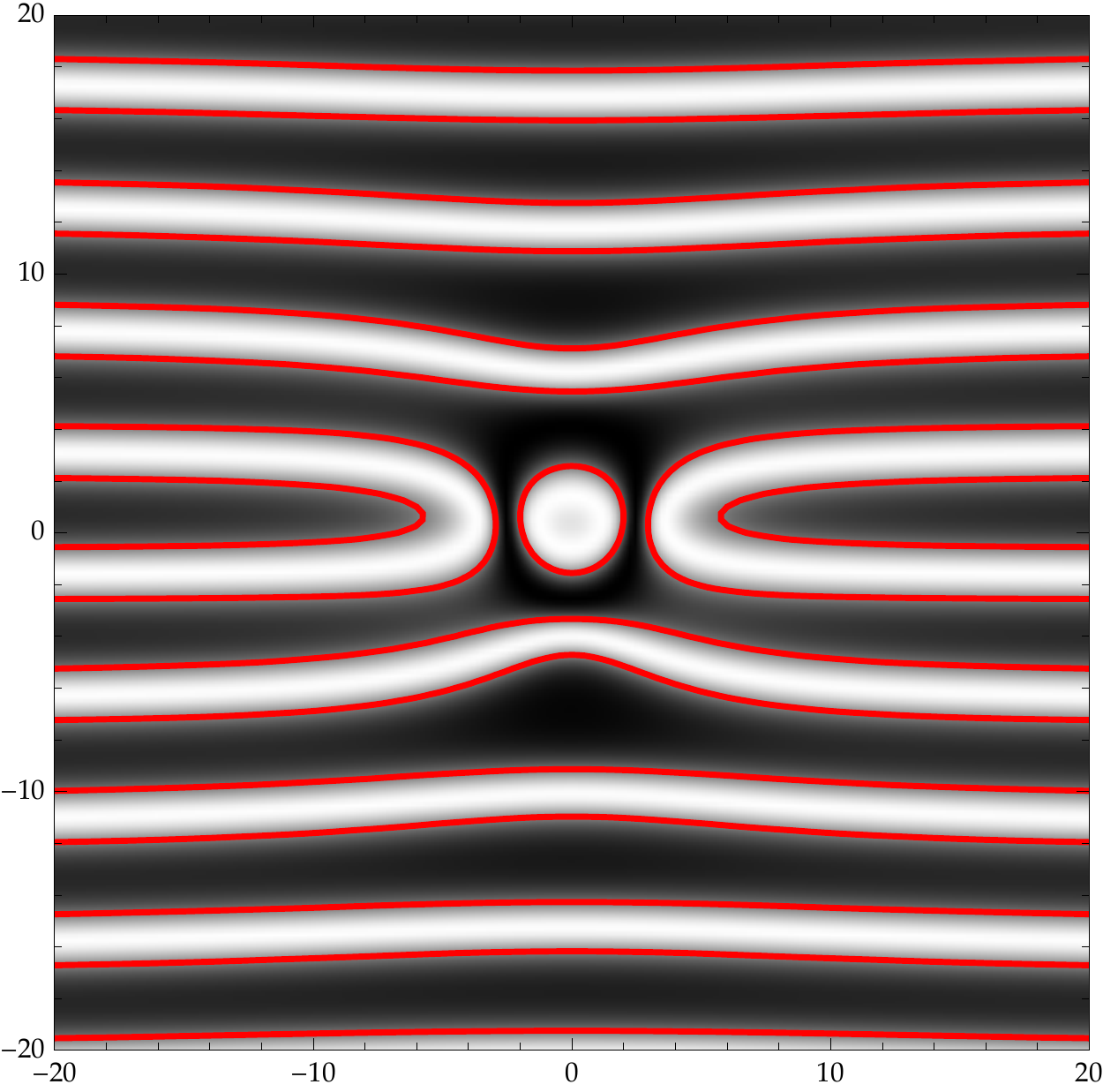}%
\includegraphics[height=.24\linewidth]{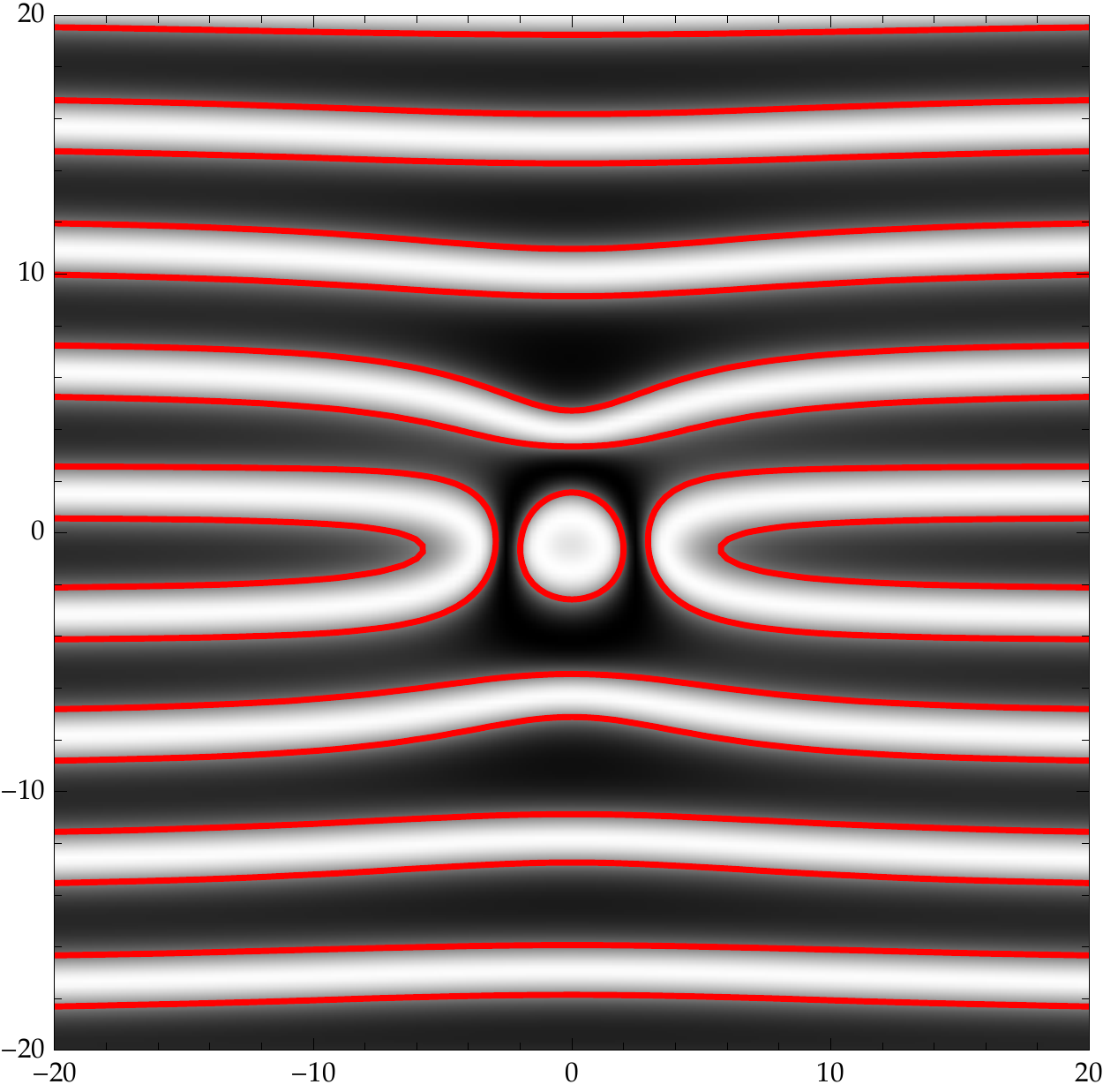}%
\includegraphics[height=.24\linewidth]{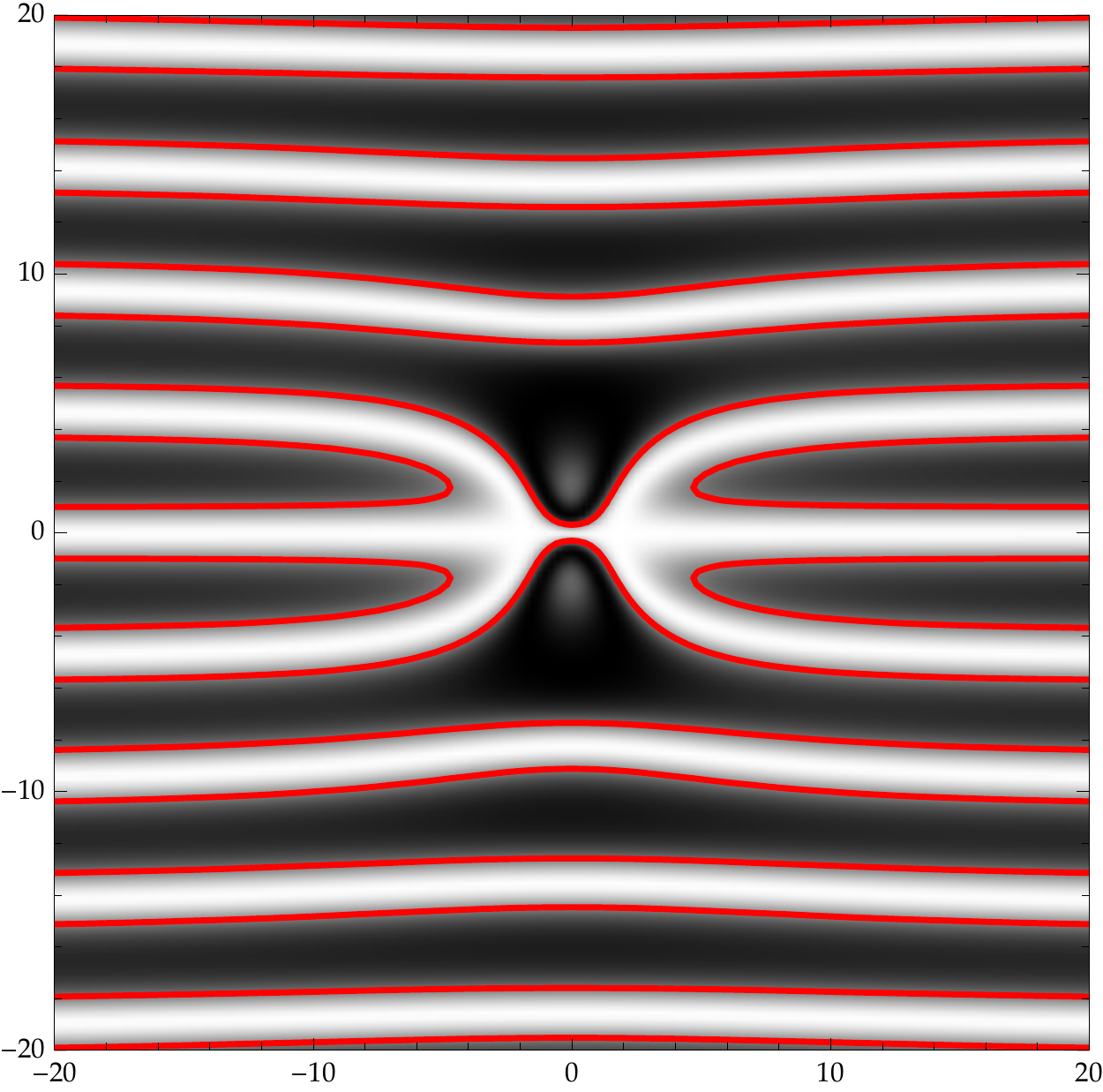}\\
\includegraphics[height=.24\linewidth]{fig/Legend-SMALL.pdf}%
\includegraphics[height=.24\linewidth]{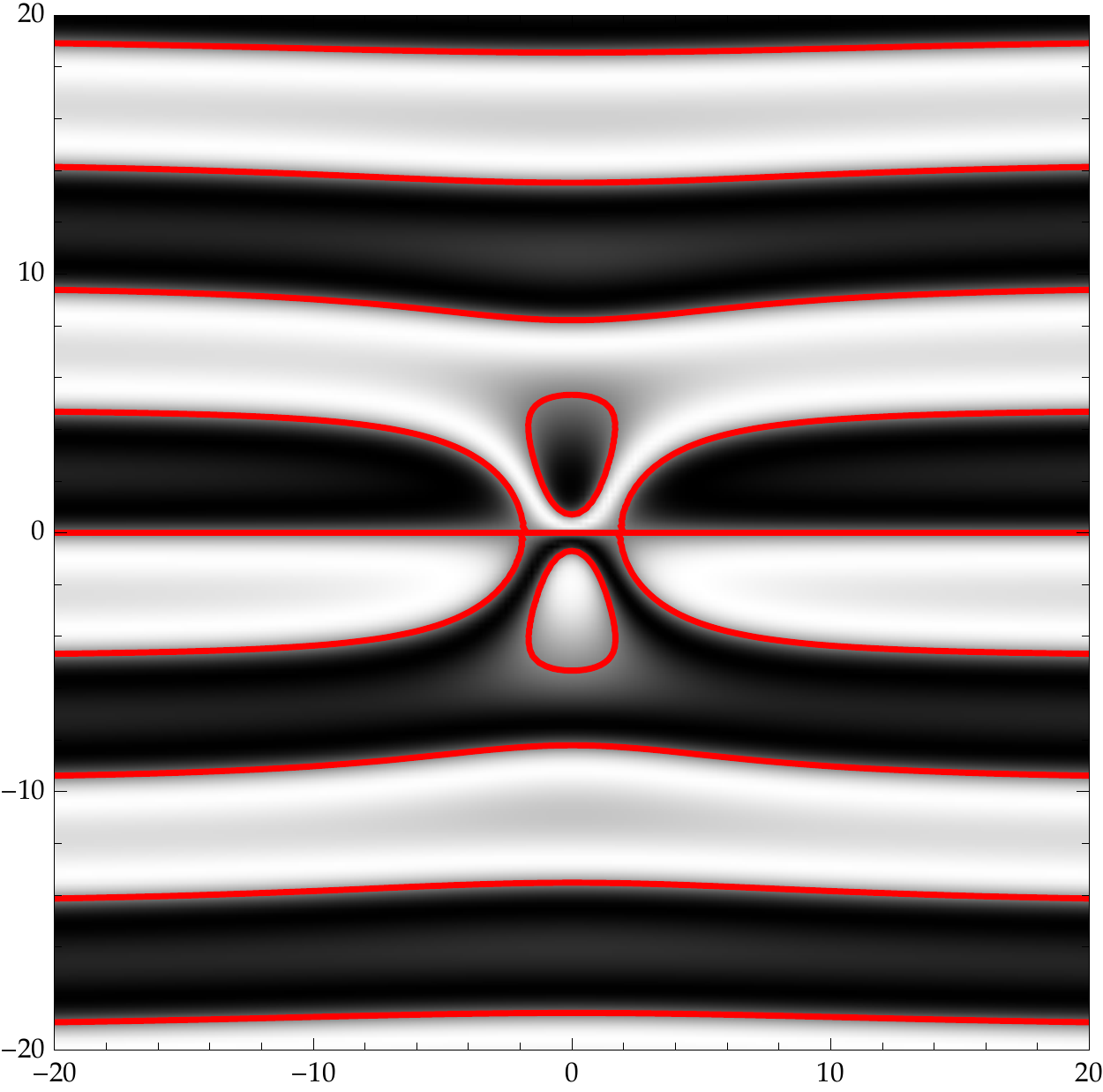}%
\includegraphics[height=.24\linewidth]{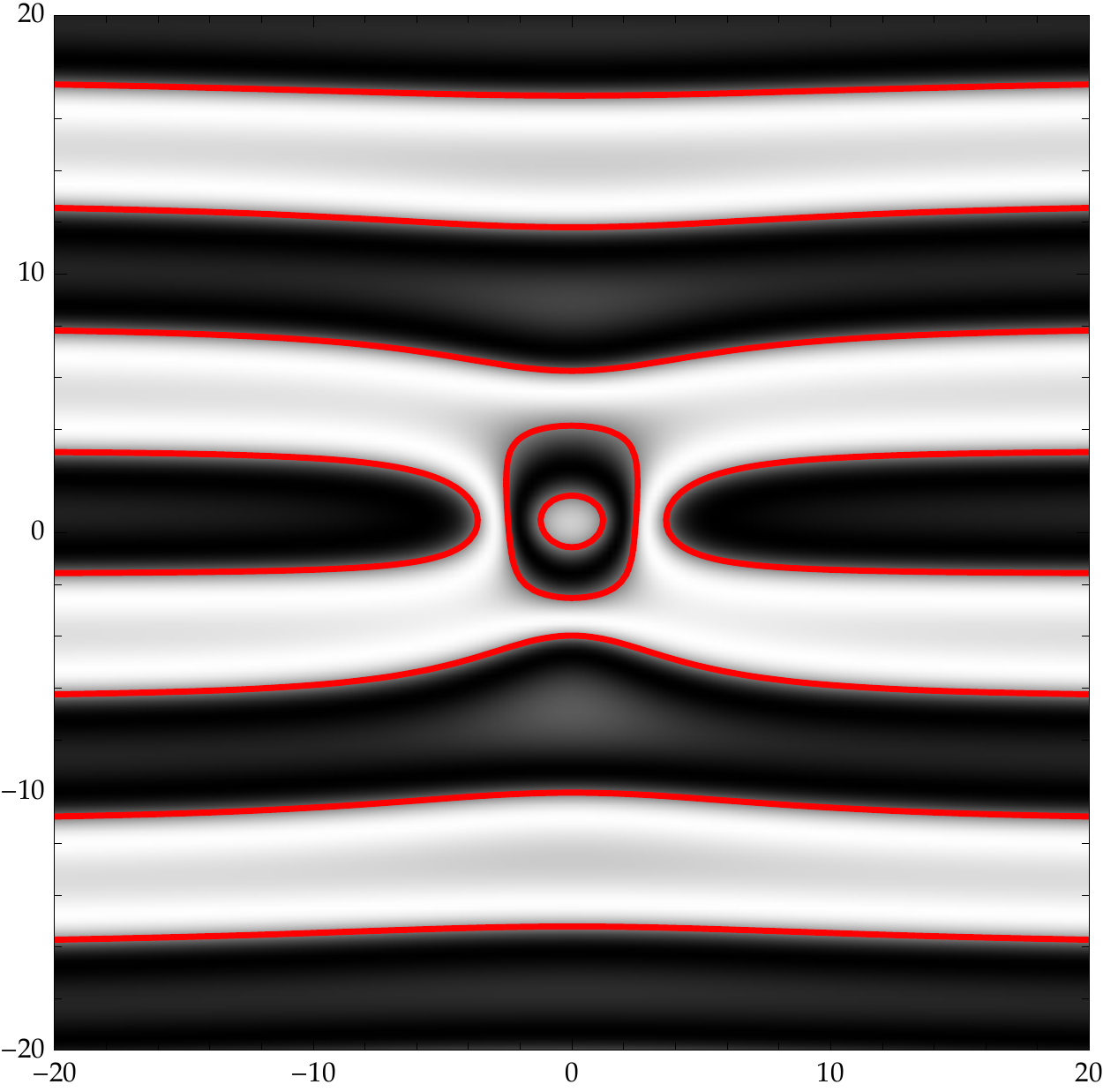}%
\includegraphics[height=.24\linewidth]{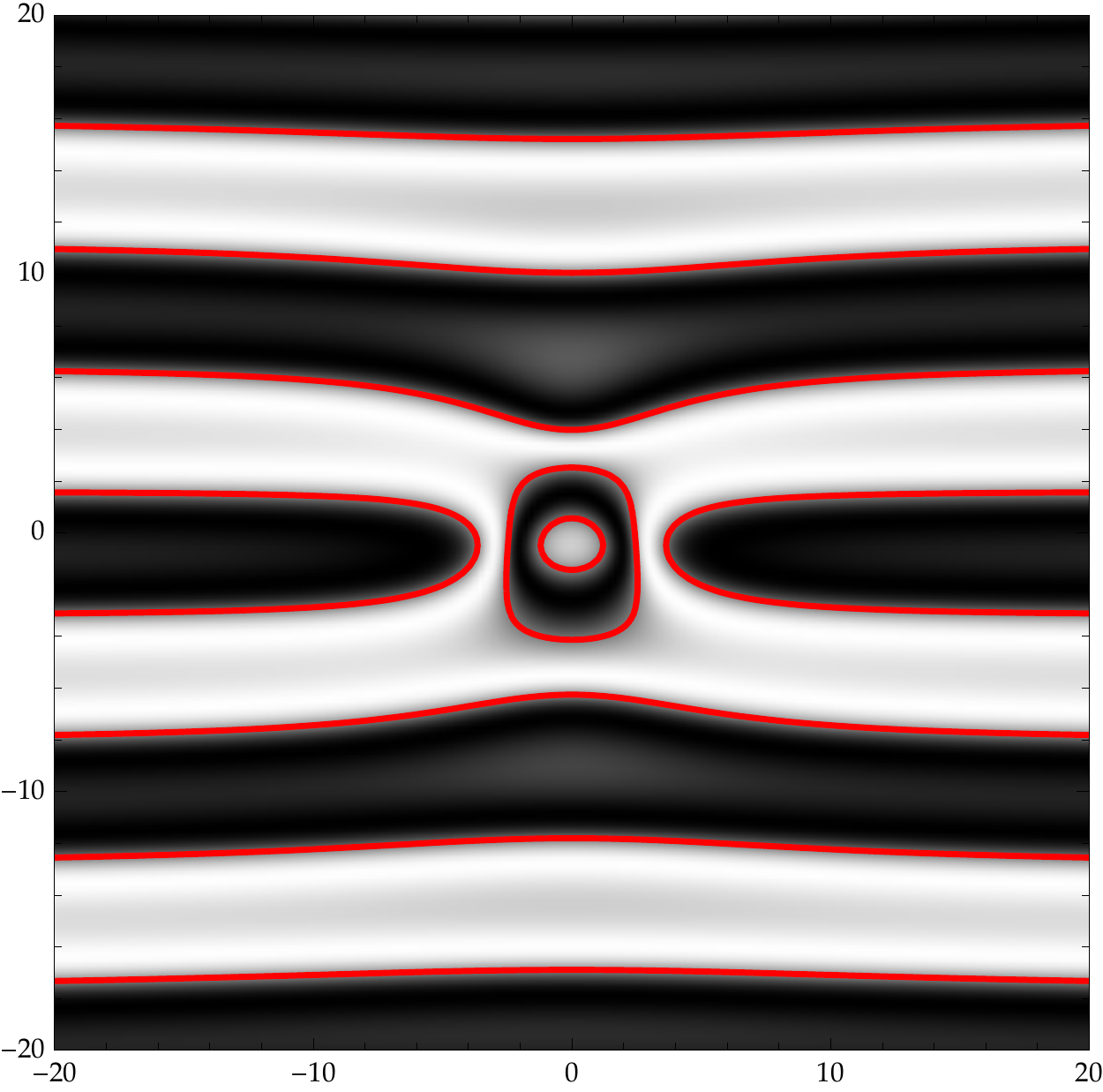}%
\includegraphics[height=.24\linewidth]{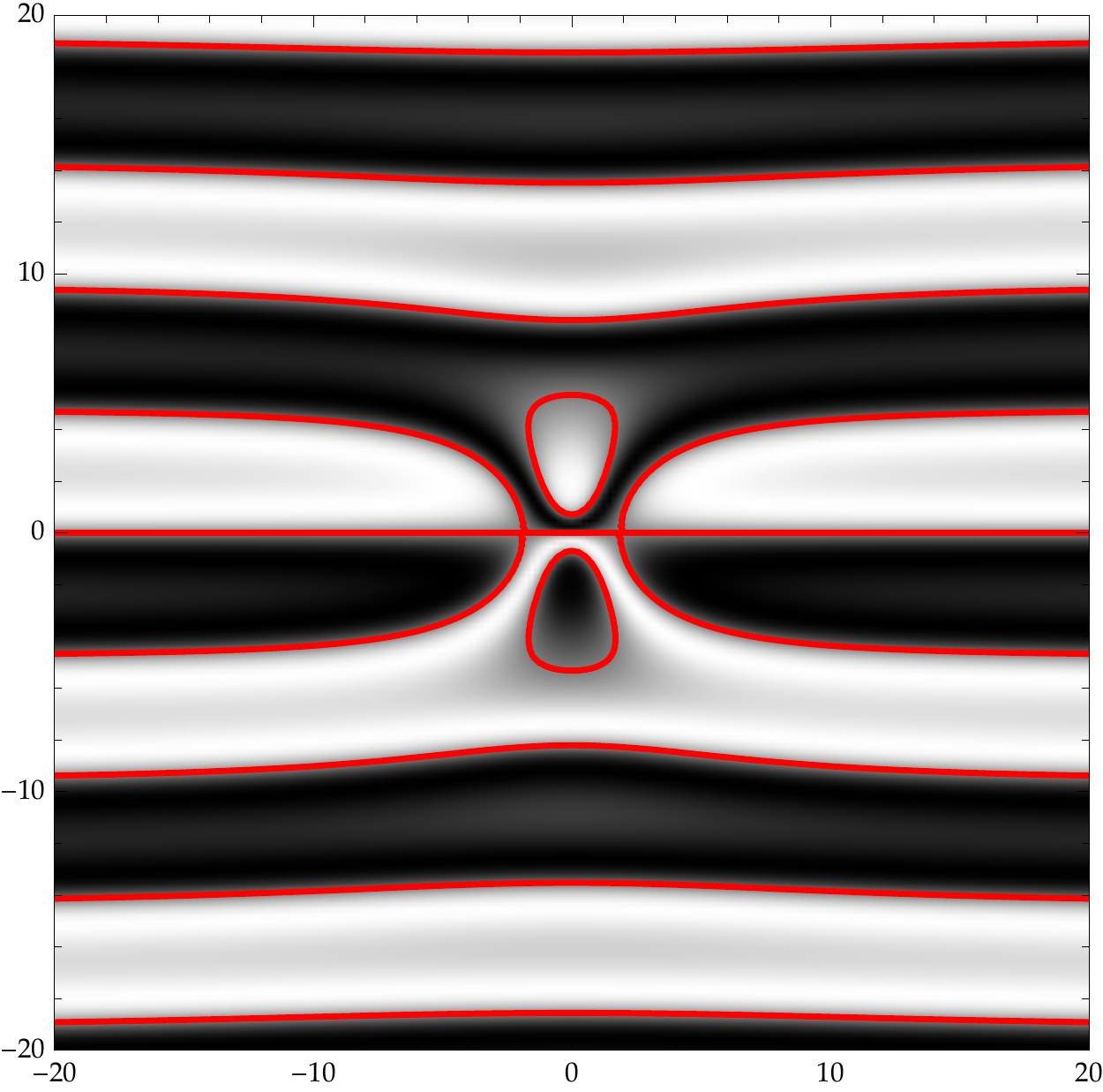}
\end{center}
\caption{As in Figure~\ref{fig:exact-solutions-first} but for $m=\sin^2(\tfrac{3}{8}\pi)$.}
\end{figure}
\begin{figure}[h!]
\begin{center}
\includegraphics[height=.24\linewidth]{fig/Legend-SMALL.pdf}%
\includegraphics[height=.24\linewidth]{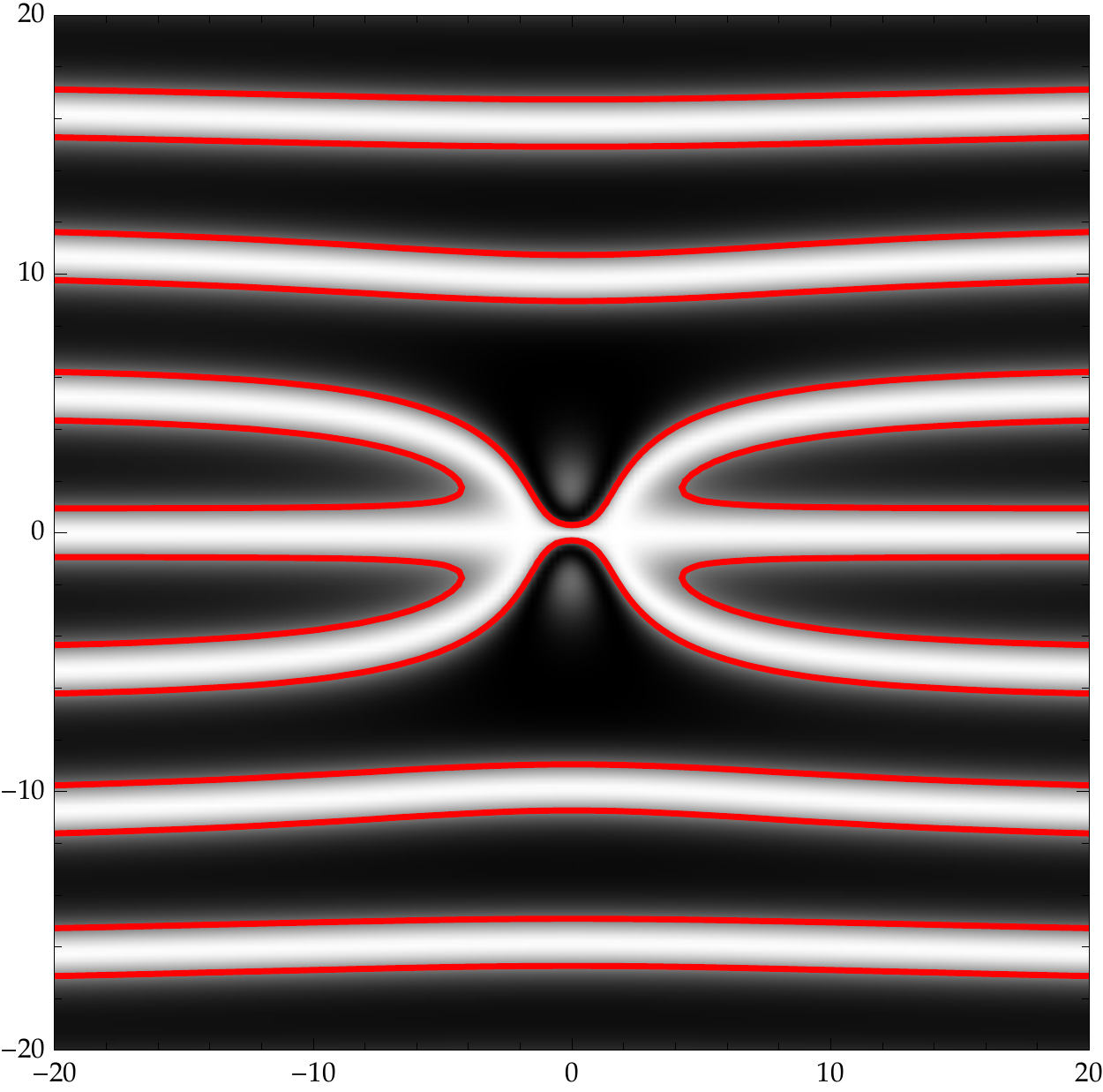}%
\includegraphics[height=.24\linewidth]{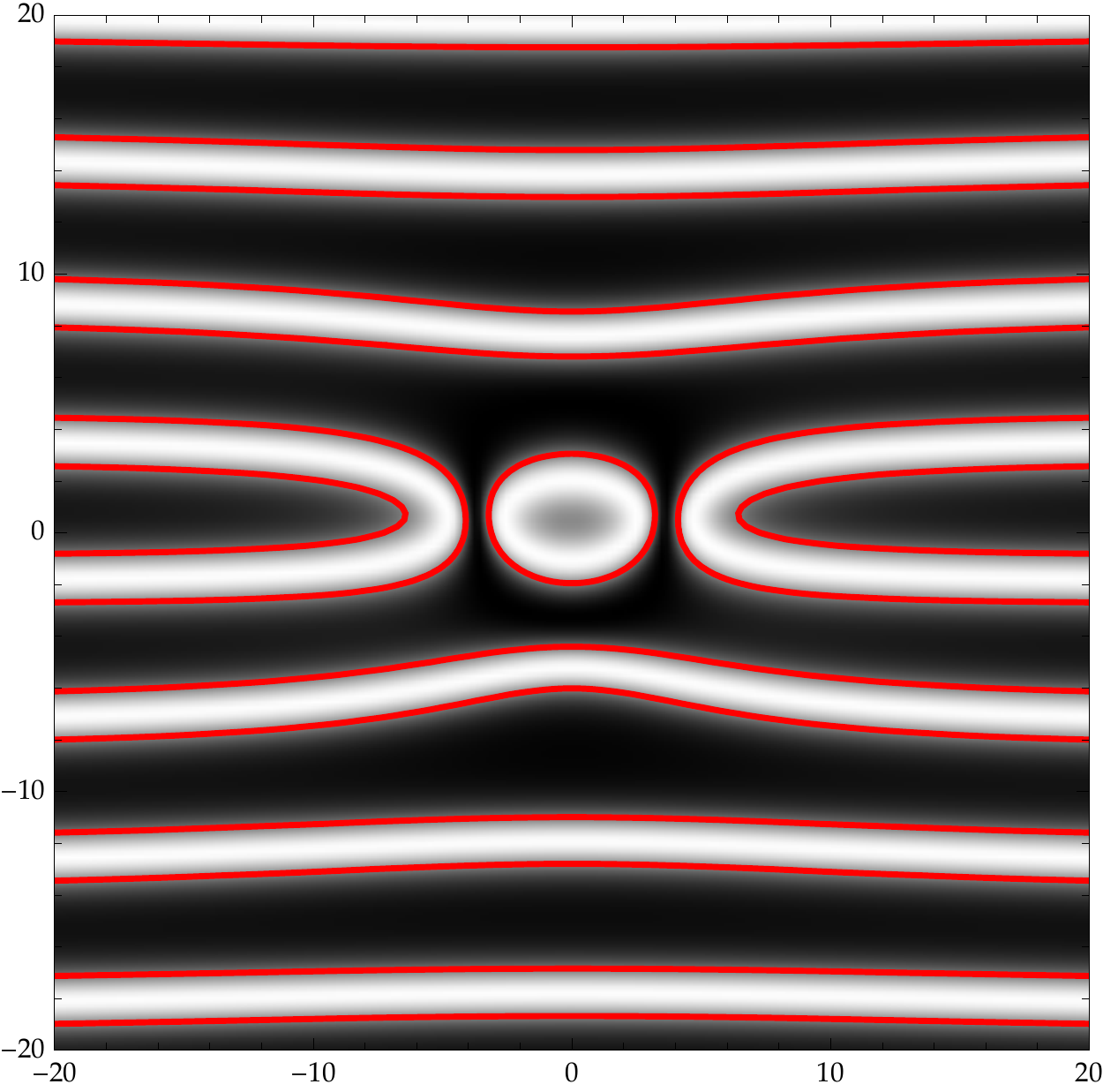}%
\includegraphics[height=.24\linewidth]{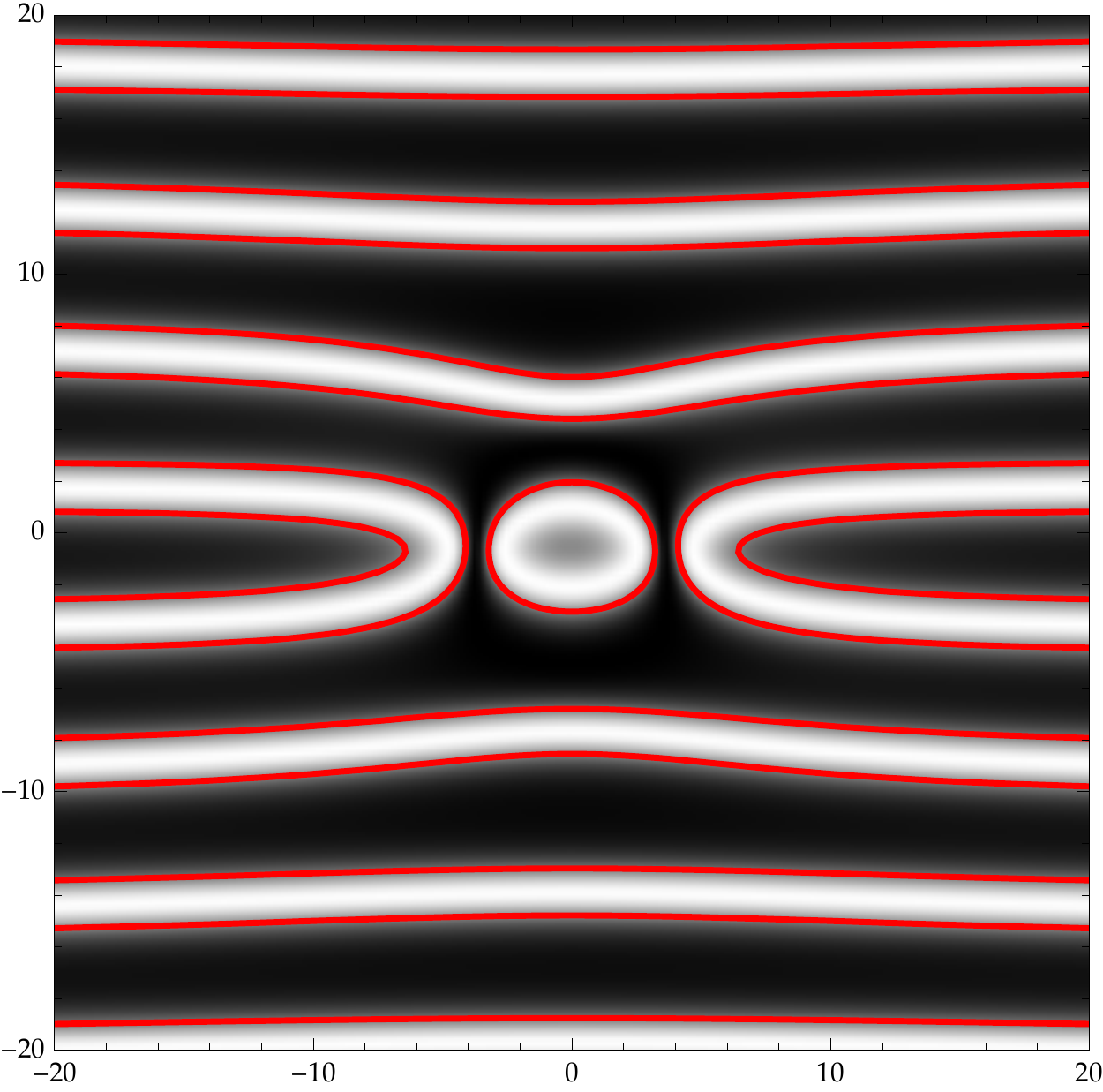}%
\includegraphics[height=.24\linewidth]{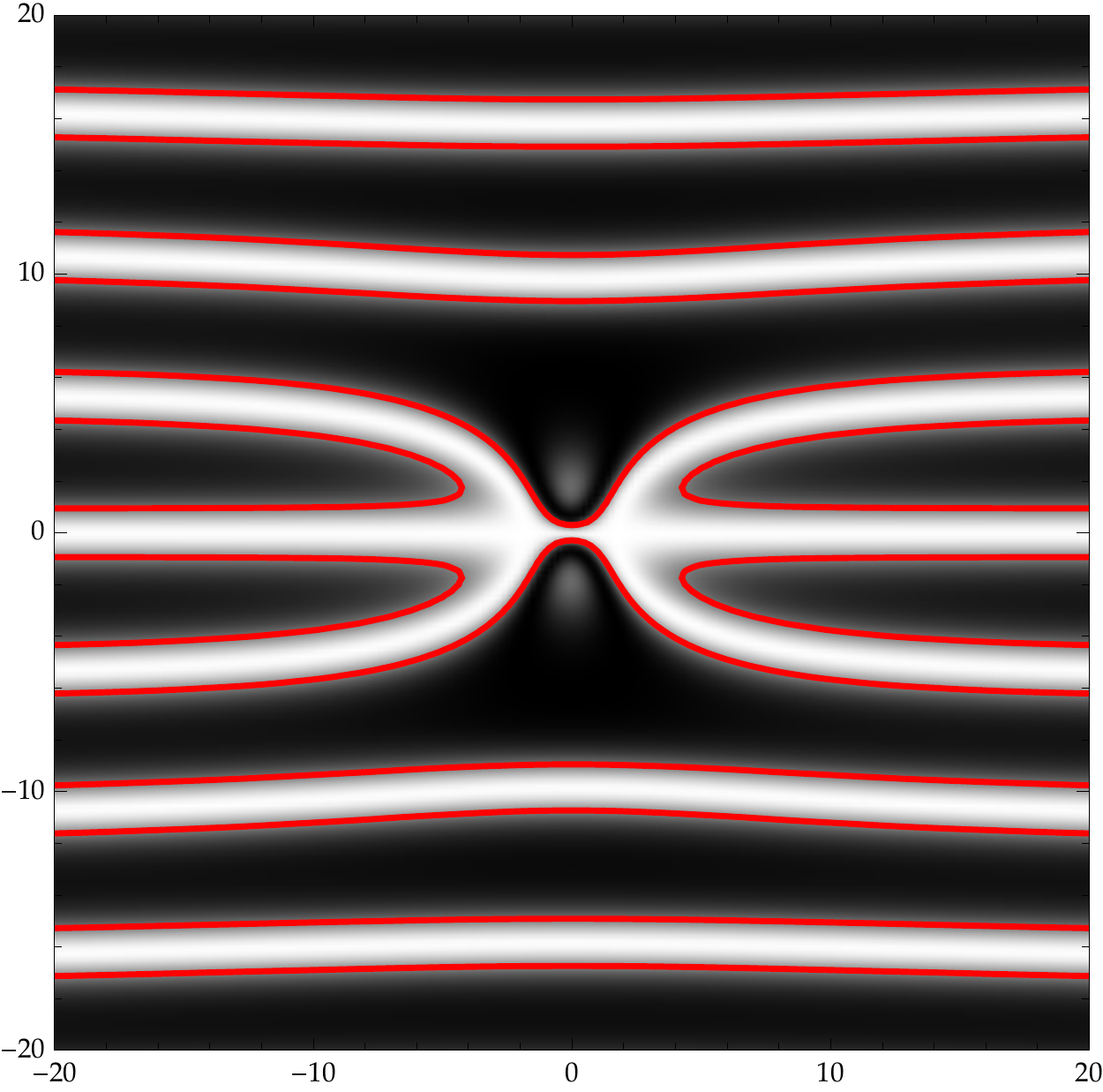}\\
\includegraphics[height=.24\linewidth]{fig/Legend-SMALL.pdf}%
\includegraphics[height=.24\linewidth]{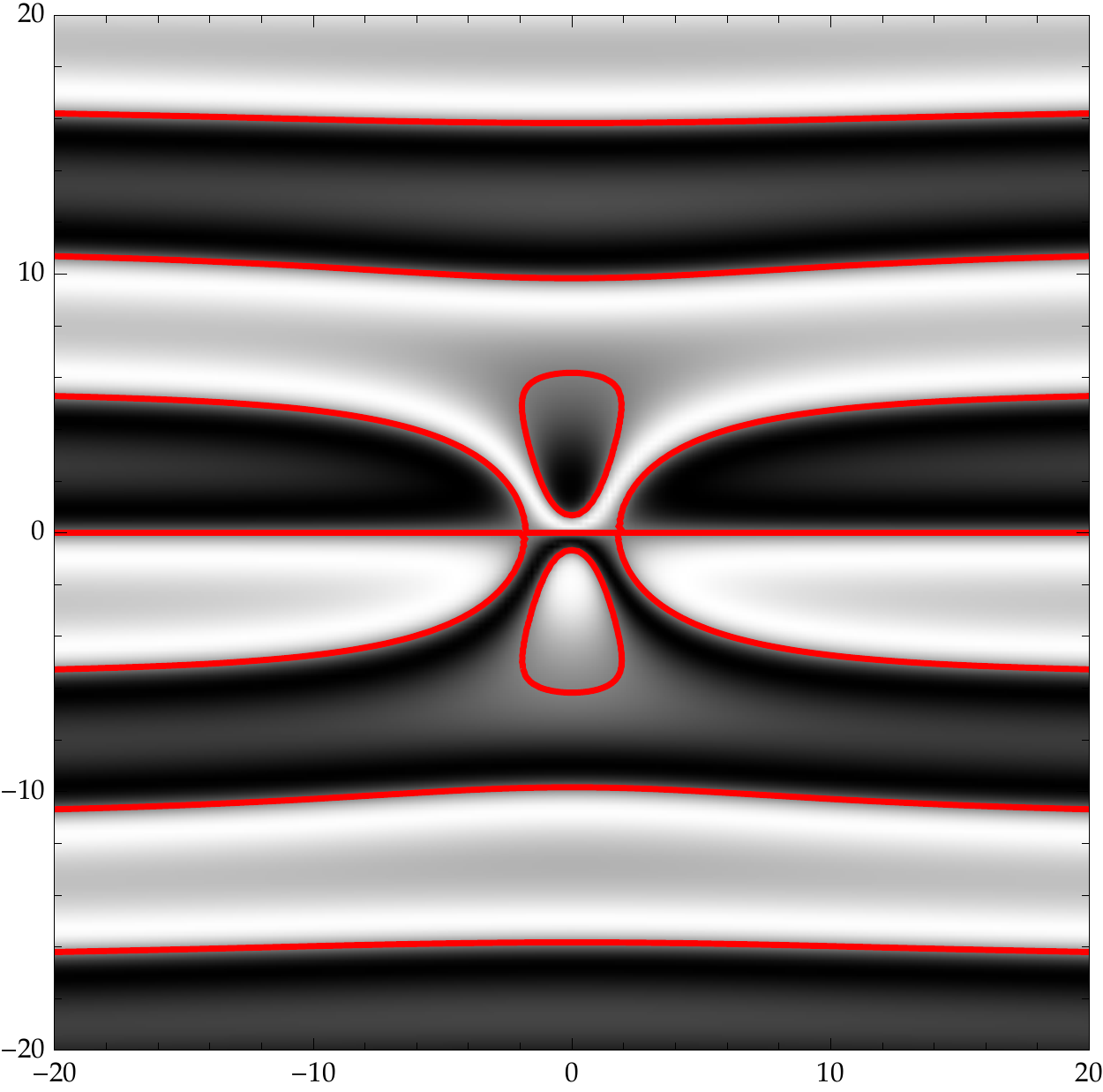}%
\includegraphics[height=.24\linewidth]{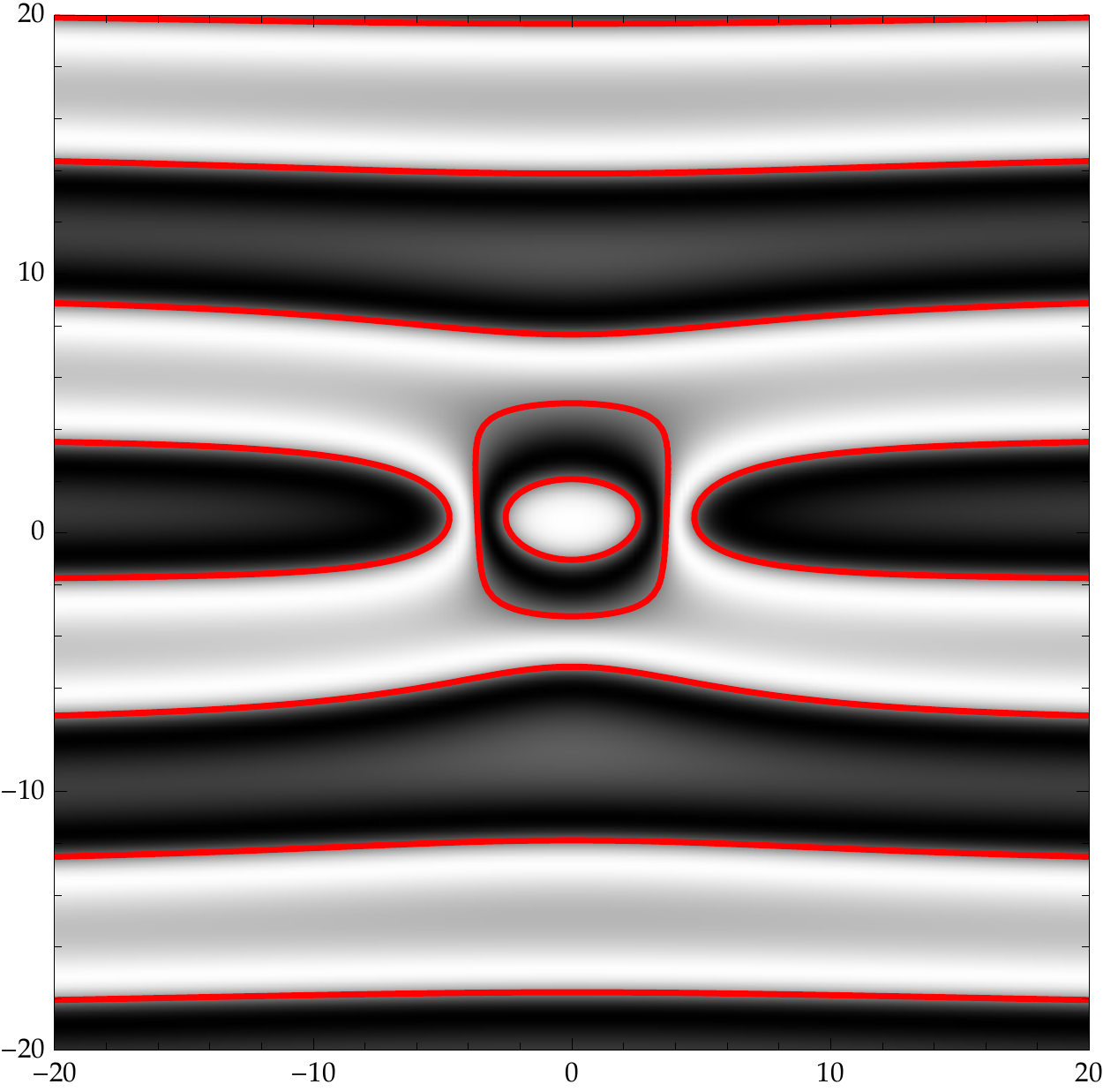}%
\includegraphics[height=.24\linewidth]{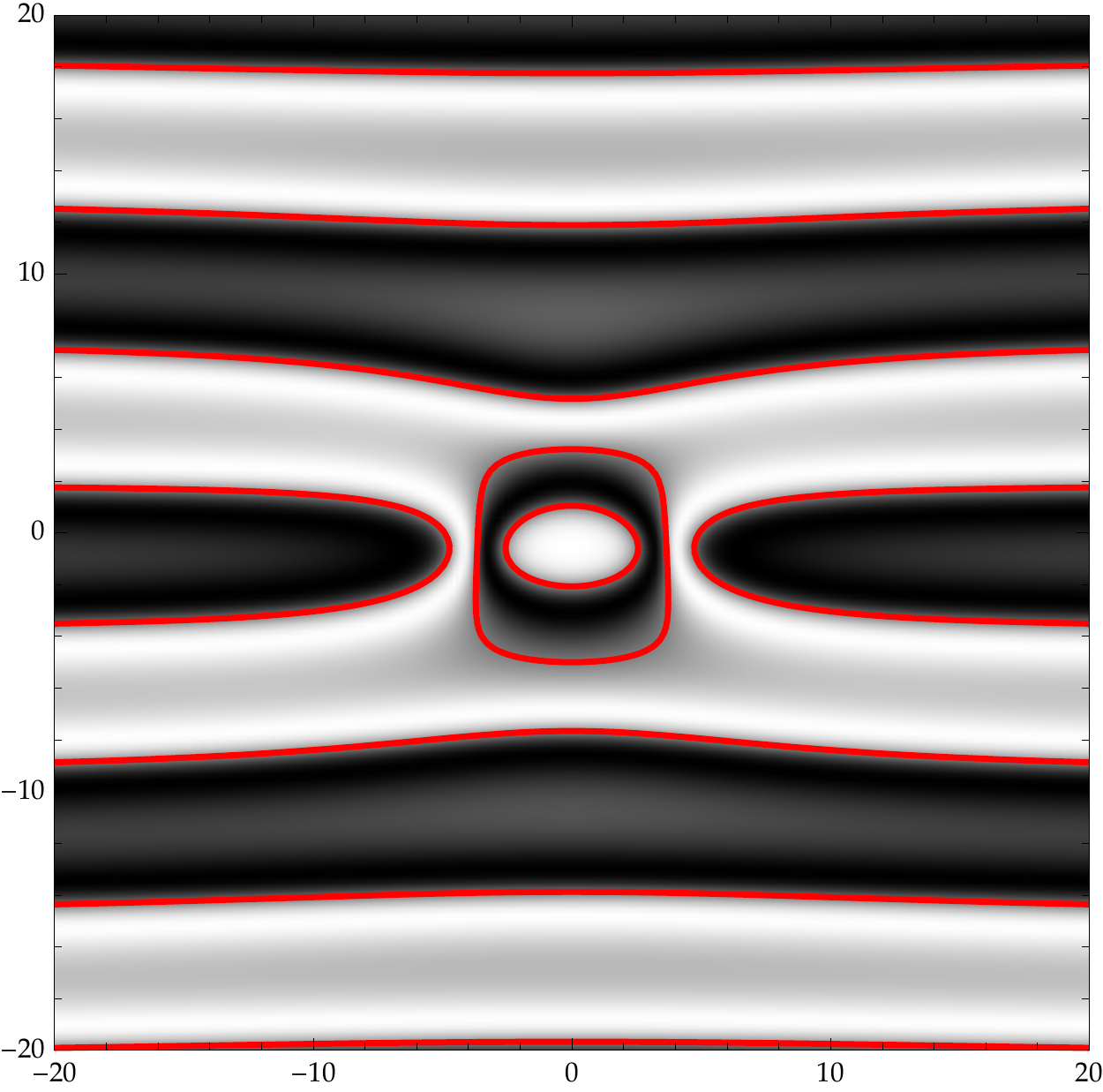}%
\includegraphics[height=.24\linewidth]{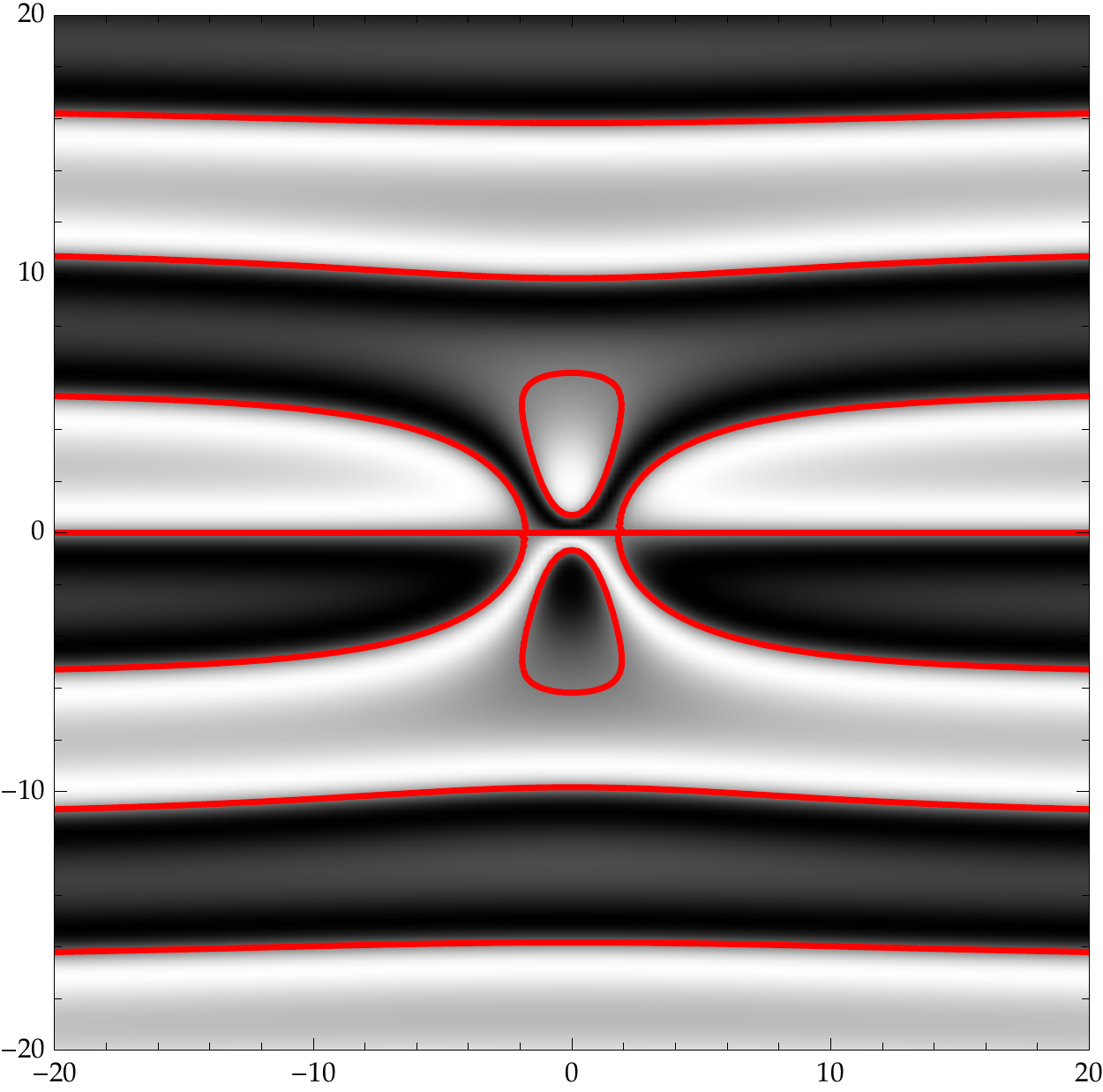}
\end{center}
\caption{As in Figure~\ref{fig:exact-solutions-first} but for $m=\sin^2(\tfrac{5}{12}\pi)$.}
\end{figure}
\begin{figure}[h!]
\begin{center}
\includegraphics[height=.24\linewidth]{fig/Legend-SMALL.pdf}%
\includegraphics[height=.24\linewidth]{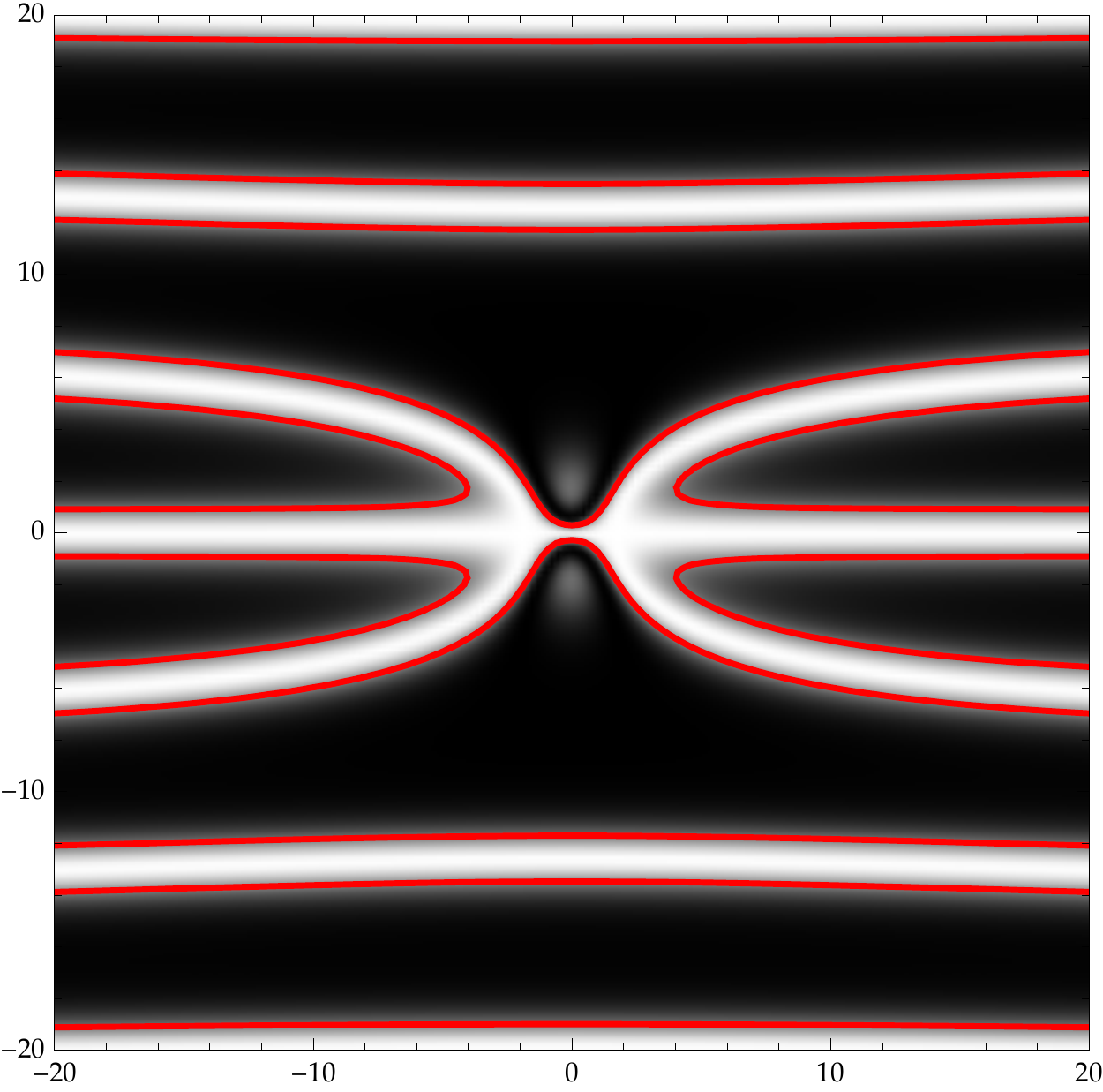}%
\includegraphics[height=.24\linewidth]{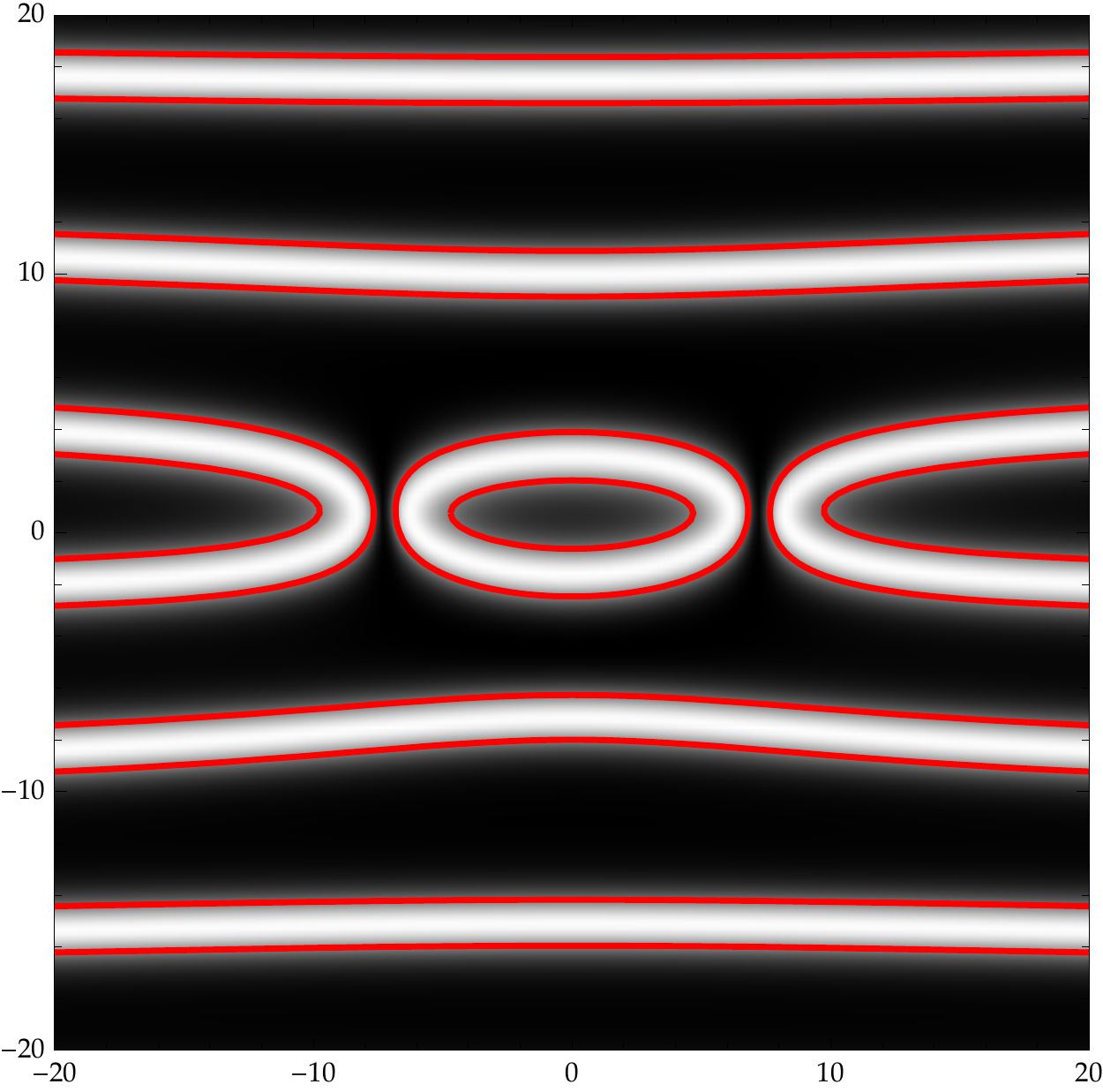}%
\includegraphics[height=.24\linewidth]{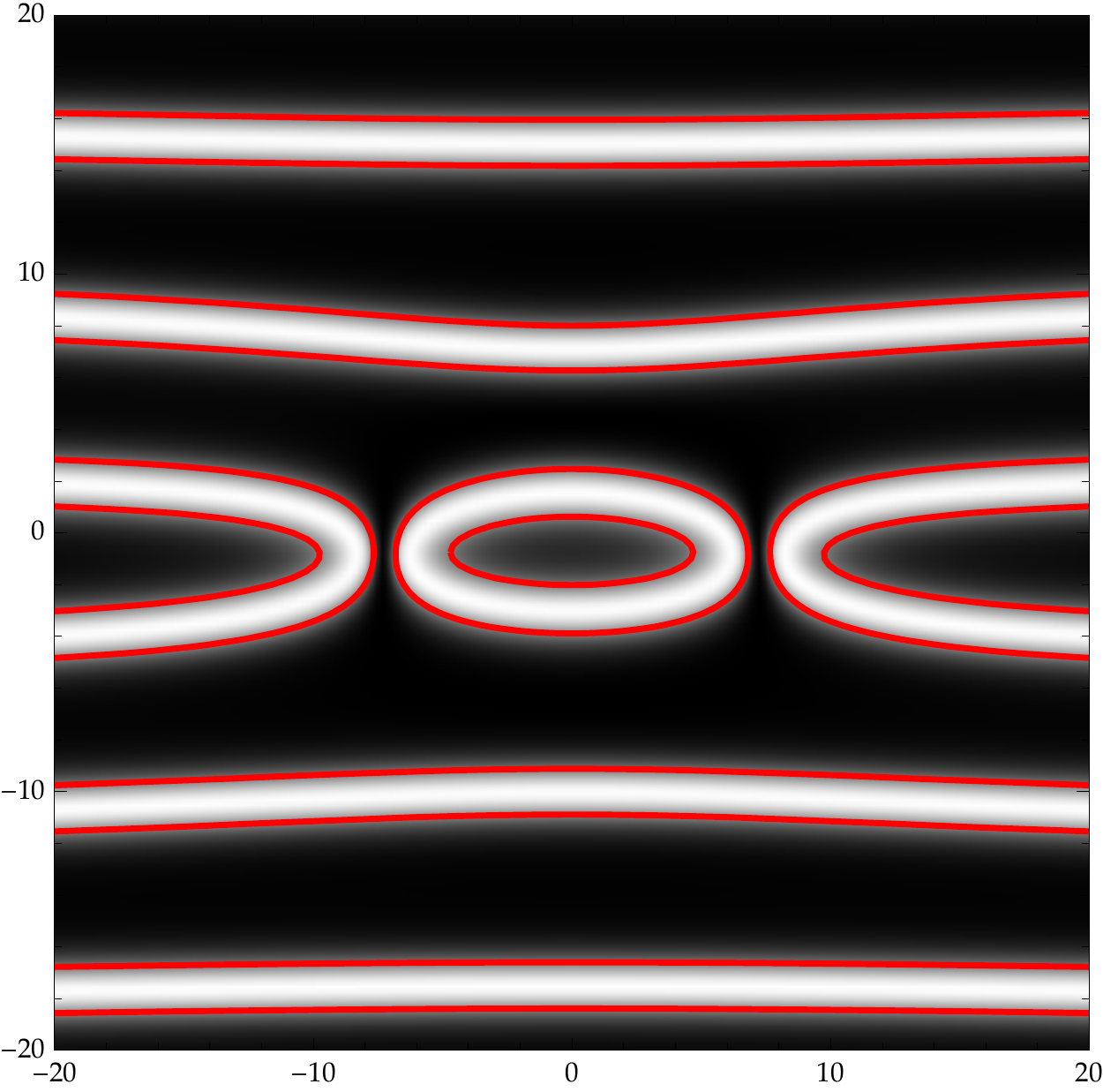}%
\includegraphics[height=.24\linewidth]{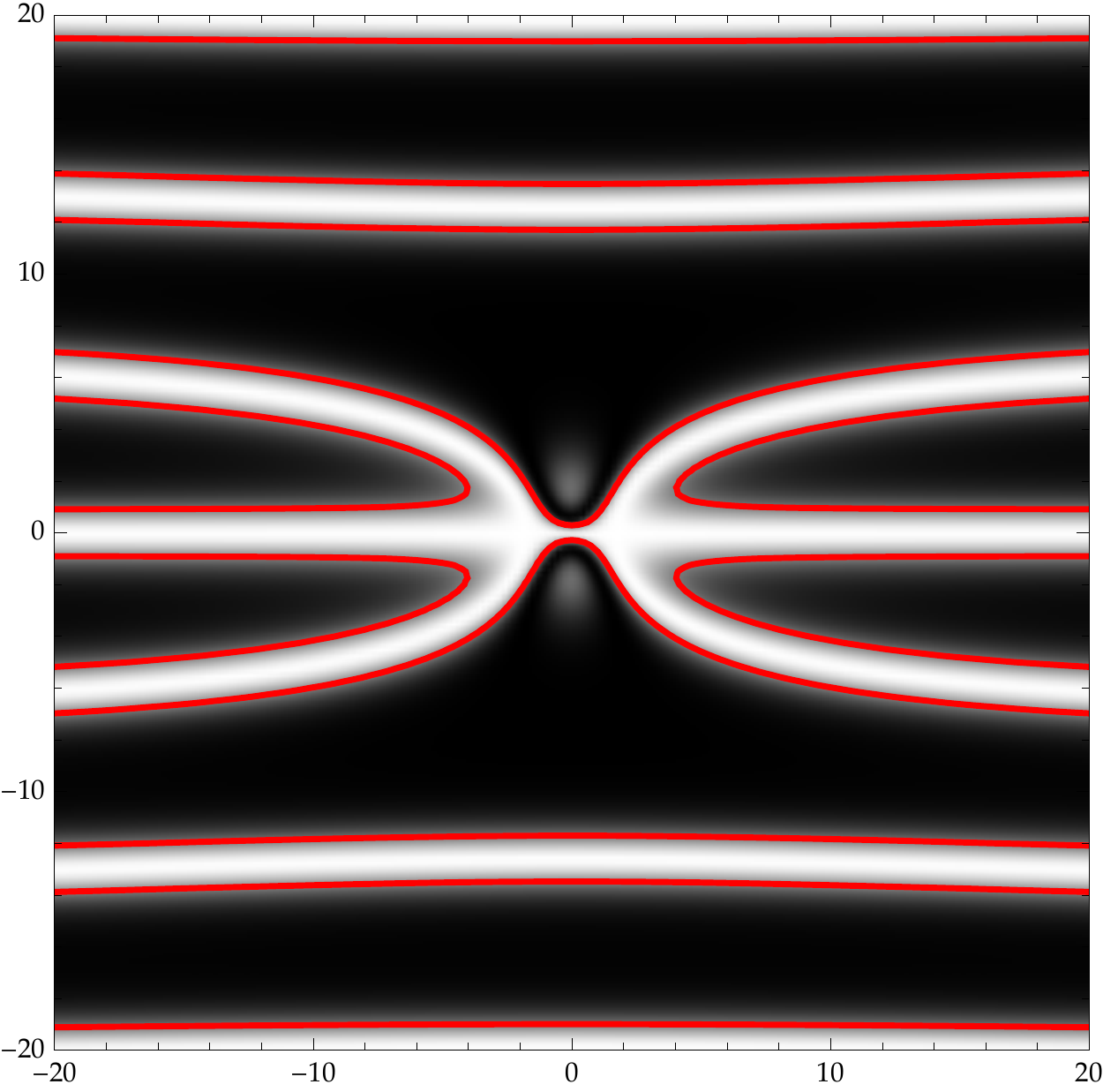}\\
\includegraphics[height=.24\linewidth]{fig/Legend-SMALL.pdf}%
\includegraphics[height=.24\linewidth]{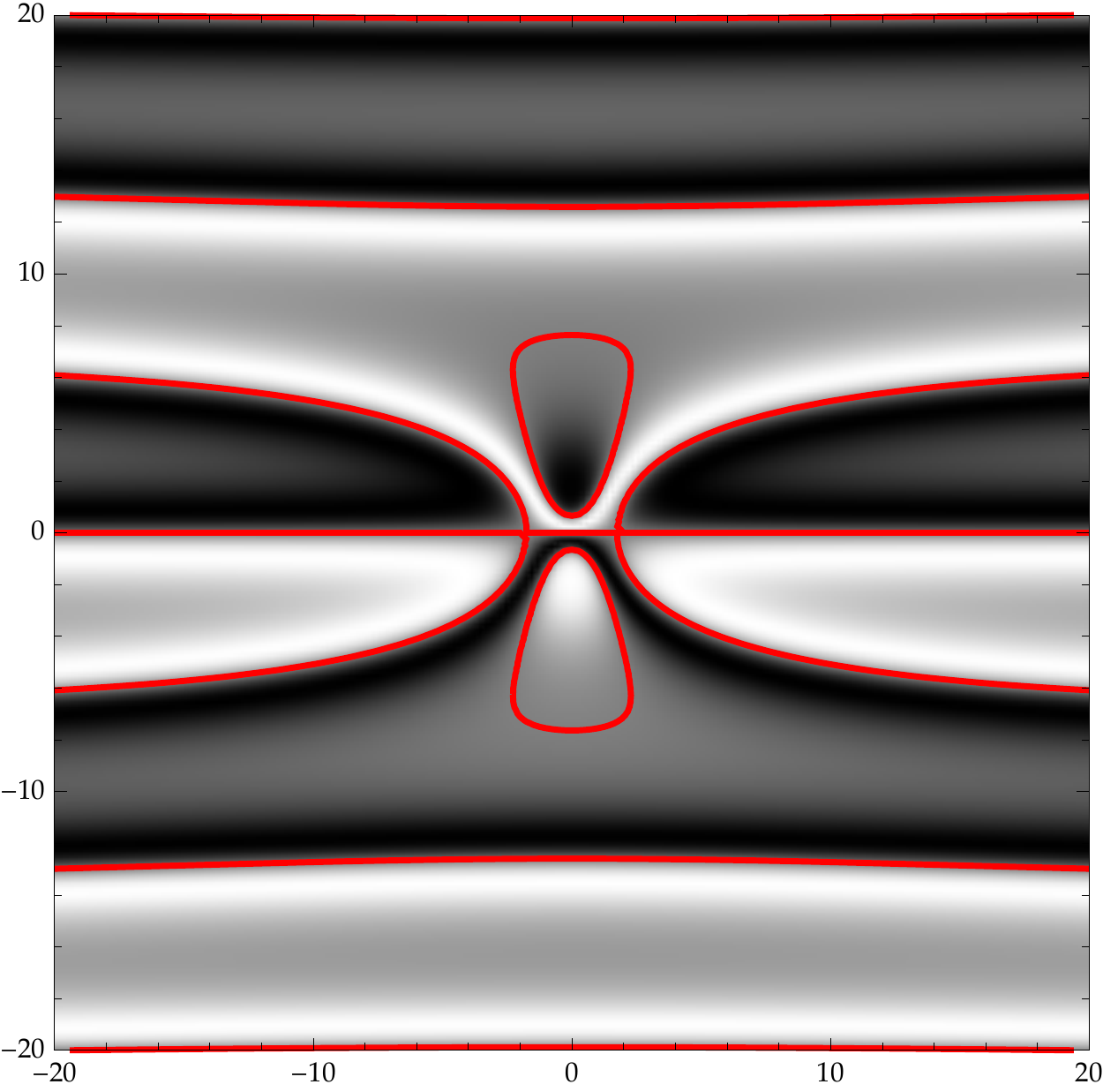}%
\includegraphics[height=.24\linewidth]{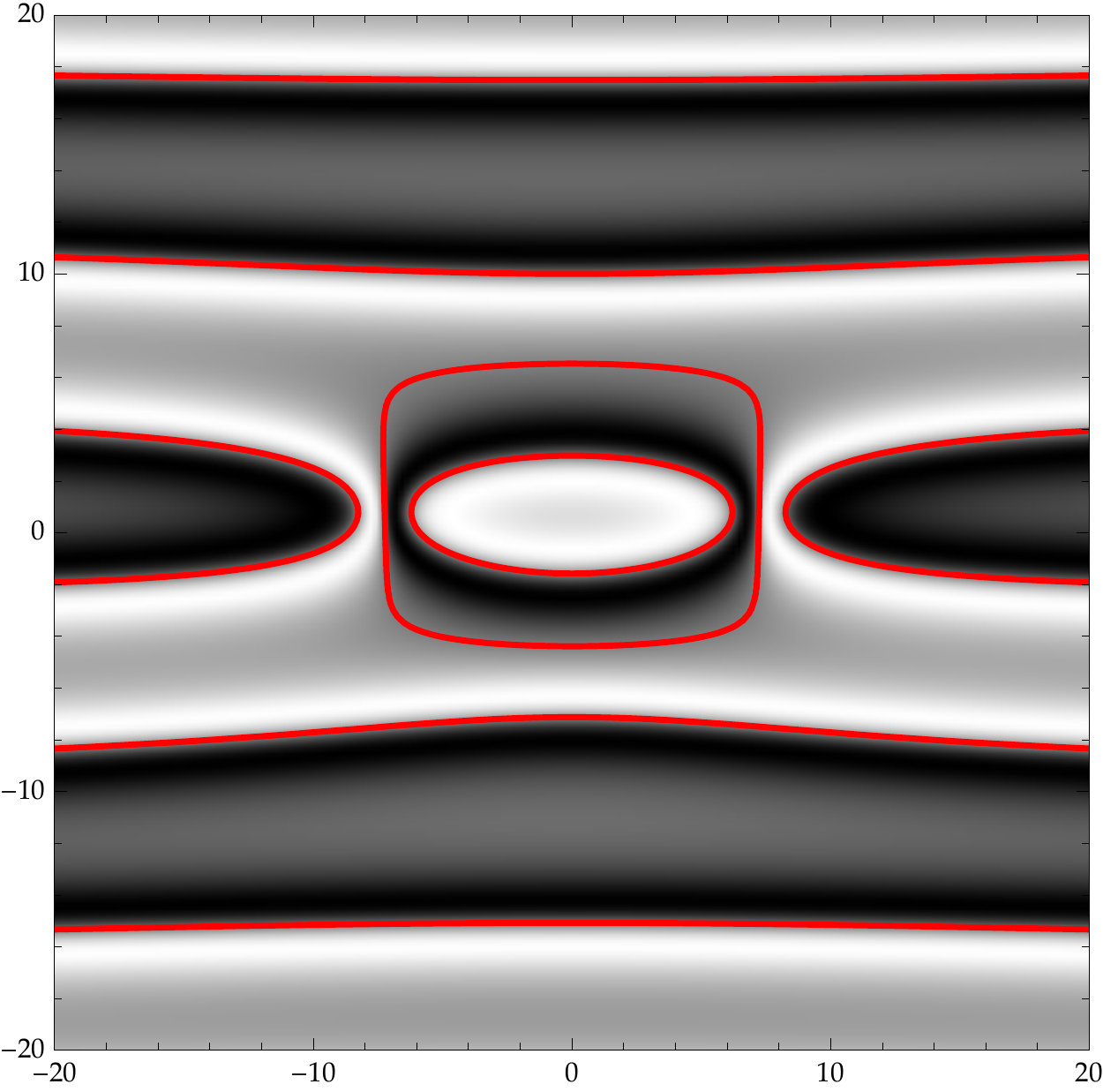}%
\includegraphics[height=.24\linewidth]{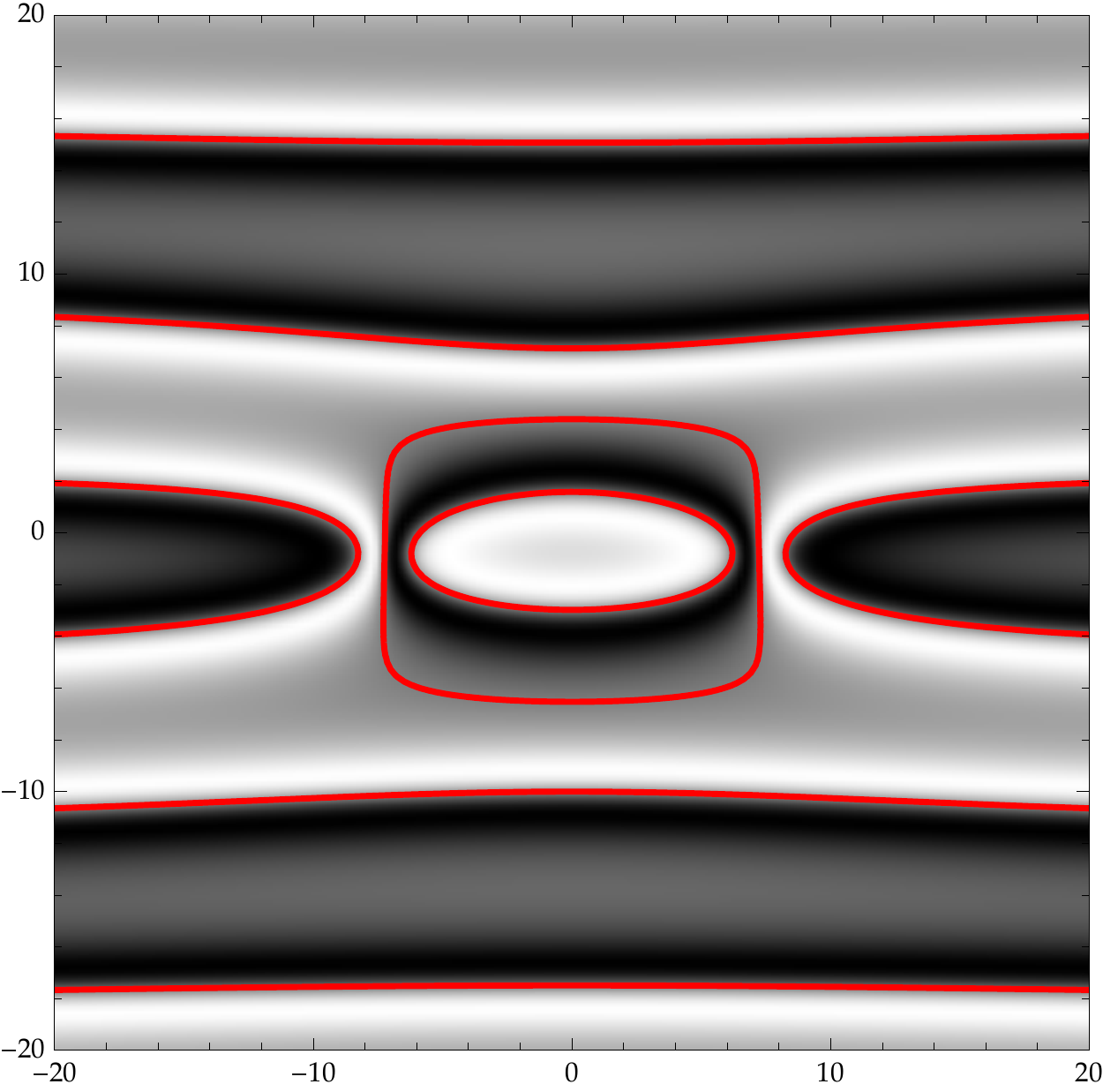}%
\includegraphics[height=.24\linewidth]{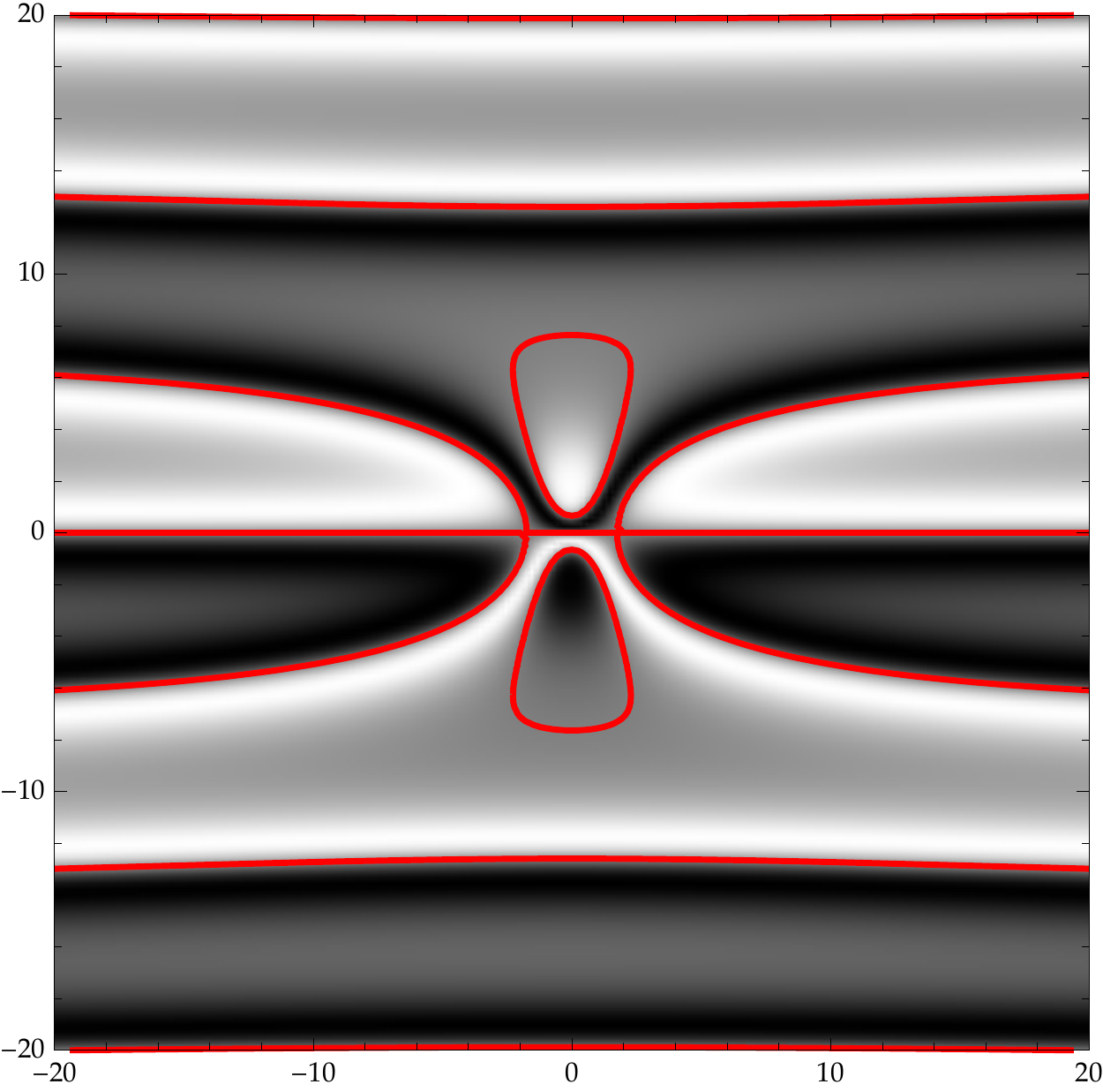}
\end{center}
\caption{As in Figure~\ref{fig:exact-solutions-first} but for $m=\sin^2(\tfrac{11}{24}\pi)$.}
\label{fig:exact-solutions-last}
\end{figure}

\clearpage

\end{document}